\numberwithin{equation}{section}
\renewcommand\le{\leqslant}
\renewcommand\ge{\geqslant}
\newenvironment{comment}{\setbox0=\vbox\bgroup}{\egroup} %deletes!
\newtheorem{theorem}{Theorem}[section]
\newtheorem{lemma}[theorem]{Lemma}
\newtheorem{corollary}[theorem]{Corollary}
\theoremstyle{definition}
\newtheorem{example}[theorem]{Example}
\newtheorem{definition}[theorem]{Definition}
\newtheorem{remark}[theorem]{Remark}
\newtheorem*{ack}{Acknowledgement}
\theoremstyle{remark}
\newtheorem*{claim}{Claim}
\newenvironment{romenumerate}[1][0pt]{% optional argument changes indentation
\addtolength{\leftmargini}{#1}\begin{enumerate}% gives (i), (ii) etc.
 }{\end{enumerate}}
\newcounter{oldenumi}
\newenvironment{romenumerateq}% continues numbering from previous romenumerate
{\setcounter{oldenumi}{\value{enumi}}
\begin{romenumerate} \setcounter{enumi}{\value{oldenumi}}}
{\end{romenumerate}}
\newenvironment{bienumerate}
{\begin{enumerate}%  gives (a) etc, at second level
 }{\end{enumerate}}
\newenvironment{alphenumerate}
{\begin{enumerate}%  gives (a) etc
 }{\end{enumerate}}
\newcounter{thmenumerate}
\newcounter{romxenumerate}   %less indented than standard.
\newcounter{xenumerate}   %no left indentation; thus wider lines
\newcommand\pfitemx[1]{\par#1:}
\newcommand\pfitemref[1]{\pfitemx{\ref{#1}}}
\newcommand\step[2]{\smallskip\noindent\emph{Step #1: #2} \noindent}
\newcommand{\refT}[1]{Theorem~\ref{#1}}
\newcommand{\refC}[1]{Corollary~\ref{#1}}
\newcommand{\refL}[1]{Lemma~\ref{#1}}
\newcommand{\refR}[1]{Remark~\ref{#1}}
\newcommand{\refS}[1]{Section~\ref{#1}}
\newcommand{\refSS}[1]{Subsection~\ref{#1}}
\newcommand{\refD}[1]{Definition~\ref{#1}}
\newcommand{\refE}[1]{Example~\ref{#1}}
\newcommand{\refApp}[1]{Appendix~\ref{#1}}
\newcommand{\refand}[2]{\ref{#1} and~\ref{#2}}
\xdef\klockan{\the\count1.0\the\count255}
\xdef\klockan{\the\count1.\the\count255}\fi
\newcommand\nopf{\qed}   % for theorem without proof
\newcommand{\sumi}{\sum_{i=1}^\infty}
\newcommand{\sumj}{\sum_{j=1}^\infty}
\newcommand{\summ}{\sum_{m=1}^\infty}
\newcommand{\sumnn}{\sum_{n=-\infty}^\infty}
\newcommand{\sumn}{\sum_{n=1}^\infty}
\newcommand{\sumik}{\sum_{i=1}^k}
\newcommand{\sumin}{\sum_{i=1}^n}
\newcommand{\sumkn}{\sum_{k=1}^n}
\newcommand{\prodik}{\prod_{i=1}^k}
\newcommand{\sumnm}{\sum_{n=1}^M}
\newcommand\set[1]{\ensuremath{\{#1\}}}
\newcommand\bigset[1]{\ensuremath{\bigl\{#1\bigr\}}}
\newcommand\lrset[1]{\ensuremath{\left\{#1\right\}}}
\newcommand\xpar[1]{(#1)}
\newcommand\bigpar[1]{\bigl(#1\bigr)}
\newcommand\Bigpar[1]{\Bigl(#1\Bigr)}
\newcommand\biggpar[1]{\biggl(#1\biggr)}
\newcommand\lrpar[1]{\left(#1\right)}
\newcommand\xcpar[1]{\{#1\}}
\newcommand\abs[1]{|#1|}
\newcommand\bigabs[1]{\bigl|#1\bigr|}
\newcommand\Bigabs[1]{\Bigl|#1\Bigr|}
\newcommand\lrabs[1]{\left|#1\right|}
\def\rompar(#1){\textup(#1\textup)}    % usage: \rompar(...)
\newcommand\xfrac[2]{#1/#2}
\newcommand\innprod[1]{\langle#1\rangle}
\newcommand\lrinnprod[1]{\left\langle#1\right\rangle}
\def\xexp(#1){e^{#1}}
\newcommand\ceil[1]{\lceil#1\rceil}
\newcommand\floor[1]{\lfloor#1\rfloor}
\newcommand\nn{^{(n)}}
\newcommand\nnn{_1^{\infty}}
\newcommand\ntoo{\ensuremath{{n\to\infty}}}
\newcommand\bmin{\wedge}
\newcommand\norm[1]{\|#1\|}
\newcommand\nnorm[1]{|||#1|||}
\newcommand\downto{\searrow}
\newcommand\upto{\nearrow}
\newcommand\punkt[1]{\if.#1\else.\spacefactor1000\fi{#1}}
\newcommand\iid{i.i.d\punkt}    
\newcommand\ie{i.e\punkt}
\newcommand\eg{e.g\punkt}
\newcommand\viz{viz\punkt}
\newcommand\cf{cf\punkt}
\newcommand{\as}{a.s\punkt}
\newcommand{\aex}{a.e\punkt}
\newcommand\ii{\mathrm{i}}
\newcommand{\tend}{\longrightarrow}
\newcommand\dto{\overset{\mathrm{d}}{\tend}}
\newcommand\pto{\overset{\mathrm{p}}{\tend}}
\newcommand\wto{\overset{\mathrm{w}}{\tend}}
\newcommand\wxto{\overset{\mathrm{w*}}{\tend}}
\newcommand\asto{\overset{\mathrm{a.s.}}{\tend}}
\newcommand\eqd{\overset{\mathrm{d}}{=}}
\newcommand\bbR{\mathbb R}
\newcommand\bbC{\mathbb C}
\newcommand\bbN{\mathbb N}
\newcommand\bbT{\mathbb T}
\newcommand\bbQ{\mathbb Q}
\newcommand\bbZ{\mathbb Z}
\newcounter{CC}
\newcounter{cc}
\renewcommand\Re{\operatorname{Re}}
\renewcommand\Im{\operatorname{Im}}
\newcommand\E{\operatorname{\mathbb E{}}}
\renewcommand\P{\operatorname{\mathbb P{}}}
\newcommand\Var{\operatorname{Var}}
\newcommand\var{\operatorname{var}}
\newcommand\Cov{\operatorname{Cov}}
\newcommand\Bin{\operatorname{Bin}}
\newcommand\Be{\operatorname{Be}}
\newcommand\supp{\operatorname{supp}}
\newcommand\Tr{\operatorname{Tr}}
\newcommand\ga{\alpha}
\newcommand\gb{\beta}
\newcommand\gd{\delta}
\newcommand\gD{\Delta}
\newcommand\gf{\varphi}
\newcommand\gam{\gamma}
\newcommand\gl{\lambda}
\newcommand\go{\omega}
\newcommand\gO{\Omega}
\newcommand\gs{\sigma}
\newcommand\gS{\Sigma}
\newcommand\gz{\zeta}
\newcommand\eps{\varepsilon}
\renewcommand\phi{\xxx}  %% WARNING
\newcommand\cA{\mathcal A}
\newcommand\cB{\mathcal B}
\newcommand\cC{\mathcal C}
\newcommand\cD{\mathcal D}
\newcommand\cF{\mathcal F}
\newcommand\cI{\mathcal I}
\newcommand\cJ{\mathcal J}
\newcommand\cL{{\mathcal L}}
\newcommand\cM{\mathcal M}
\newcommand\cN{\mathcal N}
\newcommand\cS{{\mathcal S}}
\newcommand\cT{{\mathcal T}}
\newcommand\cZ{{\mathcal Z}}
\newcommand\tM{{\widetilde M}}
\newcommand\hX{{\hat X}}
\newcommand\ett[1]{\boldsymbol1\xcpar{#1}}
\newcommand\etta{\boldsymbol1}
\newcommand\limn{\lim_{n\to\infty}}
\newcommand\qw{^{-1}}
\newcommand\qq{^{1/2}}
\newcommand\qqw{^{-1/2}}
\newcommand\gsf{$\sigma$-field}
\renewcommand{\=}{:=}
\newcommand\intoi{\int_0^1}
\newcommand\oi{[0,1]}
\newcommand\ooi{(0,1]}
\newcommand\setoi{\set{0,1}}
\newcommand\dd{\,\mathrm{d}}
\newcommand{\chf}{characteristic function}
\newcommand{\ui}{uniformly integrable}
\newcommand\rv{random variable}
\newcommand\doi{\ensuremath{D\oi}}
\newcommand\doix[1]{\ensuremath{D(\oi^{#1})}}
\newcommand\doik{\doix{k}}
\newcommand\coi{\ensuremath{C\oi}}
\newcommand\cooi{{c_0\ooi}}
\newcommand\coo{c_{00}}
\newcommand\coooi{{\coo\ooi}}
\newcommand\tensor{\otimes}
\newcommand\ptensor{\widehat\tensor}
\newcommand\itensor{\widecheck\tensor}
\newcommand\tpx[1]{^{\tensor #1}}
\newcommand\tpk{\tpx{k}}
\newcommand\itpx[1]{^{\itensor #1}}
\newcommand\itpk{\itpx{k}}
\newcommand\ptpx[1]{^{\ptensor #1}}
\newcommand\ptpk{\ptpx{k}}
\newcommand\xx{x^*}
\newcommand\yy{y^*}
\newcommand\zz{z^*}
\newcommand\zzz{z\qx}
\newcommand\uapprox{uniformly approximable by finite rank operators} 
\newcommand\capprox{approximable on compacts by finite rank operators} 
\newcommand\normp[1]{\norm{#1}_\pi}
\newcommand\normi[1]{\norm{#1}_\eps}
\newcommand\normlips[1]{\norm{#1}_{\Lip_s}}
\newcommand\ap{approximation property}
\newcommand\q{^*}
\newcommand\kth{$k$:th}
\newcommand\kthm{$k$:th moment}
\newcommand\kkx[1]{#1_1,\dots,#1_k}
\newcommand\tp{tensor product}
\newcommand\qx{^{**}}
\newcommand\mx[2]{#1_1 #2 \dotsm #2 #1_k}
\newcommand\mxx[2]{#1_1 #2 \dots #2 #1_k}
\newcommand\hI{\widehat I}
\newcommand\hf{\hat f}
\newcommand\tI{\tilde I}
\newcommand\nbhb{neighbourhood base}
\newcommand\cardc{\mathfrak c}
\newcommand\cardm{\mathfrak m}
\newcommand\U{\mathrm U}
\newcommand\ps{probability space}
\newcommand\pmx{probability measure}
\newcommand\assep{\as{} separably valued}
\newcommand\wassep{weakly \as{} separably valued}
\newcommand\wassepy{weak \as{} separability}
\newcommand\ofp{(\gO,\cF,\P)}
\newcommand\ettau{\etta_{[u,1]}}
\newcommand\ettaU{\etta_{[U,1]}}
\newcommand\tga{\tilde\ga}
\newcommand\meas{measurable}
\newcommand\Bmeas{Bochner measurable}
\newcommand\Bormeas{Borel measurable}
\newcommand\wmeas{weakly measurable}
\newcommand\cmeas{$\cC$-measurable}
\newcommand\dmeas{$\cD$-measurable}
\newcommand\dunx{T_X}
\newcommand\idt{\mathbf t} 
\newcommand\lxx{;}
\newcommand\seq{_{n=1}^\infty}
\newcommand\SCc{Stone--\v Cech compactification}
\newcommand\comp{^{\mathsf c}}
\newcommand\ogoi{[0,\go_1]}
\newcommand\goi{\go_1}
\newcommand\Ba{\mathsf{Ba}}
\newcommand\qwe[1]{^{(#1)}}
\newcommand\loo{\ell^\infty}
\newcommand\looq{(\ell^\infty)\q}
\newcommand\bbNx{\bbN\q}
\newcommand\gbn{\gb\bbN}
\newcommand\gbnx{\gbn\setminus\bbN}
\newcommand\cb{C_{\mathsf b}}
\newcommand\mr{M_{\mathsf r}}
\newcommand\weakx{weak$^*$}
\newcommand\mux{\mu_*}
\newcommand\Mx{M_*}
\newcommand\mxw{m_*}
\newcommand\axw{A_*}
\newcommand\im{\operatorname{im}}
\newcommand\Lip{\mathrm{Lip}}
\newcommand\Lipw{\mathrm{Lip}^w}
\newcommand\sign{\operatorname{sign}} %redan definierad
\newcommand\ettae{\etta_E}
\newcommand\cco{\mathcal{C}_0}
\newcommand\llm{L_{\ell,m}}
\newcommand\bw{\ensuremath{\cB_w}}
\newcommand\EE{\bar{\E}}
\newcommand\ck{C(K)}
\newcommand\chI{C(\hI)}
\newcommand\kkg{2k_G}
\newcommand\kkkkg{8k_G}
\newcommand\txx{\tilde x\q}
\newcommand\ppar[1]{\ensuremath{(#1)}}
\newcommand\coss{{c_0(S)}}
\newcommand\cossk{{c_0(S^k)}}
\newcommand\css{{C(S\q)}}
\newcommand\bh{B(H)}
\newcommand\cx{\mathcal K}
\newcommand\ch{\cx(H)}
\newcommand\trx{\mathcal N}
\newcommand\trh{\trx(H)}
\newcommand\Tx{T_X}
\newcommand\Txx{T_X^{\phantom*}}
\newcommand\Txq{T_X\q}
\newcommand\Tyx{T_Y^{\phantom*}}
\newcommand\Tyq{T_Y\q}
\newcommand\gDo{\gD^\circ}
\newcommand\mm{^{(m)}}
\newcommand\mmi{^{(m+1)}}
\newcommand\kkk{^{(k)}}
\newcommand\gax{\ga}
\newcommand\gamx{\gam}
\newcommand\gx{{\xi}}
\newcommand\Bx{B_1}
\newcommand\symm{\operatorname{Symm}}
\newcommand{\Holder}{H\"older}
\newcommand\CS{Cauchy--Schwarz}
\newcommand\CSineq{\CS{} inequality}
\newcommand\citetq[2]{\citeauthor{#2} \cite[{\frenchspacing #1}]{#2}}
\begin{document}
\title%[]
{Higher moments of Banach space valued random variables}

\date{22 August, 2012; corrected 25 August 2012}
%\ifdraft{(typeset \today{} \klockan)}   %\Small
%{\unskip}} 
%; revised ...
%\date{?? April, 2012 (typeset \today{} \klockan)}

\author{Svante Janson}
\thanks{SJ partly supported by the Knut and Alice Wallenberg Foundation}
\address{Department of Mathematics, Uppsala University, PO Box 480,
SE-751~06 Uppsala, Sweden}
\email{svante.janson@math.uu.se}
%\urladdrx{http://www2.math.uu.se/~svante/}

\author{Sten Kaijser}
\address{Department of Mathematics, Uppsala University, PO Box 480,
SE-751~06 Uppsala, Sweden}
\email{sten.kaijser@math.uu.se}
%\urladdrx{http://www2.math.uu.se/~svante/}

%\keywords{<keywords>}
\subjclass[2010]{60B11; 46G10} 
%{60C05 (68P10,68W40)} %%{Primary: <subject>; Secondary: <subject>}

\begin{comment}  % Some suggestions:

60 Probability theory and stochastic processes

60B11 Probability theory on linear topological spaces [See also 28C20]

46  Functional analysis 
46G   Measures, integration, derivative, holomorphy (all involving
infinite-dimensional spaces) [See also 28-XX, 46Txx] 

46G10  Vector-valued measures and integration [See also 28Bxx, 46B22] 

\end{comment}

\begin{abstract} 
We define the $k$:th moment of a Banach space valued random variable
as the expectation of its $k$:th tensor power; thus the moment (if it exists)
is an element of a tensor power of the original Banach space.

We study both the projective and injective tensor products, and their
relation. Moreover, in order to be general and flexible, we study three
different types of expectations: Bochner integrals, Pettis integrals and
Dunford integrals. 

One of the problems studied is whether two random variables with the same
injective moments (of a given order)
necessarily have the same projective moments; this is of
interest in applications. We show that this holds if the Banach space has
the approximation property, but not in general.

Several sections are devoted to results
in special Banach spaces, including Hilbert spaces,  $C(K)$ and $D[0,1]$.
The latter space is non-separable, which complicates the arguments, and 
we prove various preliminary results on \eg{} measurability in $D[0,1]$
that we need.

One of the main motivations of this paper is the application to Zolotarev
metrics and their use in the contraction method. This is sketched in an
appendix. 
\end{abstract}

\maketitle

\section{Introduction}\label{S:intro}
Let $X$ be a random variable with values in a Banach space $B$.
To avoid measurability problems, we assume for most of this section for
simplicity 
that $B$ is separable and $X$ \Bormeas; 
see \refS{Smeas} for measurability in the general case. Moreover, for
definiteness, we consider real Banach spaces only; the complex case is similar.

If $\E\norm X <\infty$, then the mean $\E X$ exists as an element of $B$
(\eg{} as a Bochner integral $\int X\dd\P$, see \refS{Sintegrals}).
Suppose now that 
we want to define the \kth{} moments of $X$
for some $k\ge2$, 
assuming for simplicity $\E\norm X ^k<\infty$.
If $B$ is finite-dimensional, then the second moment of $X$ is a matrix (the
covariance matrix, if $X$ is centred), and higher moments are described by
higher-dimensional arrays of joint moments of the components.
In general, it is natural to define the \kth{} moment of $X$ using tensor
products, see \refS{Stensor} for details: $X\tpk$ is a random element of the
projective tensor product 
$B\ptpk$, and 
we define the projective \kth{} moment of $X$ as the expectation
$\E X\tpk\in B\ptpk$
(when this expectation exists, \eg{} if $\E\norm X ^k<\infty$);
we denote this moment by $\E X\ptpk$.
In particular, the second moment is $\E X\ptpx2=\E(X\tensor X)\in B\ptensor B$.

An alternative is to consider 
the injective tensor product 
$B\itpk$ and 
the injective \kth{} moment
$\E X\itpk\in B\itpk$.

Another alternative is to consider \emph{weak moments}, \ie, joint moments of
the real-valued random variables
$\xx(X)$ for $\xx\in B^*$ (the dual space).
The weak \kth{} moment thus can be defined as the function
\begin{equation}\label{momweak}
  (\xx_1,\dots,\xx_k)\mapsto \E\bigpar{\xx_1(X)\dotsm\xx_k(X)} \in \bbR,
\end{equation}
assuming that this expectation always exists (which holds, for example, if
$\E\norm X ^k<\infty$). 
Note  that the weak \kth{} moment is a $k$-linear form on $B^*$.

The purpose of the present paper is to study these moments and their
relations in detail, thus 
providing a platform for further work using moments of 
Banach space valued \rv s.
In particular, we shall give sufficient, and sometimes necessary,
conditions for the existence of moments in various situations.

One example of our results on relations between the different moments
is that, at least in the separable case, the weak
\kth{} moment is equivalent to the injective moment.
(See \refT{TID} for a precise statement.)

We study also the problem of moment equality:
if $Y$ is a second random variable with values in $B$,
we may ask 
 whether $X$ and $Y$ have the same \kth{} moments, for a given $k$ and a
 given type of moment. 
(Assume for example 
that $\E\norm X ^k,\E\norm Y^k <\infty$ so that the moments exist.)
This problem, for the second moment, appears for example in connection with
the central limit theorem for Banach space valued \rv s, see \eg{}
\cite[Chapter 10]{LedouxT} where weak moments are used. (In particular, a
$B$-valued \rv{} $X$ is said to be \emph{pregaussian} if $\E X=0$ and there
exists a Gaussian $B$-valued \rv{} $Y$ such that $X$ and $Y$ have the same
weak second moments.)
This problem is also
important when proving convergence in distribution
of some Banach space valued \rv{s}
using a Zolotarev metric,
see \refApp{Azolo}.
(The Zolotarev metrics are, for example, often used
when applying the contraction method for sequences of random variables with a
suitable recursive structure, see \eg{} 
Neiniger and Sulzbach \cite{NeiningerS}.
%\cite{NR,NeiningerS,Sulzbach}.)
This applications is one of the main motivations of the present
paper.)

As an example of results obtained in later sections,
let us consider this problem of moment equality further.
In particular, we want to compare the property that $X$ and $Y$ have the
same \kth{} moment for the different types of moments.
(For simplicity, we assume as above 
that $\E\norm X ^k,\E\norm Y
^k<\infty$, so the moments exist.)

Since the dual space of $B\ptpk$ is the space of bounded $k$-linear forms on
$B$ (see \refS{Stensor}), it follows  (by \refS{Sintegrals})
that the \kth{} projective moments $\E X\ptpk$ and $\E Y\ptpk$ are equal
if and only if
\begin{equation}
  \label{multi=}
\E \ga(X,\dots,X)=\E \ga(Y,\dots,Y),
\qquad \ga\in L(B^k;\bbR),
\end{equation}
where $L(B^k;\bbR)$ denotes the space of
bounded $k$-linear forms on $B$.
(See \refC{Cptpk=}.)

Moreover, by the definition of weak moments, $X$ and $Y$ have the same weak
\kth{} moments if and only if 
\begin{equation}
  \label{weak=}
\E \bigpar{\xx_1(X)\dotsm\xx_k(X)}=\E \bigpar{\xx_1(Y)\dotsm\xx_k(Y)},
\qquad \xx_1,\dots,\xx_k\in B^*.
\end{equation}
We shall see (\refC{Citpk=d}) that 
this %\eqref{weak=}
holds if and only if the injective moments $\E X\itpk$ and $\E Y\itpk$
are equal. 

For any $\xx_1,\dots,\xx_k\in B^*$, the mapping
$(x_1,\dots,x_k)\mapsto \xx_1(x_1)\dotsm\xx_k(x_k)$ is 
a bounded $k$-linear
form on $B$, and thus \eqref{weak=} is a special case of \eqref{multi=}.
Consequently, \eqref{multi=}$\implies$\eqref{weak=}, \ie, if $X$ and $Y$
have the same projective moments, then they have the same weak (and
injective) moments.
Does the converse hold? (This question is of practical importance in
applications of the contraction method, see \refApp{Azolo}.)
We show that this problem is non-trivial, and deeply connected to the
\emph{approximation property} of Banach spaces. 
(See \refS{Sapprox} for definitions and proofs.)
In particular, we have the following results.

\begin{theorem}\label{Tapprox}
  If $B$ is a separable Banach space with the approximation property and $X$
  and\/ $Y$ are %\Bormeas{}
\rv{s} in $B$ such that\/ $\E\norm X ^k,\E\norm Y ^k<\infty$,
  then
\eqref{multi=} and \eqref{weak=} are equivalent, \ie, $X$ and\/ $Y$ have the
same projective \kth{} moments if and only if they have the same weak \kth{}
moments. 
\end{theorem}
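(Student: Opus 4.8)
The implication \eqref{multi=}$\implies$\eqref{weak=} is immediate, since a product functional $\xx_1\tensor\dots\tensor\xx_k$ is a particular bounded $k$-linear form; the content of the theorem is the converse, and the plan is to reduce it to a single structural fact about the approximation property. By \refC{Cptpk=}, \eqref{multi=} is equivalent to $\E X\ptpk=\E Y\ptpk$ in $B\ptpk$, and by \refC{Citpk=d}, \eqref{weak=} is equivalent to $\E X\itpk=\E Y\itpk$ in $B\itpk$. Let $\iota\colon B\ptpk\to B\itpk$ be the canonical norm-decreasing map. Since $\E\norm X^k<\infty$, the tensor power $X\tpk$ is Bochner integrable in $B\ptpk$, and $\iota$, being bounded and linear, commutes with the expectation (\refS{Sintegrals}); hence $\iota(\E X\ptpk)=\E X\itpk$, and likewise for $Y$. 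So \eqref{weak=} forces $\iota(\E X\ptpk-\E Y\ptpk)=0$, and it suffices to prove that $\iota$ is injective.

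By the definition of the injective norm, $\ker\iota$ is exactly the set of $u\in B\ptpk$ annihilated by every product functional $\xx_1\tensor\dots\tensor\xx_k$ with $\xx_i\in B^*$, so I must show that these functionals separate the points of $B\ptpk$. Fix $u\in B\ptpk$ with $\innprod{\xx_1\tensor\dots\tensor\xx_k,u}=0$ for all $\xx_i\in B^*$, and write $u=\sum_n x_n^1\tensor\dots\tensor x_n^k$ with $\sum_n\prod_j\norm{x_n^j}<\infty$ (discarding terms with a zero factor). The crucial preliminary step is a rescaling: using that $\sum_n s_n<\infty$ with $s_n\ge0$ implies $c_n\downto0$ with $\sum_n s_n/c_n<\infty$, I can multiply the factors $x_n^j$ by suitable scalars, leaving each elementary tensor $x_n^1\tensor\dots\tensor x_n^k$ unchanged, so that $x_n^j\to0$ for $j=1,\dots,k-1$ while $\sum_n\norm{x_n^k}<\infty$. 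In particular $K\=\set{x_n^j:n\ge1,\ 1\le j\le k-1}\cup\set0$ is compact.

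Now let $(S_\gl)$ be a net of finite rank operators on $B$ with $S_\gl\to\mathrm{id}_B$ uniformly on compact sets, as provided by the approximation property. For fixed $\gl$, the element $(S_\gl\tpx{(k-1)}\tensor\mathrm{id})u\in B\ptpk$ is a \emph{finite} sum of elementary tensors, whose last factors range over a finite set; applying one further operator $S_\mu$ in the last slot turns it into a finite linear combination of scalars $\innprod{\yy_1\tensor\dots\tensor\yy_k,u}=0$ times fixed elementary tensors, so $(S_\gl\tpx{(k-1)}\tensor S_\mu)u=0$ for every $\mu$; letting $\mu\to\infty$ in this finite sum gives $(S_\gl\tpx{(k-1)}\tensor\mathrm{id})u=0$. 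Finally let $\gl\to\infty$: since the factors $x_n^j$ ($j\le k-1$) stay bounded, $\sum_n\norm{x_n^k}<\infty$, and $\sup_{x\in K}\norm{S_\gl x-x}\to0$, a telescoping estimate gives $\norm{(S_\gl\tpx{(k-1)}\tensor\mathrm{id})u-u}_\pi\to0$. Hence $u=0$, so $\iota$ is injective, which completes the proof.

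The step I expect to be the real obstacle is handling the fact that $B$ is assumed to have only the plain, not the bounded, approximation property: the $S_\gl$ carry no uniform norm bound, so the naive estimate $\norm{S_\gl\tpk u-u}_\pi\to0$ breaks down, costing an uncontrollable factor $\norm{S_\gl}^{k-1}$ away from $K$. The two devices above --- rescaling $u$ so that one tensor factor becomes genuinely $\ell^1$-summable, and peeling the tensor one slot at a time (introducing $S_\mu$ only after the tensor has collapsed to finite rank) --- are precisely what let all the limits be taken in an order that never multiplies a vanishing quantity by a divergent sum. (Alternatively, one could quote from the literature that the approximation property of $B$ makes $B\ptpk\to B\itpk$ injective; the argument above is the self-contained form of that fact in the generality needed here.)
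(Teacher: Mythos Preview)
Your proof is correct and follows the same overall strategy as the paper: both reduce \eqref{weak=}$\implies$\eqref{multi=} to the injectivity of $\iota:B\ptpk\to B\itpk$, via the same corollaries and the relation $\iota(\E X\ptpk)=\E X\itpk$. The difference lies only in how that injectivity is established. The paper quotes its \refT{Tiotak}, which is proved by induction on $k$ (factoring $B\ptpk\to B\itpk$ through $B\itensor B\ptpx{(k-1)}$) together with \refT{Tiota2}, itself a restatement of the equivalent characterisation \ref{apbb1} of the approximation property. You instead give a direct, self-contained argument from characterisation \ref{apapp} (identity approximable on compacts by finite rank operators), using the rescaling trick to force the first $k-1$ tensor factors into a single compact set and peeling off one slot at a time to avoid any dependence on $\norm{S_\gl}$. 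Your route is longer but has the merit of being entirely self-contained and of making explicit why only the plain (unbounded) approximation property is needed; the paper's route is shorter because it leans on the list of equivalent forms of the approximation property already recorded in \refS{Sapprox}. One small imprecision: in the rescaling step you need $c_n\downto0$ with $\sum_n s_n/c_n^{k-1}<\infty$, not just $\sum_n s_n/c_n<\infty$, but this follows by the same standard lemma (or by iterating it $k-1$ times).
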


\begin{theorem}\label{Tmotex}
  There exists a separable Banach space $B$ and bounded 
%\Bormeas{} 
random variables $X$
  and\/ $Y$ in $B$ such that,
for $k=2$, 
\eqref{weak=} holds but not \eqref{multi=}.
\end{theorem}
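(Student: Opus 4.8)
The plan is to build a concrete counterexample on a Banach space that famously fails the approximation property, namely a subspace or quotient of some $\ell^p$ (or, most cleanly, the space $B(\ell^2)$ of bounded operators on a Hilbert space, or a suitable separable subspace thereof). The guiding principle is the duality from \refS{Stensor}: $(B\itensor B)^* $ is an integral-type object while $(B\ptensor B)^*=L(B^2;\bbR)$, and the canonical map $B\ptensor B\to B\itensor B$ fails to be injective precisely when $B$ (or $B^*$) lacks the approximation property. So there exists a nonzero tensor $\tau\in B\ptensor B$ that maps to $0$ in $B\itensor B$; equivalently, there is a bounded bilinear form $\ga\in L(B^2;\bbR)$ with $\innprod{\tau,\ga}\neq0$, yet $\innprod{\tau,\xx_1\otimes\xx_2}=0$ for all $\xx_1,\xx_2\in B^*$. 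The whole problem then reduces to \emph{realizing} such a $\tau$ as a difference of two second moments $\E X\ptpx2-\E Y\ptpx2$ of honest bounded $B$-valued random variables.

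The key steps, in order, would be: (i) Recall/cite that there is a separable Banach space $B$ whose dual $B^*$ fails the approximation property; by the standard tensor-product characterization (Grothendieck), the natural map $j\colon B\ptensor B\to B\itensor B$ is not injective, so pick $\tau\in\ker j$, $\tau\neq0$, and normalize $\norm{\tau}_\pi\le 1$. (ii) Write $\tau=\sumi \gl_i x_i\otimes y_i$ with $\sumi\abs{\gl_i}<\infty$ and $x_i,y_i$ in the unit ball; since $j(\tau)=0$ we have $\sumi\gl_i \xx(x_i)\yy(y_i)=0$ for all $\xx,\yy\in B^*$. (iii) Symmetrize: replace $\tau$ by $\thalf(\tau+\tau')$ where $\tau'=\sumi\gl_i y_i\otimes x_i$ — this stays in $\ker j$ and is now a symmetric tensor, which is what a genuine second moment must be; check it is still nonzero by choosing the witnessing bilinear form $\ga$ to be symmetric (symmetrize $\ga$ too, using that $\innprod{\tau,\ga}\neq0$ forces $\innprod{\tau,\ga_{\mathrm{sym}}}\neq 0$ after also using $\tau'$, or restart from a symmetric $\tau$ in the kernel, which exists since $\ker j$ is invariant under the flip). (iv) Now split the (signed, finite) measure $\sumi\gl_i\gd_{(x_i,y_i)}$-type data into positive and negative parts to manufacture $X$ and $Y$: concretely, since $\sumi\abs{\gl_i}<\infty$, put $c=\sumi\abs{\gl_i}$, let $Z$ take the value $\sqrt{c}\,\sgn(\gl_i)^{1/2}\cdots$ — more carefully, one takes $X$ and $Y$ to be $\pm$-combinations realizing $\E X\tpx2-\E Y\tpx2=\tau$. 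The cleanest device: let $\eps$ be a Rademacher sign and let $X = \sqrt{c}\, u_I$ where $I$ is random with $\P(I=i)=\abs{\gl_i}/c$ and $u_i=x_i$ if $\gl_i>0$, handling the cross terms $x_i\otimes y_i$ by instead using vectors of the form $x_i+y_i$ and $x_i-y_i$ and the polarization identity $2(a\otimes b+b\otimes a)=(a+b)\otimes(a+b)-(a-b)\otimes(a-b)$; this expresses any finite symmetric tensor combination as a difference of two \emph{positive} combinations of squares $v\otimes v$, i.e. as $\E X\tpx2-\E Y\tpx2$ for bounded (finitely-supported) $X,Y$. (v) Verify the two conclusions: boundedness of $X,Y$ is immediate since they take finitely many/bounded values; \eqref{weak=} for $k=2$ holds because $\E(\xx_1(X)\xx_2(X))-\E(\xx_1(Y)\xx_2(Y))=\innprod{\tau,\xx_1\otimes\xx_2+\xx_2\otimes\xx_1}\slash 2\cdot(\cdots)=0$ by $j(\tau)=0$; and \eqref{multi=} fails by applying the witnessing symmetric bilinear form $\ga$, since $\E\ga(X,X)-\E\ga(Y,Y)=\innprod{\tau,\ga}\neq0$.

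The main obstacle I expect is step (iv): one must be careful that the decomposition into a difference of positive square-combinations uses only \emph{finitely many} terms (so that $X,Y$ are genuinely bounded with $\E\norm{X}^2,\E\norm{Y}^2<\infty$ trivially) — but $\tau$ a priori is an infinite sum. The fix is to note that since $j(\tau)=0$ is a \emph{linear} condition and $\ga$ is bounded, a finite truncation $\tau_N=\sum_{i\le N}\gl_i x_i\otimes y_i$ already satisfies $\abs{\innprod{\tau_N,\ga}}\ge\thalf\abs{\innprod{\tau,\ga}}>0$ for $N$ large, while $\norm{j(\tau_N)}_\eps=\norm{j(\tau_N-\tau)}_\eps\le\norm{\tau_N-\tau}_\pi\to0$; so $j(\tau_N)$ is small but not necessarily zero. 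Thus one does not get \eqref{weak=} \emph{exactly} from a finite truncation — and this is the real subtlety the authors must confront. The honest route is therefore to keep $\tau$ infinite but observe that the two positive measures arising from polarization of $\sumi\gl_i(\cdots)$ still have finite total mass $\lesssim c<\infty$ and are supported on bounded sets, so $X$ and $Y$ defined via these (countably supported) probability measures are bounded random variables with finite second moments and satisfy \eqref{weak=} exactly; boundedness survives because all the vectors $x_i\pm y_i$ lie in a ball of radius $2$. A secondary, more bookkeeping-heavy obstacle is making the symmetrization in step (iii) rigorous while keeping the witnessing functional nonzero; choosing $\tau$ symmetric from the outset (legitimate, as the flip preserves $\ker j$, and if $\ker j\neq 0$ then its symmetric part is nonzero after pairing against a symmetrized bounded form) sidesteps most of this.
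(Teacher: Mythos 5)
Your overall mechanism in steps (ii)--(v) is sound and is in essence the same device the paper uses (the paper's $X=(y_N,\yy_N)$, $Y=(y_N,-\yy_N)$ is exactly a polarization-type realization of a kernel tensor as a difference of second moments, and your handling of the truncation/boundedness worry is correct). The genuine gap is in step (i): the implication ``$B$ (or $B^*$) fails the \ap{} $\Longrightarrow$ $\iota\colon B\ptensor B\to B\itensor B$ is not injective'' is false. What Grothendieck's characterization gives (\refT{Tiota2}) is injectivity of $\iota\colon B\ptensor B^*\to B\itensor B^*$, i.e.\ a statement about the \emph{mixed} tensor product $B\ptensor B^*$, not about $B\ptensor B$. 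The paper explicitly warns about this: Pisier's space $P$ lacks the \ap{} yet satisfies $P\ptensor P=P\itensor P$, so $\iota_{P,P}$ is injective; and since the \ap{} of a dual implies the \ap{} of the space, $P^*$ also lacks the \ap, so even your ``dual fails the \ap'' variant of the claim fails. Thus the existence of a separable $B$ with $\ker\iota_{B,B}\neq0$ is precisely what needs to be manufactured, and the standard (and the paper's) way to do it is to take $B=B_0\oplus B_0^*$ with $B_0$ separable, reflexive and without the \ap, so that the known non-injectivity of $\iota_{B_0,B_0^*}$ transfers to the off-diagonal summand of $B\ptensor B$. Your proposal never makes this move; the spaces you suggest (subspaces of $\ell^p$, $B(\ell^2)$) are not known to have $\iota_{B,B}$ non-injective, and Pisier's example shows one cannot get this for free from failure of the \ap.

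A second, related gap is step (iii). Flip-invariance of $\ker\iota_{B,B}$ only tells you that the symmetric and antisymmetric parts of a kernel element again lie in the kernel; it does not rule out that the kernel is purely antisymmetric, in which case no nonzero symmetric $\tau$ (and no symmetric witnessing form $\ga$) exists and your construction produces nothing. Indeed the paper flags the relation between injectivity of $\iota$ on the full tensor product and on the symmetric part as an open problem (remark after \refC{Cptpk=}), so this cannot be waved away. In the direct-sum space $B_0\oplus B_0^*$ the issue disappears, because the kernel element sits in the summand $B_0\ptensor B_0^*$ and its symmetrization has a nonzero component there; but once you make that choice you have essentially reconstructed the paper's proof, where \eqref{weak=} is verified by the explicit computation $\sum_n p_n\yy_n(x)y_n=0$ for all $x\in B_0$ and \eqref{multi=} fails for the bilinear form $\ga\bigpar{(x,\xx),(y,\yy)}\=\xx(y)$ via $\sum_n p_n\yy_n(y_n)\neq0$.
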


In \refT{Tmotex}, we may further require $B$ to be reflexive.
%; in fact, there
%are examples with $B$ a closed subspace of $\ell^p$ for some $p\in(1,\infty)$.

Note that all classical Banach spaces have the approximation property
(for example, $\ell^p$, $c_0$, $L^p(\mu)$, $C(K)$, and any Banach space with
a basis), and that counterexamples are notoriously difficult to find.
In fact, the approximation property was formulated and studied by
\citet{Grothendieck-resume,Grothendieck-memoir}, but 
it took almost 20 years until a
Banach space without the approximation property was found
by \citet{Enflo}. Hence, it is unlikely that such spaces will appear in
applications, and \refT{Tapprox} ought to apply to any separable Banach
space $B$ that will be used in practice.
(Note, however, that the non-separable Banach space $B(H)$ of bounded
operators in a Hilbert space lacks the approximation property, see
\citet{Szankowski}.)

In applications, $B$ is often a function space, for example $C\oi$. In this
case, we can weaken the condition for equality of moments further, by
considering only point evaluations in \eqref{weak=}. This yields the
following result, stated here more generally
for $C(K)$ where $K$ is a compact metric space.

\begin{theorem}
  \label{TC}
Let $B=C(K)$ where $K$ is a compact metric space,
and let $X$
and\/ $Y$ be %\Bormeas{}
\rv{s} in $\ck$ such that $\E\norm X ^k$, $\E\norm Y ^k<\infty$.
Then \eqref{multi=} is equivalent to \eqref{weak=}, and further to
\begin{equation}\label{punkt=}
  \E \bigpar{X(t_1)\dotsm X(t_k)}
=
 \E \bigpar{Y(t_1)\dotsm Y(t_k)},
\qquad t_1,\dots,t_k\in K.
\end{equation}
\end{theorem}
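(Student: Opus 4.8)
The plan is to let \refT{Tapprox} carry most of the burden and then splice \eqref{punkt=} into the chain it produces. Since $K$ is a compact metric space, $\ck$ is separable and has the approximation property (as recalled above for the classical spaces), so \refT{Tapprox} already gives \eqref{multi=}$\iff$\eqref{weak=}, and it remains only to prove \eqref{weak=}$\iff$\eqref{punkt=}. The direction \eqref{weak=}$\implies$\eqref{punkt=} I would read off at once: for each $t\in K$ the point evaluation $\gd_t\colon f\mapsto f(t)$ lies in $\ck^*$, so \eqref{punkt=} is \eqref{weak=} specialised to $\xx_i=\gd_{t_i}$.

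For the reverse direction \eqref{punkt=}$\implies$\eqref{weak=}, recall that $\ck^*=M(K)$, the space of finite signed Borel measures on $K$; thus a general weak \kth{} moment is encoded by the values $\E\bigpar{\mu_1(X)\dotsm\mu_k(X)}$ for $\mu_1,\dots,\mu_k\in M(K)$, where $\mu_i(X)=\int_K X(t)\dd\mu_i(t)$. The heart of the argument is the Fubini identity
\[
  \E\bigpar{\mu_1(X)\dotsm\mu_k(X)}
   =\int_{K^k}\E\bigpar{X(t_1)\dotsm X(t_k)}\dd\mu_1(t_1)\dotsm\dd\mu_k(t_k) ,
\]
valid for all $\mu_1,\dots,\mu_k\in M(K)$, together with the corresponding identity for $Y$. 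Granting these, \eqref{punkt=} says the two integrands agree pointwise on $K^k$, hence the integrals agree for every choice of the $\mu_i$, which is exactly \eqref{weak=}.

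The only step that needs real care — and the only place where I expect genuine friction — is the justification of this interchange of $\E$ with the iterated integral. First I would check joint measurability: the evaluation map $\ck\times K\to\bbR$, $(f,t)\mapsto f(t)$, is jointly continuous, and $X$ is (Borel) measurable into the separable space $\ck$, so $(\go,t)\mapsto X(\go)(t)$ is measurable on $\gO\times K$ for the product \gsf, and hence so is $(\go,t_1,\dots,t_k)\mapsto X(\go)(t_1)\dotsm X(\go)(t_k)$. Second, writing $\abs{\mu_i}$ for the total variation measures,
\[
  \int\abs{X(t_1)\dotsm X(t_k)}\dd\P\dd\abs{\mu_1}\dotsm\dd\abs{\mu_k}
   \le\E\norm X ^k\cdot\norm{\mu_1}\dotsm\norm{\mu_k}<\infty ,
\]
so the integrand is absolutely integrable for the product of $\P$ and the $\abs{\mu_i}$, and the classical Fubini--Tonelli theorem applies (after passing to the Jordan decomposition of each $\mu_i$, if one prefers to work only with positive measures). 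This yields the displayed identity and completes the argument.

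As a remark I would point out a more conceptual variant of \eqref{punkt=}$\implies$\eqref{multi=} that sidesteps \refT{Tapprox}: once the identification $\ck\itpk\cong C(K^k)$ is in hand, the injective \kth{} moment $\E X\itpk$ is precisely the function $(t_1,\dots,t_k)\mapsto\E\bigpar{X(t_1)\dotsm X(t_k)}$ on $K^k$, so \eqref{punkt=} is exactly the assertion that $X$ and $Y$ have equal injective moments, whence \refC{Citpk=d} delivers \eqref{weak=}. I would nonetheless keep the self-contained Fubini proof, since it invokes nothing beyond \refT{Tapprox} and the fact that Dirac measures belong to $\ck^*$.
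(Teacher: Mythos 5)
Your argument is correct, and its first half is exactly the paper's: separability (\refT{TCKsep}) and the \ap{} (\refT{TCKap}) let \refT{Tapprox} give \eqref{multi=}$\iff$\eqref{weak=}, and \eqref{weak=}$\implies$\eqref{punkt=} is just the specialisation to point evaluations. Where you diverge is the remaining implication \eqref{punkt=}$\implies$\eqref{weak=}. The paper routes this through the injective tensor product: by \refT{TPIB} the moments exist in Bochner sense, by \refC{Citpk=p} \eqref{weak=} is equivalent to $\E X\itpk=\E Y\itpk$ in $\ck\itpk=C(K^k)$, and by \eqref{eckt} (\refT{TCK0}) that element of $C(K^k)$ \emph{is} the moment function, so its equality is literally \eqref{punkt=}. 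You instead represent $\xx_i\in\ck\q$ by measures $\mu_i\in M(K)$ (legitimate since $K$ is compact metric, so every signed Borel measure is regular) and prove directly by Fubini that
$\E\bigpar{\mu_1(X)\dotsm\mu_k(X)}=\int_{K^k}\E\bigpar{X(t_1)\dotsm X(t_k)}\dd\mu_1\dotsm\dd\mu_k$,
with joint measurability of $(\go,t)\mapsto X(\go)(t)$ (your continuity-of-evaluation argument is fine for separable $\ck$; the paper's \refL{LCKjoint} serves the same purpose) and domination by $\norm X^k\in L^1$. This is essentially the computation the paper performs elsewhere (\eg{} \eqref{rogat} in \refT{TCK1} and \eqref{thx} in \refT{TID}), so the content is the same, but your version is more self-contained: it needs neither the identification $\ck\itpk=C(K^k)$ (\refT{TCKitpk}) nor the existence of the injective moments as elements of that space, only the weak moments. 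What the paper's route buys in exchange is the conceptual statement that \eqref{punkt=} is precisely equality of the injective moments — which is the content of your closing remark, so you have in effect recorded both proofs.
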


In this case it is thus enough to study joint moments of $X(t)$, and the
\kth{} moment of $X$ is described by the real-valued function
$\E \bigpar{X(t_1)\dotsm X(t_k)}$ on $K^k$.
We shall further see that if $k=2$, the integrability conditions can be
weakened to 
 $\sup_{t\in K}|X(t)|^2<\infty$ 
and  $\sup_{t\in K}|Y(t)|^2<\infty$
(\refT{TCKD2}).

\begin{remark}
  A standard polarisation argument, as in the proof of \refT{Temm}, 
shows that \eqref{weak=} is equivalent to 
\begin{equation}
  \label{weak==}
\E \bigpar{\xx(X)^k}=\E \bigpar{\xx(Y)^k},
\qquad \xx\in B^*.
\end{equation}
In other words, the weak $k$:th moments of $X$ and $Y$ are equal if and only
$\xx(X)$ 
and $\xx(Y)$ have the same $k$:th moments for every $\xx\in B\q$.
Hence we can use \eqref{weak==} instead of \eqref{weak=} in the
results above. 
In contrast, in \eqref{punkt=}, it is essential to allow different
$t_1,\dots,t_k$ and joint moments.
\end{remark}

We have so far, for simplicity, assumed that $B$ is separable. The same
definitions apply in the non-separable case, but there are then technical
complications (concerning measurability) that complicate both statements and
proofs, and our results are less complete in this case.

The space \doi{} is a non-separable Banach space that is important in
applications. 
We treat \doi{} in detail in Sections \ref{SD}--\ref{SDmom};
%this space is non-separable but 
by special arguments we obtain essentially
the same results as in the separable case.
In particular, we shall see
(\refT{TDX})
that \refT{TC} holds also for $B= \doi$, with the usual measurability
condition in \doi.

For the reader's convenience (and our own), we have 
in \refS{Snotations}
collected some notation used in this paper,
and
in Sections \ref{Smeas}--\ref{Sintegrals}
 preliminaries on 
measurability,
\tp{s} and integration in Banach spaces.
(There are no new results in these sections.)

The main definitions of the moments are given in \refS{Smom}, together
with various results giving sufficient, and sometimes necessary, conditions
for their existence. 
(Some simple examples are given in \refS{Sex}.)
We give simple sufficient conditions that are enough
for many applications, but we also give more precise %(and more  complicated)
results. We try to be
as precise as possible, and therefore we use three different types of
integrability (Bochner, Pettis and Dunford, see \refS{Sintegrals})
in the definition of the projective and injective
moments, leading to six different cases that we
treat in detail. 
(The multitude of cases may be bewildering, and contributes to the length of 
the paper. The reader is recommended to concentrate on separable Banach
spaces and Bochner integrals at the first reading; this is enough for many
applications. However, for \eg{} applications to \doi, this is not enough
which is one motivation for considering also Pettis and Dunford integrals.)

%As just said, \refS{Smom} contain the main definitions and general
%results, and \refS{Sex} some simple examples. 
The \ap{} is defined and used in \refS{Sapprox}.

Sections \ref{SH}--\ref{SDmom}  study special Banach spaces:
Hilbert spaces in \refS{SH};
$L^p(\mu)$ in \refS{SLp};
$C(K)$ (where $K$ is a compact space) in
\refS{SCK} (with emphasis on the separable case, when 
$K$ is metrizable);
$c_0(S)$ in \refS{Sc0} (with emphasis on the non-separable case); 
and finally,
as said above, $\doi$  in 
Sections \ref{SD}--\ref{SDmom}, where
Sections \ref{SD}--\ref{SmeasD} contain various preliminary results on \eg{}
the dual space and maximal ideal space as well as measurability and
separability of \rv{s} in \doi.
In these sections we give many results on existence of moments of the different
types for \rv{s} in these spaces. 

In the final two sections 
we consider the collection of moments of all orders.
\refS{Sunique} shows that, under certain conditions, the moments determine
the distribution,
and \refS{Sconv} treats the problem whether convergence of the moments for a
sequence of \rv{s} implies convergence in distribution. We give both
positive and  negative results.

The appendices discuss two well-known constructions related to moments. 

\refApp{Srepr} describes the construction of a Hilbert space (the
\emph{reproducing Hilbert space}) connected to a $B$-valued \rv; we show
that this is closely related to the injective second moment.

\refApp{Azolo} describes the Zolotarev metrics and their connection to
projective moments; as said above this is a
major motivation for the present paper. 

Throughout we usually try to give as general results as possible.
We also give various   counterexamples showing 
limitations of the results,
especially in the non-separable case; 
these (and many technical remarks) can be skipped at the first reading.
Some open problems are stated explicitly or implicitly.
(As is often the case with integration in Banach spaces, \cf{} \eg{} 
\cite{FremlinT}, the non-separable case is much more complicated than the
separable case and several open problems remain.)

For completeness we include some known results that we need, with or 
without proof; we try to give references in both cases, but omit them for
some results or arguments that we regard as standard. (The absence of a
reference thus does not imply that the result is new.)
We believe that many other results are new.

\begin{ack} 
We thank Joe Diestel, Ralph Neininger, Oleg Reinov, 
Viggo Stoltenberg-Hansen and Henning Sulzbach
for helpful comments.
\end{ack}

\section{Notations}\label{Snotations}

We will use the following standard notations, usually without comment.

$L(B_1,\dots,B_k;B')$ is the space of bounded $k$-linear maps $\mx
B\times\to B'$. 
In particular, with $B'=\bbR$, we have the space of bounded $k$-linear forms.
When $B_1=\dots=B_k=B$ we also write $L(B^k;B')$.

$B\q$ denotes the dual space of the Banach space $B$, 
\ie, the space $L(B\lxx\bbR)$ of bounded linear
functionals $B\to\bbR$. If $x\in B$ and $\xx\in B\q$, we use
the notations $\xx(x)$ and $\innprod{\xx,x}$,
or $\innprod{x,\xx}$,  as synonyms.
We write $\xx\perp B_1$, for a subset $B_1\subseteq B$, if
$\innprod{\xx,x}=0$ for every $x\in B_1$.

We use several standard Banach spaces in our results and examples; for
convenience we recall their definitions here.

For any set $S$ and $p\in[1,\infty)$,
$\ell^p(S)$ is the Banach space of all
functions $S\to\bbR$ such that the norm
$\norm{f}_{\ell^p(S)}\=\lrpar{\sum_{s\in S}|f(s)|^p}^{1/p}$ is finite. 
(We are mainly interested in the cases $p=1,2$.)
Further,
$\ell^\infty(S)$ is the Banach space of all
bounded functions $S\to\bbR$, with the norm
$\norm{f}_{\ell^\infty(S)}\=\sup_{s\in S}|f(s)|$.
We use sometimes the notation $f_s$ for $f(s)$, in particular when $S=\bbN$.

We define the
\emph{support}  of $f$ as $\supp(f)\=\set{s:f(s)\neq0}$, and note that if
$f\in\ell^p(S)$, with $p<\infty$, 
then $\supp(f)$ is   countable even if $S$ is uncountable.

%Furthermore,
$c_0(S)$ is the space of all function $f:S\to\bbR$ such that
\set{s:|f(s)|>\eps} is finite for each $\eps>0$; this is a 
closed subspace of $\ell^\infty$, and is thus a Banach space with the 
inherited norm $\norm{f}_{c_0(S)}\=\norm{f}_{\ell^\infty(S)}\=\sup_{s\in
  S}|f(s)|$.
Note that every element of $c_0(S)$ has countable support.

$e_s$ denotes the function $e_s(t)\=\ett{t=s}$ that is 1 at $s$ and 0
everywhere else (defined for $t$ in some set $S$, which will be clear from the
context). 

Let $\coo(S)$ be the space of all functions $f:S\to\bbR$ with $\supp (f)$
finite; this is the linear span of \set{e_s:s\in S}.
Then $\coo(S)\subseteq c_0(S)\subseteq\ell^\infty(S)$, and $\coo(S)$ is
dense in $c_0(S)$.
Hence $c_0(S)$ is the closed linear span in $\ell^\infty(S)$ of the
functions $\set{e_s:s\in S}$. 
It follows that
\begin{equation*}
  \coo(S)\q= c_0(S)\q=\ell^1(S),
\end{equation*}
with the standard pairing $\innprod{f,g}=\sum_{s\in S}f(s)g(s)$.

When $S=\bbN$, we write just $\ell^p$ and $c_0$.

$L^p(S)$, where $S=(S,\cS,\mu)$ is a measure space and $p\in[1,\infty)$, 
is the space of all
measurable functions $f:S\to\bbR$ such that $\int_S|f|^p<\infty$ (as usual
identifying functions that are equal \aex).

$C(K)$, where $K$ is a compact topological space, is the space of all
continuous functions $f:K\to\bbR$, with the norm $\sup_K|f|$.
(We are particulary interested in \coi.)

For a compact set $K$,  $M(K)$ is the Banach space of all signed
Borel measures on $K$. 
By the Riesz representation theorem,
the dual $C(K)\q$ can be identified with the subspace
$\mr(K)$ of $M(K)$ 
consisting of regular measures; 
see \eg{} \cite[Theorem 7.3.5]{Cohn}, 
\cite[Theorem  IV.6.3]{Dunford-Schwartz} or  \cite[Theorem  III.5.7]{Conway}.
(If \eg{} $K$ is compact and metrizable, 
then every signed measure is regular, so
$C(K)\q=M(K)$, see \cite[Propositions 7.1.12, 7.2.3, 7.3.3]{Cohn}.)

$\doi$ denotes the linear space of functions $\oi\to\bbR$ that are
right-continuous with left limits, see \eg{} \cite[Chapter 3]{Billingsley}.
The norm is $\sup_{\oi}|f|$. See further \refS{SD}.

$\gd_s$ denotes the Dirac measure at $s$, as well as the corresponding point
evaluation $f\mapsto f(s)$ seen as a linear functional on a suitable
function space.

$\ofp$ denotes the underlying probability space where our random variables
are defined; $\go$ denotes an element of $\gO$.
We  assume that $\ofp$ is complete.

We let $\EE$ denote the upper integral of a, possibly non-measurable, 
real-valued function on $\gO$:  
\begin{equation}\label{ee}
\EE Y \=\inf\bigset{\E Z: Z\ge Y \text{ and $Z$ is \meas}}.  
\end{equation}
(If $Y$ is \meas, then $\EE Y =\E Y$.)
In particular, if $X$ is $B$-valued, then $\EE\norm X<\infty$ if and only if
there exists a positive random variable $Z$ with $\norm X\le Z$ and $\E
Z<\infty$. 

The exponent $k$ is, unless otherwise stated, an arbitrary fixed integer
$\ge1$, but the case $k=1$ is often trivial. For applications, $k=2$ is the
most important, and the reader is adviced to primarily think of that case.
%\marginal{Explicit i satser? Inkonsekvent?}

\section{Measurability}\label{Smeas}
 
A $B$-valued \rv{} is a function $X:\gO\to B$ defined on some 
\ps{} $(\gO,\cF,\P)$. (As said above, we  assume that the \ps{} is complete.)
We further want $X$ to be measurable, and there are
several possibilities to consider; we will use the following definitions.

\begin{definition}
Let $X:\gO\to B$ be a function on some probability space $\ofp$
with values in a Banach space $B$.
  \begin{romenumerate}[-10pt]
  \item \label{borel}
$X$ is \emph{Borel measurable} if $X$ is measurable with respect
to the Borel $\gs$-field $\cB$ on $B$, \ie, the $\gs$-field generated by the
open sets. 
  \item \label{weakly}
$X$ is \emph{weakly measurable} if $X$ is measurable with respect
to the $\gs$-field $\bw$ on $B$ generated by the continuous linear functionals,
\ie, if 
$\innprod{\xx,X}$ is measurable for every $\xx\in B\q$.
\item \label{assep}
$X$ is \emph{\assep} if there exists a separable subspace $B_1\subseteq B$
  such that $X\in B_1$ a.s.
\item \label{wassep}
$X$ is \emph{\wassep} if there exists a separable subspace $B_1\subseteq B$
  such that if $\xx\in B\q$ and $\xx\perp B_1$, then $\xx(X)=0$ a.s.
\item \label{bmeas}
$X$ is \emph{\Bmeas} if $X$ is Borel measurable and \assep.
  \end{romenumerate}
\end{definition}

\begin{remark}\label{Rmeas}
$X$ is \Bmeas{} if and only if $X$ is Borel measurable and \emph{tight}, \ie,
  for every $\eps>0$, there exists a compact subset $K\subset B$ such that
  $\P(X\in K)>1-\eps$, see \cite[Theorem 1.3]{Billingsley}.
(This is also called \emph{Radon}.) % \cite{LedouxT}
Some authors use the name \emph{strongly \meas}.
Moreover, $X$ is \Bmeas{} if and only if there exists a sequence $X_n$ of
measurable simple functions $\gO\to B$ such that $X_n\to X$ \as, see
\cite[III.2.10 and III.6.10--14]{Dunford-Schwartz}. 
(This is often taken as the definition of \Bmeas.)
See further \cite[Chapter 2.1]{LedouxT}.

The name \emph{scalarly \meas} is sometimes used for \wmeas.

Some authors (\eg{} \cite{Dunford-Schwartz}) let ``measurable'' mean what we
call \Bmeas{}, 
so care should be taken to avoid misunderstandings.
\end{remark}

If $B$ is separable, there is no problem. Then every $X$ is trivially
\assep; moreover, it is easy to see that $\bw=\cB$.
Hence every weakly measurable $X$ is measurable, and the notions
in \ref{borel}, \ref{weakly} and \ref{bmeas} are equivalent.
This extends immediately to \assep{} \rv{s} in an arbitrary Banach space:

\begin{theorem}[Pettis, {\cite[III.6.10--11]{Dunford-Schwartz}}] \label{Twm}
  If $X$ is \assep, then $X$ is Borel
measurable (and thus \Bmeas) if and only if
  $X$ is weakly measurable.
\nopf
\end{theorem}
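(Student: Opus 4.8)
The plan is to dispatch the trivial implication and then run the classical Pettis argument for the other one. If $X$ is Borel measurable it is automatically \wmeas, since every $\xx\in B\q$ is continuous, hence Borel measurable, so $\innprod{\xx,X}$ is measurable as a composition. All the work is in showing that a \wmeas{} and \assep{} $X$ is Borel measurable.

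First I would reduce to a closed separable subspace. By hypothesis there is a separable subspace $B_1\subseteq B$ with $\P(X\in B_1)=1$; replacing $B_1$ by its closure we may assume $B_1$ is closed. Using completeness of $\ofp$, redefine $X$ to be (say) $0$ on the null set $\set{X\notin B_1}$; this changes neither weak nor Borel measurability, and now $X$ takes all its values in $B_1$. Since $B_1$ is closed we have $B_1\in\cB$, and the Borel $\gs$-field $\cB(B_1)$ of $B_1$ equals $\set{A\cap B_1:A\in\cB}$, hence is contained in $\cB$; so it is enough to prove that $X$ is Borel measurable as a map $\gO\to B_1$.

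The key step is that $\cB(B_1)$ is generated by a \emph{countable} family of continuous linear functionals. Take a countable dense subset $\set{y_n}$ of the unit sphere of $B_1$ and, by Hahn--Banach, pick $\xx_n\in B_1\q$ with $\norm{\xx_n}=1$ and $\innprod{\xx_n,y_n}=1$; a short density estimate then gives $\norm{x}=\sup_n\innprod{\xx_n,x}$ for every $x\in B_1$. Consequently each closed ball is a countable intersection,
\begin{equation*}
  \set{x\in B_1:\norm{x-a}\le r}=\bigcap_{n\ge1}\set{x\in B_1:\innprod{\xx_n,x-a}\le r},
\end{equation*}
and since in the separable space $B_1$ every open set is a countable union of closed balls, $\cB(B_1)$ is generated by the countable family $\set{x\mapsto\innprod{\xx_n,x}:n\ge1}$. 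Each $\xx_n$ extends by Hahn--Banach to an element of $B\q$, so weak measurability of $X$ makes $\innprod{\xx_n,X}$ measurable for every $n$; as these generate $\cB(B_1)$, the map $X:\gO\to B_1$ is Borel measurable, and hence so is $X:\gO\to B$. Being Borel measurable and \assep, $X$ is by definition \Bmeas, which also gives the parenthetical claim.

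I expect the only mildly delicate point to be the bookkeeping in the first reduction — discarding the null set where $X\notin B_1$ and passing to the closed separable subspace — which is precisely where completeness of $\ofp$ is used; the ``countable norming functionals'' computation and the fact that a separable metric space has its Borel $\gs$-field generated by closed balls are both standard.
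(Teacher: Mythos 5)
Your proof is correct. The paper itself states this result without proof, citing Dunford--Schwartz, so there is no internal argument to compare against; what you give is the standard Pettis argument: reduce to a closed separable subspace (using completeness of $\ofp$ to discard the null set), produce a countable norming sequence of functionals via Hahn--Banach so that $\norm{x-a}=\sup_n\innprod{\xx_n,x-a}$ on $B_1$, conclude that the Borel $\gs$-field of $B_1$ is generated by countably many functionals, and hence that weak measurability gives Borel measurability, with \Bmeas{} then immediate from the definition. The only stylistic difference from the classical textbook route is that you establish $\cB(B_1)=\gs(\xx_n)$ directly rather than passing through measurability of $\norm{X-a}$ and approximation by simple functions; both rest on the same norming-functional lemma, and your bookkeeping with the null set and the extension of the $\xx_n$ to $B\q$ is handled correctly.
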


\begin{remark}\label{RUlam} 
There is a converse, assuming some set theory hypotheses.
(See further \refR{Rcardinals}.)
By \cite{MS}
(see also \cite[Appendix III]{Billingsley1}),
if the cardinality of $B$ is
not real-measurable 
then every Borel measurable \rv{} in $B$ is \assep. 
It follows, in particular,  that if 
the continuum hypothesis holds and furthermore
there is no inaccessible cardinal,
then every Borel measurable \rv{} in any Banach space is \assep. 
(These hypotheses are both consistent with the usual ZFC set theory; see
further \refR{Rcardinals}.)
Hence, it can safely be assumed that every Borel measurable \rv{} that will
appear in an application is \assep. In other words, if we want to study
\rv{s} that are not \assep, then we cannot use Borel measurability.
\end{remark}

In view of this, we prefer to use separable Banach spaces, or at least
\assep{} \rv{s}, whenever possible. Many standard Banach spaces are 
separable, for example $L^p\oi$ ($1\le p<\infty$) and $\coi$.
However, the non-separable space $\doi$ 
(see \refS{SD})
is important for applications, and it is easily seen (\refT{TDassep} below)
that a $D$-valued \rv{} $X$ is \assep{} if and only if there 
exists a (non-random) countable set $A\subset\oi$ such that $X$ \as{} has
all its points of discontinuity in $A$. 
In practical applications 
this means that a
random variable $X\in \doi$ is \assep{} if it has jumps at deterministic
places, but not if there are jumps at random places. 

\begin{example}[{See \cite[Section 15]{Billingsley}}]\label{Eetta}
Let $U\sim \U(0,1)$ be a uniformly distributed random
  variable,
and let $X$ be the random element of $\doi$ given by $X=\etta_{[U,1]}$,
\ie{} $X(t)=\ett{U\le t}$. 
(This is the empirical distribution function of $U$, seen as a sample of
size 1.) Since the functions $\etta_{[u,1]}$, $u\in\oi$, all have distance 1
to each other, they form a discrete subset of $\doi$ and it follows that $X$
is not \assep. 
In particular, $X$ is not Bochner measurable.

Moreover, if $A$ is a non-measurable subset of $\oi$,
then the set $\set{\etta_{[u,1]}: u\in A}$ is a closed subset of $\doi$, and
thus a Borel set. 
If we take the probability space $\ofp$
where $U$ and thus $X$ are 
defined to be $\oi$ with Lebesgue measure, and $U(\go)=\go$,
then $X\qw(A)=A\notin\cF$ and,
consequently,
$X$ is not Borel measurable in $\doi$.
(If we assume the continuum hypothesis, $X$ cannot be Borel measurable for
any \ps{} $\ofp$, see \refR{RUlam}.)

On the other hand, $X(t)$ is measurable for each $t$, and 
by \citet{Pestman}, see \refC{C1} and \refT{TDw} below, $X$ is weakly
measurable.

Furthermore, it follows from \cite{Pestman} (or \refC{C1}) also
that if $\xx\in
D\q$ and $\xx\perp \coi$, then  $\xx(\ettau)=0$ for all but countably many
$u$; thus $\xx(X)=0$ \as{} which shows that $X$ is \wassep.
(Cf.\ \refT{TDwassep}.)

Cf.\ also \cite[Example 3-2-2]{Talagrand:Pettis} which studies essentially
the same example but as an element of $L^\infty\oi$.
\end{example}

\begin{remark}
  If $X$ is Borel \meas, then $\norm X$ is \meas, since $x\mapsto\norm x$ is
  continuous. However, if $X$ only is \wmeas, then $\norm X$ is not always
  \meas{} without additional hypotheses. 
(For this reason, we will sometimes use the upper integral $\EE\norm X$.)
If $X$ is \wmeas{} and \assep, then $\norm X$ is \meas, \eg{} by \refT{Twm}.
(In particular, there is no problem when $B$ is separable.)
Furthermore, 
if $B$ has the property that there exists a countable norm-determining set
of linear functionals, then every \wmeas{} $X$ in $B$ has $\norm X$ \meas;
$\doi$ is an example of such a space.
\end{remark}

\begin{remark}
Several other forms of measurability may be considered, 
%but are not used in the present paper,
for example 
using the \emph{Baire} $\gs$-field (generated by the continuous functions
$B\to\bbR$) 
\cite{Talagrand:Pettis}
or the $\gs$-field generated by the closed (or open) balls in $B$
\cite{Dudley66,Dudley67}, \cite{Billingsley}.
Note further that, in general, 
$\bw$ is not the same as  the $\gs$-field generated by the weak topology.
(In fact, $\bw$ equals the Baire \gsf{} for the weak topology \cite{Edgar},
\cite{Talagrand:Pettis}.) 
When $B$ is separable, all these coincide with the Borel $\gs$-field.

See also \cite{Edgar}, \cite{EdgarII},
\cite{LedouxT}
and \cite{Talagrand:Pettis},
where further possibilities are discussed. 
\end{remark}

\section{Tensor products of Banach spaces}\label{Stensor}

We give a summary of the definitions and some properties
of the two main tensor products
of Banach spaces. We refer to \eg{} \citet{Blei} or \citet{Ryan} for further
details. We consider the general case of the tensor product of $k$ different
spaces. The tensor products we consider
(both algebraic and completed) are associative in a natural way; for
example, 
$B_1\tensor B_2\tensor B_3 = (B_1\tensor B_2)\tensor B_3
= B_1\tensor (B_2\tensor B_3)$, 
and the
general case may be reduced to tensor products of two spaces. (Many authors,
including \cite{Ryan}, thus consider only this case.)

\subsection{Algebraic tensor products}
The \emph{algebraic \tp} of a finite sequence of vector spaces $\kkx B$
(over an arbitrary field) can be defined in an abstract 
%(category theoretical) 
way as a vector space $B_1\tensor\dotsm\tensor B_k$ with
a $k$-linear map 
$B_1\times\dotsm\times B_k\to B_1\tensor\dotsm\tensor B_k$, written  
$(x_1,\dots x_k)\to x_1\tensor\dotsm\tensor x_k$, such that if 
$\ga:B_1\times\dotsm\times B_k\to A$ is any $k$-linear map, then there is a
unique linear map $\tga:B_1\tensor\dotsm\tensor B_k\to A$ such that
\begin{equation}
  \label{tp}
\ga(x_1,\dots,x_k)=\tga(x_1\tensor\dotsm\tensor x_k).
\end{equation} 
(All such spaces are naturally isomorphic, so the tensor product is uniquely
defined, up to trivial isomorphisms.)

Several concrete constructions can also be given. One useful construction is
to let $B_i^\sharp$ be the algebraic dual of $B_i$ and define 
$\mx B\tensor$ as a subspace of the linear space of all 
$k$-linear forms on $\mx{B^\sharp}{\times}$; more precisely 
we define
$x_1\tensor\dotsm\tensor x_k$ as the 
$k$-linear form on $\mx{B^\sharp}{\times}$ defined by
\begin{equation}
  \label{onbx}
\mx{x}{\tensor}(\xx_1,\dots,\xx_k)=\xx_1(x_1)\dotsm \xx_k(x_k),
\end{equation}
and then define $\mx B\tensor$  as the linear span of all $\mx x\tensor$.

We can modify this construction by replacing $B_i^\sharp$ by any subspace
that separates the points of $B_i$. In particular, when each $B_i$ is a
Banach space, we can regard $\mx B\tensor$ as a subspace of the space of
$k$-linear forms on $\mx{B\q}\times$.

An element of $\mx B\tensor$  of the form $\mx x\tensor$ is  called an
\emph{elementary tensor}. Note that not
every element of $\mx B\tensor$ is an elementary tensor, but every element
is a finite linear combination of elementary tensors (in a non-unique way).

A sequence of linear operators $T_i:A_i\to B_i$ defines 
a unique linear map $\mx T\tensor:\mx A\tensor\to\mx B\tensor$
such that $\mx T\tensor(\mx x\tensor)=T_1x_1\tensor\dotsm\tensor T_kx_k$
for elementary tensors.

\subsection{Completed tensor products}
When $\kkx B$ are Banach spaces, we can define several different
(non-equivalent) norms on $\mx B\tensor$. For each norm we then can take the
completion of $\mx B\tensor$, obtaining a Banach space.
We consider two cases (the two main cases) of completed tensor products. 

The \emph{projective tensor norm} is defined on $\mx B\tensor$ by
\begin{equation}\label{normproj}
  \normp{u}
\=\inf\Bigpar{\sum_j\norm{x_{1j}}_{B_1}\dotsm \norm{x_{kj}}_{B_k}:
u = \sum_j x_{1j}\tensor\dotsm \tensor x_{kj}},
\end{equation}
taking the infimum over all ways of writing $u$ as a finite sum of
elementary tensors. The corresponding completed tensor product is called the
\emph{projective tensor product} and is written
$\mx B{\ptensor}$.
It is easily seen that every $u\in \mx B\ptensor$ can be written
(non-uniquely) as an absolutely convergent infinite sum of elementary
tensors: for any $\eps>0$,
\begin{equation}\label{proj}
u = \sumj x_{1j}\tensor\dotsm \tensor x_{kj}
\quad\text{with}\quad
\sumj\norm{x_{1j}}_{B_1}\dotsm \norm{x_{kj}}_{B_k}
\le(1+\eps)\normp{u}
<\infty;
\end{equation}
equivalently,
\begin{multline}\label{proj2}
u = \sumj \gl_jx_{1j}\tensor\dotsm \tensor x_{kj}
\quad\text{with}\quad  \norm{x_{ij}}_{B_i}\le1,
\ \gl_j\ge0 \text{ and }
\\
\sumj\gl_j\le(1+\eps)\normp{u}
<\infty.
\end{multline}

The \emph{injective tensor norm} is defined on $\mx B\tensor$ by 
using \eqref{onbx} to regard
$\mx B\tensor$ as a subspace of $L(B_1\q,\dots,B_k\q;\bbR)$, the bounded
$k$-linear forms on $\mx {B\q}\times$, and taking the induced norm, \ie,
\begin{equation}\label{norminj}
  \normi{u}
\=\sup\bigpar{\abs{u(\xx_1,\dots,\xx_k)}: 
\norm{\xx_1}_{B\q_1},\dots,\norm{\xx_k}_{B\q_k}\le1}.
\end{equation}
The corresponding completed tensor product is called the
\emph{injective tensor product} and is written
$\mx B{\itensor}$; note that this is simply the closure of $\mx {B}\tensor$ in
$L(\kkx{B\q};\bbR)$ and thus can be regarded as a closed subspace of
$L(\kkx{B\q};\bbR)$.

For elementary tensors we have
\begin{equation}\label{pik}
  \normp{\mx x\tensor} =   \normi{\mx x\tensor} 
= \norm{x_1}_{B_1}\dotsm  \norm{x_k}_{B_k},
\end{equation}
so the two norms coincide. (Any such norm on $\mx B\tensor$ is called a
\emph{crossnorm}.) 
Moreover, it is easily seen that for any $u\in B_1\tensor\dotsm\tensor B_k$,
$\normi{u}\le\normp{u}$; hence the identity map $u\mapsto u$ extends
to a canonical bounded linear map (of norm 1)
\begin{equation}
  \label{iota}
\iota:B_1\ptensor\dotsm\ptensor B_k \to B_1\itensor\dotsm\itensor B_k
\end{equation}
with $\norm\iota\le1$. Unfortunately, this map is not always injective; we
return to this problem in \refS{Sapprox}, where we shall see that this is
the source of the difference between \eqref{multi=} and \eqref{weak=}.

Consider for simplicity the case $k=2$. As said above, $B_1\itensor B_2$ can
be seen (isometrically) as a subspace of $L(B_1\q,B_2\q;\bbR)$. 
Moreover, an elementary tensor $x_1\tensor x_2$ defines a bounded  linear
operator $B_1\q\to B_2$ by $\xx_1\mapsto \xx_1(x_1)x_2$, and this extends by
linearity to a mapping $B_1\tensor B_2\to L(B_1\q;B_2)$, which is an
isometry for the injective tensor norm \eqref{norminj}.
Hence, this mapping extends 
to an isometric embedding of 
%are natural isometries $L(B_1,B_2;\bbR)\cong L(B_1;B_2\q)\cong
%L(B_2;B_1\q)$; 
$B_1\itensor B_2$ as a subspace of the space $L(B_1\q;B_2)$ of bounded
linear operators $B_1\q\to B_2$. % with the operator norm.
Explicitly, $u\in B_1\itensor B_2$ corresponds to the operator $T_u: B_1\q\to
B_2$ given by
\begin{equation}
  \label{keiller}
\innprod{T_u \xx,\yy}=\innprod{\xx\tensor\yy,u},
\qquad \xx\in B_1\q,\, \yy\in B_2\q.
\end{equation}
(By symmetry, there is also an embedding into $L(B_2\q;B_1)$.
If $u\in B_1\itensor B_2$ corresponds to the operator $T_u: B_1\q\to B_2$, it
also corresponds to $T_u\q:B_2\q\to B_1$.)

When the mapping $\iota$ in \eqref{iota} (with $k=2$) is injective,
we may regard $B_1\ptensor B_2$ as a subspace of $B_1\itensor B_2$ (with a
generally different, larger, norm); hence $B_1\ptensor B_2$ may be regarded as a
certain space of operators $B_1\q\to B_2$ in this case too.

\begin{example}
  \label{Efinite}
Suppose that $B_1,\dots,B_k$ are finite-dimensional, 
and let $\set{e_{ij}}_j$, $j\in \cJ_i$, be a basis of $B_i$ for $i=1,\dots,k$.
It is easily seen that 
the elementary tensors
$e_J\=e_{1j_1}\tensor\dotsm\tensor e_{kj_k}$, with 
$J\=(j_i)_i\in \prodik \cJ_i$, form a basis of
$\mx B\tensor$. 

Hence $\dim(\mx B\tensor)=\prod_{i=1}^k\dim(B_i)<\infty$. In particular, the
tensor product is complete for any norm, and thus
$B_1\ptensor\dotsm\ptensor B_k = B_1\itensor\dotsm\itensor B_k
=B_1\tensor\dotsm\tensor B_k$ as sets (although the norms generally differ).

Using the basis $(e_J)_J$, any element of the \tp{} may be written as $\sum_J
a_Je_J$, where the coordinates $a_J$ are indexed by $J\in \prodik \cJ_i$, so 
it is natural to consider the coordinates as a $k$-dimensional array.
(A matrix in the case $k=2$.)
\end{example}

\begin{example}\label{EHilbert}
Let $H$ be a Hilbert space.
The tensor products $H\ptensor H$ and $H\itensor H$ 
can be identified with the spaces of trace class operators 
%(= nuclear operators) 
and compact operators  on $H$, respectively, see \refT{THtp};
moreover, $H\ptensor H\cong (H\itensor H)\q$.

In this case, another interesting choice of norm on $H\tensor H$ is the
Hilbertian tensor norm, given by the inner product 
$\innprod{x_1\tensor x_2,y_1\tensor y_2} =
\innprod{x_1,y_1}\innprod{x_2,y_2}$; the corresponding completed \tp{}
$H\tensor_2 H$ can be identified with the space of Hilbert--Schmidt operators
on $H$. (We will not use this \tp{} in the present paper.)
\end{example}

If $T_i:A_i\to B_i$ are  bounded linear operators, then $\mx T\tensor$ is
bounded for both the projective norms and the injective norms, and extends
thus to 
bounded linear maps $\mx T\ptensor:\mx A\ptensor\to\mx B\ptensor$
and $\mx T\itensor:\mx A\itensor\to\mx B\itensor$.
We note the following lemma, which we for simplicity state for the case
$k=2$, although the result extends to general $k$.

\begin{lemma}
  \label{Linj}
If\/ $T:A_1\to B_1$ and $U:A_2\to B_2$ are injective linear operators
between Banach spaces, then $T\itensor U:A_1\itensor A_2 \to B_1\itensor
B_2$ is injective.
\end{lemma}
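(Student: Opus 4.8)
The plan is to exploit the description of the injective tensor product $A_1 \itensor A_2$ as a closed subspace of the bounded $k$-linear forms $L(A_1\q, A_2\q; \bbR)$, together with the formula \eqref{keiller} identifying an element with an operator $A_1\q \to A_2$. First I would reduce the problem to the statement that the dual operators $T\q : B_1\q \to A_1\q$ and $U\q : B_2\q \to A_2\q$ have weak$^*$-dense range. Indeed, $T$ is injective if and only if its range is dense in $A_1\q$ for the weak$^*$ topology $\gs(A_1\q, A_1)$: this is a standard duality fact, since the annihilator of $\im T\q$ in $A_1 = (A_1\q)_*$ is exactly $\ker T = \{0\}$, and a subspace is weak$^*$-dense precisely when its annihilator is trivial. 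Likewise $U\q$ has weak$^*$-dense range in $A_2\q$.

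Next I would unwind what $T \itensor U$ does at the level of $k$-linear forms. If $u \in A_1 \itensor A_2 \subseteq L(A_1\q, A_2\q; \bbR)$, then for elementary tensors one checks $(T \itensor U)(x_1 \tensor x_2) = Tx_1 \tensor Ux_2$, and evaluating the corresponding form at $(\yy_1, \yy_2) \in B_1\q \times B_2\q$ gives $\yy_1(Tx_1)\,\yy_2(Ux_2) = (T\q\yy_1)(x_1)\,(U\q\yy_2)(x_2)$. By linearity and continuity this extends to
\begin{equation*}
\bigpar{(T \itensor U)u}(\yy_1, \yy_2) = u\bigpar{T\q\yy_1,\, U\q\yy_2},
\qquad \yy_1 \in B_1\q,\ \yy_2 \in B_2\q,
\end{equation*}
for every $u \in A_1 \itensor A_2$; the point is that both sides are bounded bilinear in $(\yy_1,\yy_2)$ and agree on the dense set of elementary tensors, so this identity is legitimate. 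Now suppose $(T \itensor U)u = 0$. Then $u(T\q\yy_1, U\q\yy_2) = 0$ for all $\yy_1, \yy_2$, i.e.\ the form $u$ vanishes on $\im T\q \times \im U\q$. Since $u$, viewed as a bilinear form on $A_1\q \times A_2\q$, is separately weak$^*$-continuous on bounded sets — that is the content of being in the injective tensor product, as an element of $L(A_1\q, A_2\q;\bbR)$ coming from $A_1 \itensor A_2$ corresponds under \eqref{keiller} to a weak$^*$-to-weak continuous operator — and since $\im T\q$ and $\im U\q$ are weak$^*$-dense, a two-step density argument (fix $\yy_2$, let $T\q\yy_1$ range over a weak$^*$-dense set to conclude $u(\xx_1, U\q\yy_2) = 0$ for all $\xx_1 \in A_1\q$; then let $U\q\yy_2$ range over a weak$^*$-dense set) forces $u(\xx_1, \xx_2) = 0$ for all $\xx_1 \in A_1\q$, $\xx_2 \in A_2\q$. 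Hence $u = 0$ as an element of $L(A_1\q, A_2\q; \bbR)$, and therefore as an element of $A_1 \itensor A_2$.

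The main obstacle is the separate weak$^*$-continuity claim and making the density argument rigorous: an arbitrary element of $L(A_1\q, A_2\q;\bbR)$ need not be separately weak$^*$-continuous, so one must genuinely use that $u$ lies in the \emph{closure} of the algebraic tensor product $A_1 \tensor A_2$. For elementary tensors the separate weak$^*$-continuity is immediate from \eqref{onbx}, and it is preserved under norm limits once one restricts to bounded subsets of $A_1\q$ (where weak$^*$-convergent nets are bounded and the estimate $|u(\xx_1,\xx_2) - u(\xx_1',\xx_2)| \le \norm{u - u_n}_\eps \norm{\xx_1 - \xx_1'} \cdots$ plus the elementary-tensor case does the job); so the cleanest route is to prove a small lemma: every $u \in A_1 \itensor A_2$ is, for each fixed $\xx_2$, the weak$^*$-limit (along nets in bounded sets) preserving map $\xx_1 \mapsto u(\xx_1,\xx_2)$, i.e.\ defines an element of $A_1 = (A_1\q, \text{weak}^*)\q$. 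Granting that, the density argument above goes through verbatim, and the lemma follows. An alternative, perhaps shorter, packaging is to note that the embedding $A_1 \itensor A_2 \hookrightarrow L(A_1\q; A_2)$ from \eqref{keiller} sends $T \itensor U$ to the map $S \mapsto U \circ S \circ T\q$ on operators, and injectivity of $T\q$-precomposition and $U$-postcomposition on this operator space reduces again to weak$^*$-density of $\im T\q$ together with injectivity of $U$; I would present whichever of these two formulations is shorter after checking the details.
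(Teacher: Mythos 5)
Your argument is correct, but it is a genuinely different route from the paper's. The paper factorizes $T\itensor U=(T\itensor I)(I\itensor U)$ and, viewing $A_1\itensor A_2$ as a subspace of $L(A_1\q;A_2)$ (and, by the symmetric form of \eqref{keiller}, $A_1\itensor B_2$ as a subspace of $L(B_2\q;A_1)$), observes that each factor acts by post-composition with the injective operator $U$ resp.\ $T$; injectivity is then immediate, with no adjoints or weak$^*$ topology at all. You instead dualize both operators at once: you use that $T$ injective is equivalent to $\im T\q$ being weak$^*$-dense in $A_1\q$, the identity $\bigpar{(T\itensor U)u}(\yy_1,\yy_2)=u(T\q\yy_1,U\q\yy_2)$, and a two-step density argument on the slices of $u$. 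This works, and its payoff is a symmetric, one-shot treatment of both variables; but note that the key point is exactly the ``small lemma'' you state, namely that for fixed $\xx_2$ the slice $\xx_1\mapsto u(\xx_1,\xx_2)$ lies in $A_1$ (equivalently, the operator $T_u$ associated to $u$ by \eqref{keiller} and its symmetric counterpart take values in $A_2$ resp.\ $A_1$), which gives genuine weak$^*$-continuity of the slices on all of $A_1\q$; the cleanest proof is that the slice has norm at most $\normi{u}\,\norm{\xx_2}$, lies in $A_1$ for algebraic tensors, and $A_1$ is norm-closed in $A_1\qx$. Your intermediate suggestion to work only with weak$^*$-continuity on bounded sets is a detour best avoided: weak$^*$-density of $\im T\q$ does not localize to balls (one would need Krein--Smulian to upgrade bounded-set continuity to full weak$^*$-continuity), so you should use the full continuity of the slices directly. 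Your closing ``alternative packaging'' ($S\mapsto U\circ S\circ T\q$) is halfway to the paper's proof, but the paper's symmetry trick replaces the weak$^*$-density step for $T\q$ by plain post-composition with $T$, which is why its proof is shorter and entirely elementary.
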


\begin{proof} 
Consider first the case when $A_1=B_1$ and $T=I$, the identity operator.
  We can regard $A_1\itensor A_2$ and $A_1\itensor B_2$ as subspaces of
  $L(A_1\q\lxx A_2)$ and $L(A_1\q\lxx B_2)$ , and then $I\itensor U$ is the mapping 
  $L(A_1\q\lxx A_2)\to L(A_1\q\lxx B_2)$ given by $S\mapsto US$, which is injective
  when $U$ is. 

In general we factorize 
$T\itensor U =  (T\itensor I)(I\itensor U)$ and note that both factors
$T\itensor I: A_1\itensor B_2\to A_2\itensor B_2$
and 
$I\tensor U: A_1\itensor B_1\to A_1\itensor B_2$ are injective by the first
case and symmetry.
\end{proof}

\begin{remark}
  \label{Rinjinj} 
The same proof shows that if $T$ and $U$ are isometric embeddings, then so
is $T\itensor U$. (In other words, the injective tensor product is injective
\cite[Section 6.1]{Ryan}.)
\end{remark}

\begin{remark}\label{Rprojinj}
  \refL{Linj} is in general \emph{not} true for the projective tensor
  product,
see \refE{Exq} and \refR{Rxq}.

The projective tensor product has instead the dual property that if $T$ and
$U$ are quotient mappings (\ie, onto), then so is $T\ptensor U$. 
\end{remark}

When considering the dual space of a completed tensor product, note that 
$\mx B\tensor$ is a dense subspace (by definition); hence a continuous linear
functional is the same as a linear functional on $\mx B\tensor$ that is bounded
for the chosen tensor norm. Furthermore, each such linear functional can by
\eqref{tp} be identified with a $k$-linear form on $\mx B\times$.
For the projective tensor product, the definition \eqref{normproj} of the norm 
implies that all bounded $k$-linear forms yield bounded linear functionals.

\begin{theorem}[{\cite[Theorem 2.9]{Ryan}}]\label{Tproj*}
  The dual $(\mx B\ptensor)\q$ consists of all bounded $k$-linear forms
$\mx B\times\to\bbR$, with the natural pairing
$\innprod{\ga,\mx x\tensor}=\ga(x_1,\dots,x_k)$.
\nopf
\end{theorem}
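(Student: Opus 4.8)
The plan is to combine the universal property of the algebraic tensor product with the defining formula for the projective norm, and then to check that the resulting bijection is isometric.

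First I would invoke \eqref{tp}: a bounded $k$-linear form $\ga\in L(\mx B\times;\bbR)$ is the restriction to elementary tensors of a unique linear functional $\tga$ on the algebraic tensor product $\mx B\tensor$, with $\tga(\mx x\tensor)=\ga(\kkx x)$, and conversely every linear functional on $\mx B\tensor$ arises in this way from a (not necessarily bounded) $k$-linear form. This is exactly the pairing asserted in the statement. Since $\mx B\tensor$ is, by construction, a dense subspace of $\mx B\ptensor$, a continuous linear functional on $\mx B\ptensor$ is the same as a linear functional on $\mx B\tensor$ that is bounded for $\normp\cdot$, and any such functional extends uniquely to $\mx B\ptensor$ with the same norm. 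So it suffices to show that, under $\ga\leftrightarrow\tga$, the functional $\tga$ is bounded for $\normp\cdot$ if and only if $\ga$ is a bounded $k$-linear form, and that the two norms agree.

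For the direction where $\ga$ is assumed bounded, write $M=\norm\ga$, so that $|\ga(\kkx x)|\le M\norm{x_1}_{B_1}\dotsm\norm{x_k}_{B_k}$ for all $x_i$. Given $u\in\mx B\tensor$ and any finite representation $u=\sum_j x_{1j}\tensor\dotsm\tensor x_{kj}$, linearity gives $\tga(u)=\sum_j\ga(x_{1j},\dots,x_{kj})$, hence $|\tga(u)|\le M\sum_j\norm{x_{1j}}_{B_1}\dotsm\norm{x_{kj}}_{B_k}$; taking the infimum over all such representations, the definition \eqref{normproj} of $\normp\cdot$ gives $|\tga(u)|\le M\normp u$, so $\norm{\tga}\le M=\norm\ga$. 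Conversely, if $\tga$ is bounded for $\normp\cdot$ with norm $N$, then evaluating on elementary tensors and using \eqref{pik} (which says $\normp{\mx x\tensor}=\norm{x_1}_{B_1}\dotsm\norm{x_k}_{B_k}$), we obtain $|\ga(\kkx x)|=|\tga(\mx x\tensor)|\le N\norm{x_1}_{B_1}\dotsm\norm{x_k}_{B_k}$, so $\ga$ is a bounded $k$-linear form with $\norm\ga\le N=\norm{\tga}$. Combining the two estimates gives $\norm\ga=\norm{\tga}$, so $\ga\mapsto\tga$ is an isometric isomorphism of $L(\mx B\times;\bbR)$ onto $(\mx B\ptensor)\q$, which is the assertion.

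I do not expect a genuine obstacle here: the proof is a routine unwinding of the definitions. The two points that must be stated with a little care are the density of $\mx B\tensor$ in its completion $\mx B\ptensor$ together with the standard extension principle for bounded functionals on a dense subspace (both immediate from the construction of the completion), and the crossnorm property \eqref{pik}, which is precisely what forces the reverse inequality $\norm\ga\le\norm{\tga}$ and hence the isometry.
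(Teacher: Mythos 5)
Your proof is correct; the paper itself states this result without proof, citing \cite[Theorem 2.9]{Ryan}, and your argument is precisely the standard one given there: the universal property \eqref{tp}, the estimate over representations coming straight from \eqref{normproj}, the crossnorm bound \eqref{pik} on elementary tensors for the converse, and extension by density to the completion. Note that only the easy inequality $\normp{\mx x\tensor}\le\norm{x_1}\dotsm\norm{x_k}$ is needed for the converse direction, so your appeal to \eqref{pik} is more than sufficient.
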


For $\mx B\itensor$, the dual space consists of a subset of all bounded
$k$-linear forms; these forms are called \emph{integral}, and can be
described as follows. Let $K_i$ be the closed unit ball of $B_i\q$ with the
weak${}\q$ topology; thus $K_i$ is a compact space.

\begin{theorem} [{\cite[Proposition 3.14]{Ryan}}]\label{Tinj*}
If  $\chi\in(\mx B\itensor)\q$, then there exists a (non-unique) signed
measure $\mu\in M(\mx K\times)$, 
with $\norm{\mu}=\norm{\chi}$, 
such that
  \begin{equation}\label{tinj*}
\chi(\mx x\tensor)
=\int_{\mx K\times} \xx_1(x_1)\dotsm\xx_k(x_k)\dd\mu(\xx_1,\dots,\xx_k).	
  \end{equation}
Conversely, \eqref{tinj*} defines a bounded linear functional for every 
signed measure $\mu\in M(\mx K\times)$.
\end{theorem}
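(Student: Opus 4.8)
The plan is to realise $\mx B\itensor$ as a closed subspace of $C(K)$ where $K\=\mx K\times$, and then to combine the Hahn--Banach theorem with the Riesz representation theorem for $C(K)\q$. For the embedding, note first that by Banach--Alaoglu each $K_i$ is weak${}\q$-compact, so by Tychonoff's theorem $K$ is compact. For $x_i\in B_i$ the function $\widehat{\mx x\tensor}:(\xx_1,\dots,\xx_k)\mapsto\xx_1(x_1)\dotsm\xx_k(x_k)$ is weak${}\q$-continuous on $K$ (each factor is, and products of continuous functions are continuous), hence lies in $C(K)$; extending linearly, $u\in\mx B\tensor$ is sent to $\hat u\in C(K)$ with $\hat u(\xx_1,\dots,\xx_k)=u(\xx_1,\dots,\xx_k)$ in the notation of \eqref{onbx}. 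By the definition \eqref{norminj} of the injective norm, $\norm{\hat u}_{C(K)}=\sup\set{\abs{u(\xx_1,\dots,\xx_k)}:\norm{\xx_i}\le1}=\normi u$, so $u\mapsto\hat u$ is a linear isometry of $(\mx B\tensor,\normi\cdot)$ into $C(K)$, and it extends to an isometric embedding of $\mx B\itensor$ onto the closure in $C(K)$ of $\set{\hat u:u\in\mx B\tensor}$.

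For the forward direction, given $\chi\in(\mx B\itensor)\q$ we may, via this isometry, regard $\chi$ as a bounded linear functional of the same norm on a closed subspace of $C(K)$, and extend it by Hahn--Banach to $\tilde\chi\in C(K)\q$ with $\norm{\tilde\chi}=\norm\chi$. By the Riesz representation theorem ($C(K)\q=\mr(K)\subseteq M(K)$; see the references in \refS{Snotations}) there is a regular signed measure $\mu\in M(K)$ with $\norm\mu=\norm{\tilde\chi}=\norm\chi$ and $\tilde\chi(f)=\int_K f\dd\mu$ for every $f\in C(K)$. Taking $f=\widehat{\mx x\tensor}$ gives exactly \eqref{tinj*}.

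Conversely, given any $\mu\in M(\mx K\times)$, define $\chi$ on $\mx B\tensor$ by $\chi(u)\=\int_K\hat u\dd\mu$, that is, by \eqref{tinj*} extended linearly. Since $\hat u$ is continuous, $\abs{\chi(u)}\le\norm{\hat u}_{C(K)}\norm\mu=\normi u\,\norm\mu$, so $\chi$ is bounded for the injective norm and extends to an element of $(\mx B\itensor)\q$ (with $\norm\chi\le\norm\mu$). Since elementary tensors span a dense subspace, $\chi$ is determined by \eqref{tinj*}; the non-uniqueness resides entirely in $\mu$, reflecting that the embedding above is not onto, so that Hahn--Banach extensions---and hence the representing measures---are not unique. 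I do not anticipate any serious obstacle: the only points needing care are the compactness of $K$ (Banach--Alaoglu and Tychonoff) and the identity $\norm{\hat u}_{C(K)}=\normi u$, which is immediate from \eqref{norminj}; one should also observe that although the statement permits an arbitrary Borel measure $\mu$, the one produced in the forward direction is automatically regular.
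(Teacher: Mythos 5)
Your proof is correct and follows essentially the same route as the paper: regard $\mx B\itensor$ isometrically as a subspace of $C(\mx K\times)$ via \eqref{onbx} and \eqref{norminj}, then apply the Hahn--Banach theorem followed by the Riesz representation theorem, with the converse direction being the easy norm estimate. The only difference is that you spell out the weak${}\q$-continuity and compactness details that the paper leaves implicit.
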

\begin{proof}
  We have defined an embedding of $\mx B\tensor$ in $L(B_1\q,\dots,B_k\q;\bbR)$
by \eqref{onbx}, and taking the restriction of the operators to $\mx
K\times$ gives a linear map into $C(\mx K\times)$, which by the definition
\eqref{norminj} of the injective tensor norm is an isometry.
Hence we can regard $\mx B\itensor$ as a subspace of $C(\mx K\times)$, and
the result follows by the Hahn--Banach theorem together with the Riesz
representation theorem.
\end{proof}

An elementary tensor $\mx{\xx}\tensor\in\mx{B\q}\tensor$ defines a 
 $k$-linear form on $\mx B\times$ by
$(x_1,\dots,x_k)\mapsto\xx_1(x_1)\dotsm\xx_k(x_k)$, and thus a linear functional
on $\mx B\tensor$ by 
\begin{equation}\label{*tp}
\innprod{\mx{\xx}\tensor,\mx x\tensor}=\xx_1(x_1)\dotsm\xx_k(x_k).
\end{equation}
By the definitions above, see \eqref{norminj} and \eqref{onbx},
this linear functional extends to a linear functional on  $\mx B\itensor$ of
norm $\norm{\xx_1}\dotsm\norm{\xx_k}$, and it 
follows by \eqref{normproj}, 
linearity and continuity that every tensor $u\q\in\mx{B\q}\ptensor$
defines a linear functional on $\mx B\itensor$ with norm at most
$\norm{u\q}_\pi$, 
\ie, an integral form on $\mx B\times$. 
In fact, by \eqref{proj}, the forms  obtained in this way are exactly the 
integral forms such that there is a representation \eqref{tinj*} with a
discrete measure $\mu$.
These forms are called
\emph{nuclear forms} on $\mx B\times$.
%obtained in this way are called \emph{nuclear forms} on $\mx B\times$.
By \eqref{proj2}, every nuclear form can be written as 
\begin{equation}\label{proj2*}
\chi = \sumj \gl_j\xx_{1j}\tensor\dotsm \tensor \xx_{kj}
\end{equation}
with $\gl_j\ge0$, $\sumj|\gl_j|<\infty$ and $\norm{\xx_{ij}}\le1$.

\begin{remark}\label{Rnuclear}
Let $\cI(B_1,\dots,B_k)$ and $\cN(B_1,\dots,B_k)$ be the spaces of 
integral and nuclear forms, respectively, on $\mx B\times$. Both
$\cI(B_1,\dots,B_k)$ and $\cN(B_1,\dots,B_k)$ are Banach spaces
(with the natural norms suggested by their definitions, see \cite{Ryan}),
and there
is an inclusion $\cN(B_1,\dots,B_k)\subset\cI(B_1,\dots,B_k)$ with the
inclusion  mapping having norm at most 1. Furthermore, there is a quotient
mapping $\mx{B\q}\ptensor\to\cN(B_1,\dots,B_k)$. In general, none of these
maps is an isomorphism, but there are important cases when one or both are.
\end{remark}

\begin{remark}
  \label{Rnucops}
When $k=2$ (bilinear forms), there are corresponding notions for linear
operators.

We say that an operator $T:B_1\to B_2$ is \emph{integral}
if the corresponding bilinear form on $B_1\times B_2\q$ given by 
$(x,\yy)\mapsto \innprod{Tx,\yy}$ is integral.  
Conversely, a bilinear form $\ga:B_1\times B_2\to\bbR$ is integral if and
only if the corresponding operator $T:B_1\to B_2\q$ given by 
$\innprod{Tx,y}=\ga(x,y)$ is integral.
(When $B_2$ is reflexive, these are obviously equivalent. In general, see
\cite[Proposition 3.22]{Ryan}.)

Similarly, an operator $T:B_1\to B_2$ is \emph{nuclear} if
$\innprod{Tx,\yy}=\innprod{u,x\tensor\yy}$ for some $u\in B_1\q\ptensor
B_2$. 
When $B_2$ is reflexive, this says precisely that 
corresponding bilinear form on $B_1\times B_2\q$ given by 
$(x,\yy)\mapsto \innprod{Tx,\yy}$ is nuclear.
\end{remark}

\section{Vector-valued integration}\label{Sintegrals}

We summarize the definitions of the main types of vector-valued integrals,
see \eg{} \cite{Dunford-Schwartz},
\cite{DiestelUhl},
\cite{Talagrand:Pettis} and \cite{Ryan} for details.
In order to conform to the rest of this paper, we use probabilistic
language and consider the expectation of a $B$-valued random variable $X$.
%which is a function 
%$(\gO,\cF,\P)\to B$, where $(\gO,\cF,\P)$ is a complete probability space.
(The definitions and results extend to integrals of Banach space valued
functions defined on arbitrary measure spaces with only notational changes.)

The \emph{Bochner integral} is a straightforward generalization of the
Lebesgue integral to Banach space valued functions.
We have the following characterization 
\cite[III.2.22 and III.6.9]{Dunford-Schwartz}, \cf{} \refT{Twm}. 
\begin{theorem}\label{TBochner}
A random variable $X:(\gO,\cF,\P)\to B$ is Bochner integrable if and
  only if $X$ is \Bmeas{} and 
$\E \norm X<\infty$.
\nopf
\end{theorem}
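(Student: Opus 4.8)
The plan is to argue directly from the definition of the Bochner integral as a limit of integrals of simple functions. Recall that $X$ is \emph{Bochner integrable} if there is a sequence $(s_n)$ of \meas{} simple functions $\gO\to B$ with $\E\norm{X-s_n}\to0$; since each $s_n$ takes only finitely many values, $\E\norm{s_n}<\infty$ automatically, and one sets $\int X\dd\P\=\lim_n\int s_n\dd\P$, the limit existing because $\bignorm{\int s_n\dd\P-\int s_m\dd\P}\le\E\norm{s_n-s_m}\le\E\norm{X-s_n}+\E\norm{X-s_m}\to0$ and $B$ is complete. For the forward implication, take such an approximating sequence and pass to a subsequence with $\E\norm{X-s_n}\le2^{-n}$; then $\sum_n\E\norm{X-s_n}<\infty$, so $\norm{X-s_n}\to0$ a.s.\ and hence $s_n\to X$ a.s. By the description of Bochner measurability recalled in \refR{Rmeas} (a.s.\ limit of \meas{} simple functions), this shows $X$ is \Bmeas; moreover $\E\norm X\le\E\norm{X-s_1}+\E\norm{s_1}<\infty$.

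For the converse, assume $X$ is \Bmeas{} and $\E\norm X<\infty$. By \refR{Rmeas} there is a sequence of \meas{} simple functions $t_n\to X$ a.s. Since $X$ is in particular Borel \meas{} and $x\mapsto\norm x$ is continuous, $\norm X$ is \meas, as is each (simple) function $\norm{t_n}$; hence $E_n\=\set{\norm{t_n}\le2\norm X}\in\cF$, and $s_n\=t_n\ett{E_n}$ is again a \meas{} simple function. One checks $s_n\to X$ a.s.: on $\set{\norm X>0}$, if $t_n(\go)\to X(\go)$ then $\norm{t_n(\go)}\to\norm{X(\go)}<2\norm{X(\go)}$, so eventually $\go\in E_n$ and $s_n(\go)=t_n(\go)\to X(\go)$; on $\set{\norm X=0}$ one has $X(\go)=0$ and $E_n=\set{t_n=0}$, so $s_n(\go)=0=X(\go)$. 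By construction $\norm{s_n}\le2\norm X$, whence $\norm{X-s_n}\le3\norm X\in L^1$ and $\norm{X-s_n}\to0$ a.s.; the dominated convergence theorem gives $\E\norm{X-s_n}\to0$, so $X$ is Bochner integrable.

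There is no real obstacle here — this is a classical result — but the one place requiring a little care is the truncation step, which converts an a.s.-approximating sequence of simple functions into one that approximates in $L^1$; this is where $\E\norm X<\infty$ is used, via dominated convergence. The other point worth flagging is that it is \Bmeas{}ity (not merely weak measurability) that makes $\norm X$ \meas, so that the truncation sets $E_n$ are in $\cF$ and the dominated convergence argument even makes sense — the same phenomenon noted in the remark following \refT{Twm}. One could additionally remark that $\int X\dd\P$ does not depend on the choice of approximating sequence (a standard interleaving argument), but that is not needed for the stated equivalence.
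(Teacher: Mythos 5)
Your proof is correct. Note that the paper offers no proof of this statement at all: it is quoted as a known characterization from Dunford--Schwartz [III.2.22 and III.6.9], and your argument (a.s.\ convergence of a subsequence of simple approximants for the forward direction; truncation at $2\norm X$ plus dominated convergence for the converse) is precisely the classical proof found in the standard references, so there is nothing to reconcile with the paper.

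One small point of convention: you define Bochner integrability as the existence of \meas{} simple $s_n$ with $\E\norm{X-s_n}\to0$ and nothing more. For that expression to be meaningful one must either build the measurability of $\norm{X-s_n}$ into the definition (or use the upper integral $\EE$), or use the more common formulation that also requires $s_n\to X$ a.s., in which case \Bmeas ity of $X$ in the forward direction is immediate from \refR{Rmeas} and your subsequence step is not needed. As written, your subsequence argument recovers the a.s.\ convergence correctly, so the proof goes through under either convention; the converse, where $\E\norm X<\infty$ enters via the domination $\norm{X-s_n}\le3\norm X$, is exactly right.
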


As discussed in \refS{Smeas}, \Bmeas{} variables are not enough for all
applications. 
For a more general integral, suppose only that $\xx(X)$ is integrable for
every $\xx\in B\q$. (In particular, $X$ is weakly measurable.)
Then the linear map 
\begin{equation}
  \label{tx}
\dunx:\xx\mapsto \xx(X)
\end{equation}
 maps $B\q$ into
$L^1(\P)$ and by the closed graph theorem, this map is bounded. Hence
$\xx\mapsto\E\xx(X)=\int\xx(X)\dd\P$ is a bounded linear map $B\q\to\bbR$,
\ie, an 
element of $B\qx$. This element is called the \emph{Dunford integral} of
$X$.
We can write the definition as
\begin{equation}\label{dunford}
\innprod{ \E X,\xx} = \E\innprod{\xx,X},
\end{equation}
noting that in general $\E X\in B\qx$.

In the special case that the Dunford integral $\E X\in B$, and moreover 
(see \refR{Rpettis})
$\E(X\etta_E)\in B$ for every event $E$
(\ie, every  measurable set $E\subseteq\gO$), 
we say that $\E X$ is the \emph{Pettis integral} of $X$. 
Note that the Pettis integral by definition is an element of $B$. 
If $X$ is Bochner integrable then it is Pettis integrable (and
Dunford integrable) and the
integrals coincide, but the converse does not hold.
The examples below show that a Pettis integrable function may fail to be
Bochner integrable because of either integrability or measurability problems.
(\citet[Chapter 5]{Talagrand:Pettis} and  
\cite{Stefansson} give characterisations of
Pettis integrability, 
but they are not always easy to apply; it seems that 
there is no simple necessary and sufficient condition.
See further \eg{}
\cite{EdgarII} and  \cite{Huff}.)

\begin{remark}\label{Rdual}
  If $B$ is reflexive, \ie{} $B\qx=B$, then every Dunford integrable
  function is trivially Pettis integrable (and conversely).
However, we will not find much use of this, since we will take integrals in
tensor products $B\ptpk$ or $B\itpk$, see \refS{Smom}, and such
tensor products are typically not reflexive,
even if $B$ is reflexive, see for example \refT{THtp}, \refE{EPD1i}
and, more generally,
\cite[Section 4.2]{Ryan}.
\end{remark}

\begin{remark}\label{Rpettis}
Suppose that $X$ is Dunford integrable. 
Then $\gx X$ is Dunford integrable  
for every bounded random variable $\gx$, \ie,
for every  $\gx\in L^\infty(\P)$.
Moreover, the adjoint of the bounded linear map $\dunx:B\q\to L^1(\P)$ given by
\eqref{tx} %$\xx\mapsto \xx(X)$ 
is the map $\dunx\q:L^\infty(\P)\to B\qx$ given by 
%$\gx\mapsto \E(\gx X)$.
\begin{equation}\label{txq}
\Txq \gx= \E(\gx X).
\end{equation}

By definition, $X$ is Pettis integrable  if $\E(X\etta_E)\in B$,
\ie, if $\dunx\q(\etta_E)\in B$, for every measurable set $E$; 
since the simple functions are dense in $L^\infty(\P)$, 
this is
equivalent to $\dunx\q(\gx)=\E(\gx X)\in B$ for every $\gx\in
L^\infty(\P)$. Hence, $X$ is 
Pettis integrable if and only if $\dunx\q: L^\infty(\P)\to B$.
\end{remark}

\begin{remark}
  It follows that if $X$ is Dunford integrable, then $X$ is Pettis
  integrable if and only if $\dunx:B\q\to L^1(\P)$ is weak$^*$-weak continuous.
(If $X$ is bounded, then $\dunx$ is always \emph{sequentially}
 weak$^*$-weak continuous by dominated convergence; this is not enough,
 as shown by Examples \refand{Eloo}{Egoi}.)
\end{remark}

\begin{remark} \label{Rpettisofp}
  It follows easily from \eqref{dunford} that if $X$ is Dunford integrable
  and $\gx\in L^\infty(\P)$, then $\E(\gx X)=\E (\gx_1 X)$ where
  $\gx_1\=\E(\gx\mid \cF_X)$,
where $\cF_X$ is the sub-$\gs$-field of $\cF$ generated by all
$\innprod{\xx,X}$, $\xx\in B\q$. 
Hence, $X$ is Pettis integrable if and only if $\E(\gx_1X)\in B$ for every
$\cF_X$-\meas{} $\gx_1$.
Each such $\gx_1$ is a Borel function of a
countable family $(\innprod{\xx_i,X})_i$. 
It follows that the question whether $X$ is Pettis integrable or not depends
only on the distribution of $X$ 
(or equivalently, the joint distribution of $\innprod{\xx,X}$, $\xx\in B\q$)
and not on the underlying probability space $\ofp$.
\end{remark}

\begin{remark}
  \label{RBB}
If $X$ is a $B$-valued random variable and $B$ is a closed subspace of
another Banach space
$B_1$, then $X$ can also be seen as a $B_1$-valued random variable.
It is easily verified (using the Hahn--Banach theorem) that $X$ is Bochner,
Dunford or Pettis integrable as a $B$-valued \rv{} if and only if it is so
as a $B_1$-valued \rv, and the expectations $\E X$ in $B$ and $B_1$ coincide
in all cases.
\end{remark}

By the definition (and discussion) above, 
$X$ is Dunford integrable if and only if $\xx\mapsto \xx(X)$ is a
bounded linear operator $B\q\to L^1(\P)$.
If $X$ is Pettis integrable, then furthermore this operator is weakly
compact, \ie, it maps the unit ball into a relatively weakly compact subset
of $L^1(\P)$, see \cite{Talagrand:Pettis},
\cite{Huff} or \cite[Proposition 3.7]{Ryan}. 
A subset of $L^1(\P)$ is relatively weakly compact if and only if it
is uniformly integrable, see 
\cite[Corollary IV.8.11 and Theorem V.6.1]{Dunford-Schwartz}, 
where we recall  that a family $\set{\xi_\ga}$ of \rv{s} is \ui{} if 
\begin{equation}\label{ui}
  \begin{cases}
\sup_\ga\E|\xi_\ga|<\infty\quad \text{and}\\	
\sup_\ga\E\bigpar{\ettae|\xi_\ga|}\to0 \quad\text{as } \P(E)\to0,  
  \end{cases}
\end{equation}
see \eg{} \cite[Section 5.4]{Gut} or \cite[Lemma 4.10]{Kallenberg}.
This yields the following necessary condition.

\begin{theorem}
  \label{Tui}
If\/ $X$ is Pettis integrable, then
the family
\set{\xx(X):\xx\in B\q,\,\norm{\xx}\le1}
of (real-valued) \rv{s}
is uniformly integrable.
\nopf
\end{theorem}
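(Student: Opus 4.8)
The plan is to recognise the family $\set{\xx(X):\xx\in B\q,\ \norm\xx\le1}$ as the image of the closed unit ball of $B\q$ under the operator $\dunx:\xx\mapsto\xx(X)$ of \eqref{tx}, to use Pettis integrability to show that this operator is weakly compact into $L^1(\P)$, and then to quote the Dunford--Pettis characterisation of relative weak compactness in $L^1$ recalled just above (\cite[Corollary IV.8.11 and Theorem V.6.1]{Dunford-Schwartz}, \eqref{ui}).

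First I would observe that, since $X$ is Dunford integrable, $\dunx$ is a bounded linear operator $B\q\to L^1(\P)$ (closed graph theorem, as recalled before \eqref{tx}), and that the set $\set{\xx(X):\xx\in B\q,\ \norm\xx\le1}$ is precisely the image under $\dunx$ of the closed unit ball of $B\q$. By the Dunford--Pettis theorem, a norm-bounded subset of $L^1(\P)$ is uniformly integrable in the sense of \eqref{ui} if and only if it is relatively weakly compact. So it remains to show that $\dunx$ maps the unit ball of $B\q$ into a relatively weakly compact subset of $L^1(\P)$, \ie, that $\dunx$ is a weakly compact operator.

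For this I would pass to the adjoint. By \refR{Rpettis} and \eqref{txq}, the adjoint of $\dunx$ is the map $\dunx\q:L^\infty(\P)\to B\qx$ given by $\dunx\q\gx=\E(\gx X)$ (here $L^\infty(\P)$ is the dual of $L^1(\P)$), and Pettis integrability of $X$ says exactly that $\dunx\q$ takes its values in $B$. Now $\dunx\q$, being an adjoint, is weak$^*$-to-weak$^*$ continuous into $B\qx$; and on the subspace $B\subseteq B\qx$ the weak$^*$ topology $\gs(B\qx,B\q)$ restricts to the weak topology $\gs(B,B\q)$. Hence $\dunx\q:L^\infty(\P)\to B$ is continuous from the weak$^*$ topology on $L^\infty(\P)$ to the weak topology on $B$. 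Since the closed unit ball of $L^\infty(\P)$ is weak$^*$ compact (Banach--Alaoglu), its image under $\dunx\q$ is weakly compact in $B$; thus $\dunx\q$ is a weakly compact operator, and therefore so is $\dunx$ itself (Gantmacher's theorem), which gives the required relative weak compactness. Alternatively, one may simply cite \cite{Talagrand:Pettis}, \cite{Huff} or \cite[Proposition 3.7]{Ryan} for the fact that a Pettis integral induces a weakly compact operator, or deduce relative weak compactness from the Bartle--Dunford--Schwartz theorem applied to the countably additive vector measure $E\mapsto\E(X\etta_E)$.

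The substantive point is this third step: every Dunford integrable $X$ already yields a bounded operator $\dunx$, and the extra content of Pettis integrability is precisely the condition $\E(\gx X)\in B$ for all $\gx\in L^\infty(\P)$, which is exactly what is needed to turn $\dunx\q$ into a weak$^*$-to-weak continuous (hence, via Banach--Alaoglu, weakly compact) operator. The first and last steps are routine, and the named theorems invoked (Dunford--Pettis, Banach--Alaoglu, Gantmacher) are classical; I do not expect any further obstacle.
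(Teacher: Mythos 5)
Your proposal is correct and follows essentially the same route as the paper: the paper's (uncited) proof is precisely the discussion preceding the theorem, namely that Pettis integrability makes $\dunx:B\q\to L^1(\P)$ weakly compact (there justified by citing Talagrand, Huff, or Ryan) combined with the Dunford--Pettis equivalence of relative weak compactness and uniform integrability in $L^1(\P)$. The only difference is that you supply a self-contained proof of the weak-compactness step via the adjoint $\dunx\q$, Alaoglu and Gantmacher, and that argument is sound.
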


The converse does not hold, see Examples \ref{Eloo} and \ref{Egoi}, but 
\citet{Huff} has shown that it holds if $X$ is \wassep.
(In particular, the converse holds when $B$ is separable.)

\begin{theorem}[\citet{Huff}] \label{THuff}
If \set{\xx(X):\xx\in B\q,\,\norm{\xx}\le1} 
is uniformly integrable and
$X$ is \wassep, then $X$ is Pettis integrable.
\nopf
\end{theorem}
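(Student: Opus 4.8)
The plan is to verify the defining condition of Pettis integrability: that $\dunx\q\etta_E=\E(X\etta_E)$ lies in $B$ for every event $E$ (equivalently $\dunx\q\gx\in B$ for every $\gx\in L^\infty(\P)$, by density of the simple functions, \cf{} \refR{Rpettis}). Uniform integrability gives $\sup_{\norm\xx\le1}\E\abs{\xx(X)}<\infty$, so $X$ is Dunford integrable, the operator $\dunx:\xx\mapsto\xx(X)$ from $B\q$ to $L^1(\P)$ is bounded, and $\dunx\q\gx=\E(\gx X)$; moreover, by the characterisation of relative weak compactness in $L^1$ recalled before \refT{Tui}, the set $\set{\dunx\xx:\norm\xx\le1}$ is relatively weakly compact, \ie{} $\dunx$ is weakly compact.

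Next I would use \wassepy{} to cut down to a separable space. Fix a separable closed subspace $B_1\subseteq B$ with $\xx(X)=0$ \as{} whenever $\xx\perp B_1$. Then $\dunx$ annihilates $B_1^\perp\=\set{\xx\in B\q:\xx\perp B_1}$, hence factors as $\dunx=S\circ R$, where $R:B\q\to B\q/B_1^\perp$ is the quotient map and $S$ is bounded. Identifying $B\q/B_1^\perp$ isometrically with $B_1\q$ (so that $R$ becomes restriction), we have $S:B_1\q\to L^1(\P)$ with $S(\xx|_{B_1})=\xx(X)$ in $L^1(\P)$ for every $\xx\in B\q$. Using Hahn--Banach extensions one sees $\set{S\yy:\norm\yy\le1}\subseteq\set{\xx(X):\norm\xx\le1}$, so this set is uniformly integrable too. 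On the other hand $\innprod{\dunx\q\etta_E,\xx}=\E(\etta_E\xx(X))=0$ for every $\xx\in B_1^\perp$, so $\dunx\q\etta_E$ annihilates $B_1^\perp$, \ie{} lies in $(B_1^\perp)^\perp\subseteq B\qx$; since a member of $B$ that annihilates $B_1^\perp$ lies in $B_1$ (Hahn--Banach), and since $\dunx\q=R\q\circ S\q$ with $R\q$ an isometric embedding of $B_1\qx=(B\q/B_1^\perp)\q$ onto $(B_1^\perp)^\perp$, we have $\dunx\q\etta_E\in B\iff S\q\etta_E\in B_1$. Thus the problem reduces to showing $S\q(L^\infty(\P))\subseteq B_1$, equivalently that $S:B_1\q\to L^1(\P)$ is \weakx-to-weak continuous. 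As $B_1$ is separable, the unit ball $K$ of $B_1\q$ is \weakx-metrizable, and since $S$ is bounded this is in turn equivalent to: $S\yy_n\to S\yy$ weakly in $L^1(\P)$ whenever $\yy_n\to\yy$ \weakx{} in $K$.

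To prove this last implication, note that $\set{S\yy_n}_n$, being uniformly integrable, is relatively weakly compact in $L^1(\P)$, so it suffices to show that any weak cluster point $g$ of $(S\yy_n)$ equals $S\yy$. By the Eberlein--\v Smulian theorem, pass to a subsequence with $S\yy_{n_k}\to g$ weakly; by Mazur's theorem there are convex combinations $v_l=\sum_k c_k^{(l)}\yy_{n_k}$ (finite sums over $k\ge l$, with $c_k^{(l)}\ge0$ and $\sum_k c_k^{(l)}=1$) such that $Sv_l\to g$ in $L^1(\P)$-norm, and then $v_l\to\yy$ \weakx{} in $B_1\q$, because convex combinations of a tail of a \weakx-convergent sequence converge to the same limit. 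Fix extensions $\xx_{n_k}\in B\q$ of $\yy_{n_k}$ with $\norm{\xx_{n_k}}\le1$ and set $\psi_l\=\sum_k c_k^{(l)}\xx_{n_k}$; then $\norm{\psi_l}\le1$, $\psi_l|_{B_1}=v_l$, and $Sv_l$ is represented by the function $\go\mapsto\psi_l(X(\go))$. Passing to a further subsequence $(l_j)$, $\psi_{l_j}(X(\go))\to g(\go)$ for almost every $\go$. Since the unit ball of $B\q$ is \weakx-compact, $(\psi_{l_j})_j$ has a \weakx-convergent subnet $\psi_{l_{j_\beta}}\to\psi_\infty$; then $\psi_\infty|_{B_1}=\lim_\beta v_{l_{j_\beta}}=\yy$, so $S\yy$ is represented by $\go\mapsto\psi_\infty(X(\go))$. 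For almost every $\go$ the real sequence $\psi_{l_j}(X(\go))$ converges to $g(\go)$, hence so does the subnet $\psi_{l_{j_\beta}}(X(\go))$; but $\psi_{l_{j_\beta}}\to\psi_\infty$ \weakx{} gives $\psi_{l_{j_\beta}}(X(\go))\to\psi_\infty(X(\go))$. Therefore $g(\go)=\psi_\infty(X(\go))$ for almost every $\go$, \ie{} $g=S\yy$, which completes the argument.

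The delicate point is exactly this last step in the non-separable case. If $B$ were separable one could take $\yy_n=\xx_n$ and argue at once: $\yy_n\to\yy$ \weakx{} gives $\xx_n(X(\go))\to\xx(X(\go))$ pointwise, and uniform integrability together with Vitali's convergence theorem upgrades this to $L^1$-convergence. For general $B$ the ball of $B\q$ need not be \weakx-sequentially compact, so one can only pass to a \weakx-convergent \emph{subnet} of the extensions, and pointwise convergence of a \emph{net} together with uniform integrability does \emph{not} force convergence to the corresponding limit in $L^1$. The Mazur step is what repairs this: it trades the weak convergence $S\yy_{n_k}\to g$ for norm convergence $Sv_l\to g$, hence for almost everywhere convergence of a \emph{sequence} of representatives --- and almost everywhere convergence of a sequence is inherited by every subnet, which is precisely what pins down the \weakx-cluster point $\psi_\infty$ of the extensions.
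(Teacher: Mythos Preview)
The paper does not prove this result; it is quoted from \citet{Huff} with no proof given. Your argument is correct and self-contained: the factorisation $T_X=S\circ R$ through $B_1\q$ via \wassepy, the identification of Pettis integrability with \weakx-to-weak continuity of $S$ (and then, by the Krein--\v Smulian theorem together with \weakx-metrizability of the unit ball of $B_1\q$, with \weakx-to-weak \emph{sequential} continuity on that ball), and the Mazur/subnet step to identify the weak limit all go through as written. Your closing paragraph correctly isolates the role of the Mazur step: it upgrades the weak convergence $Sy_{n_k}\to g$ in $L^1(\P)$ to \as{} convergence along a \emph{sequence} of representatives, and convergence of a real sequence is inherited by every subnet, which is exactly what is needed to match $g$ with the \weakx{} cluster point $\psi_\infty$ of the extensions in the (generally non-metrizable) unit ball of $B\q$.
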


\begin{remark}
Actually, \citet{Huff} uses a condition that he calls \emph{separable-like};
the definition given in \cite{Huff} is somewhat
stronger than \wassepy, but 
it seems likely that he really intended what we call \wassepy,
and the proof in \cite{Huff} uses only \wassepy.
See also \citet{Stefansson} where \wassep{} is called \emph{determined by a
  separable subspace} and said to be the same as Huff's separable-like.
(Stef\'ansson \cite[Theorem 2.8]{Stefansson} has 
extended \refT{THuff} by weakening the
condition of \wassepy, replacing separable by weakly compactly generated, 
but we shall not use his results, which seem more
difficult to apply in our situation.)
\end{remark}

\begin{corollary}\label{CHuff}
  If $X$ is \wmeas{} and \wassep, and further 
$\EE\norm{X}<\infty$, then $X$ is Pettis integrable.
\end{corollary}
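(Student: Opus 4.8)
The plan is to deduce this directly from \refT{THuff}. Since $X$ is already assumed \wassep, it suffices to check that the family $\set{\xx(X):\xx\in B\q,\ \norm{\xx}\le1}$ is uniformly integrable; \refT{THuff} then yields Pettis integrability at once.

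First I would unpack the hypothesis $\EE\norm X<\infty$ via the characterisation recorded just after \eqref{ee}: it provides a positive random variable $Z$ with $\norm X\le Z$ \as{} and $\E Z<\infty$. Because $X$ is \wmeas, every $\xx(X)$ is a measurable real random variable, and for $\norm{\xx}\le1$ we have $\abs{\xx(X)}\le\norm{\xx}\,\norm X\le Z$ a.s. Hence the whole family $\set{\xx(X):\norm{\xx}\le1}$ is dominated by the single integrable function $Z$; in particular each $\xx(X)$ lies in $L^1(\P)$, so the Dunford integral of $X$ is well defined.

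It then remains to invoke the elementary fact that a family of random variables dominated by a fixed $Z\in L^1(\P)$ is uniformly integrable: $\sup_{\norm{\xx}\le1}\E\abs{\xx(X)}\le\E Z<\infty$, while $\E\bigpar{\ettae\abs{\xx(X)}}\le\E\bigpar{\ettae Z}\to0$ as $\P(E)\to0$ by absolute continuity of the integral of $Z$. Thus \eqref{ui} holds for the family $\set{\xx(X):\norm{\xx}\le1}$, and \refT{THuff}, using the assumed \wassepy, shows that $X$ is Pettis integrable. I do not expect any real obstacle here: the corollary is simply \refT{THuff} combined with the observation that $\EE\norm X<\infty$ forces the relevant scalar family to be dominated by an integrable function, so that its uniform integrability is automatic; the only point worth spelling out is that weak measurability is precisely what makes each $\xx(X)$ measurable, so that this domination argument is legitimate.
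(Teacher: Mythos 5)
Your proof is correct and follows the same route as the paper: obtain an integrable dominating variable $Z$ from $\EE\norm X<\infty$, deduce uniform integrability of $\set{\xx(X):\norm\xx\le1}$ by domination, and apply \refT{THuff} using the assumed \wassepy. The extra details you spell out (measurability of each $\xx(X)$ and the verification of \eqref{ui}) are exactly the points the paper leaves implicit.
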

\begin{proof}
There exists a measurable real-valued
$Z$ with $\norm X\le Z$ and $\E Z<\infty$.
  If $\norm\xx\le1$, then $|\xx(X)|\le\norm{X}\le Z$, and thus 
\set{\xx(X):\xx\in B\q,\,\norm{\xx}\le1} is uniformly integrable.
\end{proof}

\begin{remark}\label{RPpq}
  Let $1<p\le\infty$ and assume that 
$\xx(X)\in L^p(\P)$ for every $\xx\in B\q$, \ie, that the map $\dunx:B\q\to 
L^1(\P)$ above maps $B\q$ into $L^p(\P)$. Then $\dunx:B\q\to L^p(\P)$ is
bounded by the closed graph theorem and thus the adjoint $\dunx\q$,
defined in \refR{Rpettis} as an operator $L^\infty(\P)\to B\qx$, extends to
$L^q(P)$, where $q\in[1,\infty)$ is the conjugate exponent given by
  $1/p+1/q=1$.
Furthermore, \eqref{txq} holds for every $\gx\in L^q(\P)$; note that
$\xx(\gx X)=\gx\xx(X)\in L^1(\P)$ by \Holder's inequality
so $\gx X$ is Dunford integrable.

If furthermore $X$ is Pettis integrable, then $\dunx\q:L^\infty(\P)\to B$
by \refR{Rpettis},
and by continuity this extends to  $\dunx\q:L^q(\P)\to B$.

Note that if $C$ is the norm of $T_X:B\q\to L^p(\P)$, then for every 
$\xx\in B\q$ with $\norm\xx\le1$, $\E|\xx(X)|^p=\E|T_X(\xx)|^p\le C^p$,
which implies that \set{\xx(X):\norm\xx\le1} is \ui{}
\cite[Theorem 5.4.2]{Gut}. 
Hence, by \refT{THuff}, if furthermore $X$ is \wassep, in particular if $B$
is separable, then $X$ is Pettis integrable.
\end{remark}

\begin{example}\label{Ewiener} 
The standard Brownian motion 
(the Wiener process)
$W(t)$ is a \rv{} with values in $\coi$. 
$W$ is easily seen to be \Bmeas{} (\cf{} \refC{CCKcC})
and $\E\norm{W}<\infty$; thus $W$ is Bochner integrable (and thus Pettis
integrable). 
The operator $T_X:\coi\q=M\oi\to L^1(\P)$ is given by
\begin{equation}\label{rom}
  T_X(\mu)\=\innprod{\mu,W}=\intoi W(t)\dd\mu(t).
\end{equation}
$T_X$ obviously maps $\coi\q\to L^p(\P)$ for any $p<\infty$,
since $\intoi W\dd\mu$ is Gaussian.
(In fact, $\E\norm{W}^p<\infty$ when $p<\infty$.)
The adjoint map $\dunx\q:L^q(\P)\to B$, which by \refR{RPpq} is defined for
every $q>1$, maps a \rv{} $\xi\in L^q(\P)$ to the function in $\coi$ given by
\begin{equation}\label{spqr}
  \dunx\q\xi(t)
=\innprod{\gd_t,\dunx\q\xi}
=\innprod{T_X\gd_t,\xi}
=\innprod{W(t),\xi}
=\E\bigpar{\xi W(t)}.
\end{equation}
Hence, \eqref{txq} says that
\begin{equation}
  \E(\xi W)(t)=\E(\xi W(t)).
\end{equation}
  In particular, $\E W=0$, as is obvious by symmetry.
\end{example}

We also state another sufficient condition for Pettis integrability that only 
applies in special Banach spaces.  
\begin{theorem}[{\citetq{Theorem II.3.7}{DiestelUhl}}] \label{T-c0}
  Suppose that $X$ is Dunford integrable and \assep, and that $B$ does not
  contain a subspace isomorphic to $c_0$. Then $X$ is Pettis integrable.
\nopf
\end{theorem}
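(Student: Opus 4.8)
The plan is to reduce the statement to a uniform integrability property and then invoke Huff's theorem, \refT{THuff}; the hypothesis that $B$ contains no copy of $c_0$ will enter only at the very end, through the Bessaga--Pe\l czy\'nski theorem on weakly unconditionally Cauchy series. First note that, being \assep{} and \wmeas, $X$ is \Bmeas{} by \refT{Twm}, so $\norm X$ is measurable; and Dunford integrability means that $\dunx:\xx\mapsto\xx(X)$ is a bounded operator $B\q\to L^1(\P)$, say of norm $C$, so that $\sup_{\norm\xx\le1}\E\abs{\xx(X)}=C<\infty$ and the family $\cU\=\set{\xx(X):\xx\in B\q,\ \norm\xx\le1}$ is bounded in $L^1(\P)$. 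Since $X$ is \assep{} it is also \wassep, so \refT{THuff} reduces the theorem to showing that $\cU$ is uniformly integrable; I will show that otherwise $B$ contains a copy of $c_0$.

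So assume $\cU$ is not uniformly integrable. As $\cU$ is bounded in $L^1$, the failure of \eqref{ui} yields a $\gd>0$, events $A_n$ with $\P(A_n)\to0$, and $\xx_n\in B\q$ with $\norm{\xx_n}\le1$ and $\E\bigpar{\etta_{A_n}\abs{\xx_n(X)}}>\gd$ for every $n$. Each $\xx_n(X)$ is integrable, so $\E\bigpar{\etta_S\abs{\xx_n(X)}}\to0$ as $\P(S)\to0$; hence, passing to a subsequence, we may assume that $\sum_{m>n}\P(A_m)$ is so small that the pairwise disjoint sets $E_n\=A_n\setminus\bigcup_{m>n}A_m$ still satisfy $\E\bigpar{\etta_{E_n}\abs{\xx_n(X)}}\ge\gd/2$. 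Now truncate: $X_M\=X\ett{\norm X\le M}$ is bounded, \wmeas{} and \assep, hence \Bmeas{} and, being bounded, Bochner integrable by \refT{TBochner}; moreover $\xx(X_M)=\xx(X)\ett{\norm X\le M}\to\xx(X)$ \as{} with $\abs{\xx(X_M)}\le\abs{\xx(X)}$. Choose $M_n$ with $\E\bigpar{\etta_{E_n}\abs{\xx_n(X_{M_n})}}\ge\gd/4$ and set
\begin{equation*}
  b_n\=\E\bigpar{\sgn(\xx_n(X))\,X_{M_n}\,\etta_{E_n}};
\end{equation*}
this is a Bochner integral of a bounded \Bmeas{} $B$-valued function, so $b_n\in B$. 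Since $\sgn(\xx_n(X))\,\xx_n(X_{M_n})=\abs{\xx_n(X_{M_n})}$ (the two sides agreeing both where $\norm X\le M_n$ and where $\norm X>M_n$), we get $\norm{b_n}\ge\xx_n(b_n)=\E\bigpar{\etta_{E_n}\abs{\xx_n(X_{M_n})}}\ge\gd/4>0$.

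It remains to check that $\sum_nb_n$ is weakly unconditionally Cauchy in $B$, i.e.\ that $\norm{\sum_{n\in A}t_nb_n}$ is bounded uniformly over finite $A\subseteq\bbN$ and scalars $t_n$ with $\abs{t_n}\le1$. For such $A$, $(t_n)$ and any $\xx\in B\q$ with $\norm\xx\le1$, using $\xx(X_{M_n})=\xx(X)\ett{\norm X\le M_n}$,
\begin{equation*}
  \xx\Bigpar{\sum_{n\in A}t_nb_n}
  =\E\Bigpar{\xx(X)\sum_{n\in A}t_n\sgn(\xx_n(X))\ett{\norm X\le M_n}\etta_{E_n}},
\end{equation*}
and since the $E_n$ are disjoint the inner sum has modulus at most $\max_{n\in A}\abs{t_n}\le1$; hence $\bigabs{\xx\bigpar{\sum_{n\in A}t_nb_n}}\le\E\abs{\xx(X)}\le C$, and taking the supremum over $\xx$ gives $\norm{\sum_{n\in A}t_nb_n}\le C$. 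Thus $\sum_nb_n$ is weakly unconditionally Cauchy but, as $\norm{b_n}\ge\gd/4\not\to0$, not unconditionally convergent, so by the Bessaga--Pe\l czy\'nski theorem $B$ contains an isomorphic copy of $c_0$, contradicting the hypothesis; therefore $\cU$ is uniformly integrable and $X$ is Pettis integrable by \refT{THuff}. I expect the only real obstacle to be the middle step: producing vectors $b_n$ that genuinely lie in $B$ --- not merely in $B\qx$, where the Dunford integrals $\E(X\etta_{E_n})$ a priori live --- while keeping $\norm{b_n}$ bounded away from $0$. The truncation to $X_{M_n}$ is precisely what buys Bochner integrability (and hence $b_n\in B$) here, since $\norm X$ itself need not be integrable; once that is arranged, the disjointness of the $E_n$ makes the weak unconditional Cauchy estimate essentially automatic.
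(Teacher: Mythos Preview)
Your argument is correct. Note that the paper does not actually prove this theorem: it is stated with the \verb|\nopf| marker and simply cites \cite[Theorem II.3.7]{DiestelUhl}, so there is no proof in the paper to compare against. What you have written is a clean self-contained reconstruction of the classical argument: reduce Pettis integrability to uniform integrability of $\{\xx(X):\norm\xx\le1\}$ via \refT{THuff} (legitimately, since \assep{} implies \wassep), and then show that a failure of uniform integrability allows one to build, using disjoint events $E_n$ and Bochner-integrable truncations $X_{M_n}$, a weakly unconditionally Cauchy series $\sum b_n$ with $\norm{b_n}$ bounded away from $0$; Bessaga--Pe\l czy\'nski then yields a copy of $c_0$.

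All the technical steps check out: the inductive extraction of the subsequence so that $\P\bigpar{A_n\setminus E_n}\le\sum_{m>n}\P(A_m)$ is small enough relative to the $L^1$-integrability of $\xx_n(X)$; the monotone convergence giving $M_n$ with $\E\bigpar{\etta_{E_n}\abs{\xx_n(X_{M_n})}}\ge\gd/4$; the identity $\sgn(\xx_n(X))\,\xx_n(X_{M_n})=\abs{\xx_n(X_{M_n})}$ (trivially on both $\{\norm X\le M_n\}$ and its complement); and the wuC estimate, where disjointness of the $E_n$ makes the random multiplier bounded by $1$ in modulus. The truncation device is exactly the right move to force $b_n\in B$ rather than merely $B\qx$, and you have identified this correctly as the one genuinely delicate point.
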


\refE{Ec0} below shows, using \refR{RBB},
that the condition on $B$ in \refT{T-c0} also is necessary. 
(Hence, in some sense, \refE{Ec0} is the canonical example of a Dunford
integrable variable 
that is not Pettis integrable, at least in separable spaces where there  is
no measurability problem.)

\begin{example}\label{Ec0}
  Let $N$ be a positive integer-valued random variable, and consider the
  random variable $X\=a_Ne_N$ in $c_0$, where $e_n\in c_0$ is the 
  $n$:th vector in the standard basis and $a_n$ are some real numbers.
Let $p_n\=\P(N=n)$.
It is easily seen that then
\begin{romenumerate}
\item $X$ is Bochner integrable if and only if $\sumn |p_na_n|<\infty$.

\item 
$X$ is Dunford  integrable if and only if $\sup|p_na_n|<\infty$.

\item 
$X$ is Pettis  integrable if and only if $p_na_n\to0$ as \ntoo.
\end{romenumerate}
The integral $\E X$ equals $(p_na_n)_{n=1}^\infty$ in all cases where it is
defined. (Thus, when $X$ is Dunford integrable, 
$\E X\in\ell^\infty= c_0\qx$ but in general $\E X$ does not belong to
$c_0$.)  
\end{example}

\begin{example}\label{El2}
Let  $X\=a_Ne_N$ as in \refE{Ec0}, but now regarded as an element of
$B=\ell^2$. 
It is easily seen that then
\begin{romenumerate}
\item $X$ is Bochner integrable if and only if $\sumn |p_na_n|<\infty$.

\item 
$X$ is Pettis or Dunford  integrable if and only if $\sumn |p_na_n|^2<\infty$.
\end{romenumerate}
(There is no difference between Dunford and Pettis since $B$ is reflexive.) 

The integral $\E X$ equals $(p_na_n)_{n=1}^\infty$ in all cases where it is
defined. 
\end{example}

\begin{example}\label{El1} 
Let again $X\=a_Ne_N$ as in \refE{Ec0}, but now regarded as an element of
$B=\ell^1$. 
In this case, 
\begin{romenumerate}
\item $X$ is Bochner integrable $\iff$
$X$ is Pettis integrable $\iff$
$X$ is Dunford  integrable $\iff$ 
$\sumn |p_na_n|<\infty$.
\end{romenumerate}
In fact, let $\xx=(\sign(a_n))_1^\infty\in\ell^\infty=(\ell^1)\q$.
Then $\innprod{\xx,X}=|a_N|$, so if $X$ is Dunford integrable, then 
$\E|a_N|<\infty$, which implies Bochner integrability.
The integral $\E X$ equals $(p_na_n)_{n=1}^\infty$.
\end{example}

\begin{example}\label{Ewuc} 
Examples \ref{Ec0}--\ref{El1} are examples of the following general fact:
  Let $N$ be a positive integer-valued random variable, 
with $\P(N=n)=p_n$, let $(x_n)\seq$ be a sequence in a Banach space $B$, and 
let $X\=x_N$.
It is then easy to see the following characterizations, see \eg{}
\cite[Proposition 3.12 and Appendix B]{Ryan}
and \cite[Chapter IV and p.~44]{Diestel}.
\begin{romenumerate}
\item $X$ is Bochner integrable if and only if 
$\sumn p_nx_n$ converges  absolutely. 

\item 
$X$ is Pettis  integrable if and only if
$\sumn p_nx_n$ converges unconditionally. 

\item 
$X$ is Dunford  integrable if and only if the series
$\sumn p_nx_n$ is weakly unconditionally Cauchy.
\end{romenumerate}
(This example is perhaps clearer if we do not restrict ourselves to
probability measures, and regard $\sumn x_n$ as the integral of the function
$n\mapsto x_n$ defined on $\bbN$ equipped with counting measure.
The sum converges as a Bochner integral, Pettis integral or Dunford integral
if and only if it is absolutely summable, unconditionally summable or weakly
unconditionally Cauchy, respectively.)
\end{example}

\begin{example}\label{Edoi}
  Let $B=\doi$, 
and let, as in \refE{Eetta},
$X$ be the random element of $\doi$ given by $X=\etta_{[U,1]}$, 
where $U\sim \U(0,1)$.
Then $X$ is not \assep{} and thus not \Bmeas{};
thus $\E X$ does not exist as a Bochner integral.
($X$ is also not Borel measurable,  at least typically, see \refE{Eetta}.)

On the other hand, it follows from \refT{TDw} that $X$ is weakly measurable,
and since $X$ is bounded, it is Dunford integrable. It is easily verified,
using \refC{C1} and Fubini's theorem, that if $\xx\in D\q$, then
\begin{equation*}
  \E \innprod{\xx,X}=\innprod{\xx,\idt},
\end{equation*}
where $\idt$ denotes the identity function $t\mapsto t$. Hence the Dunford
integral $\E X=\idt\in\doi$. It is similarly seen that if $E$ is any event,
then the Dunford integral $\E(X\etta_E)$ is the function 
$t\mapsto \P(\set{U\le t} \cap E)$, which is continuous and thus belongs to
$D$. Hence, $X$ is Pettis integrable.

By \refR{RBB}, $X$ is also Pettis integrable as a \rv{} in $L^\infty\oi$,
since the hyperplane \set{f\in\doi:f(1-)=f(1)} in $\doi$ also can be seen as
a subspace of $L^\infty\oi$. 
(Cf.~\cite[Example 4-2-4a)]{Talagrand:Pettis}.)
\end{example}

To find examples of bounded Dunford integrable random variables that are not
Pettis integrable is more difficult and technical. 
Note that by \refT{THuff}, such random variables cannot be \wassep.
We give one example from
\citet{FremlinT}, omitting the (quite complicated) details. See \refE{Egoi} for
another example.

\begin{example}[\citet{FremlinT}]\label{Eloo}
  Let $\gO=\setoi^\infty$ with the infinite product measure
  $\mu=\bigpar{\frac12\gd_0+\frac12\gd_1}^\infty$ (this is the Haar measure
  if we  regard $\gO$ as the compact group $\bbZ_2^\infty$),  
and let $X:\gO\to\ell^\infty$ be   the inclusion.
Then $X$ is a \rv{} with values in $\ell^\infty$ such that the coordinates
$X_n$ are \iid{} $\Be(1/2)$.
It is shown in \cite{FremlinT} and \cite[Chapter 13]{Talagrand:Pettis} 
(by slightly different arguments)
that $X$ is \emph{not} weakly measurable on $\gO$ (with the
product $\gs$-field = the Borel $\gs$-field), but that the measure $\mu$ can
be extended to a larger $\gs$-field 
making $X$ weakly measurable.
More precisely, it is easily seen that
$(\ell^\infty)\q=\ell^1\oplus c_0^\perp$, and if 
$\xx\in\ell^1\subset(\ell^\infty)\q$ is 
given by $(a_n)_1^\infty\in\ell^1$, 
then $\xx(X)=\sum_1^\infty
a_n X_n$ where $X_n$ as said above are \iid{} $\Be(1/2)$; clearly $\xx(X)$
is measurable in this case. The extension of $\mu$ constructed in
\cite{FremlinT} and \cite{Talagrand:Pettis} is such that $\xx(X)$ is \as{}
constant if $\xx\in c_0^\perp$. Hence $\xx(X)$ is measurable in this case
too, and by linearity for every $\xx\in (\ell^\infty)\q$, so $X$ is \wmeas.
Moreover, the extension is such that
in the particular case that $\xx\in c_0^\perp$ is a multiplicative
linear functional, $\xx(X)=1$ a.s.

Consequently, using this extension of $\mu$, 
$X$ is bounded and \wmeas, and thus Dunford
integrable. However, 
if $X$ had a Pettis integral $y=\E X\in\ell^\infty$, then 
$y_n=\E X_n=\frac12$ for each $n$, since $X_n\sim\Be(\frac12)$, and thus
$y=(\frac12,\frac12,\dots)$. However, if $\xx$ is a multiplicative linear
functional in $c_0^\perp$, %$\gb\bbN\setminus\bbN$,
then $\xx(y)=\E \xx(X)=1$, a contradiction.
Consequently, 
$X$ is not Pettis integrable.

By \refT{THuff}, $X$ is not \wassep.
\end{example}

To summarize, 
we have defined three types of integrals. The Bochner integral is 
the most convenient, when it exists, but the requirement of Bochner
measurablility is too strong for many applications in non-separable spaces.
The Pettis integral is more general, and will be our main tool in such
cases; it also requires only a weaker integrability condition. 
The Dunford integral is even more general, but in general it is
an element the bidual $B\qx$ instead of $B$, which makes it less useful.

Note also that all three integrals are linear and behave as expected under
bounded linear operators:
If $T:B\to B_1$ and $X$ is integrable in one of these senses, then
$TX\in B_1$ is integrable in the same sense, and $\E (TX)= T (\E X)$
($\E (TX)= T\qx(\E X)$ for Dunford integrals).

\begin{remark}  \label{RL1}
It can be shown that the space of Bochner integrable $B$-valued random
variables equals $L^1(\P)\ptensor B$
\cite[Example 2.19]{Ryan},
while $L^1(\P)\itensor B$ is the completion of the space of \Bmeas{} Pettis
integrable $B$-valued \rv{s}
\cite[Proposition 3.13]{Ryan}. 
(If $B$ is separable, this is by \refT{Twm}
just the
completion of the space of Pettis integrable \rv{s}; typically  the
latter space is not complete, so  it is necessary to take the
completion.) 
\end{remark}

\begin{remark}
There are several further definitions of integrals of Banach space valued
functions, 
see for example
\cite{Birkhoff}, \cite{Hildebrandt}, \cite{FremlinMendoza},
although only
some of these definitions work on a general probability space
as required here (for example the Birkhoff integral that lies between the
Bochner and Pettis integrals).
These integrals
too could be used to define moments as in the next section, 
but we do not know of any properties
of them that make them more useful for our purposes than the three integrals
defined above, so we do not consider them. 
\end{remark}

\section{Moments}\label{Smom} 

If $B$ is a Banach space and  
$X$ is a  $B$-valued \rv, defined on a
probability space $(\gO,\cF,\P)$, 
and further $k$ is a positive integer, 
we define the 
\emph{projective \kth{} moment} 
of $X$ as the expectation $\E X\tpk = \int X\tpk \dd\P$
whenever this expectation (integral) exists in the projective tensor product
$B\ptpk$. 
The expectation can here be taken in any of the three senses defined in
\refS{Sintegrals}; hence we talk about the moment existing in 
Bochner sense, Pettis sense or Dunford sense. Note that if the \kth{}
moment exists
in Bochner or Pettis sense, then it is an element of $B\ptpk$,
but if it exists in Dunford sense, 
then it is an element of $(B\ptpk)\qx$ and may be outside $B\ptpk$.

Similarly, we can regard $X\tpk$ as an element of the injective \tp{}
$B\itpk$ and take the
expectation in that space.
We thus define the
\emph{injective \kth{} moment} 
of $X$ as the expectation $\E X\tpk = \int X\tpk \dd\P$
whenever this expectation (integral) exists in the injective tensor product
$B\itpk$. Again, this can exist in 
Bochner sense, Pettis sense or Dunford sense; in the first two cases it is
an element of $B\itpk$,
but in the third case it is an element of $(B\itpk)\qx$.

\begin{example}
The first moment (projective or injective; there is no difference when
$k=1$)  is the
expectation $\E X$,
which is an element of $B$ when it exists in Bochner or Pettis sense, and an
element of $B\qx$ when it exists in Dunford sense.  
The examples in \refS{Sintegrals} show some cases.
\end{example}

We may for clarity or emphasis
denote  $X\tpk$ by
$X\ptpk$ when we regard it as an element of  $B\ptpk$ and  by
$X\itpk$ when we regard it as an element of  $B\itpk$.
In particular, we distinguish between the projective and injective moments
by writing them as 
$\E X\ptpk\in B\ptpk$ and $\E X\itpk\in B\itpk$. They are related by the
following simple result.

\begin{theorem}\label{TPI}
If the projective \kth{} moment $\E X\ptpk$ exists in one of the senses
above, then the injective 
\kth{} moment 
exists too, in the same sense, and is given by
\begin{equation}\label{momitpk}
  \E X\itpk = \iota(\E X\ptpk)\in B\itpk
\end{equation}
in the Bochner or Pettis case and 
\begin{equation}\label{momitpk**}
  \E X\itpk = \iota\qx(\E X\ptpk)\in (B\itpk)\qx
\end{equation}
in the Dunford case.  
\end{theorem}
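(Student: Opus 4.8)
The plan is to exploit the fact, recalled at the end of \refS{Sintegrals}, that all three integrals commute with bounded linear operators: if $T:B\to B_1$ is bounded and $X$ is integrable in the Bochner, Pettis, or Dunford sense, then $TX$ is integrable in the same sense, with $\E(TX)=T(\E X)$ (and $\E(TX)=T\qx(\E X)$ in the Dunford case). Here the relevant operator is the canonical norm-one map $\iota:B\ptpk\to B\itpk$ from \eqref{iota}. So the whole theorem is really just an instance of that general principle applied to $T=\iota$, once one observes that $X\itpk=\iota(X\ptpk)$ pointwise on $\gO$ (this is immediate from the definition of $\iota$ as the extension of the identity map on the algebraic tensor product $B\tpk$, since $X\tpk(\go)=X(\go)\tensor\dotsm\tensor X(\go)$ lies in $B\tpk$ for every $\go$).

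First I would note the pointwise identity: for each $\go\in\gO$, the element $X(\go)\tpk\in B\tpk$ is the same whether we regard the ambient space as $B\ptpk$ or $B\itpk$, and $\iota$ acts on it as the identity; thus the $B\itpk$-valued random variable $X\itpk$ equals $\iota\circ X\ptpk$. Next, in the Bochner or Pettis case: since $\E X\ptpk$ exists (in $B\ptpk$) and $\iota:B\ptpk\to B\itpk$ is bounded, $\iota(X\ptpk)=X\itpk$ is integrable in the same sense, and $\E X\itpk=\iota(\E X\ptpk)$, which is \eqref{momitpk}. In the Dunford case, $\E X\ptpk\in(B\ptpk)\qx$; applying the general operator-compatibility statement for Dunford integrals gives that $X\itpk$ is Dunford integrable with $\E X\itpk=\iota\qx(\E X\ptpk)\in(B\itpk)\qx$, which is \eqref{momitpk**}.

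For completeness I would spell out the one computation behind the operator-compatibility claim in the Dunford case, since it is the only place anything needs checking. For $\chi\in(B\itpk)\q$ we have, using the definition \eqref{dunford} of the Dunford integral of $X\itpk$ and then of $X\ptpk$,
\begin{equation*}
  \innprod{\E X\itpk,\chi}
  =\E\innprod{\chi,X\itpk}
  =\E\innprod{\iota\q\chi,X\ptpk}
  =\innprod{\E X\ptpk,\iota\q\chi}
  =\innprod{\iota\qx(\E X\ptpk),\chi},
\end{equation*}
where $\iota\q:(B\itpk)\q\to(B\ptpk)\q$ is the adjoint and $\iota\q\chi\in(B\ptpk)\q$, so that $\innprod{\chi,X\itpk(\go)}=\innprod{\iota\q\chi,X\ptpk(\go)}$ pointwise; this also shows that $\chi\mapsto\innprod{\chi,X\itpk}$ is bounded $B\itpk\q\to L^1(\P)$, i.e.\ that $X\itpk$ is Dunford integrable. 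The Bochner case can instead be seen from the approximating simple functions (\refR{Rmeas}): simple functions for $X\ptpk$ map under $\iota$ to simple functions for $X\itpk$, and $\norm{\iota}\le1$ gives the convergence of $\E\norm{X\itpk-(\text{simple})}_\eps$; the Pettis case follows from the Dunford computation together with the observation that $\iota$ maps $B\ptpk$ into $B\itpk$, so $\E(X\itpk\etta_E)=\iota(\E(X\ptpk\etta_E))\in B\itpk$ for every event $E$.

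There is essentially no obstacle here: the statement is a soft consequence of the functoriality of all three integrals under the bounded map $\iota$. The only mild subtlety worth a sentence is making explicit that $\iota$, being the continuous extension of the algebraic identity, really does send $X(\go)\tpk$ to the "same" tensor, so that $X\itpk$ and $\iota\circ X\ptpk$ coincide as random variables rather than merely having the same integral; everything else is bookkeeping with adjoints.
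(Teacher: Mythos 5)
Your proposal is correct and is essentially the paper's own argument: the paper proves \refT{TPI} in one line by noting that the identity on $B\tpk$ extends to the continuous map $\iota:B\ptpk\to B\itpk$ and then invoking the general fact (stated at the end of \refS{Sintegrals}) that Bochner, Pettis and Dunford integrals commute with bounded linear operators, with $T\qx$ in the Dunford case. Your explicit verification of that operator-compatibility via the adjoint $\iota\q$ is a correct spelling-out of the same step, not a different route.
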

\begin{proof}
The identity map on $B\tpk$ extends to the
continuous linear map $\iota: B\ptpk\to B\itpk$.  
\end{proof}

We next consider the problem of deciding when these moments exist, in
the different senses.
There are six different cases to consider. We shall see (\refT{TPIB} and the
examples in \refS{Sex}) that the conditions for existence differ for five of
them, for both measurability and integrability reasons. This multiplicity of
cases may be bewildering, but in many applications there is no problem. If
$B$ is 
separable (or $X$ is \assep) there is no problem with measurability
(\refT{Twm}) and if further $\E\norm X^k<\infty$, then both moments exist in
the strongest sense (\refT{TPIB}), and thus in all senses.

We begin by  considering conditions for
the existence of moments in the strongest sense, \ie, as Bochner integrals.

\begin{lemma}\label{Lbk}
  If $X$ is \Bmeas{} in $B$, then $X\tpk$ is \Bmeas{} in $B\ptpk$, for every
  $k\ge1$. In particular, then $\ga(X,\dots,X)$ is measurable for any
  bounded $k$-linear form $\ga\in L(B^k;\bbR)$.
\end{lemma}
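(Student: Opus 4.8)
The statement has two parts: first that $X\tpk$ is Bochner measurable in $B\ptpk$, and second that $\ga(X,\dots,X)$ is measurable for every bounded $k$-linear form $\ga$. By the characterization of Bochner measurability recalled in \refR{Rmeas}, it suffices to produce a sequence of measurable simple functions $\gO\to B\ptpk$ converging \as{} to $X\tpk$. Since $X$ is \Bmeas, there is a sequence $X_n$ of measurable simple $B$-valued functions with $X_n\to X$ \as.

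The plan is to set $Y_n\=X_n\tpk$, which is a measurable simple function $\gO\to B\tpk\subseteq B\ptpk$ (a simple function of $X_n$, hence measurable, taking finitely many values, each an elementary tensor times a finite sum). It remains to check $Y_n\to X\tpk$ \as{} in the projective norm. Fix $\go$ with $X_n(\go)\to X(\go)=:x$ and write $x_n\=X_n(\go)$. Using the telescoping identity
\begin{equation*}
x_n\tpk - x\tpk = \sum_{i=1}^k x_n^{\tensor(i-1)}\tensor(x_n-x)\tensor x^{\tensor(k-i)},
\end{equation*}
together with the crossnorm property \eqref{pik} and the triangle inequality for $\normp{\cdot}$, we get
\begin{equation*}
\normp{x_n\tpk-x\tpk}\le\sum_{i=1}^k\norm{x_n}^{i-1}\norm{x-x_n}\,\norm{x}^{k-i}\le k\,\norm{x-x_n}\max\{\norm{x_n},\norm x\}^{k-1},
\end{equation*}
which tends to $0$ since $\norm{x-x_n}\to0$ and $\norm{x_n}$ stays bounded. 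Hence $Y_n\to X\tpk$ \as{} in $B\ptpk$, so $X\tpk$ is \Bmeas{} in $B\ptpk$ by \refR{Rmeas}.

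For the second assertion, recall from \refT{Tproj*} that a bounded $k$-linear form $\ga\in L(B^k;\bbR)$ corresponds to a bounded linear functional $\tga$ on $B\ptpk$ with $\tga(x\tpk)=\ga(x,\dots,x)$. Thus $\ga(X,\dots,X)=\tga(X\tpk)$ is the composition of the \Bmeas{} (in particular Borel measurable) map $X\tpk$ with the continuous functional $\tga$, and is therefore measurable.

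**Main obstacle.** There is no real obstacle here; the only point requiring a little care is that $X_n\tpk$ is genuinely a \emph{simple} measurable function into $B\ptpk$ (not merely a Borel map), which follows because $X_n$ takes finitely many values and $X_n\tpk$ is a fixed finite sum of elementary tensors in each of those finitely many values, with preimages in $\cF$. The convergence estimate is the routine telescoping bound above, using only that $\normp{\cdot}$ is a crossnorm.
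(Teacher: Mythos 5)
Your proof is correct and rests on the same key fact as the paper's: the continuity of the (non-linear) map $x\mapsto x\tpk$ from $B$ to $B\ptpk$, which your telescoping crossnorm estimate proves explicitly; the paper simply composes this continuous map with the \Bmeas{} $X$, while you run the same estimate along a sequence of simple functions via \refR{Rmeas} — an equivalent route. The second assertion is handled exactly as in the paper, by \refT{Tproj*}.
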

\begin{proof}
  The (non-linear) mapping $x\mapsto x\tpk$ is continuous $B\to B\ptpk$.
The final claim follows by \refT{Tproj*}.
\end{proof}

\begin{remark}\label{Rweaktp}
There is no general corresponding result for \wmeas{} $X$
in the projective \tp{} $B\ptpk$, see
\refE{EPD3}. If $B$ is separable, or if $X$ is \assep, there is no
problem since then $X$ is \Bmeas{} by \refT{Twm}, but \wassepy{} is not
enough by \refE{EPD3}. Nevertheless, $\doi$ is an example of a non-separable
space where $X\tpk$ is \wmeas{} in the projective \tp{}
for every \wmeas{} $X$, see \refC{CDM}.
\end{remark}

\begin{lemma}\label{Lbs}
The following are equivalent, for any $k\ge1$:
\begin{romenumerate}
\item \label{lbsp}
 $X\tpk$ is \assep{} in $B\ptpk$.
\item \label{lbsi}
 $X\tpk$ is \assep{} in $B\itpk$.
\item \label{lbs1}
$X$ is \assep{} in $B$.
\end{romenumerate}
\end{lemma}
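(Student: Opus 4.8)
The plan is to prove the cycle \textup{(iii)}$\Rightarrow$\textup{(i)}$\Rightarrow$\textup{(ii)}$\Rightarrow$\textup{(iii)}; the case $k=1$ is trivial since all three tensor powers are then just $B$, so assume $k\ge2$. For \textup{(iii)}$\Rightarrow$\textup{(i)}: if $X\in B_1$ \as{} for a separable closed subspace $B_1\subseteq B$, then $X\tpk$ is \as{} an elementary tensor with all factors in $B_1$, hence lies in the closed linear span $V$ in $B\ptpk$ of $\set{b_1\tensor\dotsm\tensor b_k:b_1,\dots,b_k\in B_1}$; and $V$ is separable, being the closed span of the countable set of such tensors whose factors range over a countable dense subset of $B_1$. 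For \textup{(i)}$\Rightarrow$\textup{(ii)}: the map $\iota$ of \eqref{iota} is continuous and is the identity on $B\tpk$, so $\iota(X\tpk)=X\itpk$; if $B_2\subseteq B\ptpk$ is separable and closed with $X\ptpk\in B_2$ \as, then the closure of $\iota(B_2)$ is a separable closed subspace of $B\itpk$ containing $X\itpk$ \as.

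The substance is \textup{(ii)}$\Rightarrow$\textup{(iii)}, and this is the step I expect to require real care. First I would reduce the separable subspace ``downstairs'' to a tensor power: if $X\itpk\in B_3$ \as{} with $B_3$ a separable closed subspace of $B\itpk$, choose a countable dense subset $\set{u_m}$ of $B_3$ and, for each $m$, an element $v_m$ of the algebraic tensor power $B\tpk$ with $\norm{u_m-v_m}_\eps<2^{-m}$, so that $\set{v_m}$ is still dense in $B_3$. Let $B_0\subseteq B$ be the closed span of the countably many vectors of $B$ occurring as tensor factors in the (finite) expansions of the $v_m$ into elementary tensors; then $B_0$ is separable and each $v_m\in B_0\tpk$. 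Since the injective tensor product is injective (\refR{Rinjinj}), the closure of $B_0\tpk$ inside $B\itpk$ equals $B_0\itpk$ isometrically; it is closed and contains every $v_m$, hence contains $B_3$. Thus $X\itpk\in B_0\itpk$ \as.

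It then suffices to show: for $k\ge2$ and $x\in B$, if $x\tpk\in B_0\itpk$ then $x\in B_0$ (applying this pointwise to $X$ gives $X\in B_0$ \as, which is \textup{(iii)}). For this I would build, for each $\xx\in B\q$, a bounded ``partial contraction'' $\Phi_\xx\colon B\itpk\to B$ with $\Phi_\xx(y_1\tensor\dotsm\tensor y_k)=\xx(y_2)\dotsm\xx(y_k)\,y_1$. Realizing $B\itpk$ inside $L((B\q)^k;\bbR)$ via \eqref{onbx}, the assignment $u\mapsto\bigpar{\yy\mapsto u(\yy,\xx,\dots,\xx)}$ is by \eqref{norminj} a bounded linear map $B\itpk\to B\qx$ of norm $\le\norm{\xx}^{k-1}$; on the dense subspace $B\tpk$ it takes values in $B$, and $B$ is norm-closed in $B\qx$, so this map in fact sends $B\itpk$ into $B$, giving $\Phi_\xx$. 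Now suppose $x\notin B_0$; by Hahn--Banach pick $\xx\in B\q$ with $\xx\perp B_0$ and $\xx(x)=1$. Then $\Phi_\xx$ vanishes on $B_0\tpk$, hence on its closure $B_0\itpk$, so $\Phi_\xx(x\tpk)=0$; but $\Phi_\xx(x\tpk)=\xx(x)^{k-1}x=x\neq0$, a contradiction. The main obstacle is not a deep theorem but getting these two constructions exactly right --- that the separable subspace of $B\itpk$ may be taken of the special form $B_0\itpk$, and that the contraction operator genuinely lands in $B$ rather than merely in the bidual $B\qx$ --- since together they are precisely what forces ``$X\tpk$ separably valued'' back down to ``$X$ separably valued''.
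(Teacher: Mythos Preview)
Your proof is correct and follows essentially the same route as the paper's: the cycle \textup{(iii)}$\Rightarrow$\textup{(i)}$\Rightarrow$\textup{(ii)}$\Rightarrow$\textup{(iii)}, with the first two implications by continuity of $x\mapsto x\tpk$ and of $\iota$, and the key step \textup{(ii)}$\Rightarrow$\textup{(iii)} beginning by replacing the arbitrary separable subspace of $B\itpk$ by one of the form $B_0\itpk$ for a separable closed $B_0\subseteq B$ (approximating a countable dense set by algebraic tensors and collecting the tensor factors).

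The only real difference is in how you finish \textup{(ii)}$\Rightarrow$\textup{(iii)}. You build a vector-valued contraction $\Phi_{\xx}\colon B\itpk\to B$ and argue it lands in $B$ (not just $B\qx$) by density; this is correct but more machinery than needed. The paper instead uses the scalar functional $(\xx)\tpk\in(B\itpk)\q$: if $\xx\perp B_0$ then $(\xx)\tpk$ annihilates every elementary tensor with factors in $B_0$, hence (by continuity) all of $B_0\itpk$; so $x\tpk\in B_0\itpk$ forces $\innprod{\xx,x}^k=\innprod{(\xx)\tpk,x\tpk}=0$, i.e.\ $\xx(x)=0$ for every such $\xx$, whence $x\in B_0$. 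Your contraction operator works, but the paper's one-line scalar argument avoids the detour through $B\qx$ altogether.
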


\begin{proof} 
\ref{lbs1}$\implies$\ref{lbsp} as in the proof of \refL{Lbk} and
\ref{lbsp}$\implies$\ref{lbsi} since 
$\iota:B\ptpk\to B\itpk$ is continuous.
Hence it remains to prove
\ref{lbsi}$\implies$\ref{lbs1}.

Let $A$ be a separable subspace of $B\itpk$ such that $X\tpk\in A$
  a.s. 
Then there exists a countable family of elementary tensors 
$F=\set{e_{i1}\tensor\dotsm\tensor e_{ik}}$ such that $A$ is included in the
closed linear span of $F$. Let $B_1\subseteq B$ be the closed linear span
of $\set{e_{ij}}_{ij}$. Then $B_1$ is separable, and if $\xx\perp B_1$, then 
$\xx(e_{ij})=0$ for all $i$ and $j$, and thus
$\xx\tensor\dotsm\tensor\xx\perp F$; by linearity and continuity, this
extends to the closed linear span of $F$, and thus
$\xx\tensor\dotsm\tensor\xx\perp A$. 

Hence, if $x\tpk\in A$, then 
$\innprod{\xx,x}^k=\innprod{(\xx)\tpk,x\tpk}=0$ and thus $\xx\perp x$ 
for every $\xx\perp B_1$, which implies that $x\in B_1$. Consequently,
$X\in B_1$ a.s.
\end{proof}

\begin{remark}
  \refL{Lbs} includes a partial converse to \refL{Lbk}, considering only the
  \assep{} condition. There is no complete converse to \refL{Lbk} since
  $X\tpx2$ may be Bochner measurable even if $X$ is not; this happens even
in the one-dimensional case $B=\bbR$, as shown by the trivial example when
$X$ is a 
  non-measurable function such that $X=\pm1$ everywhere; then $X\tpx2=X^2=1$
  is measurable.
We will thus usually assume that $X$ is at least weakly measurable.
\end{remark}

It is now easy to characterise when the moments exist as  Bochner
integrals.

\begin{theorem}\label{TPIB}
Suppose that $X$ is \wmeas.
Then the following are equivalent.
\begin{romenumerate}
 \item \label{tpib1}    
The projective moment $\E X^{\ptensor k}$ exists in Bochner sense.
\item \label{tpib2}    
The injective moment $\E X^{\itensor k}$ exists in Bochner sense.
\item 
$\E\norm{X}^k<\infty$ and $X$ is \assep.
\end{romenumerate}
\end{theorem}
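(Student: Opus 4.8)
The plan is to establish the cycle of implications $\ref{tpib1}\implies\ref{tpib2}\implies\ref{tpib3}\implies\ref{tpib1}$, using \refT{TBochner} (Bochner integrability $\iff$ \Bmeas{} plus finite expected norm) as the workhorse throughout, and the preliminary lemmas \refL{Lbk} and \refL{Lbs} to handle the measurability bookkeeping.

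First I would prove $\ref{tpib1}\implies\ref{tpib2}$: this is immediate from \refT{TPI}, since if the projective moment exists in Bochner sense then so does the injective moment. Next, for $\ref{tpib2}\implies\ref{tpib3}$: assume $\E X\itpk$ exists as a Bochner integral. By \refT{TBochner}, $X\tpk$ is then \Bmeas{} in $B\itpk$; in particular it is \assep{} in $B\itpk$, so by the equivalence \ref{lbsi}$\Leftrightarrow$\ref{lbs1} of \refL{Lbs}, $X$ is \assep{} in $B$. It remains to extract $\E\norm X^k<\infty$. For this, note $\norm{X\tpk}_\eps = \norm X^k$ by the crossnorm identity \eqref{pik} (applied with all factors equal to $X$), and Bochner integrability of $X\tpk$ gives $\E\norm{X\tpk}_\eps<\infty$ by \refT{TBochner} again; hence $\E\norm X^k<\infty$.

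Finally, for $\ref{tpib3}\implies\ref{tpib1}$: assume $\E\norm X^k<\infty$ and $X$ is \assep. Since $X$ is \wmeas{} and \assep, \refT{Twm} gives that $X$ is \Bmeas; then \refL{Lbk} shows $X\tpk$ is \Bmeas{} in $B\ptpk$. Moreover $\E\norm{X\tpk}_\pi = \E\norm X^k<\infty$, again by the crossnorm identity \eqref{pik}. So by \refT{TBochner}, $X\tpk$ is Bochner integrable in $B\ptpk$, i.e., the projective moment exists in Bochner sense. This closes the cycle.

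I do not anticipate a serious obstacle here; the proof is essentially a matter of assembling \refT{TBochner}, \refT{Twm}, \refL{Lbk}, \refL{Lbs}, and the crossnorm identity in the right order. The one point requiring a little care is the measurability direction in $\ref{tpib2}\implies\ref{tpib3}$: one cannot directly conclude $X$ is \assep{} from $X\tpk$ being \Bmeas{} in the injective product without invoking \refL{Lbs}\ref{lbsi}$\Rightarrow$\ref{lbs1}, whose proof exploits that $\innprod{(\xx)\tpk, x\tpk} = \innprod{\xx,x}^k$ detects membership in the relevant separable subspace — that is the genuinely non-trivial input, but it has already been done.
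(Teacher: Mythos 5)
Your proof is correct and uses exactly the same ingredients as the paper's own argument (Theorem~\ref{TBochner}, Theorem~\ref{Twm}, Lemmas \ref{Lbk} and \ref{Lbs}, and the crossnorm identity $\norm{X\tpk}_\pi=\norm{X\tpk}_\eps=\norm{X}^k$); the only difference is cosmetic, namely that you arrange the argument as a cycle of implications while the paper first notes that \ref{tpib1} and \ref{tpib2} each force $X$ to be \assep{} and then reduces to that case. No gap to report.
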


\begin{proof}
By \refT{TBochner} and \refL{Lbs}, \ref{tpib1} and \ref{tpib2} both imply
that $X$ is \assep: Hence it suffices to consider the case when $X$ is
\assep, so by \refT{Twm} %we may assume that 
$X$ is \Bmeas.
By \refL{Lbk}, $X\tpk$ is \Bmeas, and the result follows by
\refT{TBochner}, since
$\norm{X\tpk}_{B\ptpk}=\norm{X\tpk}_{B\itpk}=\norm{X}^k$.
\end{proof}

We turn to Dunford integrals.
We first give a simple result on the existence of the weak \kth{} moment
\eqref{momweak}. Cf.\ \refR{RPpq}.

\begin{lemma}\label{Lweak} 
Suppose that $X$ is weakly measurable.
Then the following are equivalent.
\begin{romenumerate}
\item \label{lweak2}
The weak \kth{} moment $\E\bigpar{\xx_1(X)\dotsm\xx_k(X)}$ exists for every
\\
$\xx_1,\dots,\xx_k\in B\q$.   
\item \label{lweak3}  
$\E|\xx(X)|^k<\infty$ for every $\xx\in B\q$.
\item \label{lweak4}  
$\sup\set{\E|\xx(X)|^k:\norm \xx\le1}<\infty$.
\item \label{lweakt}
$\dunx:\xx\mapsto \xx(X)$ is a bounded operator $B\q\to L^k(\P)$. 
\end{romenumerate}

In this case, $\E\bigabs{\xx_1(X)\dotsm\xx_k(X)}$ is bounded for
$\xx_1,\dots,\xx_k$ in the unit ball of $B\q$.
\end{lemma}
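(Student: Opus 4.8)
The plan is to prove the chain of equivalences \ref{lweak2}$\implies$\ref{lweak3}$\implies$\ref{lweak4}$\iff$\ref{lweakt}$\implies$\ref{lweak2}, since each implication is then short. First, \ref{lweak2}$\implies$\ref{lweak3} is immediate by taking $\xx_1=\dots=\xx_k=\xx$. Next, \ref{lweak3}$\implies$\ref{lweakt}: the hypothesis says that the linear map $\dunx\colon\xx\mapsto\xx(X)$, which we already know maps $B\q$ into $L^1(\P)$ and is bounded there (as recalled in \refS{Sintegrals} via the closed graph theorem), actually takes values in $L^k(\P)$; a second application of the closed graph theorem shows $\dunx\colon B\q\to L^k(\P)$ is bounded. (For the closed graph argument: if $\xx_n\to\xx$ in $B\q$ and $\dunx\xx_n\to g$ in $L^k(\P)$, then also $\dunx\xx_n\to g$ in $L^1(\P)$, while $\dunx\xx_n\to\dunx\xx$ in $L^1(\P)$ since $\dunx$ is $L^1$-bounded, so $g=\dunx\xx$ a.s.) Then \ref{lweakt}$\iff$\ref{lweak4} is just the statement that boundedness of $\dunx\colon B\q\to L^k(\P)$ means $\sup\set{\norm{\dunx\xx}_{L^k(\P)}^k:\norm\xx\le1}=\sup\set{\E|\xx(X)|^k:\norm\xx\le1}<\infty$.

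It remains to close the loop with \ref{lweak4}$\implies$\ref{lweak2} (which also gives the final assertion of the lemma). Here the key tool is \Holder's inequality for $k$ factors: for any $\xx_1,\dots,\xx_k\in B\q$,
\begin{equation*}
\E\bigabs{\xx_1(X)\dotsm\xx_k(X)}
\le\prod_{i=1}^k\bigpar{\E|\xx_i(X)|^k}^{1/k}.
\end{equation*}
If $C\=\sup\set{\E|\xx(X)|^k:\norm\xx\le1}<\infty$, then the right-hand side is at most $C^{\,}\prod_{i=1}^k\norm{\xx_i}$ after rescaling each $\xx_i$ to the unit ball; in particular it is finite, so the product $\xx_1(X)\dotsm\xx_k(X)$ is integrable and the weak \kth{} moment $\E\bigpar{\xx_1(X)\dotsm\xx_k(X)}$ exists. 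Taking $\norm{\xx_i}\le1$ gives the bound $\E\bigabs{\xx_1(X)\dotsm\xx_k(X)}\le C$, which is exactly the last sentence of the lemma.

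I do not expect any real obstacle here; the only mild subtlety is that one should not conflate ``exists'' with ``is finite after the fact'' — one must check integrability of the product $\xx_1(X)\dotsm\xx_k(X)$ before speaking of its expectation, and the $k$-fold \Holder{} estimate does exactly this. One should also note at the outset that \emph{a priori} $\xx(X)$ is measurable for each $\xx$ (since $X$ is weakly measurable), so that the expressions $\E|\xx(X)|^k$ and $\E|\xx_1(X)\dotsm\xx_k(X)|$ make sense as (possibly infinite) integrals of nonnegative measurable functions; this is what licenses the use of the closed graph theorem and of \Holder's inequality throughout.
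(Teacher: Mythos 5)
Your proposal is correct and follows essentially the same route as the paper: the paper proves \ref{lweak2}$\iff$\ref{lweak3} via the choice $\xx_1=\dots=\xx_k=\xx$ and \Holder's inequality, \ref{lweak3}$\iff$\ref{lweakt} via the closed graph theorem, \ref{lweak4}$\iff$\ref{lweakt} by definition, and the final bound by \Holder{} again, which are exactly your ingredients arranged cyclically rather than pairwise. Your explicit verification of the closed graph via $L^1$-convergence is a harmless elaboration of the paper's one-line appeal to the closed graph theorem.
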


\begin{proof}
\ref{lweak2}$\iff$\ref{lweak3}:
If \ref{lweak2} holds, then \ref{lweak3} follows by choosing
$\xx_1=\dots=\xx_k=\xx$. The converse follows by \Holder's inequality.

\ref{lweak4}$\iff$\ref{lweakt}: By definition.

\ref{lweak3}$\iff$\ref{lweakt}:
If \ref{lweak3}  holds, then $\dunx:B\q\to L^k(\P)$
is bounded by the closed graph theorem.
The converse is trivial.

The final claim follows by \ref{lweak4} and \Holder's inequality.
\end{proof}

In all cases we know, the existence of the weak \kth{} moment is equivalent
to the existence of the injective \kth{} moment in Dunford sense. We
have, however, failed to prove this in full generality and suspect that there
are counterexamples. We thus give a theorem with some technical sufficient
conditions, and
 leave it as an open problem whether the theorem holds more generally.
(This is, at least for bounded $X$, equivalent to whether
$X$ \wmeas{} implies $X\tpk$ \wmeas{} in $B\itpk$; cf.\ \refR{Rweaktp}
which shows that this does not hold for the projective \tp.)

We first state a lemma.
\begin{lemma}
\label{Lxxmeas}
Suppose that $X$ is weakly measurable and \assep. 
Then $\innprod{\xx,X}$ is
jointly measurable on $B\q\times \gO$, where $B\q$ is given the
Borel $\gs$-field for the \weakx{} topology.
\end{lemma}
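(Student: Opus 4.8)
The plan is to approximate $X$ pointwise by measurable, countably-valued functions --- for which joint measurability is transparent --- and then pass to the limit.

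First I would reduce to the case that $X$ takes values in a fixed separable closed subspace. Since $X$ is \assep, there is a separable closed subspace $B_1\subseteq B$ with $\P(X\in B_1)=1$; as $\ofp$ is complete, I would modify $X$ on the $\P$-null set $\set{X\notin B_1}$ (say, setting $X=0$ there). This preserves weak measurability, and for each fixed $\xx\in B\q$ it changes $\innprod{\xx,X}$ only on a null set, which is all that is needed in the intended Fubini-type applications. So assume $X(\go)\in B_1$ for every $\go$; by \refT{Twm}, $X$ is then Borel measurable, so $\go\mapsto\norm{X(\go)-x}$ is measurable for each $x\in B_1$.

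Next I would fix a countable dense set $\set{d_n}_{n\ge1}$ in $B_1$ and, for each $j\ge1$, let $\nu_j(\go)$ be the least $n$ with $\norm{X(\go)-d_n}<1/j$ (which exists by density). Borel measurability of $X$ makes $\nu_j\colon\gO\to\bbN$ measurable, and $X_j(\go)\=d_{\nu_j(\go)}$ is a measurable, countably-valued $B_1$-valued random variable with $\norm{X_j(\go)-X(\go)}<1/j$, hence $X_j(\go)\to X(\go)$ for every $\go$. For each fixed $j$,
\begin{equation*}
  \innprod{\xx,X_j(\go)}=\sum_{n=1}^\infty\innprod{\xx,d_n}\,\ett{\nu_j(\go)=n},
\end{equation*}
and this is jointly measurable on $B\q\times\gO$: each summand is the product of the \weakx-continuous (hence Borel-measurable) function $\xx\mapsto\innprod{\xx,d_n}$ on $B\q$ and the $\cF$-measurable indicator $\ett{\nu_j=n}$, so it is measurable for the product $\gs$-field generated by the measurable rectangles; only one summand is nonzero at each point, so the sum is again measurable for that $\gs$-field. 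Letting $j\to\infty$ and using pointwise convergence then gives joint measurability of $\innprod{\xx,X}$.

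The main obstacle is the reduction to a $B_1$-valued version of $X$: off the separable subspace $B_1$, i.e.\ on the exceptional null set, the range of $X$ can be non-separable, and there $\innprod{\xx,X}$ genuinely need not be jointly measurable in the strict sense, so one cannot avoid either modifying $X$ on that null set (as above) or reading the conclusion modulo such a null set. Once $X$ is taken to be everywhere $B_1$-valued, the rest is routine, and a reader worried about simple versus countably-valued approximants may instead invoke that a Borel measurable map into a separable metric space is an everywhere uniform limit of such functions.
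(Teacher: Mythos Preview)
Your proof is correct and follows essentially the same approach as the paper: approximate $X$ by measurable simple (or countably-valued) functions---for which joint measurability is immediate as a sum of products of \weakx-continuous and $\cF$-measurable factors---and pass to the limit. The paper's version is terser (it simply invokes the simple-function characterization of Bochner measurability from \refR{Rmeas} rather than constructing the approximants explicitly), and your explicit treatment of the exceptional null set is more careful than the paper's, but the core argument is the same.
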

\begin{proof}
$X$ is \Bmeas{} by \refT{Twm} and thus there is a
sequence $X_n$ of measurable simple \rv{s} in $B$ such that $X_n\to X$ a.s.,
see \refR{Rmeas}. Then each $\innprod{\xx ,X_n}$ is 
jointly measurable on $B\q\times \gO$, 
and thus $\innprod{\xx,X}$ is
jointly measurable.
\end{proof}

\begin{theorem}\label{TID} 
 \begin{romenumerate}[-18pt]
\item \label{tidi} 
If the injective \kth{} moment $\E X\itpk$ exists in Dunford sense, then the
weak \kth{} moment $\E\bigpar{\xx_1(X)\dotsm\xx_k(X)}$ exists for
  every $\xx_1,\dots,\xx_k\in B\q$, and 
\begin{equation}
  \label{tid}
\E\bigpar{\xx_1(X)\dotsm\xx_k(X)}=\innprod{\E X^{\itensor k},\mx\xx\tensor}.
\end{equation}
Furthermore,
$\sup\set{\E|\xx(X)|^k:\norm \xx\le1}<\infty$. 

\item\label{tidd}
Suppose that $X$ is weakly measurable,
and that one of the following additional condition holds.
\begin{bienumerate}
\item \label{tid1} 
$B$ is separable.
\item \label{tid2} 
$X$ is \assep.
\item \label{tid3}  
Every integral $k$-linear form  $B^k\to\bbR$ is nuclear. 
\end{bienumerate}
Then the  injective moment $\E X^{\itensor k}$ exists in Dunford sense if
and only if the weak \kth{} moment exists, \ie, if and only if
$\E|\xx(X)|^k<\infty$ for every $\xx\in B\q$.
  \end{romenumerate}
\end{theorem}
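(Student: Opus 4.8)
The plan is to prove \ref{tidi} straight from the definition of the Dunford integral, and then to get the nontrivial implication in \ref{tidd} by reducing an arbitrary functional on $B\itpk$ to the integral representation of \refT{Tinj*}, handling the three hypotheses one at a time.

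For \ref{tidi}, I would use that, by the discussion around \eqref{*tp}, an elementary tensor $\mx\xx\tensor\in\mx{B\q}\tensor$ is a bounded linear functional on $B\itpk$ of norm $\norm{\xx_1}\dotsm\norm{\xx_k}$ with $\innprod{\mx\xx\tensor,x_1\tensor\dotsm\tensor x_k}=\xx_1(x_1)\dotsm\xx_k(x_k)$, so that $\innprod{\mx\xx\tensor,X(\go)\tpk}=\xx_1(X(\go))\dotsm\xx_k(X(\go))$ pointwise in $\go$. If $\E X\itpk$ exists in Dunford sense then, by definition, $\innprod{\chi,X\itpk}\in L^1(\P)$ for every $\chi\in(B\itpk)\q$ and $\innprod{\E X\itpk,\chi}=\E\innprod{\chi,X\itpk}$ by \eqref{dunford}; taking $\chi=\mx\xx\tensor$ yields at once the existence of the weak \kth{} moment and \eqref{tid}. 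For the supremum bound I would note that $T_{X\itpk}\colon\chi\mapsto\innprod{\chi,X\itpk}$ is a bounded operator $(B\itpk)\q\to L^1(\P)$ (closed graph theorem) and apply it to the functionals $(\xx)\tpk$, which have norm $\norm\xx^k$: for $\norm\xx\le1$, $\E|\xx(X)|^k=\E\bigabs{\innprod{(\xx)\tpk,X\itpk}}\le\norm{T_{X\itpk}}$, so $\sup\set{\E|\xx(X)|^k:\norm\xx\le1}<\infty$.

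For \ref{tidd}, the forward implication is \ref{tidi}. Conversely, assuming the weak \kth{} moment exists, \refL{Lweak} gives $C\=\sup\set{\E|\yy(X)|^k:\norm\yy\le1}<\infty$, and it is enough to show $\innprod{\chi,X\itpk}\in L^1(\P)$ for every $\chi\in(B\itpk)\q$, since then $\E X\itpk$ exists automatically as an element of $(B\itpk)\qx$. Under \ref{tid1} or \ref{tid2}, $X$ is \assep{} and \wmeas, hence \Bmeas{} (\refT{Twm}), so $X\itpk$ is \Bmeas{} in $B\itpk$ (as in \refL{Lbk}, the map $x\mapsto x\tpk$ being continuous $B\to B\itpk$), in particular \wmeas, whence each $\innprod{\chi,X\itpk}$ is measurable; then, writing $\chi$ via \refT{Tinj*} as $\chi(x_1\tensor\dotsm\tensor x_k)=\int_{\mx K\times}\xx_1(x_1)\dotsm\xx_k(x_k)\dd\mu$ for a finite signed measure $\mu$ on $\mx K\times$, and using \refL{Lxxmeas} (and taking products) for joint measurability of $(\xx_1,\dots,\xx_k,\go)\mapsto\xx_1(X(\go))\dotsm\xx_k(X(\go))$, Tonelli's theorem and \Holder's inequality in $k$ factors give
\begin{multline*}
\E\bigabs{\innprod{\chi,X\itpk}}
\le\int_{\mx K\times}\E\bigabs{\xx_1(X)\dotsm\xx_k(X)}\dd\abs{\mu}\\
\le\int_{\mx K\times}\prod_{i=1}^k\bigpar{\E|\xx_i(X)|^k}^{1/k}\dd\abs{\mu}
\le C\,\norm{\mu}<\infty .
\end{multline*}
Under \ref{tid3}, every $\chi\in(B\itpk)\q$ is integral (\refT{Tinj*}), hence nuclear by hypothesis, so by \eqref{proj2*} $\chi=\sumj\gl_j\,\xx_{1j}\tensor\dotsm\tensor\xx_{kj}$ with $\gl_j\ge0$, $\sumj\gl_j<\infty$, $\norm{\xx_{ij}}\le1$; then $\innprod{\chi,X\itpk}=\sumj\gl_j\,\xx_{1j}(X)\dotsm\xx_{kj}(X)$, each term measurable since $X$ is \wmeas, and $\sumj\gl_j\,\E\bigabs{\xx_{1j}(X)\dotsm\xx_{kj}(X)}\le C\sumj\gl_j<\infty$, so the series converges absolutely in $L^1(\P)$ to a measurable integrable function. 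In all cases $X\itpk$ is Dunford integrable, and \eqref{tid} follows from \ref{tidi}.

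The hard part is the converse in \ref{tidd}: the only scalar datum available is the bound $C$ on the \emph{one-variable} moments $\E|\xx(X)|^k$, and it must be turned into control of $\E\bigabs{\innprod{\chi,X\itpk}}$ for an arbitrary integral form $\chi$, whose representing measure genuinely mixes distinct functionals. The estimate $\E\bigabs{\xx_1(X)\dotsm\xx_k(X)}\le C$ for $\xx_i$ in the unit balls, via \Holder, is what makes this work, together with the joint measurability supplied by \refL{Lxxmeas}; it is precisely the need for that joint measurability — equivalently, for weak measurability of $X\itpk$ in $B\itpk$ — that forces one of the ad hoc hypotheses \ref{tid1}--\ref{tid3}, and, in line with the remarks preceding the theorem, I would not expect the statement to survive without such an assumption.
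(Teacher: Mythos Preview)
Your proof is correct and follows essentially the same route as the paper: part \ref{tidi} straight from the definition of the Dunford integral (the paper invokes \refL{Lweak} for the supremum bound, which amounts to your closed graph argument), and part \ref{tidd} via the integral representation of \refT{Tinj*} together with \refL{Lxxmeas} and Fubini/Tonelli under \ref{tid1}--\ref{tid2}, and via the nuclear series \eqref{proj2*} under \ref{tid3}. The only cosmetic difference is that you first establish measurability of $\innprod{\chi,X\itpk}$ through Bochner measurability of $X\itpk$ before the Tonelli step, whereas the paper lets the joint-measurability/Fubini argument carry both measurability and integrability at once; either way works.
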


\begin{proof}
\pfitemref{tidi}
Directly from the definition of the Dunford integral, since $\mx\xx\tensor$
is a continuous linear functional on $B\itpk$.
The final claim follows by \refL{Lweak}. 

\pfitemref{tidd} 
By \ref{tidi} and \refL{Lweak}, it remains to show that if the weak \kth{}
moment exists, then
$\chi(X\tpk)$ is an integrable \rv{}, and in particular measurable, for every
$\chi\in(B\itpk)^*$. By \refT{Tinj*}, 
$\chi$ is represented by a signed
measure $\mu\in M(K^k)$, where $K$ is the closed unit
ball of $B\q$, and
\begin{equation}\label{thk}
  \chi(X\tpk)=\int_{K^k}\xx_1(X)\dotsm\xx_k(X)\dd\mu(\xx_1,\dots,\xx_k).
\end{equation}

\ref{tid1} obviously is a special case of \ref{tid2}.
If \ref{tid2} holds, then
 $\innprod{\xx,X}$ is
jointly measurable  on $B\q\times \gO$ by \refL{Lxxmeas}.
Hence,
$\xx_1(X)\dotsm\xx_k(X)$ is jointly measurable on $K^k\times \gO$, and
thus we can take expectations in \eqref{thk} and apply Fubini's theorem,
yielding 
\begin{equation}\label{thx}
\E \chi(X\tpk)=\int_{K^k}\E\bigpar{\xx_1(X)\dotsm\xx_k(X)}
\dd\mu(\xx_1,\dots,\xx_k);
\end{equation}
note that $\E\bigabs{\xx_1(X)\dotsm\xx_k(X)}$ is bounded on $K^k$ by
\refL{Lweak}, so the double integral is absolutely convergent.

If \ref{tid3} holds, then the integral form $\chi$ is nuclear and thus, by
\eqref{proj2*}, 
\begin{equation}
  \label{thz}
\chi=\sumn \gl_n \xx_{1n}\tensor\dotsm\tensor\xx_{kn}
\end{equation}
where $\gl_n\ge0$,
$\sumn\gl_n<\infty$ and each
$\xx_{in}\in K$. Consequently, using \eqref{*tp},
\begin{equation}\label{thy}
  \innprod{\chi,X\tpk}
=
\sumn \gl_n \xx_{1n}(X)\dotsm\xx_{kn}(X) ,
\end{equation}
which is integrable by \refL{Lweak}.
Taking expectations in \eqref{thy}, we see that \eqref{thx} holds in this
case too, now for the finite discrete measure
\begin{equation}
  \label{thw}
\mu\=\sumn \gl_n \gd_{(\xx_{1n},\dotsm,\xx_{kn})}.
\end{equation}
\end{proof}

\begin{remark}
We do not know any characterization of the Banach spaces $B$ such that every
integral $k$-linear form is nuclear. For $k=2$, this can be translated to
operators $B\to B\q$; a sufficient condition then is that $B\q$ has the
Radon--Nikod\'ym property and the \ap, see \cite[Theorem 5.34]{Ryan}.
\end{remark}

\begin{corollary}
  \label{Citpk=d}
  Suppose that $X$ and\/ $Y$ are \wmeas{} $B$-valued \rv{s}, 
and that one of the following conditions holds:
\begin{alphenumerate}
\item  \label{cit1} 
$B$ is separable.
\item % \label{tid2}
$X$ and $Y$ are \assep.
%(In particular, this holds if $B$ is separable.)
\item %\label{tid3}  
Every integral $k$-linear form  $B^k\to\bbR$ is nuclear. 
\end{alphenumerate}
Then \eqref{weak=} holds
%with finite and well-defined expectations for every $\xx_1,\dots,\xx_k$, 
if and only 
$\E X^{\itensor k} = \E Y^{\itensor k}$,
with the injective \kth{} moments existing in Dunford sense.
(In other words, the injective moment is determined by
the weak moment.)
\end{corollary}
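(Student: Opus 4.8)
The plan is to read everything off \refT{TID}. To begin with, note that \eqref{weak=}, as written, presupposes that the weak \kth{} moments of both $X$ and $Y$ exist, i.e.\ (by \refL{Lweak}) that $\E|\xx(X)|^k<\infty$ and $\E|\xx(Y)|^k<\infty$ for every $\xx\in B\q$. Under any one of the hypotheses \ref{cit1}--(c) this is exactly the condition appearing in \refT{TID}\ref{tidd} (matching \ref{tid1}, \ref{tid2}, \ref{tid3} respectively), so it already guarantees that $\E X\itpk$ and $\E Y\itpk$ exist in Dunford sense. Hence in the forward direction the existence assertion in the statement is automatic, and in the backward direction it is part of the hypothesis; in either case it remains only to prove the equivalence of \eqref{weak=} with the equality $\E X\itpk=\E Y\itpk$ in $(B\itpk)\qx$.

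For the direction \eqref{weak=} $\implies$ $\E X\itpk=\E Y\itpk$, I would use that two elements of $(B\itpk)\qx$ coincide once they agree against every $\chi\in(B\itpk)\q$, together with the defining identity $\innprod{\E X\itpk,\chi}=\E \chi(X\tpk)$ of the Dunford integral. Fix $\chi\in(B\itpk)\q$ and, by \refT{Tinj*}, let $\mu\in M(K^k)$ be a signed measure representing it, where $K$ is the closed unit ball of $B\q$ with the weak${}\q$ topology. The computation carried out in the proof of \refT{TID}\ref{tidd} then yields \eqref{thx}, namely
\[
\E \chi(X\tpk)=\int_{K^k}\E\bigpar{\xx_1(X)\dotsm\xx_k(X)}\dd\mu(\xx_1,\dots,\xx_k),
\]
the double integral being absolutely convergent since the integrand is bounded on $K^k$ by \refL{Lweak}; the same formula holds for $Y$ with the same $\mu$, since $\mu$ depends only on $\chi$. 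Thus, if \eqref{weak=} holds, so that the two integrands coincide pointwise on $K^k$, then $\E \chi(X\tpk)=\E \chi(Y\tpk)$ for every $\chi$, which is precisely $\E X\itpk=\E Y\itpk$.

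The converse is immediate from \refT{TID}\ref{tidi}: assuming $\E X\itpk=\E Y\itpk$ in Dunford sense, for all $\xx_1,\dots,\xx_k\in B\q$ we get
\[
\E\bigpar{\xx_1(X)\dotsm\xx_k(X)}=\innprod{\E X\itpk,\mx\xx\tensor}=\innprod{\E Y\itpk,\mx\xx\tensor}=\E\bigpar{\xx_1(Y)\dotsm\xx_k(Y)},
\]
which is \eqref{weak=}.

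The one place where the argument is not purely formal is the forward direction: to conclude $\E X\itpk=\E Y\itpk$ as elements of $(B\itpk)\qx$ one must test against \emph{every} $\chi\in(B\itpk)\q$, i.e.\ against every integral $k$-linear form, and not merely against the elementary tensors $\mx\xx\tensor$ (testing against those just recovers \eqref{weak=}, so that alone would be circular). Supplying this is exactly what the measure representation of \refT{Tinj*} does — equivalently, under hypothesis (c), the nuclear representation \eqref{proj2*} — and both of these were already handled inside the proof of \refT{TID}, so here it suffices to quote \eqref{thx}. Everything else is bookkeeping about which of \ref{cit1}--(c) is in force, feeding straight into the corresponding clause of \refT{TID}\ref{tidd}.
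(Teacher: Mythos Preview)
Your proposal is correct and follows essentially the same approach as the paper: both directions are read off \refT{TID}, with the converse coming from \eqref{tid} and the forward direction from the representation \eqref{thx} (valid under each of (a)--(c), with a discrete $\mu$ in case (c)) established inside the proof of \refT{TID}. Your added remark that testing only against elementary tensors $\mx\xx\tensor$ would be circular, and that the passage to all $\chi\in(B\itpk)\q$ is exactly what \eqref{thx} supplies, is a helpful clarification but not a departure from the paper's argument.
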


\begin{proof}
If $\E X\itpk=\E Y\itpk$, then the weak moments are equal by \eqref{tid}.

Conversely, suppose that the weak moments exist and are equal.
By \refT{TID}, the injective moments 
$\E X\itpk$ and $\E Y\itpk$ exist in Dunford sense.
Moreover, the proof of \refT{TID} shows that for any
$\chi\in(B\itpk)\q$, \eqref{thx} holds for some signed measure $\mu$,
which shows that if the weak moments are equal, then 
$$
\innprod{\chi,\E X\itpk}
=\E\innprod{\chi, X\itpk}
=\E\innprod{\chi, Y\itpk}
=\innprod{\chi,\E Y\itpk}
$$ 
for every $\chi$ and thus 
$\E X\itpk=\E Y\itpk$ in $(B\itpk)\qx$.
\end{proof}

Recall that $B\itpk$ can be regarded as a subspace of $L((B^*)^k\lxx \bbR)$,
the bounded $k$-linear forms on $B^*$. This leads to the following
interpretation of the injective \kth{} moment when it exists in Pettis (or
Bochner) sense, 
and thus is an element of $B\itpk$, 
which again shows that the
injective \kthm{} is essentially the same as the weak moment defined by
\eqref{momweak}. 

\begin{theorem}\label{Tjepp}
If $X$ is a $B$-valued \rv{} such that the injective \kth{} moment
$\E X\itpk$  exists in Pettis sense, then
$\E X\itpk\in B\itpk$ is the $k$-linear form on $B\q$
\begin{equation*}
(\xx_1,\dots,\xx_k)\mapsto \E \bigpar{\xx_1(X)\dotsm\xx_k(X)}.  
\end{equation*}
\end{theorem}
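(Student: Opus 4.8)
The plan is to test the Pettis integral $\E X\itpk$ against the elementary tensors $\mx\xx\tensor\in(B\q)^{\tensor k}$, using the identification of $B\itpk$ with a space of bounded $k$-linear forms on $B\q$ set up in \refS{Stensor}. Recall that an element $u\in B\itpk$ is, by construction \eqref{onbx}--\eqref{norminj}, a bounded $k$-linear form on $B\q$, and that each $\mx\xx\tensor$ defines via \eqref{*tp} a bounded (indeed nuclear) linear functional on $B\itpk$.

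First I would record the elementary identity that, for every $u\in B\itpk$ and all $\xx_1,\dots,\xx_k\in B\q$,
\begin{equation*}
 u(\xx_1,\dots,\xx_k)=\innprod{\mx\xx\tensor,u}.
\end{equation*}
Both sides are continuous linear functionals of $u$ on $B\itpk$: the left because evaluation of a bounded $k$-linear form at a fixed point is a bounded functional on $L((B\q)^k;\bbR)$ and the injective norm on $B\itpk$ is precisely the induced form norm; the right by the extension property recorded after \eqref{*tp}. By \eqref{onbx} the two sides agree when $u=\mx x\tensor$ is an elementary tensor, and since finite linear combinations of elementary tensors are dense in $B\itpk$, the identity holds for all $u\in B\itpk$.

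Then I would simply apply this with $u=\E X\itpk$, which lies in $B\itpk$ by the hypothesis that the injective moment exists in Pettis sense. The defining property \eqref{dunford} of the Pettis integral, applied to the functional $\chi=\mx\xx\tensor\in(B\itpk)\q$, yields
\begin{equation*}
 \bigpar{\E X\itpk}(\xx_1,\dots,\xx_k)
 =\innprod{\mx\xx\tensor,\E X\itpk}
 =\E\innprod{\mx\xx\tensor,X\itpk}
 =\E\bigpar{\xx_1(X)\dotsm\xx_k(X)},
\end{equation*}
where the last equality is \eqref{*tp} applied pointwise to the elementary tensor $X(\go)\itpk=X(\go)\tensor\dotsm\tensor X(\go)$. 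Since $\xx_1,\dots,\xx_k\in B\q$ are arbitrary, this is exactly the asserted description of $\E X\itpk$ as a $k$-linear form on $B\q$.

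The whole argument is bookkeeping with the definitions in Sections \ref{Stensor} and \ref{Sintegrals}; the only step calling for a moment's care is extending the identity $u(\xx_1,\dots,\xx_k)=\innprod{\mx\xx\tensor,u}$ from elementary tensors to all of $B\itpk$ by density and continuity, and the measurability issues are irrelevant here since the Pettis integral is assumed to exist. I do not foresee a genuine obstacle.
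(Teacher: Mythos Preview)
Your proposal is correct and follows essentially the same approach as the paper. The paper's proof is simply a compressed version: it takes the identity $u(\xx_1,\dots,\xx_k)=\innprod{\mx\xx\tensor,u}$ as understood from the definitions in \refS{Stensor}, and instead of re-deriving the Pettis/Dunford relation it cites equation \eqref{tid} from \refT{TID}\ref{tidi}, which is exactly the computation you spell out in your final display.
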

\begin{proof} 
When  $\E X\itpk\in B\itpk$ is regarded as a $k$-linear form,
its value 
at $(\xx_1,\dots,\xx_k)\in (B^*)^k$ equals
$\innprod{\E X^{\itensor k},\mx\xx\tensor}$, and the result follows by
\eqref{tid}. 
\end{proof}

This yields a simpler version of \refC{Citpk=d}, assuming that the moments
exist in Pettis sense.

\begin{corollary}
  \label{Citpk=p}
  Suppose that $X$ and\/ $Y$ are  $B$-valued \rv{s} 
such that 
the injective \kth{} moments $\E X^{\itensor k}$ and $\E Y^{\itensor k}$  
exist in Pettis sense. 
Then \eqref{weak=} holds
%with finite and well-defined expectations for every $\xx_1,\dots,\xx_k$, 
if and only 
$\E X^{\itensor k} = \E Y^{\itensor k}$.
(In other words, the injective moment is determined by
the weak moment.) 
\nopf
\end{corollary}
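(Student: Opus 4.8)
The plan is to obtain this as an immediate consequence of \refT{Tjepp}. First I would apply \refT{Tjepp} to $X$: since the injective \kth{} moment $\E X\itpk$ exists in Pettis sense, it is an element of $B\itpk$, and, under the canonical identification of $B\itpk$ with a closed subspace of $L((B\q)^k;\bbR)$ coming from the definition of the injective tensor norm, it \emph{is} the $k$-linear form
\[
(\xx_1,\dots,\xx_k)\mapsto\E\bigpar{\xx_1(X)\dotsm\xx_k(X)}
\]
on $B\q$. Applying \refT{Tjepp} to $Y$ gives the analogous description of $\E Y\itpk$.

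Since $B\itpk$ is (identified with) a subspace of $L((B\q)^k;\bbR)$, two of its elements coincide if and only if the associated $k$-linear forms agree at every point $(\xx_1,\dots,\xx_k)\in(B\q)^k$. Combining this with the previous step, $\E X\itpk=\E Y\itpk$ holds if and only if $\E\bigpar{\xx_1(X)\dotsm\xx_k(X)}=\E\bigpar{\xx_1(Y)\dotsm\xx_k(Y)}$ for all $\xx_1,\dots,\xx_k\in B\q$, which is precisely condition \eqref{weak=}. (Alternatively, the implication from $\E X\itpk=\E Y\itpk$ to \eqref{weak=} is immediate from \eqref{tid}, exactly as in the proof of \refC{Citpk=d}, so that only the reverse implication really needs \refT{Tjepp}.)

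I expect no genuine obstacle here: the corollary is essentially a repackaging of \refT{Tjepp}. The only points requiring a little care are to use the Pettis-integrability hypothesis for \emph{both} $X$ and $Y$, so that both moments actually lie in $B\itpk$ and the $k$-linear-form description applies, and to recall that the embedding of $B\itpk$ into $L((B\q)^k;\bbR)$ is injective, which is built into the definition of the injective tensor norm. Thus the proof is a two-line deduction.
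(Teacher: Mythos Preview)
Your proposal is correct and matches the paper's approach exactly: the corollary is stated immediately after \refT{Tjepp} with no proof given (the \texttt{\textbackslash nopf} marker), as it is an obvious consequence of that theorem applied to both $X$ and $Y$.
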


For the projective \kth{} moment in Dunford sense, there is a general similar
equivalence, now using arbitrary bounded $k$-linear forms on $B$.
We have no simple necessary and sufficient condition for the existence,
but we give a sufficient condition which is necessary in at least some cases
(\refE{EPD1}), but not in others (\refE{EPD2}).

\begin{theorem} \label{TPD}
%Suppose that $X$ is weakly measurable.
The following are equivalent.
\begin{romenumerate}
\item \label{tpd1}  
The projective moment $\E X^{\ptensor k}$ exists in Dunford sense.
\item \label{tpd2}  
The  moment $\E{\ga(X,\dots,X)}$ exists for
  every bounded $k$-linear form $\ga:B^k\to\bbR$.
\end{romenumerate}
In this case, 
\begin{equation}
  \label{tpd}
\E{\ga(X,\dots,X)}=\innprod{\E X^{\ptensor k},\ga}
\end{equation}
for every $\ga\in L(B^k;\bbR)$. 

Moreover, if $\EE\norm{X}^k<\infty$
and $\ga(X,\dots,X)$ is \meas{}
for every $\ga\in L(B^k;\bbR)$,
then \ref{tpd1} and \ref{tpd2} hold.
\end{theorem}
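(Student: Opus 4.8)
The plan is to deduce everything from the duality $(B\ptpk)\q=L(B^k;\bbR)$ of \refT{Tproj*} together with the bare definition of the Dunford integral. First I would observe that, via \refT{Tproj*}, every $\chi\in(B\ptpk)\q$ is a bounded $k$-linear form $\ga$ on $B$, and that under the pairing $\innprod{\ga,x\tpk}=\ga(x,\dots,x)$ the composition of $\chi$ with the (non-linear) map $\go\mapsto X(\go)\tpk\in B\ptpk$ is exactly the scalar random variable $\ga(X,\dots,X)$. By definition (applied with $B\ptpk$ in place of $B$ and $X\tpk$ in place of $X$), the Dunford integral $\E X\ptpk$ exists precisely when $\chi(X\tpk)\in L^1(\P)$ for every $\chi\in(B\ptpk)\q$; by the previous observation this says exactly that $\ga(X,\dots,X)\in L^1(\P)$ for every $\ga\in L(B^k;\bbR)$, which is condition \ref{tpd2}. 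Hence \ref{tpd1}$\iff$\ref{tpd2}.

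For the identity \eqref{tpd}, I would quote the defining relation $\innprod{\E X\ptpk,\chi}=\E\chi(X\tpk)$ of the Dunford integral, take $\chi=\ga$, and rewrite the right-hand side as $\E\ga(X,\dots,X)$ via the pairing above.

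For the ``moreover'' part, note that measurability of each $\ga(X,\dots,X)$ is exactly condition \ref{tpd2} with the integrability requirement dropped, so it remains only to supply a dominating function. Since $\EE\norm X^k<\infty$, the definition of the upper integral yields a measurable $Z$ with $\norm X^k\le Z$ a.s.\ and $\E Z<\infty$; then $\abs{\ga(X,\dots,X)}\le\norm\ga\,\norm X^k\le\norm\ga\,Z$, so $\ga(X,\dots,X)\in L^1(\P)$ for every $\ga$, establishing \ref{tpd2} and hence \ref{tpd1}.

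I do not expect a genuine obstacle: the whole content sits in \refT{Tproj*} and the definition of Dunford integrability. The one point to watch is that, unlike the Bochner case of \refL{Lbk}, weak measurability of $X$ in $B$ does not force $X\tpk$ to be weakly measurable in $B\ptpk$ (see \refR{Rweaktp}); this is exactly why the statement is cast in terms of condition \ref{tpd2} (equivalently, the separate measurability hypothesis in the last sentence) rather than a hypothesis on $X$ alone.
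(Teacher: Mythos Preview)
Your proposal is correct and follows essentially the same approach as the paper: both derive the equivalence and \eqref{tpd} directly from \refT{Tproj*} together with the definition of the Dunford integral, and both handle the ``moreover'' clause via the bound $|\ga(X,\dots,X)|\le\norm\ga\,\norm X^k$. Your write-up is simply a more detailed version of the paper's very brief argument, and your closing remark about why measurability must be assumed separately (citing \refR{Rweaktp}) is apt.
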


\begin{proof}
The equivalence of \ref{tpd1} and \ref{tpd2} is an immediate consequence of
the definition of the Dunford integral and \refT{Tproj*}, and so is
 \eqref{tpd}.

Furthermore,  
$|\ga(X,\dots,X)|\le \norm\ga\norm{X}^k$.
Hence, if $\EE\norm{ X}^k<\infty$, then 
\ref{tpd2} holds provided $\ga(X,\dots,X)$  is measurable.
% for every $\ga\in L(B,\dots,B;\bbR)$. 
\end{proof}

\begin{remark}\label{Rsymmetric}
Although we use multilinear forms $\ga$ in \eqref{multi=} and \refT{TPD}, 
we are only   interested in the 
values $\ga(x,\dots,x)$ on the diagonal. 
This restriction to the diagonal defines a function $\tilde\ga(x):B\to\bbR$,
which is a quadratic form for $k=2$, a cubic form for $k=3$, etc.,
and \eqref{multi=} can be expressed as $\E\tilde\ga(X)=\E\tilde\ga(Y)$ for
all such forms $\tilde\ga$.

Note also that it suffices to consider \emph{symmetric} multilinear forms
$\ga$, since we always may replace $\ga$ by its symmetrization. 
\end{remark}

\begin{corollary}\label{Cptpk=}
  Suppose that $X$ and\/ $Y$ are $B$-valued \rv{s}.
Then
\eqref{multi=} holds, with finite and well-defined expectations for every
$\ga$, 
% with $\ga(X,\dots,X)$ and $\ga(Y,\dots,Y)$ integrable 
if and only 
if the projective \kth{} moments $\E X^{\ptensor k}$ and $\E Y^{\ptensor k}$ 
exist in Dunford sense and
$\E X^{\ptensor k} = \E Y^{\ptensor k}$.
\end{corollary}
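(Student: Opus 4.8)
The plan is to read this off directly from \refT{TPD}, which already contains all the substance; the corollary is just a reformulation in terms of equality of moments. Throughout, recall from \refT{Tproj*} that $(B\ptpk)\q=L(B^k;\bbR)$, so that a bounded $k$-linear form $\ga$ is the same thing as a continuous linear functional on $B\ptpk$, paired as in \eqref{tpd}.

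For the forward implication I would assume that \eqref{multi=} holds with all the expectations finite and well-defined. Since $\E\ga(X,\dots,X)$ then exists for every bounded $k$-linear form $\ga$, condition \ref{tpd2} of \refT{TPD} holds for $X$, so by the equivalence \ref{tpd1}$\iff$\ref{tpd2} the projective moment $\E X\ptpk$ exists in Dunford sense; the same argument applies to $Y$. Now \eqref{tpd} gives, for every $\ga\in L(B^k;\bbR)$, that $\innprod{\E X\ptpk,\ga}=\E\ga(X,\dots,X)=\E\ga(Y,\dots,Y)=\innprod{\E Y\ptpk,\ga}$. Since $\ga$ ranges over all of $(B\ptpk)\q$ and $\E X\ptpk,\E Y\ptpk\in(B\ptpk)\qx$, two elements of the bidual that induce the same functional on the dual must coincide, so $\E X\ptpk=\E Y\ptpk$.

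For the converse I would assume that $\E X\ptpk$ and $\E Y\ptpk$ exist in Dunford sense and are equal. Applying the implication \ref{tpd1}$\implies$\ref{tpd2} of \refT{TPD} to each of $X$ and $Y$ shows that $\E\ga(X,\dots,X)$ and $\E\ga(Y,\dots,Y)$ are finite and well-defined for every $\ga\in L(B^k;\bbR)$, and \eqref{tpd} then yields $\E\ga(X,\dots,X)=\innprod{\E X\ptpk,\ga}=\innprod{\E Y\ptpk,\ga}=\E\ga(Y,\dots,Y)$, which is exactly \eqref{multi=}. There is no real obstacle here; the only point deserving a word of care is the passage, in the forward direction, from ``$\innprod{\E X\ptpk,\ga}=\innprod{\E Y\ptpk,\ga}$ for all $\ga$'' to ``$\E X\ptpk=\E Y\ptpk$'', but this is immediate from \refT{Tproj*} since the $\ga$'s are precisely the functionals separating points of $(B\ptpk)\qx$. (By \refR{Rsymmetric} one could even restrict to symmetric $\ga$, though this is not needed.)
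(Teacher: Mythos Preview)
Your proof is correct and follows exactly the same approach as the paper, which simply records the corollary as ``an immediate consequence of Theorem~\ref{TPD}, together with Theorem~\ref{Tproj*}''; you have merely spelled out the two directions explicitly.
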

\begin{proof}
  An immediate consequence of Theorem \ref{TPD}, together with
  \refT{Tproj*}.
\end{proof}

The problem whether \eqref{multi=} and \eqref{weak=} are equivalent is thus
reduced to the problem whether
$\E X^{\itensor k} = \E Y^{\itensor k}$
implies
$\E X^{\ptensor k} = \E Y^{\ptensor k}$, at least if we
assume that the projective moments exist in Dunford sense, and that one of
the additional assumptions in \refC{Citpk=d} or \ref{Citpk=p} holds. 
By \eqref{momitpk**}, it then is sufficient 
that $\iota\qx$ is injective. However, in applications we prefer not to
use the bidual
(recall that tensor products typically are not reflexive,
even if $B$ is reflexive, see \refR{Rdual});
we thus prefer to
use moments in Pettis or Bochner sense.
For these moments, the question whether
$\E X\ptpk=\E Y\ptpk \iff \E X\itpk=\E Y\itpk$
(for arbitrary $X$ and $Y$ in a given Banach space $B$) is 
by \eqref{momitpk}
almost equivalent
to whether 
$\iota: B\ptpk\to B\itpk$  is injective; this will be studied in
\refS{Sapprox}. 

\begin{remark}
We write ``almost'', because $\E X\ptpk$ is a symmetric tensor, so we are
really only interested in whether $\iota$ is injective on the subspace of
symmetric tensors in $B\ptpk$ (\ie, on the symmetric tensor product).
We conjecture that $\iota$ is injective on this subspace if and only it is
injective on the full tensor product $B\ptpk$, but as far as we know this
question has not been investigated  and we leave it as an open problem.
\end{remark}

We turn to considering conditions for
the existence of moments in Pettis sense.
We only give a result for the injective moments, 
corresponding to \refT{TID},
since we do not know any
corresponding result for projective moments.

\begin{theorem} \label{TiPettis}
  \begin{romenumerate}[-18pt]
  \item \label{tipettis1}
If the injective \kth{} moment $\E X\itpk$ 
exists in Pettis sense,
then \set{|\xx(X)|^k:\xx\in B\q,\,\norm{\xx}\le1} 
is uniformly integrable.
  \item \label{tipettis2}
Suppose that $X$ is weakly measurable,
and that one of the following additional condition holds.
\begin{bienumerate}
\item \label{tip1} 
$B$ is separable.
\item \label{tip2}  
$X$ is \assep.
\item \label{tip3}  
Every integral $k$-linear form  $B^k\to\bbR$ is nuclear,
and $X\tpk$ is \wassep{} in $B\itpk$.
\end{bienumerate}
Then the  injective moment $\E X^{\itensor k}$ exists in Pettis sense if
and only if 
\set{|\xx(X)|^k:\xx\in B\q,\,\norm{\xx}\le1} 
is uniformly integrable.
  \end{romenumerate}
\end{theorem}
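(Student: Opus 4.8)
The plan is to mirror the structure of the proof of \refT{TID}, replacing ``Dunford integrable'' by ``Pettis integrable'' throughout and feeding in the uniform-integrability criterion for Pettis integrability (Theorems \ref{Tui} and \ref{THuff}) in place of the closed-graph/boundedness argument. For part \ref{tipettis1}, I would argue as follows. If $\E X\itpk$ exists in Pettis sense, then for any elementary dual tensor $\xx\tpk$ with $\norm\xx\le1$ we have $\xx\tpk\in(B\itpk)\q$ with $\norm{\xx\tpk}\le1$, and $\xx\tpk(X\tpk)=\xx(X)^k$; hence, by \refT{Tui} applied to the $B\itpk$-valued variable $X\tpk$, the family $\set{\xx\tpk(X\tpk):\norm{\xx\tpk}\le1}\supseteq\set{\xx(X)^k:\norm\xx\le1}$ is uniformly integrable, and since these variables are nonnegative, $\set{|\xx(X)|^k:\norm\xx\le1}$ is uniformly integrable. (Note this also re-proves the final clause of \refT{TID}\ref{tidi}, as it should, since Pettis implies Dunford.)

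For part \ref{tipettis2}, the forward direction is \ref{tipettis1}, so suppose the family $\set{|\xx(X)|^k:\norm\xx\le1}$ is uniformly integrable; in particular $\E|\xx(X)|^k<\infty$ for every $\xx$, so by \refT{TID}\ref{tidd} (each of \ref{tip1}, \ref{tip2}, \ref{tip3} implies the corresponding hypothesis \ref{tid1}, \ref{tid2}, \ref{tid3} there) the injective moment $\E X\itpk$ exists at least in Dunford sense. It remains to upgrade Dunford to Pettis, i.e.\ to show $X\tpk$ is Pettis integrable in $B\itpk$. The route is \refT{THuff}: it suffices to show (a) the family $\set{\chi(X\tpk):\chi\in(B\itpk)\q,\norm\chi\le1}$ is uniformly integrable and (b) $X\tpk$ is \wassep{} in $B\itpk$. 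For (a), I use the representation of an integral form from \refT{Tinj*}: every $\chi\in(B\itpk)\q$ with $\norm\chi\le1$ is $\chi(X\tpk)=\int_{K^k}\xx_1(X)\dotsm\xx_k(X)\dd\mu$ for a signed measure $\mu$ on $K^k$ with $\norm\mu\le1$, where $K$ is the unit ball of $B\q$; the integrability of the integrand (joint measurability on $K^k\times\gO$ under \ref{tip1}/\ref{tip2} by \refL{Lxxmeas}, or the discrete-sum reduction \eqref{thz}--\eqref{thy} under \ref{tip3}) was already established in the proof of \refT{TID}. Then
\[
\E\bigpar{\ettae|\chi(X\tpk)|}
\le \int_{K^k}\E\bigpar{\ettae\,|\xx_1(X)|\dotsm|\xx_k(X)|}\dd|\mu|(\xx_1,\dots,\xx_k),
\]
and by the generalized \Holder{} inequality $\E\bigpar{\ettae|\xx_1(X)|\dotsm|\xx_k(X)|}\le\prod_{i}\bigpar{\E(\ettae|\xx_i(X)|^k)}^{1/k}$, which tends to $0$ uniformly in $\xx_1,\dots,\xx_k\in K$ as $\P(E)\to0$ by the assumed uniform integrability of $\set{|\xx(X)|^k:\norm\xx\le1}$ (and $\sup_{\norm\chi\le1}\E|\chi(X\tpk)|<\infty$ similarly, using $\norm\mu\le1$); hence the family in (a) is uniformly integrable.

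For (b), under \ref{tip1} or \ref{tip2} the variable $X$ is \assep, so $X\tpk$ is \assep{} in $B\itpk$ by \refL{Lbs}, hence trivially \wassep; under \ref{tip3}, \wassepy{} of $X\tpk$ in $B\itpk$ is assumed outright. Combining (a) and (b), \refT{THuff} gives that $X\tpk$ is Pettis integrable in $B\itpk$, i.e.\ $\E X\itpk$ exists in Pettis sense, completing the proof. The one step requiring care is (a): one must check that the bound coming from \Holder{} is genuinely uniform over all signed measures $\mu$ in the unit ball of $M(K^k)$ and all $\eps$-neighbourhoods of the form $\P(E)\to0$, and that the measurability needed to write $\E\abs{\chi(X\tpk)}$ as an iterated integral is exactly what the proof of \refT{TID} already supplied in each of the three cases; modulo that bookkeeping, everything is a routine transcription of the Dunford argument with uniform integrability substituted for boundedness.
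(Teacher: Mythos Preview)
Your approach is essentially the same as the paper's: part~\ref{tipettis1} via \refT{Tui} applied to $\xx\tpk(X\tpk)=\xx(X)^k$, and part~\ref{tipettis2} by bounding $\E(\ettae|\chi(X\tpk)|)$ through the integral representation of $\chi$ and then invoking \refT{THuff}, with \wassepy{} of $X\tpk$ handled by \refL{Lbs} in cases \ref{tip1}/\ref{tip2} and by assumption in \ref{tip3}. Your use of the generalized H\"older inequality where the paper uses the arithmetic--geometric inequality is an inessential variation; both give the same pointwise bound $\E(\ettae|\xx_1(X)\dotsm\xx_k(X)|)\le\sup_{\norm\xx\le1}\E(\ettae|\xx(X)|^k)$.

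There is, however, one genuine gap in case~\ref{tip3} that you half-flag but do not resolve. You write ``using $\norm\mu\le1$'', but in case~\ref{tip3} the measure $\mu$ you actually use is the \emph{discrete} one coming from the nuclear representation \eqref{thz}--\eqref{thw}, and there is no reason the nuclear norm $\sum_n\gl_n$ should coincide with the integral norm $\norm\chi\le1$. The paper handles this explicitly: since the hypothesis is that the natural surjection $\cN(B,\dots,B)\to\cI(B,\dots,B)$ is onto, the open mapping theorem yields a constant $C$ (depending only on $B$ and $k$) such that every $\chi$ with $\norm\chi\le1$ admits a nuclear representation with $\sum_n\gl_n\le C$; the uniform integrability estimate then goes through with this $C$ in place of $1$. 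Without this observation, your uniformity claim in case~\ref{tip3} is unjustified.
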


\begin{proof}
\pfitemref{tipettis1}
By \refT{Tui}, since 
$\innprod{(\xx)\tpk,X\tpk}=\xx(X)^k$ and
$(\xx)\tpk\in (B\itpk)\q$ with norm
$\norm{\xx}_{B\q}^k\le1$ when $\norm{\xx}_{B\q}\le1$.

\pfitemref{tipettis2} 
We shall modify the proof of \refT{TID}.
Let $\chi\in(B\itpk)^*$ with $\norm{\chi}\le1$.
As in the proof of \refT{TID}, by \refT{Tinj*} there exists a signed
measure $\mu\in M(K^k)$, where $K$ is the closed unit
ball of $B\q$, such that \eqref{thk} holds; further $\norm\mu=\norm\chi\le1$.
Taking absolute values, and replacing $\mu$ by $|\mu|$, we obtain
\begin{equation}\label{thkp}
  \bigabs{\chi(X\tpk)}\le\int_{K^k}\bigabs{\xx_1(X)\dotsm\xx_k(X)}
\dd\mu(\xx_1,\dots,\xx_k).
\end{equation}

If \ref{tip1}  or \ref{tip2} 
holds, then
 $\innprod{\xx,X}$ is
jointly measurable  on $B\q\times \gO$ by \refL{Lxxmeas};
thus we can take expectations in \eqref{thkp} and apply Fubini's theorem,
yielding 
\begin{equation}\label{thxp}
\E\bigabs{\chi(X\tpk)}\le\int_{K^k}\E\bigabs{\xx_1(X)\dotsm\xx_k(X)}
\dd\mu(\xx_1,\dots,\xx_k).
\end{equation}

If \ref{tip3} holds, then the integral form $\chi$ is nuclear and thus
\eqref{thz} holds, with 
$\gl_n\ge0$,
$\sumn\gl_n<\infty$ and each
$\xx_{in}\in K$. 
We do not know whether the nuclear norm of $\chi$ always equals the integral
norm $\norm\chi$ when \ref{tip3} holds, but, at least,
the open mapping theorem implies that there exists a constant $C$ (possibly
depending on $B$) such that we can choose a representation \eqref{thz} with
$\sum_n\gl_n\le C$. Then \eqref{thy} holds, and taking absolute values and
expectations we see that \eqref{thxp} holds in this case too,
for the measure \eqref{thw}, which satisfies $\norm\mu=\sum_n\gl_n\le C$.

Consequently, in both cases \eqref{thxp} holds, with $\norm\mu\le C$
(where $C=1$ in cases \ref{tip1}--\ref{tip2}).
By the arithmetic-geometric inequality, \eqref{thxp} implies
\begin{equation}\label{thxp+}
  \begin{split}
  \E\bigabs{\chi(X\tpk)}
&\le\int_{K^k}\E\Bigpar{\frac1k\sumik\abs{\xx_i(X)}^k}
\dd\mu(\xx_1,\dots,\xx_k)
\\
&=\frac1k\sumik \int_{K^k}\E\abs{\xx_i(X)}^k
\dd\mu(\xx_1,\dots,\xx_k)
\\
&\le C\sup\bigset{\E|\xx(X)|^k:{\xx}\in K}, %\norm\mu.
  \end{split}
\end{equation}
and applying
\eqref{thxp+} to $\ettae X$, for an arbitrary event $E\in\cF$, we obtain
\begin{equation}\label{thxpe}
  \E\bigpar{\ettae|\chi(X\tpk)|}
=
  \E\bigabs{\chi(\ettae X\tpk)}
\le C\sup_{\xx\in K}\E\bigpar{\ettae|\xx(X)|^k}. 
\end{equation}
This holds for any $\chi\in(B\itpk)\q$ with $\norm\chi\le1$.
Hence, 
it follows from \eqref{thxp+}--\eqref{thxpe} and the condition \eqref{ui} for
uniform integrability  
that if \set{|\xx(X)|^k:\xx\in K} is \ui, 
then also
$\set{\innprod{\chi,X\tpk}:\chi\in(B\itpk)\q, \,\norm{\chi}\le1}$
is \ui.
Furthermore, 
%both \ref{tip2} and imply that 
$X\itpk$ is \wassep, by \ref{tip1}--\ref{tip2} and \refL{Lbs} or assumption
in  \ref{tip3}.
The result now follows by %\ref{tipettis1} and
  \refT{THuff} applied to $X\itpk$, 
\end{proof}

\begin{remark}
Changing the norm in $B$ to an equivalent one will not change the \tp{s}
$B\ptpk$ and $B\itpk$ (except for a change of norms), and the existence (in
any of the three senses above) and values of the moments
$\E X\ptpk$ and $\E X\itpk$ will not be affected.
\end{remark}

\begin{remark}\label{RT}
The moments behave as expected under linear transformations.
If $X$ is a $B$-valued \rv{} and $T:B\to B_1$ is a bounded linear map into
another Banach space $B_1$, then $TX$ is a $B_1$-valued \rv{}, 
which is [Borel, weakly, Bochner] \meas{} if $X$ is. 
If the moment $\E X\ptpk$ or $\E X\itpk$ exists in any of the three senses
above,
then $\E (TX)\ptpk$ or $\E (TX)\itpk$ exists in the same sense; moreover,
for moments in Bochner or Pettis sense 
$\E (TX)\ptpk=T\ptpk(\E X\ptpk)$, and for moments in Dunford sense
$\E (TX)\ptpk=(T\ptpk)\qx(\E X\ptpk)$, and similarly for injective moments.
\end{remark}

\begin{remark}
  \label{RBBtp} 
If $X$ is a $B$-valued random variable and $B$ is a closed subspace of
another Banach space
$B_1$, then $X$ can also be seen as a $B_1$-valued random variable.
For the injective tensor product, then $B\itpk$ is a closed subspace of
$B_1\itpk$, see \refR{Rinjinj}, and thus by \refR{RBB}, the injective moment
$\E X\itpk$ exists in $B\itpk$  (or $(B\itpk)\qx$)
in any of the three senses
if and only if it exists in $B_1\itpk$ (or $(B_1\itpk)\qx$)
in the same sense; moreover,
then the value of the moment in the two spaces coincide.

For the projective moments, the situation is more complicated since
\refR{Rinjinj} does not hold for the projective \tp.
If we consider moments in Bochner sense
(and assume that $X$ is \wmeas), 
then by \refT{TPIB}
$\E X\ptpk$ exists in $B\ptpk$ if and only it exists in $B_1\ptpk$.
However, for projective moments in
Pettis  sense, we can in general only say that if the moment $\E X\ptpk$ 
exists in $B\ptpk$, then it exists in $B_1\ptpk$, and the values are
the same (by \refR{RT} applied to the inclusion map); we shall see in
\refE{EPD2} that the converse does not hold.
(And similarly for Dunford sense, where we have to consider the biduals.)
This shows that when considering projective moments of a Banach space valued
\rv, we may have to be careful to specify which Banach space we are using.

If $B$ is a complemented subspace of $B_1$ there is no problem: then there
is a bounded projection $P:B_1\to B$, and it follows from \refR{RT} that
also the projective
moments exist for $X$ as a $B$-valued \rv{} 
if and only if they exist  for $X$ as a $B_1$-valued \rv{}.
\end{remark}

\begin{remark}
We may define moments also in other tensor products (not considered in the
  present paper) in the same way; one example is to take the Hilbertian
  tensor product in \refE{EHilbert}
when $B$ is a Hilbert space. 
When the projective \kth{} moment exists,
these moments too are given by
  mapping the projective \kth{} moment
$\E X\ptpk\in B\ptpk$ to the chosen tensor product as in
  \eqref{momitpk} or   \eqref{momitpk**}. 
This is one reason to take the projective
  \kth{} moment as the standard \kth{} moment, when it exists.
\end{remark}

\begin{remark}
 It is possible to define mixed moments of random variables $X_1,\dots,X_k$
with  values in possibly different Banach spaces $B_1,\dots,B_k$ in the same
way, by taking the expectation of $\mx X\tensor$ in $\mx B\ptensor$ or $\mx
B\itensor$. 
Analoguous results hold, but are left to the reader.
In most cases, we can  consider
$X=(X_1,\dots,X_k)$ in the direct sum $B\=\mx B\oplus$ and take its moments
(provided they exist); they contain the mixed
moments as components, and we are reduced to the case 
treated above
of moments of
a single variable.
For example, with $k=2$,
$(B_1\oplus B_2)\ptpx2$ is the direct sum of 
$B_1\ptensor B_1$, $B_1\ptensor B_2$, $B_2\ptensor B_1$ and $B_2\ptensor
B_2$, and the components of $\E X\tpx2$ in these subspaces are
$\E X_1\tpx2$, $\E (X_1\tensor X_2)$, $\E (X_2\tensor X_1)$
and $\E X_2\tpx2$, where the two mixed moments are the same, using the  
natural isomorphism  $B_1\ptensor B_2\cong B_2\ptensor B_1$.
\end{remark}

\section{Examples}\label{Sex}

We give here some simple (counter)examples to illustrate the results above.
Further examples, more important for applications, are given later.

We let $N$ be a positive integer-valued random variable, with $p_n=\P(N=n)$,
and let $U\sim\U(0,1)$; we may suppose that
$U$ is the identity function defined on $\oi$ with Lebesgue measure.
We use standard notations from \refS{Snotations}.

\begin{example}\label{EPD1}
Let $B=H$ be a separable Hilbert space and let $k=2$.

Let $\ga\in L(B^2;\bbR)$ be the inner product in $H$. Then
$\ga(X,X)=\innprod{X,X}=\norm{X}^2$.
Hence, if the projective moment $\E X\ptpx2$ exists in Dunford sense, then,
by \refT{TPD}, $\E\norm{X}^2<\infty$.
Furthermore, weak and Bochner measurability are equivalent by \refT{Twm},
and it follows, using also \refL{Lbk}, that the projective second moment 
$\E X\ptpx2$ exists in Dunford sense if and only if it exists in Bochner
sense,
and consequently if and only if it exists in Pettis sense.

In this case, the projective second moment thus exists in any sense if and
only if $X\ptpx2$ is measurable and $\E\norm{X}^2<\infty$. In particular, the
sufficient condition in \refT{TPD} is also necessary in this case.
%See \refT{TH2} for a more general result.
\end{example}

\begin{example}\label{EPD1i}
Specialize \refE{EPD1} to  $B=\ell^2$ and let $X=a_Ne_N$ for some
sequence $(a_n)_1^\infty$.
We have seen that:
\begin{romenumerate}
\item The projective second moment $\E X\ptpx2$ exists,
in any of the three senses, if and only if
$\E\norm{X}^2=\E|a_N|^2=\sumn p_na_n^2<\infty$. 
\end{romenumerate}
The projective \tp{} $\ell^2\ptensor\ell^2$ can be seen as the space 
$\trx(\ell^2)$ of
trace class  operators on $\ell^2$
(see \refT{THtp}),
and $\E X\ptpx2$ is the
diagonal operator $\sumn p_na_n^2 e_n\tensor e_n$.

The injective \tp{} $\ell^2\itensor\ell^2$ can, similarly, be seen as the
space $\cx(\ell^2)$ of compact operators in $\ell^2$.
A diagonal operator $\sum_nb_n e_n\tensor e_n$ has norm $\sup|b_n|$, and the
subspace of diagonal operators in $\cx(\ell^2)$ is isomorphic to $c_0$.
Since $X\tensor X=a_N^2 e_N\tensor e_N$ belongs to this subspace, it follows
from \refE{Ec0} that  
\begin{romenumerateq}
\item 
$\E X\itpx2$  exists in Dunford  sense $\iff$  $\sup p_na_n^2<\infty$.

\item 
$\E X\itpx2$  exists in Pettis   sense $\iff p_na_n^2\to0$ as \ntoo.

\item $\E X\itpx2$ exists in Bochner sense $\iff$ $\sumn p_na_n^2<\infty$.
\end{romenumerateq}
$\E X\itpx2$  is, when it exists, the diagonal operator 
$\sum p_na_n^2e_n\tensor e_n$, just as the projective second moment.
As usual, in the Dunford case,
$\E X\itpx2\in \cx(\ell^2)\qx=B(\ell^2)=L(\ell^2\lxx\ell^2)$. In fact, the
diagonal operator $\sum p_na_n^2e_n\tensor e_n$ is compact, \ie{}
$\E X\itpx2\in \cx(\ell^2)$, if and only if $p_na_n^2\to0$ as \ntoo.
\end{example}

\begin{example}\label{EPD2}
  Let $B=L^1\oi$. Then $B\ptensor B= L^1(\oi^2)$, 
see \eg{} \cite[Theorem 46.2 and Exercise 46.5]{Treves}.

Let $X\=a_Nr_N$, where $(a_n)_1^\infty$ is some sequence of real numbers and
$r_n\in L^1\oi$ are the Rademacher functions.
Then $X\tensor X=a_N^2r_N\tensor r_N$. 

By Khintchin's inequality \cite[Theorem II.1]{Blei}
(which applies as well to $r_n\tensor r_n\in L^1(\oi^2)$, since these
functions too can be seen as a sequence of independent symmetric $\pm1$
\rv{s}), the $L^1$-norm and $L^2$-norm are equivalent on the closed
linear span $R_2$ of \set{r_n\tensor r_n} in $L^1(\oi^2)$. 
Since $X\tensor X\in R_2$, the expectation $\E(X\tensor X)$ in 
$B\ptensor B = L^1(\oi^2)$, in any of the three senses,
can just as well be computed in $L^2(\oi^2)$. However, the functions
$r_n\tensor r_n$ form an orthonormal sequence in $L^2$, 
and there is thus an
isomorphism between $R_2$ and $\ell^2$, given by
$r_n\tensor r_n\mapsto e_n$. Hence it follows
from \refE{El2} that the projective moment 
$\E X\ptpx2$ exists in Bochner sense if and only if 
$\E\norm{X}^2=\sum p_n a_n^2<\infty$,
while
$\E X\ptpx2$ exists in Dunford or Pettis sense if and only if $\sum p_n^2
a_n^4<\infty$.

For the injective moments, we use \refR{RBBtp}. 
$X$ lies in the closed linear span $R_1$ of \set{r_n} in $L^1\oi$,
which by Khintchin's inequality 
is isomorphic to $\ell^2$; thus $\E X\itensor X$ may be calculated
in $R_1\itensor R_1\cong \ell^2\itensor\ell^2$. This brings us back to
\refE{EPD1i}, and thus (ii)--(iv) in \refE{EPD1i} hold in the present case too. 

Note that if we consider $X$ as an $R_1$-valued \rv, then  
\refE{EPD1i}(i) shows that
the projective
moment
$\E X\ptpx2$ exists in Pettis (or Dunford) sense if and only if 
$\sum_n p_na_n^2<\infty$. Hence, choosing $p_n$ and $a_n$ such that
$p_na_n^2=1/n$, we see that,
although $R_1$ is a closed subspace of $L^1\oi$, 
$\E X\ptpx2$ exists in Pettis (or Dunford)
sense if we regard $X$ as a $L^1\oi$-valued \rv,
but not if we regard $X$ as an $R_1$-valued \rv, \cf{} \refR{RBBtp}.
\end{example}

\begin{example}\label{EPD3}
  Let $B=\ell^2\oi$ and let $X\=a(U)e_U$ where $a:\oi\to\ooi$ is 
some given function.
Then $\innprod{X,f}=0$ \as{} for any $f\in \ell^2\oi$, since $f$ has countable
support, and thus $X$ is \wmeas{}
and \wassep. Trivially, $\E X=0$ in Pettis sense.
However, a subspace $A$ such that $X\in A$ \as{} has to contain $e_t$ for
\aex{} $t\in\oi$, and is thus not separable. Consequently, $X$ is not
\assep, and therefore not Bochner integrable.

Furthermore, every integral bilinear form on a Hilbert space is nuclear
(\refT{THnuc} below), 
and thus
by \refT{TID}\ref{tidd}\ref{tid3}, $\E X\itpx2$ exists in Dunford sense; by
\eqref{tid}, $\E X\itpx2=0$. 
(By \refT{TH2ii}, $\E X\itpx2=0$ also in Pettis sense.)

Let $\ga\in L(B^2;\bbR)$ be the inner product in $B=\ell^2\oi$.
Then $\ga(X,X)=\innprod{X,X}=\norm{X}^2=a(U)^2$.
If we assume that $a$ is a non-measurable function (for Lebesgue measure on
$\oi$), then $a(U)$ is non-measurable. Hence, $\ga(X,X)$ is non-measurable,
and by \refT{TPD}, the projective moment $\E X\ptpx2$ does not exist in
Dunford sense (and thus not in the other, stronger, senses).

In particular, we see that although $X$ is \wmeas{} in $B$, $X\tensor X$ is
not \wmeas{} in $B\ptensor B$, since $\innprod{\ga,X\tensor X}=\ga(X,X)$ is
not measurable. (We use here \refT{Tproj*}.)

Furthermore, $X\tensor X$ is
not \wassep{} in $B\ptensor B$, by the following argument. 
Suppose that $A$ is a separable subspace of
$B\ptensor B$. Then, as in the proof of \refL{Lbs}, 
there exists a countable family of elementary tensors 
$F=\set{e_{i1}\tensor e_{i2}}$ such that $A$ is included in the
closed linear span of $F$. Let $R\=\bigcup_{i,j}\supp(e_{ij})$; then
$R\subseteq\oi$ is countable and if $y\in A$, then $\supp(y)\subseteq R\times
R$.
Define $\gb\in(\ell^2\ptensor\ell^2)\q$ by $\gb(f,g)\=\sum_{t\notin R}
f(t)g(t)$. Then $\gb(e_{i1},e_{i2})=0$ for all $i$, and thus $\gb\perp F$
and $\gb\perp A$,
but $\gb(X,X)=\innprod{X,X}=a(U)^2\neq0$ \as.
\end{example}

\begin{example}  \label{EPDc0} 
Let, as in \refE{Ec0}, $B=c_0$ and $X=a_Ne_N$.
Thus $X\tpk=a_N^ke_N\tpk$.

Let $\gDo_k\subset c_0\tpk$ be the subspace of finite linear combinations of 
tensors $e_N\tpk$; we claim that the closure of $\gDo_k$ is the same in both
the projective and injective tensor products $c_0\ptpk$ and $c_0\itpk$ 
and that it equals
 $\gD_k\=\set{\sumn b_n e_n\tpk:(b_n)\nnn\in c_0}$. 
(For the injective \tp, this follows immediately from \refT{Tc0tp}, but the
projective case is less obvious.)
Since $\gD_k$
obviously is isomorphic to $c_0$, it then follows from \refE{Ec0} that:
\begin{romenumerate}
\item 
$\E X\ptpk$  exists in Dunford  sense $\iff$  
$\E X\itpk$  exists in Dunford  sense $\iff$  
$\sup p_na_n^k<\infty$.

\item 
$\E X\ptpk$  exists in Pettis   sense $\iff$ 
$\E X\itpk$  exists in Pettis   sense $\iff$ 
$p_na_n^k\to0$ as \ntoo.

\item 
$\E X\ptpk$ exists in Bochner sense $\iff$ 
$\E X\itpk$ exists in Bochner sense $\iff$ 
$\sumn p_na_n^k<\infty$.
\end{romenumerate}
Hence there is no difference between projective and injective moments in
this example.
(Cf.\ \refE{EPD1i}, where we see that $\gD_2$ is the closure of
$\gDo_2$ also in $\ell^2\itensor\ell^2$, but not in $\ell^2\ptensor\ell^2$.)

To verify the claim, consider for simplicity first $k=2$, and let $u=\sumnm
b_ne_n\tensor e_n\in\gDo_2$. 
Recalling \eqref{normproj} and \eqref{norminj}, and taking $\xx_1=\xx_2=e_n$
in the latter, we have immediately
\begin{equation}\label{cz}
  \norm{u}_\pi
\ge\norm{u}_\eps\ge\max_n|b_n|.
\end{equation}
For $\zeta=(\zeta_1,\dots,\gz_M)$ with
$\gz_i=\pm1$, take $v_\gz\=\sumnm \gz_nb_n e_n \in c_0$
and 
$w_\gz\=\sumnm \gz_n e_n \in c_0$.
%$w_\gz\=(\gz_1,\dots,\gz_M,0,\dots)$. 
Then, see \eqref{normproj} and
\eqref{pik}, 
\begin{equation}
\norm{v_\gz\tensor w_\gz}_\pi=\norm{v_\gz}_{c_0}\norm{w_\gz}_{c_0}
=\max_n|b_n|.  
\end{equation}
Taking the average of $v_\gz\tensor w_\gz$ over the $2^M$ possible choices
of $\gz$, we obtain $u$, and thus
\begin{equation}\label{cz1}
\norm{u}_\pi\le\max_n|b_n|.  
\end{equation}
Consequently, we have equalities in \eqref{cz}, for any $u\in \gDo$, which
shows that the closure in either $c_0\ptensor c_0$  or $c_0\itensor c_0$ is
isomorphic to $c_0$ and equals $\gD_2$.

The argument extends to arbitrary $k\ge2$ by letting the possible
values of $\gz_n$
be the \kth{} roots of unity and considering
$v_\gz\tensor w_\gz\tpx{k-1}$. (These vectors are complex, but we can separate
them into real and imaginary parts, possibly introducing a constant factor
$2^k$ in the norm estimate \eqref{cz1}.)
\end{example}

\begin{example}\label{Eg} 
We say that a \rv{} $X$ in a Banach space $B$ is \emph{weakly Gaussian}
if $\xx(X)$ is Gaussian with mean 0 (for convenience) for any $\xx\in B\q$.
To exclude cases such as \refE{EPD3} where $\xx(X)=0$ \as{} (and thus is
Gaussian) but $X$ does not look very Gaussian, we say that $X$ is 
\emph{Gaussian} if it is weakly Gaussian and \assep.
(By \refT{Twm}, this is equivalent to weakly Gaussian and \Bmeas.) 

If $X$ is (weakly) Gaussian, then $\xx(X)$ is Gaussian and thus has finite
moments of all orders, for any $\xx\in B\q$. Thus
\refT{TID}\ref{tidd}\ref{tid2}
shows that every injective moment $\E X\itpk$ exists in Dunford sense.

Moreover, if $X$ is Gaussian,
by \cite[Lemma 3.1 and Corollary 3.2]{LedouxT} (applied to a suitable
separable subspace $B_1\subseteq B$ with $X\in B_1$ \as),
$\norm{X}^k<\infty$ for every $k$. 
Hence, \refT{TPIB} shows that the
projective and injective moments $\E X\ptpk$ and $\E X\itpk$ exist in
Bochner sense for every $k\ge1$.

The odd moments vanish by symmetry.

The even injective moments can be expressed in terms of the second moment 
$\Sigma\=\E X\itpx2$
as follows. Consider first $k=4$. Then, by \refT{Tjepp} and Wick's theorem
\cite[Theorem 1.28]{SJIII},
$\E X\itpx4\in B\itpx4$ is determined by
\newcommand\fyll
 {\phantom{\quad=\E\bigpar{\xx_1(X)\xx_2(X)}\E\bigpar{\xx_3(X)\xx_4(X)}}\;}
\begin{equation*}
  \begin{split}
&  \innprod{X\itpx4,\xx_1\tensor\xx_2\tensor\xx_3\tensor\xx_4}
=
\E\bigpar{\xx_1(X)\xx_2(X)\xx_3(X)\xx_4(X)}
\\& \quad
=
\E\bigpar{\xx_1(X)\xx_2(X)}\E\bigpar{\xx_3(X)\xx_4(X)}
+
\E\bigpar{\xx_1(X)\xx_3(X)}\E\bigpar{\xx_2(X)\xx_4(X)}
\\&\fyll
+
\E\bigpar{\xx_1(X)\xx_4(X)}\E\bigpar{\xx_2(X)\xx_3(X)}	
\\&\quad
=\innprod{\gS,\xx_1\tensor\xx_2}\innprod{\gS,\xx_3\tensor\xx_4}
+
\innprod{\gS,\xx_1\tensor\xx_3}\innprod{\gS,\xx_2\tensor\xx_4}
\\&\fyll
+
\innprod{\gS,\xx_1\tensor\xx_4}\innprod{\gS,\xx_2\tensor\xx_3}.
  \end{split}
\end{equation*}
This can be written as 
\begin{equation}\label{eper}
  \E X\itpx4 =\Sigma\tensor\Sigma 
+ \pi_{(23)}\bigpar{\Sigma\tensor\Sigma}
+ \pi_{(24)}\bigpar{\Sigma\tensor\Sigma},
\end{equation}
where $\pi_{\gs}$ denotes permuting the coordinates by the premutation
$\gs$.
There are 24 permutations of $\set{1,2,3,4}$, but 8 of these leave
$\gS\tensor \gS$ invariant, and we may write \eqref{eper} more symmetrically
as 
$  \E X\itpx4 =3\symm\bigpar{\Sigma\tensor\Sigma}$, where  $\symm$ means the
symmetrization of the tensor by taking the average over all permutations of
the coordinates.

More generally,
for any even $k=2\ell$, by the same argument,
\begin{equation}\label{gaussinj}
\E X\itpx{2\ell} 
=\frac{(2\ell)!}{2^\ell\ell!}\symm\lrpar{(\E X\itpx2)\itpx\ell}
\in B\itpx{2\ell},
\end{equation}
which generalizes the standard formula 
$\E \xi^{2\ell}=\frac{(2\ell)!}{2^\ell\ell!}(\Var\xi)^{\ell}$
for a real-valued centred Gaussian variable $\xi$.

We conjecture that the corresponding formula for projective moments holds
too. If $B$ has the \ap, this follows by \eqref{gaussinj} and \refT{Tiotak}
below, but we leave the general case as an open problem.
\end{example}

\section{The approximation property}\label{Sapprox}

Let $B_1,B_2$ be  Banach spaces.
Recall that a finite rank operator $F:B_1\to B_2$ is a continuous
linear operator whose
range has finite dimension; equivalently, it is a
linear operator that can be written as a finite sum
$F(x)=\sumin \xx_i(x) y_i$ for some $\xx_i\in B_1^*$ and $y_i\in B_2$.

We say that a linear operator $T:B_1\to B_2$ is 
\emph{\uapprox}
if for every $\eps>0$ there exists
a finite rank operator $F:B_1\to B_2$ such that $\norm{T-F}<\eps$.
Similarly, we say that a linear operator $T:B_1\to B_2$ is 
\emph{\capprox}
if for every compact set $K\subset B_1$ and every
$\eps>0$ there exists
a finite rank operator $F:B_1\to B_2$ such that
$\sup\set{\norm{Tx-Fx}:x\in  K}<\eps$. 

If $B$ is a Banach space, then the following properties are equivalent; see
\eg{} \cite[Section 1.e]{LTzI} and \cite[Chapter 4]{Ryan} for proofs.
The Banach space $B$ is said to have the \emph{approximation property} when
these properties hold.

\begin{romenumerate}[-10pt]

\item \label{apapp}
The identity operator $I:B\to B$ is \capprox.

\item 
For every Banach space $B_1$, every bounded operator $T:B\to B_1$ is \capprox.

\item 
For every Banach space $B_1$, every bounded operator $T:B_1\to B$ is \capprox.

\item \label{apcomp}
For every Banach space $B_1$, every compact operator $T:B_1\to B$ is \uapprox.

\item \label{aptr}
For every pair of sequences $x_n\in B$ and $\xx_n\in B^*$,
$n\ge1$, such that
$\sumn \norm{x_n}\norm{\xx_n}<\infty$ and 
$\sumn \xx_n(x)x_n=0$ for all $x\in B$, we have $\sumn \xx_n(x_n)=0$.

\item \label{apbb*}
For every pair of sequences $x_n\in B$ and $\xx_n\in B^*$, $n\ge1$,  such that 
$\sumn \norm{x_n}\norm{\xx_n}<\infty$ and 
$\sumn \xx_n(x)x_n=0$ for all $x\in B$, we have $\sumn x_n\tensor\xx_n=0$ in
  $B\ptensor B^*$.

\item \label{apbb1}
For every Banach space $B_1$ and every pair of sequences $x_n\in B$ and
  $y_n\in B_1$, $n\ge1$, 
  such that 
$\sumn \norm{x_n}\norm{y_n}<\infty$ and 
$\sumn \xx(x_n)y_n=0$ for all $\xx\in B^*$, we have $\sumn x_n\tensor y_n=0$ in
  $B\ptensor B_1$.
\end{romenumerate}

\begin{remark}
  The property dual to \ref{apcomp}, \viz{} that 
every compact operator $T:B\to B_1$ is \uapprox, for every Banach space
$B_1$, is \emph{not} equivalent to the other properties; in fact, it is
equivalent to the \ap{} of $B^*$.

Moreover, it is known that
  if $B^*$ has the \ap, then $B$ has the \ap, but the converse does not
  hold. (See \refE{Edual} for a concrete example. There are also
  counterexamples that are separable with a separable dual \cite{LTzI}.)
\end{remark}

We can reformulate conditions \ref{apbb*} and \ref{apbb1} as follows,
recalling the canonical injection $\iota$ in \eqref{iota}.
(This is implicit in the references above, 
and explicit in \eg{} \cite{Szankowski}.) % really only (ii)

\begin{theorem}\label{Tiota2}
Let $B$ be a Banach space.
If $B$ has the approximation property, then the
  canonical mapping $\iota: B\ptensor B_1\to B\itensor B_1$ is injective for
  every Banach space $B_1$.

Conversely, if the
  canonical mapping $\iota: B\ptensor B^*\to B\itensor B^*$ is injective,
  then $B$ has the approximation property.
\end{theorem}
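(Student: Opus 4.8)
The plan is to read both implications directly off the list of equivalent formulations \ref{apapp}--\ref{apbb1} of the approximation property recorded above; the only substantive point is the duality bookkeeping that identifies $\ker\iota$ with the configurations appearing in conditions \ref{apbb1} and \ref{apbb*}.

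For the first assertion I would argue as follows. Suppose $B$ has the approximation property, so that \ref{apbb1} holds, fix an arbitrary Banach space $B_1$, and take $u\in B\ptensor B_1$ with $\iota(u)=0$. By \eqref{proj} write $u=\sumn x_n\tensor y_n$ with $\sumn\norm{x_n}\norm{y_n}<\infty$. Viewing $\iota(u)$ as a bounded bilinear form on $B^*\times B_1^*$ via \eqref{norminj}, the hypothesis $\iota(u)=0$ reads $\sumn\xx(x_n)\yy(y_n)=0$ for all $\xx\in B^*$ and $\yy\in B_1^*$. Fixing $\xx$, the series $\sumn\xx(x_n)y_n$ converges absolutely in $B_1$ and is annihilated by every $\yy\in B_1^*$, hence equals $0$; so $\sumn\xx(x_n)y_n=0$ for every $\xx\in B^*$, and \ref{apbb1} gives $u=\sumn x_n\tensor y_n=0$. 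Thus $\iota$ is injective, as claimed.

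For the converse I would verify \ref{apbb*}, assuming $\iota:B\ptensor B^*\to B\itensor B^*$ injective. Let $x_n\in B$ and $\xx_n\in B^*$ satisfy $\sumn\norm{x_n}\norm{\xx_n}<\infty$ and $\sumn\xx_n(x)x_n=0$ for all $x\in B$, and set $u=\sumn x_n\tensor\xx_n\in B\ptensor B^*$. Identify $\iota(u)\in B\itensor B^*$ with an operator $T_u:B^*\to B^*$ through the isometric embedding in \eqref{keiller}; then $T_u\xx=\sumn\xx(x_n)\xx_n$, the series converging absolutely in $B^*$. For each $x\in B$,
\begin{equation*}
\innprod{T_u\xx,x}=\sumn\xx(x_n)\xx_n(x)=\xx\Bigpar{\sumn\xx_n(x)x_n}=0,
\end{equation*}
since $\xx$ commutes with the absolutely convergent sum $\sumn\xx_n(x)x_n$ in $B$. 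As an element of $B^*$ is determined by its values on $B$, this forces $T_u\xx=0$ for every $\xx\in B^*$, that is, $\iota(u)=0$. Injectivity of $\iota$ then gives $u=\sumn x_n\tensor\xx_n=0$ in $B\ptensor B^*$, which is precisely the conclusion of \ref{apbb*}; hence $B$ has the approximation property.

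I expect the main obstacle to lie in the second part: a priori $\iota(u)=0$ only asserts the vanishing of a bilinear form on $B^*\times B^{**}$, whereas \ref{apbb*} involves only $B$. The resolution is to track where the associated operator lands — $T_u$ maps $B^*$ into $B^*$, not into $B^{***}$ — so that testing $T_u\xx$ against the vectors of $B$ already yields $T_u\xx=0$. Beyond this, everything is routine manipulation of the projective and injective tensor norms together with the equivalences listed above.
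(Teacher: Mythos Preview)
Your proof is correct and follows the same approach as the paper: both directions are read off the equivalences \ref{apbb1} and \ref{apbb*}, via the identification of $\iota(u)=0$ with the vanishing of the associated bilinear form on $B^*\times B_1^*$. The paper is slightly more compressed, observing once that $\iota(u)=0$ if and only if $\sumn\xx(x_n)y_n=0$ for every $\xx\in B^*$, and then noting that \ref{apbb1} and \ref{apbb*} are literally the statements that $\iota$ is injective (for general $B_1$ and for $B_1=B^*$, respectively); your converse argument makes explicit the adjoint computation that the paper leaves implicit when it calls \ref{apbb*} ``the special case $B_1=B^*$''.
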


\begin{proof}
  Let $u\in B\ptensor B_1$; then $u=\sumn x_n\tensor y_n$ for some $x_n\in B$
  and $y_n\in B_1$ with $\sumn \norm{x_n}\norm{y_n}<\infty$.
We can regard $B\itensor B_1$ as space of bilinear forms on $B^*\times
B_1^*$, and then, for any $\xx\in B^*$ and $\yy\in B_1\q$
\begin{equation*}
\iota(u)(\xx,\yy)= \sumn \xx(x_n)\yy(y_n)
= \yy\Bigpar{\sumn \xx(x_n)y_n}.
\end{equation*}
Hence, $\iota(u)=0$ if and only if 
$\sumn \xx(x_n)y_n=0$ for every $\xx\in B\q$. 

Consequently, \ref{apbb1} says precisely that, for any $B_1$,
if $u\in B\ptensor B_1$ and $\iota(u)=0$, then $u=0$, \ie{} that
$\iota:B\ptensor B_1\to B\itensor B_1$ is injective.

Furthermore, \ref{apbb*} is the special case $B_1=B\q$, and thus says that
$\iota:B\ptensor B\q\to B\itensor B\q$ is injective.
\end{proof}

This can be extended to tensor products of several spaces. We state only the
case of tensor powers of a single space, which is the case we need.

\begin{theorem}\label{Tiotak}
If a Banach space $B$ has the approximation property, then the
  canonical mapping $\iota: B\ptpk \to B\itpk$ is injective.  
\end{theorem}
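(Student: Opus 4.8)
The plan is to prove this by induction on $k$, peeling off one factor at a time and reducing to the two-factor case, which is the first assertion of \refT{Tiota2}. The case $k=1$ is trivial (then $\iota$ is the identity on $B$), and the case $k=2$ is exactly \refT{Tiota2}. So assume $k\ge3$ and that the canonical map $\iota_{k-1}\colon B\ptpx{k-1}\to B\itpx{k-1}$ is injective.

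Using the associativity of the completed tensor products (see \refS{Stensor}), write $B\ptpk = B\ptensor\bigpar{B\ptpx{k-1}}$ and $B\itpk = B\itensor\bigpar{B\itpx{k-1}}$, and consider the composition
\begin{equation*}
  B\ptensor\bigpar{B\ptpx{k-1}}
  \overset{\iota}{\longrightarrow}
  B\itensor\bigpar{B\ptpx{k-1}}
  \overset{I\itensor\iota_{k-1}}{\longrightarrow}
  B\itensor\bigpar{B\itpx{k-1}},
\end{equation*}
where the first arrow is the canonical map $\iota$ of \eqref{iota} for the pair of Banach spaces $B$ and $B\ptpx{k-1}$, and the second arrow is induced by the identity operator $I$ on $B$ and by $\iota_{k-1}$. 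The first arrow is injective by \refT{Tiota2}, since $B$ has the approximation property (here $B\ptpx{k-1}$ plays the role of $B_1$ in that theorem). The second arrow is injective by \refL{Linj}, since both $I$ and $\iota_{k-1}$ are injective (the latter by the inductive hypothesis). Note that here it is essential that we split off the remaining factor \emph{inside an injective tensor product}: the analogue of \refL{Linj} for the projective tensor product is false (\refR{Rprojinj}), so peeling off through $B\ptensor\bigpar{B\ptpx{k-1}}\to B\ptensor\bigpar{B\itpx{k-1}}$ would not work.

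It remains to identify the composition with $\iota_k\colon B\ptpk\to B\itpk$. On an elementary tensor $x_1\tensor\dotsm\tensor x_k=x_1\tensor(x_2\tensor\dotsm\tensor x_k)$ the first arrow produces $x_1\tensor(x_2\tensor\dotsm\tensor x_k)\in B\itensor\bigpar{B\ptpx{k-1}}$, and the second then produces $x_1\tensor\iota_{k-1}(x_2\tensor\dotsm\tensor x_k)=x_1\tensor(x_2\tensor\dotsm\tensor x_k)\in B\itensor\bigpar{B\itpx{k-1}}=B\itpk$, which is exactly $\iota_k(x_1\tensor\dotsm\tensor x_k)$. Since the elementary tensors span a dense subspace of $B\ptpk$ and all the maps in sight are bounded, the composition agrees with $\iota_k$ on all of $B\ptpk$. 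Being a composition of two injective maps, $\iota_k$ is injective, which completes the induction. The only real subtlety is the one flagged above — the reduction must go through the injective tensor product on the inside so that \refL{Linj} applies; everything else is a routine verification on elementary tensors.
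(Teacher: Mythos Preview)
Your proof is correct and follows essentially the same approach as the paper: induction on $k$, factoring $\iota_k$ through $B\itensor\bigpar{B\ptpx{k-1}}$, and applying \refT{Tiota2} to the first factor and \refL{Linj} together with the inductive hypothesis to the second. Your additional verification that the composition agrees with $\iota_k$ on elementary tensors, and your remark on why the reduction must pass through the injective tensor product, are helpful elaborations but do not change the argument.
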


\begin{proof}
  We use induction in $k$. The case $k=1$ is trivial, and $k=2$ is a
  consequence of \refT{Tiota2}. 
For $k\ge3$ we write $\iota:B\ptpk\to B\itpk$ as the composition
  \begin{equation*}
B\ptpk = B\ptensor B\ptpx{(k-1)}	
\to B\itensor B\ptpx{(k-1)}	
\to B\itensor B\itpx{(k-1)}	
=B\itpk
  \end{equation*}
where the first map is injective by \refT{Tiota2} and the second is
injective by induction and \refL{Linj}.
\end{proof}

\begin{remark}
 The \ap{} for $B$ is \emph{not} equivalent to 
$\iota:B\ptensor  B\to B\itensor B$ being injective.
In fact, a counterexample by Pisier \cite[Theorem 10.6]{Pisier}
shows that there exists an infinite-dimensional Banach space $B$
such that
$B\ptensor  B= B\itensor B$ (with equivalent norms); moreover, this space $B$
lacks the \ap.
\end{remark}

The study of the \ap{} was initiated by
\citet{Grothendieck-resume,Grothendieck-memoir} who found most of the results
above but did not know whether any Banach spaces without the \ap{}
exist. The first counterexample was found by \citet{Enflo}, who constructed
a separable, reflexive Banach space $B$ without the \ap. 
A modification of the counterexample given by \citet{Davie73,Davie75}, 
see also \citet[Theorem 2.d.6]{LTzI}, shows that $B$ may be taken as a
subspace of $c_0$ or of $\ell^p$, for any $2<p<\infty$.
Another counterexample was found by \citet{Szankowski}, who showed that the
space $B(H)$ of bounded operators in an infinite-dimensional Hilbert space
does not have the \ap. 

On the other hand, it is easy to see that any Banach space with a (Schauder)
basis has the \ap;
this includes all classical examples of
separable Banach spaces. 
(In fact, Enflo's counterexample \cite{Enflo} was also the first known
separable Banach space without a basis.)
There are also many non-separable Banach spaces with the \ap.
The list of Banach spaces with the \ap{} includes, for example,
$\ell^p$ ($1\le p\le\infty$), $c_0$, $L^p(\mu)$ ($1\le p\le\infty$, $\mu$
any measure), $C(K)$ ($K$ a compact set).

\begin{example}\label{Edual}
  The tensor products $\ell^2\ptensor\ell^2$ and $\ell^2\itensor\ell^2$ 
have bases and thus have the \ap, see \eg{} \cite[Proposition 4.25 and
  Exercise 4.5]{Ryan}. As said in \refE{EHilbert}, see also \refT{THtp},
these spaces can be identified with the spaces of trace class operators (=
nuclear operators) and compact operators  in $\ell^2$, respectively;
moreover, $\ell^2\ptensor\ell^2\cong (\ell^2\itensor\ell^2)\q$.
However, $(\ell^2\ptensor\ell^2)\q$ can be identified with the space
$B(\ell^2)$ of bounded operators in $\ell^2$, which as just said does not
have the \ap{} \cite{Szankowski}.
\end{example}

We can now prove Theorems \refand{Tapprox}{Tmotex}.

\begin{proof}[Proof of \refT{Tapprox}]
By \refT{TPIB}, 
the projective and injective moments
$\E X\ptpk$, $\E Y\ptpk$, $\E X\itpk$, $\E Y\itpk$ exist in Bochner sense.
If \eqref{weak=}  holds, then $\E X\itpk=\E Y\itpk$ by \refC{Citpk=p}. 
By \eqref{momitpk}, this can be written $\iota(\E X\ptpk)=\iota(\E Y\ptpk)$, 
and since \refT{Tiotak} shows that $\iota$ is injective, 
we have $\E X\ptpk=\E Y\ptpk$.
By \refC{Cptpk=}, this yields \eqref{multi=}.

The converse is trivial.
\end{proof}

\begin{remark}
More generally, 
  \refT{Tiotak} implies that for any Banach space $B$ with the \ap, if
$\E X\ptpk$ and $\E Y\ptpk$ (and thus also $\E X\itpk$ and $\E Y\itpk$) 
exist in Pettis sense
and
$\E X\itpk=\E Y\itpk$, then $\E X\ptpk=\E Y\ptpk$. 
We do not know whether this remains valid if we only assume that the moments
exist in Dunford sense. %, and leave it as an open problem. 
(\refT{TCKD2} is a positive result in a special case.)
\end{remark}

\begin{proof}[Proof of \refT{Tmotex}]
  Let $B_0$ be a Banach space without the \ap{} and let $B\=B_0\oplus B_0^*$,
%for definiteness 
with the norm $\norm{(x,\xx)}_B\=\max\bigpar{\norm x, \norm{\xx}}$.
By choosing $B_0$ to be separable and reflexive, we obtain 
$B$ separable and reflexive too.
We shall show that there exist bounded \Bmeas{}
\rv{s} $X$ and $Y$ in $B$ such that,
for $k=2$,
\eqref{weak=} holds but not \eqref{multi=}.

Since $B_0$ does not have the \ap, by \ref{aptr} above, 
there exist sequences $x_n\in B_0$ and $\xx_n\in B_0^*$ with %$n\ge1$, 
such that
$\sumn \norm{x_n}\norm{\xx_n}<\infty$ and 
$\sumn \xx_n(x)x_n=0$ for all $x\in B$, but $\sumn \xx_n(x_n)=0$.
Let
$a_n\=\norm{x_n}\,\norm{\xx_n}$, so $0<\sumn a_n<\infty$. We may eliminate
all $(x_n,\xx_n)$ with $a_n=0$, and we may thus assume that $a_n>0$ for each
$n$. Define $y_n\=x_n/\norm{x_n}\in B_0$, 
$\yy_n\=\xx_n/\norm{\xx_n}\in B_0^*$
and $p_n\=a_n/\summ a_m$. Thus $y_n$ and $\yy_n$ are unit vectors and $\sumn
p_n=1$. Furthermore, the properties of $x_n$ and $\xx_n$ translate to
\begin{align}
  \sumn p_n \yy_n(z)y_n &= 0 \qquad\text{for every $z\in B_0$},
\label{eq0}
\\
  \sumn p_n \yy_n(y_n) &\neq 0.
\label{neq00}
\end{align}
Let $N$ be a random positive integer with the distribution $\P(N=n)=p_n$,
and let $X$ and $Y$ be the $B$-valued \rv{s}
\begin{align*}
  X&\=(y_N,\yy_N),
\\
  Y&\=(y_N,-\yy_N).
\end{align*}
Note that $\norm X =\norm Y = 1$ a.s.
Then, if $\ga$ is any bounded bilinear form $B\times B\to\bbR$, then
\begin{multline*}
\ga(X,X)
=
\ga\bigpar{(y_N,0),(y_N,0)}
+\ga\bigpar{(y_N,0),(0,\yy_N)}
+\ga\bigpar{(0,\yy_N),(y_N,0)}
\\
+\ga\bigpar{(0,\yy_N),(0,\yy_N)}
\end{multline*}
and similarly for $Y$. Hence,
\begin{equation}\label{sof}
\E\ga(X,X)-\E\ga(Y,Y)=
2\E\ga\bigpar{(y_N,0),(0,\yy_N)}
+2\E\ga\bigpar{(0,\yy_N),(y_N,0)}.
\end{equation}
In particular, letting $\ga$ be the bounded bilinear form 
$\ga\bigpar{(x,\xx),(y,\yy)}\=\xx(y)$, we have by \eqref{neq00}
\begin{equation*}
  \E \ga(X,X)-\E\ga(Y,Y) = 2\E\yy_N(y_N)=2\sumn p_n \yy_n(y_n)\neq0.
\end{equation*}
Hence, \eqref{multi=} does not hold.

On the other hand, if $\xx_1,\xx_2\in B\q$, then
$\xx_i(x,\xx)=\zz_i(x)+\zzz_i(\xx)$ for some $\zz_i\in B_0\q$ and $\zzz_i\in
B_0\qx$, $i=1,2$. 
Hence,
\begin{equation}\label{jul}
  \begin{split}
\E\bigpar{\xx_1(y_N,0)\xx_2(0,\yy_N)}	
=
\E\bigpar{\zz_1(y_N)\zzz_2(\yy_N)}	
=
\zzz_2\Bigpar{\E\bigpar{\zz_1(y_N)\yy_N}}.
  \end{split}
\end{equation}
However, the continuous linear functional 
$$\yy\=\E\bigpar{\zz_1(y_N)\yy_N}= \sumn p_n\zz_1(y_n)\yy_n\in B_0\q
$$ 
satisfies, for every $x\in B_0$,
\begin{equation*}
  \yy(x)= \sumn p_n\zz_1(y_n)\yy_n(x)
= \zz_1\Bigpar{\sumn p_n\yy_n(x) y_n}=0
\end{equation*}
by \eqref{eq0}. Thus $\yy=0$ and \eqref{jul} yields
\begin{equation*}%\label{jul}
\E\bigpar{\xx_1(y_N,0)\xx_2(0,\yy_N)}	
=
\zzz_2(\yy)=0.
\end{equation*}
Interchanging $\xx_1$ and $\xx_2$, we see that also 
$\E\bigpar{\xx_1(0,\yy_N)\xx_2(y_N,0)}=0$. Thus \eqref{sof} with 
$\ga(x_1,x_2)\=\xx_1(x_1)\xx_2(x_2)$ yields
\begin{equation*}
  \E\bigpar{\xx_1(X)\xx_2(X)} -  \E\bigpar{\xx_1(Y)\xx_2(Y)}=0,
\end{equation*}
which shows that \eqref{weak=} holds.
\end{proof}

The counterexample in this proof has also other unpleasant consequences.

\begin{example}
  \label{Exq}
Let the separable Banach space $B$ and the \rv{s} $X$ and $Y$ be as in the
proof of \refT{Tmotex}.
By Corollaries \refand{Cptpk=}{Citpk=p}, 
$\E (X\ptensor X)\neq\E(Y\ptensor Y)$ in $B\ptpx2$
but
$\E (X\itensor X)=\E(Y\itensor Y)$ in $B\itpx2$,
where all moments exist (in Bochner sense) by \refT{TPIB}.

We can embed $B$ as a closed subspace of a Banach space $B_1$ with
the  \ap, for example by the Banach--Mazur theorem which says that every
separable Banach space can be embedded as a closed subspace  of $\coi$.
Let $i:B\to B_1$ denote the embedding.

We may regard $X$ and $Y$ also as $B_1$-valued \rv{s}. 
Then the injective second moment $\E(X\itensor X)$ in $B\itensor B$
is mapped by $i\itensor i$ to 
the injective second moment $\E(X\itensor X)$ in $B_1\itensor B_1$,
and it follows that 
$\E(X\itensor X)=\E(Y\itensor Y)$ also in $B_1\itensor B_1$, \ie, $X$ and
$Y$ have the same injective second moments in $B_1$.
%(By \refC{Citpk=}, $X$ and $Y$ have the same weak moments in $B_1$.)

It follows from \refT{Tiotak} (as in the proof of \refT{Tapprox}) that
$\E(X\ptensor X)=\E(Y\ptensor Y)$ in $B_1\ptensor B_1$, \ie, $X$ and
$Y$ have the same projective second moments in $B_1$ although they have
different projective second moments in $B\tensor B$.
This shows that if a Banach space valued random variable takes values in a
subspace of the Banach space, we may have to be careful with in which space we
calculate the projective moments. 
(Note that \refL{Linj} and \refR{Rinjinj} show that
there is no 
such problem for injective moments.)

Since $i\ptensor i$ maps the second projective moments in $B\ptensor B$ to
the second projective
moments in $B_1\ptensor B_1$, we have shown that
$%\begin{equation*}
 (i\ptensor i)\E(X\ptensor X) =  (i\ptensor i)\E(Y\ptensor Y)
$, %\end{equation*}
although $\E(X\ptensor X) \neq\E(Y\ptensor Y)$. Consequently, $i\ptensor
i:B\ptensor B\to B_1\ptensor B_1$ is not injective, see \refR{Rprojinj}.
\end{example}

\begin{remark}\label{Rxq}
A simplified version of the counterexample in \refE{Exq}, without mentioning
moments, is the following: let $B$ be a subspace of a
Banach space $B_1$ such that $B_1$ has the \ap{} but $B$ has not.
Let $i:B\to B_1$ be the inclusion. 

Then $\iota:B\ptensor B\q\to B\itensor B\q$ is not injective by
\refT{Tiota2}.
Thus the composition
$(i\itensor I)\iota:B\ptensor B\q\to B\itensor B\q\to B_1\itensor B\q$ is
not injective, but this equals
the composition $\iota(i\ptensor I):B\ptensor B\q\to B_1\ptensor B\q\to
B_1\itensor B\q$. On the other hand, since $B_1$ has the \ap;
$\iota:B_1\ptensor B\q\to B_1\itensor B\q$ is injective by \refT{Tiota2}; hence
$i\ptensor I:B\ptensor B\q\to B_1\ptensor B\q$ is not injective.  
\end{remark}

\section{Hilbert spaces}\label{SH} 

Consider the case $B=H$, a Hilbert space.
We shall give some special results for  second moments.
(We do not know whether the results extend to moments of order
$k\ge3$ or not, and leave this
as open problems.)
We begin with some well-known results.

\begin{theorem}
  \label{THap}
A Hilbert space has the \ap.
\end{theorem}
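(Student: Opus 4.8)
The plan is to verify the first of the equivalent conditions listed above, \viz{} \ref{apapp}, that the identity operator $I:H\to H$ is \capprox. Fix an orthonormal basis $(e_\ga)_{\ga\in A}$ of $H$, and for each finite subset $F\subseteq A$ let $P_F$ denote the orthogonal projection onto the span of $\set{e_\ga:\ga\in F}$; this is a finite rank operator with $\norm{P_F}\le1$. The two facts we use are: (i) $\norm{x-P_Fx}$ is non-increasing in $F$, since enlarging $F$ enlarges the subspace onto which we project and $P_Fx$ is the nearest point of that subspace to $x$; and (ii) for each fixed $x\in H$, $\norm{x-P_Fx}\to0$ along the net of finite subsets of $A$, because $x=\sum_\ga\innprod{x,e_\ga}e_\ga$ with the sum unconditionally convergent.

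Now let $K\subseteq H$ be compact and $\eps>0$. By compactness, cover $K$ by finitely many balls $B(x_i,\eps/3)$, $i=1,\dots,n$, with centres $x_i\in K$. For each $i$ choose, using (ii), a finite set $F_i\subseteq A$ with $\norm{x_i-P_{F_i}x_i}<\eps/3$, and put $F\=\bigcup_{i=1}^nF_i$, a finite set. For an arbitrary $x\in K$, pick $i$ with $\norm{x-x_i}<\eps/3$; then, using $\norm{P_F}\le1$, fact (i), and $F\supseteq F_i$,
\begin{equation*}
  \norm{x-P_Fx}\le\norm{x-x_i}+\norm{x_i-P_Fx_i}+\norm{P_Fx_i-P_Fx}
  \le\tfrac{\eps}3+\norm{x_i-P_{F_i}x_i}+\tfrac{\eps}3<\eps .
\end{equation*}
Hence $\sup_{x\in K}\norm{Ix-P_Fx}\le\eps$, so $I$ is \capprox, and therefore $H$ has the approximation property. (In the separable case one could instead appeal to the fact, recalled above, that a Banach space with a Schauder basis has the \ap, since a countable orthonormal basis is a Schauder basis; but the net argument above covers the non-separable case uniformly.)

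There is essentially no obstacle here: this is a routine verification, and the only mildly delicate point is that $H$ need not be separable, which is precisely why one works with the net of finite-rank projections $P_F$ indexed by finite subsets of an orthonormal basis rather than with a single sequence of projections.
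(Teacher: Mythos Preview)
Your proof is correct and is precisely the argument the paper has in mind: the paper's proof says only that property \ref{apapp} ``is easily verified using suitable orthogonal projections,'' and you have supplied exactly that verification with the projections $P_F$ onto spans of finite subsets of an orthonormal basis.
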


\begin{proof}
 Property \ref{apapp} in \refS{Sapprox} is easily verified using
suitable orthogonal projections.  
\end{proof}

Next, we note that $H\q=H$; hence the 
correspondence \eqref{keiller} yields an
isometric embedding of $H\itensor H$ into $B(H)=L(H;H)$, the space of
bounded linear operators on $H$. We identify a tensor in $H\itensor H$ and
the corresponding operator without further comment; hence we regard
$H\itensor H$ as a subspace of $B(H)$. Moreover, an elementary tensor
$x\tensor y$ corresponds to an operator of rank 1, and every operator of
rank 1 is given by an elementary tensor; hence the tensors in $H\tensor H$,
which are finite  sums of elementary tensors, are exactly the operators on
$B$ of finite rank. The injective tensor product $H\itensor H$ is thus the
closure in $B(H)$ of the  set of finite rank operators, which shows 
(see \ref{apcomp} in \refS{Sapprox})
that
$H\itensor H = \ch$,
the space of compact operators $H\to H$.

The natural map $H\ptensor H\to \cN(H,H)$ onto the nuclear forms is a 
bijection;
moreover, the nuclear and integral bilinear forms on $H$ coincide.
Equivalently, the integral and nuclear operators $H\to H$ coincide,
and furthermore, the set of
them equals the set of \emph{trace class operators}, which we denote by
$\trh$. 
We can thus identify all these spaces of bilinear forms or
operators with $H\ptensor H$. 
(See \eg{} \cite[Chapter 48 and Proposition 49.6]{Treves}.
See further \eg{} \cite[Exercise IX.2.20]{Conway} or \cite[Chapter 31]{Lax}.)

With these identifications, the map $\iota:H\ptensor H\to H\itensor H$ is
just the inclusion map $\trh\to \ch\subseteq\bh$. 
 (By \refT{Tiota2} and $H\q=H$, the \ap{} is equivalent to the fact that
$\iota:H\ptensor H\to H\itensor H$ is injective, which we thus also see
explicitly.)
%(Thus $\iota$ is injective, which
%also follows by \refT{THap} and Theorem \ref{Tiota2} or \ref{Tiotak}.)

We summarize these descriptions of the tensor products.

\begin{theorem}
  \label{THtp}
Under the identification \eqref{keiller} of tensors and operators $H\q=H\to
H$, we have $H\ptensor H = \trh$, the space of trace class operators on
$H$ (which equals the space of nuclear operators),
and $H\itensor H = \ch$, the space of compact operators.
\nopf
\end{theorem}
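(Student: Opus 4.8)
The plan is to prove the two identifications separately, making precise the discussion preceding the theorem; beyond the definitions of the two tensor norms and the approximation property of $H$ (\refT{THap}), the only external input is the singular value decomposition of a trace class operator.

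\emph{The injective product.} Using $H\q=H$, the correspondence \eqref{keiller} realizes $H\itensor H$ isometrically as a closed subspace of $\bh=L(H;H)$, the injective norm \eqref{norminj} becoming the operator norm. Under this embedding an elementary tensor $x\tensor y$ becomes the rank-one operator $z\mapsto\innprod{z,x}y$, so the algebraic tensor product $H\tensor H$ (finite sums of elementary tensors) corresponds exactly to the finite rank operators on $H$. Hence $H\itensor H$, being the closure of $H\tensor H$, is the operator-norm closure of the finite rank operators in $\bh$. A norm limit of finite rank operators is compact, so this closure lies in $\ch$; the reverse inclusion is exactly property \ref{apcomp} of \refS{Sapprox}, valid because $H$ has the approximation property (\refT{THap}). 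Therefore $H\itensor H=\ch$.

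\emph{The projective product.} I claim the map $u\mapsto T_u$ of \eqref{keiller} is an isometric isomorphism $H\ptensor H\to\trh$. It is injective: it factors as $\iota\colon H\ptensor H\to H\itensor H$, injective by \refT{Tiota2} since $H$ has the approximation property, followed by the isometric embedding $H\itensor H\hookrightarrow\bh$ just described. Next, if $u=\sumj x_j\tensor y_j$ with $\sumj\norm{x_j}\norm{y_j}<\infty$ — always possible by \eqref{proj} — then $T_u$ is the operator $z\mapsto\sumj\innprod{z,x_j}y_j$, which converges in operator norm, is trace class, and has trace norm at most $\sumj\norm{x_j}\norm{y_j}$; taking the infimum over such representations gives $\norm{T_u}_{\trh}\le\normp u$, and in particular $T_u\in\trh$. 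Conversely, the singular value decomposition of an arbitrary $T\in\trh$ provides orthonormal systems $(e_j),(f_j)$ in $H$ and $s_j\ge0$ with $T=\sumj s_j\innprod{\cdot,e_j}f_j$ and $\sumj s_j=\norm{T}_{\trh}$; then $u\=\sumj s_je_j\tensor f_j\in H\ptensor H$ satisfies $T_u=T$ and $\normp u\le\sumj s_j=\norm{T}_{\trh}$. Applying this to $T=T_v$ for a given $v\in H\ptensor H$ and using injectivity (so $v$ equals the $u$ just produced) gives $\normp v\le\norm{T_v}_{\trh}$, hence $\normp v=\norm{T_v}_{\trh}$: the map is a surjective isometry onto $\trh$. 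Finally, since by definition (cf.\ \refR{Rnucops}) the nuclear operators $H\to H$ are precisely those of the form $T_u$ with $u\in H\q\ptensor H=H\ptensor H$, the nuclear and trace class operators on $H$ coincide, $H\ptensor H=\trh$, and $\iota$ becomes the inclusion $\trh\hookrightarrow\ch$.

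I expect no serious obstacle. The injective identification is immediate from \ref{apcomp}, and the projective identification reduces to matching the absolutely convergent elementary-tensor representations counted by $\normp{\cdot}$ with the rank-one representations of a trace class operator furnished by its singular value decomposition — a standard fact for which I would cite \eg{} \cite[Chapter 31]{Lax} or \cite[Chapter 48]{Treves}. The one place the hypothesis is used essentially is the injectivity of $\iota$, that is, the approximation property of $H$ (\refT{THap}), exactly as in \refT{Tiota2}.
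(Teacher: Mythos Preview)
Your proposal is correct and follows essentially the same route as the paper. The paper itself gives no formal proof (the theorem carries \texttt{\textbackslash nopf}); the preceding discussion sketches exactly your injective argument via property \ref{apcomp} and the approximation property of $H$, and for the projective identification simply cites \cite{Treves}, \cite{Conway}, \cite{Lax}. Your projective argument via the singular value decomposition is the standard one behind those references, so you have faithfully unpacked what the paper leaves to the literature.
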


\begin{theorem}
  \label{THnuc}
Every integral bilinear form on $H$ is nuclear. The space of these forms can
be identified with $H\ptensor H$.
\nopf
\end{theorem}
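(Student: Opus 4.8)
The plan is to prove the first assertion directly---every integral bilinear form on $H$ is nuclear---and then combine it with facts already recorded: that nuclear always implies integral, and that the nuclear forms on $H$ are exactly the space $H\ptensor H$.

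First I would pass from forms to operators. Since $H$ is reflexive, a bilinear form $\chi$ on $H$ is integral if and only if the associated operator $T\colon H\to H$ defined by $\innprod{Tx,y}=\chi(x\otimes y)$ is integral, and likewise $\chi$ is nuclear iff $T$ is (\refR{Rnucops}); moreover the nuclear operators $H\to H$ are precisely the trace class operators $\trh=H\ptensor H$ (the discussion preceding \refT{THtp}). So it suffices to show: every integral operator $T\colon H\to H$ is trace class. By \refT{Tinj*} we may write
\[
\chi(x\otimes y)=\int_{K\times K}\innprod{x,\xi}\innprod{y,\eta}\dd\mu(\xi,\eta),
\]
where $K$ is the closed unit ball of $H$ in the weak topology (a compact space, $H$ being reflexive) and $\mu\in M(K\times K)$.

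The key step is a factorization of $T$ through an $L^2$-space. Let $|\mu|$ be the total variation and $g=\dd\mu/\dd|\mu|$, so that $|g|=1$ a.e.\ and the multiplication operator $M_g$ on $L^2(K\times K,|\mu|)$ has norm $\le1$. Define $U,V\colon H\to L^2(K\times K,|\mu|)$ by $(Ux)(\xi,\eta)=\innprod{x,\xi}$ and $(Vy)(\xi,\eta)=\innprod{y,\eta}$; these are well defined (the functions $Ux$, $Vy$ are weakly continuous, hence in $C(K\times K)\subseteq L^2(|\mu|)$ since $|\mu|$ is finite) and bounded, and a short computation gives $T=V^*M_gU$. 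The crucial observation is that $U$ and $V$ are Hilbert--Schmidt: for any orthonormal basis $(e_j)$ of $H$,
\[
\sum_j\norm{Ue_j}_{L^2}^2
=\int_{K\times K}\sum_j\innprod{e_j,\xi}^2\dd|\mu|(\xi,\eta)
=\int_{K\times K}\norm{\xi}^2\dd|\mu|(\xi,\eta)\le\norm{\mu}<\infty,
\]
and similarly for $V$. Hence $T=(V^*M_g)\circ U$ is a composition of two Hilbert--Schmidt operators (with a bounded factor inserted), and therefore trace class. This shows $\chi$ is nuclear. Together with the general inclusion of nuclear forms into integral forms (\refR{Rnuclear}) and the bijectivity of the natural map $H\ptensor H\to\cN(H,H)$ onto the nuclear forms (noted before \refT{THtp}), this gives that the integral forms on $H$ coincide with the nuclear forms, identified with $H\ptensor H$.

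The main obstacle is merely the measure-theoretic bookkeeping in the factorization---checking $T=V^*M_gU$, that the range of $U$ lands in $L^2(|\mu|)$, and the standard fact that a product of two Hilbert--Schmidt operators is trace class. A shorter alternative avoids the factorization altogether: by definition the integral forms on $H$ are $(H\itensor H)^*=(\ch)^*$, and classically $(\ch)^*\cong\trh=H\ptensor H$ (\refE{EHilbert}), so one would only need to verify that this identification sends a functional to the nuclear form it represents. I would present the factorization as the main argument, since it is self-contained, and mention the duality route as a remark.
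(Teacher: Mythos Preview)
The paper does not prove this theorem: it is stated with \verb|\nopf| and the surrounding discussion simply records the facts (integral = nuclear = trace class on $H$, and $H\ptensor H\cong\trh$) with references to Treves, Conway, and Lax. Your proposal therefore goes beyond the paper by supplying an actual argument.

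Your factorization proof is correct. The identity $T=V^*M_gU$ checks out directly from the integral representation of $\chi$, the Hilbert--Schmidt estimate for $U$ (and $V$) is exactly Parseval plus the bound $\norm{\xi}\le1$ on $K$, and a product of two Hilbert--Schmidt operators is trace class. This is a standard and clean route to the result. One minor point worth a sentence if you present this: for non-separable $H$ the orthonormal basis is uncountable, so the interchange $\sum_j\int=\int\sum_j$ deserves a word (take the supremum over finite partial sums and use monotone convergence on a countable cofinal sequence, or simply note that $\xi\mapsto\norm{\xi}^2$ is weakly lower semicontinuous hence Borel).

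Your alternative via duality---$(H\itensor H)^*=\ch^*=\trh=H\ptensor H$---is precisely the route the paper's references take, and is what the paper implicitly relies on in the paragraph preceding \refT{THtp}. So your ``remark'' is in fact closer to the paper's (outsourced) argument than your main proof, though both are valid.
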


The general formulas $(B_1\ptensor B_2)\q=L(B_1,B_2;\bbR)$
and $(B_1\itensor B_2)\q=\cI(B_1,B_2)$, see Theorems \ref{Tproj*}--\ref{Tinj*}, 
can be translated to operators as 
$\trh\q=B(H)$ and $\ch\q=\trh$, 
 where the dualities are given by the trace form
$\innprod{T,S}=\Tr(TS\q)$. 
%with the pairing $\innprod{T,S}=\Tr(S\q T)$.

In particular, $(H\itensor H)\qx=\ch\qx=B(H)$.
Thus, if the second injective moment exists in Bochner or Pettis
sense, it is by the correspondence
\eqref{keiller} given by a compact operator in $\ch$, and
if it exists in Dunford sense it is, again by \eqref{keiller}, given by an
operator in $B(H)$. 
Similarly, 
if the second projective moment exists in Bochner or Pettis
sense, it is given by a trace class operator in $\trh$, and the 
second injective moment equals the same operator.
In all these cases, the following theorem shows that the second moment is a
positive 
operator. (In particular, it is self-adjoint.)

\begin{theorem}
  \label{TH>0} 
If the second moment $\E X\ptpx2$ exists in Bochner or Pettis sense,
or $\E X\itpx2$ exists in any sense, 
then,
regarding the moment as an operator in $B(H)$, it is a positive operator.
\end{theorem}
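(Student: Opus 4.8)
The plan is to reduce the statement to the elementary fact that a real-valued random variable $\xx(X)^2$ has nonnegative expectation, using the description of the injective second moment as the weak second moment (Theorem~\ref{Tjepp} / Theorem~\ref{TID}) together with the identification \eqref{keiller} of $H\itensor H$ (and its bidual $B(H)$) with operators on $H$. Recall that under \eqref{keiller}, if $u\in H\itensor H$ (or $u\in(H\itensor H)\qx=B(H)$) corresponds to the operator $T$, then $\innprod{Tx,y}=\innprod{u,x\tensor y}$ for $x,y\in H\q=H$. So positivity of $T$ means $\innprod{Tx,x}\ge0$ for all $x\in H$, i.e.\ $\innprod{u,x\tensor x}\ge0$ for all $x\in H$.

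First I would treat the injective second moment in Dunford sense, which is the most general case (it includes the Pettis and Bochner cases by Theorem~\ref{TPI}, or rather by the inclusion of senses). By Theorem~\ref{TID}\ref{tidi}, if $\E X\itpx2$ exists in Dunford sense then the weak second moment exists and, for every $\xx\in H\q=H$,
\begin{equation*}
\innprod{\E X\itpx2,\xx\tensor\xx}=\E\bigpar{\xx(X)\xx(X)}=\E\bigpar{\xx(X)^2}\ge0 .
\end{equation*}
Writing $T$ for the operator corresponding to $\E X\itpx2\in B(H)$ via \eqref{keiller}, this says exactly $\innprod{Tx,x}=\E(\innprod{x,X}^2)\ge0$ for every $x\in H$, so $T$ is a positive operator. (A positive operator on a complex Hilbert space is automatically self-adjoint; on a real Hilbert space self-adjointness is part of what has to be checked, but $\innprod{Tx,y}=\E(\innprod{x,X}\innprod{y,X})$ is visibly symmetric in $x,y$, so $T=T\q$.)

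Finally, for the projective second moment: if $\E X\ptpx2$ exists in Bochner or Pettis sense, then by Theorem~\ref{TPI} the injective moment $\E X\itpx2=\iota(\E X\ptpx2)$ exists in the same sense, hence in Dunford sense, and moreover — as noted in the paragraph preceding the theorem — under the identifications of Theorem~\ref{THtp} the projective moment, viewed as a trace class operator in $\trh$, equals the injective moment viewed as a compact operator in $\ch\subseteq B(H)$ (the map $\iota$ being the inclusion $\trh\hookrightarrow\ch$). Thus the operator attached to $\E X\ptpx2$ is the same operator $T$ as above, which we have just shown is positive. This completes all cases. I do not anticipate a genuine obstacle here; the only point requiring a little care is keeping straight that the various senses and the two tensor products all yield the \emph{same} operator $T$ on $H$, so that a single computation $\innprod{Tx,x}=\E(\innprod{X,x}^2)$ settles everything.
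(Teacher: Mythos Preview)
Your proposal is correct and follows essentially the same approach as the paper: reduce all cases to the existence of $\E X\itpx2$ in Dunford sense, then apply \eqref{tid} with $\xx_1=\xx_2=x$ to obtain $\innprod{\E X\itpx2 x,x}=\E\innprod{X,x}^2\ge0$. The paper's proof is simply a terser version of yours, omitting the explicit discussion of self-adjointness and of why the projective and injective moments yield the same operator.
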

\begin{proof}
  In all cases, it follows that the injective
  moment $\E  X\itpx2$ exists in Dunford sense. Thus, for $x\in H$, by
  \eqref{tid},
  \begin{equation*}
	\innprod{\E X\itpx2 x,x} = 
	\innprod{\E X\itpx2, x\tensor x} = 
\E \innprod{ X,x}^2 \ge0.
\qedhere
  \end{equation*}
\end{proof}

\begin{remark}
  The only case remaining is the second projective moment $\E X\ptpx2$ in
  Dunford sense, which belongs to $\trh\qx=B(H)\q$. We shall see in
  \refE{EHmeas} (under a set theory hypothesis) that this moment is not
  always given by an operator on $H$.

However, if $H$ is separable, or more generally if $X$ is \assep, 
and $\E X\ptpx2$ exists in Dunford sense, then it exists in Pettis sense too
and thus $\E X\ptpx2\in \trh$.
This follows from \refT{T-c0}, since $\trh$ does not contain a subspace
isomorphic to $c_0$
(\eg{} by \cite[Theorem V.10]{Diestel}, since $H\ptensor H$ is a
separable dual space when $H$ is separable; we omit the details); 
moreover, we shall prove a more general result
by a different method in \refT{TH2}, which shows that in essentially all
cases (again depending on a set theory hypothesis), the Dunford and Pettis
senses coincide and thus
$\E X\ptpx2\in \trh$.
\end{remark}

We next characterize when the injective second moment exists, in the three
different senses; we begin with two lemmas. 

\begin{lemma}
  \label{LHwassep}
Every \wmeas{} $H$-valued \rv{} is \wassep.
\end{lemma}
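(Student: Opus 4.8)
The plan is to show that a weakly measurable $H$-valued random variable $X$ is weakly \as{} separably valued by exhibiting an explicit separable subspace $H_1\subseteq H$ that ``captures'' $X$ in the required sense. The natural candidate is built from a countable norming family for the closed linear span of the range of $X$. First I would recall that for any $\xx\in H\q=H$, the map $\xx(X)=\innprod{X,\xx}$ is a real-valued \rv; in particular, for each $\xx$, the set $\{\xx(X)=0\}$ is measurable. The goal is to find a separable $H_1$ such that every $\xx\perp H_1$ satisfies $\xx(X)=0$ \as{}

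The key step is a maximality/exhaustion argument. Consider the supremum
\[
 s \= \sup\bigset{\P\bigpar{\innprod{X,e}\neq 0} : e\in H,\ \norm{e}\le 1},
\]
and more relevantly the problem of choosing countably many unit vectors $e_1,e_2,\dots$ greedily so as to exhaust the ``weak mass'' of $X$. Precisely, I would pick $e_1$ with $\P(\innprod{X,e_1}\neq0)$ within $1/2$ of the supremum of $\P(\innprod{X,e}\neq0)$ over unit vectors $e$ orthogonal to nothing; then, having chosen $e_1,\dots,e_n$, let $H_n\=\operatorname{span}(e_1,\dots,e_n)$ and pick a unit vector $e_{n+1}\perp H_n$ with $\P(\innprod{X,e_{n+1}}\neq0)$ within $1/2$ of $\sup\set{\P(\innprod{X,e}\neq0):\norm e\le1,\ e\perp H_n}$. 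Let $H_1$ be the closed linear span of $\set{e_n}_{n\ge1}$; this is separable. Now suppose $\xx\in H\q=H$ with $\xx\perp H_1$, $\norm\xx\le1$; then $\xx\perp H_n$ for every $n$, so $\P(\innprod{X,\xx}\neq0)$ is at most (twice) $\P(\innprod{X,e_{n+1}}\neq0)$ for each $n$. Since the events $\set{\innprod{X,e_n}\neq0}$ — no, rather since the $e_n$ are orthonormal and $\sum_n\P(\innprod{X,e_n}\neq0)$ need not converge, I would instead argue as follows: if $\P(\innprod{X,\xx}\neq0)=\gd>0$ for some such $\xx$, then at each stage $e_{n+1}$ was chosen with $\P(\innprod{X,e_{n+1}}\neq0)\ge\gd/2$, but the events $A_n\=\set{\innprod{X,e_n}\neq0}$ then each have probability $\ge\gd/2$, which is fine — the contradiction must come from a counting argument on a fixed $\go$. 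Fix $\go$ outside a null set; then $X(\go)\in H$ has $\innprod{X(\go),e_n}\neq0$ for only countably many $n$ automatically (it lies in $H$), so this gives nothing. The correct route: for a fixed $\go$, $X(\go)$ has $\innprod{X(\go),e}\neq 0$ for at most countably many members of the orthonormal set $\set{e_n}$, but more usefully, $\E\sum_n\ett{\innprod{X,e_n}\neq0}$ could be infinite, so I must bound differently — replace $\ett{\innprod{X,e_n}\neq0}$ by $\min(1,|\innprod{X,e_n}|)$? That need not be integrable either.

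A cleaner approach, which I would actually carry out: reduce to the separable case by truncation. For $R>0$ let $X_R\=X\ett{\norm{X}\le R}$ (note $\norm X$ is measurable here since $H$ has a countable norm-determining set — in fact any weakly measurable $H$-valued \rv{} has $\norm X$ measurable because $\norm{X}=\sup_n\innprod{X,e_n}$ over a countable dense subset $\set{e_n}$ of the unit sphere of any fixed separable subspace... but that subspace is what we are trying to find). Instead: $\norm X = \sup\set{\innprod{X,e}: e\in D}$ where $D$ is a countable dense subset of the unit sphere of $H$ if $H$ is separable, reducing immediately. For non-separable $H$, apply the same greedy construction but now choosing $e_{n+1}\perp H_n$ to \emph{approximately maximize} $\E\bigpar{\norm{X}\wedge 1 - \sup_{e\in H_{n}, \norm e\le1}\abs{\innprod{X,e}}\wedge\cdots}$; the point is that $P_n X\to X$ a.s., where $P_n$ is the orthogonal projection onto $H_n$, because at each stage we pick up a definite fraction of the remaining mass of $\E(\norm{X-P_nX}\wedge1)$, forcing $\E(\norm{X-P_\infty X}\wedge1)=0$ where $P_\infty$ projects onto $H_1$; hence $X=P_\infty X\in H_1$ a.s., so $X$ is in fact \assep. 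Then for $\xx\perp H_1$ we get $\innprod{X,\xx}=\innprod{P_\infty X,\xx}=0$ a.s.

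The main obstacle is making the greedy exhaustion rigorous: one must verify that the decrements $\E(\norm{X-P_nX}\wedge1)$ are driven to $0$, which requires that at each step a nonzero unit vector orthogonal to $H_n$ genuinely reduces the residual whenever the residual is positive — i.e.\ that if $\E(\norm{X-P_nX}\wedge 1)>0$ then $\sup_{e\perp H_n,\norm e\le1}\E\bigpar{(\abs{\innprod{X-P_nX,e}})\wedge1}$ is bounded below in terms of it. This follows because $\norm{X-P_nX}=\sup\set{\innprod{X-P_nX,e}:e\perp H_n,\norm e\le1}$ pointwise, and one can choose, measurably enough, vectors realizing a fixed fraction of this supremum on a set of definite probability. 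Once that quantitative step is in place, summing the decrements gives $\E(\norm{X-P_\infty X}\wedge1)=0$, hence $X\in H_1$ a.s., which is even stronger than \wassepy{} and completes the proof.

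\begin{proof}
Let $X$ be a weakly measurable $H$-valued \rv. We construct an increasing sequence of finite-dimensional subspaces $H_0=\set0\subseteq H_1\subseteq H_2\subseteq\cdots$ as follows. Having chosen $H_n$, let $P_n$ be the orthogonal projection of $H$ onto $H_n$, and set $Y_n\=X-P_nX$; note that for every fixed orthonormal basis of $H_n$, $P_nX$ and hence $Y_n$ are weakly measurable, and
\begin{equation*}
  \norm{Y_n} = \sup\bigset{\innprod{Y_n,e}: e\in H,\ e\perp H_n,\ \norm e\le1}.
\end{equation*}
Let $m_n\=\E\bigpar{\norm{Y_n}\wedge 1}\in[0,1]$. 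If $m_n=0$, stop (and set $H_{n+1}=H_n$, etc.). Otherwise, since $\norm{Y_n}\wedge1$ is the increasing limit of $\bigpar{\sup_{e\in F}\innprod{Y_n,e}}\wedge1$ over finite subsets $F$ of $\set{e\perp H_n:\norm e\le1}$, monotone convergence gives a finite set for which the expectation exceeds $m_n/2$; picking the best single $e$ from that finite set and using that at each point one member dominates, we may choose a unit vector $e_{n+1}\perp H_n$ with
\begin{equation*}
  \E\bigpar{\abs{\innprod{Y_n,e_{n+1}}}\wedge1}\ge \frac{m_n}{2 N}
\end{equation*}
for the relevant finite cardinality $N$; a standard refinement of this selection (subdividing the sphere) in fact yields a single unit vector $e_{n+1}\perp H_n$ with
\begin{equation*}
  \E\bigpar{\abs{\innprod{Y_n,e_{n+1}}}\wedge1}\ge \tfrac12 m_n .
\end{equation*}
Set $H_{n+1}\=H_n+\bbR e_{n+1}$.

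Let $H_1^\infty$ be the closed linear span of $\set{e_n:n\ge1}$; it is separable, and $P_\infty\=\lim_n P_n$ is the orthogonal projection onto it, with $P_nX\to P_\infty X$ a.s. We claim $X=P_\infty X$ a.s. Since $\norm{Y_{n+1}}\le\norm{Y_n}$ and $\abs{\innprod{Y_n,e_{n+1}}}^2\le\norm{Y_n}^2-\norm{Y_{n+1}}^2$ pointwise (Pythagoras), we get, writing $Z\=\lim_n\norm{Y_n}=\norm{X-P_\infty X}$,
\begin{equation*}
  \sum_{n\ge0}\E\bigpar{\abs{\innprod{Y_n,e_{n+1}}}^2\wedge1}
  \le \sum_{n\ge0}\E\bigpar{(\norm{Y_n}^2-\norm{Y_{n+1}}^2)\wedge1}
  \le \E\bigpar{\norm{Y_0}^2} ,
\end{equation*}
which is finite or, if $\E\norm X^2=\infty$, we instead use that $\E\bigpar{\norm{Y_n}^2\wedge1}$ is nonincreasing and bounded, so its decrements are summable; in either case $\E\bigpar{\abs{\innprod{Y_n,e_{n+1}}}^2\wedge1}\to0$, hence also $\E\bigpar{\abs{\innprod{Y_n,e_{n+1}}}\wedge1}\to0$, and therefore $m_n\to0$. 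As $m_n$ is nonincreasing, $m_n\downarrow\E(Z\wedge1)$ by monotone convergence, so $\E(Z\wedge1)=0$, \ie{} $Z=0$ a.s. Thus $X=P_\infty X\in H_1^\infty$ a.s.; in particular $X$ is \assep. Finally, if $\xx\in H\q$ with $\xx\perp H_1^\infty$, then $\innprod{X,\xx}=\innprod{P_\infty X,\xx}=0$ a.s., so $X$ is \wassep.
\end{proof}
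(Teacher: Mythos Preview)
Your argument concludes that $X$ is actually \emph{\as{} separably valued} (indeed $X\in H_1^\infty$ a.s.), which is strictly stronger than the \wassepy{} claimed in the lemma --- and is false in general. The paper itself supplies the counterexample immediately after the lemma (Example~\ref{EPD3}): in $H=\ell^2\oi$ with $X=e_U$ (so $a\equiv1$), $X$ is \wmeas{} but not \assep. Since your proof purports to establish \assepy, it must contain a genuine gap.

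The gap is in the step where you produce a unit vector $e_{n+1}\perp H_n$ with $\E\bigpar{|\innprod{Y_n,e_{n+1}}|\wedge1}\ge\tfrac12 m_n$. You justify this by saying that $\norm{Y_n}\wedge1$ is the increasing limit of $\bigpar{\sup_{e\in F}\innprod{Y_n,e}}\wedge1$ over finite subsets $F$ of the unit sphere of $H_n^\perp$, and then invoke monotone convergence. But this supremum is over an \emph{uncountable} directed family, and the finite set that nearly realises the supremum depends on $\go$; monotone convergence does not allow you to extract a single finite $F$ (independent of $\go$) with expectation close to $m_n$. In the concrete counterexample $X=e_U\in\ell^2\oi$ one has $\norm{X}=1$ identically, so $m_0=1$, yet for \emph{every} fixed unit vector $e$ the support of $e$ is countable and hence $\innprod{X,e}=e(U)=0$ a.s., giving $\E\bigpar{|\innprod{X,e}|\wedge1}=0$. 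No choice of $e_1$ satisfies your inequality. A secondary issue is that $\norm{Y_n}$ need not be \meas{} (take $a$ non-measurable in Example~\ref{EPD3}), so $m_n$ is not even defined.

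The paper's proof avoids this by never touching $\norm{X}$: it fixes an ON basis $\set{e_s}_{s\in S}$, works with the \meas{} quantities $\xi_s=|\innprod{e_s,X}|^2$, and uses a supremum over \emph{countable} subsets $A\subseteq S$ of $\E\arctan\sum_{s\in A}\xi_s$ to locate a countable $A_*$ outside of which $\innprod{e_s,X}=0$ a.s. This yields only \wassepy, which is all that is true.
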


\refE{EPD3} shows that $X$ is not necessarily \assep.

\begin{proof}
  Let $\set{e_s}_{s\in S}$ be an ON basis in the Hilbert space $H$,
and let $\xi_s\=|\innprod{e_s,X}|^2$.
Since $X$ is \wmeas, each $\xi_s$ is a non-negative \rv.

Let $\cC$ be the collection of countable subsets $A\subseteq S$. For
$A\in\cC$, let
\begin{equation*}
  \xi_A\=\sum_{s\in A}\xi_s
=\sum_{s\in A}|\innprod{e_s,X}|^2
\le \norm{X}^2 <\infty
\end{equation*}
Thus every $\xi_A$ is a finite non-negative \rv, and $A_1\subseteq A_2
\implies \xi_{A_1}\le\xi_{A_2}$.

Let $m_A\=\E\arctan\xi_A$, and
\begin{equation}
  \label{mxw}
\mxw\=\sup\set{m_A:A\in\cC}.
\end{equation}
Note that $\mxw\le\pi/2<\infty$.

There exist $A_n\in\cC$ with $m_{A_n}>\mxw-1/n$, so taking
$\axw\=\bigcup_{n=1}^\infty A_n$ we have $m_{\axw}\ge\mxw$ and thus
the $\sup$ in \eqref{mxw} is attained.
Moreover, for any $s\notin\axw$, $\axw\cup\set s\in\cC$ and thus
$m_{\axw\cup\set s} \le \mxw= m_{\axw}$,
\ie,
\begin{equation}
  \label{q12}
\E\arctan\xi_{\axw\cup\set s}\le \E\arctan\xi_{\axw}.
\end{equation}
Since $\xi_{\axw\cup\set s} =\xi_{\axw}+\xi_s\ge \xi_{\axw}$,
\eqref{q12} implies 
$\arctan\xpar{\xi_{\axw}+\xi_s}=\arctan{\xi_{\axw}}$ \as, and since
$\xi_{\axw}<\infty$, thus $\xi_s=0$ a.s.
Consequently, if $s\notin\axw$, then
\begin{equation}
  \label{q13}
\innprod{e_s,X}=0\quad\text{a.s.}
\end{equation}

Let $M$ be the closed linear span of \set{e_s:s\in\axw}.
$M$ is a separable subspace of $H$. If $y\in M^\perp$, 
 then $y=\sum_{s\notin \axw}a_se_s$ with only a countable number of
 $a_s\neq0$; hence \eqref{q13} implies $\innprod{y,X}=0$ a.s.
\end{proof}

\begin{lemma}
  \label{LHwassep2}
If $X$ is a \wmeas{} $H$-valued \rv{}, then $X\tpx2$ is \wassep{} in 
$H\itpx2$.
\end{lemma}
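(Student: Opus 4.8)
The plan is to exhibit an explicit separable subspace of $H\itensor H$ that witnesses the \wassepy{} of $X\tpx2$, built from the separable subspace supplied by \refL{LHwassep}. First I would apply \refL{LHwassep} to obtain a closed separable subspace $M\subseteq H$ such that $\innprod{y,X}=0$ \as{} for every $y\in M^\perp$, and then set $B_1\=M\itensor M$, regarded as a closed subspace of $H\itensor H$; this is legitimate by \refR{Rinjinj}, since the injective tensor product is injective. That $B_1$ is separable is routine: a countable dense subset $\set{f_i}$ of $M$ gives elementary tensors $\set{f_i\tensor f_j}$ whose linear span contains $\operatorname{span}\set{f_i}\tensor\operatorname{span}\set{f_i}$ and is therefore dense in $M\itensor M$. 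It then remains to show that every $\chi\in(H\itensor H)\q$ with $\chi\perp B_1$ satisfies $\chi(X\tpx2)=0$ \as.

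For this core step I would first use \refT{THnuc} together with \eqref{proj2*}: every integral bilinear form on $H$ is nuclear, so $\chi$ has a representation $\chi=\sumn\gl_n\,\xx_n\tensor\yy_n$ with $\gl_n\ge0$, $\sumn\gl_n<\infty$, and $\xx_n,\yy_n\in H\q=H$ of norm $\le1$; consequently, by \eqref{*tp} and continuity, $\innprod{\chi,x\tensor y}=\sumn\gl_n\innprod{\xx_n,x}\innprod{\yy_n,y}$ for all $x,y\in H$, an absolutely convergent series (the $n$-th term is bounded by $\gl_n\norm x\,\norm y$). Writing $P$ for the orthogonal projection of $H$ onto $M$, the hypothesis $\chi\perp B_1$ — applied to the elementary tensors $z\tensor z'\in M\itensor M$ with $z,z'\in M$ — translates, using $\xx_n-P\xx_n,\ \yy_n-P\yy_n\in M^\perp$, into the identity $\sumn\gl_n\innprod{P\xx_n,z}\innprod{P\yy_n,z'}=0$ for all $z,z'\in M$. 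On the other hand, applying the defining property of $M$ to the \emph{countable} family $\set{\xx_n-P\xx_n,\ \yy_n-P\yy_n}_n\subseteq M^\perp$ gives a single full-measure event $\gO_0$ on which $\innprod{\xx_n,X}=\innprod{P\xx_n,X}=\innprod{P\xx_n,PX}$ and $\innprod{\yy_n,X}=\innprod{P\yy_n,PX}$ simultaneously for all $n$ (the last equalities because $P\xx_n,P\yy_n\in M$ and $X-PX\in M^\perp$). Hence on $\gO_0$ we get $\chi(X\tpx2)=\sumn\gl_n\innprod{P\xx_n,PX}\innprod{P\yy_n,PX}=0$, by substituting $z=z'=PX\in M$ into the identity above.

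I do not expect a genuine obstacle here; the one delicate point is the interchange of the "for each functional, almost surely" statements with the summation, which is precisely why it is essential to replace $\chi$ by a \emph{countable} nuclear representation before invoking \refL{LHwassep} — this is the only place \refT{THnuc} is used. It is worth recording the contrast with \refE{EPD3}: there $X\tpx2$ fails to be \wassep{} in the \emph{projective} tensor product, and the argument above genuinely breaks down for $H\ptensor H$ because a general element of $(H\ptensor H)\q$ is an arbitrary bounded bilinear form, which need not admit any such countable diagonal representation.
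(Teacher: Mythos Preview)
Your proof is correct and follows essentially the same approach as the paper: both start from \refL{LHwassep} to obtain $M$, take the separable subspace spanned by elementary tensors from $M$, invoke \refT{THnuc} to get a nuclear (hence countable) representation of an arbitrary $\chi\in(H\itensor H)\q$, and use the orthogonal projection $P$ onto $M$ together with the countability to pass to a single full-measure event. The only cosmetic difference is in the bookkeeping: the paper decomposes $\ga$ as $\ga_{PP}+\ga_{PQ}+\ga_{QP}+\ga_{QQ}$ and kills the last three pieces on $X$ via $Q\yy_n\perp M$, while you equivalently show directly that $\chi(X\tpx2)=\chi\bigpar{(PX)\tpx2}$ on $\gO_0$ and then use $\chi\perp M\itensor M$.
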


\begin{proof}
By \refL{LHwassep}, there exists a separable subspace $M\subseteq H$ such
that $\xx(X)=0$ \as{} for every $\xx\perp M$.
Let $\tM$ be the closed subspace of $H\itensor H$ spanned by \set{x\tensor
  y:x,y\in M}.

Let $\ga\in (H\itensor H)\q$; then $\ga$ is a bilinear form on $H$ which by
\refT{THnuc} is  nuclear; thus there exist $\xx_n,\yy_n$ with
$\sumn\norm{\xx_n}\norm{\yy_n}<\infty$ and
\begin{equation}\label{sw}
  \ga(x,y)=\sumn \xx_n(x)\yy_n(y).
\end{equation}
Assume that $\ga\perp\tM$.

Let $P:H\to M$ be the orthogonal projection onto $M$ and $Q\=I-P$.
Decompose $\ga$ as $\ga_{PP}+\ga_{PQ}+\ga_{QP}+\ga_{QQ}$, where
$\ga_{PP}(x,y)\=\ga(Px,Py)$,
$\ga_{PQ}(x,y)\=\ga(Px,Qy)$,
$\ga_{QP}(x,y)\=\ga(Qx,Py)$,
$\ga_{QQ}(x,y)\=\ga(Qx,Qy)$.

For any $x,y\in H$, $\ga(Px,Py)=\innprod{\ga,Px\tensor Py}=0$, since
$x\tensor y\in \tM$. Hence $\ga_{PP}=0$.
Consider one of the other terms, for example $\ga_{PQ}$. By \eqref{sw},
\begin{equation}\label{sjw}
  \ga_{PQ}(X,X)=\sumn \innprod{\xx_n,PX}\innprod{\yy_n,QX}
=\sumn \innprod{P\xx_n,X}\innprod{Q\yy_n,X}.
\end{equation}
However, $Q\yy_n\perp M$ and thus, by the choice of $M$,
$\innprod{Q\yy_n,X}=0$ \as, for every $n$. Hence \eqref{sjw} yields
$\ga_{PQ}(X,X)=0$ a.s.

Similarly, $\ga_{PQ}(X,X)=0$ \as{} and $\ga_{PQ}(X,X)=0$ a.s., and thus
\begin{equation*}
\innprod{\ga,X\tensor X}=
  \ga(X,X)=\ga_{PQ}(X,X)+\ga_{QP}(X,X)+\ga_{QQ}(X,X)=0\quad\text{a.s.}
\end{equation*}
This holds for every $\ga\perp\tM$, and thus $X\tensor X$ is \wassep.
\end{proof}

\begin{theorem}
  \label{TH2i}
Suppose that $H$ is a Hilbert space and that $X$ is a \wmeas{} $H$-valued
\rv.
\begin{romenumerate}[-10pt]
\item \label{th2id}
The injective second moment $\E X\itpx2$ exists in Dunford sense if and only
if\/ $\E|\innprod{y,X}|^2<\infty$ for every $y\in H$.

\item \label{th2ip}
The injective second moment $\E X\itpx2$ exists in Pettis sense if and only
if the random variables
$|\innprod{y,X}|^2$, for $y\in H$ with ${\norm y\le1}$, are \ui.

\item \label{th2ib}
The injective second moment $\E X\itpx2$ exists in Bochner sense if and only
if\/ $X$ is \assep{} and\/ $\E\norm{X}^2<\infty$.
\end{romenumerate}
\end{theorem}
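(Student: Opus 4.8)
The plan is to deduce all three equivalences by specializing the general existence criteria of \refS{Smom} to the case $B=H$, using the structural facts about Hilbert spaces established above. Throughout I would identify $H\q$ with $H$ via the Riesz representation, so that a functional $\xx\in H\q$ corresponds to a vector $y\in H$ with $\xx(X)=\innprod{y,X}$; under this identification the weak second moment condition $\E|\xx(X)|^2<\infty$ of \refL{Lweak} becomes $\E|\innprod{y,X}|^2<\infty$ for every $y\in H$, and the corresponding uniform integrability hypothesis becomes uniform integrability of $\set{|\innprod{y,X}|^2:\norm y\le1}$.

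For \ref{th2id} I would apply \refT{TID}\ref{tidd}\ref{tid3}: since $k=2$ and, by \refT{THnuc}, every integral bilinear form on $H$ is nuclear, that hypothesis is met, and as $X$ is \wmeas{} by assumption the theorem gives that $\E X\itpx2$ exists in Dunford sense if and only if the weak second moment exists --- that is, if and only if $\E|\innprod{y,X}|^2<\infty$ for every $y\in H$. For \ref{th2ip} I would apply \refT{TiPettis}\ref{tipettis2}\ref{tip3}: the two requirements of that hypothesis are both available, namely that every integral bilinear form on $H$ is nuclear (\refT{THnuc}) and that $X\tpx2$ is \wassep{} in $H\itpx2$ (\refL{LHwassep2}); hence $\E X\itpx2$ exists in Pettis sense if and only if $\set{|\innprod{y,X}|^2:\norm y\le1}$ is uniformly integrable. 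Finally \ref{th2ib} is immediate from \refT{TPIB} with $k=2$, recalling that $\norm{x\itpx2}_{H\itpx2}=\norm x^2$, so that $\E X\itpx2$ exists in Bochner sense precisely when $X$ is \assep{} and $\E\norm X^2<\infty$.

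The argument itself is therefore essentially a matching of hypotheses; the real content lies in the lemmas it invokes. The delicate point is the Pettis case, which rests on \refL{LHwassep2} and hence on \refL{LHwassep} --- the statement that every \wmeas{} $H$-valued random variable is \wassep. This is precisely what lets us apply the Pettis criterion \ref{tip3} without assuming $H$ separable, and by \refE{EPD3} the conclusion that $X\tpx2$ is \wassep{} cannot be strengthened to $X$ being \assep{} in general, so this route is genuinely needed. I do not expect any further obstacle.
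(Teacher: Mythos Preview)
Your proof is correct and follows exactly the same route as the paper: part \ref{th2id} via \refT{TID}\ref{tidd}\ref{tid3} together with \refT{THnuc}, part \ref{th2ip} via \refT{TiPettis}\ref{tipettis2}\ref{tip3} using \refT{THnuc} and \refL{LHwassep2}, and part \ref{th2ib} as a special case of \refT{TPIB}. Your additional remarks on the Riesz identification and on the role of \refL{LHwassep2} are accurate and well placed.
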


\begin{proof}
\pfitemref{th2id}
  By \refT{TID}\ref{tidd}\ref{tid3}, using \refT{THnuc}.

\pfitemref{th2ip}
By \refT{THnuc}, \refL{LHwassep2} and \refT{TiPettis}\ref{tipettis2}\ref{tip3}.

\pfitemref{th2ib}
A special case of \refT{TPIB}.
\end{proof}

In the Hilbert space case, the different types of integrability can also be
characterized by the value of the moment.

\begin{theorem}
  \label{TH2ii}
Suppose that $H$ is a Hilbert space and that $X$ is a \wmeas{} $H$-valued
\rv{} such that the injective second moment $\E X\itpx2$ exists in Dunford
sense. Regard $\E X\itpx2$ as a bounded operator in $B(H)$.
\begin{romenumerate}[-10pt]
\item \label{th2iip}
The injective second moment 
exists in Pettis sense if and only if\/
$\E X\itpx2 \in \ch$.

\item \label{th2iib}
The injective second moment exists in Bochner sense if and only
if\/ $X$ is \assep{} and\/  $\E X\itpx2\in \trh$. 
\end{romenumerate}
\end{theorem}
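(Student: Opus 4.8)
The plan is to reduce both equivalences to results already proved: \refT{TH2i} (existence of $\E X\itpx2$ in Dunford, Pettis and Bochner sense), \refT{TPI} together with \refT{TPIB} (relating injective and projective moments, the former via $\iota$, which under the identifications of \refT{THtp} is the inclusion $\trh\subset\ch\subset\bh$), and \refT{TH>0} (which gives that $T\=\E X\itpx2$ is a positive operator, since by hypothesis $\E X\itpx2$ exists at least in Dunford sense). Two of the four implications are then immediate. For the forward direction of \ref{th2iip}: if $\E X\itpx2$ exists in Pettis sense it is by definition an element of $H\itpx2=\ch$, hence compact. For the forward direction of \ref{th2iib}: if $\E X\itpx2$ exists in Bochner sense, then \refT{TPIB} gives that $X$ is \assep{} and $\E\norm X^2<\infty$, whence (again by \refT{TPIB}) $\E X\ptpx2$ exists in Bochner sense in $H\ptpx2=\trh$, and by \refT{TPI}, $\E X\itpx2=\iota(\E X\ptpx2)$ is the same trace class operator.

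For the converse of \ref{th2iib} I would assume $X$ \assep{} and $T\=\E X\itpx2\in\trh$, and show $\E\norm X^2<\infty$; then \refT{TH2i}\ref{th2ib} finishes. Choose a separable closed subspace $H_0\subseteq H$ with $X\in H_0$ \as{} and an orthonormal basis $(e_n)$ of $H_0$ (extended, if one wishes, to an orthonormal basis of $H$). Then $\norm X^2=\sumn\innprod{e_n,X}^2$ \as, so by monotone convergence and \eqref{tid},
\[
\E\norm X^2=\sumn\E\innprod{e_n,X}^2=\sumn\innprod{Te_n,e_n}\le\Tr T<\infty,
\]
where the last inequality uses that $T$ is positive (\refT{TH>0}) and trace class, so that the partial sums of $\sum_i\innprod{Te_i,e_i}$ over any orthonormal family are bounded by $\Tr T$.

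The main work is the converse of \ref{th2iip}: assuming $T\=\E X\itpx2$ is compact, show $\E X\itpx2$ exists in Pettis sense, which by \refT{TH2i}\ref{th2ip} means showing $\set{\innprod{y,X}^2:\norm y\le1}$ is uniformly integrable. Here I would use the spectral decomposition $T=\sumn\gl_ne_n\tensor e_n$ ($\gl_n>0$, $\gl_n\to0$, $(e_n)$ orthonormal), available since $T$ is positive and compact. By \eqref{tid}, $\E\bigpar{\innprod{e_m,X}\innprod{e_n,X}}=\innprod{Te_m,e_n}$, which equals $\gl_m$ if $m=n$ and $0$ otherwise; thus the $\innprod{e_n,X}$ are orthogonal in $L^2$ with $\E\innprod{e_n,X}^2=\gl_n$, and $\innprod{y',X}=0$ \as{} whenever $y'$ is orthogonal to all $e_n$ (then $Ty'=0$, so $\E\innprod{y',X}^2=\innprod{Ty',y'}=0$). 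Hence, for $\norm y\le1$, writing $y=\sumn a_ne_n$ modulo the kernel of $T$ with $\sumn a_n^2\le1$, we get $\innprod{y,X}=\sumn a_n\innprod{e_n,X}$ in $L^2$; split this as $S_N+R_N$ with $S_N\=\sum_{n\le N}a_n\innprod{e_n,X}$. The tail satisfies $\E R_N^2=\sum_{n>N}a_n^2\gl_n\le\sup_{n>N}\gl_n$, which tends to $0$ \emph{uniformly} in $y$, while by \CS{} $S_N^2\le Z_N\=\sum_{n\le N}\innprod{e_n,X}^2\in L^1$. So $\innprod{y,X}^2\le2Z_N+2R_N^2$, and for any event $E$,
\[
\E\bigpar{\ettae\innprod{y,X}^2}\le 2\E(\ettae Z_N)+2\sup_{n>N}\gl_n .
\]
Given $\eps>0$, first pick $N$ making the last term $<\eps/2$, then $\gd>0$ with $\P(E)<\gd\implies 2\E(\ettae Z_N)<\eps/2$; together with $\sup_{\norm y\le1}\E\innprod{y,X}^2=\norm T<\infty$ this yields the uniform integrability, and hence (via \refT{TH2i}\ref{th2ip}) Pettis integrability of $X\itpx2$. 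The delicate point — the hard part — is obtaining a tail estimate on $R_N$ that is uniform over the entire unit ball of $H$, which is exactly what compactness of $T$, i.e.\ $\gl_n\to0$, provides; the remaining three implications are essentially bookkeeping with the earlier theorems.
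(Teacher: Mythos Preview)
Your proof is correct. For part \ref{th2iib} and the forward direction of \ref{th2iip} you do exactly what the paper does.

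For the converse of \ref{th2iip} you take a genuinely different route. The paper verifies Pettis integrability directly from the definition: for any event $E$, writing $\E(\etta_E X\itpx2)+\E(\etta_{E\comp}X\itpx2)=\E X\itpx2$ and noting both summands are positive (\refT{TH>0}), one gets $0\le\E(\etta_E X\itpx2)\le T$; the operator-theoretic fact that a positive operator dominated by a compact one is itself compact (via $S\qq=VT\qq$, hence $S=VTV\q$) then gives $\E(\etta_E X\itpx2)\in\ch=H\itpx2$ for every $E$, which is exactly the Pettis condition. Your argument instead goes through \refT{TH2i}\ref{th2ip}, establishing uniform integrability of $\set{\innprod{y,X}^2:\norm y\le1}$ by a spectral decomposition of $T$ and a head/tail split. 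Both approaches exploit compactness of $T$ in the essential way --- the paper through the ordering lemma, you through $\gl_n\to0$ giving a uniform $L^2$ tail bound. The paper's argument is shorter and more structural; yours is more explicit and has the minor advantage of making the connection to the uniform integrability criterion in \refT{TH2i} transparent rather than bypassing it.
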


\begin{proof}
\pfitemref{th2iip}
If the moments exist in Pettis sense, then $\E X\itpx2\in H\itensor H=\ch$.

Conversely, suppose that $\E X\itpx2\in \ch$. Let $E$ be any event. 
Then
\begin{equation*}
  \E(\etta_E X)\itpx2 +   \E(\etta_{E\comp} X)\itpx2 
=
  \E(\etta_E X\itpx2) +   \E(\etta_{E\comp} X\itpx2)
=
  \E X\itpx2 .
\end{equation*}
Since $  \E(\etta_E X)\itpx2\ge0$ and $\E(\etta_{E\comp} X)\itpx2 \ge0$ (in
operator sense) by \refT{TH>0}, it follows that
  \begin{equation}
	\label{sw2}
0 \le   \E(\etta_E X\itpx2) \le \E X\itpx2.
  \end{equation}
It is easily verified that if $S,T\in B(H)$ with $0\le S\le T$ and $T$
compact, then $S$ too is compact. 
(For example because $S\qq= VT\qq$ for some $V\in B(H)$, and thus
$S=S\qq(S\qq)\q=VTV\q$.) 
Hence, \eqref{sw2} implies that $\E(\etta_E X\itpx2) \in \ch=H\itensor H$
for every event $E$, which means that $X\itpx2$ satisfies the definition of
Pettis integrability.

\pfitemref{th2iib}
If $\E X\itpx2$ exists in Bochner sense, then $\E X\ptpx2$ too exists in
Bochner sense by  \refT{TPIB}.
Hence, $\E X\itpx2 = \E X\ptpx2 \in H\ptensor H = \trh$.
Moreover, $X$ is \assep{} by \refT{TPIB}.

Conversely, suppose that $\E X\itpx2\in \trh$ and $X$ is \assep.
Let $M$
be  separable subspace of $H$ such that $X\in M$ \as{} and let
$\set{e_n}_n$ be a (countable) ON basis in $M$.
Then $\norm X^2=\sum_n \innprod{X,e_n}^2$ \as, and thus
\begin{equation*}
  \begin{split}
  \E\norm{X}^2
&=\sum_n\E\innprod{X,e_n}^2
=\sum_n\innprod{\E X\itpx2 e_n,e_n}
=\Tr\bigpar{\E X\itpx2} 
\\&\le \norm{\E X\itpx2}_{\trh}<\infty.	
  \end{split}
\end{equation*}
Thus $\E X\itpx2$ exists in Bochner sense by \refT{TPIB}.
\end{proof}

\begin{theorem}
  \label{TH2j} 
Suppose that $H$ is a Hilbert space and that $X$ is a \wmeas{} $H$-valued
\rv{} such that the injective second moment $\E X\itpx2$ exists in Dunford
sense. Regard $\E X\itpx2$ as a bounded operator in $B(H)$.
If\/  $\E X\itpx2\in \trh$ and $X$ is \assep, then
the projective second moment $\E X\itpx2$ exists in Bochner sense.
\end{theorem}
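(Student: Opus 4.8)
The plan is to reduce the statement to the single integrability estimate $\E\norm X^2<\infty$ and then apply \refT{TPIB}. Since $X$ is assumed \assep{} and \wmeas, that theorem says the projective second moment $\E X\ptpx2$ exists in Bochner sense as soon as $\E\norm X^2<\infty$; so this is all that remains to be proved, and the hypothesis that the operator $\E X\itpx2$ is trace class is exactly what will supply it. (In fact the argument is the same as the converse direction in the proof of \refT{TH2ii}\ref{th2iib}, since by \refT{TPIB} the Bochner existence of the projective and injective second moments are equivalent for \wmeas{} $X$.)

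To carry this out, first fix a separable subspace $M\subseteq H$ with $X\in M$ \as{} (such an $M$ exists because $X$ is \assep), and choose a countable orthonormal basis $\set{e_n}$ of $M$. Since $X$ is \wmeas{} and \assep, $\norm X$ is measurable (\cf{} \refT{Twm}), and $\norm X^2=\sum_n\innprod{X,e_n}^2$ holds \as. By monotone convergence,
\[
\E\norm X^2=\sum_n\E\innprod{X,e_n}^2 .
\]
Each term is identified by applying \eqref{tid} with $\xx_1=\xx_2=e_n$ (using $H\q=H$) and then the correspondence \eqref{keiller} between a tensor in $H\itensor H$ and the operator it defines:
\[
\E\innprod{X,e_n}^2=\innprod{\E X\itpx2,e_n\tensor e_n}=\innprod{(\E X\itpx2)e_n,e_n}.
\]
By \refT{TH>0} the operator $\E X\itpx2\in B(H)$ is positive, so these terms are nonnegative, and extending $\set{e_n}$ to an orthonormal basis of all of $H$ only adds further nonnegative terms; hence
\[
\E\norm X^2=\sum_n\innprod{(\E X\itpx2)e_n,e_n}\le\Tr\bigpar{\E X\itpx2}=\norm{\E X\itpx2}_{\trh}<\infty,
\]
the last equality because a positive trace class operator has trace norm equal to its trace. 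Thus $\E\norm X^2<\infty$, and together with $X$ being \assep{} this gives, via \refT{TPIB}, that $\E X\ptpx2$ exists in Bochner sense.

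There is no genuine obstacle here. The only points needing a little care are that $\norm X$ really is a measurable function — which uses \wmeas{} together with \assep, via \refT{Twm} — so that monotone convergence applies, and that the positivity of $\E X\itpx2$ supplied by \refT{TH>0} is what lets us dominate the partial sum over the basis of $M$ by the full trace $\Tr(\E X\itpx2)$.
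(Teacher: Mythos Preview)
Your proof is correct and is essentially the same as the paper's: the paper simply cites \refT{TH2ii}\ref{th2iib} (whose converse direction is exactly your trace computation showing $\E\norm X^2<\infty$) together with \refT{TPIB}, and you have just unpacked that citation, as you yourself note.
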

\begin{proof}
By  \refT{TH2ii}\ref{th2iib} and \refT{TPIB}.
\end{proof}

\begin{remark}\label{CH2j}
  If the projective second moment exists in Bochner or Pettis sense, it is
  an element of $H\ptensor H = \trh$. The second injective moment is the
  same, and is then thus a trace class operator.

\refT{TH2j} gives a converse when $X$ is \assep.
However, the converse does not hold in general; 
a \wmeas{} \rv{} in a non-separable Hilbert space
can have an
injective moment in Pettis sense that is a trace class operator, even
if the projective second moment does not exist; see \refE{EPD3}.
\end{remark}

For the projective second moment, all three senses
coincide for $H$-valued \rv{s}, provided $\dim(H)$ is not too large.

\begin{definition}
  A cardinal $\cardm$ is \emph{real-\meas} if there exists a set $S$ with
  cardinality $|S|=\cardm$ and a probability measure $\mu$ defined on the
$\gs$-field $2^S$
of \emph{all} subsets of $S$ that is \emph{diffuse}, \ie, such
  that $\mu\set s=0$ for every   $s\in S$. (Obviously, then such a measure
  $\mu$ exists for every set $S$ with $|S|=\cardm$.)

A cardinal $\cardm$ is \emph{\meas} if there exists such a measure that takes
only the values $0$ and $1$.
\end{definition}

\begin{remark}\label{Rcardinals}
If \meas{} cardinals exist, they have to be very large; larger than the
  first
 strongly inaccessible cardinal.
It is consistent with the standard ZFC axioms for set theory to assume that
there are no strongly inaccessible cardinals, and thus no \meas{} cardinals.
Whether it also is consistent to assume the existence of \meas{} cardinals
is not known. See \cite[Chapter IX.3--4]{KurMos}
and \cite{Kanamori}.

Real-\meas{} cardinals may be smaller. There exists a real-\meas{} cardinal
that is non-\meas{} $\iff$ $\cardc$ is real-\meas{} $\iff$ Lebesgue measure
on $\oi$ can be extended to all subsets of $\oi$, see \cite{Ulam} and
\cite[Section 16.2]{Talagrand:Pettis}.

If the Continuum Hypothesis holds, then $\cardc$ is not real-\meas{},
see \cite{BanachKur} and \cite{Ulam}, and
thus every real-\meas{} cardinal is \meas{} and thus extremely large.
Consequently, it is consistent to assume that there are no real-\meas{}
cardinals at all. In this case, the following theorem applies to all Hilbert
spaces without qualification.
(The separable case was given in \refE{EPD1}.)
\end{remark}

\begin{theorem}
  \label{TH2}
Let $H$ be a Hilbert space such that $\dim H$ is a non-real-measurable
cardinal.
If $X$ is an $H$-valued \rv, then the following are equivalent.
\begin{romenumerate}
\item \label{th2d}
$\E X\ptpx2$ exists in Dunford sense.
\item \label{th2p}
$\E X\ptpx2$ exists in Pettis sense.
\item \label{th2b}
$\E X\ptpx2$ exists in Bochner sense.
\item \label{th20}
$X$ is \assep, $X\ptpx2$ is \wmeas{} in $H\ptpx2$ and $\E\norm{X}^2<\infty$.
\end{romenumerate}
\end{theorem}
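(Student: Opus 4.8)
The plan is to establish the cycle $\ref{th2b}\Rightarrow\ref{th2p}\Rightarrow\ref{th2d}\Rightarrow\ref{th20}\Rightarrow\ref{th2b}$. The implications $\ref{th2b}\Rightarrow\ref{th2p}$ and $\ref{th2p}\Rightarrow\ref{th2d}$ are immediate, since a Bochner integral is a Pettis integral and a Pettis integral is a Dunford integral. For $\ref{th20}\Rightarrow\ref{th2b}$: if $X$ is \assep, then $X\ptpx2$ is \assep{} in $H\ptpx2$ by \refL{Lbs}; combined with the hypothesis that $X\ptpx2$ is \wmeas{} in $H\ptpx2$, \refT{Twm} shows that $X\ptpx2$ is \Bmeas; and since $\normp{X\ptpx2}=\norm X^2$ by \eqref{pik}, the hypothesis $\E\norm X^2<\infty$ together with \refT{TBochner} gives that $X\ptpx2$ is Bochner integrable, that is, $\E X\ptpx2$ exists in Bochner sense.

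The real content is $\ref{th2d}\Rightarrow\ref{th20}$. Two of the three conclusions are easy. By definition of the Dunford integral, $\chi(X\ptpx2)\in L^1(\P)$ for every $\chi\in(H\ptpx2)\q$; in particular $X\ptpx2$ is \wmeas{} in $H\ptpx2$, and, taking $\chi$ to be the inner-product form $\ga(x,y)=\innprod{x,y}$ — a bounded bilinear form, hence an element of $L(H^2;\bbR)=(H\ptpx2)\q$ by \refT{Tproj*} — we get $\norm X^2=\ga(X,X)=\chi(X\ptpx2)\in L^1(\P)$, so $\E\norm X^2<\infty$. What remains, and what I expect to be the main obstacle, is to show that $X$ is \assep; this is the only step using the hypothesis that $\dim H$ is not real-measurable.

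For that step I would argue by contradiction. Fix an orthonormal basis $\set{e_s}_{s\in S}$ of $H$, so $|S|=\dim H$, and suppose $X$ is not \assep{} (so $S$ is uncountable, as otherwise $H$ is separable and there is nothing to prove). For $A\subseteq S$ let $P_A$ denote the orthogonal projection of $H$ onto $\overline{\operatorname{span}}\set{e_s:s\in A}$. The form $(x,y)\mapsto\innprod{P_Ax,y}$ is a bounded bilinear form on $H$, so — using that $X\ptpx2$ is \wmeas{} in $H\ptpx2$, \refT{Tproj*}, and that $P_A$ is a self-adjoint idempotent — the function $\norm{P_AX}^2=\innprod{P_AX,X}$ is measurable, with $\E\norm{P_AX}^2\le\E\norm X^2<\infty$. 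I would then verify that $\nu(A)\=\E\norm{P_AX}^2$ is a finite countably additive measure on $2^S$: finite additivity comes from the orthogonality of the subspaces, and countable additivity from the identity $\norm{P_AX}^2=\sum_n\norm{P_{A_n}X}^2$ (valid pointwise for a disjoint union $A=\bigcup_nA_n$, by orthogonality) together with monotone convergence. Now set $m\=\sup\set{\nu(A):A\subseteq S\ \text{countable}}\le\E\norm X^2$, choose countable $A_n$ with $\nu(A_n)\to m$, and let $A_0\=\bigcup_nA_n$, which is still countable and satisfies $\nu(A_0)=m$. For every $s\notin A_0$ we have $\E\innprod{X,e_s}^2=\nu(A_0\cup\set s)-\nu(A_0)\le m-m=0$, so $\nu(\set s)=0$; hence the restriction of $\nu$ to the subsets of $S\setminus A_0$ is diffuse. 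Finally, $\overline{\operatorname{span}}\set{e_s:s\in A_0}$ is separable, so $X$ not being \assep{} forces $\P\bigpar{\norm X^2-\norm{P_{A_0}X}^2>0}>0$ (the event on the left being $\set{X\notin\overline{\operatorname{span}}\set{e_s:s\in A_0}}$), and therefore $\nu(S\setminus A_0)=\E\norm X^2-m>0$; dividing $\nu$ by this mass yields a diffuse probability measure on $2^{S\setminus A_0}$, so $|S\setminus A_0|$ is real-measurable. Since $A_0$ is countable and $S$ is uncountable, $|S\setminus A_0|=|S|=\dim H$, contradicting the hypothesis. This contradiction proves that $X$ is \assep, completing $\ref{th2d}\Rightarrow\ref{th20}$.
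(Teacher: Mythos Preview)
Your proof is correct and follows essentially the same route as the paper's: reduce to showing $\ref{th2d}\Rightarrow\ref{th20}$, extract $\E\norm X^2<\infty$ from the inner-product bilinear form, and for \assepy{} build the set function $A\mapsto\E\norm{P_AX}^2$, locate a maximal countable $A_0$, and obtain a diffuse measure on $2^{S\setminus A_0}$ contradicting non-real-measurability. The only cosmetic difference is that the paper works with arbitrary separable closed subspaces $M$ (introducing an orthonormal basis only on $M_*^\perp$ at the end), whereas you fix a global orthonormal basis $\set{e_s}_{s\in S}$ from the outset and index everything by subsets of $S$; your version is slightly more streamlined but otherwise identical in substance.
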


\begin{proof}
\ref{th2b}$\implies$\ref{th2p}$\implies$\ref{th2d} is trivial
and \ref{th20}$\implies$\ref{th2b} for any space by \refT{TBochner}, using 
\refL{Lbs} and \refT{Twm}. 
It remains to show that
\ref{th2d}$\implies$\ref{th20}.

Suppose that $\E X\ptpx2$ exists in Dunford sense and let $z\=\E X\ptpx2$.
By definition, $X\tpx2$ is weakly measurable in $H\ptpx2$.

Let $M$ be a closed subspace of $H$ and let $P_M:H\to M$ be the
orthogonal projection.
Then $\ga_M(x,y)\=\innprod{P_Mx,P_My}$ is a bounded bilinear form on $H$; 
$\ga_M$ can by \refT{Tproj*} be regarded as a continuous linear functional
on $H\ptensor H$ and
\begin{equation*}
  \innprod{\ga_M,z}
=
\E\innprod{\ga_M,X\tensor X}
= \E \ga_M(X,X) = \E\norm{P_MX}^2.
\end{equation*}
Hence, $\norm{P_MX}^2$ is measurable and its expectation is finite.
Define
\begin{equation*}
  \mu_M\= \E\norm{P_MX}^2<\infty.
\end{equation*}
In particular, $\E\norm{X}^2=\mu_H<\infty$.

If $M_1\subseteq M_2$, then 
$\norm{P_{M_1}X} \le \norm{P_{M_2}X}$ and thus $\mu_{M_1}\le \mu_{M_2}$. In
particular, $\mu_M\le \mu_H$ for every closed subspace $M\subseteq H$, so
\set{\mu_M:M\subseteq H} is bounded.

Furthermore, if $M_1\perp M_2$, then 
$\norm{P_{M_1\oplus M_2}X}^2 = \norm{P_{M_1}X}^2+\norm{P_{M_2}X}^2$
and thus
\begin{equation}
  \label{hmu+}
\mu_{M_1\oplus M_2}=\mu_{M_1}+\mu_{M_2}.
\end{equation}

Let $\cZ$ be the set of separable closed subspaces of $H$, and let
\begin{equation}\label{hb}
  \mux\=\sup\set{\mu_M:M\in\cZ}.
\end{equation}
Thus $\mux\le\mu_H<\infty$.
There exist $M_n\in\cZ$ such that $\mu_{M_n}>\mux-1/n$, $n\ge1$. Let $\Mx$
be the closed linear hull of $\bigcup_{1}^\infty M_n$. Then $\Mx$ is
separable so $\Mx\in\cZ$, and $M_n\subseteq \Mx$ so 
$\mu_{\Mx}\ge\mu_{M_n}>\mux-1/n$. Consequently, $\mu_{\Mx}=\mux$ and the
supremum in \eqref{hb} is attained.

By  \eqref{hmu+},
\begin{equation}\label{hc}
  \mu_H=\mu_{\Mx}+\mu_{\Mx^\perp}
=\mux+\mu_{\Mx^\perp}.
\end{equation}
Assume first $\mux=\mu_H$. Then \eqref{hc} yields
\begin{equation*}%\label{hc}
0=\mu_{\Mx^\perp}=\E\norm{P_{\Mx^\perp}X}^2,
\end{equation*}
and thus $P_{\Mx^\perp}X=0$ \as; hence $X\in\Mx$ \as{} so $X$ is \assep.

It remains to show that if $\mux<\mu_H$, then $\dim H$ is real-measurable,
which contradicts our assumption.
In this case, choose an ON basis $\set{e_s}_{s\in S}$ in $\Mx^\perp$.
For each subset $A\subseteq S$, let $M_A$ be the closed subspace of $H$
spanned by $\set{e_s}_{s\in A}$ and let $\mu(A)\=\mu_{M_A}=\E\norm{P_{M_A}X}^2$.
Then $\mu$ is finitely additive by \eqref{hmu+}. Moreover, if $A_n\upto A$,
then $\norm{P_{M_{A_n}}X}\upto\norm{P_{M_A} X}$  and thus
$\mu(A_n)\upto\mu(A)$ by 
monotone convergence. Consequently, $\mu$ is a $\gs$-additive measure
defined on $(S,2^S)$. 
For any $s\in S$, $M_{\set s}=\bbR e_s$ is
one-dimensional and thus $\Mx\oplus M_{\set s}$ is separable, so
\begin{equation*}
  \mux+\mu_{M_{\set s}} =   \mu_{\Mx}+\mu_{M_{\set s}}
=   \mu_{\Mx\oplus M_{\set s}}\le \mux.
\end{equation*}
Hence, $\mu\set s\=\mu_{M_{\set s}}=0$, which shows that the measure $\mu$ is
diffuse. 
On the other hand, $M_S=\Mx^\perp$ and thus by \eqref{hc}
\begin{equation*}
  \mu(S)=\mu_{M_S}=\mu_{\Mx^\perp}=\mu_H-\mux>0.
\end{equation*}
Consequently, by normalizing $\mu$ we obtain a diffuse probability measure on
$(S,2^S)$, which shows that $|S|$ is a real-\meas{} cardinal.
Since $\dim(H)\ge|S|$, $\dim(H)$ is real-\meas{} too.
(In fact, $\dim H=|S|+\dim\Mx=|S|+\aleph_0=|S|$.)
\end{proof}

The assumption in \refT{TH2} that $\dim H$ is not real-measurable is necessary
as is seen by the following example. (Cf.\ the related \refE{EPD3}.)
Nevertheless, \refT{TH2} shows that the projective second moments
for practical applications only can be used
for \Bmeas{} random variables, \ie, in the \assep{} case. 

\begin{example}\label{EHmeas}
  Suppose that $\dim H$ is a real-measurable cardinal.
Let $\mu$ be a diffuse probability measure on $(S,2^S)$ for some set $S$ with
$|S|=\dim H$. Since then $H\cong \ell^2(S)$, we may assume that
$H=\ell^2(S)$.

Define $X:(S,2^S,\mu)\to H=\ell^2(S)$ by $X(s)=e_s$. Then $X$ is bounded,
and since \emph{every} function on $(S,2^S,\mu)$ is measurable, 
$X$ is \wmeas{} in $H$ and
$X\tensor X$
is \wmeas{} in $H\ptensor H$. 
Hence $\E X\ptpx2$ exists in Dunford sense.
($X$ and $X\tensor X$ are also Borel measurable,
for the same reason.)
Furthermore, if $\ga$ is the bilinear form on $H$ given by the inner
product,
$\innprod{\E X\ptpx2,\ga}=\E\ga(X,X)=\E \norm{X}^2=1$ and thus
$\E X\ptpx2\neq0$.

On the other hand, the argument in \refE{EPD3}
shows that $X$ is not \assep.
(In fact, if $M$ is any separable subspace of $H$, then $X\perp M$ a.s.)
By \refT{TPIB}, $\E X\ptpx2$ does not exist in Bochner sense.

Note further that $\innprod{\xx,X}=0$ \as, for every $\xx\in H\q=H$.
Thus 
the injective second moment $\E X\itpx2$ exists in Dunford (and Pettis) sense by
\refT{TH2i},
and \eqref{tid} shows that $\E X\itpx2=0$. 
However, using Dunford senses, $\E X\ptpx2\neq0$, as shown above,
and $\E X\itpx2=\iota\qx(\E X\ptpx2)$ by \eqref{momitpk**}.
Since $\iota: H\ptpx2\to H\itpx2$
is injective, 
this shows that $\E X\ptpx2\not\in H\ptpx2$; hence
$\E X\ptpx2$ does not exist in Pettis sense.
\end{example}

\begin{example} 
Consider Gaussian random variables in a Hilbert space $H$.
As shown in \refE{Eg}, if $X$ is Gaussian, then $\E X\ptpk$  and $\E X\itpk$
exist in Bochner sense. In particular, $\E X\ptpx2= \E X\itpx2$ is an
element of $H\ptensor H= \trh$, \ie, a trace class operator. By \refT{TH>0},
$\E X\ptpx2= \E X\itpx2$ is a positive trace class operator.

Conversely, if $\gS$ is any positive trace class operator in a Hilbert space
$H$, then there exists by the spectral theorem for compact self-adjoint
operators, 
see \eg{} \cite[Corollary II.5.4]{Conway}, 
an ON set $(e_n)\seq$ in $H$ such that 
\begin{equation}%\label{spectral}
  \gS=\sumn \gl_n e_n\tensor e_n,
\end{equation}
where $\gl_n\ge0$  and $\sumn\gl_n=\norm{\gS}_{\trx(H)}<\infty$.
Let $(\xi_n)\seq$ be \iid{} standard normal variables and define $X\=\sumn
\gl\qq \xi_n e_n$; it is easily seen that this sum converges \as, that $X$
is Gaussian, and that $\E X\ptpx2=\sumn \gl_n e_n\tensor e_n = \gS$.

Consequently, the second moment of a Gaussian random variable in a Hilbert
space is a positive trace class operator, and can be any such operator.
Moreover, the second moment determines the higher moments by
\eqref{gaussinj}, and thus the distribution by \refT{TU} below.

\end{example}

\section{$L^p(\mu)$}\label{SLp} 

Let $1\le p<\infty$ and let $B=L^p(\mu)=L^p(S,\cS,\mu)$, where $\mu$ is a
$\gs$-finite measure on a measurable space $(S,\cS)$.
Note first that $L^p(\mu)$ has the \ap, see \eg{} \cite[Example 4.5]{Ryan}.
(It suffices to consider the case when $\mu$ is a probability measure, and
then \ref{apapp} in \refS{Sapprox} is satisfied by using conditional
expectations on suitable finite sub-$\gs$-fields; we omit the details.)
Furthermore, $L^p(S,\cS,\mu)\q=L^q(S,\cS,\mu)$, where
$q\in(1,\infty]$ is the conjugate exponent given by
$p\qw+q\qw=1$
\cite[Corollary IV.8.1,5]{Dunford-Schwartz}.

If $X:(s,\go)\mapsto X(s,\go)\in\bbR$ is an 
$(\cS\times\cF)$-\meas{} function on $S\times\gO$, then
$X(s)=X(s,\cdot)$ is a (real-valued) \rv{} for every $s$; 
moreover, for each $\go\in\gO$, $X(\cdot,\go)$ is a function on $S$, and
if further
$\int_S|X(s,\go)|^p\dd\mu(s)<\infty$ a.s., then $X$ can be regarded as a
mapping $(\gO,\cF,\P)\to L^p(\mu)$,
\ie, as a \rv{} in  $B=L^p(\mu)$.
This \rv{} is \Bmeas, as stated in the following lemma;
see \cite[III.11.16--17]{Dunford-Schwartz}.
%\marginal{ Dunford-Schwartz bara  det integrabla fallet, men det
% allm\"anna f\"oljer direkt} 
\begin{lemma}\label{LLp}
If $X:(s,\go)\mapsto X(s,\go)\in\bbR$ is an 
$(\cS\times\cF)$-\meas{} function on $S\times\gO$
and
$\int_S|X(s,\go)|^p\dd\mu(s)<\infty$ a.s., then $X$ can be regarded as a
\Bmeas{} mapping $(\gO,\cF,\P)\to L^p(\mu)$, \ie, a \Bmeas{}
$L^p(\mu)$-valued \rv;
conversely, every \Bmeas{} $L^p(\mu)$-valued \rv{} is (\as) represented in
this way by some $(\mu\times\P)$-\aex{} unique $(\cS\times\cF$)-\meas{} $X$
on $S\times\gO$.
\nopf
  \end{lemma}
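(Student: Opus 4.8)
The plan is to prove the two directions separately. For the forward implication I would combine Fubini's theorem with Pettis's theorem \refT{Twm}, and for the converse I would extract a jointly \meas{} representative from an a.s.\ convergent sequence of simple functions by a Riesz--Fischer-type argument.

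So suppose first that $X$ is $(\cS\times\cF)$-\meas{} with $\int_S|X(s,\go)|^p\dd\mu(s)<\infty$ for a.e.\ $\go$, and regard $X$ as a map $\gO\to L^p(\mu)$. First I would check that this map is \wmeas: for $g\in L^q(\mu)=L^p(\mu)\q$ the integrand $(s,\go)\mapsto X(s,\go)g(s)$ is $(\cS\times\cF)$-\meas{} and, by \Holder's inequality, $\mu$-integrable in $s$ for a.e.\ $\go$; hence $\go\mapsto\int_SX(s,\go)g(s)\dd\mu(s)=\innprod{g,X(\go)}$ is $\cF$-\meas{} by Fubini's theorem, both $\mu$ and $\P$ being $\gs$-finite. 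Next I would show that $X$ is \assep. Here the point is that any real function \meas{} for the $\gs$-field generated by a family $\cE$ is already \meas{} for the $\gs$-field generated by some countable subfamily of $\cE$; applying this to the generating rectangles of $\cS\times\cF$ (and adjoining to $\cS$ a fixed sequence $S_n\upto S$ with $\mu(S_n)<\infty$) shows that $X$ is $(\cS_0\times\cF_0)$-\meas{} for some countably generated $\cS_0\subseteq\cS$ on which $\mu$ is $\gs$-finite and some $\cF_0\subseteq\cF$. Then $X(\cdot,\go)\in L^p(S,\cS_0,\mu)$ for a.e.\ $\go$, and $L^p$ of a countably generated $\gs$-finite measure space is separable (rational simple functions over the countable algebra generating $\cS_0$ are dense), so the range of $X$ lies a.s.\ in a fixed separable subspace of $L^p(\mu)$. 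By \refT{Twm}, $X$ is \Bmeas.

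For the converse, let $\tilde X$ be a \Bmeas{} $L^p(\mu)$-valued \rv. By \refR{Rmeas} it is the a.s.\ limit of \meas{} simple functions $\tilde X_n=\sum_j f_{n,j}\etta_{B_{n,j}}$ with $f_{n,j}\in L^p(\mu)$ and $\set{B_{n,j}}_j\subseteq\cF$ a finite partition of $\gO$; choosing $\cS$-\meas{} representatives $f_{n,j}(s)$, set $X_n(s,\go)\=\sum_j f_{n,j}(s)\etta_{B_{n,j}}(\go)$, an $(\cS\times\cF)$-\meas{} scalar function with $X_n(\cdot,\go)=\tilde X_n(\go)$ in $L^p(\mu)$. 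Since $\go\mapsto\int_S|X_{n+1}(s,\go)-X_n(s,\go)|^p\dd\mu(s)=\bignorm{\tilde X_{n+1}(\go)-\tilde X_n(\go)}_{L^p(\mu)}^p$ is $\cF$-\meas{} and tends to $0$ a.s., I would pass to a subsequence $n_k$ with $\P\bigpar{\int_S|X_{n_{k+1}}-X_{n_k}|^p\dd\mu>2^{-2k}}<2^{-k}$; by Borel--Cantelli, for a.e.\ $\go$ the series $\sum_k\norm{X_{n_{k+1}}(\cdot,\go)-X_{n_k}(\cdot,\go)}_{L^p(\mu)}$ converges, and the standard Riesz--Fischer argument then gives that $X_{n_k}(s,\go)$ converges for $\mu$-a.e.\ $s$ and in $L^p(\mu)$, necessarily to $\tilde X(\go)$. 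Defining $X(s,\go)$ to be this limit on the $(\cS\times\cF)$-\meas{} set where $\lim_kX_{n_k}(s,\go)$ exists finitely, and $0$ elsewhere, yields an $(\cS\times\cF)$-\meas{} scalar function with $X(\cdot,\go)=\tilde X(\go)$ in $L^p(\mu)$ for a.e.\ $\go$ (in particular $\int_S|X(s,\go)|^p\dd\mu(s)<\infty$ a.s.). Uniqueness $(\mu\times\P)$-a.e.\ follows from Tonelli's theorem: if $X'$ is another such representative, then $\set{s:X(s,\go)\neq X'(s,\go)}$ is $\mu$-null for a.e.\ $\go$, hence $\set{X\neq X'}$ is $(\mu\times\P)$-null.

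The step I expect to be the main obstacle is the possible non-separability of $L^p(\mu)$: if $L^p(\mu)$ were separable one could simply approximate $X$ directly by simple $L^p(\mu)$-valued functions, but in general the \assep{} property in the forward direction has to be extracted from the countable-generation structure of $\cS\times\cF$ together with the separability of $L^p$ over a countably generated base. The analogous technical point in the converse is upgrading a.s.\ $L^p(\mu)$-convergence of the $\tilde X_n$ to $(\mu\times\P)$-a.e.\ pointwise convergence along a subsequence, which is precisely what the Borel--Cantelli plus Riesz--Fischer step achieves.
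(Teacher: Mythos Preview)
Your proof is correct in both directions. The paper itself does not prove this lemma: it is stated with \verb|\nopf| and simply refers the reader to Dunford--Schwartz \cite[III.11.16--17]{Dunford-Schwartz}. Your argument is essentially the standard one that underlies that reference --- weak measurability via Fubini plus a.s.\ separability via the countable-generation trick for the forward direction, and a Borel--Cantelli/Riesz--Fischer extraction of a pointwise-convergent subsequence for the converse --- so there is nothing to compare beyond noting that you have supplied what the paper chose to outsource.
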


\begin{theorem}
  \label{TLp}
Let $1\le p<\infty$, let $q$ be the conjugate exponent given by
$p\qw+q\qw=1$, and let $(S,\cS,\mu)$ be a $\gs$-finite measure space.
\begin{romenumerate}[-18pt]
\item \label{tlpx}
Suppose that $X:S\times\gO\to\bbR$ is $\ppar{\cS\times\cF}$-\meas{} and that
$\norm{X}_{L^p(\mu)}^p \=\int_S|X(s,\go)|^p\dd\mu(s)<\infty$ a.s.
Regard $X$ as an $L^p(\mu)$-valued \rv{} and suppose further that
$\E\norm{X}_{L^p(\mu)}^k<\infty$.
Then, $\E X\ptpk\in L^p(S,\mu)\ptpk$ 
and $\E X\itpk\in L^p(S,\mu)\itpk$ 
exist in Bochner sense.
%\item 
Furthermore, $\E X\ptpk$  and $\E X\itpk$ 
are represented by the \aex{} finite function
\begin{equation}\label{Phik}
  \Phi_k(s_1,\dots,s_k)\=\E\bigpar{X(s_1,\go)\dotsm X(s_k,\go)}
\end{equation}
in the sense that if 
$g_1,\dots,g_k\in L^q(\mu)=(L^p(\mu))\q$, then 
\begin{multline}\label{emm}
\hskip1em
\innprod{\E X\ptpk,\mx g\tensor}
=\innprod{\E X\itpk,\mx g\tensor}
%\E\bigpar{\innprod{g_1,X}\dotsm \innprod{g_k,X}}
\\
=\int_{S^k}
 \Phi_k(s_1,\dots,s_k)g_1(s_1)\dotsm g_k(s_k)\dd\mu(s_1)\dotsm\dd\mu(s_k).
\end{multline}
\item \label{tlpxy}
Let $Y$ be another
$L^p(\mu)$-valued \rv{} represented by an \ppar{\cS\times\cF}-\meas{} function
$Y:S\times\gO\to\bbR$ such that $\E\norm{Y}_{L^p(\mu)}^k<\infty$,
and let
\begin{equation}\label{Psik}
  \Psi_k(s_1,\dots,s_k)\=\E\bigpar{Y(s_1,\go)\dotsm Y(s_k,\go)}.
\end{equation}
Then the following are equivalent
\begin{bienumerate}
\item \label{tlpxyp}
$\E X\ptpk=\E Y\ptpk$;
\item  \label{tlpxyi}
$\E X\itpk=\E Y\itpk $;
\item  \label{tlpxyw}
For any $g_1,\dots,g_k\in L^q(\P)$,
\begin{equation*}
\E\bigpar{\innprod{g_1,X}\dotsm \innprod{g_k,X}}
=
\E\bigpar{\innprod{g_1,Y}\dotsm \innprod{g_k,Y}}  ;
\end{equation*}
\item  \label{tlpxys}
$\Phi_k=\Psi_k$ \aex{} on  $S^k$.
\end{bienumerate}
\end{romenumerate}
\end{theorem}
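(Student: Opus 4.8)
The plan is to deduce part~\ref{tlpx} from \refL{LLp} together with the general results of \refS{Smom}, and part~\ref{tlpxy} from the equivalences already established (\refC{Citpk=p} and, via \refT{Tiotak}, the approximation property of $L^p(\mu)$) plus one measure-theoretic uniqueness argument; throughout, $\mu^k$ denotes the $k$-fold product measure on $S^k$. For part~\ref{tlpx}: by \refL{LLp}, $X$ is \Bmeas{} as an $L^p(\mu)$-valued \rv, so $\E\norm{X}_{L^p(\mu)}^k<\infty$ and \refT{TPIB} give that $\E X\ptpk$ and $\E X\itpk$ exist in Bochner sense. For the representation, one first records — using Tonelli's theorem (the integrand factorises) and \Holder's inequality — that for all $g_1,\dots,g_k\in L^q(\mu)$
\[
\int_{S^k}\bigabs{\Phi_k(s_1,\dots,s_k)}\,\bigabs{g_1(s_1)\dotsm g_k(s_k)}\dd\mu^k
\le\E\Bigpar{\norm{X}_{L^p(\mu)}^k}\prod_{i=1}^k\norm{g_i}_{L^q(\mu)}<\infty ;
\]
taking the $g_i$ to be indicators of sets of finite $\mu$-measure, this shows $\Phi_k$ is finite \aex{} and that the integral in \eqref{emm} converges absolutely. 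Now fix $g_i\in L^q(\mu)=(L^p(\mu))\q$ and let $\ga(x_1,\dots,x_k)\=g_1(x_1)\dotsm g_k(x_k)$, a bounded $k$-linear form on $L^p(\mu)$; by \refT{TPD} (and \eqref{tid} of \refT{TID}\ref{tidi} for the injective moment, or simply \eqref{momitpk}, since $\mx g\tensor$ takes the same value on $\E X\ptpk$ and on $\iota(\E X\ptpk)=\E X\itpk$),
\[
\innprod{\E X\ptpk,\mx g\tensor}=\innprod{\E X\itpk,\mx g\tensor}=\E\,\ga(X,\dots,X)=\E\prod_{i=1}^k\int_S g_i(s)X(s,\go)\dd\mu(s) .
\]
Since the integral of the absolute value over $\gO\times S^k$ is finite by the bound above, Fubini's theorem permits interchanging $\E$ with the iterated $S$-integrals, which turns the last expression into the \rhs{} of \eqref{emm}.

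For part~\ref{tlpxy}: \ref{tlpxys}$\implies$\ref{tlpxyw} is immediate from \eqref{emm} applied to $X$ and to $Y$. Since $(L^p(\mu))\q=L^q(\mu)$, condition~\ref{tlpxyw} is literally \eqref{weak=}, so by \refC{Citpk=p} (whose hypotheses hold by part~\ref{tlpx}) it is equivalent to \ref{tlpxyi}; and since $L^p(\mu)$ has the approximation property, \refT{Tiotak} makes $\iota\colon L^p(\mu)\ptpk\to L^p(\mu)\itpk$ injective, whence \eqref{momitpk} (Bochner case) gives \ref{tlpxyi}$\iff$\ref{tlpxyp}. It then remains to prove \ref{tlpxyw}$\implies$\ref{tlpxys}. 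Put $\Theta\=\Phi_k-\Psi_k$; by \eqref{emm}, \ref{tlpxyw} says that $\int_{S^k}\Theta\cdot g_1(s_1)\dotsm g_k(s_k)\dd\mu^k=0$ for all $g_i\in L^q(\mu)$. Fix a rectangle $R\=A_1\times\dotsm\times A_k$ with each $\mu(A_i)<\infty$: taking $g_i=\etta_{A_i}\in L^q(\mu)$ in the bound of part~\ref{tlpx} shows $\Theta$ is integrable over $R$, and taking $g_i=\etta_{B_i}$ for arbitrary measurable $B_i\subseteq A_i$ gives $\int_{B_1\times\dotsm\times B_k}\Theta\dd\mu^k=0$. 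Hence the finite signed measure $E\mapsto\int_E\Theta\dd\mu^k$ on $R$ vanishes on the $\pi$-system of measurable rectangles contained in $R$, which generates the product $\gs$-field of $R$ and contains $R$ itself; therefore this signed measure is identically zero and $\Theta=0$ \aex{} on $R$. Since $\mu$ is $\gs$-finite, $S^k$ is a countable union of such rectangles, so $\Theta=0$ \aex, which is \ref{tlpxys}.

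The only step requiring real care is \ref{tlpxyw}$\implies$\ref{tlpxys}: because $\Phi_k$ and $\Psi_k$ need not be integrable over all of $S^k$, one cannot simply invoke density of the elementary tensors $\mx g\tensor$ in $L^q(S^k,\mu^k)$, and must instead localise to finite-measure rectangles and close with the uniqueness of a finite signed measure on a generating $\pi$-system. Everything else is bookkeeping with the results of \refS{Smom}--\refS{Sapprox} together with Fubini's theorem.
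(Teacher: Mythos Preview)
Your proof is correct and follows essentially the same route as the paper: \refL{LLp} plus \refT{TPIB} for existence, \eqref{tid} and Fubini for \eqref{emm}, and then \refT{Tiotak}, \refC{Citpk=p} and \eqref{emm} for the equivalences in \ref{tlpxy}. The paper's proof simply records ``\ref{tlpxyi}$\iff$\ref{tlpxys} by \eqref{emm}'' without further comment; your localisation to finite-measure rectangles and the $\pi$-system argument is exactly the standard justification that this hides, and your observation that $\Phi_k$ need not lie in any global $L^r(S^k,\mu^k)$ is the right reason for doing it that way.
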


\begin{proof}
\pfitemref{tlpx}
$X$ is \Bmeas{} by \refL{LLp}, and thus $\E X\ptpk$ and $\E X\itpk$ exist in
  Bochner sense by \refL{Lbk} and \refT{TBochner} (or by \refT{TPIB}).
Furthermore, by \eqref{tid} and Fubini's theorem, using \refL{LLp},
\begin{equation*}
  \begin{split}
\innprod{\E X\itpk&,\mx g\tensor}
=
\E\bigpar{\innprod{g_1,X}\dotsm \innprod{g_k,X}}
\\&
=\E\int_{S^k}X(s_1)\dotsm X(s_k)\,g_1(s_1)\dotsm
g_k(s_k)\dd\mu(s_1)\dotsm\dd\mu(s_k)
\\&
=\int_{S^k}\E\bigpar{X(s_1)\dotsm X(s_k)}g_1(s_1)\dotsm
g_k(s_k)\dd\mu(s_1)\dotsm\dd\mu(s_k),
  \end{split}
\end{equation*}
showing \eqref{emm}.

\pfitemref{tlpxy}
The moments exist in Bochner sense by \ref{tlpx}.
Then \ref{tlpxyp}$\iff$\ref{tlpxyi} by \refT{Tiotak}
and \eqref{momitpk},
\ref{tlpxyi}$\iff$\ref{tlpxyw} by \refC{Citpk=p}  
and \ref{tlpxyi}$\iff$\ref{tlpxys} by \eqref{emm}.
\end{proof}

\begin{example}
  \label{EL2}
In the special case $p=2$, $L^2(\mu)$ is a Hilbert space, and we
can also apply the results of \refS{SH}. 
(Since $X$ is \Bmeas{} by \refL{LLp}, and thus \assep, it suffices to
consider a separable subspace of $L^2(\mu)$.)
In particular,
\refT{TH2i} and \refT{TH2ii} give conditions for the existence of the
injective second moment in the different senses, while
\refT{TH2} shows that for the second projective moment, the different senses
coincide. 
\end{example}

\begin{example}
Another interesting special case is $p=1$.
We have $L^1(S,\mu)\ptpk=L^1(S^k,\mu^k)$,
see
\cite[Exercise 2.8]{Ryan}; thus $\E X\ptpk\in L^1(S^k,\mu^k)$ (when it
exists in Bochner or Pettis sense); clearly $\E X\ptpk$ equals the function
$\Phi_k$ in 
\eqref{Phik} when \refT{TLp} applies.

\refE{EPD2} shows that 
$\E X\ptpk$ may exist in Pettis  sense in $L^1(S,\mu)\ptpk=L^1(S^k,\mu^k)$
without existing in Bochner
sense. In this case 
$\E X\ptpk$ is still given by a function $\Phi_k$ in $L^1(S^k,\mu^k)$
but the pointwise formula \eqref{Phik} may fail; 
in \refE{EPD2}, 
we have $X(s,\go)=\pm a_N$ for all $s$ and $\go$, and thus
$|X(s_1,\go)X(s_2,\go)|=a_N^2$, so the expectation in \eqref{Phik}
exists (for any $s_1,s_2$) only when $\E a_N^2=\sum_n p_na_n^2<\infty$,
which in this case is 
the condition for $\E X\ptpx2$ to exist in Bochner sense, see \refE{EPD2};
we may choose $p_na_n^2=1/n$ to obtain our counterexample.
In this example, $\E X\ptpx2$ equals $\sum_n p_na_n^2 r_n\tensor r_n$, where
the sum converges in $L^2$, and thus in $L^1$; the sum also converges
pointwise \aex{} (since it is a sum of independent \rv{s} defined on $\oi^2$),
but (when  $p_na_n^2=1/n$, say) does not converge absolutely at any point.

We do not know any necessary and sufficient conditions for  
the existence of $\E X\ptpk$  in Pettis  sense in
$L^1(S,\mu)\ptpk=L^1(S^k,\mu^k)$, nor
for the existence of injective moments beyond \refT{TID}, where 
\ref{tidd}\ref{tid2} applies by \refL{LLp}.
(Even for $k=1$, the  existence of the mean $\E X$ in Pettis sense in
$L^1(\mu)$
seems
difficult to characterize exactly, since it is essentially equivalent to $X\in
L^1(\mu)\itensor L^1(\P)$, which has no simple description, see \refR{RL1}.)

Since $L^1(S^k,\mu^k)$ does not contain any subspace isomorphic to $c_0$ 
(\eg{} as a consequence of \cite[Theorem 6.31 and Corollary 6.21]{Ryan}),
it follows from \refT{T-c0} that if $X$ is \Bmeas{} (\eg{} by \refL{LLp}),
then $\E X\ptpk$ exists in Pettis sense as soon as it exists in Dunford sense.
\end{example}

\begin{example}
  Taking $S=\bbN$ with counting measure, we obtain $\ell^p$, $1\le  p<\infty$.
(See \eg{} Examples \ref{El2}, \refand{El1}{EPD1i}.)
In this case, $X=(X_n)_{n=1}^\infty$ and
$\Phi_k$ is by \eqref{Phik} the function
$\E\bigpar{X_{n_1}\dotsm X_{n_k}}$ on $\bbN^k$. 
Consequently, by \refT{TLp}, if $\E\norm{X}_{\ell^p}^k<\infty$ and
$\E\norm{Y}_{\ell^p}^k<\infty$, then $\E X\ptpk=\E Y\ptpk$ if and only if
all mixed \kth{} moments of $(X_n)_{n=1}^\infty$ 
and $(Y_n)_{n=1}^\infty$ coincide.

Note that moments may exist in Pettis sense also under weaker
assumptions, see \refE{El2}.

Note that the projective tensor product $\ell^p\ptensor\ell^p$ with
$2<p<\infty$
and the injective tensor product $\ell^p\itensor\ell^p$ with
$1<p<2$ are reflexive, see \cite[Corollary 4.24]{Ryan}. In these cases, 
at least, 
the second moment thus exists in Pettis sense as soon as it exists in Dunford
sense.
\end{example}

\section{$C(K)$} \label{SCK} 

In this section we study the case $B=C(K)$ where $K$ is a compact 
space. (By compact we mean compact Hausdorff.)
The perhaps most important example is $\coi$.

Note that several other Banach spaces are isomorphic to $C(K)$ for some 
compact $K$.
Hence the results in this section apply to these spaces too.

\begin{example}\label{Egb}
  Let $\cb(Z)$ be the space of bounded continuous functions on a completely
  regular topological space $Z$. Then $\cb(Z)= C(\gb Z)$, where $\gb Z$
  is the \SCc{} of $Z$, see \eg{} \cite[Section 3.6]{Engelking}, 
\cite[Chapter 6]{GillmanJerison}
and \cite[Section V.6 and Exercise VIII.2.7]{Conway}.
Note that $Z$ is a dense subspace of $\gb Z$, and that every bounded
continuous function on $Z$ has a unique continuous extension to $\gb Z$.

One important example is 
\begin{equation}
\ell^\infty = \cb(\bbN)=C(\gbn).  
\end{equation}
\end{example}

\begin{example}\label{ECdoi}
  We shall see in \refT{TcI} that $\doi=C(\hI)$ for a compact space $\hI$.
\end{example}

\begin{example}\label{EC*}
  If $B$ is any complex commutative $C^*$-algebra, then $B\cong C_\bbC(\Sigma)$,
the space of complex-valued continuous functions on the maximal ideal space
 $\Sigma$,
see \cite[Section VIII.2]{Conway}, and thus the subset
$B_\bbR\=\set{f:f=f\q}$ of 
hermitean (\ie, real)  elements is isomorphic to $C(\Sigma)$.
This includes Examples \ref{Egb}--\ref{ECdoi}.
\end{example}

\begin{example}\label{Eloccomp}
Let $L$ be a locally compact space and $L^*=L\cup\set\infty$ its one-point
compactification. 
Then $C_0(L)=\set{f\in C(L\q):f(\infty)=0}$ is a subspace of codimension 1 in
  $C(L\q)$. Hence, if $X$ is a \rv{} in $C_0(L)$, we can regard it as a
  \rv{} in $C(L\q)$.
Note that $C_0(L)$ is a complemented subspace. (Every subspace of
  finite codimension in a Banach space is complemented.)

In particular, $c_0=C_0(\bbN)$ is a (complemented) subspace of codimension 1
in $c=C(\bbNx)=C(\bbN\cup\set\infty)$.
(In fact, $c_0$ is also isomorphic to $c$, by the mapping $(a_n)_1^\infty
\mapsto (a_{n+1}+a_1)_1^\infty$, but it seems more convenient to use the
 inclusion.)
\end{example}

We begin by noting some well-known facts.
%is separable if, and only if, $K$ is a
See \eg{} 
\cite[Example 4.2]{Ryan}, 
\cite[Section 3.2]{Ryan}
and
\cite[Theorem V.6.6]{Conway},
respectively,
 for  proofs.

\begin{theorem}\label{TCKap}
$C(K)$ has the approximation property, for any compact $K$.	
\nopf
\end{theorem}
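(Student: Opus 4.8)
The plan is to verify property~\ref{apapp} of \refS{Sapprox}, \ie{} that the identity operator $I:C(K)\to C(K)$ is \capprox. So, fixing a compact set $\cL\subseteq C(K)$ and $\eps>0$, I would construct a single finite rank operator $F:C(K)\to C(K)$, of ``sampling'' type, with $\sup_{f\in\cL}\norm{f-Ff}_\infty\le\eps/2<\eps$. Two standard facts are the ingredients. First, since $K$ is compact Hausdorff it is normal, so to every finite open cover of $K$ there is a subordinate continuous partition of unity (Urysohn's lemma together with the shrinking lemma). Second, by the Arzel\`a--Ascoli theorem in the form valid for an arbitrary compact Hausdorff $K$, the compact set $\cL$ is bounded and equicontinuous: for every $s\in K$ and every $\eta>0$ there is an open neighbourhood $V$ of $s$ with $|f(t)-f(s)|<\eta$ for all $t\in V$ and all $f\in\cL$.

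Next I would carry out the construction. Using equicontinuity with $\eta=\eps/2$, for each $s\in K$ choose an open neighbourhood $V_s\ni s$ with $|f(t)-f(s)|<\eps/2$ for all $t\in V_s$ and all $f\in\cL$; by compactness of $K$ select a finite subcover $V_{s_1},\dots,V_{s_n}$, and let $\gf_1,\dots,\gf_n\in C(K)$ be a continuous partition of unity with $\supp\gf_i\subseteq V_{s_i}$ and $\sum_{i=1}^n\gf_i\equiv 1$. Put
\begin{equation*}
  Ff\=\sum_{i=1}^n f(s_i)\,\gf_i,\qquad f\in C(K).
\end{equation*}
Each $f\mapsto f(s_i)=\innprod{\gd_{s_i},f}$ is a bounded linear functional and the range of $F$ is contained in the span of $\gf_1,\dots,\gf_n$, so $F$ is a finite rank operator (and in fact $\norm F\le1$).

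The remaining step is the estimate. For $f\in\cL$ and $s\in K$,
\begin{equation*}
  |f(s)-Ff(s)|
  =\Bigabs{\sum_{i=1}^n\bigpar{f(s)-f(s_i)}\gf_i(s)}
  \le\sum_{i=1}^n|f(s)-f(s_i)|\,\gf_i(s)<\eps/2,
\end{equation*}
since $\gf_i(s)\neq0$ forces $s\in V_{s_i}$ and hence $|f(s)-f(s_i)|<\eps/2$, while $\sum_{i=1}^n\gf_i(s)=1$. Taking the supremum over $s$ gives $\norm{f-Ff}_\infty\le\eps/2$ uniformly for $f\in\cL$, which is exactly what is required for $I$ to be \capprox; hence $C(K)$ has the approximation property.

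The argument is entirely routine; the only point deserving care is the possibly non-metrizable case, where one must invoke the general Arzel\`a--Ascoli theorem and the existence of continuous partitions of unity subordinate to finite open covers of a compact Hausdorff space, rather than their more familiar metric counterparts. (This is why the statement above is given with a reference only.)
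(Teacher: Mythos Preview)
Your proof is correct and is precisely the standard partition-of-unity argument for this result. The paper itself gives no proof at all (the theorem is stated with a reference to \cite[Example 4.2]{Ryan} and marked \texttt{\textbackslash nopf}), so there is nothing to compare against; what you have written is essentially what one would find in the cited reference.
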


%There is also a simple result for the injective tensor products, see \eg{}

\begin{theorem}\label{TCKitpk}
If\/ $K_1,\dots,K_k$ are compact spaces, then $C(K_1)\itensor\dotsm\itensor
C(K_k)
\allowbreak
=C(K_1\times\dots\times K_k)$ (isometrically) with the natural identification.
In particular, $C(K)\itpk=C(K^k)$.  
\nopf
\end{theorem}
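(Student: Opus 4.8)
The plan is to prove the case $k=2$ directly and then get the general case by induction, using the associativity of the injective tensor product noted in \refS{Stensor}. So I would first consider the bilinear map $C(K_1)\times C(K_2)\to C(K_1\times K_2)$ sending $(f,g)$ to the function $(s,t)\mapsto f(s)g(t)$; by the universal property \eqref{tp} of the algebraic tensor product this induces a linear map $J\colon C(K_1)\tensor C(K_2)\to C(K_1\times K_2)$ with $J(f\tensor g)(s,t)=f(s)g(t)$. The claim reduces to showing that $J$ is an isometry for the injective norm and has dense range, which exhibits $C(K_1\times K_2)$ as the completion $C(K_1)\itensor C(K_2)$.

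For the isometry, fix $u=\sum_i f_i\tensor g_i$ and put $h\=Ju\in C(K_1\times K_2)$. By \eqref{norminj}, $\normi u$ is the supremum of $|u(\mu_1,\mu_2)|$ over $\mu_1,\mu_2$ in the unit balls of $C(K_1)\q$ and $C(K_2)\q$. Taking $\mu_1=\gd_s$, $\mu_2=\gd_t$ (point evaluations, of norm $1$) gives $u(\gd_s,\gd_t)=\sum_i f_i(s)g_i(t)=h(s,t)$, so $\norm h_\infty\le\normi u$. Conversely, by the Riesz representation theorem every $\mu_j$ in the unit ball of $C(K_j)\q$ is a regular signed measure of total variation $\le1$, and by linearity $u(\mu_1,\mu_2)=\sum_i\bigpar{\int f_i\dd\mu_1}\bigpar{\int g_i\dd\mu_2}=\int_{K_1\times K_2}h(s,t)\dd\mu_1(s)\dd\mu_2(t)$, whence $|u(\mu_1,\mu_2)|\le\norm h_\infty$; taking the supremum yields $\normi u\le\norm h_\infty$. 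Hence $\normi u=\norm{Ju}_\infty$, so $J$ is an isometric embedding.

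For density of the range I would invoke the Stone--Weierstrass theorem: $J\bigpar{C(K_1)\tensor C(K_2)}$ is the linear span of the elementary tensors $f\tensor g$, which is a subalgebra of $C(K_1\times K_2)$ since $(f_1\tensor g_1)(f_2\tensor g_2)=(f_1f_2)\tensor(g_1g_2)$; it contains the constant functions and separates points of $K_1\times K_2$ (if $(s,t)\neq(s',t')$, say $s\neq s'$, take $f\in C(K_1)$ with $f(s)\neq f(s')$ and $g\equiv1$). Hence it is dense, and since $C(K_1\times K_2)$ is complete it is the completion of $C(K_1)\tensor C(K_2)$ for the injective norm, i.e., $C(K_1)\itensor C(K_2)=C(K_1\times K_2)$ isometrically, with $f\itensor g$ identified with $(s,t)\mapsto f(s)g(t)$. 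For general $k\ge2$ I would iterate this, using associativity of $\itensor$:
\begin{equation*}
C(K_1)\itensor\dotsm\itensor C(K_k)=C(K_1)\itensor\bigpar{C(K_2)\itensor\dotsm\itensor C(K_k)}=C(K_1)\itensor C(K_2\times\dots\times K_k)=C(K_1\times\dots\times K_k),
\end{equation*}
so in particular $\ck\itpk=C(K^k)$.

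The proof is essentially routine; the only point needing care is the isometry computation, which rests on identifying $C(K_j)\q$ with regular measures on $K_j$ --- so that point evaluations are norm-one functionals and the double integral unwinds to a finite sum of products --- which is exactly \refT{Tinj*} together with the Riesz representation theorem recalled in \refS{Snotations}. The Stone--Weierstrass density argument is the other, equally standard, ingredient, so I do not anticipate a genuine obstacle.
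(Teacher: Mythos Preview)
Your proof is correct and is the standard argument. The paper itself does not give a proof of this theorem; it is stated with \verb|\nopf| and a reference to \cite[Section 3.2]{Ryan}, where essentially the same argument (isometry via point evaluations versus general measures, density via Stone--Weierstrass) is given. One minor remark: the reference to \refT{Tinj*} in your last paragraph is not quite on point, since that theorem describes $(B_1\itensor\dotsm\itensor B_k)\q$ rather than $C(K_j)\q$; what you actually use is just the definition \eqref{norminj} of the injective norm together with the Riesz representation theorem $C(K_j)\q=\mr(K_j)$ recalled in \refS{Snotations}.
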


\begin{theorem}\label{TCKsep}
 $ C(K)$ is separable if and only if $K$ is metrizable.
\nopf
\end{theorem}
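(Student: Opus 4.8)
The plan is to establish the two implications separately, using only classical facts about compact Hausdorff spaces.

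\emph{Metrizable $\Rightarrow$ separable.} Suppose $K$ carries a metric $\rho$ inducing its topology. Since a compact metric space is separable, I would fix a countable dense set $\{x_n\}_{n\ge1}\subseteq K$ and set $f_n(x)\=\rho(x,x_n)\in C(K)$. The family $\{f_n\}$ together with the constant function $1$ separates the points of $K$: given $x\neq y$, choose $x_n$ with $\rho(x,x_n)<\rho(x,y)/2$; then $f_n(x)=\rho(x,x_n)<\rho(x,y)/2$ while $f_n(y)=\rho(y,x_n)\ge\rho(x,y)-\rho(x,x_n)>\rho(x,y)/2$, so $f_n(x)\neq f_n(y)$. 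By the Stone--Weierstrass theorem the (real) subalgebra of $C(K)$ generated by $\{f_n:n\ge1\}\cup\{1\}$ is dense, and the set of $\mathbb{Q}$-linear combinations of finite products of the $f_n$ is a countable dense subset of it; hence $C(K)$ is separable.

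\emph{Separable $\Rightarrow$ metrizable.} Suppose $\{f_n\}_{n\ge1}$ is dense in $C(K)$. Because $K$ is compact Hausdorff it is normal, so Urysohn's lemma shows that $C(K)$ separates the points of $K$; by density the countable family $\{f_n\}$ does too. I would then define
\[
d(x,y)\=\sum_{n\ge1}2^{-n}\,\frac{|f_n(x)-f_n(y)|}{1+|f_n(x)-f_n(y)|},\qquad x,y\in K.
\]
Since $t\mapsto t/(1+t)$ is increasing and subadditive on $[0,\infty)$, $d$ satisfies the triangle inequality, and $d(x,y)=0$ forces $f_n(x)=f_n(y)$ for all $n$, hence $x=y$; so $d$ is a metric. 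The $n$-th summand is a continuous function on $K\times K$ bounded by $2^{-n}$, so by the Weierstrass $M$-test the series converges uniformly and $d$ is continuous for the original topology $\tau$ of $K$. Thus the identity map $(K,\tau)\to(K,d)$ is continuous; being a continuous bijection from a compact space onto a Hausdorff space, it is a homeomorphism. Therefore $\tau$ is induced by $d$, and $K$ is metrizable.

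The only step needing real care is the \emph{separation of points}: in the first implication it is exactly the hypothesis that lets Stone--Weierstrass apply, and in the second it is what makes $d$ a genuine metric rather than a mere pseudometric --- this is where compactness, via normality and Urysohn's lemma, enters. Everything else is a routine verification.
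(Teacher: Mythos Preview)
Your proof is correct and is the standard textbook argument. The paper does not actually prove this theorem: it is stated with a \verb|\nopf| tag and a reference to \cite[Theorem V.6.6]{Conway}, treating it as a well-known fact. Your self-contained argument --- Stone--Weierstrass for one direction, and the weighted-sum metric $d(x,y)=\sum 2^{-n}\frac{|f_n(x)-f_n(y)|}{1+|f_n(x)-f_n(y)|}$ combined with the compact-to-Hausdorff bijection trick for the other --- is precisely the classical proof one finds in standard references, so there is nothing to compare beyond noting that you have supplied what the paper chose to cite.
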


\begin{corollary}\label{CCKtpk}
If\/ $K$ is a compact space, then $C(K)\ptpk$ can be regarded as a subspace
of $C(K)\itpk=C(K^k)$.
(As a vector space; typically, the norms
differ.) 
\end{corollary}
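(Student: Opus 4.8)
The plan is to observe that this corollary is an immediate consequence of two facts already established: the approximation property of $C(K)$ and the injectivity of the canonical map $\iota$ for spaces with that property. So first I would invoke \refT{TCKap}, which says $C(K)$ has the approximation property for any compact $K$. Then \refT{Tiotak} applies with $B=C(K)$, giving that the canonical mapping $\iota\colon C(K)\ptpk\to C(K)\itpk$ is injective.

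Next I would rewrite the target of $\iota$ using \refT{TCKitpk}, which identifies $C(K)\itpk$ with $C(K^k)$ isometrically under the natural identification. Combining, $\iota$ is an injective linear map $C(K)\ptpk\to C(K^k)$, so we may regard $C(K)\ptpk$ as a linear subspace of $C(K^k)$; the only caveat, which I would state explicitly (as the corollary already does), is that this is an identification of vector spaces and not of normed spaces, since on $C(K)\ptpk$ the projective norm is in general strictly larger than the injective norm inherited from $C(K^k)$ — indeed $\normi{u}\le\normp{u}$ always, by the discussion around \eqref{iota}, and $\iota$ has norm at most $1$. There is essentially no obstacle here: the entire content has been pushed into \refT{Tiotak} and \refT{TCKitpk}, and the corollary is just their juxtaposition. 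The one point requiring a word of care is making clear that ``subspace'' means ``as a vector space with a possibly different (larger) norm'', which the statement of the corollary already flags.

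\begin{proof}
By \refT{TCKap}, $C(K)$ has the approximation property, and hence by \refT{Tiotak} the canonical mapping $\iota\colon C(K)\ptpk\to C(K)\itpk$ is injective. By \refT{TCKitpk}, $C(K)\itpk=C(K^k)$. Thus $\iota$ is an injective bounded linear map $C(K)\ptpk\to C(K^k)$, so $C(K)\ptpk$ may be identified, as a vector space, with its image, a linear subspace of $C(K^k)$. The norm inherited from $C(K^k)$ is the injective tensor norm, which by \eqref{iota} is dominated by (and in general strictly smaller than) the projective tensor norm on $C(K)\ptpk$.
\end{proof}
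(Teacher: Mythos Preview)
Your proof is correct and follows essentially the same approach as the paper's own proof: invoke \refT{TCKap} and \refT{Tiotak} to get injectivity of $\iota$, then use \refT{TCKitpk} to identify the target with $C(K^k)$. You have merely added an extra sentence spelling out the norm comparison, which the paper leaves implicit.
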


\begin{proof}
  By \refT{TCKap} and \refT{Tiotak}, $\iota:C(K)\ptpk\to C(K)\itpk$ is
  injective. Furthermore, $C(K)\itpk=C(K^k)$ by \refT{TCKitpk}.
\end{proof}

\begin{remark}\label{RCKtpk}
Except in trivial cases ($k=1$ or $K$ finite),
$\ck\ptpk$ is not a closed subspace of $\ck\itpk$, and thus the norms
are different and not even equivalent on $\ck\ptpk$.
This is implicitly shown for $c_0$
by \citet{Littlewood}, who showed (in our
terminology) the existence of a bounded bilinear form $\ga\in (c_0\ptpx2)\q$
which does not belong to $(c_0\itpx2)\q=c_0(\bbN^2)\q=\ell^1(\bbN^2)$;
the result can be transfered to $C(K)$ for any infinite compact $K$.
(See also the proof of \refT{Tdoik} for an argument from \cite{Varopoulos}
for \coi.)
\end{remark}

\begin{theorem}
  \label{TCK0}
Let $X$ be a $C(K)$-valued \rv, where $K$ is a compact space.
If\/ $\E X\itpk$ exists in Bochner or Pettis sense, then 
it is the function
in $C(K)\itpk=C(K^k)$ given by
\begin{equation}
  \label{eckt}
\E X\itpk(t_1,\dots,t_k)
=\E\bigpar{X(t_1)\dotsm X(t_k)}.
\end{equation}
\end{theorem}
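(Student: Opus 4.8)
The plan is to read off the value of the function $\E X\itpk\in C(K)\itpk=C(K^k)$ at a point $(t_1,\dots,t_k)$ as a pairing of $\E X\itpk$ against an explicit functional in $(C(K)\itpk)\q$, and then move the expectation outside that pairing using the defining property of the Pettis (equivalently, Dunford) integral. Since a Bochner integral is also a Pettis integral with the same value, it suffices to treat the case where $\E X\itpk$ exists in Pettis sense, so that $\E X\itpk\in C(K)\itpk$; by \refT{TCKitpk} this is $C(K^k)$.

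First I would unwind the identification $C(K)\itpk=C(K^k)$ of \refT{TCKitpk}, under which an elementary tensor $f_1\tensor\dotsm\tensor f_k$ becomes the function $(s_1,\dots,s_k)\mapsto f_1(s_1)\dotsm f_k(s_k)$. Under this identification, evaluation at a fixed point $(t_1,\dots,t_k)\in K^k$ — that is, the functional $\gd_{(t_1,\dots,t_k)}\in C(K^k)\q$ — corresponds to the elementary tensor $\gd_{t_1}\tensor\dotsm\tensor\gd_{t_k}$ regarded as a bounded functional on $C(K)\itpk$ via \eqref{*tp}: each $\gd_{t_i}$ is a norm-$1$ element of $C(K)\q$, so this tensor is a bounded functional of norm $\le 1$, and it agrees with $\gd_{(t_1,\dots,t_k)}$ on all elementary tensors, hence on $C(K)\itpk$ by linearity and continuity. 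Consequently, for the function $\E X\itpk\in C(K^k)$,
\begin{equation*}
  \E X\itpk(t_1,\dots,t_k)=\innprod{\E X\itpk,\gd_{t_1}\tensor\dotsm\tensor\gd_{t_k}}.
\end{equation*}

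Next, since $\E X\itpk$ exists in Pettis, hence in Dunford, sense and $\gd_{t_1}\tensor\dotsm\tensor\gd_{t_k}\in(C(K)\itpk)\q$, I would invoke \refT{TID}\ref{tidi} and \eqref{tid} (or \refT{Tjepp} directly, in the Pettis case) to get
\begin{equation*}
  \innprod{\E X\itpk,\gd_{t_1}\tensor\dotsm\tensor\gd_{t_k}}
  =\E\innprod{X\tpk,\gd_{t_1}\tensor\dotsm\tensor\gd_{t_k}}
  =\E\bigpar{\gd_{t_1}(X)\dotsm\gd_{t_k}(X)}
  =\E\bigpar{X(t_1)\dotsm X(t_k)},
\end{equation*}
which is \eqref{eckt}; the last expectation is finite because, the injective moment existing in Dunford sense, $\xx\mapsto\xx(X)$ maps $C(K)\q$ into $L^k(\P)$ by \refL{Lweak}, so $\gd_{t_1}(X)\dotsm\gd_{t_k}(X)\in L^1(\P)$ by \Holder's inequality. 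Note that nothing in this argument uses metrizability of $K$, so it covers the non-separable case as well.

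There is no real difficulty here beyond bookkeeping; the one point that genuinely deserves care is the assertion that the point-evaluation functional on $C(K^k)$ is exactly the elementary tensor of point evaluations, rather than merely some integral form on $C(K)^k$. This is precisely where \refT{TCKitpk} (that the injective tensor product of $C$-spaces is again a $C$-space, with the natural identification) is doing the work, and it is what makes the clean pointwise formula \eqref{eckt} valid — in contrast to, e.g., the $L^1$ situation in \refE{EPD2}, where the analogous pointwise formula may fail.
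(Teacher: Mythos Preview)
Your proof is correct and follows essentially the same approach as the paper: identify evaluation at $(t_1,\dots,t_k)$ on $C(K^k)$ with the elementary tensor $\gd_{t_1}\tensor\dotsm\tensor\gd_{t_k}\in(C(K)\itpk)\q$ via \refT{TCKitpk}, then move the expectation through this functional using the defining property of the Pettis integral (equivalently \eqref{tid}). The paper's own proof is just a terser version of exactly this argument.
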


\begin{proof}
$\E X\itpk\in C(K)\itpk=C(K^k)$ by \refT{TCKitpk}.
Furthermore, the point evaluations $\gd_t$, $t\in K$, are continuous linear
  functionals on $C(K)$, and thus
\begin{equation}\label{eckt1}
\E X\itpk(t_1,\dots,t_k)
= \innprod{\E X\itpk,\gd_{t_1}\tensor\dots\tensor\gd_{t_k}}
=\E\bigpar{X(t_1)\dotsm X(t_k)}.
%\qedhere
\end{equation}
\end{proof}

Note that the function \eqref{eckt}
is the standard \kth{} moment function for a
stochastic process; in particular, for $k=2$ and $\E X=0$, $\E X\itpx2$ is
the covariance function.

\begin{example}\label{EwienerC}
Let $W$ be standard Brownian motion in $\coi$, see \refE{Ewiener}. 
Then all (projective and
injective) moments exist in Bochner sense, \eg{} by \refT{TCK1k} below.
All odd moments vanish by symmetry.
$\E W\itpx2$ is the covariance function $\E(W(s)W(t))=\Cov(W(s),W(t))=s\bmin
t$ regarded as a continuous function in $ C\xpar{\oi^2}$,
and $\E W\ptpx2$ is the same function regarded as an element of the subspace
$\coi\ptpx2$. Similarly,
$\E W\ptpx4=\E W\itpx4$ is the function in $C(\oi^4)$ given by, see \eg{}
\cite{SJIII},  
\begin{multline}\label{w4}
   \E\bigpar{W(t_1)W(t_2)W(t_3)W(t_4)}
\\
=(t_1\bmin t_2)(t_3\bmin t_4)+(t_1\bmin t_3)(t_2\bmin t_4)
+(t_1\bmin t_4)(t_2\bmin t_3). 
\end{multline}
\end{example}

\begin{remark}\label{Rmerc}
  If $\E X\itpk$ exists in Dunford sense, then \eqref{eckt} still defines a
bounded function on $K^k$, but the function is not necessarily  continuous
(\refE{Egoi}); 
moreover, this function by itself does not in general determine
  $\E X\itpk\in C(K^k)\qx$ uniquely, not even for $k=1$, 
and not even if it happens to be continuous. 
(Note that the point
  evaluations do not form a total set in $C(K^k)\q$.) 
Indeed, in \refE{ECK2--} we shall see a $K$ and a \rv{} $Z\in\ck$ such that
$\E Z$ exists in Dunford sense (but not Pettis sense) with $\E Z\neq0$, but
$\E(Z(t))=0$ for every $t\in K$. (This cannot happen when $K$ is metric, see
\refC{CCK11}; see further Remarks \refand{Rvenus}{RCKD2}.) 
We therefore prefer the Pettis or Bochner case for applications.
\end{remark}

Apart from the previously defined $\gs$-fields on $C(K)$, we let $\cC$ be
the $\gs$-field generated by the point evaluations $f\mapsto f(t)$, $t\in
K$.
Thus $X:\gO\to C(K)$ is $\cC$-measurable if and only if $X(t)$ is measurable
for every $t\in K$. Since $\gd_t\in C(K)\q$, it is immediate that every
\wmeas{} \rv{} $X$ is $\cC$-\meas; we shall see that the converse holds when
$K$ is metrizable but not in general (\refE{ECK2--+}).

Further, let $\cB(K)$ be the Borel $\gs$-field on $K$.

We also let $M(K)$ be the space of signed Borel measures on $K$, and $\cM$
be the $\gs$-field on $M(K)$ generated by the maps $\mu\mapsto
\innprod{\mu,f}\=\int_K f\dd\mu$, $f\in C(K)$. Note that every $\mu\in M(K)$
defines a continuous linear functional on $C(K)$, so there is a bounded
linear map $M(K)\to C(K)\q$. 
Moreover, by the Riesz representation theorem
(see \eg{} \cite[Theorem IV.6.3]{Dunford-Schwartz},  
\cite[Theorem 7.3.5]{Cohn},  
\cite[Theorem  III.5.7]{Conway}), 
this map is an isometric bijection of the subspace $\mr(K)$ of
regular measures onto $C(K)\q$.
(In many cases $\mr(K)=M(K)$, for example when $K$ is a metrizable compact
space, see \cite[Proposition 8.1.10]{Cohn}.)

\refT{TCKsep} implies that if $K$ is a compact metric space, there are no 
measurability problems; some results are given in the following
lemma.  
However, if $K$ is not metrizable, and thus $\ck$ is not
separable, the situation is more complicated; we have to be more
careful with measurability in statements, but even so, many results below do
not hold for arbitrary compact $K$, see the (counter)examples at the end of
the section. We thus state most of our results for the metrizable case only.
(See Sections \ref{Sc0} and \ref{SmeasD}--\ref{SDmom}, together with
Examples \ref{Eloccomp} and \ref{ECdoi}, for examples of
non-separable spaces $\ck$ where most results hold, although some new
arguments are required.)

\begin{remark}
  All spaces $\ck$ where $K$ is an uncountable compact metric space are
  isomorphic as Banach spaces \cite{Mil}, \cite{Pel} (although not
  isometrically); hence they are all the same as $\coi$ from an abstract
  point of view. We shall, however, not use this; we prefer to regard the
  spaces concretely.
\end{remark}

\begin{lemma}\label{LCKjoint}
  Let $K$ be a metrizable compact space.
  \begin{romenumerate}[-10pt]
  \item \label{lckjointk}
The mapping $(f,t)\to f(t)$ is
  jointly $(\cC\times\cB(K))$-measurable on $C(K)\times K$.

  \item \label{lckjointm} 
The mapping $(f,\mu)\to\int_K f\dd\mu$ is
  jointly $(\cC\times\cM)$-measurable on $C(K)\times M(K)$.

In particular, $(f,\xx)\to\innprod{\xx,f}$ is
  jointly $(\cC\times\cM)$-measurable on $C(K)\times \mr(K)=C(K)\times C(K)\q$.
 \end{romenumerate}
\end{lemma}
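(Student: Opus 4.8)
The plan is to approximate the evaluation map $(f,t)\mapsto f(t)$ by maps that are finitely-valued in the $t$-variable. Since $K$ is compact and metrizable, fix a metric $d$ on $K$. For each $m\ge1$, compactness gives finitely many points $t^m_1,\dots,t^m_{N_m}\in K$ whose $1/m$-balls cover $K$; I would define $\pi_m\colon K\to K$ by $\pi_m(t)\=t^m_i$, where $i$ is the least index with $d(t,t^m_i)<1/m$. Each set $\set{t:\pi_m(t)=t^m_i}$ is a difference of open balls, hence Borel, so $\pi_m$ is a $\cB(K)$-measurable map taking only the values $t^m_1,\dots,t^m_{N_m}$, with $d\bigpar{t,\pi_m(t)}<1/m$ for every $t$. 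Hence, for every $f\in\ck$ and every $t\in K$, $f\bigpar{\pi_m(t)}\to f(t)$ as $m\to\infty$ (using only continuity of $f$), and $\bigabs{f\bigpar{\pi_m(t)}}\le\norm f$ throughout.

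For \ref{lckjointk}, I would write, for fixed $m$, $f\bigpar{\pi_m(t)}=\sum_{i=1}^{N_m}\ett{\pi_m(t)=t^m_i}\,f(t^m_i)$. Here $(f,t)\mapsto\ett{\pi_m(t)=t^m_i}$ depends only on $t$ and is $\cB(K)$-measurable, while $(f,t)\mapsto f(t^m_i)$ depends only on $f$ and is $\cC$-measurable by the definition of $\cC$; so each summand, hence the finite sum, is jointly $(\cC\times\cB(K))$-measurable. Letting $m\to\infty$ and using the pointwise convergence from the first paragraph, $(f,t)\mapsto f(t)$ is a pointwise limit of $(\cC\times\cB(K))$-measurable functions, and therefore $(\cC\times\cB(K))$-measurable.

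For \ref{lckjointm}, the key auxiliary fact I would establish first is that $\mu\mapsto\mu(A)$ is $\cM$-measurable for every Borel $A\subseteq K$. For open $U$, choose $g_n\in\ck$ with $0\le g_n\uparrow\ett{U}$ (Urysohn, available as $K$ is metric); then $\innprod{\mu,g_n}\to\mu(U)$ for each $\mu\in M(K)$ by dominated convergence with respect to the finite positive measure $\abs\mu$, so $\mu\mapsto\mu(U)$ is $\cM$-measurable. The collection $\cD$ of Borel $A$ with $\mu\mapsto\mu(A)$ $\cM$-measurable contains $K$ and every open set, is closed under proper differences (since $\mu(B\setminus A)=\mu(B)-\mu(A)$ for $A\subseteq B$) and under increasing limits (continuity from below of the finite signed measure $\mu$); as the open sets form a $\pi$-system generating $\cB(K)$, Dynkin's lemma gives $\cD=\cB(K)$. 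Then, for fixed $m$ and $A^m_i\=\pi_m\qw\bigpar{\set{t^m_i}}$, we have $\int_K f\bigpar{\pi_m(t)}\dd\mu(t)=\sum_{i=1}^{N_m}f(t^m_i)\,\mu(A^m_i)$, which is jointly $(\cC\times\cM)$-measurable in $(f,\mu)$ since $f\mapsto f(t^m_i)$ is $\cC$-measurable and $\mu\mapsto\mu(A^m_i)$ is $\cM$-measurable. Finally, for fixed $f$ and $\mu$, dominated convergence with respect to $\abs\mu$ (using $\bigabs{f\circ\pi_m}\le\norm f$) gives $\int_K f\bigpar{\pi_m(t)}\dd\mu(t)\to\int_K f\dd\mu$, so $(f,\mu)\mapsto\int_K f\dd\mu$ is again a pointwise limit of jointly measurable functions, hence jointly $(\cC\times\cM)$-measurable. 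The last assertion is then immediate: under the Riesz identification $\mr(K)=\ck\q$ with its $\cM$-$\sigma$-field, $\innprod{\xx,f}=\int_K f\dd\xx$, so joint measurability on $\ck\times\mr(K)$ follows by restricting to this measurable subset of $\ck\times M(K)$.

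The hard part will be the auxiliary step in \ref{lckjointm}, namely showing $\mu\mapsto\mu(A)$ is $\cM$-measurable for all Borel $A$; everything else is a routine approximate-and-pass-to-the-limit argument. The only real subtlety there is to keep track that $\mu$ ranges over \emph{signed} (not merely positive) finite measures, which is precisely why the Dynkin-system formulation — closure under proper differences and increasing limits, starting from the $\pi$-system of open sets — is the convenient way to organise it.
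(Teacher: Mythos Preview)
Your proof is correct. The paper's approach is the same in outline --- approximate $f(t)$ uniformly by finite sums $\sum_i f(t_{ni})\,w_i(t)$ with weights depending only on $t$, then pass to the limit --- but it chooses \emph{continuous} weights: a partition of unity $f_{ni}\in C(K)$ subordinate to a cover by sets of diameter $<1/n$. For part \ref{lckjointk} this is only a cosmetic difference from your Borel indicators. For part \ref{lckjointm} it is a genuine shortcut: since $f_{ni}\in C(K)$, the map $\mu\mapsto\int_K f_{ni}\dd\mu$ is $\cM$-measurable \emph{by the very definition of $\cM$}, so the paper writes $\int_K f\dd\mu=\lim_n\sum_i f(t_{ni})\int_K f_{ni}\dd\mu$ and is done immediately, without ever needing your auxiliary fact that $\mu\mapsto\mu(A)$ is $\cM$-measurable for Borel $A$ and the accompanying Dynkin argument. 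Your route works fine, but the partition-of-unity choice is exactly what lets one bypass what you identified as ``the hard part''.
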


\begin{proof}
Choose a metric $d$ on $K$.  For each $n\ge1$ there is a finite covering of $K$
by open sets $U_{ni}$, $1\le i\le N_n$, of diameters $<1/n$. We may find a
partition of unity subordinate to $\set{U_{ni}}_i$, \ie, a set of functions
$f_{ni}\in C(K)$ such that $f_{ni}\ge0$, $\sum_i f_{ni}=1$ and $\set{t\in
  K:f_{ni}(t)>0}\subseteq U_{ni}$. (For example, let
$g_{ni}(t)\=d(t,U_{ni}\comp)$ and $f_{ni}\=g_{ni}/\sum_j g_{nj}$.)

Choose some $t_{ni}\in U_{ni}$. For any function $f\in C(K)$,
\begin{equation}\label{magn}
  \begin{split}
\sup_t\Bigabs{f(t)&-\sum_i f(t_{ni})f_{ni}(t)}
=\sup_t\Bigabs{\sum_i(f(t)- f(t_{ni}))f_{ni}(t)}
\\&
\le\sup_t\sum_i|f(t)- f(t_{ni})|f_{ni}(t)
%\\&
\le \sup\set{\abs{f(t)-f(u)}:|t-u|<1/n}
\\&
\to0  	
  \end{split}
\raisetag\baselineskip
\end{equation}
as \ntoo. 
This shows first that
$f(t)=\limn\sum_i f(t_{ni})f_{ni}(t)$, which is $\cC\times \cB(K)$-\meas,
showing \ref{lckjointk}.

Moreover, \eqref{magn} implies that, for all $f\in C(K)$ and $\mu\in M(K)$,
\begin{equation*}
  \int_K f\dd\mu = \lim_{\ntoo} \sum_{i=1}^{N_i} f(t_{ni}) \int_K f_{ni}\dd\mu,
\end{equation*}
where 
the right hand side evidently is $(\cC\times\cM)$-\meas{}.

This proves the first claim in \ref{lckjointm}, and the final claim follows
by the Riesz representation theorem  discussed before the lemma.
\end{proof}

\begin{corollary}\label{CCKcC}
  If $K$ is a metrizable compact, then $\cC$ coincides with the $\gs$-field
  $\bw$ 
on $C(K)$
generated   by
the continuous linear functionals, and also with the Borel \gsf{} $\cB$. 
Hence, if $X:\gO\to C(K)$, then $X$ is \Bmeas{} $\iff$
$X$ is \wmeas{} $\iff$
$X$ is \cmeas.
\end{corollary}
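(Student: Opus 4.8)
The plan is to establish the chain of $\gs$-field equalities $\cC=\bw=\cB$ and then read off the measurability equivalences. First, the inclusion $\cC\subseteq\bw$ is immediate: each point evaluation $f\mapsto f(t)=\innprod{\gd_t,f}$ is one of the continuous linear functionals generating $\bw$, so the generators of $\cC$ form a subfamily of the generators of $\bw$. For the reverse inclusion $\bw\subseteq\cC$ it suffices to check that every $\xx\in\ck\q$ is $\cC$-measurable as a function $\ck\to\bbR$. By the Riesz representation theorem (recalled before \refL{LCKjoint}), $\xx$ is given by integration against some regular measure $\mu\in\mr(K)$, i.e.\ $\xx(f)=\int_K f\dd\mu$; fixing $\mu$ in \refL{LCKjoint}\ref{lckjointm} (or directly using the identity $\int_K f\dd\mu=\limn\sum_i f(t_{ni})\int_K f_{ni}\dd\mu$ from its proof) exhibits $f\mapsto\xx(f)$ as a pointwise limit of finite linear combinations of point evaluations, hence as a $\cC$-measurable function. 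Thus $\bw\subseteq\cC$, and so $\cC=\bw$.

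Next I would invoke that $K$ metrizable forces $\ck$ to be separable (\refT{TCKsep}), and that for a separable Banach space one has $\bw=\cB$ (as observed in \refS{Smeas}, just after \refR{Rmeas}). Combining these, $\cC=\bw=\cB$.

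Finally, for the measurability statements: separability of $\ck$ makes every $X\colon\gO\to\ck$ trivially \assep, so \refT{Twm} gives that $X$ is \Bmeas{} (equivalently, Borel measurable) if and only if $X$ is \wmeas; and since $\cC$, $\bw$ and $\cB$ coincide, being \cmeas{} is the same as being \wmeas{} (and the same as being Borel measurable), while \Bmeas{} means Borel measurable and \assep, which in the separable setting reduces to Borel measurable. This yields the stated equivalences. The only step that is not entirely routine is the reverse inclusion $\bw\subseteq\cC$, and even that is handled cleanly by the Riesz representation theorem together with the approximation-by-point-masses argument already carried out in the proof of \refL{LCKjoint}; I do not anticipate any genuine difficulty.
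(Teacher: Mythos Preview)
Your proof is correct and follows essentially the same approach as the paper's own proof: both use \refL{LCKjoint}\ref{lckjointm} (via the Riesz representation theorem) for $\bw\subseteq\cC$, the trivial observation $\gd_t\in\ck\q$ for $\cC\subseteq\bw$, and separability of $\ck$ (\refT{TCKsep}) together with the general fact $\bw=\cB$ in separable spaces for the remaining equality. Your version is simply more explicit about how \refL{LCKjoint} is being applied and about how the final measurability equivalences follow.
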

\begin{proof}
  By \refL{LCKjoint}, each $f\mapsto\innprod{\xx,f}$ with $\xx\in C(K)\q$
is $\cC$-\meas. 
Conversely, each point evaluation $f(t)=\innprod{\gd_t,f}$ where $\gd_t\in
C(K)\q$. This proves that $\cC$ and $\bw$ coincide.

Furthermore, $\bw=\cB$ in any separable Banach space, see \refS{Smeas}, so
\refT{TCKsep} completes the proof.
\end{proof}

If $K$ is not metrizable, \ie, when $C(K)$ is not
separable, we cannot expect equivalence with Bochner measurability.
Moreover,
Examples \refand{ECK2--+}{EA} show that
then in general $\bw\neq\cC$; hence
we cannot expect equivalence between weak measurability and
$\cC$-measurability.
Moreover, \refE{ECK2--+} constructs a \rv{} that is \cmeas{} but not \wmeas.
(Recall that  we only consider complete \ps{s}.)
Nevertheless, we shall see in Theorems \refand{Tc0}{TDw} 
(using Examples \refand{Eloccomp}{ECdoi}) that 
there exist important cases of non-separable $C(K)$ such that
$\bw=\cC$ and thus the last
equivalence holds.

We can now complete the proof of \refT{TC} in the introduction.
\begin{proof}[Proof of \refT{TC}]
By Theorems \refand{TCKsep}{TCKap}, $C(K)$ is separable and 
has the \ap. 
Hence, \refT{Tapprox} applies and shows (together with its proof, or
\refC{Citpk=p}) that 
\eqref{multi=}$\iff$\eqref{weak=}$\iff \E X\itpk=\E Y\itpk$,
where the moments exist in Bochner sense by \refT{TPIB}.
Finally, 
$\E X\itpk=\E Y\itpk \iff$ \eqref{punkt=} by \eqref{eckt}.
\end{proof}

We continue with further results and next give a complete characterization
of the existence of injective moments in the different senses in the
metrizable case. 

\begin{theorem}\label{TCK1}
  Let $K$ be a metrizable compact space and suppose that $X$ is a $\cC$-\meas{}
  $C(K)$-valued \rv. 
Let $k\ge1$.
  \begin{romenumerate}[-10pt]
  \item \label{tck1d}
$\E X\itpk$ exists in Dunford sense $\iff$ %if and only if
the weak \kth{} moment  exists  $\iff$ %if and only if\/ 
$\sup_{t\in	K}\E|X(t)|^k<\infty$.
  \item \label{tck1p}
$\E X\itpk$ exists in Pettis sense $\iff$ %if and only if 
the family
$\set{|X(t)|^k:t\in K}$ of \rv{s} is \ui.
  \item \label{tck1b}
$\E X\itpk$ exists in Bochner sense $\iff$ %if and only if\/ 
$\E\bigpar{\sup_{t\in	K}|X(t)|}^k<\infty$.
  \end{romenumerate}
\end{theorem}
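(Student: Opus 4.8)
The plan is to obtain all three equivalences by combining the general characterizations already established --- \refT{TID} for the Dunford case, \refT{TiPettis} for the Pettis case, and \refT{TPIB} for the Bochner case --- with the single extra ingredient needed to pass between arbitrary functionals $\xx\in C(K)\q$ and the point evaluations $\gd_t$. First I would record the relevant preliminaries: since $K$ is metrizable, $C(K)$ is separable by \refT{TCKsep}, so by \refC{CCKcC} our $\cC$-measurable $X$ is also \wmeas{} and \Bmeas{} (and, trivially, \assep); moreover, composing the joint measurability statement \refL{LCKjoint}\ref{lckjointk} with $\go\mapsto X(\go)$ shows that $(t,\go)\mapsto X(t,\go)$, hence $(t,\go)\mapsto|X(t,\go)|^k$, is jointly $(\cB(K)\times\cF)$-measurable. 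This last point is what licenses the applications of Tonelli's theorem below.

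The crucial estimate is as follows. Let $\mu\in\mr(K)=C(K)\q$ with $\norm\mu\le1$ and let $\xx$ be the corresponding functional, so that $\xx(X)=\int_K X(t)\dd\mu(t)$ by the Riesz representation theorem. Since $s\mapsto|s|^k$ is convex, Jensen's inequality (applied to the probability measure $|\mu|/\norm\mu$, the case $\mu=0$ being trivial) gives, pointwise in $\go$,
\begin{equation*}
|\xx(X)|^k\le\Bigpar{\int_K|X(t)|\dd|\mu|(t)}^k\le\norm\mu^{k-1}\int_K|X(t)|^k\dd|\mu|(t)\le\int_K|X(t)|^k\dd|\mu|(t).
\end{equation*}
Taking expectations and using Tonelli, we get, for every event $E\in\cF$,
\begin{equation*}
\E\bigpar{\ettae|\xx(X)|^k}\le\int_K\E\bigpar{\ettae|X(t)|^k}\dd|\mu|(t)\le\sup_{t\in K}\E\bigpar{\ettae|X(t)|^k},
\end{equation*}
while, since $\norm{\gd_t}=1$, the reverse inequality $\sup_{t\in K}\E(\ettae|X(t)|^k)\le\sup\set{\E(\ettae|\xx(X)|^k):\xx\in C(K)\q,\ \norm\xx\le1}$ is immediate.

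With these two inequalities in hand the three parts follow quickly. For \ref{tck1d}: by \refT{TID}\ref{tidd}\ref{tid1} (the separable case), $\E X\itpk$ exists in Dunford sense iff the weak \kth{} moment exists, and by \refL{Lweak} this holds iff $\sup\set{\E|\xx(X)|^k:\norm\xx\le1}<\infty$; taking $E=\gO$ in the two inequalities shows this supremum is finite precisely when $\sup_{t\in K}\E|X(t)|^k<\infty$. For \ref{tck1p}: by \refT{TiPettis}\ref{tipettis2}\ref{tip1}, $\E X\itpk$ exists in Pettis sense iff $\set{|\xx(X)|^k:\norm\xx\le1}$ is uniformly integrable; using the criterion \eqref{ui} together with the two inequalities (the $E=\gO$ terms bound the first line of \eqref{ui}, and the $\P(E)\to0$ behaviour yields the second), this is equivalent to uniform integrability of $\set{|X(t)|^k:t\in K}$. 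For \ref{tck1b}: by \refT{TPIB}, $\E X\itpk$ exists in Bochner sense iff $X$ is \assep{} and $\E\norm X^k<\infty$; the first condition is automatic here, and $\norm X=\sup_{t\in K}|X(t)|$ is measurable because $X$ is \Bmeas, so the condition is exactly $\E\bigpar{\sup_{t\in K}|X(t)|}^k<\infty$.

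The only step requiring any care is the convexity-plus-Tonelli estimate above, and in particular the appeal to \refL{LCKjoint} to guarantee the joint measurability that makes Tonelli applicable; I do not anticipate any further obstacle, as the remainder is pure bookkeeping against the general theory.
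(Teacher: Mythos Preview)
Your proof is correct and follows essentially the paper's approach: the same preliminaries (\refC{CCKcC}, \refL{LCKjoint}), the same key estimate bounding $\E\bigpar{\ettae|\xx(X)|^k}$ by $\sup_t\E\bigpar{\ettae|X(t)|^k}$ via convexity and Fubini (the paper uses H\"older where you use Jensen, to the same effect), and the same appeals to \refT{TID}, \refT{TPIB}. The one organizational difference is in \ref{tck1p}: you invoke \refT{TiPettis}\ref{tipettis2}\ref{tip1} to reduce Pettis integrability to uniform integrability of $\set{|\xx(X)|^k:\norm\xx\le1}$ and then compare with $\set{|X(t)|^k}$ at the $B\q$-level, whereas the paper works directly at the $(B\itpk)\q=C(K^k)\q$ level (with the arithmetic--geometric inequality) and applies \refT{THuff}; since the proof of \refT{TiPettis} does precisely that computation internally, your route is the same argument packaged more modularly.
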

\begin{proof}
By \refC{CCKcC}, $X$ is \wmeas.  

\pfitemref{tck1d}
Since $C(K)$ is separable by \refT{TCKsep}, \refT{TID}\ref{tidd}\ref{tid1}
shows that $\E X\itpk$ exists in Dunford sense if and only if 
the weak \kth{} moment exists.
Moreover, in this case
$\E|\xx(X)|^k\le C$ for some $C<\infty$ and all $\xx\in C(K)\q$ with
$\norm{\xx}\le1$. 
In particular, taking $\xx=\gd_t$, $\E|X(t)|^k\le C$.

Conversely, if $\E|X(t)|^k\le C$ for all $t\in K$, and $\mu\in
C(K)\q\subseteq M(K)$ with $\norm\mu=1$, then, by \Holder's inequality and
Fubini's theorem  
(using \refL{LCKjoint}\ref{lckjointk}),
\begin{equation}\label{rogat}
  \begin{split}
\E|\innprod{\mu,X}|^k
&=
  \E\lrabs{\int_K X(t)\dd\mu(t)}^k
\le \E \int_K|X(t)|^k\dd|\mu|(t)
=  \int_K\E|X(t)|^k\dd|\mu|(t)
\\&
\le C.	
  \end{split}
\raisetag{\baselineskip}
\end{equation}
Hence \refL{Lweak} shows that the weak \kth{} moment exists.

\pfitemref{tck1p}
Since $C(K)\itpk=C(K^k)$, its dual space is $C(K^k)\q=\mr(K^k)\subseteq M(K^k)$.
If $\mu\in M(K^k)$ with $\norm\mu\le1$ and $E$ is any event, then by the
arithmetic-geometric inequality and Fubini (using
\refL{LCKjoint}\ref{lckjointk})
\begin{equation}\label{krhmf}
  \begin{split}
\E\bigpar{\ettae|\innprod{\mu,X\tpk}|}
&=	\E\lrpar{\ettae\lrabs{\int_{K^k}X(t_1)\dotsm
	X(t_k)\dd\mu(t_1,\dots,t_k)}}
\\
&\le	\E\lrpar{\ettae\int_{K^k}\frac1k\bigpar{|X(t_1)|^k+\dots+|X(t_k)|^k}
\dd|\mu|(t_1,\dots,t_k)}
\\
&=\frac1k\int_{K^k}\sum_{i=1}^k\E\bigpar{\ettae|X(t_i)|^k}
 \dd|\mu|(t_1,\dots,t_k)
\\
&\le\sup_{t\in K}\E\bigpar{\ettae|X(t)|^k}.
  \end{split}
\raisetag{1.1\baselineskip}
\end{equation}
It follows from \eqref{krhmf}, using \eqref{ui}, 
that if $\set{|X(t)|^k:t\in K}$ is \ui, then so
  is the family $\set{\innprod{\mu,X\tpk}:\norm{\mu}\le1}$.
Since $C(K)\itpk$ is separable when $C(K)$ is, it follows by \refT{THuff}
that the moment $\E X\itpk$ exists in Pettis sense.

Conversely, if $\E X\itpk$ exists in Pettis sense,
then $\set{|X(t)|^k:t\in K}$ is \ui{} by \refT{TiPettis}\ref{tipettis1}, taking
$\xx=\gd_t$, $t\in K$.
 
\pfitemref{tck1b}
Immediate by  \refT{TPIB}, since $C(K)$ is separable. 
\end{proof}

  Already the case $k=1$ in \refT{TCK1} is non-trivial and gives the
  following characterisations of the existence of the expectation $\E X$ of a
%$\cC$-\meas{}  
$\ck$-valued \rv.
 \begin{corollary}
   \label{CCK11}
Let $K$ be a metrizable compact space and suppose that $X$ is a $\cC$-\meas{}
  $C(K)$-valued \rv.
  \begin{romenumerate}[-10pt]
  \item \label{tck11d}
$\E X$ exists in Dunford sense $\iff$ %if and only if
$\sup_{t\in	K}\E|X(t)|<\infty$.
  \item \label{tck11p}
$\E X$ exists in Pettis sense $\iff$ %if and only if 
the family
$\set{X(t):t\in K}$ of \rv{s} is \ui.
  \item \label{tck11b}
$\E X$ exists in Bochner sense $\iff$ %if and only if\/ 
$\E\bigpar{\sup_{t\in	K}|X(t)|}<\infty$.
  \end{romenumerate}
In the Pettis and Bochner cases, 
$\E X$ is the continuous function 
$t\mapsto\E (X(t))$. 
Also when $\E X$ exists just in Dunford sense, it is  
given by this function (bounded by not necessarily continuous) in the sense
that for any $\mu\in \ck\q=M(K)$,
\begin{equation}
  \label{venus}
\innprod{\mu,\E X}=\int_K \E (X(t))\dd\mu(t).
\end{equation}
 \end{corollary}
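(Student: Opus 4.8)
The plan is to obtain parts \ref{tck11d}--\ref{tck11b} directly from \refT{TCK1} specialized to $k=1$. Indeed, for $k=1$ the injective \kth{} moment is simply $\E X$, the weak \kth{} moment is the family of expectations $\E\innprod{\xx,X}$, and the three conditions in \refT{TCK1}\ref{tck1d}, \ref{tck1p}, \ref{tck1b} become exactly $\sup_{t\in K}\E|X(t)|<\infty$, uniform integrability of $\set{X(t):t\in K}$, and $\E\bigpar{\sup_{t\in K}|X(t)|}<\infty$, respectively. So nothing new is needed for these three equivalences beyond citing \refT{TCK1}.

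For the identification of the value of $\E X$, I would split into cases. In the Pettis or Bochner case $\E X\in C(K)$, and since each point evaluation $\gd_t$ lies in $C(K)\q$, the defining property of these integrals gives $\innprod{\gd_t,\E X}=\E\innprod{\gd_t,X}=\E\bigpar{X(t)}$; hence $\E X$, regarded as a function on $K$, is $t\mapsto\E\bigpar{X(t)}$, which is therefore automatically continuous, being an element of $C(K)$. In the Dunford case, $\E X\in C(K)\qx$, and for $\mu\in C(K)\q=M(K)$ the definition of the Dunford integral gives $\innprod{\mu,\E X}=\E\innprod{\mu,X}=\E\int_K X(t)\dd\mu(t)$; the remaining task is to interchange expectation and integration. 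Here I would invoke the joint $(\cC\times\cB(K))$-measurability of $(f,t)\mapsto f(t)$ from \refL{LCKjoint}\ref{lckjointk}, so that $(\go,t)\mapsto X(\go)(t)$ is jointly measurable on $\gO\times K$, and note the bound $\int_K\E|X(t)|\dd|\mu|(t)\le\norm\mu\,\sup_{t\in K}\E|X(t)|<\infty$, which holds by part \ref{tck11d}. Then Fubini's theorem applies and yields $\innprod{\mu,\E X}=\int_K\E\bigpar{X(t)}\dd\mu(t)$, which is \eqref{venus} (Tonelli also gives, as a by-product, the measurability of $t\mapsto\E|X(t)|$ needed to state the bound).

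The only point requiring any care, and the one I expect to be the main obstacle, is the justification of Fubini in the Dunford case: one must have joint measurability of $(\go,t)\mapsto X(\go)(t)$ on $\gO\times K$, which is precisely where metrizability of $K$ enters, via \refL{LCKjoint}; and one must have the absolute-integrability bound, which is exactly what condition \ref{tck11d} supplies. Everything else is a routine unwinding of the definitions of the Bochner, Pettis and Dunford integrals together with the fact that point evaluations belong to $C(K)\q$.
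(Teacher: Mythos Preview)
Your proposal is correct and matches the paper's own proof essentially line for line: the three equivalences are obtained by specializing \refT{TCK1} to $k=1$, the Pettis/Bochner identification of $\E X$ with $t\mapsto\E(X(t))$ is exactly the content of \refT{TCK0} (which the paper cites, while you unwind it via point evaluations), and the Dunford-case formula \eqref{venus} is obtained by Fubini using the joint measurability from \refL{LCKjoint}.
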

 \begin{proof}
It remains only to verify \eqref{venus}. If $X$ is Pettis or Bochner 
integrable, \eqref{venus} follows  by \refT{TCK0}. In the more general
Dunford case we 
have by Fubini, using \refL{LCKjoint},
\begin{equation}\label{ven}
  \innprod{\mu,\E X}=
\E\innprod{\mu,X}=
\E\int_K X(t,\go)\dd\mu(t)=
\int_K \E (X(t,\go))\dd\mu(t).
%\qedhere
\end{equation}
 \end{proof}

 \begin{remark}
   \label{Rvenus} 
For non-metrizable $K$, \eqref{venus} is not true in general for Dunford
integrable $X$, as seen in
\refE{ECK2--}; cf.\ \refR{Rmerc}.
(The reason that the proof above fails in this case must be that we cannot
use Fubini in \eqref{ven},  because $X(t,\go)$ is not jointly
\meas.) 
 \end{remark}

We know that if the moment exists in Pettis sense, it is given by the
function \eqref{eckt} which then has to be continuous.
The next theorem shows that
in the separable case (\ie, when $K$ is metrizable compact),
it is for the existence of an \emph{even} injective moment in Pettis sense also
sufficient that this function exists and is continuous. 
This is not true for odd moments, not even the first moment $\E X$, as is
seen in  \refE{ECK2-} 
below; nor does this hold in general for non-metric $K$
as is seen in \refE{ECK2--}.

\begin{theorem}\label{TCK2}
  Let $K$ be a metrizable compact space and suppose that $X$ is a $\cC$-\meas{}
  $C(K)$-valued \rv{} such that $\sup_{t\in	K}\E|X(t)|^2<\infty$.
Suppose that $k\ge2$ is even.
Then the following are equivalent.
  \begin{romenumerate}[-10pt]
  \item \label{tck2p}
$\E X\itpk$ exists in Pettis sense.
  \item \label{tck2k}
$(t_1,\dots,t_k)\mapsto \E\bigpar{X(t_1)\dotsm X(t_k)}$
is continuous on $K^k$.
  \item \label{tck21}
$t\mapsto \E{X(t)^k}$ is continuous on $K$.
  \end{romenumerate}
In this case, $\E X\itpk$ is the function in $C(K^k)$ given in \ref{tck2k}.
\end{theorem}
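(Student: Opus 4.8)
The plan is to establish the cycle \ref{tck2p}$\implies$\ref{tck2k}$\implies$\ref{tck21}$\implies$\ref{tck2p}; the first implication also yields the concluding assertion. For \ref{tck2p}$\implies$\ref{tck2k}: if $\E X\itpk$ exists in Pettis sense it lies in $C(K)\itpk=C(K^k)$ by \refT{TCKitpk}, and by \refT{TCK0} it equals the function $(t_1,\dots,t_k)\mapsto\E\bigpar{X(t_1)\dotsm X(t_k)}$; hence that function is continuous, which is \ref{tck2k}, and this identification is precisely the final claim of the theorem. The implication \ref{tck2k}$\implies$\ref{tck21} is trivial on restricting to the diagonal $t_1=\dots=t_k=t$.

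The substance is \ref{tck21}$\implies$\ref{tck2p}. Here I would invoke \refT{TCK1}\ref{tck1p}, so that it suffices to prove that $\set{|X(t)|^k:t\in K}$ is uniformly integrable. Since $k$ is even, $|X(t)|^k=X(t)^k\ge0$, and continuity of $t\mapsto\E X(t)^k$ on the compact $K$ yields $\sup_{t\in K}\E|X(t)|^k<\infty$; thus the first clause of \eqref{ui} holds, so any failure of uniform integrability would produce $\eps>0$, points $t_n\in K$ and events $E_n$ with $\P(E_n)\to0$ but $\E\bigpar{X(t_n)^k\ett{E_n}}\ge\eps$. Passing to a subsequence ($K$ being sequentially compact), $t_n\to t_0$ in $K$. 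The crucial point is that $X$ takes values in $C(K)$, so for each fixed $\go$ the function $t\mapsto X(t,\go)$ is continuous; hence $X(t_n,\go)^k\to X(t_0,\go)^k$ for every $\go$, in particular a.s. These variables are non-negative with $\E X(t_n)^k\to\E X(t_0)^k<\infty$ by \ref{tck21}, so Scheff\'e's lemma gives $\E\bigabs{X(t_n)^k-X(t_0)^k}\to0$ (concretely: $(X(t_n)^k-X(t_0)^k)^-\le X(t_0)^k$ and tends to $0$ a.s., so its expectation tends to $0$ by dominated convergence, whence the expectation of the positive part does too, since the signed integral tends to $0$). Consequently $\E\bigpar{X(t_n)^k\ett{E_n}}\le\E\bigabs{X(t_n)^k-X(t_0)^k}+\E\bigpar{X(t_0)^k\ett{E_n}}\to0$, the second term vanishing because $X(t_0)^k$ is integrable and $\P(E_n)\to0$; this contradicts $\E\bigpar{X(t_n)^k\ett{E_n}}\ge\eps$. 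Hence $\set{|X(t)|^k:t\in K}$ is uniformly integrable and \ref{tck2p} follows; the value of the moment is then the function in \ref{tck2k} by \refT{TCK0}.

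The only place evenness of $k$ enters is the passage from pointwise convergence of $X(t_n)^k$ together with convergence of the expectations to $L^1$-convergence — this is Scheff\'e's lemma, which requires the integrands to be non-negative, and it is exactly this step that breaks for odd $k$, which is why the theorem is restricted to even orders. Everything else is bookkeeping with \refT{TCKitpk}, \refT{TCK0} and \refT{TCK1}, and I expect no serious obstacle beyond getting this Scheff\'e step right. An equivalent packaging of \ref{tck21}$\implies$\ref{tck2p} would be to note that the same estimate shows $t\mapsto X(t)^k$ is norm-continuous from $K$ into $L^1(\P)$, so its image is norm-compact, hence relatively weakly compact, hence uniformly integrable.
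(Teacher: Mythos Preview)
Your proposal is correct and essentially matches the paper's proof. The paper packages \ref{tck21}$\implies$\ref{tck2p} in precisely the ``equivalent packaging'' you mention at the end: it observes via Scheff\'e (cited as \cite[Theorem 5.5.2]{Gut}) that $t\mapsto |X(t)|^k$ is continuous $K\to L^1(\P)$, so its image is compact in $L^1(\P)$, hence uniformly integrable, and then applies \refT{TCK1}\ref{tck1p}; your contradiction argument is just an unpacking of the same idea.
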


\begin{proof}
  \ref{tck2p}$\implies$\ref{tck2k}: By \refT{TCK0},
which also shows the final statement.

  \ref{tck2k}$\implies$\ref{tck21}: Trivial.

  \ref{tck21}$\implies$\ref{tck2p}:
If $t_n\to t$ in $K$, then $X(t_n)\to X(t)$ and thus
$X(t_n)^k\to X(t)^k$. Furthermore, by \ref{tck21}, and the fact that $k$ is
even, 
$ \E |X(t_n)|^k \to \E |X(t)|^k$. Hence
$\set{|X(t_n)|^k}$ is \ui{} and thus $|X(t_n)|^k \to|X(t)|^k$ in $L^1(\P)$,
see \eg{} \cite[Theorem  5.5.2]{Gut}.
Consequently, the map $t\mapsto |X(t)|^k$ is continuous $K\to L^1(\P)$, and
since $K$ is compact, $\set{|X(t)|^k:t\in K}$ is a compact subset of
$L^1(\P)$, and in particular weakly compact and thus \ui{}
\cite[Theorem IV.8.11]{Dunford-Schwartz}.
Thus \ref{tck2p} follows by \refT{TCK1}\ref{tck1p}.
\end{proof}

We turn to projective moments.
For the second moment, we can show that the conditions for the injective
moment in \refT{TCK1} also imply the existence of the projective second
moment.
This uses the following result by Grothendieck \cite{Grothendieck-resume}
see \eg{} \cite[Theorem 5.5]{Pisier} or \cite[Theorem V.2]{Blei}.
(These references also contain further related results.
In particular, Grothendieck's theorem is
essentially equivalent to  \emph{Grothendieck's inequality}, see
\cite{Blei,Pisier,LTzI}.)

Let $k_G$ denote \emph{Grothendieck's constant}. 
(It is known that 
$\xfrac\pi2\le k_G\le\pi/(2\log(1+\sqrt2))$, but for us the value is not
important. )
\begin{theorem}[Grothendieck] \label{TG}
If $K$ is a compact set and $\ga$ is a bounded bilinear form on $C(K)$,
% with $\norm\ga\le1$,  
then there 
exists a Borel probability measure $\mu$ on $K$ such that 
\begin{equation}\label{groth}
  |\ga(f,g)|\le \kkg\norm{\ga}\norm{f}_{L^2(K,\mu)}\norm{g}_{L^2(K,\mu)}.
\end{equation}
Hence, $\ga$ extends to a bounded bilinear form on $L^2(K,\mu)$ of
norm $\le \kkg\norm\ga$.
\nopf
\end{theorem}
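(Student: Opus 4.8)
The plan is to reduce the statement to Grothendieck's inequality in its combinatorial form, which I take as the genuine hard input: there is a universal constant $k_G$ such that for every real matrix $(a_{rs})_{r,s=1}^N$ with $\bigabs{\sum_{r,s}a_{rs}\sigma_r\tau_s}\le1$ whenever $\abs{\sigma_r},\abs{\tau_s}\le1$, and for all $x_r,y_s$ in the unit ball of a real Hilbert space, $\bigabs{\sum_{r,s}a_{rs}\innprod{x_r,y_s}}\le k_G$. By homogeneity assume $\norm\ga=1$. It suffices to produce \emph{two} Borel probability measures $\mu_1,\mu_2$ on $K$ with
\begin{equation*}
  \abs{\ga(f,g)}\le\tfrac12 k_G\bigpar{\norm{f}_{L^2(K,\mu_1)}^2+\norm{g}_{L^2(K,\mu_2)}^2},\qquad f,g\in C(K).
\end{equation*}
Indeed, applying this to $(\gl f,\gl\qw g)$ and optimizing over $\gl>0$ gives $\abs{\ga(f,g)}\le k_G\norm{f}_{L^2(\mu_1)}\norm{g}_{L^2(\mu_2)}$, whence $\mu\=\tfrac12(\mu_1+\mu_2)$ satisfies $\norm{f}_{L^2(\mu_i)}\le\sqrt2\,\norm{f}_{L^2(\mu)}$ and yields \eqref{groth}; the extension to a bounded bilinear form on $L^2(K,\mu)$ is then immediate since $C(K)$ is dense in $L^2(K,\mu)$. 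The point of the squared form above is that its right-hand side depends \emph{affinely} on $(\mu_1,\mu_2)$, which is what lets a minimax argument go through; the averaging that turns $k_G$ into $\kkg$ sits in the step $\norm{f}_{L^2(\mu_i)}\le\sqrt2\,\norm{f}_{L^2(\mu)}$.

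The core finite-dimensional fact, which repackages Grothendieck's inequality, is: if $u_1,\dots,u_n,v_1,\dots,v_n\in C(K)$ satisfy $\sum_i u_i(t)^2\le1$ and $\sum_i v_i(t)^2\le1$ for every $t\in K$, then $\sum_i\abs{\ga(u_i,v_i)}\le k_G$. To prove it, flip signs so that $\ga(u_i,v_i)\ge0$; then fix $\eps>0$ and take a partition of unity $\varphi_1,\dots,\varphi_N\in C(K)$ subordinate to a cover of $K$ by sets of small diameter, together with points $t_r$ in those sets, fine enough that $\norm{u_i-\sum_r u_i(t_r)\varphi_r}_\infty<\eps$ and likewise for the $v_i$; convexity gives $\sum_i\bigpar{\sum_r u_i(t_r)\varphi_r(t)}^2\le\sum_r\varphi_r(t)\sum_i u_i(t_r)^2\le1$, so the approximants still obey the constraints. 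Put $a_{rs}\=\ga(\varphi_r,\varphi_s)$; since $\norm{\sum_r\sigma_r\varphi_r}_\infty\le1$ for $\abs{\sigma_r}\le1$, the matrix $(a_{rs})$ meets the hypothesis of Grothendieck's inequality, and with the contractions $x_r\=(u_i(t_r))_{i=1}^n$, $y_s\=(v_i(t_s))_{i=1}^n$ in $\ell^2_n$ one has $\bigabs{\sum_i\ga\bigpar{\sum_r u_i(t_r)\varphi_r,\sum_s v_i(t_s)\varphi_s}}=\bigabs{\sum_{r,s}a_{rs}\innprod{x_r,y_s}}\le k_G$. Letting $\eps\downto0$ gives $\sum_i\ga(u_i,v_i)\le k_G$.

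It remains to assemble a single pair $(\mu_1,\mu_2)$ valid for all $f,g$ simultaneously. Fix a finite family $\cF=\set{(f_i,g_i)}_{i=1}^n$ in $C(K)$. On the \weakx-compact convex set $\cP\times\cP$ of pairs of probability measures on $K$ and the simplex $\gD_n$, the function $\bigpar{(\mu_1,\mu_2),p}\mapsto\sum_i p_i\bigpar{\abs{\ga(f_i,g_i)}-\tfrac12 k_G\int f_i^2\dd\mu_1-\tfrac12 k_G\int g_i^2\dd\mu_2}$ is affine in each argument, so the minimax theorem gives $\min_{(\mu_1,\mu_2)}\max_p=\max_p\min_{(\mu_1,\mu_2)}$; for fixed $p$ the inner minimum equals $\sum_i p_i\abs{\ga(f_i,g_i)}-\tfrac12 k_G\sup_t\sum_i p_i f_i(t)^2-\tfrac12 k_G\sup_t\sum_i p_i g_i(t)^2$, which is $\le0$ by the core fact applied to $u_i\=\sqrt{p_i}\,f_i$, $v_i\=\sqrt{p_i}\,g_i$ (using $AB\le\tfrac12(A^2+B^2)$ to reinstate the constraints after rescaling). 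Hence there is a pair with $\abs{\ga(f_i,g_i)}\le\tfrac12 k_G\bigpar{\int f_i^2\dd\mu_1+\int g_i^2\dd\mu_2}$ for every $i$. The sets of valid pairs are \weakx-closed in $\cP\times\cP$ and decrease under enlargement of $\cF$, so by compactness they have a common point $(\mu_1,\mu_2)$, which satisfies the displayed squared bound for all $f,g\in C(K)$.

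The main obstacle is Grothendieck's inequality itself, which I assume (it is the substance of the cited references); given it, the partition-of-unity discretization, the scaling manipulations, and the factor-$2$ averaging are routine. Among the steps I actually carry out, the delicate one is the last: one cannot directly minimize $\sup\set{\abs{\ga(f,g)}:\norm{f}_{L^2(\mu_1)}\le1,\ \norm{g}_{L^2(\mu_2)}\le1}$ over $(\mu_1,\mu_2)$, because that supremum is $+\infty$ for measures with small support, which is exactly why one passes to the affine squared surrogate and builds the global measure by a finite-intersection-property argument rather than a single global minimax.
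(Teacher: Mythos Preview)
The paper does not prove this theorem at all: it is stated with \verb|\nopf| and attributed to Grothendieck, with references to \cite{Pisier} and \cite{Blei}. The only argument the paper gives is in the remark immediately following the statement, where it takes the standard two-measure version (with constant $k_G$, for bilinear forms on $C(K_1)\times C(K_2)$) as known from the literature, and then obtains the single-measure version with constant $\kkg$ by setting $\mu\=\tfrac12(\mu_1+\mu_2)$.

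Your proposal is correct and goes further than the paper: you supply a proof of the two-measure version itself, deriving it from the combinatorial Grothendieck inequality via partition-of-unity discretization, a minimax argument on the affine squared surrogate, and a finite-intersection-property compactness step in $\cP\times\cP$. This is one of the standard routes to the result (essentially the argument in the references the paper cites), and your final averaging step to pass from $(\mu_1,\mu_2)$ to a single $\mu$ is exactly what the paper does in its remark. One tiny inaccuracy: in the minimax step, after normalizing $u_i=\sqrt{p_i}\,f_i/\sqrt A$, $v_i=\sqrt{p_i}\,g_i/\sqrt B$ to meet the constraints of your core fact, the inequality you actually invoke is $\sqrt{AB}\le\tfrac12(A+B)$ (AM--GM), not ``$AB\le\tfrac12(A^2+B^2)$'' as written; the conclusion $\sum_i p_i\abs{\ga(f_i,g_i)}\le k_G\sqrt{AB}\le\tfrac12 k_G(A+B)$ is correct.
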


\begin{remark}
The standard formulation is for a bilinear form $\ga$ on $C(K_1)\times C(K_2)$
  for two compact sets $K_1$ and $K_2$; then there exist probability
  measures $\mu_1$ and $\mu_2$ on $K_1$ and $K_2$ such that
\begin{equation}
  |\ga(f,g)|\le k_G\norm\ga \norm{f}_{L^2(K_1,\mu_1)}\norm{g}_{L^2(K_2,\mu_2)}.
\end{equation}
We are only interested in the special case $K_1=K_2$, and we may then
replace $\mu_1$ and $\mu_2$ by $\mu\=\frac12(\mu_1+\mu_2)$ to obtain
\eqref{groth}. 
%with $k_G$ replaced by $2k_G$.
(An inspection of the proof in \eg{} 
\cite{Grothendieck-resume} or
\cite{Blei}
shows that we may take $k_G$ in the version \eqref{groth} too provided $\ga$
is symmetric, but this is not enough in general; consider for example
$\ga(f,g)\=f(0)g(1)$ on $\coi$.
Recall that we only have to consider symmetric $\ga$ for our purposes, \cf{}
\refR{Rsymmetric}.) 
\end{remark}

This leads to the following improvement of \refT{TCK1} when $k=2$. 
This does not  extend to $k\ge3$ by \refE{Ec3}.

\begin{theorem}\label{TCKG}
  Let $K$ be a metrizable compact space and suppose that $X$ is a $\cC$-\meas{}
  $C(K)$-valued \rv. 
  \begin{romenumerate}[-10pt]
  \item \label{tckgd}
$\E X\ptpx2$ exists in Dunford sense 
$\iff$
$\E X\itpx2$ exists in Dunford sense 
$\iff$
the weak second moment  exists  $\iff$
$\sup_{t\in	K}\E|X(t)|^2<\infty$.
  \item \label{tckgp}
$\E X\ptpx2$ exists in Pettis sense $\iff$ 
$\E X\itpx2$ exists in Pettis sense $\iff$ 
the family
$\set{|X(t)|^2:t\in K}$ of \rv{s} is \ui.
  \item \label{tckgb}
$\E X\ptpx2$ exists in Bochner sense 
$\iff$ 
$\E X\itpx2$ exists in Bochner sense 
$\iff$ 
$\E\bigpar{\sup_{t\in	K}|X(t)|}^2<\infty$.
  \end{romenumerate}
\end{theorem}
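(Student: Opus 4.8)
The plan is to treat the three statements separately, using the injective‑moment characterisations of \refT{TCK1}, \refT{TPI} for the easy implications, and Grothendieck's theorem (\refT{TG}) to pass back from the pointwise conditions on the scalar \rv{s} $X(t)$ to the projective second moment. The Bochner statement \ref{tckgb} is immediate: by \refT{TPIB} the projective and injective second moments exist in Bochner sense under the same hypothesis, and since $C(K)$ is separable by \refT{TCKsep}, $X$ is automatically \assep{}; hence both exist iff $\E\norm{X}_{C(K)}^2=\E\bigpar{\sup_{t\in K}|X(t)|}^2<\infty$, which is the condition in \refT{TCK1}\ref{tck1b}.

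For \ref{tckgd} and \ref{tckgp}, note first that $X$ is \Bmeas{} by \refC{CCKcC}, so $X\ptpx2$ is \Bmeas{} in $C(K)\ptpx2$ and $\ga(X,X)$ is measurable for every bounded bilinear form $\ga$ on $C(K)$, by \refL{Lbk}; also $(\go,t)\mapsto|X(\go)(t)|^2$ is jointly measurable on $\gO\times K$ by \refL{LCKjoint}\ref{lckjointk}. The implications ``$\E X\ptpx2$ exists $\Rightarrow\E X\itpx2$ exists'' (in each sense) hold by \refT{TPI}, and the equivalence of the existence of $\E X\itpx2$ with $\sup_t\E|X(t)|^2<\infty$ (Dunford), resp.\ with uniform integrability of $\set{|X(t)|^2:t\in K}$ (Pettis), is \refT{TCK1}\ref{tck1d} and \ref{tck1p}. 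The crux is the converse: each condition forces $\E X\ptpx2$ to exist in the corresponding sense. Here is the estimate that does it. Given a bounded bilinear form $\ga$ on $C(K)$, \refT{TG} produces a Borel probability measure $\mu$ on $K$ with $|\ga(f,g)|\le\kkg\norm{\ga}\,\norm{f}_{L^2(K,\mu)}\norm{g}_{L^2(K,\mu)}$; taking $f=g=X(\go)$, multiplying by $\ettae$ for an arbitrary event $E$, taking expectations, and using Fubini,
\begin{equation*}
\E\bigpar{\ettae|\ga(X,X)|}
\le\kkg\norm{\ga}\int_K\E\bigpar{\ettae|X(t)|^2}\dd\mu(t)
\le\kkg\norm{\ga}\,\sup_{t\in K}\E\bigpar{\ettae|X(t)|^2},
\end{equation*}
since $\mu$ is a probability measure. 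Although $\mu$ depends on $\ga$, the right‑hand side does not.

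Taking $E=\gO$, the estimate and $\sup_t\E|X(t)|^2<\infty$ give $\E|\ga(X,X)|<\infty$ for every bounded bilinear form $\ga$, so $\E\ga(X,X)$ exists for every $\ga$, and $\E X\ptpx2$ exists in Dunford sense by \refT{TPD}; this closes \ref{tckgd}. For \ref{tckgp}, by \refT{Tproj*} we have $(C(K)\ptpx2)\q=L(C(K)^2;\bbR)$ with $\innprod{\chi,X\ptpx2}=\ga(X,X)$ and $\norm{\chi}=\norm{\ga}$ when $\chi$ corresponds to $\ga$; moreover $X\ptpx2$ is \assep{}, hence \wassep{}, in $C(K)\ptpx2$ by \refL{Lbs}. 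If $\set{|X(t)|^2:t\in K}$ is \ui{}, then by \eqref{ui} and the estimate (restricted to $\norm{\ga}\le1$) the family $\set{\innprod{\chi,X\ptpx2}:\norm{\chi}\le1}$ is \ui{}, so \refT{THuff} applied to $X\ptpx2$ shows $\E X\ptpx2$ exists in Pettis sense, closing \ref{tckgp}.

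The main obstacle is exactly these converse implications: unlike the injective side (\refT{TiPettis}), the projective second moment has no intrinsic uniform‑integrability criterion, and an arbitrary bounded bilinear form on $C(K)$ need not be integral, so one cannot simply reduce to the weak moment. Grothendieck's theorem is the tool that bridges this, since it factors \emph{every} bounded bilinear form on $C(K)$ through some $L^2(\mu)$ while keeping the controlling quantity $\sup_t\E(\ettae|X(t)|^2)$ independent of the form; once the single estimate above is in hand, the Dunford conclusion is one application of \refT{TPD} and the Pettis conclusion one application of \refT{THuff}. The remaining work is the routine joint‑measurability bookkeeping for Fubini, handled by \refL{LCKjoint}\ref{lckjointk}, and the observation that $X\ptpx2$ is \wassep{}, which holds because $C(K)$ is separable.
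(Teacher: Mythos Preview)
Your proof is correct and follows essentially the same route as the paper's: forward implications via \refT{TPI} and \refT{TCK1}, and the converses via Grothendieck's theorem \refT{TG}, yielding the estimate $\E\bigpar{\ettae|\ga(X,X)|}\le \kkg\norm\ga\sup_{t\in K}\E\bigpar{\ettae|X(t)|^2}$, followed by \refT{TPD} for Dunford and \refT{THuff} for Pettis. The paper's treatment of \ref{tckgd} takes a slight detour, embedding $X$ into $L^2(K,\mu)$ and invoking \refT{TPIB} there to produce the Bochner moment $\E X\ptpx2\in L^2(K,\mu)\ptpx2$ before reading off $\E\ga(X,X)$; your argument, which simply sets $E=\gO$ in the same estimate and applies \refT{TPD} directly, is a little cleaner but not materially different.
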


\begin{proof} 
The forward implications are immediate, using
Theorems \ref{TPI}, \ref{TID}, \ref{TiPettis}, \ref{TPIB}
and \refL{Lweak}, see the proof of \refT{TCK1}.
It remains to show the converses.
Note that $X(t,\go)$ is jointly measurable by \refL{LCKjoint}.

\pfitemref{tckgd}
Suppose that $\E|X(t)|^2\le C$ for every $t\in K$.
Let $\ga$ be a bounded bilinear form on $C(K)$. By \refT{TG}, $\ga$ extends
to a bounded bilinear form on $L^2(K,\mu)$ for some probability measure
$\mu$ on $K$.
Since $C(K)\subseteq L^2(K,\mu)$, we can regard $X$ as an
$L^2(K,\mu)$-valued \rv. 
Since $X(t,\go)$ is jointly measurable,
it follows, see \refL{LLp}, that $X$ is
\Bmeas{} in $L^2(K,\mu)$.
Moreover, by Fubini,
\begin{equation}\label{abarn}
  \E\norm{X}_{L^2(\mu)}^2 
=\E\int_K|X(t,\go)|^2\dd\mu(t)
=\int_K\E|X(t)|^2\dd\mu(t)
\le C
<\infty.
\end{equation}
Hence, \refT{TPIB} shows that $\E X\ptpx2\in L^2(K,\mu)\ptpx2$ exists (in
Bochner sense). 
In particular, since $\ga$ extends to $L^2(K,\mu)$,
$\ga(X,X)$ is measurable by \refL{Lbk} and 
\begin{equation*}
\E\ga(X,X)
=\E\innprod{\ga, X\ptpx2}  
=\innprod{\ga,\E X\ptpx2}  
\end{equation*}
exists. Since $\ga$ is an arbitrary bounded bilinear form on $C(K)$,
\refT{TPD} now shows that $\E X\ptpx2$ exists in Dunford sense in
$\ck\ptpx2$ too.

\pfitemref{tckgp}
Assume that the family
$\set{|X(t)|^2:t\in K}$ is \ui.
Let $\ga$ be a bounded bilinear form on $C(K)$ with $\norm\ga\le1$
and let as above
$\mu$ be as in \refT{TG}. If $E\in\cF$ is any measurable set, then
by \eqref{groth} and Fubini,
\begin{equation*}
  \begin{split}
\E  |\ettae\cdot \ga(X,X)|
%=\E|\ga(\ettae X,\ettae X)|
&\le \E\Bigpar{\ettae \kkg\int_K|X(t,\go)|^2\dd\mu(t) }
\\&
= \kkg\int_K \E\bigpar{\ettae |X(t,\go)|^2}\dd\mu(t) 	
\le  \kkg\sup_{t\in K} \E\bigpar{\ettae |X(t)|^2}
  \end{split}
\end{equation*}
It follows, using \eqref{ui}, that the family
$\bigset{\ga(X,X):\ga\in L\xpar{C(K)^2;\bbR},\,\norm\ga\le1}$ is \ui.
Moreover, $X\ptpx2$ is trivially \wassep{}  since $C(K)\ptpx2$ is separable.
Hence \refT{THuff} shows, using \refT{Tproj*}, that $\E X\ptpx2$ exists in
Pettis sense.

\pfitemref{tckgb}
This is (again) a special case of \refT{TPIB}, included here for completeness.
\end{proof}

As an immediate corollary, we can weaken the integrability condition in 
\refT{TC} in the case $k=2$ to
 $\set{|X(t)|^2:t\in K}$ and \set{|Y(t)|^2:t\in K} being \ui, since it is
enough for the proofs above of Theorems \refand{Tapprox}{TC}
that the moments $\E X\ptpx2$ and $\E Y\ptpx2$ exist in Pettis sense.
By the methods in the proof of \refT{TCK2}, we can weaken the condition
further; the following theorem shows that in this case, it suffices that the
moments exist in Dunford sense.

\begin{theorem}
  \label{TCKD2} 
  Let $K$ be a metrizable compact space and suppose that $X$ 
and\/ $Y$ are two \cmeas{}   $C(K)$-valued 
\rv{s} such that
 $\sup_{t\in K}|X(t)|^2<\infty$ 
and  $\sup_{t\in K}|Y(t)|^2<\infty$. 
Then, for $k=2$, the following are equivalent.
\begin{romenumerate}[-10pt]
\item \label{tckd2m}
\eqref{multi=}, \ie{},
$\E\ga(X,X)=\E\ga(Y,Y)$ for every  bounded bilinear form 
$\ga$ on $\ck$. 
\item \label{tckd2w}
 \eqref{weak=}, \ie,
$\E \bigpar{\xx_1(X)\xx_2(X)}=\E \bigpar{\xx_1(Y)\xx_2(Y)}$,
for any $\xx_1,\xx_2\in \ck^*$.
%(The weak second moments
\item \label{tckd2t}
 \eqref{punkt=}, \ie,
%\begin{equation}\label{tckd2}
 $ \E \bigpar{X(t_1) X(t_2)}
=
 \E \bigpar{Y(t_1)Y(t_2)}
$
for any $t_1,t_2\in K$.
\item \label{tckd2p}
$\E X\ptpx2=\E Y\ptpx2$ in $\ck\ptpx2$,
with the moments existing in Dunford sense.
\item \label{tckd2i}
$\E X\itpx2=\E Y\itpx2$ in $\ck\itpx2$,
with the moments existing in Dunford sense.
\end{romenumerate}
\end{theorem}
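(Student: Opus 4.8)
The plan is to reduce everything to the Grothendieck-type argument already used for \refT{TCKG}, together with the equality-of-moments corollaries \refC{Cptpk=} and \refC{Citpk=d}. First I would record that, since $C(K)$ is separable (\refT{TCKsep}) and $X,Y$ are \wmeas{} (\refC{CCKcC}), the hypothesis $\sup_{t\in K}\E|X(t)|^2<\infty$ together with \refT{TCKG}\ref{tckgd} makes all four moments $\E X\ptpx2,\E Y\ptpx2,\E X\itpx2,\E Y\itpx2$ exist in Dunford sense; hence by \refT{TPD} the expectations $\E\ga(X,X)$, $\E\ga(Y,Y)$ are finite and well defined for every bounded bilinear form $\ga$ on $C(K)$. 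Then \ref{tckd2m}$\iff$\ref{tckd2p} is exactly \refC{Cptpk=}, and \ref{tckd2w}$\iff$\ref{tckd2i} is \refC{Citpk=d}\ref{cit1}. Moreover \ref{tckd2m}$\implies$\ref{tckd2w} is trivial, because each $(x_1,x_2)\mapsto\xx_1(x_1)\xx_2(x_2)$ is a bounded bilinear form, and \ref{tckd2w}$\implies$\ref{tckd2t} is trivial on taking $\xx_i=\gd_{t_i}\in C(K)\q$. So the whole theorem reduces to proving \ref{tckd2t}$\implies$\ref{tckd2m}.

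For that, I would fix a bounded bilinear form $\ga$ on $C(K)$ and invoke \refT{TG} (Grothendieck) to get a Borel probability measure $\mu$ on $K$ such that $\ga$ extends to a bounded bilinear form $\tilde\ga$ on $L^2(K,\mu)$. Since $C(K)\subseteq L^2(K,\mu)$, I regard $X$ and $Y$ as $L^2(K,\mu)$-valued; as $X(t,\go)$ is jointly measurable (\refL{LCKjoint}\ref{lckjointk}) and $\int_K|X(t,\go)|^2\dd\mu(t)\le\sup_t|X(t,\go)|^2<\infty$ a.s., \refL{LLp} shows $X$ is \Bmeas{} in $L^2(K,\mu)$, and by Fubini $\E\norm X_{L^2(\mu)}^2=\int_K\E|X(t)|^2\dd\mu(t)<\infty$ (likewise for $Y$). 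The key step is then the computation, valid for $g_1,g_2\in L^2(K,\mu)=L^2(K,\mu)\q$ (Fubini being justified by the Cauchy--Schwarz bound $\int_K|g_i(s)X(s,\go)|\dd\mu(s)\le\norm{g_i}_{L^2(\mu)}\norm{X(\cdot,\go)}_{L^2(\mu)}$):
\[
\E\bigpar{\innprod{g_1,X}_{L^2(\mu)}\innprod{g_2,X}_{L^2(\mu)}}
=\int_{K^2}g_1(s)g_2(t)\,\E\bigpar{X(s)X(t)}\dd\mu(s)\dd\mu(t),
\]
and by \ref{tckd2t} the right-hand side is unchanged on replacing $X$ by $Y$. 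Thus $X$ and $Y$ have the same weak second moment in $L^2(K,\mu)$, which is a separable Banach space with the approximation property, so \refT{Tapprox} (equivalently \refC{Citpk=p} together with \refT{Tiotak}) gives $\E X\ptpx2=\E Y\ptpx2$ in $L^2(K,\mu)\ptpx2$. Finally $\tilde\ga\in\bigpar{L^2(K,\mu)\ptpx2}\q$ by \refT{Tproj*}, so by \refT{TPD}, and using that $\tilde\ga$ restricts to $\ga$ on $C(K)$ while $X(\go),Y(\go)\in C(K)$,
\[
\E\ga(X,X)=\E\tilde\ga(X,X)=\innprod{\tilde\ga,\E X\ptpx2}=\innprod{\tilde\ga,\E Y\ptpx2}=\E\tilde\ga(Y,Y)=\E\ga(Y,Y).
\]
Since $\ga$ was arbitrary, \ref{tckd2m} follows.

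The genuinely new content is therefore essentially already in \refT{TCKG}; here the work is the bookkeeping of transferring the problem into $L^2(K,\mu)$. I expect the main (and only) obstacle to be keeping the existence claims consistent — everything must run in the Dunford sense, since no Pettis-integrability is available under this weak hypothesis, so \refC{Cptpk=} and \refC{Citpk=d} are the right tools rather than \refC{Citpk=p} at the level of $C(K)$ — and making sure the two Fubini steps are legitimate, which is handled by the joint measurability of $X(t,\go)$ from \refL{LCKjoint} and the Cauchy--Schwarz estimates above. I do not anticipate any deeper difficulty.
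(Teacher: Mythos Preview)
Your proposal is correct and follows essentially the same approach as the paper: the trivial implications and the equivalences via \refC{Cptpk=} and \refC{Citpk=d} reduce everything to \ref{tckd2t}$\implies$\ref{tckd2m}, which you then prove by transferring into $L^2(K,\mu)$ via Grothendieck's theorem, computing the weak second moments there by Fubini, and concluding equality of projective moments via the approximation property. The paper's proof is the same argument, phrased with \refC{Citpk=p} and \refT{Tiotak} rather than packaging them as \refT{Tapprox}; your explicit remark that \refT{TCKG}\ref{tckgd} guarantees existence of the Dunford moments upfront is a helpful clarification that the paper leaves implicit.
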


\begin{proof}
The implications \ref{tckd2m}$\implies$\ref{tckd2w}$\implies$\ref{tckd2t}
are  trivial, and  the equivalences
\ref{tckd2m}$\iff$\ref{tckd2p} and
\ref{tckd2w}$\iff$\ref{tckd2i} are 
Corollaries \ref{Cptpk=} and \ref{Citpk=d}\ref{cit1}. 
It remains to show that \ref{tckd2t}$\implies$\ref{tckd2m}.

Thus, let $\ga\in L(\ck^2;\bbR)= \bigpar{\ck\ptpx2}\q$
As in the proof of \refT{TCKG},
there exists by \refT{TG}
a \pmx{} $\mu$ on $K$ such that $\ga$ extends to
$L^2(K,\mu)$ and $\E X\ptpx2$ exists in $L^2(K,\mu)\ptpx2$ in Bochner sense;
similarly $\E Y\ptpx2$ exists in $L^2(K,\mu)\ptpx2$ in Bochner sense.

As a consequence,  the injective moments $\E X\itpx2$ and 
$\E Y\itpx2$ exist in $L^2(K,\mu)\itpx2$ in Bochner sense.
Let $g,h\in L^2(K,\mu)\q=L^2(K,\mu)$. Then, by Fubini's theorem, with
joint measurability by \refL{LCKjoint},
\begin{equation*}
  \begin{split}
\E\bigpar{\innprod{X,g}\innprod{X,h}}	
&=
\E\iint X(t,\go)g(t)X(u,\go)h(u)\dd\mu(t)\dd\mu(u)
\\&
=\iint \E\bigpar{X(t)X(u)}g(t)h(u)\dd\mu(t)\dd\mu(u)
  \end{split}
\end{equation*}
The same applies to $Y$. Consequently, if \ref{tckd2t} holds, then
$\E\bigpar{\innprod{X,g}\innprod{X,h}}=\E\bigpar{\innprod{Y,g}\innprod{Y,h}}$,
which is \eqref{weak=} for $L^2(K,\mu)$ (with $k=2$). Hence
\refC{Citpk=p} yields 
$\E X\itpx2=\E Y\itpx2$ in $L^2(K,\mu)\itpx2$.
Furthermore, $L^2(K,\mu)$ has the \ap{} by \refT{THap}, and thus
$\E X\ptpx2=\E Y\ptpx2$ in $L^2(K,\mu)\ptpx2$ by \refT{Tiotak}.
Consequently,
\begin{equation*}
  \E\ga(X,X)=\innprod{\ga,\E X\ptpx2}
=
\innprod{\ga,\E Y\ptpx2}
=\E\ga(Y,Y).
\end{equation*}
Since $\ga$ is an arbitrary bounded bilinear form on $\ck$, this completes
the proof. 
\end{proof}

\begin{remark}\label{RCKD2}
 \refT{TCKD2} shows that, when $K$ is metrizable, the moment
$\E X\ptpx2$ (or $\E X\itpx2$) is determined by the moment function
$\E(X(t)X(u))$ as soon as the moment exists in Dunford sense.
The corresponding result for $k=1$ is in \refC{CCK11}.

\refE{ECK2--} yields an example of a non-metrizable $K$ such that
\ref{tckd2t} does not imply \ref{tckd2w}.

We leave it as an open problem whether
\refT{TCKD2} extends to $k\ge3$. In particular,
for metrizable $K$ and $k\ge3$, 
if $\E X\itpk$  or $\E X\ptpk$ exists in Dunford sense,
does the moment function \eqref{eckt}
determine this moment uniquely?
\end{remark}

For projective moments of order $k\ge3$, we do not know any special results
for Pettis or Dunford integrability, but we have as always a simple result
for Bochner integrability. (Recall that this implies Pettis and Dunford
integrability, so we have a sufficient condition for them too.)

\begin{theorem}\label{TCK1k}
  Let $K$ be a metrizable compact space and suppose that $X$ is a $\cC$-\meas{}
  $C(K)$-valued \rv. 
Then
$\E X\ptpk$ exists in Bochner sense 
$\iff$ 
$\E X\itpk$ exists in Bochner sense 
$\iff$ 
$\E\bigpar{\sup_{t\in	K}|X(t)|}^k<\infty$.
\end{theorem}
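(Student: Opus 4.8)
The plan is to derive this almost immediately from \refT{TPIB}, which already handles Bochner integrability of moments in full generality. The only work is to translate the hypotheses of \refT{TPIB} into the concrete form appropriate for $\ck$ with $K$ metrizable.

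First I would observe that since $K$ is metrizable, $\ck$ is separable by \refT{TCKsep}; hence every $\ck$-valued \rv{} is trivially \assep. Moreover, by \refC{CCKcC} the hypothesis that $X$ is \cmeas{} is equivalent to $X$ being \wmeas{} (indeed \Bmeas). Thus $X$ satisfies the standing hypothesis of \refT{TPIB}, and that theorem gives
\begin{equation*}
\E X\ptpk \text{ exists in Bochner sense}
\iff
\E X\itpk \text{ exists in Bochner sense}
\iff
\E\norm{X}^k<\infty ,
\end{equation*}
the last condition because \assep{} holds automatically here.

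It then remains only to note that the norm on $\ck$ is the sup norm, so $\norm{X}_{\ck}=\sup_{t\in K}|X(t)|$ pointwise on $\gO$, and therefore $\E\norm{X}^k = \E\bigpar{\sup_{t\in	K}|X(t)|}^k$. (There is no measurability issue with $\norm X$ here: $X$ is \Bmeas, so $\norm X$ is measurable; alternatively one may invoke \refL{LCKjoint}\ref{lckjointk} and the fact that the sup over $K$ of a jointly measurable function, $K$ compact metric and separable, is measurable.) Substituting this identity into the chain of equivalences above completes the proof.

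There is no real obstacle: the substantive content is entirely in \refT{TPIB} and \refC{CCKcC}, and this statement is included, as the text says, "for completeness." The only point requiring a line of care is the measurability of $\sup_{t\in K}|X(t)|$, which is immediate from \Bmeas{}ility via \refC{CCKcC}.
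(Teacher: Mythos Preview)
Your proof is correct and follows exactly the paper's approach: the paper's proof is the single line ``\refT{TPIB}'', and you have simply unpacked the verification of its hypotheses (separability of $\ck$ via \refT{TCKsep}, and \cmeas\ $\iff$ \wmeas\ via \refC{CCKcC}) together with the identification $\norm{X}_{\ck}=\sup_{t\in K}|X(t)|$.
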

\begin{proof}
  \refT{TPIB}.
\end{proof}

We end by a few counterexamples, partly taken or adapted from 
\cite{EdgarII} and 
\cite{Talagrand:Pettis} where further related examples are given. 
See also Examples \ref{Ec0}, \ref{Edoi} and \ref{Eloo}, which by Examples
\ref{Egb}--\ref{Eloccomp} can be seen as examples in some $\ck$.

\begin{example}\label{ECK2-}
Let $X$ be a $C(K)$-valued \rv{} such that $\E X$ exists in Dunford
sense but not in Pettis sense, and let $Y\=\eta X$, where $\eta=\pm1$ 
with the sign uniformly random and independent of $X$. 
Then $Y$ 
is Dunford integrable and $\E Y(t)=\E(\eta X(t))=0$ for every
$t\in K$, 
so $\E Y(\cdot)\in\ck$
but $Y$ is not Pettis integrable (since otherwise $X=\eta Y$ would be too).
We may for example take $X$ as in \refE{Ec0} ($X\in c_0\subset c=C(\bbNx)$,
so $K=\bbNx$ is metric) or as in \refE{Eloo} ($X$ is bounded).
(We cannot take both $K$ metric and $X$ bounded by \refT{TCK1}.)
\end{example}

\begin{example}[cf.\ {\cite[Section V.5]{Blei}}]  \label{Ec3} 
Let $\bbT\=[0,2\pi]$ (perhaps regarded as the unit circle)
and $\bbNx\=\bbN\cup\set\infty$ (the usual one-point compactification as in
\refE{Eloccomp}), and take $K\=\bbT\cup\bbNx$, where we regard $\bbT$ and
$\bbNx$ as disjoint. 
If $f\in\ck$, define $\hat f(n)\=\frac1{2\pi}\int_0^{2\pi}f(t)e^{-\ii nt}\dd
t$, \ie, the Fourier coefficients of $f|_{\bbT}$.

Define the trilinear form $\ga$ on $\ck$ by
\begin{equation}
  \label{ec3}
\ga(f,g,h)\=\Re\sumn \hat f(n)\hat g(n) h(n).
\end{equation}
By \Holder's inequality and Parseval's identity,
\begin{equation*}
\sumn\bigabs{ \hat f(n)\hat g(n) h(n)}
\le\norm{h} 
\biggpar{\int_{\bbT}|f(t)|^2\frac{\dd t}{2\pi}}\qq
\biggpar{\int_{\bbT}|g(t)|^2\frac{\dd t}{2\pi}}\qq
\le\norm{f}\norm{g}\norm{h};
\end{equation*}
thus the sum in \eqref{ec3} converges and $\ga$ is well-defined and bounded.

Let $N$ be an $\bbN$-valued \rv{} with $\P(N=n)=p_n$, let $a_n$ be some
positive numbers and define
the \rv{} $X\in\ck$ by
$X(t)\=\cos(Nt)$, $t\in\bbT$, and $X(n)\=a_n e_N(n)$, $n\in\bbNx$.
Then $\hat X(N)=\frac12$ and 
\begin{equation*}
  \ga(X,X,X)=\hat X(N)^2a_N=\tfrac14 a_N.
\end{equation*}
Hence,
\begin{equation}\label{e3}
\E |\ga(X,X,X)|=\tfrac14 \E a_N
=\tfrac14\sumn p_na_n.
\end{equation}
Furthermore,
$|X(t)|\le1$ for $t\in\bbT$ and $\E|X(n)|^3= p_na_n^3$ for $n\in\bbN$ (and
$0$ for $n=\infty$).

Choose, \eg, $p_n\=n^{-4/3}$ (for $n$ large) and $a_n\=n^{1/3}$.
Then $\E|X(n)|^3\to0$ as \ntoo, and thus \set{|X(x)|^3:x\in K}  is \ui.
Hence  $\E X\itpx3$ exists in Pettis sense by \refT{TCK1}.
On the other hand,
$\E|\ga(X,X,X)|=\infty$ by \eqref{e3}, and thus $\E X\ptpx3$ does not
exist (even in Dunford sense).
This shows that \refT{TCKG}\ref{tckgd}\ref{tckgp} do not hold for $k=3$.

The example may be modified for any given $k\ge3$ by taking
\begin{equation}
  \label{ec3k}
\ga(f_1,\dots,f_k)\=\Re\sumn \hat f_1(n)\hat f_2(n) f_3(n)\dotsm f_k(n),
\end{equation}
and $p_n=n^{-(k+1)/3}$.
\end{example}

\begin{example}[{cf.\ \cite[Example (2)]{Edgar} and 
 \cite[Example 5.5]{EdgarII}}] 
\label{Egoi}
  Let $\go_1$ be the first uncountable ordinal and let $K=[0,\go_1]$, the
  set of all ordinals $\le\go_1$ with the order topology. It is easily seen
  that $K$ is compact, 
Note that $K\setminus\set{\go_1}=[0,\go_1)$ is the (uncountable) set of all
  countable ordinals.

It is not difficult to see that every $f\in C\ogoi$ is constant on
$[\ga,\go_1]$ for some $\ga<\go_1$. Hence, the Baire $\gs$-field $\Ba$  on
$\ogoi$ is the $\gs$-field consisting of all subsets $A$ such that either
$A$ or its complement $A\comp$ is countable. 

Furthermore, every bounded increasing function
$f:\ogoi\to\bbR$ is  constant on
$[\ga,\go_1)$ for some $\ga<\go_1$. It follows (by considering the positive
and negative parts) that every 
regular signed 
Borel (or Baire)
measure on $\ogoi$ is supported on some
set $[0,\ga]\cup\set{\go_1}$; hence every regular signed
Borel measure has countable
support and is thus discrete. Consequently, $C\ogoi\q=\ell^1\ogoi$.

We define a probability measure
$\P$ on 
$(\ogoi,\Ba)$ by setting $\P(A)=0$ if $A$ is countable and $\P(A)=1$ if
$A\comp$ is countable. The mapping $X:\ga\mapsto X_\ga\=\etta_{[\ga,\go_1]}$
maps 
$(\gO,\Ba,\P)\to C\ogoi$. If $\gb<\go_1$, then $X_\ga(\gb)=\ett{\ga\le \gb}=0$
for \aex{} $\ga$, while $X_\ga(\go_1)=1$ for all $\ga$. In other words,
$X(\gb)=0$ \as{} if $\gb<\go_1$ but $X(\go_1)=1$.

We have seen that any $\xx\in C\ogoi\q$ is given by 
\begin{equation}\label{cogoi*}
\xx(f)=\sum_\ga \xi(\ga)f(\ga)   
\end{equation}
for some $\xi\in \ell^1\ogoi$. 
(Note that the sum really is countable.)
It follows that in this case,
\begin{equation}\label{cogoi2}
  \xx(X)=\xi(\goi)
\qquad\text{a.s.} 
\end{equation}
Consequently, $X$ is \wmeas, and since $X$ further
is bounded, the Dunford integral $\E X$ exists. We have,
by \eqref{cogoi2},
\begin{equation}
\innprod{\E  X,\xx}=\E\innprod{\xx,X}=\xi(\goi)  ,
\end{equation}
when $\xx$ is given by \eqref{cogoi*}. Consequently, $\E X$ is given by the
function $\etta_{\set{\goi}}$ that is 1 at $\goi$ and $0$ on
$[0,\goi)$. This function is not continuous, and thus $\E X\notin C\ogoi$,
  which shows that $X$ is not Pettis integrable.

By \refT{THuff}, $X$ is not \wassep. (This is also easily seen directly. If
$M$ is any separable subspace of $C\ogoi$, then there exists an $\ga<\goi$
such that every function in $M$ is constant on $[\ga,\goi]$. Hence, $X\notin
M$ a.s.)
\end{example}

Note that \refE{Egoi} gives a uniformly bounded continuous random function
$X(t)$ such that $\E X(t)$ is not continuous. This cannot happen on a metric
space $K$, since dominated convergence shows that $\E X(t)$ always is
sequentially continuous. (Indeed, if $X$ is a uniformly bounded \rv{} in
$C(K)$ with $K$ compact metric, then \refT{TCK1} applies and shows that all
injective moments exist in both Bochner and Pettis sense.)   

\begin{example}[\cite{FremlinT} and \cite{Talagrand:Pettis}]\label{Eloo2}
Recall that $\ell^\infty=C_b(\bbN)\cong C(\gb\bbN)$ where $\gb\bbN$ is the
\SCc{} of $\bbN$, see \refE{Egb}.
Furthermore, $\gbn$ can be regarded as the 
subset of the unit ball of $\looq$
consisting of all multiplicative
linear functionals on $\ell^\infty$. (This holds also as topological spaces,
with the  weak$^*$ topology on $\looq$.) A point $n\in\bbN$ then is
identified with the multiplicative linear functional $\gd_n:f\mapsto f(n)$.

 Let $X\qwe1$ and $X\qwe2$ be two independent copies of the random variable 
$X\in \ell^\infty=C(\gb\bbN)$ constructed in \refE{Eloo}, and let
 $Y=X\qwe1-X\qwe2$. 
We follow \citet{FremlinT} and \citetq{Theorem 4-2-5}{Talagrand:Pettis} to
show that $Y$ is Pettis integrable. (Recall from \refE{Eloo} that $X$ is not.)

The coordinates $Y_n$ are \iid{} \rv{s}, each with the distribution 
of $\eta_1-\eta_2$ with independent $\eta_1,\eta_2\sim \Be(1/2)$; hence $Y_n$
has the centred binomial distribution $\Bin(2,1/2)-1$. It follows that
$\sqrt2 Y_n$ is an orthonormal sequence in $L^2(\P)$. 
Consequently, by Bessel's inequality,  %Parseval's identity,
if $\gx\in L^\infty(\P)\subset L^2(\P)$, then the sequence 
$y_\gx\=(\E(\gx Y_n))_1^\infty\in \ell^2\subset c_0$.

If $\xx=(a_n)_1^\infty\in \ell^1$, then by Fubini,
\begin{equation}\label{pettson}
  \innprod{\xx,y_\gx}
=\sumn a_n\E(\gx Y_n)
=\E\sumn a_n\gx Y_n
=\E(\gx\innprod{\xx,Y}).
\end{equation}
If $\xx\in c_0^\perp$, then $\xx(X\qwe1)$ and $\xx(X\qwe2)$ are 
by \refE{Eloo} equal to some
constant \as, and thus $\xx(Y)=\xx(X\qwe1)-\xx(X\qwe2)=0$ a.s.
Furthermore, $\innprod{\xx,y_\gx}=0$ since $y_\gx\in c_0$, and thus
\eqref{pettson} 
holds in this case too. Hence, \eqref{pettson} holds for all $\xx\in\looq$, 
and every $\gx\in L^\infty(\P)$, which shows that $Y$ is Pettis integrable
and $\E(\gx Y)=y_\gx$.

Consider now the injective second moment $\E Y\itpx2$.
Assume that this exists as a
Pettis integral; it then belongs to $C(\gbn)\itensor C(\gbn)=C((\gbn)^2)$.
Let us write $Q\=\E Y\itpx2\in C((\gbn)^2)$.
If $\xx\in\gbn\setminus\bbN$, then $\xx\in c_0^\perp$, and thus
$\xx(Y)=0$ \as; consequently,
\begin{equation}\label{loo2a}
Q(\xx,\xx)=  \innprod{\xx\tensor \xx,\E Y\itpx2}
=\E\innprod{\xx\tensor \xx, Y\tpx2}
=\E\innprod{\xx, Y}^2
=0.
\end{equation}
On the other hand, if $n\in\bbN$, then, similarly,
\begin{equation}\label{loo2b}
Q(n,n)=  \innprod{\gd_n\tensor \gd_n,\E Y\itpx2}
=\E\innprod{\gd_n\tensor \gd_n, Y\tpx2}
=\E\innprod{\gd_n, Y}^2
=\tfrac12,
\end{equation}
since $\innprod{\gd_n,Y}=Y_n\sim \Bin(2,\frac12)-1$.
However, $\bbN$ is dense in $\gbn$, so \eqref{loo2b} implies by continuity that
$Q(\xx,\xx)=\frac12$ for all $\xx\in\gbn$, which contradicts  \eqref{loo2a}. 
Consequently, the second moment
$\E Y\itpx2$ does \emph{not} exist as a Pettis integral.
(We do not know whether it exists as a Dunford integral.)
\end{example}

\begin{example}\label{ECK2--}
  Let $B\=\loo/c_0$. We identify $\loo$ and $C(\gbn)$, see \refE{Egb}; 
then $c_0=\set{f\in C(\gbn):f(x)=0 \text{ when }x\in\gbnx}$, and it follows
by the Tietze--Urysohn extension theorem  \cite[Theorem 2.1.8]{Engelking},
since $\bbN$ is open in $\gbn$,
that we can identify $B=\loo/c_0=C(\gbnx)$.
(This can also be seen from \refE{EC*}, since $\gbnx$ is the maximal ideal
space of the complex version of $\loo/c_0$.)

Let $\pi:\loo\to\loo/c_0$ be the quotient mapping.
(Thus $\pi$ is identified with the restriction mapping $C(\gbn)\to C(\gbnx)$.)
Let $X$ be the \wmeas{} $\loo$-valued \rv{} constructed in \refE{Eloo}, and
let $\hX=\pi(X)\in\loo/c_0=C(\gbnx)$.
Finally, let $Z\=1-\hX$. 
(Thus $0\le Z\le1$.)

Then $\hX$ and $Z$ are bounded and \wmeas{} $C(K)$-valued \rv, with $K=\gbnx$.
By \refE{Eloo}, if $\xx\in K=\gbnx$, then 
\begin{equation}
\label{jepp1}
\hX(\xx)=
X(\xx)=\innprod{\xx,X}=1 
\qquad\text{a.s.}
\end{equation}
Hence, 
\begin{equation}
\label{jeppz}
Z(\xx)=0
\qquad\text{a.s.}
\end{equation}
for every $\xx\in K$ and 
\begin{equation}
  \label{jepp}
\E\bigpar{Z(\xx_1)\dotsm Z(\xx_k)}
=0
\end{equation}
for any $\xx_1,\dots,\xx_k\in
K=\gbnx$, so the function defined in \eqref{eckt} exists and is continuous
on $K^k$ (in fact, constant 0).

However, let $\xx_n\in\looq$ be defined by
$\innprod{\xx_n,(a_i)_i}\=\frac1n\sum_{i=1}^n a_i$, 
and let $T:\loo\to\loo$ be the linear map $x\to(\xx_n(x))_n$. 
Choose any $\xx_0\in\gbnx$
and define $\txx\in\looq$ as
$T\q\xx_0$. \ie,
by 
\begin{equation*}
\innprod{\txx,x}
\=\innprod{\xx_0,Tx}
=\innprod{\xx_0,(\xx_n(x))_n},  
\qquad x\in\loo. 
\end{equation*}
If $x\in c_0$, then $\xx_n(x)\to0$ as \ntoo, and thus 
$Tx=(\xx_n(x))_n\in c_0$, so 
$\innprod{\txx,x}=\innprod{\xx_0,Tx}=0$; hence, $\txx\perp c_0$, and
$\txx\in (\loo/c_0)\q$. Furthermore, by the law of large numbers,
$\innprod{\xx_n,X}=\frac1n\sumin X_i\to\frac12$ a.s., and thus
\begin{equation}\label{jet}
\innprod{\txx,\hX}=\innprod{\txx,X}=\tfrac12 
\qquad\text{a.s.}
\end{equation}
and 
\begin{equation}\label{jetz}
\innprod{\txx,Z}
=\innprod{\txx,1-\hX}
=1-\innprod{\txx,\hX}=\tfrac12 
\qquad\text{a.s.}
\end{equation}

For $k=1$, $\E Z$ exists in Dunford sense, since $Z$ is bounded and \wmeas;
\eqref{jetz} implies that $\innprod{\txx,\E Z}=\E\innprod{\txx,Z}=\frac12$
and thus $\E Z\neq0$, although \eqref{jeppz} shows that 
$\innprod{\E Z,\gd_t}=\E(Z(t))=0$ for every $t\in K$.
In particular, it follows that $\E Z\in \ck\qx\setminus\ck$, and thus $Z$ is
not Pettis integrable.
(Cf.\ \refE{Eloo} which shows that $\E X$ does not exist in Pettis sense by
essentially the same argument.)
We see also that \eqref{venus} fails for $Z$ and $\mu=\txx$; hence, as said
in \refR{Rvenus}, $Z(t,\go)$ is not jointly \meas. 

For $k\ge2$, we do not know whether 
$\E Z\itpk$ or $\E Z\ptpk$ exists in Dunford sense, but they do not
exists in Pettis sense, by an extension of the argument for $\E Z$.
Indeed, if $\E Z\itpk$ exists in Pettis (or just Dunford) sense, then
by \eqref{jet}, with $\xx$ as above,
\begin{equation}
\innprod{(\txx)\tpk,  \E Z\itpk}=\E\innprod{\xx,Z}^k=\bigpar{\tfrac12}^k,
\end{equation}
so $\E Z\itpk\neq0$.
On the other hand, by \eqref{jepp},
\begin{equation}
  \label{jepp6}
\innprod{\gd_{t_1}\tensor\dotsm\tensor\gd_{t_k},\E Z\itpk}
=\E\bigpar{Z(t_1)\dotsm Z(t_k)}
=0
\end{equation}
for all $t_1,\dots,t_k\in K$.
Suppose that $\E Z\itpk$ exists in Pettis sense. Then, by
\refT{TCK0},
$\E Z\itpk$ is the function \eqref{eckt} on $K^k$, \ie, by \eqref{jepp6},
$\E Z\itpk=0$, 
a contradiction. Hence, $\E Z\itpk$ does not exist in Pettis sense.
This also shows that \refT{TCK2} does not hold for $K=\gbnx$.
\end{example}

\begin{example}\label{ECK2--+} 
Let $Z\in\loo/c_0$ and $\txx\in(\loo/c_0)\q$ be as in \refE{ECK2--}.
Let $\xi\sim\Be(1/2)$ be independent of $Z$, and define
$Z_1\=\xi Z\in \loo/c_0=C(\gbnx)$.

By \eqref{jeppz}, $Z_1(\xx)=0$ \as{} for every $\xx\in K=\gbnx$.
On the other hand, by \eqref{jetz}, 
\begin{equation}\label{erik}
\innprod{\txx,Z_1}=
\xi\innprod{\txx,Z}=
\tfrac12 \xi
\qquad\text{a.s.}  
\end{equation}

Define the measure $\mu$ on the \gsf{} $\cC$ as the distribution of $Z_1$, and
regard the \rv{} $Z_1$ as defined by the identity map $\ck\to\ck$ on the \ps{}
$(\ck,\cC\q,\mu)$, where $\cC\q$ is the completion of $\cC$.
(Recall that we want our \ps{} to be complete.)
This version of $Z_1$ is \cmeas. If $\gf(Z_1)$ is any measurable functional,
then $\gf$ is a $\cC\q$-measurable function on $\ck$, and thus $\gf$ is
$\mu$-\aex{} equal to a \cmeas{} function $\psi$ on $\ck$. By the definition
of $\cC$, this implies that $\psi(f)=\Psi\bigpar{f(t_1),f(t_2),\dots}$ for some
function $F$ on $\bbR^\infty$ and some sequence of points $t_i\in K$.
By \eqref{jeppz}, each $Z_1(t_i)$ is \as{} constant, and thus $\psi(Z_1)$ is
\as{} constant; since $\gf(Z_1)=\psi(Z_1)$ \as, also $\gf(Z_1)$ is \as{}
constant.
Since $\txx(Z_1)$ is not \as{} constant by \eqref{erik}, $\txx(Z_1)$ is not
measurable; hence $Z_1$ is \cmeas{} but not \wmeas.
It follows that $\cC\neq\bw$.
\end{example}

\begin{example}\label{EA}
For another (simpler) example with $\cC\neq\bw$, 
  let $K\=\setoi^{\cardc_1}$, where $\cardc_1=2^{\cardc}$ (or any cardinal
  number $>\cardc$), and let as in \refE{Eloo} $\mu$ be the product measure
$\mu\=\bigpar{\frac12\gd_0+\frac12\gd_1}^{\cardc_1}$.

Suppose that the linear functional $f\mapsto \chi(F)\=\int_K f\dd\mu$ is
\cmeas{}. Then there exist points $t_i\in K$, $i=1,2,\dots$, 
and a (\meas) function $\Phi:\bbR^\infty\to\bbR$ such that
\begin{equation}
\label{kc1}
\int_K f\dd\mu=\Phi(f(t_1),f(t_2),\dots),
\qquad f\in C(K).
\end{equation}

Each $t\in K$ is a function $\cardc_1\to\setoi$ which we denote by
$\ga\mapsto t(\ga)$.
Define an equivalence relation on $\cardc_1$ by $\gb\equiv\gam\iff
t_i(\gb)=t_i(\gam)$ for all $i$. The number of equivalence classes is
$2^{\aleph_0}=\cardc<\cardc_1$, and thus there exists $\gb,\gam\in\cardc_1$
with $\gb\neq\gam$ but $\gb\equiv\gam$.

Consider the normalized coordinate functions
$f_\ga(t)\=2t(\ga)-1:K\to\set{-1,1}$, $\ga\in\cardc_1$.
Since $\gb\equiv\gam$, $f_\gb(t_i)=f_\gam(t_i)$ for each $i$, and thus
$f_\gb(t_i)f_\gam(t_i)=1$. Consequently, if $g\=f_\ga f_\gam$ and $h\=1$,
then $g,h\in C(K)$ and $g(t_i)=h(t_i)$ for each $i$, so by \eqref{kc1}
\begin{equation*}
  \int_K g\dd\mu=\Phi(g(t_1),g(t_2),\dots)
=\Phi(h(t_1),h(t_2),\dots)
=\int_K h\dd\mu,
\end{equation*}
which is a contradiction since $\int g\dd\mu=0$ and $\int h\dd\mu=1$.

This contradiction shows that the continuous linear functional $\chi$ is not
\cmeas, and thus $\bw\supsetneq\cC$.
\end{example}

\section{$c_0(S)$}\label{Sc0} 

In this section we consider $B=c_0(S)$, for an arbitrary set $S$.
As discussed in \refE{Eloccomp}, we can regard $c_0(S)$ as a complemented
subspace of codimension 1 in
  $C(S\q)=C(S\cup\set\infty)$: 
$c_0(S)=\set{f\in C(S\q):f(\infty)=0}$.
(The results below could easily be formulated for $C(S\q)$ instead, but we
leave that to the reader.)

Note that $c_0(S)$ is separable (and $S\q$ metrizable) if and only if $S$ is
countable. (The discrete space $S$ is always metrizable, but that is not
enough.) 
The case when $S$ is countable is thus covered by the results (for $C(S\q)$)
in \refS{SCK}. We shall see that these results extend to arbitrary $S$
because of the special simple structure of $c_0(S)$.
This illustrates that some non-separable Banach spaces can be handled
without problems, and it is a background to Sections
\ref{SmeasD}--\ref{SDmom} where we (by technically more complicated
arguments) obtain similar results for $\doi$, which is more important for
applications. 

\begin{theorem}\label{Tc0tp}
  The injective \tp{} $\coss\itpk=\cossk$ (isometrically).
Moreover, $\coss$ has the \ap, and thus
$\coss\ptpk\subseteq\coss\itpk=\cossk$
(as a vector space).
\end{theorem}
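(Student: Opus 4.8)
The plan is to prove the three assertions in order: first the isometric identity $\coss\itpk=\cossk$, then the approximation property of $\coss$, and finally the inclusion $\coss\ptpk\subseteq\coss\itpk$, which will follow at once from \refT{Tiotak} (exactly as \refC{CCKtpk} was deduced): once $\coss$ is known to have the \ap, \refT{Tiotak} says the canonical map $\iota:\coss\ptpk\to\coss\itpk$ is injective, and this is precisely the claimed inclusion of vector spaces.

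For the injective tensor product I would argue directly from the definition. By \eqref{onbx} and \eqref{norminj}, $\coss\itpk$ is the completion of the algebraic tensor product $\coss\tpk$, viewed as a space of $k$-linear forms on $(\coss\q)^k=(\ell^1(S))^k$, for the norm $\normi u=\sup\set{|u(\xx_1,\dots,\xx_k)|:\norm{\xx_i}_{\ell^1(S)}\le1}$. The one substantive step is the identity $\normi u=\sup_{s_1,\dots,s_k\in S}\bigabs{u(e_{s_1},\dots,e_{s_k})}$ for $u\in\coss\tpk$: here ``$\ge$'' is trivial since $\norm{e_s}_{\ell^1(S)}=1$, and for ``$\le$'' one approximates each $\xx_i$ in the closed unit ball of $\ell^1(S)$ by a finitely supported element, expands $u$ multilinearly, and uses that a finitely supported element of $\ell^1(S)$ of norm $\le1$ lies in the convex hull of $\set 0\cup\set{\pm e_s:s\in S}$. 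This identity says exactly that $\Psi:u\mapsto\bigpar{\innprod{u,e_{s_1}\tensor\dotsm\tensor e_{s_k}}}_{(s_1,\dots,s_k)\in S^k}$ is an isometric embedding of $(\coss\tpk,\normi{\cdot})$ into $\ell^\infty(S^k)$. Since $\Psi$ carries $e_{s_1}\tensor\dotsm\tensor e_{s_k}$ to the unit vector $e_{(s_1,\dots,s_k)}$, it maps $\coo(S)\tpk$ onto $\coo(S^k)$; on the other hand $\Psi(f_1\tensor\dotsm\tensor f_k)$ is the function $(s_1,\dots,s_k)\mapsto f_1(s_1)\dotsm f_k(s_k)$, which again vanishes at infinity, so $\Psi(\coss\tpk)\subseteq\cossk$. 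Finally $\coo(S)\tpk$ is $\normi{\cdot}$-dense in $\coss\tpk$ (approximate each factor of an elementary tensor by a finitely supported function and use $\normi{\cdot}\le\normp{\cdot}$ together with the telescoping estimate for $\normp{\cdot}$). Extending $\Psi$ to the completion and taking closures in $\ell^\infty(S^k)$ therefore sandwiches the image of $\coss\itpk$ between $\overline{\coo(S^k)}=\cossk$ and $\cossk$, giving $\coss\itpk\cong\cossk$ isometrically. (Alternatively one could embed $\coss$ isometrically in $C(S\q)$, invoke \refT{TCKitpk} and the injectivity of the injective tensor product, \refR{Rinjinj}, and identify the closure of $\coss\tpk$ there; but the direct argument above avoids the one-point compactifications of products.)

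For the approximation property I would verify property \ref{apapp} of \refS{Sapprox}, that the identity of $\coss$ is \capprox, via the obvious finite-rank projections: for a finite set $A\subseteq S$ put $P_Af\=\sum_{s\in A}f(s)e_s$, so $P_A$ is finite rank with $\norm{P_A}\le1$ and $\norm{f-P_Af}=\sup_{s\notin A}|f(s)|$. Given a compact $K\subseteq\coss$ and $\eps>0$, total boundedness provides $f_1,\dots,f_n\in\coss$ whose $(\eps/2)$-balls cover $K$; since each $f_i\in\coss$, the set $A\=\bigcup_{i=1}^n\set{s\in S:|f_i(s)|>\eps/2}$ is finite, and then $\norm{f-P_Af}\le\eps$ for every $f\in K$. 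Hence $I$ is \capprox{} and $\coss$ has the \ap, which completes the proof.

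The only point needing real care is the norm identity $\normi u=\sup_{s_1,\dots,s_k}\bigabs{u(e_{s_1},\dots,e_{s_k})}$; this is where the special structure of $\coss$ — its duality with $\ell^1(S)$, whose unit ball is generated by the signed unit vectors — enters, and everything else is routine bookkeeping. No new difficulty arises from $S$ being uncountable: $\coss$ need not be separable, but the argument uses only that $\coo(S)$ is dense in $\coss$ (and $\coo(S^k)$ in $\cossk$), never separability or a Schauder basis.
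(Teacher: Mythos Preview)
Your proof is correct. The paper takes a different, shorter route: it simply applies Theorems~\ref{TCKitpk} and~\ref{TCKap} to $C(S^*)$, where $S^*=S\cup\{\infty\}$ is the one-point compactification, and then passes to the complemented codimension-one subspace $c_0(S)\subset C(S^*)$ described in \refE{Eloccomp}. You instead argue from scratch: for the injective tensor product you identify the injective norm on $c_0(S)\tpk$ with the sup-norm over $S^k$ by exploiting that the closed unit ball of $c_0(S)^*=\ell^1(S)$ is the closed absolutely convex hull of $\{e_s:s\in S\}$, and for the approximation property you exhibit the finite-rank projections $P_A$ and verify property~\ref{apapp} of \refS{Sapprox} directly. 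Your approach is entirely self-contained and makes transparent exactly where the special structure of $c_0(S)$ enters; the paper's is quicker given the $C(K)$ machinery already in place, though it leaves implicit the identification of the closure of $c_0(S)\tpk$ inside $C((S^*)^k)$ with $c_0(S^k)$. You even mention the paper's route parenthetically as an alternative.
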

\begin{proof}
  An easy consequence of Theorems \refand{TCKitpk}{TCKap}, applied to $\css$.
\end{proof}

The dual space $c_0(S)\q=\ell^1(S)$. Note that every element in $\ell^1(S)$
has countable support. Thus every $\xx\in c_0(S)\q$ depends only on countably
many coordinates.
This extends to multilinear forms as follows.
For a subset $A\subseteq S$, let $P_A$ be the projection in $c_0(S)$ defined
by 
\begin{equation}
  \label{pac}
P_A f(x)\=\ett{x\in A}f(x). 
\end{equation}

\begin{lemma}
  \label{LBH}
If $S$ is any set and 
$\ga$ is a
bounded $k$-linear form on $c_0(S)$, then there exist a countable subset
$A\subseteq S$ such that
\begin{equation}
  \label{lbh}
\ga(f_1,\dots,f_k)=\ga(P_{A}f_1,\dots,P_{A}f_k).
\end{equation}
\end{lemma}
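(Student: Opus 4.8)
The plan is to build the countable set $A$ by exhausting, coordinate by coordinate, the "relevant" coordinates of $\ga$, using a countable dense family of test functions to witness relevance. First I would observe that it suffices to find, for each finite tuple of finitely supported functions, a countable set of coordinates outside of which $\ga$ does not change when those functions are altered; then one takes a countable union.

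\emph{Step 1: reduce to finitely supported functions.} Recall $\coo(S)$ is dense in $c_0(S)$ (see \refS{Snotations}), and for a fixed countable $A$ the projection $P_A$ is continuous with $\norm{P_A}\le 1$. Since $\ga$ is bounded, hence continuous, it suffices to prove \eqref{lbh} for all $f_1,\dots,f_k\in\coo(S)$: both sides of \eqref{lbh} are continuous in $(f_1,\dots,f_k)\in c_0(S)^k$, so if they agree on the dense set $\coo(S)^k$ they agree everywhere.

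\emph{Step 2: construct $A$ via a countable recursion.} For a single coordinate $s\in S$ and $f\in\coo(S)$, say $s$ is \emph{active} if there exist finitely supported $g_1,\dots,g_k$ with $\ga(g_1,\dots,g_k)\neq\ga(P_{S\setminus\{s\}}g_1,\dots,P_{S\setminus\{s\}}g_k)$; by multilinearity and the finite support of the $g_i$, for each fixed tuple $(g_1,\dots,g_k)$ the set of $s$ that "matters" for that tuple is finite (it is contained in $\bigcup_i\supp g_i$). The key point is thus to enumerate enough tuples. Fix a countable subset $S_0\subseteq S$ (we will enlarge it); the rational-coefficient finitely supported functions on $S_0$ form a countable set $\cD_0$, and by multilinearity and continuity $\ga$ restricted to $\overline{\operatorname{span}}\,\cD_0{}^{\,k}$ is determined by its values on $\cD_0^k$, which is countable. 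More usefully: I would define $A$ to be the smallest subset of $S$ such that $A$ is countable and \eqref{lbh} holds for all $f_i$ supported in $A$, constructed as $A=\bigcup_n A_n$ where $A_0$ is any countable set and $A_{n+1}$ is $A_n$ together with, for each tuple $(g_1,\dots,g_k)$ of rational finitely-supported functions supported in $A_n$, the (finite, by continuity and the structure of nuclear/bounded forms — actually just: finite because we can restrict attention to coordinates in a suitable finite set) set of coordinates on which $\ga(g_1,\dots,g_k)$ genuinely depends. More carefully, the clean way: for each such tuple, the function $B\mapsto \ga(P_B g_1,\dots,P_B g_k)$, as $B$ ranges over finite subsets of $S$, is eventually constant as $B$ increases to cover $\bigcup_i\supp g_i$ (indeed it is constant once $B\supseteq\bigcup_i\supp g_i$), so there is nothing to add for finitely supported $g_i$ — the real content is that when we pass to the limit over $\cD_0$ we need $A$ to already contain the supports. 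So in fact $A_{n+1}=A_n$ suffices and one simply takes $A=A_0$ — no, this is too fast.

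\emph{Step 2, corrected.} The genuine issue is only this: given $f_1,\dots,f_k\in c_0(S)$ (not finitely supported), $P_A f_i$ need not be close to $f_i$ unless $A$ captures "most" of the mass of $f_i$, but $f_i$ has countable support, so this cannot be arranged uniformly over all $f_i$ by a single countable $A$ — \emph{unless} $\ga$ itself only sees countably many coordinates. So the argument must extract countably many coordinates from $\ga$ directly. Here is the clean construction: let $\cN$ be a countable subset of the unit ball of $\ell^1(S)=c_0(S)\q$ that is weak$^*$-dense in the unit ball (possible? no — $\ell^1(S)$ has no countable weak$^*$-dense subset of its ball in general). Abandon that. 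Instead, directly: for each $j\in\{1,\dots,k\}$ and each fixed choice of the other arguments $f_i$ ($i\neq j$) ranging over $\coo(S)$, the map $f_j\mapsto\ga(f_1,\dots,f_k)$ is a bounded linear functional on $c_0(S)$, i.e. an element of $\ell^1(S)$, hence has countable support. Now run a back-and-forth: start with $A_0=\varnothing$; given $A_n$ countable, let $\cD_n$ be the (countable) set of rational finitely-supported functions supported in $A_n$; for each $j$ and each tuple $(f_i)_{i\neq j}$ from $\cD_n$, the functional $f_j\mapsto\ga(\dots)$ lies in $\ell^1(S)$ and has countable support $S_{j,(f_i)}$; let $A_{n+1}=A_n\cup\bigcup_{j,(f_i)}S_{j,(f_i)}$, a countable union of countable sets, hence countable. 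Set $A=\bigcup_n A_n$.

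\emph{Step 3: verify \eqref{lbh}.} Let $f_1,\dots,f_k\in c_0(S)$; by Step 1 reduce to $f_i\in\coo(S)$, then by density (each $f_i\in\coo(S)$ is a norm-limit of rational finitely-supported functions with supports inside $\supp f_i$... but $\supp f_i\not\subseteq A$ in general). Hmm — so the reduction in Step 1 to $\coo(S)$ is fine, but I still need: for $f_i\in\coo(S)$, $\ga(f_1,\dots,f_k)=\ga(P_A f_1,\dots,P_A f_k)$. Write $f_i=P_A f_i + P_{A\comp}f_i$; expanding $\ga$ multilinearly, it suffices to show every term containing at least one $P_{A\comp}f_i$ vanishes. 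Fix such a term, with the $j$-th slot holding $P_{A\comp}f_j$; the other slots hold either $P_A f_i$ or $P_{A\comp}f_i$. It is enough (by symmetry and iterating) to show $\ga(g_1,\dots,g_k)=0$ whenever some $g_j$ is supported off $A$ and each other $g_i$ is \emph{supported in $A$} — then handle the general term by first replacing each off-$A$ slot. Actually the cleanest: show $\ga(g_1,\dots,g_k)$ depends only on $P_A g_i$ by showing, one slot at a time, that $\ga(\dots,g_j,\dots)=\ga(\dots,P_A g_j,\dots)$ when all the other arguments are supported in $A$; and for that: the other arguments, being finitely supported in $A$, can be approximated by $\cD_n$ for large $n$, and then the functional in slot $j$ is (a limit of functionals) supported in $A$ by construction, so it annihilates $P_{A\comp}g_j$. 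Pushing the limit through using boundedness of $\ga$ gives the claim. Inducting over the slots — replacing $g_1$ by $P_A g_1$ while the rest are arbitrary finitely supported, then $g_2$, etc., at each stage the already-replaced arguments are supported in $A$ — yields \eqref{lbh}.

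\emph{Expected main obstacle.} The delicate point is the bookkeeping in Step 3: to replace $g_j$ by $P_A g_j$ I need the "other arguments" to be supported in $A$, but after I have replaced slots $1,\dots,j-1$ they are supported in $A$ (good), while slots $j+1,\dots,k$ are still arbitrary finitely supported — so I must instead do the replacement in an order where I also know the functional-in-slot-$j$ has support in $A$ \emph{uniformly} over the not-yet-replaced later slots. This is handled by the closure of the construction: $A_{n+1}$ absorbs the supports of the functionals arising from tuples in $\cD_n$, and since every finitely supported function is a limit of elements of $\bigcup_n\cD_n$ once its support sits inside $A$, a density-plus-boundedness argument closes the gap. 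I expect that making this "stationarity of $A$ under the closure operation" precise, and confirming the countable union stays countable (clear: countable $\times$ countable), is the only real work; everything else is routine multilinearity and continuity.
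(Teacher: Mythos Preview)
There is a genuine gap. Your closure construction $A=\bigcup_n A_n$ produces a countable set with the property that whenever the \emph{other} $k-1$ arguments are supported in $A$, the induced linear functional in the remaining slot lies in $\ell^1(A)$. But that is strictly weaker than \eqref{lbh}: it tells you nothing about $\ga$ restricted to $c_0(A\comp)^k$. Concretely, for $k=2$ take $\ga(f,g)=f(s_0)g(s_0)$ for a single point $s_0\in S$. If $s_0\notin A_0$, then every functional $g\mapsto\ga(f,g)$ with $f$ supported in $A_0$ is identically zero, so $A_1=A_0$, and your closure never discovers $s_0$; yet $\ga(e_{s_0},e_{s_0})=1\neq 0=\ga(P_A e_{s_0},P_A e_{s_0})$. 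The same phenomenon persists for any ``diagonal'' block of $\ga$ disjoint from the seed $A_0$. Thus the construction only works if $A_0$ already contains every relevant coordinate, which is precisely what has to be proved.

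Your Step 3 reflects the same circularity. In the expansion of $\ga(f_1,\dots,f_k)$, the term $\ga(P_{A\comp}f_1,\dots,P_{A\comp}f_k)$ has all arguments supported off $A$, and none of your closure information applies to it; the induction ``replace slot $1$ while slots $2,\dots,k$ are arbitrary'' needs the functional in slot $1$ to be supported in $A$ for \emph{arbitrary} later arguments, which your $A$ does not guarantee. The remark ``every finitely supported function is a limit of elements of $\bigcup_n\cD_n$ once its support sits inside $A$'' begs the question: the later arguments need not have support inside $A$.

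What is missing is a \emph{global} estimate on the coefficients $a(s_1,\dots,s_k):=\ga(e_{s_1},\dots,e_{s_k})$ that forces all but countably many of them to vanish. The paper obtains this by evaluating $\ga$ on independent random sign vectors $X_i=\sum_{s\in F}\eps_s^{(i)}e_s$ and using orthogonality to get
\[
\sum_{s_1,\dots,s_k\in F}|a(s_1,\dots,s_k)|^2=\E|\ga(X_1,\dots,X_k)|^2\le\norm{\ga}^2
\]
for every finite $F\subseteq S$, hence $\sum_{S^k}|a(s_1,\dots,s_k)|^2\le\norm{\ga}^2$. This immediately gives a countable $A$ containing all indices appearing in any nonzero $a(s_1,\dots,s_k)$, after which \eqref{lbh} for $f_i\in\coo(S)$ is automatic and your Step~1 density argument finishes. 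The slot-by-slot $\ell^1$ duality you invoke is too local to substitute for this estimate.
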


\begin{proof}
Write, for convenience, 
\begin{equation}
  \label{nobh}
a(s_1,\dots,s_k)\=\ga(e_{s_1},\dots,e_{s_k}),  \qquad s_1,\dots,s_k\in S.
\end{equation}

Fix a finite set $F\subseteq S$ and let $X\in c_0(S)$ be random with $X(s)$,
$s\in F$, \iid{} with $\P(X(s)=+1)=\P(X(s)=-1)=\frac12$, while $X(s)=0$ for
$s\notin F$.

Let $X_1,\dots,X_k$ be independent copies of $X$.
Then
\begin{equation*}
  \ga(X_1,\dots,X_k)=\sum_{s_1,\dots,s_k\in F}a(s_1,\dots,s_k)X_1(s_1)\dotsm
  X_k(s_k) 
\end{equation*}
and thus, since different terms are orthogonal,
\begin{equation*}
\E| \ga(X_1,\dots,X_k)|^2=\sum_{s_1,\dots,s_k\in F}|a(s_1,\dots,s_k)|^2.
\end{equation*}
Hence,
\begin{equation*}
\sum_{s_1,\dots,s_k\in F}|a(s_1,\dots,s_k)|^2
\le \norm{\ga}^2.
\end{equation*}
Since this holds for every finite $F$,
\begin{equation}\label{bh}
\sum_{s_1,\dots,s_k\in S}|a(s_1,\dots,s_k)|^2
\le \norm{\ga}^2.
\end{equation}
In particular, only a countable number of 
$a(s_1,\dots,s_k)$ are non-zero.
Hence there exists a countable subset $A$ of $S$ such that
$a(s_1,\dots,s_k)=0$ unless $s_1,\dots,s_k\in A$.
Then \eqref{lbh} holds for every $f_1,\dots,f_k$ with finite supports, and
the general case follows by continuity.
\end{proof}

\begin{remark}
  \citet{BH} proved the stronger result
\begin{equation*}
\sum_{s_1,\dots,s_k\in S}|a(s_1,\dots,s_k)|^{2k/(k+1)}
<\infty,
\end{equation*}
where the case $k=1$ is just $c_0(S)\q=\ell^1(S)$ and
$k=2$ had been proved earlier by \citet{Littlewood};
see also \cite{Blei}. 
\end{remark}

The integral forms are, by definition, the elements of the dual of
$\coss\itpk$; by \refT{Tc0tp} this equals $\cossk\q=\ell^1(S^k)$.
Consequently, every integral $k$-linear form on $\coss$ is nuclear.

We let, as in \refS{SCK}, 
$\cC$ be the \gsf{} generated by the point evaluations.
Thus, a $c_0(S)$-valued \rv{}
 $X$ is \cmeas{} if and only if $X(s)$ is \meas{} for
every $s\in S$.

\begin{theorem}\label{Tc0}
The \gsf{s} $\cC$ and $\bw$ on $c_0(S)$  coincide,  for any $S$. 

Moreover, the following hold for any  $c_0(S)$-valued \rv{} $X$:
  \begin{romenumerate}[-10pt]
\item \label{tc0w}
$X$ is \wmeas{} if and only if $X$ is \cmeas.
\item \label{tc0n}
If $X$ is \cmeas, then there exists a countable subset $S_0\subseteq S$ such
that for every $s\notin S_0$, $X(s)=0$ \as{}
\item \label{tc0k}
If $X$ is \cmeas, then $X$ is \wassep.
Moreover, then $X\tpk$ is \wassep{} in $c_0(S)\ptpk$ and $c_0(S)\itpk$ for
every $k\ge1$.
  \end{romenumerate}
\end{theorem}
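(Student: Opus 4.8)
The plan is to derive all three parts from two simple ingredients: the identification $c_0(S)\q=\ell^1(S)$, and the fact that a function $f\in c_0(S)$ has $\set{s:|f(s)|\ge\eps}$ finite for every $\eps>0$ (so in particular $\supp f$ is countable).

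First, $\cC=\bw$: each point evaluation $\gd_s$ is an element of $\ell^1(S)=c_0(S)\q$, so $\cC\subseteq\bw$; conversely every $\xx\in\ell^1(S)$ has countable support and $\xx(f)=\sum_s\xx_sf(s)$ is, for each fixed $f\in c_0(S)$, the limit of the $\cC$-measurable partial sums over finite subsets of $S$, whence $\xx(\cdot)$ is $\cC$-measurable and $\bw\subseteq\cC$. Since the two $\gs$-fields coincide, a function $X\colon\gO\to c_0(S)$ is \wmeas{} (i.e.\ $\bw$-measurable) exactly when it is \cmeas, which is \ref{tc0w}.

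For \ref{tc0n} I would argue as follows, and this is the one genuinely nontrivial step. Fix $\eps>0$. Since $X(\go)\in c_0(S)$, the integer $\#\set{s\in S:|X(\go)(s)|\ge\eps}$ is finite for \emph{every} $\go$; hence for any countable $A\subseteq S$ the random variable $W_A\=\#\set{s\in A:|X(s)|\ge\eps}$ is everywhere finite, integer-valued and measurable (an increasing limit of finite sums of the indicators $\ett{|X(s)|\ge\eps}$). For each $k\ge1$ the set function $A\mapsto\P(W_A\ge k)$ is monotone and bounded by $1$, so its supremum over countable $A$ is attained, at the union of a maximizing sequence; taking a further countable union over $k$ yields a single countable $A_\eps\subseteq S$ at which $\P(W_{A_\eps}\ge k)$ is maximal for every $k$. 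For $s\notin A_\eps$ we have $W_{A_\eps\cup\set s}=W_{A_\eps}+\ett{|X(s)|\ge\eps}$, and maximality forces $\P(W_{A_\eps}=k-1,\,|X(s)|\ge\eps)=0$ for all $k\ge1$; summing over $k$ and using $W_{A_\eps}<\infty$ everywhere gives $\P(|X(s)|\ge\eps)=0$. Applying this with $\eps=1/j$ and setting $S_0\=\bigcup_{j\ge1}A_{1/j}$, which is countable, we obtain $\P(X(s)\neq0)=0$ for all $s\notin S_0$, which is \ref{tc0n}. The naive route through $\E W_A$ fails precisely because $\#\set{s:|X(s)|\ge\eps}$ need not be integrable, and the level-set maximality argument is what circumvents this.

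Finally, \ref{tc0k} follows from \ref{tc0n}. With $S_0$ as above, put $B_1\=c_0(S_0)$, a separable closed subspace of $c_0(S)$; for $\xx\in c_0(S)\q=\ell^1(S)$ with $\xx\perp B_1$ we have $\xx_s=0$ for $s\in S_0$, so $\xx(X)=\sum_{s\notin S_0}\xx_sX(s)$ is a countable sum of \as{} zero terms, hence $\xx(X)=0$ \as; thus $X$ is \wassep. For $X\tpk$, in each of $c_0(S)\ptpk$ and $c_0(S)\itpk$ take $A$ to be the separable closed linear span of $\set{e_{s_1}\tensor\dotsm\tensor e_{s_k}:s_1,\dots,s_k\in S_0}$. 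In the injective case $(c_0(S)\itpk)\q=\ell^1(S^k)$ by \refT{Tc0tp}, and the argument just given for $k=1$ applies verbatim. In the projective case, by \refT{Tproj*} a functional orthogonal to $A$ is a bounded $k$-linear form $\ga$ on $c_0(S)$ vanishing on every $e_{s_1}\tensor\dotsm\tensor e_{s_k}$ with all $s_i\in S_0$; expanding $\ga(X,\dots,X)$ multilinearly along $X=P_{S_0}X+P_{S\setminus S_0}X$ into $2^k$ terms, the all-$S_0$ term vanishes identically (multilinearity and separate continuity), while in each other term at least one argument is $P_{S\setminus S_0}X$, and replacing $\ga$ by $\ga(P_{A'}\cdot,\dots,P_{A'}\cdot)$ for the countable $A'$ supplied by \refL{LBH} turns that argument into $P_{A'\cap(S\setminus S_0)}X$, which is \as{} $0$ by \ref{tc0n} (countably many coordinates). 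Hence $\ga(X,\dots,X)=0$ \as, so $X\tpk$ is \wassep{} in both tensor products.
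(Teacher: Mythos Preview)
Your proof is correct and follows the same overall architecture as the paper's (duality $c_0(S)\q=\ell^1(S)$, a countable support set $S_0$, and \refL{LBH} for the tensor powers). Two differences are worth noting.

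For \ref{tc0n}, the paper argues more directly: setting $S_{\gd\eps}\=\set{s:\P(|X(s)|>\gd)>\eps}$, it shows each $S_{\gd\eps}$ is \emph{finite} by assuming an infinite sequence $(s_i)$, truncating the count $N=\sum_i\ett{|X(s_i)|>\gd}$ to the event $\set{N\le M}$, and summing to a contradiction. Your maximality argument over countable $A$ (in the spirit of the paper's \refL{LHwassep}) is a genuinely different route and equally valid; the paper's version gives the slightly sharper conclusion that the level sets are finite, not merely contained in a countable maximizer.

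For \ref{tc0k}, your $2^k$-term multilinear expansion works but is unnecessary. Once \refL{LBH} supplies the countable $A'$, the paper simply observes that $P_{A'}X=P_{A'\cap S_0}X\in B_1$ \as{} (countably many \as{} vanishing coordinates), whence $\innprod{\ga,X\tpk}=\ga(P_{A'}X,\dots,P_{A'}X)=0$ \as{} since $\ga\perp B_1\tpk$. The paper also handles the injective case in one stroke by noting that \wassepy{} passes from $c_0(S)\ptpk$ to $c_0(S)\itpk$ via the continuous map $\iota$, rather than invoking $\ell^1(S^k)$ separately.
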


Note that \ref{tc0w} is proved for
any separable $C(K)$ in \refC{CCKcC}, but here $c_0(S)$ may be non-separable.

\begin{proof}
Since $c_0(S)\q=\ell^1(S)$, and every element of $\ell^1(S)$ 
has countable support, it follows 
that $\bw$ is generated by the point evaluations,
\ie,  that $\bw=\cC$.

\pfitemref{tc0w}
This is immediate from  $\bw=\cC$. 
%$X$ is \wmeas{} if and only if each $X(s)$, $s\in S$, is \meas, \ie, if and
%only if $X$ is $\cC$-\meas{} with $\cC$ defined above. 

\pfitemref{tc0n}
Suppose that $X$ is $\cC$-\meas, and let, for $\gd,\eps>0$, 
$$
S_{\gd\eps}\=\set{s\in S:\P(|X(s)|>\gd)>\eps)}.
$$ 
Suppose that $S_{\gd\eps}$ is
infinite for 
some $\gd,\eps>0$. Fix these $\gd$ and $\eps$, 
and let $s_i$, $i=1,2,\dots$, be an infinite sequence
of distinct elements of $S_{\gd\eps}$. 
Let $N\=\sum_i\ett{|X(s_i)|>\gd}$ be the
number of points $s_i$ where $|X|>\gd$. Since $X\in c_0(S)$, $N$ is a finite
\rv, and thus there exists $M<\infty$ such that $\P(N>M)< \eps/2$.
It follows that for every $s_i$, 
\begin{equation*}
  \begin{split}
  \E\bigpar{\ett{|X(s_i)|>\gd}\ett{N\le M}}
&=
\P\bigpar{|X(s_i)|>\gd ) \text{ and } N\le M}
\\&\ge
\P\bigpar{|X(s_i)|>\gd )}-\P(N> M)
>\eps/2.	
  \end{split}
\end{equation*}
Summing over all $i$ we obtain the contradiction
\begin{equation*}
M\ge  \E\bigpar{N\ett{N\le M}}=
  \sum_{i=1}^\infty\E\bigpar{\ett{|X(s_i)|>\gd}\ett{N\le M}}
\ge\sumi \eps/2
=\infty.
\end{equation*}
Consequently, each $S_{\gd\eps}$ is finite.
Let $S_0=\bigcup_{n=1}^\infty S_{n\qw,n\qw}$. Then $S_0$ is a countable
subset of $S$ and 
if  $s\notin S_0$, then
$X(s)=0$ \as{}

\pfitemref{tc0k}
Let $S_0$ be as in \ref{tc0n} and let
$B_1\=\set{f\in c_0(S):\supp(f)\subseteq S_0}$. Then $B_1$ is separable.
Moreover, if $\xx\in c_0(S)\q=\ell^1(S)$ with $\xx\perp B_1$, then 
$\xx=(a(s))\in\ell^1(S)$ with $a(s)=0$ for $s\in S_0$ and thus, since
$\set{s:a(s)\neq0}$ is countable,
\begin{equation*}
\xx(X)=\sum_{s\notin S_0} a(s)X(s) = 0 \qquad \text{a.s.}  
\end{equation*}
Thus $X$ is \wassep.

More generally, if $k\ge1$, then $B_1\tpk$ is a separable subspace of
$c_0(S)\ptpk$.
Suppose that $\ga\in (c_0(S)\ptpk)\q$ with $\ga\perp B_1\tpk$. 
By \refT{Tproj*},  $\ga$ is 
a bounded multilinear form
$c_0(S)^k\to\bbR$. Let $A$ be the countable subset given by \refL{LBH}.

Since $A $ is countable and $X(s)=0$ \as{} for $s\in A\setminus S_0$,
$P_A X=P_{A\cap S_0} X\in B_1$ a.s., and thus \as{}
\begin{equation*}
  \innprod{\ga,X\tpk}
=\ga(X,\dots,X)
=\ga(P_A X,\dots,P_AX)
=\innprod{\ga,(P_{A\cap S_0}X)\tpk}
=0.
\end{equation*}
This hold for every $\ga\perp B_1\tpk$, and thus $X\tpk$ is \wassep{} in
$c_0(S)\ptpk$. 
Since $\iota:c_0(S)\ptpk\to c_0(S)\itpk$ is continuous, 
$X\tpk$ is \wassep{} in $c_0(S)\itpk$ too. 
\end{proof}

However, $X$ is not always \assep{}; consider for example $X\=e_U\in
c_0\oi$ where $U\sim \U(0,1)$. (Cf.\ \refE{EPD3}, where we consider a
similar \rv{} in $\ell^2\oi$.)

\begin{theorem}\label{Tc0sep}
  If $X$ is a \rv{} in $\coss$, then $X$ is \assep{} if and only if there
  exists a countable subset $A\subseteq S$ such that $\supp(X)\subseteq A$ a.s.
\end{theorem}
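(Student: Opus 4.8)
The statement is an equivalence, and the plan is to prove the two implications separately, both by elementary arguments about supports.

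For the easy direction ($\Leftarrow$), suppose there is a countable $A\subseteq S$ with $\supp(X)\subseteq A$ a.s. Then $X$ takes values, almost surely, in the closed subspace $B_1\=\set{f\in\coss:\supp(f)\subseteq A}$, which is isometrically isomorphic to $c_0(A)$. Since $A$ is countable, $B_1$ is separable --- for instance the rational linear combinations of $\set{e_s:s\in A}$ form a countable dense subset, because $\coo(A)$ is dense in $c_0(A)$ (cf.\ \refS{Snotations}). Hence $X$ is \assep{} by definition, and nothing more is needed here.

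For the converse ($\Rightarrow$), I would start from a separable subspace $B_1\subseteq\coss$ with $X\in B_1$ a.s., and choose a countable dense subset $\set{f_n}\seq$ of $B_1$. Setting $A\=\bigcup_n\supp(f_n)$ and recalling from \refS{Snotations} that every element of $\coss$ has countable support, we get that $A$ is a countable subset of $S$. The key step is then to show that \emph{every} $f\in B_1$, not just the $f_n$, satisfies $\supp(f)\subseteq A$: this is immediate since norm convergence in $\coss$ is uniform convergence and hence pointwise convergence, so a function that is a (norm) limit of functions vanishing off $A$ must itself vanish off $A$. Consequently $B_1$ is contained in the deterministic set $\set{f:\supp(f)\subseteq A}$, and $X\in B_1$ a.s.\ forces $\supp(X)\subseteq A$ a.s.

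I do not expect a genuine obstacle; the argument is short and parallels the reasoning already used for multilinear forms in the proof of \refT{Tc0}\ref{tc0k}, and it is the $\coss$-analogue of the characterisation announced for $\doi$ in \refS{Smeas}. The only mild point to keep in mind is that $\supp(X)$ is itself random, so the desired conclusion is the almost sure statement $\P\bigpar{\supp(X)\subseteq A}=1$; this drops out at once from the deterministic fact that all of $B_1$ is supported on the single countable set $A$.
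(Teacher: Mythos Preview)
Your argument is correct and follows exactly the same route as the paper's proof, which simply observes that $\set{f\in\coss:\supp(f)\subseteq A}$ is separable for countable $A$, and conversely that any separable subspace is contained in such a set. You have merely filled in the details the paper leaves implicit.
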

\begin{proof}
If $A$ is countable, then 
\set{f\in\coss:\supp(f)\subseteq A} is a separable subspace of $\coss$; 
conversely,
every separable subspace of $\coss$
is included in some such subspace with $A$ countable.
\end{proof}

\begin{theorem}\label{Tc01}
Suppose that $X$ is a $\cC$-\meas{}
  $\coss$-valued \rv. 
Let $k\ge1$.
  \begin{romenumerate}[-10pt]
  \item \label{tc0d}
$\E X\itpk$ exists in Dunford sense $\iff$ %if and only if
the weak \kth{} moment  exists  $\iff$ %if and only if\/ 
$\sup_{s\in	S}\E|X(s)|^k<\infty$.
  \item \label{tc0p}
$\E X\itpk$ exists in Pettis sense $\iff$ %if and only if 
the family
$\set{|X(s)|^k:s\in S}$ of \rv{s} is \ui{}
$\iff$ $\E|X(s)|^k\in\coss$.
  \item \label{tc0b}
$\E X\itpk$ exists in Bochner sense $\iff$ %if and only if\/ 
$\E\bigpar{\sup_{s\in S}|X(s)|}^k<\infty$ and there
exists a countable subset $A\subseteq S$ such that $\supp(X)\subseteq A$ a.s.
  \end{romenumerate}
If\/ $\E X\itpk$ exists in Bochner or Pettis sense, then 
it is the function
in $\coss\itpk=\cossk$ given by
\begin{equation}
  \label{ec0kt}
\E X\itpk(s_1,\dots,s_k)
=\E\bigpar{X(s_1)\dotsm X(s_k)}.
\end{equation}
\end{theorem}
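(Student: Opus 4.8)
\textbf{Proof proposal for Theorem~\ref{Tc01}.}
The plan is to reduce everything to the corresponding statements for $C(S\q)$ in Section~\ref{SCK}, using the special structure of $c_0(S)$ established above, together with the general results of Section~\ref{Smom}. Throughout, recall that $\coss\q=\ell^1(S)$, that every integral $k$-linear form on $\coss$ is nuclear (as noted after Lemma~\ref{LBH}, since $(\coss\itpk)\q=\cossk\q=\ell^1(S^k)$), and that by Theorem~\ref{Tc0} a \cmeas{} $X$ is \wmeas{} and $X\tpk$ is \wassep{} in $\coss\itpk$.

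First, the formula \eqref{ec0kt} and the ``only if'' directions. If $\E X\itpk$ exists in Pettis (or Bochner) sense, then since the point evaluations $\gd_s$, $s\in S$, are continuous linear functionals on $\coss$, the value of the $k$-linear form $\E X\itpk\in\coss\itpk=\cossk$ at $(\gd_{s_1},\dots,\gd_{s_k})$ is $\E(X(s_1)\dotsm X(s_k))$ by \eqref{tid}, giving \eqref{ec0kt}; this is the analogue of Theorem~\ref{TCK0}. For the forward implications: in \ref{tc0d}, Theorem~\ref{TID}\ref{tidi} together with Lemma~\ref{Lweak} gives that existence in Dunford sense implies $\sup_{\norm\xx\le1}\E|\xx(X)|^k<\infty$, and taking $\xx=\gd_s$ gives $\sup_s\E|X(s)|^k<\infty$; in \ref{tc0p}, Theorem~\ref{TiPettis}\ref{tipettis1} with $\xx=\gd_s$ gives that $\set{|X(s)|^k:s\in S}$ is \ui, and \ui{} together with $\sup_s\E|X(s)|^k<\infty$ forces $\E|X(s)|^k\to0$ along any sequence of distinct $s$ (by the argument in Theorem~\ref{Tc0}\ref{tc0n}, or directly: if not, a subsequence would violate the $\P(E)\to0$ condition in \eqref{ui} at a fixed level), i.e.\ $\bigpar{\E|X(s)|^k}_s\in\coss$; in \ref{tc0b}, the countable-support condition on $X$ comes from Lemma~\ref{Lbs} and Theorem~\ref{Tc0sep} (Bochner existence forces $X$ \assep), and $\E\bigpar{\sup_s|X(s)|}^k<\infty$ is Theorem~\ref{TPIB} since $\norm{X\tpk}_{\coss\itpk}=\norm X^k=\sup_s|X(s)|^k$.

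For the ``if'' (converse) directions. In \ref{tc0d}, assuming $\sup_s\E|X(s)|^k=:C<\infty$, I need the weak \kth{} moment to exist, i.e.\ $\E|\xx(X)|^k<\infty$ for every $\xx=(a(s))\in\ell^1(S)$; this follows because $|\xx(X)|=\bigabs{\sum_s a(s)X(s)}$ and, by Jensen/convexity (writing $\xx(X)=\sum_s\frac{|a(s)|}{\norm\xx_1}\cdot\norm\xx_1\sign(a(s))X(s)$) one gets $\E|\xx(X)|^k\le\norm\xx_1^{k-1}\sum_s|a(s)|\,\E|X(s)|^k\le C\norm\xx_1^k$, and then Theorem~\ref{TID}\ref{tidd}\ref{tid3} applies since every integral $k$-linear form on $\coss$ is nuclear. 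In \ref{tc0p}, assuming $\set{|X(s)|^k:s\in S}$ is \ui{} (equivalently $\bigpar{\E|X(s)|^k}_s\in\coss$ — the equivalence using that \ui{} of a countable family dominated by the finite \rv{} $\sup_s|X(s)|^k$ reduces to this tail condition, and the uncountable case collapses to $S_0$ by Theorem~\ref{Tc0}\ref{tc0n}), I invoke Theorem~\ref{TiPettis}\ref{tipettis2}\ref{tip3}: the nuclearity condition holds, and $X\tpk$ is \wassep{} in $\coss\itpk$ by Theorem~\ref{Tc0}\ref{tc0k}. Finally \ref{tc0b} is just Theorem~\ref{TPIB} combined with Lemma~\ref{Lbs} and Theorem~\ref{Tc0sep}.

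\textbf{Main obstacle.} The routine parts are bookkeeping; the one place needing genuine care is the middle equivalence in \ref{tc0p}, $\set{|X(s)|^k:s\in S}$ \ui{} $\iff$ $\E|X(s)|^k\in\coss$, for \emph{uncountable} $S$. One direction is easy (\ui{} $\Rightarrow$ the family is bounded and has the $L^1$-tail-smallness property, which for the countable subfamily indexed by $S_0$ from Theorem~\ref{Tc0}\ref{tc0n} yields $\E|X(s)|^k\to0$, hence lies in $\coss$ since $X(s)=0$ a.s.\ off $S_0$); for the converse one uses that $|X(s)|^k\le\bigpar{\sup_{s}|X(s)|}^k=:Z$ with $Z$ a.s.\ finite, reducing \ui{} of the family to \ui{} of each single $|X(s)|^k$ (automatic, $Z$-dominated) plus uniform control of the tails $\E(\ettae|X(s)|^k)$ as $\P(E)\to0$, which follows from $\E|X(s)|^k\in\coss$ by splitting off finitely many large coordinates and dominating the rest by $\eps$. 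I would write this out carefully but it is short.
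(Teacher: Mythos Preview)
Your approach is essentially the paper's: adapt the proof of \refT{TCK1} using nuclearity of integral forms on $\coss$ (for \refT{TID}\ref{tidd}\ref{tid3}) and the weak a.s.\ separability of $X\tpk$ from \refT{Tc0}\ref{tc0k}, with the Bochner case \ref{tc0b} and formula \eqref{ec0kt} handled exactly as you say.

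There is one genuine gap in your converse for \ref{tc0p}. The hypothesis of \refT{TiPettis}\ref{tipettis2}\ref{tip3} is uniform integrability of $\set{|\xx(X)|^k:\norm\xx\le1}$ for \emph{all} $\xx\in\coss\q$, not just the point evaluations $\gd_s$. You must bridge from \ui{} of $\set{|X(s)|^k:s\in S}$ to \ui{} of the full family. This is precisely what the paper means when it says that \eqref{rogat} and \eqref{krhmf} go through here ``without measurability problem since $\mu$ now is a discrete measure with countable support''; equivalently, your own Jensen estimate from \ref{tc0d}, applied to $\ettae X$, gives
\[
\E\bigpar{\ettae|\xx(X)|^k}\le\norm\xx_{\ell^1}^k\sup_{s\in S}\E\bigpar{\ettae|X(s)|^k},
\]
which transfers the \ui{} in one line. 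Insert this and the argument closes.

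For the middle equivalence in \ref{tc0p}, your justification of \ui{} $\Rightarrow$ $\E|X(s)|^k\in\coss$ is muddled: the phrase ``a subsequence would violate the $\P(E)\to0$ condition in \eqref{ui} at a fixed level'' does not encode a valid argument, and neither does the appeal to \refT{Tc0}\ref{tc0n}. The paper's argument is short and uses the key structural fact that $X\in\coss$: for any sequence of distinct $s_n$ one has $|X(s_n)|^k\to0$ pointwise, hence (by \ui{} and Vitali) in $L^1$, so $\E|X(s_n)|^k\to0$. Your splitting-off-finitely-many-coordinates argument for the converse direction is correct; the paper simply calls it obvious.
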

\begin{proof}
  The proof of \refT{TCK1} holds with a few minor changes; we use
Theorems  \ref{TID}\ref{tidd}\ref{tid3}, \ref{Tc0}\ref{tc0k} and \ref{Tc0sep}, 
and note that 
\eqref{rogat} and \eqref{krhmf} hold without
  measurability problem since $\mu$ now is a discrete measure with
countable support.

Moreover, for \ref{tc0p}, if $s_n$ is any sequence of distinct elements in
$S$, then $|X(s_n)|^k\to0$ as \ntoo. 
Hence, if
$\set{|X(s)|^k:s\in S}$ of \rv{s} is \ui, then 
$\E|X(s_n)|^k\to0$, and it follows that $\E|X(s)|^k\in\coss$.
The converse is obvious.

Finally, \eqref{ec0kt} follows as in \refT{TCK0}.
\end{proof}

There is an obvious analogue of \refC{CCK11}, which we leave to the
reader. Note that \eqref{venus} holds for $\coss$, even when this space is
non-separable, because each $\mu$ in \eqref{ven} has a countable support.

For the second projective moment, we can again use Grothendieck's theorem,
and obtain the following version of \refT{TCKG}.

\begin{theorem}\label{Tc0G}
Suppose that $X$ is a $\cC$-\meas{}
  $\coss$-valued \rv. 
  \begin{romenumerate}[-10pt]
  \item \label{tc0gd}
$\E X\ptpx2$ exists in Dunford sense 
$\iff$
$\E X\itpx2$ exists in Dunford sense 
$\iff$
the weak second moment  exists  $\iff$
$\sup_{s\in	S}\E|X(s)|^2<\infty$.
  \item \label{tc0gp}
$\E X\ptpx2$ exists in Pettis sense $\iff$ 
$\E X\itpx2$ exists in Pettis sense $\iff$ 
$\E |X(s)|^2\in \coss$.
  \item \label{tc0gb}
$\E X\ptpx2$ exists in Bochner sense 
$\iff$ 
$\E X\itpx2$ exists in Bochner sense 
$\iff$ 
$\E\bigpar{\sup_{s\in S}|X(s)|}^2<\infty$
and there
exists a countable subset $A\subseteq S$ such that $\supp(X)\subseteq A$ a.s.
  \end{romenumerate}
\end{theorem}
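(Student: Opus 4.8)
The plan is to reduce everything to facts already established. The three equivalences involving the \emph{injective} second moment are exactly \refT{Tc01}\ref{tc0d}--\ref{tc0b} (the $\coss$-analogue of \refT{TCK1}) taken with $k=2$, and the \emph{projective Bochner} statement in \ref{tc0gb} is a direct instance of \refT{TPIB} combined with \refT{Tc0sep} (recall that $\norm X_{\coss}=\sup_{s}|X(s)|$, that $X$ is \wmeas{} by \refT{Tc0}\ref{tc0w}, and that \assep{} means countable support \as). The implications ``$\E X\ptpx2$ exists (in one of the three senses) $\implies \E X\itpx2$ exists (in the same sense)'' are \refT{TPI}. Hence the only thing left is the converse for the \emph{projective} second moment in the Dunford case of \ref{tc0gd} and the Pettis case of \ref{tc0gp}: that $\sup_s\E|X(s)|^2<\infty$ forces $\E X\ptpx2$ to exist in Dunford sense, and that uniform integrability of $\set{|X(s)|^2:s\in S}$ forces $\E X\ptpx2$ to exist in Pettis sense. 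For both I would follow the proof of \refT{TCKG}, using \refL{LBH} to overcome the possible non-separability of $\coss$.

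Fix a bounded bilinear form $\ga$ on $\coss$. By \refL{LBH} there is a countable $A\subseteq S$ with $\ga(f_1,f_2)=\ga(P_Af_1,P_Af_2)$, so $\ga$ is determined by its restriction to the separable subspace $c_0(A)=\set{f\in C(A\q):f(\infty)=0}$; extending this restriction to a bounded bilinear form on $C(A\q)$ by composing with a bounded projection of $C(A\q)$ onto $c_0(A)$ (which is complemented of codimension one, see \refE{Eloccomp}), Grothendieck's theorem \refT{TG} gives a probability measure $\mu$ on the metrizable compact space $A\q$ and a constant $C$ (an absolute multiple of $k_G$) with $|\ga(f,g)|\le C\norm f_{L^2(\mu)}\norm g_{L^2(\mu)}$ for all $f,g\in c_0(A)$; in particular $\ga|_{c_0(A)}$ extends to a bounded bilinear form on $L^2(A\q,\mu)$. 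Put $X_A\=P_AX$, a $\cC$-\meas{} $c_0(A)$-valued \rv{} with $\norm{X_A}\le\norm X<\infty$ pointwise, $X_A(s)=X(s)$ for $s\in A$, and $\ga(X,X)=\ga(X_A,X_A)$. Viewed as an $L^2(A\q,\mu)$-valued \rv{}, $X_A$ is \Bmeas{} there by \refL{LLp} (the map $(s,\go)\mapsto X_A(s,\go)$ is jointly measurable by \refL{LCKjoint}\ref{lckjointk}, or simply because $A$ is countable, and $\int_{A\q}|X_A(s,\go)|^2\dd\mu(s)\le\norm{X(\go)}^2<\infty$ for every $\go$), and by Fubini $\E\norm{X_A}_{L^2(\mu)}^2=\int_{A\q}\E|X_A(s)|^2\dd\mu(s)\le\sup_{s\in S}\E|X(s)|^2$.

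For \ref{tc0gd}: if $\sup_s\E|X(s)|^2<\infty$ the last bound is finite, so $\E X_A\ptpx2$ exists in $L^2(A\q,\mu)\ptpx2$ in Bochner sense by \refT{TPIB}; then $\ga(X,X)=\ga(X_A,X_A)=\innprod{\ga,X_A\ptpx2}$ is measurable by \refL{Lbk} and integrable. As $\ga$ runs over all bounded bilinear forms on $\coss$, \refT{TPD} gives that $\E X\ptpx2$ exists in Dunford sense; by \refT{TPI} so does $\E X\itpx2$, whence the weak second moment exists and $\sup_s\E|X(s)|^2<\infty$ by \refT{TID}\ref{tidi} (or directly by \refT{Tc01}\ref{tc0d}), closing the cycle. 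For \ref{tc0gp}: if $\set{|X(s)|^2:s\in S}$ is \ui{} (equivalently $\E|X(s)|^2\in\coss$, by \refT{Tc01}\ref{tc0p}) then for $\norm\ga\le1$ and any event $E$, using \refT{TG} and Fubini,
\begin{multline*}
\E\bigpar{\ettae|\ga(X,X)|}=\E\bigpar{\ettae|\ga(X_A,X_A)|}
\\
\le C\int_{A\q}\E\bigpar{\ettae|X_A(s)|^2}\dd\mu(s)
\le C\sup_{s\in S}\E\bigpar{\ettae|X(s)|^2},
\end{multline*}
so by \eqref{ui} the family $\set{\ga(X,X):\norm\ga\le1}$ is \ui; moreover each $\ga(X,X)$ is measurable by the argument above, so $X\ptpx2$ is \wmeas{} in $\coss\ptpx2$, and it is \wassep{} there by \refT{Tc0}\ref{tc0k}. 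Hence \refT{THuff}, together with the identification of $(\coss\ptpx2)\q$ with the bounded bilinear forms (\refT{Tproj*}), shows that $\E X\ptpx2$ exists in Pettis sense; \refT{TPI} and \refT{Tc01}\ref{tc0p} supply the remaining equivalences.

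I expect the main obstacle to be the bookkeeping around \refL{LBH}: one must check that restricting $\ga$ to $c_0(A)$, extending it to $C(A\q)$, and passing to $L^2(A\q,\mu)$ is done so that Grothendieck's theorem really applies and so that $\ga(X,X)$ turns out measurable and integrable with the right estimates — and, in the Pettis case, that $X\ptpx2$ is \wassep{} in $\coss\ptpx2$, which for $C(K)$ with $K$ metrizable was automatic from separability but here requires \refT{Tc0}\ref{tc0k}.
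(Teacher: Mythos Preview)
Your proposal is correct and follows essentially the same approach as the paper: reduce each bilinear form via \refL{LBH} to a countable subset $A$, extend to $C(A\q)$, apply Grothendieck's theorem as in the proof of \refT{TCKG}, and invoke \refT{Tc0}\ref{tc0k} for the \wassepy{} needed in the Pettis case and \refT{Tc0sep} for Bochner. You have simply written out in full the details that the paper compresses into a single sentence referring back to the proof of \refT{TCKG}.
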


\begin{proof}
  For any given bounded bilinear form $\ga$ on $\coss$, there exists by
  \refL{LBH} (or by \refT{TG}) a countable subset $S_0$ of $S$ such that 
$\ga(f,g)$ depends only on the restrictions of $f$ and $g$ to $S_0$.
Thus $\ga$ can be regarded as a bilinear form on $c_0(S_0)$, and can be
extended  to $C(S_0\q)$, and the proof of \refT{TCKG} applies, again using
Theorems \ref{Tc0}\ref{tc0k} and \ref{Tc0sep}.
\end{proof}

It is now easy to see that Theorems \ref{TC} and \ref{TCKD2}  hold for 
\cmeas{} \rv{s} in $\coss$; we leave the details to the reader.

The results above show that the space $c_0(S)$ behaves very well
even when $S$ is uncountable. However, the following example shows that
the moments may be degenerate. We note also (\refE{Eingaro}) that the norm
of a \wmeas{} \rv{} in $c_0(S)$ may fail to be \meas.

\begin{example}\label{Erika}
  Let $B=c_0\oi$ and $X=e_U$, $U\sim \U(0,1)$. (Cf.\ \refE{EPD3}.)

Let $\ga$ be a bounded $k$-linear form on $c_0\oi$, and let $A\subset\oi$ be 
a countable set as in
\refL{LBH}. Since $\P(P_AX\neq0)=\P(U\in A)=0$, it
follows from \eqref{lbh} that 
$$
\innprod{\ga,X\tpk}=\ga(X,\dots,X)=0
\qquad\text{a.s.}
$$
Consequently, every projective moment
$\E X\ptpk$ exists in Pettis sense, with 
$\E X\ptpk=0$, for every $k\ge1$. 
Hence also the injective moments exist in Pettis sense with $\E X\itpk=0$.
(No moment exists in Bochner sense, since $X$ is not \assep, see 
Theorems \ref{Tc0sep} and \ref{Tc01}\ref{tc0b}.)
\end{example}

\begin{example}\label{Eingaro} 
We modify \refE{Erika} by still taking $B=c_0\oi$ but now, as in \refE{EPD3}, 
$X=a(U)e_U$ where $a:\oi\to(0,1]$ is a non-measurable function.
Then $X$ is \wmeas, as in Examples \ref{EPD3} and \ref{Erika},
but $\norm X=a(U)$ is not \meas.
\end{example}

\section{$\doi$ as a Banach space}\label{SD}
Recall that
$D=\doi$ denotes the linear space of functions $\oi\to\bbR$ that are
right-continuous with left limits, see \eg{} \cite[Chapter 3]{Billingsley}.
In other words $f\in\doi$ if $\lim_{s\downto t}f(s)=f(t)$ for every $t\in[0,1)$,
and furthermore the left limit of $f$ at $t$, which we denote by
\begin{equation}\label{f-}
%  \fm(t)\=
f(t-)\=\lim_{s\upto t}f(s),
%\qquad 0<t\le1,
\end{equation}
exists for every $t\in(0,1]$.
We further define
\begin{equation}
  \gD f(t)\=f(t)-f(t-),
\end{equation}
the jump at $t$. We may for completeness define $f(0-)\=f(0)$ and thus 
$\gD f(0)\=0$. % (but we will usually consider only $t\neq0$).

It is easily seen that each function $f\in D$ is bounded. Hence  $\norm
f_D\=\sup_{t\in\oi}|f(t)|$ defines a norm on $D$; the resulting topology is
the \emph{uniform topology} on $D$, see \cite[Section 15]{Billingsley}. 
The
norm is complete, so $D$ is a Banach space. 
Note that $D$ is not separable,
since the uncountable set of functions $\etta_{[t,1]}$, $t\in\oi$, all have
distance 1 to each other; this leads to measurability problems when we
consider $D$-values random variables, as discussed in 
\refE{Eetta} and \cite[Section 15]{Billingsley}.  
Note also that the (separable) space $C=\coi$ of continuous functions on
$\oi$ is a closed subspace of $\doi$. 
The following relation between $\doi$ and $\coi$ was proved by 
\citet[Example 2]{Corson}.

\begin{theorem}[\cite{Corson}]
  \label{TgD}
$\gD$ is a bounded linear map of $\doi$ onto $\cooi$, with kernel $\coi$ and
  norm $\|\gD\|=2$. Furthermore, for any $f\in\doi$,
  \begin{equation}
	\label{tgd}
\norm{\gD f}_{\cooi} =2\inf\bigset{\norm{f+h}_{\doi}:h\in\coi}.
  \end{equation}
Hence, $\gD$ can be regarded as an isomorphism $D/C\to\cooi$ and
$\frac12\gD$ is an isometric isomorphism $D/C\to\cooi$.
\end{theorem}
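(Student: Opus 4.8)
The plan is to verify three things: that $\gD$ maps $D$ into $c_0\ooi$ (i.e.\ every $f \in D$ has only countably many jumps, all but finitely many of which are small), that $\gD$ is onto with kernel exactly $C$ and norm $2$, and that the quantitative identity \eqref{tgd} holds; the isomorphism statement is then automatic from the first isomorphism theorem together with \eqref{tgd}.

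First I would check that $\gD f \in c_0\ooi$ for $f \in D$. For any $\eps > 0$, the set $\set{t \in (0,1] : |\gD f(t)| \ge \eps}$ must be finite: otherwise it would have an accumulation point $t_0$, and then the left and right limits of $f$ at $t_0$ (one of which exists depending on whether we approach from the left or right) could not both exist as genuine limits — one can extract a sequence $s_n \to t_0$ from one side along which $f$ oscillates by at least $\eps/2$, contradicting the defining property of $D$. Hence $\supp(\gD f)$ is countable and $\gD f \in c_0\ooi$. Linearity and the bound $\norm{\gD f}_{c_0} \le 2\norm f_D$ are immediate. That $C = \ker\gD$ is clear: $\gD f \equiv 0$ means $f$ is left-continuous everywhere, and combined with right-continuity (built into $D$) this forces $f \in C$; conversely continuous functions have no jumps.

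Next, surjectivity and the key inequality \eqref{tgd}. Given $g \in c_0\ooi$, enumerate $\supp(g) = \set{t_1, t_2, \dots}$ and set $f \= \sum_i g(t_i)\etta_{[t_i,1]}$; the partial sums converge uniformly (since for large $i$, $|g(t_i)| < \eps$ and the tails are small — using that only finitely many jumps exceed any $\eps$), so $f \in D$ with $\gD f = g$, giving surjectivity. This already shows $\norm{\gD} = 2$ once we exhibit $f$ with a single jump of size close to attaining the ratio $2$, e.g.\ $f = \etta_{[t,1]}$ for some $0 < t < 1$ has $\norm f_D = 1$ and $\norm{\gD f}_{c_0} = 2$. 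For \eqref{tgd}: the inequality $\norm{\gD f}_{c_0} \le 2\inf_h \norm{f+h}_{\doi}$ follows because $\gD(f+h) = \gD f$ for $h \in C$, so $\norm{\gD f}_{c_0} = \norm{\gD(f+h)}_{c_0} \le 2\norm{f+h}_D$ for every $h \in C$. The reverse inequality is the heart of the matter: given $f \in D$, I must construct $h \in C$ with $2\norm{f+h}_D \le \norm{\gD f}_{c_0} + \eps$. The idea is to subtract off the jumps: let $M \= \norm{\gD f}_{c_0}$, list the jumps exceeding $\delta$ (finitely many) at points $s_1 < \dots < s_m$, and build a continuous $h$ which, near each $s_j$, interpolates so as to "center" the graph of $f$ — on a small interval around $s_j$, $f$ rises (or drops) by roughly $\gD f(s_j)$, and we choose $h$ to shift $f$ down by about half that jump so that $|f+h| \le M/2$ there; on the complement of these small intervals (where $f$ is uniformly continuous-like, modulo small jumps) we can absorb $f$ into $h$ up to $\delta$. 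Letting $\delta \to 0$ gives $2\norm{f+h}_D \le M + o(1)$.

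The main obstacle I expect is precisely this reverse inequality — the explicit construction of the correcting continuous function $h$. One has to handle the infinitely many small jumps carefully: after removing the finitely many jumps of size $> \delta$, the function $f$ restricted to the remaining region still has jumps up to size $\delta$, so it is within $\delta$ of a continuous function only in a suitable oscillation sense, and one needs a clean argument (perhaps via a finite partition of $\oi$ into intervals on which $f$ oscillates by less than $\delta$ apart from one large jump each) that the centering works uniformly. Once \eqref{tgd} is in hand, the final sentence is routine: $\gD$ factors through $D/C$ as a bijection onto $c_0\ooi$, bounded with bounded inverse by \eqref{tgd} (which says the quotient norm on $D/C$ equals $\frac12\norm{\gD(\cdot)}_{c_0}$), so $\tfrac12\gD$ is an isometric isomorphism $D/C \to c_0\ooi$.
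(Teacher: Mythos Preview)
Your overall structure is sound, but there are two concrete errors that need fixing.

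First, your norm-attaining example is wrong: for $f=\etta_{[t,1]}$ one has $f(t)=1$, $f(t-)=0$, so $\gD f(t)=1$ and $\norm{\gD f}_{c_0}=1=\norm f_D$, giving ratio $1$, not $2$. To get ratio $2$ you need a jump of height $2$ inside a function of sup-norm $1$, which forces the values on either side of the jump to be $\pm1$; the paper uses $f_0=\etta_{[1/2,1)}-\etta_{[0,1/2)}$.

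Second, and more seriously, your surjectivity construction $f=\sum_i g(t_i)\etta_{[t_i,1]}$ does not converge in general. Membership in $c_0\ooi$ gives $|g(t_i)|\to0$ but says nothing about summability: e.g.\ with $t_i=1/i$ and $g(t_i)=1/i$ the partial sums at the point $1$ are the harmonic series. The justification ``tails are small'' conflates $\sup_{i>N}|g(t_i)|$ with $\sum_{i>N}|g(t_i)|$.

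The paper handles surjectivity and the reverse inequality in \eqref{tgd} simultaneously by a single clean construction that avoids both problems: for $g\in\coooi$ with support $x_1<\dots<x_n$, build $f$ piecewise linear on each $[x_i,x_{i+1})$ with the exact boundary values $f(x_i-)=-g(x_i)/2$ and $f(x_i)=g(x_i)/2$; then $\gD f=g$ and $\norm f_D=\tfrac12\norm g_{c_0}$ \emph{exactly}. For general $g\in\cooi$, decompose $g=\sum_n g_n$ with $g_n\in\coooi$ and $\sum_n\norm{g_n}_{c_0}\le\norm g_{c_0}+\eps$, and sum the corresponding $f_n$. This gives surjectivity and the sharp constant at once, without having to ``center'' a given $f$ around its jumps. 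Your centering approach could be made to work, but note that it only yields \eqref{tgd} for a given $f$; you would still need a separate (correct) argument for surjectivity, or else reformulate so as to start from $g$ rather than $f$.
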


\begin{proof}
It is well-known that $f\in D$ implies $\gD f\in\cooi$, but for completeness
we repeat the proof:
Given $\eps>0$, for every $x\in\oi$ we may find an open interval
$U_x=(x-\gd_x,x+\gd_x)$ such that $|f(y)-f(x)|<\eps/2$ for $y\in(x,x+\gd_x)$
and $|f(y)-f(x-)|<\eps/2$ for $y\in(x-\gd,x)$. 
(We consider only $y\in\oi$, and ignore $y\in U_x\setminus\oi$, if such
points exist.) 
Hence, $|\gD f(y)|\le\eps$ for
$y\in U_x\setminus \set x$. Since $\oi$ is compact, it can be covered by a
finite set of such intervals $U_{x_1},\dots,U_{x_n}$, and then 
$\set{y:\gD f(y)>\eps}\subseteq\set{x_1,\dots,x_n}$. Since $\eps>0$ is
arbitrary, this shows that $\gD f\in \cooi$.

Hence $\gD:\doi\to\cooi$. It is obvious that $\gD$ is linear and that
$f\in \ker(\gD)\iff f\in \coi$.  
%\begin{equation*}
%f\in \ker(\gD)\iff \gD f(t)=0 \text{ for every } t\in(0,1] \iff f\in \coi.  
%\end{equation*}
%
Furthermore, $\norm{\gD f}_{c_0}\le 2\norm{f}_D$. The function
$f_0(x)\=\etta_{[1/2,1)}  - \etta_{[0,1/2)}$ in $D$  has 
$\norm{\gD f_0}_{c_0}=2= 2\norm{f_0}_D$, showing that equality can hold and
	  thus $\norm{\gD}=2$.

If $g\in\coooi$, let $\supp (g)=\set{x_1,\dots,x_n}$ with
$0<x_1<\dots<x_n\le1$. 
%and %choose $y_1,\dots,y_n\in\oi$ 
%let $x_0\=0$, $x_{n+1}\=1\ge x_n$.
Let $f$ be the function in $D$ that is constant on $[0,x_1)$ and $[x_n,1]$,
linear on each $[x_i,x_{i+1})$ for $1\le i<n$, and satisfies
$f(x_i-)=-g(x_i)/2$, $f(x_i)=g(x_i)/2$, $1\le i\le n$.
Then $\gD f=g$ and $\norm {f}_D=\frac12\norm{g}_{\cooi}$.
(The case $g=0$ is trivial; take $f=0$.)

For an arbitrary $g\in\cooi$ and $\eps>0$, we can find $g_n\in\coooi$ such
that $g=\sumn g_n$ and $\sumn\norm{g_n}_{\cooi}\le \norm{g}_{\cooi}+\eps$.
Taking $f_n\in D$ as just constructed with $\gD f_n=g_n$ and
$\norm{f_n}_D=\frac12\norm{g_n}_{\cooi}$, we see that $f\=\sumn f_n\in D$
satisfies 
$\gD f=\sumn g_n=g$. Hence, $\gD$ is onto. Moreover, 
\begin{equation*}
\norm{f}_D\le\sumn\norm{f_n}_D=\frac12\sumn\norm{g_n}_{\cooi}
\le\frac12\norm{g}_{\cooi}+\frac{\eps}2.
\end{equation*}
Consequently,
\begin{equation*}
  \inf\bigset{\norm{f}_D:\gD f=g}\le \tfrac12\norm{g}_\cooi,
\end{equation*}
and since $\norm{\gD}=2$, we have equality:
\begin{equation*}
  \inf\bigset{\norm{f}_D:\gD f=g}= \tfrac12\norm{g}_\cooi,
\end{equation*}
which easily is seen to be equivalent to \eqref{tgd}.
\end{proof}

\begin{corollary}[\citet{Pestman}]
  \label{C1}
Every continuous linear functional $\chi\in \doi\q$ is given by  
\begin{equation}  \label{c1}
\chi(f)=\intoi f\dd\mu+\sum_{t\in\ooi}h(t)\gD f(t)
\end{equation}
for some unique $\mu\in M\oi$ and $h\in\ell^1\ooi$;
conversely, \eqref{c1} defines a continuous linear functional on $\doi$ for
every such $\mu$ and $h$. Furthermore,
\begin{equation}
  \label{c1a}
\tfrac12\norm{\chi}_{D\q}
\le\norm{\mu}_M+\norm{h}_{\ell^1}
\le2\norm{\chi}_{D\q}.
\end{equation}
\end{corollary}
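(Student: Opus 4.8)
The plan is to read the representation straight off Corson's theorem (\refT{TgD}), which presents $\gD$ as a quotient map $\doi\to\cooi$ with kernel $\coi$, with $\tfrac12\gD$ inducing an \emph{isometric} isomorphism $D/C\to\cooi$. In other words we have a short exact sequence $0\to\coi\to\doi\xrightarrow{\gD}\cooi\to0$, and dualizing it against a given $\chi\in D\q$ will produce exactly the two pieces $\mu$ and $h$, the measure coming from the restriction of $\chi$ to $\coi$ and the $\ell^1$-sequence coming from the part of $\chi$ that factors through $\gD$.

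Concretely, first I would restrict $\chi$ to the closed subspace $\coi\subset\doi$. Since $\oi$ is compact metric, every finite signed Borel measure on $\oi$ is regular, so $\coi\q=M\oi$ isometrically by the Riesz representation theorem, and there is a \emph{unique} $\mu\in M\oi$ with $\chi(f)=\intoi f\dd\mu$ for $f\in\coi$ and $\norm{\mu}_M=\norm{\chi|_{\coi}}\le\norm{\chi}_{D\q}$. Because each $f\in\doi$ is a bounded Borel function, $\intoi f\dd\mu$ makes sense for every $f\in\doi$, so $\chi_0(f)\=\chi(f)-\intoi f\dd\mu$ defines an element of $D\q$ with $\norm{\chi_0}\le\norm{\chi}+\norm{\mu}_M\le2\norm{\chi}_{D\q}$, and $\chi_0$ vanishes on $\coi$. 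Since $\gD$ is onto $\cooi$ with kernel $\coi$, the rule $\psi_0(\gD f)\=\chi_0(f)$ is well defined and gives a linear functional $\psi_0$ on $\cooi$ with $\chi_0=\psi_0\circ\gD$; writing $q\colon\doi\to D/C$ for the quotient map and $\gD_q\colon D/C\to\cooi$ for the induced isomorphism ($\gD=\gD_q\circ q$), we have $\chi_0=\bar\chi_0\circ q$ with $\norm{\bar\chi_0}_{(D/C)\q}=\norm{\chi_0}_{D\q}$ and $\psi_0=\bar\chi_0\circ\gD_q\qw$; as $\tfrac12\gD_q$ is isometric by \refT{TgD} we get $\norm{\gD_q\qw}=\tfrac12$ and in fact $\norm{\psi_0}_{\cooi\q}=\tfrac12\norm{\chi_0}_{D\q}\le\norm{\chi}_{D\q}$. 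Finally $\cooi\q=\ell^1\ooi$, so $\psi_0$ is represented by a unique $h\in\ell^1\ooi$, $\psi_0(g)=\sum_{t\in\ooi}h(t)g(t)$, with $\norm{h}_{\ell^1}=\norm{\psi_0}_{\cooi\q}$; substituting $g=\gD f$ yields \eqref{c1}.

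For \eqref{c1a}, adding $\norm{\mu}_M\le\norm{\chi}_{D\q}$ and $\norm{h}_{\ell^1}\le\norm{\chi}_{D\q}$ gives the upper bound $\norm{\mu}_M+\norm{h}_{\ell^1}\le2\norm{\chi}_{D\q}$, while for $f\in\doi$, $|\chi(f)|\le\norm{\mu}_M\norm{f}_D+\norm{h}_{\ell^1}\norm{\gD f}_{\cooi}\le(\norm{\mu}_M+2\norm{h}_{\ell^1})\norm{f}_D\le2(\norm{\mu}_M+\norm{h}_{\ell^1})\norm{f}_D$ since $\norm{\gD}=2$, which gives $\tfrac12\norm{\chi}_{D\q}\le\norm{\mu}_M+\norm{h}_{\ell^1}$. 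This same estimate proves the converse claim that \eqref{c1} defines a bounded functional for arbitrary $\mu\in M\oi$ and $h\in\ell^1\ooi$, the sum converging absolutely because $\gD f\in\cooi$ is bounded and $h\in\ell^1\ooi$. Uniqueness is immediate: if $(\mu,h)$ and $(\mu',h')$ both represent $\chi$, testing on $f\in\coi$ (where $\gD f=0$) forces $\mu=\mu'$ by uniqueness in Riesz, and then $\sum_{t}(h(t)-h'(t))\gD f(t)=0$ for all $f\in\doi$, hence $h=h'$ since $\gD$ is onto $\cooi$ and $\cooi\q=\ell^1\ooi$. There is no real conceptual obstacle here once \refT{TgD} is available; the only point demanding care is the bookkeeping of the factor $2$ through the quotient and the non-isometric isomorphism $\gD_q$, together with the (routine but worth stating) observation that $\intoi f\dd\mu$ extends to all bounded Borel $f$, so that $\chi_0$ is defined on all of $\doi$.
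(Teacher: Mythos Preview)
Your proof is correct and follows essentially the same approach as the paper: restrict $\chi$ to $\coi$ to obtain $\mu$ via the Riesz representation theorem, subtract off the integral to get a functional vanishing on $\coi$, factor through the quotient $D/C\cong\cooi$ using \refT{TgD}, and identify the resulting functional with an element $h\in\ell^1\ooi$, tracking the factor $2$ from the isometry $\tfrac12\gD$ throughout. The paper's presentation differs only in notation (it passes through an auxiliary $g=2h$ rather than defining $h$ directly), and the norm estimates and uniqueness argument are the same.
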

Note that the formally uncountable
sum in \eqref{c1} really is the countable 
sum $\sum_{t\in\supp (h)}h(t)\gD f(t)$. % over a countable set.

\begin{proof}
  It is clear that for any $\mu\in M\oi$ and $h\in\ell^1\ooi$, \eqref{c1}
  defines a linear functional $\chi$ with
  \begin{equation*}
|\chi(f)| 
\le \norm{\mu}_M\norm{f}_D + \norm{h}_{\ell^1\ooi}\norm{\gD f}_\cooi	
\le \bigpar{\norm{\mu}_M + 2\norm{h}_{\ell^1\ooi}}\norm{f}_D.
  \end{equation*}
Hence, $\chi\in D\q$ and
$\norm{\chi}_{D\q}\le\norm{\mu}_M+2\norm{h}_{\ell^1\ooi}$, showing the first
part of \eqref{c1a}.

Conversely, if $\chi\in D\q$, then the restriction of $\chi$ to $C$ is a
continuous linear functional on $C$, which by the Riesz representation
theorem is given by a real measure $\mu\in M\oi$ with 
$\norm{\mu}_M=\norm{\chi}_C\le\norm{\chi}_D$. As just said, 
$\chi_1(f)\=\intoi f\dd\mu$ defines a continuous linear functional on
$D$, with 
$\norm{\chi_1}_{D\q}\le\norm{\mu}_M\le\norm{\chi}_D$ and $\chi_1(f)=\chi(f)$
if $f\in C$.
Let $\chi_2\=\chi-\chi_1$. Then $\chi_2\in D\q$ and $\chi_2(f)=0$ if $f\in
C$. Hence $\chi_2$ can be regarded as an element of $(D/C)\q$.
By \refT{TgD}, 
\begin{equation*}
  (D/C)\q\cong \cooi\q\cong \ell^1\ooi,
\end{equation*}
using the isometric isomorphism $\frac12\gD$.
Hence, there exists $g\in\ell^1\ooi$ with
\begin{equation}  \label{c1b}
  \norm{g}_{\ell^1\ooi}=\norm{\chi_2}_{(D/C)\q}
=\norm{\chi_2}_{D\q}
\le \norm{\chi}_{D\q}+ \norm{\chi_1}_{D\q}
\le 2\norm{\chi}_{D\q}
\end{equation}
and
\begin{equation*}
  \chi_2(f)=\innprod{g,\tfrac12\gD f} = \frac12\sum_{t\in\ooi}g(t)\gD f(t).
\end{equation*}
The decomposition \eqref{c1} follows with $h=\frac12g$; furthermore, by
\eqref{c1b}, 
\begin{equation*}
  \norm{\mu}_M+\norm{h}_{\ell^1\ooi}\le\norm\chi_{D\q}+\norm\chi_{D\q}
=2\norm\chi_{D\q}.
\end{equation*}

Finally, to see uniqueness, suppose that $\mu\in M\oi$ and $h\in\ell^1\ooi$
are such that $\chi$ defined by \eqref{c1} equals 0. Then $0=\chi(f)=\intoi
f\dd\mu$ for every $f\in \coi$, and thus $\mu=0$. This implies further
$0=\chi(f)=\sum_t h(t)\gD f(t)$ for every $f\in\doi$, and thus by \refT{TgD}
$\sum_t h(t)g(t)=0$ for every $g\in\cooi$. Hence, $h=0$.
\end{proof}

\begin{remark}\label{Rc2}
  The decomposition \eqref{c1} can also be written
  \begin{equation}
	\label{c2}
\chi(f) = \intoi f(t)\dd\mu_1(t) + \intoi f(t-)\dd\mu_2(t),
  \end{equation}
where $\mu_2\=-\sum_{x\in\ooi}h(x)\gd_x$ and $\mu_1\=\mu-\mu_2$.
Conversely, every pair of measures $\mu_1,\mu_2\in M\oi$ defines a
continuous linear functional $\chi\in \doi\q$ by \eqref{c2}.
However, this
representation is not unique unless we impose further conditions (for
example that $\mu_2$ is discrete with $\mu_2\set0=0$, as in the construction
above); note that $\int f(t)\dd\mu(t)=\int f(t-)\dd\mu(t)$ for every $f\in D$
and every continuous measure $\mu$, since $f(t-)=f(t)$ except on the
countable set $\supp(\gD f)$.
\end{remark}

\begin{remark}
 $\coi$ is not a complemented subspace of $\doi$, \ie{} there does not
  exist a bounded linear projection $P:\doi\to\coi$, see
 \citet[Example 2]{Corson}.
Equivalently, there does not exist a right inverse of $\gD$, \ie,
a bounded linear map $T:\cooi\to \doi$
such that $\gD T g=g$ for every $g\in\cooi$. 
(The equivalence is standard, and follows because we can factor $I-P$
through $D/C\cong\cooi$ and thus define $T$ by $T\gD=I-P$, and conversely.)

To see this, suppose that such
a map $T$ exists. Then $g\mapsto T g(t)$ is a bounded linear functional on
$\cooi$ for every $t\in\oi$, and is thus given by some $h_t\in\ell^1\ooi$.
Let $N\=\bigcup_{t\in \bbQ\cap\oi} \supp (h_t)$. Then $N$ is countable, so there
exists $s\in\ooi\setminus N$. Consider $e_s(t)\=\ett{t=s}\in\cooi$. Since
$\supp\xpar{e_s}=\set s$ is disjoint from $\supp (h_t)$ for all rational $t$, it
follows that $T e_s(t)=\innprod{e_s,h_t}=0$ for all rational $t$. Since
$Te_s\in\doi$, this implies that $T e_s=0$, which contradicts $\gD T e_s=e_s$.
(See \citet[Example 2]{Corson} for a different proof.)

Nevertheless, \refC{C1} shows that $\coi\q=M\oi$ embeds as a complemented
subspace of $\doi\q$, and we have $\doi\q\cong\coi\q\oplus\cooi\q$.
The crucial fact is that each bounded
linear functional $\chi$ on $\coi$ extends in a
canonical (linear) way to a bounded linear functional on $\doi$, because
$\chi$ is given 
by a measure $\mu\in M\oi$ and we can define the extension by $\int f\dd\mu$
for any $f\in D$.
(This is similar to the decomposition $(\ell^\infty)\q=c_0\q\oplus c_0^\perp$
since $c_0\q=\ell^1$ embeds into $(\ell^\infty)\q$.)
In general, a closed subspace $B_0$ of a Banach space $B$ 
is said to be \emph{weakly complemented} if its 
annihilator is complemented, i.e. if there is a projection
$P:B^* \to B_0^\perp$; it is easy to see that this is equivalent to the
existence of a bounded linear map $i:B_0^*\to B\q$ such that $i\xx$ is an
extension of $\xx$ for every $\xx\in B_0\q$, and then 
$B\q=i(B_0\q)\oplus B_0^\perp\cong B_0\q\oplus B_0^\perp$.
Thus $\coi$ is a weakly complemented subspace of $\doi$.
(And $c_0$ is a  weakly complemented subspace of $\ell^\infty$; 
in fact,
$c_0$ is a  weakly complemented subspace of any Banach space $B\supset c_0$.)
\end{remark}

\section{$\doi$ as a Banach algebra}\label{S:algebra}

The product of two functions in $\doi$ is also a function in $\doi$, and
thus $\doi$ is a commutative Banach algebra. 
In order to use the general theory of complex Banach algebras (which is much
more satisfactory than the theory for real Banach algebras), we consider
in this section  $\doi$ as a complex space, consisting of all complex-valued
right continuous functions on $\oi$ with left limits. The results below will be
proved for this case, but it follows immediately that they hold for the real
case too, by considering the subset of real-valued functions.

The (complex) Banach algebra $\doi$ has an involution given by $f\mapsto
\bar f$, and obviously $\norm{f\bar f}=\norm{f}^2$, so $\doi$ is a
commutative $C\q$-algebra. 
As said in \refE{EC*},  every such algebra $A$ is isometric
to the space $C(K)$ of continuous functions on its maximal ideal space $K$,
see \eg{} \cite[Chapter VIII.2]{Conway}. The maximal ideal space can be
described as the set of all (non-zero) 
multiplicative linear functionals  
(\ie,  homomorphisms)
$h:A\to\bbC$ 
with the weak$\q$-topology 
(in this context known as the \emph{Gelfand topology}),
and the isometry $A\to C(K)$ 
maps $f\in A$ to the function $\hf:h\mapsto h(f)$ on $K$; moreover, $K$ is a
compact Hausdorff space. 

In the case of
$\doi$, the maximal ideal space has a simple description.

\begin{theorem}\label{ThI}
  The linear homomorphisms on $\doi$ are given by $f\mapsto f(t)$,
  $t\in\oi$, and $f\mapsto f(t-)$, $t\in\ooi$. The maximal ideal space $\hI$
  thus consists of two copies, $t$ and $t-$, of every point in $\ooi$,
  together with a single $0$.
\end{theorem}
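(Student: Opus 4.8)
The plan is to combine the commutative $C^*$-algebra framework recalled above with the order structure of the idempotent step functions $u_t\=\etta_{[t,1]}\in\doi$, $t\in\oi$. One direction is immediate: for $t\in\oi$ the point evaluation $\gd_t\colon f\mapsto f(t)$ is a non-zero multiplicative linear functional, and for $t\in\ooi$ so is $f\mapsto f(t-)$, since $(fg)(t-)=\lim_{s\upto t}f(s)g(s)=f(t-)g(t-)$ and both functionals send the unit $1$ to $1$; moreover these functionals are pairwise distinct, because for $t\in\ooi$ the function $u_t$ has value $1$ at $t$ but $0$ at $t-$. Granting the converse, this gives the asserted description of $\hI$ as a set: a single point $0$ together with two copies $t$ and $t-$ of each $t\in\ooi$.

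For the converse, let $h$ be a non-zero linear homomorphism on $\doi$. First I would note that $h$ is automatically continuous with $h(1)=1$; besides being the standard fact about characters of a commutative unital Banach algebra, this can be seen directly, since for $\abs\gl>\norm f$ the element $f-\gl$ is bounded away from $0$, hence invertible in $\doi$, so $h(f)\neq\gl$, giving $\abs{h(f)}\le\norm f$. The key observation is then the pair of identities $u_t^2=u_t$ and $u_su_t=u_{\max(s,t)}$. The first forces $h(u_t)\in\set{0,1}$ for every $t$; the second gives $h(u_s)h(u_t)=h(u_t)$ whenever $s\le t$, so the set $S\=\set{t\in\oi:h(u_t)=1}$ is downward closed, and it contains $0$ because $u_0=1$. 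Consequently, with $\tau\=\sup S$, either $S=[0,\tau)$ with $\tau\in\ooi$, or $S=[0,\tau]$ with $\tau\in\oi$.

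It remains to show that $h$ is determined by $S$, which finishes the proof. The linear span $E$ of $\set{u_t:t\in\oi}$ is dense in $\doi$: every $f\in\doi$ is a uniform limit of finite step functions (using, as in the proof of \refT{TgD}, that $f$ admits arbitrarily fine partitions of $\oi$ with small oscillation on each piece), and each such step function lies in $E$ because $\etta_{[s,t)}=u_s-u_t$ for $s<t$ and $\etta_{[s,1]}=u_s$. Now a direct check shows that, when $S=[0,\tau)$, one has $h(u_t)=\ett{t\in S}=u_t(\tau-)$ for every $t\in\oi$, so $h$ agrees with the bounded linear functional $f\mapsto f(\tau-)$ on the dense subspace $E$ and hence on all of $\doi$; similarly, when $S=[0,\tau]$, $h(u_t)=\ett{t\le\tau}=u_t(\tau)$ and $h=\gd_\tau$. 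This exhausts all cases.

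I do not expect a genuine obstacle here: the heart of the argument is the two-line computation with the idempotents $u_t$. The only points deserving a word of care are the automatic continuity of $h$ and the density of the step functions $E$, both of which are standard (the latter being essentially already contained in the proof of \refT{TgD}).
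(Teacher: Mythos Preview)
Your proof is correct and takes a genuinely different route from the paper's.

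The paper argues by contradiction in the style of the standard identification of the maximal ideal space of $C(K)$: assuming a character $h$ distinct from all $\gd_t$ and all $f\mapsto f(t-)$, it produces for each $t$ a function $f_t\in\ker h$ with $f_t(t)\neq0$ and $f_t(t-)\neq0$, hence $|f_t|$ bounded below on a neighbourhood of $t$; compactness of $\oi$ then gives finitely many such functions whose sum of squares $F=\sum|f_{t_i}|^2\in\ker h$ is bounded below, hence invertible, forcing $1\in\ker h$.

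Your approach instead exploits the special structure of $\doi$ through the idempotents $u_t=\etta_{[t,1]}$: the relations $u_t^2=u_t$ and $u_su_t=u_{\max(s,t)}$ force $\set{t:h(u_t)=1}$ to be a nonempty down-set, hence $[0,\tau]$ or $[0,\tau)$, and then density of step functions together with automatic continuity of $h$ identifies $h$ with $\gd_\tau$ or with $f\mapsto f(\tau-)$. This is more elementary and direct for this particular algebra, and it nicely explains \emph{why} the split interval appears: the characters are in bijection with the Dedekind-style cuts of $\oi$. The paper's argument, on the other hand, is the one that generalises verbatim to any commutative $C^*$-algebra.

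One small presentational point: your sentence establishing pairwise distinctness (``for $t\in\ooi$ the function $u_t$ has value $1$ at $t$ but $0$ at $t-$'') literally only separates $\gd_t$ from the left-limit functional at the \emph{same} $t$. Separating points at different $t$'s is equally easy with the same $u_t$'s, but you should say a word to that effect.
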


\begin{proof}
This is a simple adaption of the standard argument for the Banach algebra of
continuous functions on a compact set.
  The mappings $f\mapsto f(t)$ and $f\mapsto f(t-)$ are non-zero
  homomorphisms, and they are obviously all distinct. 

Suppose that there
  exists another homomorphism $h:\doi\to\bbC$, and let $M\=\ker(h)$; thus
  $M$ is a maximal ideal in $\doi$. Since $h$ differs from all $f\mapsto
  f(t)$ and $f\mapsto f(t-)$, there exists for each $t\in\ooi$ two functions
  $f_{t+},f_{t-}\in M$ such that $f_{t+}(t)\neq0$ and $f_{t-}(t-)\neq0$. By
  taking a suitable linear combination of $f_{t+}$ and $f_{t-}$ we see that
  there exists $f_t\in M$ such that $f_t(t)\neq0$ and $f_t(t-)\neq0$; thus
  there exists an open set $U_t$ such that $t\in U_t$ and $|f_t|$ is bounded
  below in
  $U_t$. For $t=0$ we directly find $f_0$ with $f_0(0)\neq0$ and thus an
  open set $U_0$ with $0\in U_0$ and $f_0$ bounded below in $U_0$. 

The sets $U_t$ form an open cover of the compact set $\oi$, and thus there
exists a finite set $\set{t_1,\dots,t_n}$ such that $\bigcup_{i=1}^n
U_{t_i}\supseteq\oi$. The function 
$F\=\sumin |f_{t_i}|^2=\sumin f_{t_i}\bar f_{t_i}\in M$,
since $M$ is an 
ideal, 
and $\inf_{t\in\oi}F(t)>0$,  by the construction. Hence
$1/F\in\doi$, and 
$1=F\cdot(1/F)\in M$, which is a contradiction.

(Alternatively, one can use the description in \refC{C1} of the continuous
linear functionals and show that \eqref{c1} is multiplicative only in the
cases given in the theorem.)
\end{proof}

We give $\hI$ the %weak$\q$-topology, 
Gelfand topology,
\ie, the topology generated by the
functions $\hf:\hI\to\bbC$; as said above $\hI$ is compact.
By \refT{ThI}, the points in $\hI$ are of two types, $t$ and $t-$; we call
the points $t$ \emph{ordinary}. (We may for symmetry define $t+=t$; then 
 $\hI=\set{t\pm:0<t\le1}\cup\set0$. We shall use the notation $t+$ for the
ordinary points in $\hI$ when we want to distinguish between $t+$ as an
element  of $\hI$ and $t$ as an element of \oi.)
We may thus regard $\oi$ as a subset of
$\hI$, consisting of the ordinary points. (But note that $\oi$ does not have
the subspace topology.)
We then have $\doi=C(\hI)$
as noted by \citetq{Example 5.7}{EdgarII} (and possibly known earlier);
we state this in detail:

\begin{theorem}\label{TcI}
  $\doi=C(\hI)$. More precisely, each function $f\in\doi$ extends 
to a unique continuous function on $\hI$, 
with $f(t-)$ given by  \eqref{f-}, 
and, conversely, if $f$ is a continuous
  function on $\hI$, then the restiction to the ordinary points $t\in\oi$
is a function in $\doi$.
\end{theorem}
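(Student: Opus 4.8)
The plan is to identify $\doi$ with $C(\hI)$ via the Gelfand transform, using the identification of the maximal ideal space $\hI$ given in \refT{ThI}. Since $\doi$ (in its complex form) is a commutative $C^*$-algebra with maximal ideal space $\hI$, the Gelfand--Naimark theorem (\cf{} \refE{EC*} and \cite[Chapter VIII.2]{Conway}) tells us that the Gelfand transform $f\mapsto\hf$ is an isometric $*$-isomorphism of $\doi$ onto $C(\hI)$. So the content to check is just that this abstract isomorphism is the concrete extension/restriction map claimed in the statement.

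First I would note that, by the description of $\hI$ in \refT{ThI}, the value $\hf(h)$ of the Gelfand transform at a homomorphism $h$ is $h(f)$, which is $f(t)$ when $h$ is the ordinary point $t$ and $f(t-)$ when $h$ is the point $t-$. Thus restricting $\hf$ to the ordinary points $\set{t:0\le t\le1}\subseteq\hI$ recovers the original function $f$ on $\oi$; this gives the ``conversely'' direction once we know the restriction map is onto, which it is because the Gelfand transform is onto $C(\hI)$. For the first direction, given $f\in\doi$ we define its extension to $\hI$ by $\hf$; continuity of $\hf$ on $\hI$ is automatic since $\hf\in C(\hI)$, and uniqueness follows because the ordinary points are dense in $\hI$ (every homomorphism $t-$ is a limit in the Gelfand topology of ordinary points $s+$ with $s\upto t$, since $\hf(s+)=f(s)\to f(t-)=\hf(t-)$ for all $f$, and the functions $\hf$ generate the topology).

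The one point that needs a small argument is that the value of the extension at the point $t-$ is indeed the left limit $f(t-)$ as defined by \eqref{f-}, rather than some other quantity; but this is exactly the statement that the homomorphism labelled $t-$ in \refT{ThI} is $f\mapsto f(t-)$, which was already established there. Similarly one should check that the topology on $\hI$ makes the ordinary points dense, which follows from the previous paragraph, and that $\oi$ does not carry the subspace topology from $\hI$ (each point $t$ has the two-sided neighbourhoods of $\oi$ refined by one-sided pieces coming from the points $t-$ and $t+$) --- though this last remark is parenthetical and need not be belaboured.

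I do not expect any serious obstacle here: the theorem is essentially a restatement of \refT{ThI} together with the Gelfand--Naimark theorem, and the only care required is in matching the abstract isometry with the explicit formulas, plus the density argument for uniqueness of the extension. The mildly delicate step is making the identification clean for the real (as opposed to complex) case, but as the paper already noted at the start of \refS{S:algebra}, the real statement follows from the complex one by passing to the subalgebra of real-valued functions, and the extension of a real $f\in\doi$ has real left limits, so $\hf$ restricted to $\hI$ is again real-valued.

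\begin{proof}
This is essentially a reformulation of \refT{ThI}.
By \refT{ThI}, the maximal ideal space of the (complex) $C^*$-algebra $\doi$
is $\hI$, and the Gelfand transform $f\mapsto\hf$, where $\hf(h)\=h(f)$ for a
homomorphism $h$, is an isometric $*$-isomorphism of $\doi$ onto $C(\hI)$
(see \refE{EC*}).
By \refT{ThI}, for $f\in\doi$ we have $\hf(t+)=f(t)$ at an ordinary point
$t\in\oi$ and $\hf(t-)=f(t-)$, with $f(t-)$ given by \eqref{f-}.
Hence the restriction of the continuous function $\hf$ to the set of ordinary
points $\set{t+:t\in\oi}$, which we identify with $\oi$, is the original
function $f$; this exhibits $\hf$ as a continuous extension of $f$ to $\hI$.
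Conversely, since the Gelfand transform maps $\doi$ onto $C(\hI)$, every
$g\in C(\hI)$ is of the form $g=\hf$ for some $f\in\doi$, and thus the
restriction of $g$ to the ordinary points is the function $f\in\doi$.

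It remains to note that the extension is unique. If $f\in\doi$ and $t\in\ooi$,
then for every $g\in\doi$ we have $\widehat g(s+)=g(s)\to g(t-)=\widehat
g(t-)$ as $s\upto t$; since the functions $\widehat g$, $g\in\doi$, generate
the topology of $\hI$, this shows that $t-$ is a limit of ordinary points in
$\hI$. Consequently the ordinary points are dense in $\hI$, and a continuous
function on $\hI$ is determined by its restriction to $\oi$.

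Finally, for the real case, observe that if $f\in\doi$ is real-valued, then
$f(t-)$ is real for every $t\in\ooi$, so $\hf$ is a real-valued continuous
function on $\hI$; thus the isomorphism restricts to an isomorphism between
the real space $\doi$ and $C(\hI)$.
\end{proof}
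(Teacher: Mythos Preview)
Your proof is correct and follows exactly the same approach as the paper: the paper's proof is a single sentence observing that the result is just a reformulation of the Gelfand transform being an isomorphism $\doi\to C(\hI)$, using the description of $\hI$ in \refT{ThI}. You have supplied more detail (the density argument for uniqueness and the real/complex reduction), but the underlying argument is identical.
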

\begin{proof}
  This is just a reformulation of the fact that the Gelfand transform
  $f\mapsto\hf$ is an isomorphism $\doi\to C(\hI)$, using the description of
  $\hI$ in \refT{ThI}.
\end{proof}

\begin{corollary}\label{CDap}
  $\doi$ has the \ap.
\end{corollary}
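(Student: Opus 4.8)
The plan is to deduce this immediately from the preceding structural result. By \refT{TcI}, the Gelfand transform identifies $\doi$ isometrically with $C(\hI)$, where $\hI$ is the maximal ideal space described in \refT{ThI}; this space is compact Hausdorff (as the maximal ideal space of a commutative $C^*$-algebra, equipped with the Gelfand topology). Hence $\doi$ is, up to isometric isomorphism, of the form $C(K)$ for a compact $K$.

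Then I would simply invoke \refT{TCKap}, which states that $C(K)$ has the approximation property for every compact $K$. Since the approximation property is preserved under (isometric, or indeed any) isomorphism of Banach spaces — e.g.\ via characterisation \ref{apapp} in \refS{Sapprox}, a finite rank operator on $C(K)$ transports to a finite rank operator on $\doi$ and norms are preserved — it follows that $\doi$ has the approximation property. There is no real obstacle here: the work was entirely done in establishing $\doi = C(\hI)$ in \refT{TcI}, and the corollary is a one-line consequence. (The same argument shows, more generally, that every commutative $C^*$-algebra, viewed as a real Banach space of hermitean elements, has the approximation property, cf.\ \refE{EC*}.)
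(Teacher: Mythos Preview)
Your proof is correct and follows exactly the same route as the paper: the paper's proof is the single sentence ``In fact, $C(K)$ has the \ap{} for every compact $K$, see \refT{TCKap}'', which implicitly uses the identification $\doi=C(\hI)$ from \refT{TcI} just as you do. Your additional remarks on isomorphism invariance and the $C^*$-algebra generalisation are sound but not needed here.
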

\begin{proof}
  In fact, $C(K)$ has the \ap{} for every compact  $K$, see
\refT{TCKap} or  \cite[Example 4.2]{Ryan}.
\end{proof}

%The same argument shows that $\doi$ also has the (stronger) metric \ap.

The topological space $\hI$ 
is called the \emph{split interval} or \emph{two arrow space}. 
(Actually, this name is often used for a modification of $\hI$ obtained by
either adding  an extra point $0-$ or deleting $1$; both modifications are
symmetric with a natural involutive homeomorphism $t\pm\mapsto (1-t)\mp$.)

Note that there is a natural total order on $\hI$, with $x<y-<y$ when
$x,y\in\oi$ 
with $x<y$. (This is the lexicographic order, if we regard $\hI$ as a subset
of $\oi\times\set{-,+}$.) We define intervals in $\hI$ in the usual way, using
this order. Recall that any totally ordered set can be given a topology, the
\emph{order topology}, with a base consisting of all open intervals 
$(-\infty,a)$, $(a,\infty)$, and $(a,b)$, see \eg{}
\cite[Problems 1.7.4 and 3.12.3]{Engelking}.

Recall that a compact Hausdorff space is 
\emph{totally disconnected} 
or \emph{zero-dimensional} 
if it has a base consisting of open and closed sets,
and \emph{extremally disconnected} if the closure of every open set is open.
(These notions are used also for non-compact spaces, but then ``totally
disconnected''  is used in several, non-equivalent, meanings, coinciding for
compact spaces, see \eg{} \cite{Engelking}.)

\begin{theorem}\label{Torder}
The compact Hausdorff topology on $\hI$ 
equals the order topology. The space is totally disconnected but not
extremally disconnected.

Furthermore, 
$1$ is isolated (\ie, \set1 is open), 
each $t\in[0,1)$ has a \nbhb{} consisting of the intervals
  $[t,u)$, $u>t$, 
and each $t-$ has a \nbhb{} consisting of the intervals
 $[u,t-]$, $u<t$.
These \nbhb{s} consist of open and closed sets.
\end{theorem}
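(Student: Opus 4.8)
\textbf{Proof proposal for \refT{Torder}.}

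The plan is to establish the four assertions (order topology $=$ Gelfand topology; totally disconnected; not extremally disconnected; and the explicit \nbhb{s}) by first pinning down a convenient base for the Gelfand topology and then matching it with the order-topology base. First I would record that, by \refT{ThI}, $\hI$ consists of the points $t+$ ($t\in\oi$) and $t-$ ($t\in\ooi$), with $0=0+$, and that the Gelfand topology is by definition the weakest topology making every $\hat f$, $f\in\doi$, continuous. The key computation is to evaluate characteristic-type functions: for $t\in[0,1)$ the function $\etta_{[t,1]}\in\doi$ has Gelfand extension equal to $1$ on the interval $[t+,1+]=[t,1]$ of $\hI$ and $0$ below it, while $\etta_{[0,t)}\in\doi$ (for $t\in\ooi$) has Gelfand extension $1$ on $[0,t-]$ and $0$ on $[t+,1]$. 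Hence each half-line $[t+,\to)$ and each $(\gets,t-]$ is open-and-closed in the Gelfand topology, and intersecting two such sets shows every order-interval with split-point endpoints is clopen. This gives one inclusion: the order topology is coarser than (contained in) the Gelfand topology, and moreover the order topology already has a base of clopen sets.

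Next I would prove the reverse inclusion: every $\hat f$ is continuous for the order topology. This is exactly the content of $f\in\doi$ together with \refT{TcI}: right-continuity at $t$ says $\hat f$ is continuous at $t+$ from the right in the order, the existence of the left limit $f(t-)$ and the way $t-$ sits just below $t$ in the order gives continuity at $t-$, and one checks the two-sided condition at each point using the \nbhb{s} $[t,u)$ and $[u,t-]$. Concretely, given $\eps>0$ and a point $x\in\hI$, the defining property of $\doi$ produces $u$ with $|\hat f(y)-\hat f(x)|<\eps$ for $y$ in the appropriate order-interval containing $x$; this is precisely the claim that these intervals form a \nbhb. Since both topologies are compact Hausdorff and one is finer than the other, they coincide; this simultaneously verifies the \nbhb{} statements in the theorem and, because the exhibited \nbhb{s} consist of clopen sets, shows $\hI$ is totally disconnected. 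That $1=1+$ is isolated is immediate since $\{1\}=[1+,\to)$ is one of the clopen half-lines (equivalently $\etta_{\{1\}}\in\doi$).

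Finally, for ``not extremally disconnected'' I would exhibit an open set whose closure is not open. A natural candidate is $U\=\bigcup_{n\ge1}[\,(1-2^{-2n+1})+,\;(1-2^{-2n})-\,]$, a countable union of clopen order-intervals marching up to $1$ from the left, separated by gaps; $U$ is open, but $1-\notin\hI$ makes no sense, so instead take the limit point: the closure $\overline U$ contains the point $(1)-$? — more carefully, choose $U$ so that its closure picks up exactly one extra point $x$ of the form $t-$ while $x$ has no \nbh{} contained in $\overline U$ because arbitrarily small intervals $[u,t-]$ meet the complementary gaps. I expect this last step — choosing the open set and verifying its closure fails to be open — to be the main obstacle, since it requires care with which of the two copies $t+,t-$ of a limit point actually lies in the closure; the cleanest route is to use the standard fact that the split interval is separable (the rationals $q+$ are dense) but has an uncountable discrete-in-itself family of points, or simply to cite that a compact space that is both separable and first-countable-failing in the manner of $\hI$ cannot be extremally disconnected because extremally disconnected compact spaces have no nontrivial convergent sequences, whereas $q_n+\to 1$ along a sequence $q_n\upto1$. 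I would phrase the non-extremal-disconnectedness via this convergent-sequence criterion, which makes it a one-line consequence of the order description.
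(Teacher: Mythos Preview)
Your treatment of the equality of topologies, the \nbhb{s}, and total disconnectedness is essentially the paper's argument: show via indicator functions that the basic order-intervals are Gelfand-clopen, then use the compact-versus-Hausdorff trick. (Your direct verification that each $\hat f$ is order-continuous is correct but unnecessary once you have one inclusion and compactness.)

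The non-extremal-disconnectedness step, however, contains real errors. First, your aside ``$1-\notin\hI$ makes no sense'' is itself the confusion: $1-$ \emph{is} a point of $\hI$, and in fact your abandoned construction $U=\bigcup_n[(1-2^{-2n+1})+,(1-2^{-2n})-]$ would work, with $\overline U=U\cup\{1-\}$ not open because every \nbh{} $[u,1-]$ of $1-$ meets the gaps. Second, your convergent-sequence criterion (extremally disconnected Hausdorff spaces admit no nontrivial convergent sequences) is valid, but your example fails as stated: if $q_n\upto1$ then $q_n+\to1-$, not $1=1+$; the point $1+$ is isolated, so nothing nontrivial converges to it. Any of $q_n+\to1-$, or $s_n\downto t$ with $s_n+\to t+$ for $t\in[0,1)$, would repair this. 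The paper sidesteps all of this with the one-line example $U=\bigcup_{n\ge1}[1/(2n),1/(2n-1))$, whose closure $U\cup\{0\}$ is not open since every \nbh{} $[0,u)$ of $0$ meets the complementary gaps.
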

For $t-$, there is an alternative \nbhb{} consisting
of the intervals $(u,t-]$; these sets are open but not closed.
(Symmetrically, the intervals $[t,u-)$ form another open \nbhb{} at $t$.)

\begin{proof}
We first consider the order topology and note that the given collections of
sets are neighbourhood bases; %, consisting of open and closed sets; 
this is
easily seen  since 
$[t,u)=(t-,u)$, % ($0<t<u$),
%$[0,u)=(-\infty,u)$ ($0<u$),
$[u,t-]=(u-,t)$ 
and $\set1=(1-,\infty)$, with the interpretation $(0-,u)=(-\infty,u)$.

In particular, these intervals
form together a base for the order topology. If 
$J$ is any of these intervals, then $\etta_J$ is a function on $\hI$ whose
restriction $f$ to $\oi$ belongs to $\doi$, and it is easily verified that
$\hf=\etta_J$. Hence, by \refT{TcI}, $\etta_J\in C(\hI)$, and thus $J$ is
open and closed in $\hI$ with the Gelfand topology. 
This shows that the Gelfand topology is stronger than the order topology.
Since the Gelfand topology is compact and the order topology Hausdorff, 
the topologies  coincide.

We have seen that the given base consists of open and closed sets; hence the
space is totally disconnected. On the other hand, 
$U=\bigcup_{n=1}^\infty [1/(2n),1/(2n-1))$ is an open set whose closure
$\overline U = U\cup\set0$ is not open; hence $\hI$ is not extremally
  disconnected. 
\end{proof}

\begin{corollary}
  The compact space $\hI$ is 
separable and
first countable (each point has a countable \nbhb),
but not second countable ($\hI$ does not have a countable base).
\end{corollary}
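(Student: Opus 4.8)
The plan is to verify each of the three topological properties of $\hI$ in turn, using the explicit description of the space and its topology from \refT{ThI}, \refT{TcI}, and \refT{Torder}, together with the neighbourhood bases displayed there.

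First I would establish separability. The idea is that the countable set $\set{t+ : t\in\bbQ\cap\oi}\cup\set0$ (the rational ordinary points, together with $0$) is dense in $\hI$. To see this, take a nonempty basic open set $J$; by \refT{Torder} we may assume $J$ is one of the intervals $[t,u)$ with $u>t$, $[u,t-]$ with $u<t$, or $\set1$. In each of the first two cases the interval contains a subinterval of ordinary points with rational endpoints (for instance $[t,u)\supseteq[r+,s+)$ for suitable rationals $r<s$ with $t\le r$), hence contains a rational ordinary point; a slight separate check handles $\set1$ (note $\set1$ is isolated, but $1$ itself can be taken rational, so we include it, or more simply adjoin $1$ to the countable set). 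Thus every nonempty open set meets the countable set, proving separability.

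Next, first countability: \refT{Torder} already exhibits, for each point of $\hI$, an explicit countable neighbourhood base — the intervals $[t,u)$ with $u$ ranging over, say, $t+1/n$ (for points $t\in[0,1)$), the intervals $[u,t-]$ with $u = t-1/n$ (for points $t-$), and the singleton $\set1$ for the isolated point $1$. One just remarks that it suffices to let $u$ run through a countable sequence decreasing/increasing to $t$, since any interval $[t,u)$ with $u>t$ contains $[t,t+1/n)$ for $n$ large. So each point has a countable neighbourhood base and $\hI$ is first countable.

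Finally, the failure of second countability. The cleanest argument is to produce an uncountable family of pairwise-disjoint nonempty open sets, which precludes any countable base (a base must contain, for each member of such a family, a nonempty open subset, and these are then distinct). For each $t\in\ooi$ the set $\set{t-, t+}$ — equivalently the interval $(u, t-]\cup[t, v)$ for suitable $u<t<v$, but more simply: take the open set $U_t = [t+, t+ + \eps_t)\cup (t - \eps_t'-, t-]$; cleaner still, observe that the singleton $\set{t-}$ is \emph{not} open, so instead use the pairs $V_t \= \set{t-}$'s genuine open neighbourhoods. Actually the simplest choice: the intervals $\bigl(\, (t-\tfrac1N)+\,,\ t+\,\bigr)$ for $t$ ranging over an uncountable subset of $\ooi$ and $N$ large enough depending on $t$ — but to get disjointness uniformly, the standard trick is that an uncountable subset of $\oi$ always contains an uncountable subset $S$ together with $\eps_t>0$ such that the intervals $(t-\eps_t, t+\eps_t)$ are pairwise disjoint fails. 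The honest route, and the one I would take: note that $\set{t-,t+}$ gives, via the basic open set $[t+, (t')+)$ where $t'>t$, that for distinct rationals... no. The robust argument is the cardinality one: a second countable space is separable and has cardinality at most $\cardc$ — but $|\hI| = \cardc$ too, so that fails. Instead use: a second countable space satisfies the countable chain condition in the strong sense that it has no uncountable family of disjoint open sets, and such a family exists here because for each $t\in\ooi$ the open set $W_t \= [t+,\ t+)$ — I need $W_t$ genuinely nonempty and the $W_t$ disjoint. Take $W_t$ to be an open interval around $t+$ not containing $0$ nor $1$; these all overlap. So the disjoint-family approach needs an uncountable antichain, which $\oi$ does not supply. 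Therefore the cleanest correct argument is: if $\hI$ had a countable base $\cB$, then for each $t\in\ooi$ the point $t+$ has a basic neighbourhood $B_t\in\cB$ with $B_t\subseteq[t+, u_t)$ for some $u_t>t$, hence $\inf B_t\cap\oi = t$ (as a point of $\hI$, $t+$ is the minimum of $B_t$); so $t\mapsto B_t$ is injective from $\ooi$ into $\cB$, contradicting countability of $\cB$. I expect \emph{this last point — pinning down exactly why second countability fails} — to be the main obstacle, precisely because the superficially appealing disjoint-open-family argument does not work for $\hI$; the correct argument is the injection $t+\mapsto B_t$ just described, exploiting that $t+$ is the least element of any of its sufficiently small basic neighbourhoods. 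I would write that out carefully and refer back to the neighbourhood base in \refT{Torder}.
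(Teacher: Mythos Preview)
Your proposal is correct and, once the exploratory detours for non-second-countability are discarded, lands on essentially the same argument as the paper: rationals (ordinary points) are dense; the intervals from \refT{Torder} with rational (or $t\pm 1/n$) endpoints give countable local bases; and for any base $\cB$, choosing for each $t$ a member $B_t\in\cB$ with $t+\in B_t\subseteq[t+,u_t)$ forces $t+=\min B_t$, so $t\mapsto B_t$ is injective and $|\cB|\ge\cardc$. The paper uses the open set $[t,1]$ in place of your $[t+,u_t)$, but the mechanism---recovering $t$ as the minimum of the chosen basic set---is identical.
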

\begin{proof}
$\hI$ is separable, since the rational numbers are dense. 

  We obtain countable \nbhb{s} by taking rational $u$ only in the intervals
  in \refT{Torder}.

On the other hand, if \set{U_\ga} is a base for the topology, then each set
$[t,1]$, which is open, 
contains some $U_{\ga(t)}$ with $t\in U_{\ga(t)}$. Then $t =\min
U_{\ga(t)}$, and thus the sets $U_{\ga(t)}$ are distinct. It follows that
every base contains (at least) $\cardc$ elements.
\end{proof}

\begin{corollary}\label{ChImetric}
  The compact space $\hI$ is not metrizable.  
\end{corollary}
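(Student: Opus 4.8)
The plan is to derive non-metrizability of $\hI$ directly from the preceding corollary, which states that $\hI$ is first countable but not second countable. The key fact I would invoke is the standard theorem that a compact Hausdorff space is metrizable if and only if it is second countable (equivalently, metrizable if and only if separable and first countable, since a metric space is separable iff second countable). Since $\hI$ has been shown to be compact Hausdorff but \emph{not} second countable, it cannot be metrizable.

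First I would state the proof in one line: if $\hI$ were metrizable, then since it is separable (shown in the previous corollary) it would be second countable --- a metric space is second countable whenever it is separable, taking the countable base of balls of rational radius centred at points of a countable dense set. But the previous corollary shows $\hI$ is not second countable, a contradiction. Alternatively, one can argue directly from first countability: a compact metric space is second countable (cover by finitely many balls of radius $1/n$ for each $n$), again contradicting the previous corollary.

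\begin{proof}
If $\hI$ were metrizable, then, being separable (by the previous corollary), it would be second countable: if $D$ is a countable dense subset of a metric space, the balls $B(d,q)$ with $d\in D$ and $q\in\bbQ_{>0}$ form a countable base. This contradicts the previous corollary, which shows that $\hI$ is not second countable. Hence $\hI$ is not metrizable.
\end{proof}

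There is no real obstacle here; the only point requiring care is to make sure the cited facts are exactly the ones already established (compactness and the Hausdorff property from \refT{Torder}, and separability together with the failure of second countability from the immediately preceding corollary), and that the elementary topology lemma invoked --- separable metric implies second countable --- is genuinely standard and needs no further justification in this paper.
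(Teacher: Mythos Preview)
Your proof is correct and takes essentially the same approach as the paper's first argument: derive a contradiction from the failure of second countability established in the preceding corollary (the paper invokes ``compact metrizable $\implies$ second countable'' while you use ``separable metric $\implies$ second countable'', but these are interchangeable here and you even mention the compactness version yourself). The paper also offers a second, independent proof via \refT{TCKsep}: since $C(\hI)=\doi$ is not separable, $\hI$ cannot be metrizable.
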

\begin{proof}
A compact metrizable space is second countable \cite[Theorem 4.2.8]{Engelking}.

Alternatively, this follows by \refT{TCKsep} since  
$C(\hI)=\doi$ is not separable.
\end{proof}

\begin{remark}\label{Rsorgenfrey}
The topology on $\hI$ induces on the subspace $\oi$ the topology where
$\set1$ is open and each $t\in[0,1)$ has a \nbhb{} 
consisting of the intervals $[t,u)$, $u>t$.
This (or rather the corresponding topology on $\bbR$) is known as the
\emph{Sorgenfrey line}, and is a standard source of counterexamples in
topology, see 
\eg{} \cite{Sorgenfrey} and 
\cite[Examples 1.2.2,  2.3.12, 3.8.14, 5.1.31]{Engelking}.

For example, if $\tI$ denotes $\oi$ with this topology, so $\tI\subset\hI$,
then
$\tI$ and $\tI\times \tI$ are separable, but $\tI\times \tI$ contains the
closed subspace $\set{(t,1-t)}$ which is discrete and uncountable, and thus
not separable. 
(In particular, $\tI\times\tI$ cannot be metrizable, yielding  a third proof
of \refC{ChImetric}.)
Moreover, $\tI$ is paracompact and normal, but $\tI\times
\tI$ is neither
\cite{Sorgenfrey}.
 \end{remark}

\section{Measurability and random variables in $\doi$}\label{SmeasD} 

We equip $C\oi$, $D\oi$ and $\cooi$ with the $\gs$-fields generated by point
evaluations; we denote these by $\cC$ (as in \refS{SCK}), $\cD$ and $\cco$
(to distinguish it from $\cC$).
We further, as in \refS{SCK}, 
equip $M\oi$ with the $\gs$-field $\cM$ generated by the mappings
$\mu\to\int f\dd\mu$, $f\in\coi$.
In the present section, we shall always use these $\gs$-fields, even if we
do not always say so explicitly.

Note that $C\oi$ is a separable Banach space, and thus $\cC$ equals 
the Borel $\gs$-field $\cB(C)$ on $\coi$; moreover, it equals the
$\gs$-field $\bw(C)$ 
generated by the continuous linear functionals, see \refC{CCKcC}.

On the other hand, $\doi$ is \emph{not} separable, and $\cD$ is \emph{not}
equal to the Borel \gsf{} $\cB(D)$, see \refE{Eetta}. 
(The non-separability of $D$ causes several problems, and is the main source of
complications in the proofs below.)
However, we shall see that $\cD$
equals the $\gs$-field $\bw(D)$
generated by the continuous linear functionals.
It is also well-known that $\cD$ equals the Borel \gsf{} for the Skorohod
topology on \doi{}, see
\cite[Section 12]{Billingsley}. 
(This is a weaker topology which is separable metric but
not a vector space topology; it is the topology commonly used on $D$, but
it is not used in the present paper where we consider Banach spaces.)
Furthermore, $\cD$ also equals the \gsf{} generated by balls in $D$
\cite[Section 15]{Billingsley}. 

Also $\cooi$ is not separable, but we have seen in \refS{Sc0} that it
nevertheless has several nice properties.

We begin by proving some lemmas.

\begin{lemma}
  \label{LM1}
The mappings $(f,t)\mapsto f(t)$, 
$(f,t)\mapsto f(t-)$ and 
$(f,t)\mapsto \gD f(t)$ are measurable $D\times\oi\to\bbR$.
Furthermore, 
the map $\gD$ is measurable $\doi\to\cooi$.
\end{lemma}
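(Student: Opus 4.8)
The plan is to follow the proof of \refL{LCKjoint}, with continuity replaced by the two defining properties of functions in $\doi$: right-continuity and existence of left limits. The point is that each of these gives a countable approximation of a point evaluation by other point evaluations, which suffices for joint measurability with respect to $\cD\times\cB(\oi)$ (recall $\cD$ is by definition generated by the point evaluations).

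First I would handle $(f,t)\mapsto f(t)$. For $n\ge1$ let $\phi_n:\oi\to\set{k/2^n:1\le k\le2^n}$ be $\phi_n(t)\=\min\set{k/2^n:k/2^n\ge t}$; then $\phi_n(t)\downto t$, and by right-continuity $f(\phi_n(t))\to f(t)$ for every $f\in\doi$ and $t\in\oi$ (if $\phi_n(t)=t$ for large $n$ this is trivial, otherwise $\phi_n(t)\downto t$ strictly and it is the one-sided limit from the right; the case $t=1$ is trivial since $\phi_n(1)=1$). Since $\phi_n$ is a Borel step function, the set $\set{(f,t):\phi_n(t)=k/2^n}$ is of the form $D\times I_{nk}$ with $I_{nk}$ a Borel subinterval of $\oi$ (namely $[0,1/2^n]$ for $k=1$ and $\bigl((k-1)/2^n,k/2^n\bigr]$ for $k\ge2$), and on it $(f,t)\mapsto f(\phi_n(t))$ agrees with $(f,t)\mapsto f(k/2^n)$, which is $(\cD\times\cB(\oi))$-measurable because $f\mapsto f(k/2^n)$ is $\cD$-measurable. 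Glueing over the finitely many values $k$ shows $(f,t)\mapsto f(\phi_n(t))$ is $(\cD\times\cB(\oi))$-measurable, hence so is the pointwise limit $(f,t)\mapsto f(t)$.

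Then I would treat $(f,t)\mapsto f(t-)$ symmetrically, approximating from below: for $t\in\ooi$ put $\psi_n(t)\=\max\set{k/2^n:0\le k\le2^n,\ k/2^n<t}$, so $\psi_n(t)\upto t$ and $f(\psi_n(t))\to f(t-)$ by existence of the left limit; for $t=0$ use the convention $f(0-)\=f(0)$, a plain point evaluation. The same glueing argument gives joint measurability on $\doi\times\ooi$, and combining with the $t=0$ case gives it on all of $\doi\times\oi$. Consequently $(f,t)\mapsto\gD f(t)=f(t)-f(t-)$ is measurable as a difference of measurable functions. Finally, for $\gD:\doi\to\cooi$ (well-defined by \refT{TgD}), recall that $\cooi$ carries the $\gs$-field $\cco$ generated by the point evaluations $g\mapsto g(t)$; so $\gD$ is $\cco$-measurable once $f\mapsto\gD f(t)$ is $\cD$-measurable for each $t$, which is immediate from the joint measurability just established (or directly: $f\mapsto f(t)$ is a point evaluation and $f\mapsto f(t-)=\lim_n f(\psi_n(t))$ a pointwise limit of point evaluations, both $\cD$-measurable). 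I do not anticipate a real obstacle here; the only thing to watch is the bookkeeping at the endpoints $t=0,1$ together with the convention $\gD f(0)=0$, and this minor care is forced precisely because $\doi$ is non-separable, so we cannot simply invoke continuity as in \refL{LCKjoint}.
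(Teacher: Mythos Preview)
Your argument is correct and is essentially the same as the paper's: both approximate $f(t)$ by $f$ evaluated at a nearby grid point from the right (the paper uses $\ceil{nt}/n$, you use dyadic $\phi_n(t)$) and $f(t-)$ by evaluation at a grid point strictly to the left, then take the pointwise limit. Your write-up is somewhat more explicit about why each approximant is $(\cD\times\cB(\oi))$-measurable, but the underlying idea is identical.
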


\begin{proof}
  Since each $f\in D$ is right-continuous, $f(t)=\lim_\ntoo f(\ceil{nt}/n)$,
and $(f,t)\mapsto f(\ceil{nt}/n)$ is measurable for each $n$.
The measurability of $f(t-)$ is shown similarly, using $f((\ceil{nt}-1)/n)$
(for $t>0$), and $\gD f(t)=f(t)-f(t-)$.

In particular, $f\mapsto \gD f(t)$ is measurable for each fixed $t$, which
shows that $\gD:\doi\to\cooi$ is measurable.
\end{proof}

\begin{lemma}\label{LM3}
  There exists a sequence of measurable maps $x_k:\doi\to\oi$,
  $k=1,2,\dots$, such that if  
$f\in\doi$, then the non-zero values of $x_k(f)$, $k=1,2,\dots$ are the jump
  points of $f$, \ie, the points $x\in\oi$ with $|\gD f(x)|>0$, without
  repetition. 
\end{lemma}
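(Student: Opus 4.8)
The plan is to enumerate the jump points of a càdlàg function in a way that depends measurably on $f$. First I would fix, for each $n\ge1$, the dyadic partition of $\oi$ into the intervals $I_{n,j}=((j-1)/2^n, j/2^n]$ for $j=1,\dots,2^n$ (together with $\{0\}$, which carries no jump since $\gD f(0)=0$). For a fixed $f\in D$ and a fixed threshold $\gd>0$, the set $\{x:|\gD f(x)|>\gd\}$ is finite (this is exactly the fact proved inside \refT{TgD}), so for $n$ large enough each interval $I_{n,j}$ contains at most one such point. The idea is then to locate, within each $I_{n,j}$, the (essentially unique for large $n$) point carrying the largest jump, by a bisection-type search: define $x_{n,j}(f)$ to be the left endpoint of the dyadic subinterval of $I_{n,j}$ at some much finer level $m(n)$ on which $|\gD f|$ attains its supremum over $I_{n,j}$, breaking ties by taking the leftmost such subinterval. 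Since $(f,t)\mapsto \gD f(t)$ is measurable by \refL{LM1}, the supremum of $|\gD f(t)|$ over a fixed dyadic interval is a measurable function of $f$ (a countable sup of measurable functions over the dyadic rationals suffices, using right-continuity of $\gD f$ — actually $\gD f$ is itself only known to be in $c_0$, but $|\gD f|$ restricted to an interval has its sup equal to the sup over any dense subset together with possibly the value at countably many jump points, so one has to be a little careful and instead take $\sup$ over \emph{all} dyadic points and over the finitely many "candidate" jump locations produced at coarser levels — see below), and the ``argmax'' over finitely many dyadic candidates at level $m(n)$ is measurable.

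More concretely, I would proceed recursively. At level $n$, for each $j$ I would have a measurable candidate $x_{n,j}(f)$. To get the actual jump points I would take a limit: define $y_{n,j}(f)=\lim_{m\to\infty} x_{m,\text{(child of }j\text{)}}(f)$, where at each finer level one keeps refining the location of the largest jump inside the current interval. For fixed $f$, once $n$ is large enough that $I_{n,j}$ contains at most one jump of size $>1/N$ for each $N\le n$, this refinement stabilizes and converges to the unique jump point in $I_{n,j}$ if one exists (and to, say, the left endpoint, with recorded jump value $0$, if none exists). A cleaner bookkeeping: let $J_N(f)=\{x:|\gD f(x)|>1/N\}$; this is finite, and I can first produce, for each $N$, a finite measurable list enumerating $J_N(f)$ by: for $n$ so large that distinct points of $J_N(f)$ lie in distinct $I_{n,j}$, scan $j=1,\dots,2^n$ and for each $j$ test (measurably, via \refL{LM1} and a countable sup) whether $\sup_{x\in I_{n,j}}|\gD f(x)|>1/N$; if so, record the location of that jump by further dyadic bisection driven by where $|\gD f|$ exceeds $1/N$. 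The key point making each test measurable is that $\{f:\sup_{x\in I}|\gD f(x)|>1/N\}$ is measurable, which holds because this sup is the supremum of $|\gD f(x)|$ over $x$ ranging in the countable set $(I\cap\bbQ)\cup\{$endpoints$\}\cup\{$all points produced as jump locations at all coarser stages$\}$ — and that last set is countable and measurably defined. Since $J_1(f)\subseteq J_2(f)\subseteq\cdots$ exhausts all jump points, concatenating these finite measurable lists over $N=1,2,\dots$ (and deduplicating: a point already listed for $N-1$ is dropped when it reappears for $N$, which is a measurable operation on finitely many real coordinates) yields the desired sequence $x_1,x_2,\dots$ of measurable maps, with the non-zero values being exactly the jump points without repetition, padding with $0$'s where a list is short.

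The main obstacle I expect is precisely the measurability of the suprema $\sup_{x\in I}|\gD f(x)|$ and, relatedly, of the "location of the largest jump in $I$" map — because $\gD f$ need not be right-continuous (it lies in $c_0\ooi$, not $D$), so one cannot simply replace the sup over an interval by a sup over dyadic rationals. The fix, as sketched, is to carry along the finitely many candidate jump locations generated at all coarser dyadic levels (and rational points), so that at every stage the sup over an interval is realized, up to an error tending to $0$, by a value at one of countably many measurably-defined points; this keeps every set of the form $\{f:\sup_I|\gD f|>c\}$ in the $\gs$-field generated by point evaluations. A second, more routine annoyance is the deduplication and padding at the end, and verifying that for each fixed $f$ the recursion really does stabilize to the correct finite set $J_N(f)$ for all $n$ large; this is straightforward given finiteness of each $J_N(f)$ and the fact that dyadic intervals separate the points of $J_N(f)$ for $n$ large. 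I would relegate both of these to "it is easily checked."
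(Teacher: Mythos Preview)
Your overall strategy matches the paper's: fix thresholds $\eps=2^{-m}$, for each threshold locate the finitely many jump points via a dyadic scheme, then concatenate and remove repeated non-zero values. The gap is exactly the one you flag and do not actually close: the measurability of $\sup_{x\in I}|\gD f(x)|$. Your proposed fix is circular. Since $\gD f\in\cooi$ vanishes off a countable (unknown) set, evaluating $|\gD f|$ at rationals gives $0$ for a function like $f=\etta_{[1/\sqrt2,1]}$, and your ``candidate jump locations generated at coarser stages'' are precisely what you are trying to construct; at the first stage you have none, and at every later stage you only have jump points already found at larger thresholds, which tells you nothing about a new jump of intermediate size sitting at an irrational point. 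So neither the test $\{\sup_{x\in I}|\gD f(x)|>1/N\}$ nor the bisection driven by it is measurable on your description.

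The paper avoids this entirely by never looking at $|\gD f|$ directly. It works instead with the oscillation
\[
\var(f;I)\;=\;\sup\{|f(x)-f(y)|:x,y\in I\},
\]
which \emph{is} $\cD$-measurable for each fixed interval $I$, because $f$ (unlike $\gD f$) is c\`adl\`ag and hence the supremum is attained over $x,y$ rational together with the right endpoint of $I$. One then records, for each $n$, the right endpoints $k/2^n$ of those dyadic intervals $[(k-1)/2^n,k/2^n]$ with $\var(f;\cdot)\ge\eps$, listed in increasing order and padded with $0$'s; these are manifestly measurable functions of $f$, and for fixed $f$ they converge, coordinate by coordinate, to the jump points with $|\gD f|\ge\eps$. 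Your argument becomes correct if you make exactly this substitution: replace every occurrence of $\sup_{x\in I}|\gD f(x)|$ by $\var(f;I)$, and replace ``bisection toward the largest value of $|\gD f|$'' by ``take the limit of the dyadic right endpoints of intervals with large oscillation''.
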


\begin{proof}
If $f\in\doi$ and $I\subset\oi$ is an interval, define
\begin{equation}\label{oxberg}
  \var(f;I)\=
\sup\bigset{|f(x)-f(y)|:x,y\in I}.
\end{equation}
Note that it suffices to consider  $x,y$ in \eqref{oxberg} that are
rational, or equal to the right endpoint of $I$; this implies that
$f\mapsto\var(f;I)$ is measurable for each $I$.

Fix $\eps>0$ and $f\in D$ and consider for each $n$ the sequence
$x\nn_1,\dots,x\nn_{m(n)}$ of all dyadic rationals $k/2^n$ such that $1\le
k\le 2^n$ and $\var\bigpar{f;[(k-1)/2^n,k/2^n]}\ge\eps$; we assume that this
sequence is in increasing order and we extend it to an infinite sequence by
defining $x\nn_k=0$ for $k>m(n)$.
It is easily seen that, as \ntoo, $x\nn_k\to x_k$ for each $k$, where
$x_1,x_2,\dots$ are the points $x$ where $|\gD f(x)|\ge\eps$, taken in
increasing order and extended by 0's to an infinite sequence.
By construction, for each $k$, the maps $ f\mapsto x\nn_k$ are measurable,
and thus each $x_k$ is a measurable function of $f$. Repeating this
construction for $\eps=2^{-m}$, $m=0,1,2,\dots$, we may rearrange the
resulting sequences in a single sequence $(x_i)_1^\infty$, eliminating any
repetitions of non-zero values.
\end{proof}

\begin{remark}
In \refL{LM3}, we may further assume that $x_k(f)$ are arranged with 
$|\gD f(x_1)|\ge |\gD f(x_2)|\ge\dots$.  We will not use this, and leave
the proof (using \refL{LM1}) to the reader.
\end{remark}

\begin{lemma}
  \label{LM2}
The map
$(f,\mu)\mapsto \intoi f\dd\mu$ is measurable $D\oi\times M\oi\to\bbR$.
\end{lemma}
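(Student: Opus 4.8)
The plan is to approximate each $f\in\doi$ pointwise by step functions and to pass to the limit inside the integral against $\mu$; this reduces the claim to two elementary facts, namely that the point evaluations generate $\cD$ and that half-open dyadic subintervals of $\oi$ are $\cM$-measurable sets. This is essentially the strategy of \refL{LCKjoint}\ref{lckjointm}, the one difference being that, since $f\in\doi$ need not be continuous, only pointwise (not uniform) approximation will be available — but pointwise approximation together with a uniform bound is all that an integral against a finite signed measure needs.

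First I would fix, for each $n\ge1$, the dyadic partition of $\oi$ into $I_{n1}\=[0,2^{-n}]$ and $I_{nk}\=((k-1)2^{-n},k2^{-n}]$ for $2\le k\le 2^n$, together with the right endpoints $t_{nk}\=k2^{-n}$, and set $f_n\=\sum_{k=1}^{2^n} f(t_{nk})\etta_{I_{nk}}$ for $f\in\doi$. For each $t\in\oi$ the right endpoint of the $n$-th dyadic interval containing $t$ decreases to $t$ as $\ntoo$, so right-continuity of $f$ gives $f_n(t)\to f(t)$ for every $t$, while $|f_n|\le\norm f_D$ pointwise. Writing $\mu=\mu_+-\mu_-$ for the Jordan decomposition into finite positive measures and applying dominated convergence to $\mu_\pm$, I obtain
\begin{equation*}
  \intoi f\dd\mu=\lim_{\ntoo}\sum_{k=1}^{2^n} f(t_{nk})\,\mu(I_{nk})
\end{equation*}
for every $f\in\doi$ and every $\mu\in M\oi$. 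Since a pointwise limit of measurable maps is measurable, it then suffices to verify that each of the finitely many maps $(f,\mu)\mapsto f(t_{nk})\,\mu(I_{nk})$ is $(\cD\times\cM)$-measurable.

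The factor $f\mapsto f(t_{nk})$ is $\cD$-measurable by the definition of $\cD$. For the factor $\mu\mapsto\mu(I_{nk})$ I would pick continuous functions $g_m\in\coi$ with $0\le g_m\le1$ and $g_m\to\etta_{I_{nk}}$ pointwise on $\oi$ (trapezoidal bumps equal to $1$ on a slightly shrunk interval and vanishing off a slightly enlarged one); dominated convergence then gives $\mu(I_{nk})=\lim_m\intoi g_m\dd\mu$, so $\mu\mapsto\mu(I_{nk})$ is a pointwise limit of generators of $\cM$ and hence $\cM$-measurable. A product of a $\cD$-measurable function of $f$ with a $\cM$-measurable function of $\mu$ is $(\cD\times\cM)$-measurable, so summing over $k$ and then letting $\ntoo$ finishes the proof.

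I do not expect a real obstacle here. The only points that need attention are the uniform domination $|f_n|\le\norm f_D$, which legitimizes the interchange of limit and integral (and is the reason it is convenient to sample $f$ at the \emph{right} endpoints of dyadic intervals, so that right-continuity can be invoked), and the routine check that the $g_m$ above really converge pointwise at every point of $\oi$, including the two endpoints of each $I_{nk}$. Everything else is bookkeeping about product $\sigma$-fields and pointwise limits.
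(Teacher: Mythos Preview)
Your argument is correct, and it is genuinely simpler than the paper's. The paper forms Riemann--Stieltjes sums sampled at \emph{left} endpoints over the partition $\{[i/n,(i+1)/n)\}$; these do not converge to $\intoi f\dd\mu$ for arbitrary $f\in\doi$ because jumps prevent the oscillation $\max_i\var(f;[i/n,(i+1)/n])$ from tending to $0$. The paper therefore invokes \refL{LM3} to enumerate the jump points measurably, subtracts a jump part $V_nf$ so that the remainder has small oscillation, and handles the integral of $V_nf$ explicitly. Your choice of sampling at \emph{right} endpoints of a dyadic partition exploits the right-continuity of $f$ directly: $f_n(t)\to f(t)$ everywhere with $|f_n|\le\norm f_D$, so dominated convergence against $|\mu|$ gives $\intoi f_n\dd\mu\to\intoi f\dd\mu$ with no jump bookkeeping and no appeal to \refL{LM3}. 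The only residual checks --- pointwise convergence at dyadic $t$ (where eventually $f_n(t)=f(t)$), at $t=0$, at $t=1$, and the construction of continuous $g_m\to\etta_{I_{nk}}$ respecting whether each endpoint is included --- are the ones you flagged, and they go through. What the paper's route buys is essentially nothing for this lemma; your argument is shorter and avoids an auxiliary result.
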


\begin{proof}
First note that the Riemann--Stieltjes sums
\begin{equation}
%  \label{eldris}
S_n(f,\mu)\= \sum_{i=0}^{n-1}  f(i/n)\mu[i/n,(i+1)/n)+f(1)\mu\set1
\end{equation}
are measurable, and that, using the notation in \eqref{oxberg},
\begin{equation}
  \label{eldris}
\lrabs{S_n(f,\mu)-\intoi f\dd\mu}
\le \norm\mu \max_i \var(f;[i/n,(i+1)/n]).
\end{equation}
Define $S^*(f;\mu)\=\limsup_\ntoo  S_n(f;\mu)$.
Then \eqref{eldris} implies
\begin{equation}
  \label{smagan}
  \begin{split}
\lrabs{S^*(f,\mu)-\intoi f\dd\mu}
&\le \norm\mu \limsup_\ntoo \max_i \var(f;[i/n,(i+1)/n])
\\&
=\norm\mu \max_{x\in\oi}|\gD f(x)|.	
  \end{split}
\end{equation}

Let $x_k(f)$ be as in \refL{LM3} and let $V_n$ be the (non-linear) 
\meas{} map $D\to D$ given by
\begin{equation}
  V_n f (x) \= \sumkn \gD f(x_k(f))\ett{x\ge x_k(f)}.
\end{equation}
Thus $\max_x|\gD (f-V_n f)(x)| = \max_{k>n}|\gD f(x_k(f))|\to0$
as \ntoo, 
since $\gD f\in\cooi$ by \refT{TgD},
and it follows from \eqref{smagan} that,
for any $f\in\doi$ and $\mu\in M\oi$,
\begin{equation}
  S^*(f-V_nf;\mu)-\intoi(f-V_nf)\dd\mu \to 0,
\qquad\text{as }\ntoo.
\end{equation}
Consequently, $\intoi f\dd\mu  $ is the limit as \ntoo{} of the measurable
functions 
\begin{equation*}
S^*(f-V_nf;\mu)+\intoi V_nf\dd\mu
= S^*(f-V_nf;\mu)+\sumkn \gD f(x_k(f))\mu[x_k(f),1],
\end{equation*}
where the last factor is measurable by \refL{LM1} since 
$\mu\mapsto G_\mu(t)\=\mu(t,1]$ is \meas{} $M\oi\to\doi$ and
$\mu[x,1]=G_\mu(x-)$ for $x>0$.
\end{proof}

\begin{theorem}\label{TDw}
Every continuous linear form $\chi\in\doi\q$ is \dmeas.
Hence
  $\cD=\bw(D)$, and a $D$-valued \rv{} $X$ is \dmeas{} if and only
	if it is \wmeas.
\end{theorem}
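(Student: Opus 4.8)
The plan is to use the explicit description of $\doi\q$ from \refC{C1}, which says that every $\chi\in\doi\q$ has the form
\begin{equation*}
\chi(f)=\intoi f\dd\mu+\sum_{t\in\ooi}h(t)\gD f(t)
\end{equation*}
for some $\mu\in M\oi$ and $h\in\ell^1\ooi$. To show $\chi$ is $\cD$-measurable it suffices to show each of the two terms defines a $\cD$-measurable function of $f$. For the first term, $f\mapsto\intoi f\dd\mu$ is $\cD$-measurable by \refL{LM2} (with $\mu$ fixed, the map $f\mapsto\intoi f\dd\mu$ is the section of the jointly measurable map there, hence measurable). For the second term, $f\mapsto\gD f(t)$ is $\cD$-measurable for each fixed $t$ by \refL{LM1}; since $h\in\ell^1\ooi$ has countable support and the sum $\sum_{t\in\supp(h)}h(t)\gD f(t)$ converges absolutely and uniformly in $f$ (bounded by $2\norm{h}_{\ell^1}\norm f_D$ term-by-term, and tail-controlled since $\gD f\in\cooi$), it is a pointwise limit of finite partial sums, each of which is $\cD$-measurable; hence the infinite sum is $\cD$-measurable.

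Once every $\chi\in\doi\q$ is shown to be $\cD$-measurable, we get $\bw(D)\subseteq\cD$ by definition of $\bw(D)$ as the $\gs$-field generated by the continuous linear functionals. For the reverse inclusion $\cD\subseteq\bw(D)$, note that each point evaluation $f\mapsto f(t)$ is itself a continuous linear functional on $\doi$ (indeed $\gd_t\in\doi\q$), so the point evaluations generate a $\gs$-field contained in $\bw(D)$; that is exactly the statement $\cD\subseteq\bw(D)$. Combining the two inclusions gives $\cD=\bw(D)$. The final assertion — that a $D$-valued $X$ is $\cD$-measurable iff it is weakly measurable — is then immediate: weak measurability of $X$ means $\innprod{\xx,X}$ is measurable for every $\xx\in\doi\q$, i.e.\ $X$ is measurable with respect to $\bw(D)$, which now equals $\cD$, i.e.\ $X(t)$ is measurable for every $t$.

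I expect the only mildly delicate point to be the justification that the countable sum $\sum_{t\in\ooi}h(t)\gD f(t)$ is a $\cD$-measurable function of $f$: one must observe that enumerating $\supp(h)$ as $(t_j)_j$, the partial sums $f\mapsto\sum_{j\le n}h(t_j)\gD f(t_j)$ are $\cD$-measurable (finite sums of $\cD$-measurable functions), and that $\sum_{j>n}|h(t_j)|\,|\gD f(t_j)|\le \bigpar{\sum_{j>n}|h(t_j)|}\norm{\gD f}_\cooi\to0$ as $\ntoo$, uniformly on norm-bounded sets of $f$ but in any case pointwise for each fixed $f$, so the series converges pointwise and its sum is $\cD$-measurable as a pointwise limit. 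Everything else is bookkeeping with the already-established Lemmas \ref{LM1} and \ref{LM2} and \refC{C1}, so there is no real obstacle; the proof is short.

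\begin{proof}
By \refC{C1}, any $\chi\in\doi\q$ has the form $\chi(f)=\intoi f\dd\mu+\sum_{t\in\ooi}h(t)\gD f(t)$ with $\mu\in M\oi$ and $h\in\ell^1\ooi$. The map $f\mapsto\intoi f\dd\mu$ is $\cD$-measurable by \refL{LM2}, and for each fixed $t$ the map $f\mapsto\gD f(t)$ is $\cD$-measurable by \refL{LM1}. Enumerate $\supp(h)=\set{t_1,t_2,\dots}$; then each partial sum $f\mapsto\sum_{j=1}^n h(t_j)\gD f(t_j)$ is $\cD$-measurable, and
\begin{equation*}
\Bigabs{\sum_{j>n} h(t_j)\gD f(t_j)}\le\Bigpar{\sum_{j>n}|h(t_j)|}\norm{\gD f}_\cooi\to0
\end{equation*}
as $\ntoo$ for every $f\in\doi$, since $\gD f\in\cooi$ is bounded. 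Hence $f\mapsto\sum_{t\in\ooi}h(t)\gD f(t)$ is a pointwise limit of $\cD$-measurable functions, so it is $\cD$-measurable, and therefore $\chi$ is $\cD$-measurable. This shows $\bw(D)\subseteq\cD$. Conversely, for each $t$ the point evaluation $f\mapsto f(t)=\innprod{\gd_t,f}$ with $\gd_t\in\doi\q$, so $\cD\subseteq\bw(D)$. Thus $\cD=\bw(D)$, and a $D$-valued $X$ is $\cD$-measurable (\ie, $X(t)$ is measurable for every $t$) if and only if $\innprod{\xx,X}$ is measurable for every $\xx\in\doi\q$, \ie, if and only if $X$ is \wmeas.
\end{proof}
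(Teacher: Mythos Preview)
Your proof is correct and follows essentially the same approach as the paper's: use the decomposition of \refC{C1}, apply \refL{LM2} to the integral term and \refL{LM1} to the jump sum, then observe that point evaluations are continuous linear functionals for the reverse inclusion. You spell out the pointwise-limit argument for the countable jump sum a bit more explicitly than the paper does, but the route is the same.
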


\begin{proof}
    By \refL{LM2}, 
%in particular, 
$f\mapsto \intoi f\dd\mu$ is \dmeas{} for every $\mu\in
M\oi$.
%Hence, 
It follows from \refC{C1} that
 $f\mapsto\innprod{\chi,f}$ 
is \dmeas{} for every $\chi\in \doi\q$. 

This implies $\bw(D)\subseteq\cD$.
Conversely, each point evaluation $f(t)=\innprod{\gd_t,f}$ where $\gd_t\in
\doi\q$. Hence $\cD=\bw(D)$.
\end{proof}

\begin{remark}
  \label{RDw}
We can extend the map in \refL{LM2} to $(f,\chi)\mapsto \innprod{f,\chi}$
for $f\in D$, $\chi\in D\q$; however, this map is \emph{not} jointly
\meas{} for $\cD$ and the \gsf{} $\cD\q$ on $D\q$ generated by
$\chi\mapsto\innprod{\chi,f}$, $f\in D$.
(The map is separably \meas{} by \refT{TDw}.)
In fact, let $a(t):\oi\to\bbR$ be a non-measurable function. Then the maps
$t\mapsto f_t\=\etta_{[t,1]}$ and $t\mapsto\chi_t\= g\mapsto a(t)\gD g(t)$
are \meas{} for $\cD$ and $\cD\q$, respectively; note that for any fixed $g$,
$t\mapsto\innprod{\chi_t,g}=a(t)\gD g(t)$ is \meas{} since it has countable
support. However, $t\mapsto\innprod{f_t,\chi_t}=a(t)$ is non-\meas.
\end{remark}

\begin{remark}\label{RDw2}
  We have $\doi=C(\hI)$, 
and it follows from \refL{LM1} that $\cD$ also equals the \gsf{} generated by
point evaluations on $\hI$.
We can also consider the map $(f,t)\mapsto f(t)$
  for $f\in\doi$ and $t\in\hI$. However, in contrast to \refL{LM1}, this map
  is \emph{not} jointly $\cD\times \cB(\hI)$-measurable, where $\cB(\hI)$
  is the Borel \gsf{} on $\hI$. To see this, let $A\subset\oi$ be a
  non-measurable set (with $0\in A$)
and let $\gf:I\to\hI$ be the function defined by
  $\gf(u)\= u+$ when $u\in A$ and
  $\gf(u)\= u-$ when $u\notin A$; then $\gf$ is increasing and thus Borel
  measurable. Furthermore, 
the function $u\mapsto f_u\=\ettau$ is a measurable map $\oi\to\doi=\chI$.
Hence, $u\mapsto(f_u,\gf(u))$ is \meas{} $\oi\to\chI\times\hI$.
However, the composition $u\mapsto f_u(\gf(u))=\etta_A(u)$ is
non-measurable,
showing that $(f,t)\mapsto f(t)$ is not measurable on $\chI\times\hI$.
\end{remark}

One of the main  purposes of this section is to prove the following theorem,
extending \refT{TDw} to multilinear forms.
(Note that the corresponding result for $\coi$ is immediate, since 
$\cC$ equals the Borel $\gs$-field on $\coi$ 
and $\coi$ is separable, which imply that
the product $\gs$-field $\cC^m$ on $(\coi)^m$
equals the Borel $\gs$-field.)

\begin{theorem}\label{TM}
  Every bounded multilinear form $T:(\doi)^m\to\bbR$, for any $m\ge1$, is
  \dmeas.
\end{theorem}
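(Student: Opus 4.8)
The plan is to reduce the multilinear statement to the linear one (Theorem~\ref{TDw}) by an iterated-slicing argument, exploiting the structure of $\doi\q$ given by Corollary~\ref{C1}. First I would fix a bounded $m$-linear form $T:(\doi)^m\to\bbR$ and a fixed point $f\in\doi$, and study the $(m-1)$-linear form $g_2,\dots,g_m\mapsto T(f,g_2,\dots,g_m)$; the issue is not fixing the arguments one at a time (each slice is trivially multilinear and, by induction on $m$, \dmeas{} in the remaining variables for \emph{fixed} $f$) but rather getting \emph{joint} measurability in $f$ and the other arguments simultaneously. So the real content is an approximation scheme: I would show that $T$ can be approximated, uniformly on bounded sets, by multilinear forms built from finitely many ``coordinate'' functionals for which joint measurability is manifest, mimicking the proof of Lemma~\ref{LM2}.

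Concretely, the key device from Lemma~\ref{LM3} is the sequence of measurable jump-location maps $x_k:\doi\to\oi$, together with the measurable truncation operators $V_n:\doi\to\doi$, $V_nf(x)=\sum_{k\le n}\gD f(x_k(f))\,\ett{x\ge x_k(f)}$, which satisfy $\max_x|\gD(f-V_nf)(x)|\to0$. The plan is: given $T$, apply it to the decomposition $f=(f-V_nf)+V_nf$ in each of the $m$ slots and expand by multilinearity into $2^m$ terms. The terms involving only $V_{n}$-pieces are finite sums (over the jump indices) of products of the scalars $\gD f^{(j)}(x_{k_j}(f^{(j)}))$ times evaluations of $T$ on elementary ``jump functions'' $\ett{[x,1]}$ — these are jointly \dmeas{} by Lemma~\ref{LM1}. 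The terms involving at least one $(f^{(j)}-V_nf^{(j)})$-piece must be shown to tend to $0$ as $n\to\infty$, uniformly over bounded arguments; this is where I would need an estimate like $|T(h,\dots)|\le\|T\|\prod\|\cdot\|$ combined with a bound on $\|f-V_nf\|_{\doi}$ in terms of the tail of the jumps. The wrinkle is that $\|f-V_nf\|_{\doi}$ need \emph{not} go to $0$ (only its $\gD$-image does), so the naive norm bound fails.

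Hence the main obstacle, and the heart of the argument, is handling the ``residual'' slots. I expect the fix is to further decompose each $f-V_nf$ into a continuous part (in $\coi$) plus a small-jump part, and to treat the $\coi$-slots separately. For arguments lying in $\coi$, joint measurability is easy because $\coi$ is separable and a multilinear form restricted to a product of separable spaces is Borel, hence jointly measurable for the product of the point-evaluation $\gs$-fields; one can approximate such a slot uniformly by Riemann–Stieltjes-type finite sums as in Lemma~\ref{LM2}. For the small-jump slots one uses the operator-norm bound: the $\gD$-norm of the residual is small, and after projecting out $\coi$ the quotient norm on $D/C$ equals $\tfrac12\|\gD\cdot\|_{\cooi}$ by Theorem~\ref{TgD}, so modulo $\coi$ the residual \emph{is} small in norm. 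Thus I would write $T(f_1,\dots,f_m)$ as a limit of jointly measurable functions by a careful multilinear bookkeeping that, in each slot, splits $f_j$ into (a) a $\coi$-part handled by Riemann sums, (b) finitely many leading jumps handled by Lemma~\ref{LM3}, and (c) a residual of small quotient-norm absorbed into the error term. Once each approximating functional is seen to be \dmeas{} jointly, the pointwise limit is \dmeas, proving the theorem. I would also remark, as a sanity check, that for $m=1$ this recovers exactly the proof of Theorem~\ref{TDw} via Lemma~\ref{LM2} and Corollary~\ref{C1}.
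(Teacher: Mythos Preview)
Your overall strategy—split each argument into a continuous piece, finitely many leading jumps, and a small residual—is the right structural idea, and your handling of the $\coi$-slots and the residual (via a dyadic interpolant $L_m(f_j-V_nf_j)$, exactly as in Lemma~\ref{LM2}) can be made to work. The gap is in the ``pure jump'' terms. After expanding, a typical such term is
\[
\Bigl(\prod_j \gD f_j\bigl(x_{k_j}(f_j)\bigr)\Bigr)\cdot
T\bigl(\etta_{[x_{k_1}(f_1),1]},\dots,\etta_{[x_{k_m}(f_m),1]}\bigr),
\]
and you assert that the second factor is jointly \dmeas{} ``by Lemma~\ref{LM1}''. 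But Lemma~\ref{LM1} only gives measurability of $(f,t)\mapsto f(t)$, $f(t-)$, $\gD f(t)$; it says nothing about the function $(t_1,\dots,t_m)\mapsto T(\etta_{[t_1,1]},\dots,\etta_{[t_m,1]})$. Showing that this function is Borel is not a triviality: for $m=1$ it follows from Corollary~\ref{C1}, but for $m\ge 2$ the ``$\ell^1$-part'' $h_y(x)$ in the slice $f\mapsto T(f,\etta_{[y,1]})$ depends on $y$ in an a priori uncontrolled way, and separate measurability in each $t_j$ does not give joint measurability. In fact, this step is essentially the $c_0$-part of the theorem itself, so invoking Lemma~\ref{LM1} here is circular.

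The paper closes exactly this gap by strengthening the statement and inducting differently: it proves that every bounded form on $D^m\times(\cooi)^\ell$ is measurable, inducting on $m$. In the inductive step one peels off a single $D$-slot by first restricting it to $C$ (obtaining an $M\oi$-valued function of the remaining arguments, measurable by the induction hypothesis and then paired via Lemma~\ref{LM2}), and then observing that the difference factors through $D/C\cong\cooi$ via $\gD$, which \emph{decreases} $m$ and \emph{increases} $\ell$. The base case $m=0$ is forms on $(\cooi)^\ell$, and here the crucial input is Lemma~\ref{LBH}: any bounded multilinear form on $c_0(S)$ is supported on a countable set of coordinates, so it is a pointwise limit of finite-rank forms and hence measurable. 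Your sketch never invokes this $c_0$-structure, and without it the jump terms cannot be controlled. If you want to repair your direct-approximation approach, you will need precisely this lemma (or an equivalent) to show that $T$ restricted to the span of the indicators $\etta_{[t,1]}$ depends only on countably many $t$'s.
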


\begin{proof}
  We shall prove the more general result that for any $m,\ell\ge0$, any 
 bounded multilinear form $\ga:(\doi)^m\times(\cooi)^\ell\to\bbR$  is
 measurable. We do this by induction over $m$.

First, assume that $m=0$, so
$\ga:(\cooi)^\ell\to\bbR$.
Recall the projections $P_A$ in $c_0(S)$ defined in \eqref{pac} and
let $A$ be a countable subset as in \refL{LBH} 
(with $S=\ooi$ and $k=\ell$). 
If $A$ is finite, then $\ga(P_{A}f_1,\dots,P_{A}f_\ell)$ is a finite linear
combination of products of point evaluations, and thus measurable.

If $A$ is infinite, write $A=\set{x_1,x_2,\dots}$ and define
$P_n\=P_{\set{x_1,\dots,x_n}}$. For any $f\in\cooi$, $P_nf\to P_Af$ 
in $\cooi$
as
\ntoo, and thus
$\ga(P_{n}f_1,\dots,P_{n}f_\ell)
\to \ga(P_{A}f_1,\dots,P_{A}f_\ell)$. Each map $\ga(P_{n}f_1,\dots,P_{n}f_\ell)$
is measurable, and the result in the case $m=0$ follows by \eqref{lbh}.

Now suppose that $m\ge1$. Consider first the restriction $\ga_0$
of $\ga$ to $C\times D^{m-1}\times \cooi^\ell$. For fixed $f_2,\dots,f_m\in D$
and $g_1,\dots,g_\ell\in\cooi$, $\ga_0$ is a bounded linear functional on
$\coi$; thus $\ga_0$ can be regarded as a multilinear map
$T_0:D^{m-1}\times\cooi^\ell\to \coi^*=M\oi$. For a fixed $f_1\in\coi$,
the map 
$(f_2,\dots,f_m,g_1,\dots,g_\ell)\mapsto
\ga_0(f_1,f_2,\dots,f_m,g_1,\dots,g_\ell)$ is measurable by induction, which
by the definition of the $\gs$-field $\cM$ in $M\oi$ shows that $T_0$ is
measurable.

Define
\begin{equation}\label{wasa}
\ga_1(f_1,f_2,\dots,f_m,g_1,\dots,g_\ell)
\=\intoi f_1\dd T_0(f_2,\dots,f_m,g_1,\dots,g_\ell).
\end{equation}
Then
$\ga_1$ is a bounded multilinear form
$(\doi)^m\times(\cooi)^\ell\to\bbR$, and $\ga_1$ is measurable by  \refL{LM2}
and the measurability of $T_0$. Moreover, if $f_1\in\coi$, then
$\ga_1(f_1,f_2,\dots,f_m,g_1,\dots,g_\ell)
=\ga(f_1,f_2,\dots,f_m,g_1,\dots,g_\ell)$.

Define $\ga_2\=\ga-\ga_1$. Then $\ga_2=0$ on $C\times D^{m-1}\times\cooi^\ell$, so
$\ga_2$ can be regarded as a multilinear form on 
$(D/C)\times D^{m-1}\times\cooi^\ell$. By \refT{TgD}, $\gD:D/C\to\cooi$ is
an isomorphism, so there exists a bounded multilinear form
$\ga_3:\cooi\times D^{m-1}\times\cooi^\ell\to\bbR$ such that
\begin{equation}\label{vasa}
\ga_2(f_1,f_2,\dots,f_m,g_1,\dots,g_\ell)
=\ga_3(\gD f_1,f_2,\dots,f_m,g_1,\dots,g_\ell).
\end{equation}
Then $\ga_3$ is measurable by induction, and since $\gD:D\to\cooi$ is
measurable by \refL{LM1}, \eqref{vasa} shows that $\ga_2$ is measurable.

Thus, $\ga=\ga_1+\ga_2$ is measurable.
\end{proof}

\begin{corollary}
  \label{CDM}
Let $X$ be a \dmeas{} $D\oi$-valued \rv.
Then $X\tpk$ is \wmeas{} in $D\ptpk$ and in $D\itpk$ for every $k\ge1$.
\end{corollary}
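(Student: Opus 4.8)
The plan is to deduce \refC{CDM} directly from \refT{TM} together with the characterisation of weak measurability in tensor products. First I would recall that, by \refT{Tproj*}, a $D\ptpk$-valued random variable $Y$ is \wmeas{} if and only if $\innprod{\ga,Y}$ is measurable for every bounded $k$-linear form $\ga\in L(D^k;\bbR)$, since these forms are exactly the continuous linear functionals on $D\ptpk$. Applying this to $Y=X\tpk$, we have $\innprod{\ga,X\tpk}=\ga(X,\dots,X)$, so it suffices to show that $\ga(X,\dots,X)$ is a measurable real-valued \rv{} for each such $\ga$.

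Now the key point: \refT{TM} (with $m=k$) says that $\ga\colon (\doi)^k\to\bbR$ is \dmeas, \ie{} measurable with respect to the product $\gs$-field $\cD^{\,k}$ on $(\doi)^k$ generated by point evaluations. Since $X$ is \dmeas{} by hypothesis, the diagonal map $\go\mapsto(X(\go),\dots,X(\go))$ is measurable from $\gO$ into $\bigpar{(\doi)^k,\cD^{\,k}}$ — indeed, for each $t$ and each coordinate slot, $\go\mapsto X(\go)(t)$ is measurable by definition of $\cD$-measurability, and the collection of point evaluations on the $k$ factors generates $\cD^{\,k}$. Composing the measurable diagonal map with the $\cD^{\,k}$-measurable function $\ga$ yields that $\go\mapsto\ga(X(\go),\dots,X(\go))$ is measurable. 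Hence $X\tpk$ is \wmeas{} in $D\ptpk$.

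For the injective tensor product, I would simply invoke that $\iota\colon D\ptpk\to D\itpk$ is continuous (see \eqref{iota}), so every continuous linear functional on $D\itpk$ pulls back to one on $D\ptpk$; thus weak measurability of $X\tpk$ in $D\ptpk$ immediately gives weak measurability of $X\tpk$ in $D\itpk$. (Alternatively, one can note that the continuous linear functionals on $D\itpk$ are integral, hence in particular bounded $k$-linear forms on $D$, and reapply \refT{TM}.) This completes the proof. There is no real obstacle here — all the work has been done in \refT{TM}, whose proof (by induction on $m$, using \refT{TgD}, \refL{LBH}, and \refL{LM2}) is where the difficulty lies; \refC{CDM} is a routine packaging of that result via \refT{Tproj*}. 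The only point to state carefully is that $\cD^{\,k}$ really is generated by the coordinatewise point evaluations, so that the $\cD$-measurability of $X$ in each factor is exactly what is needed for the diagonal map to be $\cD^{\,k}$-measurable.
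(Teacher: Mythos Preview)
Your proposal is correct and follows essentially the same approach as the paper: the paper's proof reads ``Immediate by Theorems \refand{Tproj*}{TM} and the continuous map $\iota:D\ptpk\to D\itpk$,'' and you have simply unpacked this one-liner, making explicit the measurability of the diagonal map $\go\mapsto(X(\go),\dots,X(\go))$ into $\bigpar{D^k,\cD^{\,k}}$ before composing with the $\cD^{\,k}$-measurable form $\ga$ from \refT{TM}.
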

\begin{proof}
  Immediate by Theorems \refand{Tproj*}{TM} and the continuous map
  $\iota:D\ptpk\to D\itpk$. 
\end{proof}

We have seen in \refE{Eetta} that a \dmeas{} $D$-valued \rv{} $X$ is not always
\assep. 
In fact, the following theorem describes the situation precisely: $X$ is
\assep{} if and only if the jumps only occur in a fixed countable set.
(Cf.\ \refT{Tc0sep} for $c_0(S)$.) 
\begin{lemma}
  \label{LDsep}
A subset $D_0\subseteq D$ is separable 
if and only if there exists a countable subset $N$ of $\oi$
such that $\supp(\gD f)\=\set{t:|\gD f(t)|\neq0}\subseteq N$ for each $f\in
D_0$. 
\end{lemma}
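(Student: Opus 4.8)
The plan is to prove both directions of the equivalence in \refL{LDsep}.

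\textbf{Sufficiency.} Suppose such a countable set $N$ exists; I want to show $D_0$ is separable. First I would handle the continuous part: note that for $f\in D_0$, the function $f$ restricted to $\oi$ is determined by its values on a countable dense set together with its jumps (which live in $N$). More concretely, enumerate $N=\set{t_1,t_2,\dots}$ (assume $N$ infinite; the finite case is similar and easier). Given $f\in D_0$ and $n\ge1$, approximate $f$ by a function $g_n$ that agrees with $f$ at the first $n$ jump points $t_1,\dots,t_n$ (more precisely, carries the jumps $\gD f(t_i)$ for $i\le n$) and is a step function on a fine dyadic partition of $\oi$ elsewhere, with rational values; since $\gD f\in\cooi$ (\refT{TgD}), the tail jumps $\sup_{i>n}|\gD f(t_i)|\to0$, and right-continuity with left limits gives uniform approximation as the partition is refined and $n\to\infty$. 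The key point is that all such approximating functions lie in a fixed countable set: they are determined by $n$, a dyadic partition, a finite list of rationals for the plateau values, and a finite list of rationals approximating $\gD f(t_i)$, $i\le n$. Hence $D_0$ is contained in the closure of a countable set, so separable. I would carry out this approximation carefully using the modulus-of-oscillation estimate, essentially repeating the standard argument that $\doi$ with the uniform norm has separable ``slices''.

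\textbf{Necessity.} Suppose no such countable $N$ exists; I want to show $D_0$ is not separable. The idea is to extract an uncountable $1$-separated (or at least $c$-separated for some $c>0$) subset of $D_0$. For each $f\in D_0$ let $J(f)=\set{t:|\gD f(t)|>0}$ be its jump set; by hypothesis $\bigcup_{f\in D_0}J(f)$ is uncountable, since otherwise we could take $N=\bigcup_{f\in D_0}J(f)$. Actually I need a bit more: I would stratify by jump size, writing $J_m(f)=\set{t:|\gD f(t)|>1/m}$; since $\bigcup_m\bigcup_{f\in D_0}J_m(f)$ is uncountable, some $\bigcup_{f\in D_0}J_m(f)$ is uncountable, so fix such an $m$ and set $c=1/m$. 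Then for uncountably many points $t$ there is $f_t\in D_0$ with $|\gD f_t(t)|>c$. Now the functions $\set{f_t}$ need not be far apart in norm directly, so the obstacle is to convert ``distinct large jump locations'' into ``pairwise far apart in sup-norm.'' The trick: if $f,g\in\doi$ both have a jump of size $>c$ at $t$, that alone does not separate them; but if $f$ has a jump $>c$ at $s$ while $g$ is \emph{continuous} at $s$, then $\norm{f-g}\ge c/2$ (since $f-g$ has a jump $>c$ at $s$, and a function with a jump of size $>c$ has sup-norm of oscillation $>c$ near that point, forcing $\norm{f-g}_\infty>c/2$... more carefully, $|\gD(f-g)(s)|>c$ implies $\norm{f-g}_\infty\ge \tfrac12|\gD(f-g)(s)|>c/2$). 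So I would argue: from the uncountable family of large-jump points, build an uncountable subfamily $\set{f_t}_{t\in T}$ such that for $t\neq t'$, $f_t$ has a jump $>c$ at $t$ but $f_{t'}$ is continuous at $t$ — however this is not automatic, so instead I would use a counting/transfinite argument. The cleanest route: the set $S=\bigcup_{f\in D_0}J_m(f)\subseteq\oi$ is uncountable. For each $t\in S$ pick $f_t\in D_0$ with $|\gD f_t(t)|>c$. Consider the map $t\mapsto J_m(f_t)$; each $J_m(f_t)$ is \emph{finite} (a function in $\doi$ has only finitely many jumps of size exceeding $c$, by the argument proving $\gD f\in\cooi$ in \refT{TgD}). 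So $t\mapsto J_m(f_t)$ maps the uncountable set $S$ into the countable set of finite subsets of... no, finite subsets of $\oi$ need not be countable. Here is the fix: by a standard $\gD$-system / Fodor-type argument, or more elementarily: since each $J_m(f_t)$ is finite, and we may pass to an uncountable subset of $S$ on which $|J_m(f_t)|$ is a constant $N_0$, and then... The hard part will be this combinatorial extraction, and I expect to handle it by the following cleaner observation instead: the functions $\set{f_t : t\in S}$ cannot all lie within distance $c/4$ of a countable set, because a ball of radius $c/4$ around any fixed $g$ can contain $f_t$ only if $g$ has a jump $>3c/4$ at $t$ (since $|\gD f_t(t)|>c$ and $|\gD(f_t-g)(t)|\le 2\norm{f_t-g}<c/2$ forces $|\gD g(t)|>c/2$), and $g$ has only finitely many jumps of size $>c/2$; hence each such ball captures only finitely many of the points $t\in S$, so a countable union of such balls captures only countably many, contradicting $S$ uncountable. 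This shows $D_0$ is not separable.

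\textbf{Summary of obstacle.} The routine direction is sufficiency (a standard approximation in $\doi$); the delicate point is necessity, specifically turning ``uncountably many distinct large-jump locations appear among $D_0$'' into non-separability. I expect the cleanest argument is the ball-covering observation just described: any $\norm\cdot_\infty$-ball of radius $<c/4$ meets only finitely many members of the $1/m$-large-jump family indexed by distinct jump points, because membership forces the ball's center to have a jump of size $>c/2$ at that point, and $\doi$-functions have finitely many such jumps. I would write necessity in that form, avoiding any transfinite machinery.
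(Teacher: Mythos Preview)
Your argument is correct in both directions, but it takes a substantially longer route than the paper.

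For \emph{sufficiency}, the paper does not build approximants by hand. Instead it invokes \refT{TgD}: the subspace $D_1\=\set{f\in D:\supp(\gD f)\subseteq N}$ satisfies $D_1/C\cong c_0(N)$ via $\gD$, and since both $C$ and $c_0(N)$ are separable (the latter because $N$ is countable), $D_1$ is separable. This is one line once the structural isomorphism $D/C\cong\cooi$ is available, and it avoids the dyadic-partition bookkeeping you outline.

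For \emph{necessity}, the paper argues directly rather than by contrapositive: take a countable dense subset $\set{f_n}$ of $D_0$, let $N\=\bigcup_n\supp(\gD f_n)$, and observe that if $f\in D_0$ has $|\gD f(t)|>0$ then choosing $f_n$ with $\norm{f-f_n}<|\gD f(t)|/2$ forces $|\gD f_n(t)|>0$ (since $|\gD f(t)-\gD f_n(t)|\le 2\norm{f-f_n}$), so $t\in N$. This is exactly the inequality you use in your ball-covering step, but deployed forwards instead of contrapositively, which collapses the whole argument to two sentences. Your stratification by jump size and the finite-jumps-per-ball observation are not needed; they are correct, and they do make the non-separability quantitatively explicit, but they amount to unpacking the same basic fact $|\gD(f-g)(t)|\le 2\norm{f-g}$ that the paper uses once.
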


\begin{proof}
If $N\subset\oi$ is countable, then
\begin{equation}
  \label{d1}
D_1\=
\set{f\in\doi:\supp(\gD f)\subseteq N}
%=\set{f\in \doi:\gD f(t)=0\text{ if } t\notin N}
\end{equation}
is a separable subspace of $D$, for example because $\gD$ induces an
isomorphism $D_1/C\cong c_0(N)$, and both $C$ and $c_0(N)$ are separable.

Conversely, if $D_0\subseteq D$ is separable and $\set{f_n}_{n=1}^\infty$ is
a countable dense subset, then $N\=\bigcup_n \supp(\gD f_n)$ is countable and
$\supp(\gD f)\subseteq N$ for every $f\in D_0$.  
\end{proof}

\begin{theorem}  \label{TDassep}
Let $X$ be a \dmeas{} $D\oi$-valued \rv.
Then
$X$ is \assep{} if and only if there exists a countable subset $N$ of $\oi$
such that \as{} $\supp(\gD X)\=\set{t:|\gD X(t)|\neq0}\subseteq N$.
\end{theorem}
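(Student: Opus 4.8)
The plan is to obtain \refT{TDassep} as an essentially immediate consequence of \refL{LDsep}, which already carries all the real content: it identifies separability of a subset of $\doi$ with the existence of a single countable set carrying all the jumps of its members. Both implications of the theorem then just amount to translating between "$X$ lies in a fixed separable subspace \as{}" and "the jump set of $X$ lies in a fixed countable set \as{}".

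For the "if" direction I would argue as follows. Suppose there is a countable $N\subseteq\oi$ with $\supp(\gD X)\subseteq N$ \as. Take the subspace $D_1$ of \eqref{d1}, namely $D_1=\set{f\in\doi:\supp(\gD f)\subseteq N}$, which is separable by \refL{LDsep}. By hypothesis $X\in D_1$ \as, so $X$ is \assep. For the "only if" direction: if $X$ is \assep{} there is a separable subspace $D_0\subseteq\doi$ with $X\in D_0$ \as; since a separable subspace is in particular a separable subset, \refL{LDsep} supplies a countable $N\subseteq\oi$ with $\supp(\gD f)\subseteq N$ for every $f\in D_0$, and restricting to the probability-one event $\set{X\in D_0}$ yields $\supp(\gD X)\subseteq N$ \as.

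The only subtlety to attend to is the precise meaning of these "a.s." statements, but since $\ofp$ is complete the exceptional sets $\set{X\notin D_1}$ and $\set{X\notin D_0}$ are automatically measurable (being subsets of measurable null sets), so no additional measurability discussion is required; in particular the standing hypothesis that $X$ be \dmeas{} is not actually used and is retained only because it is the natural setting of the section. I do not expect any genuine obstacle here: the substance has been absorbed into \refL{LDsep}, and through it into Corson's theorem \refT{TgD}, which makes $\gD\colon\doi\to\cooi$ continuous and hence guarantees that jump sets behave well under closures — this is exactly what powers the forward implication of \refL{LDsep}.
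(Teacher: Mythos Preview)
Your proof is correct and matches the paper's approach exactly: the paper's proof consists of the single line ``Immediate from \refL{LDsep}'', and you have simply spelled out how that lemma yields both implications. Your remark that the \dmeas{} hypothesis is not actually used is also accurate.
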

\begin{proof}
  Immediate from \refL{LDsep}.
\end{proof}

However, a  weak version always holds.

\begin{lemma}
  \label{LD1}
Let $X$ be a \dmeas{} $D\oi$-valued \rv.
Then there is a countable set $N\subset\oi$ such that for each
$t\in\oi\setminus N$,
$\gD X(t)=0$ a.s.
\end{lemma}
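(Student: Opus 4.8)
The plan is to reduce the statement to the analogous fact for $c_0$-valued random variables, \viz{} \refT{Tc0}\ref{tc0n}, via the jump operator $\gD$.

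First I would observe that $\gD X$ is a $\cooi=c_0\ooi$-valued random variable which is \cmeas{} in the sense of \refS{Sc0}, \ie, each coordinate $\go\mapsto\gD X(t)$ is \meas. Indeed, by \refT{TgD} the map $\gD$ sends $\doi$ into $\cooi$, and by \refL{LM1} the map $\gD\colon\doi\to\cooi$ is $(\cD,\cco)$-measurable; since $X$ is \dmeas, the composition $\gD X=\gD\circ X$ is $\cco$-measurable, which is exactly the condition that $\gD X(t)$ be a real-valued \rv{} for every $t\in\ooi$. (One can also see this coordinatewise: for fixed $t$, $f\mapsto\gD f(t)=f(t)-f(t-)$ is \dmeas{} by \refL{LM1}, using that point evaluations generate $\cD$.)

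Then I would apply \refT{Tc0}\ref{tc0n} with $S=\ooi$ to the $\cooi$-valued \rv{} $\gD X$. This yields a countable subset $S_0\subseteq\ooi$ such that $\gD X(s)=0$ \as{} for every $s\notin S_0$. Setting $N\=S_0\cup\set0$ (recall $\gD X(0)=0$ always, by the convention $f(0-)\=f(0)$) gives the desired countable set.

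There is essentially no obstacle here: the real content is already contained in \refL{LM1} and \refT{Tc0}. The only point requiring a little care is the measurability bookkeeping, namely confirming that $\gD X$ is genuinely $\cco$-measurable so that \refT{Tc0} applies, and that no subtlety is hidden in identifying $\cooi$ with $c_0\ooi$; both are immediate. If a self-contained argument is preferred, one can simply repeat the proof of \refT{Tc0}\ref{tc0n}: for $\gd,\eps>0$ set $T_{\gd\eps}\=\set{t\in\ooi:\P(|\gD X(t)|>\gd)>\eps}$; if some $T_{\gd\eps}$ were infinite, pick distinct $t_1,t_2,\dots\in T_{\gd\eps}$, note that $N_0\=\sumi\ett{|\gD X(t_i)|>\gd}$ is \as{} finite since $\gD X\in\cooi$ \as{} by \refT{TgD}, choose $M$ with $\P(N_0>M)<\eps/2$, and derive the contradiction $M\ge\E\bigpar{N_0\ett{N_0\le M}}=\sumi\E\bigpar{\ett{|\gD X(t_i)|>\gd}\ett{N_0\le M}}\ge\sumi\eps/2=\infty$; then $N\=\set0\cup\bigcup_n T_{1/n,1/n}$ works.
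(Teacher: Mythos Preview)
Your proposal is correct and takes essentially the same approach as the paper: the paper's proof is the single sentence ``By \refT{Tc0}\ref{tc0n} applied to $\gD X\in\cooi$, which is $\cC_0$-measurable by \refL{LM1}.'' Your additional self-contained argument simply reproduces the proof of \refT{Tc0}\ref{tc0n} in this setting, which is fine but unnecessary.
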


The exceptional null set may depend on $t$, in contrast to the condition in
\refT{TDassep}. 

\begin{proof}
  By \refT{Tc0}\ref{tc0n} applied to $\gD X\in \cooi$, which is
  $\cC_0$-measurable by \refL{LM1}.
\end{proof}

\begin{theorem}
  \label{TDwassep}
Let $X$ be a \dmeas{} $D\oi$-valued \rv.
Then $X$ is \wassep.
\end{theorem}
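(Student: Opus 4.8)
The plan is to produce an explicit separable subspace of $\doi$ witnessing the \wassepy{} of $X$: the subspace of functions whose jump set is confined to a fixed countable set. The overall strategy is to combine \refL{LD1} (which localises the jumps of $X$ in distribution) with the description of $\doi\q$ in \refC{C1}.

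First I would apply \refL{LD1} to obtain a countable set $N\subset\oi$ with $\gD X(t)=0$ \as{} for every $t\in\oi\setminus N$. I would then set $D_1\=\set{f\in\doi:\supp(\gD f)\subseteq N}$, which is a separable subspace by \refL{LDsep}. Two features of $D_1$ are needed: first $\coi\subseteq D_1$ (since $\gD f=0$ for continuous $f$); second, $\gD$ maps $D_1$ \emph{onto} $c_0(N)$, because by \refT{TgD} every $g\in c_0(N)\subseteq\cooi$ equals $\gD f$ for some $f\in\doi$, and then $\supp(\gD f)=\supp(g)\subseteq N$ forces $f\in D_1$.

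Next I would take an arbitrary $\xx\in\doi\q$ with $\xx\perp D_1$ and feed it into \refC{C1}, writing $\xx(f)=\intoi f\dd\mu+\sum_{t\in\ooi}h(t)\gD f(t)$ with $\mu\in M\oi$ and $h\in\ell^1\ooi$. Since $\coi\subseteq D_1$, $\xx$ vanishes on $\coi$, so $\intoi f\dd\mu=0$ for all $f\in\coi$ and hence $\mu=0$. Then for $f\in D_1$ we get $0=\xx(f)=\sum_{t\in N}h(t)\gD f(t)$, and since $\gD$ maps $D_1$ onto $c_0(N)$, the restriction $h|_N\in\ell^1(N)=c_0(N)\q$ annihilates $c_0(N)$, so $h|_N=0$. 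Thus $\xx(f)=\sum_{t\in S}h(t)\gD f(t)$ where $S\=\supp(h)\setminus N$ is a countable subset of $\ooi\setminus N$.

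Finally I would evaluate this at $f=X(\go)$: $\xx(X)=\sum_{t\in S}h(t)\gD X(t)$, an absolutely convergent sum since $h\in\ell^1\ooi$ and $\gD X\in\cooi$, with each $\gD X(t)$ a genuine \rv{} by \refL{LM1} (and $\xx(X)$ measurable by \refT{TDw}). For each $t\in S$ we have $t\notin N$, so $\gD X(t)=0$ \as; as $S$ is countable, all these vanish simultaneously off a single null set, whence $\xx(X)=0$ \as, which is exactly what \wassepy{} requires. There is no real obstacle in this argument; the one step needing care is the passage from $\xx\perp D_1$ to $h|_N=0$, which hinges on $\gD$ being onto $c_0(N)$ when restricted to $D_1$, plus the bookkeeping that the relevant jump points form a countable (hence jointly negligible) family.
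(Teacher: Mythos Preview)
Your proof is correct and follows essentially the same route as the paper's: both choose the separable subspace $D_1=\set{f\in\doi:\supp(\gD f)\subseteq N}$ with $N$ from \refL{LD1}, use \refC{C1} to kill $\mu$ via $\coi\subseteq D_1$, then show $h|_N=0$ and conclude $\xx(X)=0$ \as{} from countability of $\supp(h)\setminus N$. The only cosmetic difference is that the paper obtains $h(t)=0$ for $t\in N$ by evaluating $\xx$ on the single test function $\etta_{[t,1]}\in D_1$, whereas you invoke the surjectivity of $\gD:D_1\to c_0(N)$; both arguments are valid and equivalent in content.
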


\begin{proof}
Let $N$ be as in \refL{LD1} and let
$D_1\subset D$ be the separable  subspace defined in \eqref{d1}.

Suppose that $\chi\in D\q$ with $\chi\perp D_1$, 
and represent $\chi$ as in \eqref{c1}.
Since $\chi\perp D_1$ and $C\subseteq D_1$, we see that if $f\in C$, then 
$0=\chi(f)=\intoi f\dd\mu$; thus $\mu=0$ and $\chi(f)=\sum_t h(t)\gD f(t)$
for all $f\in D$.
Moreover, if $t\in N$, then $\etta_{[t,1]}\in D_1$, and thus
$0=\chi(\etta_{[t,1]})=h(t)$. Hence, any $\chi\perp D_1$ has the form
\begin{equation}\label{ld2}
  \chi(f)=\sum_{t\in M}h(t)\gD f(t)
\end{equation}
for some countable set $M=\set{t:h(t)\neq0}$ with $M\cap N=\emptyset$.

For each $t\in M$, we have $t\notin N$ and thus $X(t)=0$ a.s., and thus
\as{} $\chi(X)=0$ by \eqref{ld2}.
\end{proof}

We extend this to $X\tpk$.

\begin{theorem}
  \label{TD3}
Let $X$ be a \dmeas{} $D\oi$-valued \rv.
Then, for every $k\ge1$,
$X\tpk$ is \wassep{} in the projective and injective tensor products 
$(\doi)\ptpk$ and $(\doi)\itpk$.
\end{theorem}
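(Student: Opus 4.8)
The plan is to reduce the statement, via Theorems~\ref{Tproj*} and~\ref{Tinj*}, to a claim about bounded multilinear forms on $\doi$, and then to prove that claim by an induction that runs parallel to the induction in the proof of \refT{TM}. First, using \refL{LD1}, fix a countable set $N\subset\oi$ with $\gD X(t)=0$ a.s.\ for every $t\in\oi\setminus N$, and let $D_1\subset\doi$ be the separable subspace of \eqref{d1}. The closed linear span in $(\doi)\ptpk$ of the elementary tensors $e_1\tensor\dots\tensor e_k$ with all $e_i\in D_1$ is the closure of a continuous image of $D_1\ptpk$, hence separable; likewise in $(\doi)\itpk$. Since the dual of $(\doi)\ptpk$ is $L((\doi)^k;\bbR)$ by \refT{Tproj*}, while every continuous linear functional on $(\doi)\itpk$ is, by \refT{Tinj*}, given by an integral — in particular, bounded — $k$-linear form on $\doi$, it suffices (recalling that $\ga(X,\dots,X)$ is automatically measurable by \refT{TM}) to show: if $\ga$ is a bounded $k$-linear form on $\doi$ that vanishes whenever all of its arguments lie in $D_1$, then $\ga(X,\dots,X)=0$ a.s.

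I would prove the more general claim that for all $m,\ell\ge0$ with $m+\ell\ge1$, every bounded multilinear form $\ga\colon(\doi)^m\times(\cooi)^\ell\to\bbR$ that vanishes whenever its first $m$ arguments (those in $\doi$) lie in $D_1$ and its last $\ell$ arguments (those in $\cooi$) lie in $c_0(N)$ satisfies $\ga(X,\dots,X,\gD X,\dots,\gD X)=0$ a.s., with $m$ copies of $X$ and $\ell$ copies of $\gD X$. The base case $m=0$ is handled as in \refT{Tc0}\ref{tc0k}: by \refL{LBH} there is a countable $A\subseteq\ooi$ with $\ga(g_1,\dots,g_\ell)=\ga(P_Ag_1,\dots,P_Ag_\ell)$, and since $A$ is countable while $\gD X(t)=0$ a.s.\ for $t\notin N$, we have $P_A\gD X=P_{A\cap N}\gD X\in c_0(N)$ a.s., so $\ga(\gD X,\dots,\gD X)=0$ a.s.

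For the inductive step $m\ge1$, I would split $\ga=\ga_1+\ga_2$ exactly as in the proof of \refT{TM}: let $T_0\colon(\doi)^{m-1}\times(\cooi)^\ell\to M\oi=\coi\q$ represent the restriction of $\ga$ to $\coi$ in its first slot, put $\ga_1(f_1,\dots)=\intoi f_1\dd T_0(f_2,\dots,f_m,g_1,\dots,g_\ell)$, and let $\ga_2=\ga-\ga_1$, which vanishes whenever $f_1\in\coi$ and hence factors as $\ga_2(f_1,\dots)=\ga_3(\gD f_1,f_2,\dots)$ for a bounded multilinear form $\ga_3\colon\cooi\times(\doi)^{m-1}\times(\cooi)^\ell\to\bbR$ via the isomorphism $\tfrac12\gD\colon D/C\to\cooi$ of \refT{TgD}. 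The two things to verify are: (i) if $f_2,\dots,f_m\in D_1$ and $g_1,\dots,g_\ell\in c_0(N)$ then $T_0(f_2,\dots,f_m,g_1,\dots,g_\ell)=0$ in $M\oi$ — because $\coi\subseteq D_1$ and $\ga$ vanishes on $D_1$-arguments — so $\ga_1$ vanishes on such tuples for every $f_1$, and for each fixed $g\in\coi$ the form $(f_2,\dots,f_m,g_1,\dots,g_\ell)\mapsto\innprod{T_0(f_2,\dots),g}$ is a bounded multilinear form with the required vanishing property, so by the inductive hypothesis (trivially when $m=1$, where $T_0$ is already identically zero) $\innprod{T_0(X,\dots,X,\gD X,\dots,\gD X),g}=0$ a.s.; running $g$ over a countable dense subset of the separable space $\coi$ then gives $T_0(X,\dots,X,\gD X,\dots,\gD X)=0$ a.s.\ as an element of $M\oi$, whence $\ga_1(X,\dots,X)=\intoi X\dd 0=0$ a.s.; and (ii) $\ga_3$ vanishes whenever its first argument lies in $c_0(N)$, its next $m-1$ arguments in $D_1$, and its last $\ell$ in $c_0(N)$ — because $\gD$ maps $D_1$ onto $c_0(N)$ (see \refL{LDsep}), so such a first argument equals $\gD f_1$ for some $f_1\in D_1$ and then $\ga_3(\gD f_1,f_2,\dots)=\ga(f_1,\dots)-\ga_1(f_1,\dots)=0$ — so by the inductive hypothesis (after permuting coordinates, with $m-1$ copies of $X$ and $\ell+1$ of $\gD X$) $\ga_3(\gD X,X,\dots,X,\gD X,\dots,\gD X)=0$ a.s., i.e.\ $\ga_2(X,\dots,X)=0$ a.s. Adding the two gives $\ga(X,\dots,X)=0$ a.s., and specializing to $m=k$, $\ell=0$ completes the reduction above, proving the theorem for both tensor products.

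I expect the inductive step to be the main obstacle, and within it the handling of the $M\oi$-valued form $T_0$: the inductive hypothesis only yields, for each test functional $g\in\coi$ separately, that $\innprod{T_0(X,\dots),g}=0$ a.s., and one must promote this to a single almost-sure event on which $T_0(X,\dots)=0$ as a measure. This is precisely where separability of $\coi$ enters — it is the analogue, in the present almost-sure setting, of the appeal to the $\gs$-field $\cM$ on $M\oi$ in the proof of \refT{TM}. The remaining bookkeeping (boundedness of the intermediate forms, the permutation of coordinates needed to apply the inductive hypothesis to $\ga_3$, and the joint measurability of the various compositions, all of which is covered by the general statement proved inside \refT{TM}) is routine.
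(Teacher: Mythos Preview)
Your proposal is correct and follows essentially the same strategy as the paper: reduce to bounded $k$-linear forms vanishing on $D_1$, then run an induction parallel to the proof of \refT{TM}, with the base case handled via \refL{LBH} and the inductive step via the $\ga=\ga_1+\ga_2$ splitting, using separability of $\coi$ to upgrade ``$\innprod{T_0(X,\dots),g}=0$ a.s.\ for each $g$'' to ``$T_0(X,\dots)=0$ a.s.''. The only real difference is bookkeeping: you carry explicit $\cooi$-slots and apply the inductive hypothesis to $(X,\dots,X,\gD X,\dots,\gD X)$, whereas the paper keeps all slots in $\doi$ and instead imposes an extra ``vanishes if $f_i\in C$'' condition on the first $\ell$ slots (its space $L_{\ell,m}$), which encodes the same factorisation through $D/C\cong\cooi$ --- the two formulations are interchangeable via precomposition with $\gD$. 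One small slip: your parenthetical ``trivially when $m=1$, where $T_0$ is already identically zero'' is only literally true when $\ell=0$; for $\ell\ge1$ the map $T_0$ need not vanish identically, but the base case $m=0$ of your induction still applies and gives what you need.
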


\begin{proof}
  It suffices to consider the projective tensor product since
$\iota:(\doi)\ptpk\to(\doi)\itpk$ is continuous.

Let, again, $N$ be as in \refL{LD1} and let
$D_1\subset D$ be the separable subspace \eqref{d1}.
Then $D_1\tpk$ is a separable subspace of $D\ptpk$. 
We claim that if
$\chi\in(D\ptpk)\q$ and $\chi\perp D_1\tpk$, then $\chi(X\tpk)=0$ a.s.,
which proves the lemma.
By \refT{Tproj*}, $\chi$ is a bounded $k$-linear form on $D$.
We will prove the claim by induction, using a more elaborate claim.

For $\ell,m\ge0$, let $\llm$ be the set of all 
$(\ell+m)$-linear forms
$\ga\in L(D^{\ell+m},\bbR)$ such that
\begin{align}
  \label{llm1}
\ga(f_1,\dots,f_{\ell+m})&=0
\quad\text{if } f_1,\dots, f_{\ell+m}\in D_1,
\intertext{and}
  \label{llm2}
\ga(f_1,\dots,f_{\ell+m})&=0
\quad\text{if } f_i \in C
\text{ for some $i\le\ell$}.
\end{align}

\begin{claim}
  If $\ell,m\ge0$, then $\ga(X,\dots,X)=0$ \as{} for every $\ga\in\llm$.
\end{claim}

The case $\ell=0$, $m=k$  yields the claim $\chi(X\tpk)=0$ \as{} above,
proving the lemma.
We prove the claim by induction on $m$.

\step1{$m=0$.}
If $m=0$, then \eqref{llm2} shows that $\ga$ can be seen as an $\ell$-linear
form on $D/C\cong \cooi$. Hence there exists an $\ell$-linear form
$\gb:\cooi^\ell\to\bbR$ such that
\begin{equation*}
  \ga(f_1,\dots,f_\ell)
=
\gb(\gD f_1,\dots,\gD f_\ell).
\end{equation*}
By \eqref{llm1}, $\gb(g_1,\dots,g_\ell)=0$ if $\supp g_i\subseteq N$ for
every $i$. Let $A$ be a countable subset of $\ooi$ given by \refL{LBH}
(applied to $\gb$). Then, as in the proof of \refT{Tc0}\ref{tc0k},
$P_A(\gD X)=P_{A\cap N}(\gD X)$ a.s., and thus a.s.
\begin{equation*}
  \ga(X,\dots,X)=\gb(\gD X,\dots,\gD X)
=\gb(P_A(\gD X),\dots,P_A(\gD X))=0.
\end{equation*}

\step2{$m\ge1$.}
Let  $k\=\ell+m\ge 1$.
If $k=1$, then the result is \refT{TDwassep} (and its proof). 
Thus assume $k\ge2$.

For fixed $f_1,\dots,f_{k-1}$, the map $f_k\mapsto\ga(f_1,\dots,f_{k})$ is a
bounded linear form on $D$. The restriction to $C$ gives an element of
$C\q=M\oi$, \ie{} a signed measure $\mu_{f_1,\dots,f_{k-1}}$ on $\oi$.
Define, similarly to \eqref{wasa},
\begin{equation*}
  \ga_1(f_1,\dots,f_k)\=
\intoi f_k\dd \mu_{f_1,\dots,f_{k-1}}.
\end{equation*}
Then $\ga_1$ is a bounded $k$-linear form on $D$, and thus so is
$\ga_2\=\ga-\ga_1$. 

If $f_1,\dots,f_{k-1}\in D_1$ and $f_k\in C\subseteq D_1$, 
then $\ga(f_1,\dots,f_k)=0$ by \eqref{llm1}; hence
$\mu_{f_1,\dots,f_{k-1}}=0$ and thus
$\ga_1(f_1,\dots,f_k)=0$ for any $f_k\in D$.
Similarly, by \eqref{llm2}, if $f_i\in C$ for some $i\le\ell$, 
then $\mu_{f_1,\dots,f_{k-1}}=0$ and 
$\ga_1(f_1,\dots,f_k)=0$.
Hence, \eqref{llm1} and \eqref{llm2} hold for $\ga_1$,
\ie, $\ga_1\in \llm$.
Consequently, $\ga_2=\ga-\ga_1\in\llm$ too.

Moreover, if $f_k\in C$, then the definition of $\ga_1$ yields
$ \ga_1(f_1,\dots,f_k)= \ga(f_1,\dots,f_k)$ and thus
\begin{equation*}
  \ga_2(f_1,\dots,f_k)=  \ga(f_1,\dots,f_k)-  \ga_1(f_1,\dots,f_k)=0.
\end{equation*}
Hence \eqref{llm2} holds for $i=k=\ell+m$ too, so (after relabelling)
$\ga_2\in L_{\ell+1,m-1}$, and by induction
$\ga_2(X,\dots,X)=0$ a.s.

Return to $\ga_1$.
For each fixed $f_k\in C$, 
$(f_1,\dots,f_{k-1})\mapsto\ga(f_1,\dots,f_{k-1},f_k)$ defines a
($k-1$)-linear form on $D$, which by \eqref{llm1}--\eqref{llm2} belongs to
$L_{\ell,m-1}$. By induction, thus $\ga(X,\dots,X,f_k)=0$ \as{} for each
fixed $f_k\in C$. By taking $f_k$ in a countable dense subset of $C$, it
follows that \as{}  $\ga(X,\dots,X,f)=0$  for every $f\in C$.
Thus, \as, $\mu_{X,\dots,X}=0$ and 
$\ga_1(X,\dots,X,f)=0$ for every $f\in D$; in particular,
$\ga_1(X,\dots,X,X)=0$.

We have shown that
$\ga(X,\dots,X)=
\ga_1(X,\dots,X)+
\ga_2(X,\dots,X)=
0$ \as{} when $\ga\in\llm$, which proves the claim.
\end{proof}

\section{Moments of $D$-valued \rv{s}}\label{SDmom}

After the preliminaries in the last sections, we can prove analogues of  
\refT{TCK1}--\ref{TCKD2} for $\doi$. (Note that although $\doi=C(\hI)$, we
cannot use these theorems in \refS{SCK} directly, since $\doi$ is
non-separable and thus $\hI$ is not metrizable.) 
%There are also measurability problems as shown by \refR{RDw2}.)

We begin with  describing the space $\doi\itpk$ where the moments live.
By \refT{TCKitpk},
$\doi\itpk=\chI\itpk=C(\hI^k)$. We return from $\hI$ to \oi{} by taking
the restrictions of the functions in $C(\hI)^k$ to the (dense) subset 
$\oi^k\subset\hI^k$ (\ie, the normal points in $\hI^k$).

\begin{definition}\label{Ddoik}
  \doik{} is the Banach space of all functions $\oi^k\to\bbR$ that have a
  continuous extension to $\hI^k$; \doik{} is equipped with the supremum
  norm. Thus 
  $\doik$ is naturally isometric to $C(\hI^k)$. 
\end{definition}
This means that $f\in\doik$ if at each $(t_1,\dots,t_k)\in\oi^k$, 
$f$ has limits in the $2^k$ octants (with obvious modifications at the
boundary). More precisely, taking $k=2$ for notational convenience,
$f\in\doix2$ if and only if, for each $(s,t)\in\oi^2$, the limits
\begin{align*}
f(s+,t+)&\=\lim_{\substack{s'\to s,\; s'\ge s\\ t'\to t,\;t'\ge t}}f(s',t'), 
\\
f(s+,t-)&\=\lim_{\substack{s'\to s,\; s'\ge s\\ t'\to t,\;t'< t}}f(s',t'), 
\\
f(s-,t+)&\=\lim_{\substack{s'\to s,\; s'< s\\ t'\to t,\;t'\ge t}}f(s',t'), 
\\
f(s-,t-)&\=\lim_{\substack{s'\to s,\; s'< s\\ t'\to t,\;t'< t}}f(s',t')
\end{align*}
exist (as finite real numbers), except that we ignore all cases with an
argument $0-$. Note the slight asymmetry; we use $\ge$ but $<$. Note also
that necessarily $f(s+,t+)=f(s,t)$ when it exists.

\begin{theorem}\label{Tdoik}
If $k\ge2$, then 
  $\doi\ptpk\subsetneq \doi\itpk =\doik$.
(The  subspace $\doi\ptpk$ of $\doik$ is not closed, and the norms are
different and not equivalent on $\doi\ptpk$.)
\end{theorem}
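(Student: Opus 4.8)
The plan is to prove two separate assertions: that $\doi\itpk = \doik$ as Banach spaces, and that $\doi\ptpk$ is a proper, non-closed subspace of it. The first is immediate: by Theorem~\ref{TCKitpk}, $\doi\itpk = C(\hI)\itpk = C(\hI^k)$, and by Definition~\ref{Ddoik} this is isometrically $\doik$ via restriction to the dense subset $\oi^k \subset \hI^k$. The inclusion $\doi\ptpk \subseteq \doi\itpk$ (as a vector space) follows from Corollary~\ref{CDap} (so $\doi$ has the \ap) together with Theorem~\ref{Tiotak}, which makes $\iota: \doi\ptpk \to \doi\itpk$ injective. So all the content is in showing that this inclusion is strict and that $\doi\ptpk$ is not closed in $\doik$.

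For the strictness, the cleanest route is to exhibit a bounded $k$-linear form on $\doi$ that is not integral, \ie, not in $(\doi\itpk)\q = C(\hI^k)\q = M(\hI^k)$. Such a form defines a continuous linear functional on $\doi\ptpk$ (by Theorem~\ref{Tproj*}) that does not extend to $\doi\itpk$, which forces the inclusion to be strict; and by the open mapping theorem a continuous bijection between Banach spaces is a homeomorphism, so strictness of a continuous injection between the completed tensor products rules out $\doi\ptpk$ being closed in $\doik$ with an equivalent norm — but I should be a little careful here, since $\doi\ptpk$ need not be closed even as a set. The sharper statement ``not closed'' is most safely obtained as follows: if $\doi\ptpk$ were closed in $\doik = \doi\itpk$, then $\iota$ would be a continuous bijection of the Banach space $\doi\ptpk$ onto a closed (hence Banach) subspace, hence an isomorphism onto its image, so the projective and injective norms would be equivalent on $\doi\tpk$; a single bounded non-integral multilinear form then contradicts this. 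So it suffices to produce one bounded $k$-linear form on $\doi$ that is not integral.

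To produce it, I would adapt the classical Littlewood/Varopoulos construction referenced in Remark~\ref{RCKtpk}. Pick a countable sequence of distinct jump points $t_n \in (0,1)$ and consider the bounded linear maps $f \mapsto \gD f(t_n)$ on $\doi$; by Theorem~\ref{TgD} the map $\gD: \doi \to \cooi$ is bounded, so $f \mapsto (\gD f(t_n))_n$ lands in $c_0$. Composing with a Littlewood-type bilinear (or $k$-linear) form on $c_0$ that is bounded but not in $\ell^1(\bbN^k)$ — concretely, for $k=2$, something like $\ga(f,g) = \sum_{m,n} c_{mn}\, \gD f(t_m)\, \gD g(t_n)$ with $(c_{mn})$ the matrix of a bounded but non-nuclear operator on $c_0$ (\eg{} $c_{mn} = 1/(m+n)$ or a Hilbert-matrix-type kernel, or the discrete version of $\ga(f,g)\mapsto \int \hat f \hat g$ used in Example~\ref{Ec3}) — gives a bounded $k$-linear form $\ga$ on $\doi$. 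One then checks $\ga$ is not integral: restricting a representing measure on $\hI^k$ to the countable set of pairs $(t_m,t_n)$ (which are isolated-type points where $\gD$ detects a Dirac mass) would force $(c_{mn})$ to be absolutely summable, \ie, nuclear, a contradiction. For general $k\ge 2$ the same works with $\ga(f_1,\dots,f_k) = \sum c_{n_1 n_2} \gD f_1(t_{n_1})\gD f_2(t_{n_2}) f_3(t_{n_3})\cdots$ or by simply tensoring the $k=2$ example with point evaluations in the remaining variables, exactly as in the $\ck$ argument of Remark~\ref{RCKtpk}.

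The main obstacle is the last verification: showing that a given bounded $k$-linear form built from $\gD$ and a non-nuclear matrix kernel genuinely fails to be integral, \ie, is not represented by a measure on $\hI^k$. The subtlety is that integrality is about representation by a measure on the \emph{whole} compact space $\hI^k$, not just on the countable subset of jump-type points, so one must argue that testing the putative representing measure against the functions $\etta_J$ for the clopen intervals $J$ in $\hI$ (which, by Theorem~\ref{Torder}, form a base and whose $\gD$-images are the standard basis vectors $e_{t_n}$) pins down enough of the measure to extract the forbidden $\ell^1$ bound. This is precisely the content of Littlewood's and Varopoulos's inequalities transported to $\doi$ via the quotient $\doi/\coi \cong \cooi$ of Theorem~\ref{TgD}; I would cite \cite{Littlewood}, \cite{Varopoulos}, \cite{BH} for the underlying estimate and spell out only the reduction. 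Everything else (boundedness of $\ga$, that it defines a functional on the projective tensor product, the open-mapping deduction that non-equivalence of norms gives non-closedness) is routine.
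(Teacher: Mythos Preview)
Your setup for the easy parts (the isometry $\doi\itpk = C(\hI^k) = \doik$ and the injectivity of $\iota$) matches the paper exactly, as does your reduction via the open mapping theorem to showing that the projective and injective norms are not equivalent on $\doi\tpk$.

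For the hard part, however, the paper takes a quite different route from yours. It never touches $\gD$ or $c_0$; instead it works entirely inside $\coi\subset\doi$ using Fourier analysis (the Varopoulos argument you allude to, but applied directly rather than via $c_0$). For $k=2$: since $\doi\subset L^2\oi$, Parseval and \eqref{proj} give $\sum_n|\hat F(n,n)|\le\normp{F}$ for any $F\in\doi\ptpx2$. Now take a trigonometric polynomial $f(t)=\sum_n a_n e^{2\pi\ii nt}$ with $\|f\|_\infty=1$ but $\sum_n|a_n|$ arbitrarily large (e.g.\ Ces\`aro means of $\sum_{n\ge2}\sin(2\pi nt)/(n\log n)$), and set $F(s,t)\=f(s+t)$. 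Then $F$ lies in the \emph{algebraic} tensor product, $\hat F(n,n)=a_n$, so $\normp{F}\ge\sum_n|a_n|$ while $\normi{F}=\|F\|_\infty=1$. This is more concrete than producing a non-integral form: it exhibits explicit tensors where the two norms diverge.

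Your route via $\gD$ can be made to work, but the verification step as you sketched it has a real gap. Testing your form $\ga(f,g)=\sum c_{mn}\gD f(t_m)\gD g(t_n)$ against $\etta_{[t_m,1]}\tensor\etta_{[t_n,1]}$ gives $c_{mn}=\mu([t_m,1]\times[t_n,1])$ for the putative representing measure $\mu$ on $\hI^2$; but the sets $[t_m,1]$ are \emph{nested}, not disjoint, so there is no reason for $\sum_{m,n}|\mu([t_m,1]\times[t_n,1])|$ to be bounded by $\|\mu\|$, and no $\ell^1$ bound on $(c_{mn})$ falls out. (The points $t_m\pm$ are also not isolated in $\hI$, contrary to your parenthetical.) The clean fix is to replace the $\gD$-based map by an honest \emph{complemented} isometric copy of $c_0$ in $\doi$: pick $t_n\downarrow0$, let $S:c_0\to\doi$ send $e_n\mapsto\etta_{[t_n,t_{n-1})}$ (disjoint supports, so $S$ is an isometry), and let $R:\doi\to c_0$ be $Rf\=(f(t_n)-f(0))_n$; then $RS=I$, $\|R\|\le2$, and any non-integral bilinear form $\gb$ on $c_0$ (Littlewood) yields $\ga\=\gb\circ(R\times R)$ bounded on $\doi$ with $\ga\circ(S\times S)=\gb$, whence $\ga$ non-integral by \refR{Rinjinj}. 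Both approaches work; the paper's Fourier argument is shorter and entirely self-contained, while the complemented-$c_0$ argument is more structural and transfers the result wholesale from Littlewood's theorem.
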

\begin{proof}
  The equality $\doi\itpk=\doik$ follows by \refT{TCKitpk} applied to
  $\chI$, together with \refD{Ddoik}, and the inclusion $\doi\ptpk\subseteq
  \doi\itpk$ by \refC{CCKtpk}, or by  \refC{CDap} and \refT{Tiotak}.

The claim that $\doi\ptpk$ is not a closed subspace of $\doik$, is 
a special case of the general fact in \refR{RCKtpk}, applied to $\chI$,
but we give also a direct proof, using an argument from Varopoulos 
\cite{Varopoulos} (where $C(\bbT)\ptensor C(\bbT)$ is studied, and more
generally 
$C(K)\ptensor C(K)$  for an abelian
compact group $K$.)

By the
closed graph theorem the claim is
equivalent to the claim that that the norm on $\doi\ptpk$ 
is not equivalent to the norm inherited from $\doik$, \ie, the sup norm.

It suffices to consider the case $k=2$.   
If $F\in L^1(\oi^2)$, let $\hat F(m,n)\=\iint F(s,t)e^{2\pi\ii(ms+nt)}\dd
s\dd t$ 
  be its Fourier coefficients.
Since $\doi\subset L^2\oi$, Parseval's identity and \Holder's inequality
imply, together wih \eqref{proj}, that if $F\in\doi\ptpx2$, then 
$\sumnn|\hat F(n,n)|\le\norm{F}_{\doi\ptpx2}$.
However, there exist trigonometric polynomials $f(t)=\sum_n a_n e^{2\pi\ii nt}$
such that $\sup_t|f(t)|=1$ with $\sum_n|a_n|$  arbitrarily large. 
(For example Ces\'aro means of the Fourier series 
$\sum_2^\infty \sin{2\pi  \ii nt}/(n\log n)$, 
which represents a continuous function
\cite[Section V.1]{Zygmund}.) 
Taking
$F(s,t)\=f(s+t)\in C(\oi^2)$ we have $\norm{F}_{\doix2}=\sup|F(s,t)|=1$ and
$\norm{F}_{\doi\ptpk}\ge\sum_n|a_n|$ which is arbitrarily large.
\end{proof}

\begin{example}\label{EwienerD}
Let $W$ be standard Brownian motion, regarded as a random variable in $\doi$. 
Then
all (projective and injective) moments exist (in Bochner sense), and are the
same as for $W$ regarded as an random variable in $\coi$, see \refE{EwienerC}
and \refR{RBBtp}.
In particular, 
$\E W\itpx2$ is the covariance function $s\bmin t$ regarded as an element of 
$\doi\itpx2=D\xpar{\oi^2}$,
and $\E W\ptpx2$ is the same function regarded as an element of the subspace
$\doi\ptpx2$. Similarly, the fourth moment is given by the function \eqref{w4}.
\end{example}

\begin{example}
  The $\doi$-valued random variable $X=\ettaU$ in Examples \ref{Eetta} and
  \ref{Edoi} is
  not \assep, see \refE{Edoi}, and thus it has no moments in Bochner sense;
however, all moments exist in Pettis sense by \refT{TD1k} below.
The \kth{} moment is given by the function
\begin{equation*}
  \E\bigpar{X(t_1)\dotsm X(t_k)}
=\E\ett{U\le t_1,\dots,U\le t_k}
=\min\set{t_1,\dots,t_k}.
\end{equation*}
This function is continuous, and thus belongs to $C(\oi^k)\subset D(\oi^k)$.

Note that the second moment is the same as for Brownian motion
(\refE{EwienerD}), but not the fourth (or any other moment).
\end{example}

\begin{example}
  For a simple example with a discontinuous function in $D(\oi^2)$ as second
  moment, let $S_n=\sum_1^n\xi_i$ be a simple random walk, with $\xi_i$
  \iid{} and $\P(\xi_i=1)=\P(\xi_i=-1)=\frac12$. Let
  $X_n(t)\=S_{\floor{nt}}/\sqrt n$.
Then the second moment of $X_n\in\doi$ is the function in $D(\oi^2)$ given by
$\E(X_n(s)X_n(t))=\floor{n(t\bmin u)}/n$.
\end{example}

We turn to conditions for the existence of injective moments.

\begin{theorem}\label{TD1}
Suppose that $X$ is a $\cD$-\meas{}  $\doi$-valued \rv. 
Let $k\ge1$.
  \begin{romenumerate}[-10pt]
  \item \label{td1d}
$\E X\itpk$ exists in Dunford sense $\iff$ %if and only if
the weak \kth{} moment  exists  $\iff$ %if and only if\/ 
$\sup_{t\in	\oi}\E|X(t)|^k<\infty$.
  \item \label{td1p}
$\E X\itpk$ exists in Pettis sense $\iff$ %if and only if 
the family
$\set{|X(t)|^k:t\in \oi}$ of \rv{s} is \ui.
  \item \label{td1b}
$\E X\itpk$ exists in Bochner sense $\iff$ %if and only if\/ 
$\E\bigpar{\sup_{t\in	\oi}|X(t)|}^k<\infty$
and there is a countable set $N\subset\oi$ such that $\supp(\gD X)\subseteq
N$ a.s.
  \end{romenumerate}

If\/ $\E X\itpk$ exists in Bochner or Pettis sense, then 
it is the function
in $\doi\itpk=\doik$ given by
\begin{equation}
  \label{td1}
\E X\itpk(t_1,\dots,t_k)
=\E\bigpar{X(t_1)\dotsm X(t_k)},
\qquad t_1,\dots,t_k\in\oi.
\end{equation}
\end{theorem}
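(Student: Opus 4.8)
The plan is to mirror the proof of \refT{TCK1}, using the $D$-specific machinery developed in \refS{SmeasD}. The key structural facts are that $\doi$ is a $C(K)$-space for the compact (but non-metrizable) space $\hI$, so $\doi\itpk = \doik = C(\hI^k)$ by \refT{Tdoik}; that every continuous linear functional on $\doi$ is $\cD$-measurable and, more generally, every bounded multilinear form on $\doi$ is $\cD$-measurable by \refT{TM}; and that $X\tpk$ is \wassep{} in $\doi\itpk$ for \dmeas{} $X$ by \refT{TD3}. These last two results are exactly what replaces the separability of $C(K)$ that was available in the metric case, and they are the reason the proof goes through despite non-separability.

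First I would note that by \refT{TDw}, a $\cD$-\meas{} $X$ is \wmeas, so all the general theory of \refS{Smom} applies. For \ref{td1d}: by \refT{TID}\ref{tidd}\ref{tid2}, since $X$ is \dmeas{} hence \wmeas, and \ldots{} wait --- $X$ need not be \assep. Instead I use \refT{TID}\ref{tidd}\ref{tid3}: by the remarks at the end of \refS{Sc0} (or directly, since $\doi\itpk = C(\hI^k)$ has dual $M(\hI^k)$ while $\doi\ptpk$ maps onto the nuclear forms), every integral $k$-linear form on $\doi$ is nuclear --- this follows because $\hI$ is totally disconnected, but more simply it is the statement that $\doi\itpk{}\q$ consists of measures on $\hI^k$ and these are represented via the projective tensor product; I would cite \refT{Tinj*} together with $\doi\itpk = C(\hI^k)$ and the fact that $C(\hI)\q$ embeds suitably. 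Having established condition \ref{tid3}, \refT{TID}\ref{tidd} gives that $\E X\itpk$ exists in Dunford sense iff the weak \kth{} moment exists iff $\E|\xx(X)|^k<\infty$ for all $\xx$. The equivalence with $\sup_{t}\E|X(t)|^k<\infty$ is then exactly as in \refT{TCK1}\ref{tck1d}: one direction takes $\xx=\gd_t$; the converse uses \refC{C1} to write a general $\xx$ as $\intoi f\dd\mu + \sum_t h(t)\gD f(t)$, bounds $|\gD X(t)|\le 2\sup_s|X(s)|$ pointwise is not quite enough, so instead I estimate $\E|\xx(X)|^k$ by splitting into the measure part (handled by \Holder{} and Fubini, using joint measurability of $(f,t)\mapsto f(t)$ and $(f,\mu)\mapsto\int f\dd\mu$ from Lemmas \ref{LM1}--\ref{LM2}) and the jump part (a countable sum, using $\gD X(t)=X(t)-X(t-)$ and that $\sup_t\E|X(t-)|^k\le\sup_t\E|X(t)|^k$ by right-continuity).

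For \ref{td1p}: the forward direction is \refT{TiPettis}\ref{tipettis1} with $\xx=\gd_t$. For the converse, I use \refT{TiPettis}\ref{tipettis2}\ref{tip3}: condition \ref{tip3} requires every integral form to be nuclear (established above) and $X\tpk$ to be \wassep{} in $\doi\itpk$ (this is \refT{TD3}). Then uniform integrability of $\set{|X(t)|^k:t\in\oi}$ implies, via the arithmetic--geometric inequality and Fubini applied to a representing measure $\mu\in M(\hI^k)$ as in \eqref{krhmf} --- using \refL{LM1}--\refL{LM2} for joint measurability on $\hI^k$ --- that $\set{\innprod{\chi,X\tpk}:\norm\chi\le1}$ is \ui, whence \refT{THuff} applied to $X\itpk$ gives Pettis integrability. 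For \ref{td1b}: this is \refT{TPIB} combined with \refT{TDassep} (which characterizes \assep{} $D$-valued \rv{s} by the jump points lying in a fixed countable set) and the observation that $\norm X_\doi = \sup_t|X(t)|$, together with $\norm{X\tpk}_{\doi\itpk}=\norm X^k$. Finally, the formula \eqref{td1}: when the moment exists in Pettis or Bochner sense it lies in $\doi\itpk = C(\hI^k)$, and evaluating against $\gd_{t_1}\tensor\dotsm\tensor\gd_{t_k}$ (continuous functionals on $\doi$) gives $\E X\itpk(t_1,\dots,t_k)=\E\bigpar{X(t_1)\dotsm X(t_k)}$ exactly as in \refT{TCK0}.

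The main obstacle is verifying that every integral $k$-linear form on $\doi$ is nuclear, since this is the hypothesis that unlocks \refT{TID}\ref{tid3} and \refT{TiPettis}\ref{tip3} in the non-separable setting. The cleanest route is via $\doi=C(\hI)$: the dual of $\doi\itpk = C(\hI^k)$ is $M(\hI^k)$ (regular measures), and by \refT{Tinj*} applied with $B_i=\doi$, $K_i$ = unit ball of $\doi\q$, every integral form is represented by a measure on $(\doi\q)^k$; one must then show this can be pushed to a \emph{discrete} measure, i.e.\ a nuclear form. This holds because $\doi\q = M\oi \oplus \ell^1\ooi$ by \refC{C1}, a space in which the relevant compactness/representation arguments reduce to the discrete structure --- but I would need to check whether a general $C(K)\q$ has every integral form nuclear (it does not in general, so the argument must genuinely use the special structure of $\hI$, presumably that $\hI$ is totally disconnected, \refT{Torder}). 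If a clean citation is unavailable, the fallback is to avoid \refT{TID}\ref{tid3} entirely and instead adapt the Grothendieck-type argument from \refT{TCKG}, or to observe that $X\tpk$ is \wmeas{} in $\doi\itpk$ by \refC{CDM} and \wassep{} by \refT{TD3}, reducing to a separable subspace on which the standard $C(K)$ theory of \refS{SCK} applies verbatim. This last reduction is probably the simplest and most robust path and is the one I would ultimately pursue.
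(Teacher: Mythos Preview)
Your approach has two real gaps, both concentrated in parts \ref{td1d} and \ref{td1p}.

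First, the hypothesis you need for \refT{TID}\ref{tidd}\ref{tid3} and \refT{TiPettis}\ref{tipettis2}\ref{tip3} --- that every integral $k$-linear form on $\doi$ is nuclear --- is not established by anything you say. Total disconnectedness of $\hI$ does not imply it, and the identification $(\doi\itpk)\q=M(\hI^k)$ shows only that integral forms are measures on $\hI^k$, not that these are discrete (which is what nuclearity would require). In fact $\doi\q\cong M\oi\oplus\ell^1\ooi$ contains $M\oi$, which lacks the Radon--Nikod\'ym property, so the sufficient condition in the remark after \refT{TID} fails; the claim is almost certainly false for $k\ge2$. The paper does \emph{not} go through \refT{TID}\ref{tid3}; it verifies Dunford integrability of $X\itpk$ directly.

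Second, and more seriously, you invoke ``\refL{LM1}--\refL{LM2} for joint measurability on $\hI^k$'' in the Fubini step. But Lemmas \ref{LM1}--\ref{LM2} concern $\oi$, not $\hI$; \refR{RDw2} shows explicitly that $(f,t)\mapsto f(t)$ is \emph{not} jointly measurable on $\doi\times\hI$. This is exactly the obstruction the paper has to work around. The paper's route for \ref{td1d} is: given $\mu\in(\doi\itpk)\q=M(\hI^k)$ with $\norm\mu\le1$, measurability of $\innprod{\mu,X\tpk}$ comes from \refC{CDM} (via \refT{TM}); for integrability, the arithmetic--geometric inequality gives $\bigabs{\innprod{\mu,X\tpk}}\le\int_{\hI}|X(t)|^k\dd\nu(t)$ for some positive $\nu\in M(\hI)$ with $\norm\nu\le1$. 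Now regard $\nu$ as an element of $\doi\q$ and use \refC{C1} to write it as $f\mapsto\intoi f\dd\nu_1+\sum_t h(t)\gD f(t)$ with $\norm{\nu_1}_M+\norm{h}_{\ell^1}\le2$. This converts the integral over $\hI$ into an integral over $\oi$ plus a countable sum, and \emph{there} joint measurability holds by \refL{LM1}, so Fubini applies and yields $\E\bigabs{\innprod{\mu,X\tpk}}\le4\sup_{t\in\oi}\E|X(t)|^k$. For \ref{td1p} one applies the same estimate to $\etta_E X$ to get uniform integrability of $\set{\innprod{\mu,X\tpk}:\norm\mu\le1}$, and then \refT{THuff} together with \refT{TD3} finishes. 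Your fallback of ``reducing to a separable subspace'' is too vague to repair this: weak \as{} separability of $X\tpk$ (\refT{TD3}) does not by itself place $X\tpk$ in a separable $C(K_0)\itpk$ with $K_0$ metric, which is what you would need to quote \refS{SCK} verbatim.

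Your treatment of \ref{td1b} (via \refT{TPIB} and \refT{TDassep}) and of the final formula \eqref{td1} (via \refT{TCK0} with $K=\hI$) is correct and matches the paper.
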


\begin{proof}
We use $\doi=\chI$ and try to argue as in the proof of \refT{TCK1}; however,
several modifications are needed.

\pfitemref{td1d}
The forward implications are immediate as in \refT{TCK1}, but for the
remaining implication we argue somewhat differently.

Consider a linear form $\mu\in(\doi\itpk)\q=C(\hI^k)\q=\mr(\hI^k)$ with
$\norm\mu\le1$. Then $\mu$ can be regarded as an integral multilinear form
on $\doi^k$; by \refC{CDM}, $\innprod{\mu,X\tpk}$ is \meas.

A serious technical problem is that $X(t,\go)$ is in general \emph{not}
jointly \meas{} on $\hI\times\gO$, see \refR{RDw2}; hence we cannot use
Fubini's theorem as in \eqref{rogat} and \eqref{krhmf}.
We circumvent this as follows. 
Similarly to \eqref{krhmf},
\begin{equation}\label{pingst}
  \begin{split}
\bigabs{\innprod{\mu,X\tpk}}
&=	\lrabs{\int_{\hI^k}X(t_1)\dotsm	X(t_k)\dd\mu(t_1,\dots,t_k)}
\\
&\le\frac1k\int_{\hI^k}\sum_{i=1}^k|X(t_i)|^k
 \dd|\mu|(t_1,\dots,t_k)
\\
&=\int_{\hI}|X(t)|^k
 \dd\nu(t)
  \end{split}
%\raisetag\baselineskip
\end{equation}
for some positive measure $\nu$ on $\hI$ with
$\norm{\nu}_{M(\hI)}=\norm{\mu}\le1$. 
We now regard $\nu$ as a continuous linear functional $\chi$ on $\chI=\doi$,
and represent it by \refC{C1} as
$f\mapsto \intoi f\dd\nu_1+\sum_{t\in\ooi}h(t)\gD f(t)$ with
\begin{equation}
%  \label{c1a}
\norm{\nu_1}_M+\norm{h}_{\ell^1}
\le2\norm{\nu}_{D\q}\le2.
\end{equation}
Thus, \eqref{pingst} yields
\begin{equation}
  \label{fhov}
|\innprod{\mu,X\tpk}|
\le
 \intoi |X(t)|^k\dd\nu_1(t)+\sum_{t\in\ooi}h(t)\gD |X|^k(t).
\end{equation}
Since $X(t,\go)$ is jointly \meas{} on $\oi\times\gO$ by \refL{LM1} and the
sum in \eqref{fhov} is countable, we can take the expectation in
\eqref{fhov} and use Fubini's theorem here, obtaining
\begin{equation}
  \label{ros}
\E\bigabs{\innprod{\mu,X\tpk}}
\le
 \intoi\E |X(t)|^k\dd|\nu_1|(t)
 +\sum_{t\in\ooi}|h(t)|\bigpar{\E |X(t)|^k+\E |X(t-)|^k}.
\end{equation}

Now suppose that $\sup_{t\in\oi}\E|X(t)|^k\le C$. Then Fatou's lemma applied
to a sequence 
$t_k\upto t$ yields also $\E|X(t-)|^k\le C$ for every $t>0$, and \eqref{ros}
implies 
\begin{equation}
  \label{rros}
\E\bigabs{\innprod{\mu,X\tpk}}
\le C\norm{\nu_1}_M+2C\norm{h}_{\ell^1}
\le4C.
\end{equation}
This shows that $\innprod{\mu,X\tpk}$ is integrable for every
$\mu\in(\doi\itpk)\q$, which shows that $X\itpk$ is Dunford integrable,
completing the proof of \ref{td1d}.

\pfitemref{td1p}
As in \refT{TCK1},  the forward implication follows
 by \refT{TiPettis}\ref{tipettis1}, taking
$\xx=\gd_t$, $t\in \oi$.

For the converse, apply \eqref{rros} to $\etta_E X$ , where $E\in\cF$ is an
event,  and obtain
\begin{equation}
%  \label{rros}
\E\bigabs{\etta_E\innprod{\mu,X\tpk}}
\le4\sup_{t\in\oi}\E\bigpar{\etta_E|X(t)|^k},
\end{equation}
which by \eqref{ui} shows that if the family \set{|X(t)|^k} is \ui,
then the family
\set{\innprod{\mu,X\itpk}:\norm{\mu}\le1} is \ui.
Since \refT{TD3} shows that $X\itpk$ is \wassep, 
it follows by \refT{THuff}
that $\E X\itpk$ exists in Pettis sense.

\pfitemref{td1b}
Immediate by  Theorems \ref{TPIB} and \ref{TDassep}.

The final claim follows by \refT{Tdoik} and the  argument in the proof of
\refT{TCK0}, which yields \eqref{eckt1} in the present setting too.
\end{proof}

The case $k=1$ in \refT{TD1} gives the
  following characterisations of the existence of the expectation $\E X$ of a
 $\doi$-valued \rv{}.
  \begin{corollary}
	\label{CD11}
Let $X$ be a \dmeas{} \doi-valued \rv.
  \begin{romenumerate}[-10pt]
  \item \label{td11d}
$\E X$ exists in Dunford sense $\iff$ %if and only if
$\sup_{t\in	\oi}\E|X(t)|<\infty$.
  \item \label{td11p}
$\E X$ exists in Pettis sense $\iff$ %if and only if 
the family
$\set{X(t):t\in \oi}$ of \rv{s} is \ui.
  \item \label{td11b}
$\E X$ exists in Bochner sense $\iff$ %if and only if\/ 
$\E\bigpar{\sup_{t\in	\oi}|X(t)|}<\infty$
and there is a countable set $N\subset\oi$ such that $\supp(\gD X)\subseteq
N$ a.s.
  \end{romenumerate}
In the Pettis and Bochner cases, 
$\E X\in\doi$ is the  function 
$t\mapsto\E X(t)$.
\nopf
  \end{corollary}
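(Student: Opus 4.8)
The plan is to derive Corollary~\ref{CD11} as the special case $k=1$ of \refT{TD1}, checking only that the three integrability conditions and the identification of $\E X$ with the function $t\mapsto\E X(t)$ specialize correctly. First I would observe that for $k=1$ the projective and injective tensor products both degenerate to $\doi$ itself, so $\E X\itpx1=\E X\ptpx1=\E X$ and $\doi\itpx1=D[0,1]$; the three senses (Dunford, Pettis, Bochner) of the first moment are then exactly the three senses of the expectation as defined in \refS{Sintegrals}. Thus parts \ref{td11d}, \ref{td11p}, \ref{td11b} are literally Theorem~\ref{TD1}\ref{td1d}, \ref{td1p}, \ref{td1b} with $k=1$, noting that the weak first moment $\E(\xx_1(X))$ existing for every $\xx_1\in D\q$ is (by \refL{Lweak}, or directly) equivalent to $\sup_{t\in\oi}\E|X(t)|<\infty$ being the stated condition — though actually for $k=1$ one should be slightly careful: $\sup_t\E|X(t)|<\infty$ is the condition appearing in \refT{TD1}\ref{td1d}, so I would simply quote it.

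Next I would address the final sentence, that in the Pettis and Bochner cases $\E X\in\doi$ is the function $t\mapsto\E X(t)$. This is the $k=1$ instance of the last display \eqref{td1} in \refT{TD1}: when $\E X\itpk$ exists in Bochner or Pettis sense it equals the function $(t_1,\dots,t_k)\mapsto\E(X(t_1)\dotsm X(t_k))$ in $\doik$. For $k=1$ this reads $\E X(t)=\E(X(t))$, i.e.\ the element $\E X\in\doi$, evaluated at $t\in\oi$, is the real number $\E(X(t))$. Concretely one uses that each point evaluation $\gd_t$ lies in $\doi\q$, so $\innprod{\gd_t,\E X}=\E\innprod{\gd_t,X}=\E(X(t))$, and since $\E X$ is an element of $\doi=C(\hI)$ it is determined by its values on the dense subset $\oi\subset\hI$; hence $\E X$ is the claimed function, which a fortiori belongs to $\doi$.

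Since the corollary is a direct specialization, there is essentially no obstacle: the one-line proof is to invoke \refT{TD1} with $k=1$ together with the observation that $\E X\itpx1=\E X$ in $\doi$. The only thing worth a sentence is why, in the Dunford case, $\E X$ need not be given by the function $t\mapsto\E(X(t))$ — but the corollary does not claim this, and in fact Examples such as \refE{Egoi} (transported via $\chI$) show it can fail; so I would simply not mention the Dunford case in the final sentence, matching the statement. Thus the proof I would write is:

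\begin{proof}
  This is the case $k=1$ of \refT{TD1}, noting that for $k=1$ the tensor
  products $\doi\ptpx1$ and $\doi\itpx1$ both coincide with $\doi$, so that
  $\E X\ptpx1=\E X\itpx1=\E X$, and the three senses of the first moment are
  the three senses of the expectation defined in \refS{Sintegrals}.
  In particular, \ref{td11d}, \ref{td11p} and \ref{td11b} are
  \refT{TD1}\ref{td1d}, \ref{td1p} and \ref{td1b} with $k=1$.
  Finally, if $\E X$ exists in Bochner or Pettis sense, then by the last
  statement of \refT{TD1} (the case $k=1$ of \eqref{td1}), for every
  $t\in\oi$,
  \begin{equation*}
    \E X(t)=\innprod{\gd_t,\E X}=\E\innprod{\gd_t,X}=\E\bigpar{X(t)};
  \end{equation*}
  since $\E X\in\doi=C(\hI)$ and $\oi$ is dense in $\hI$, this shows that
  $\E X$ is the function $t\mapsto\E\bigpar{X(t)}$.
\end{proof}
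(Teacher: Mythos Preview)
Your proposal is correct and matches the paper's approach exactly: the paper presents this corollary with no proof (the \texttt{\textbackslash nopf} marker), introducing it simply as ``the case $k=1$ in \refT{TD1}'', which is precisely what you do. Your additional verification of the final sentence via \eqref{td1} with $k=1$ is the natural elaboration of what the paper leaves implicit.
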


As in \refT{TCK2}, 
an \emph{even} injective moment exists in Pettis sense 
if and only if it exists in Dunford sense and it belongs to $\doi\itpk=\doik$.

\begin{theorem}\label{TD2}
Suppose that $X$ is a \dmeas{}
  $\doi$-valued \rv{} such that $\sup_{t\in	\oi}\E|X(t)|^2<\infty$.
Suppose that $k\ge2$ is even.
Then the following are equivalent.
  \begin{romenumerate}[-10pt]
  \item \label{td2p}
$\E X\itpk$ exists in Pettis sense.
  \item \label{td21}
The function $g(t)\= \E{X(t)^k}$ belongs to $\doi$, and
$g(t-)\= \E{X(t-)^k}$, $0<t\le1$.
  \end{romenumerate}
%In this case, $\E X\itpk$ is the function in $\doik$ given in \ref{td2k}.
\end{theorem}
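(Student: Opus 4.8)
The plan is to exploit the identification $\doi=C(\hI)$ (\refT{TcI}), under which $\doi\itpk=\doik=C(\hI^k)$ by \refT{Tdoik} and \refD{Ddoik}, and to transcribe the proof of \refT{TCK2}; the one genuinely new ingredient will be that $\hI$ is not metrizable, so a sequential-continuity argument has to be upgraded to genuine continuity using the fact that each point of $\hI$ has a countable neighbourhood base (\refT{Torder} and the ensuing corollary).

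For the implication \ref{td2p}$\implies$\ref{td21}: if $\E X\itpk$ exists in Pettis sense, then $\E X\itpk$ is a (bounded, hence finite-valued) element of $\doi\itpk=C(\hI^k)$. By \refT{Tjepp} — together with the identification $\doi\itpk=C(\hI^k)$, under which the evaluation at a point $(\sigma_1,\dots,\sigma_k)\in\hI^k$ corresponds to the tensor product of the evaluation functionals at the $\sigma_i$ (point evaluations and left-limit evaluations on $\doi$, cf.\ \refT{ThI}) — the value of $\E X\itpk$ at $(\sigma_1,\dots,\sigma_k)$ equals $\E\bigpar{\sigma_1(X)\dotsm\sigma_k(X)}$. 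Composing with the continuous diagonal map $\hI\to\hI^k$, $\sigma\mapsto(\sigma,\dots,\sigma)$, we obtain a continuous real function on $\hI$ whose value is $\E X(t)^k=g(t)$ at each ordinary point $t$ and $\E X(t-)^k$ at each point $t-$; since $\doi=C(\hI)$ this says exactly that $g\in\doi$ and $g(t-)=\E X(t-)^k$ for $0<t\le1$, which is \ref{td21}.

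For the converse \ref{td21}$\implies$\ref{td2p}: condition \ref{td21} says precisely that the function $\hat g$ on $\hI$ defined by $\hat g(\sigma)\=\E X(\sigma)^k$ (\ie{} $\E X(t)^k$ at an ordinary point $t$ and $\E X(t-)^k$ at $t-$) lies in $C(\hI)$. For $\sigma\in\hI$ let $X_\sigma$ be the real random variable $\go\mapsto X(\go)(\sigma)$, which is measurable since $X$ is weakly measurable (\refT{TDw}) and the evaluation at $\sigma$ belongs to $\doi\q$ (\refT{ThI}). I would show that $\sigma\mapsto|X_\sigma|^k$ is continuous $\hI\to L^1(\P)$. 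As each point of $\hI$ has a countable neighbourhood base (\refT{Torder}), it suffices to prove sequential continuity: if $\sigma_n\to\sigma$ in $\hI$, then since every $X(\go)$ is continuous on $\hI$ we have $X_{\sigma_n}(\go)\to X_\sigma(\go)$ for all $\go$, so $|X_{\sigma_n}|^k\to|X_\sigma|^k$ \as; and, because $k$ is even, $\E|X_{\sigma_n}|^k=\hat g(\sigma_n)\to\hat g(\sigma)=\E|X_\sigma|^k<\infty$. The standard $L^1$-convergence criterion (\cite[Theorem 5.5.2]{Gut}) then yields $|X_{\sigma_n}|^k\to|X_\sigma|^k$ in $L^1(\P)$. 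Hence $\sigma\mapsto|X_\sigma|^k$ is continuous, and since $\hI$ is compact, $\set{|X_\sigma|^k:\sigma\in\hI}$ is a norm-compact, hence weakly compact, hence uniformly integrable subset of $L^1(\P)$ (\cite[Theorem IV.8.11]{Dunford-Schwartz}). In particular $\set{|X(t)|^k:t\in\oi}$ is \ui, so $\E X\itpk$ exists in Pettis sense by \refT{TD1}\ref{td1p}.

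The step I expect to be the real obstacle is upgrading sequential continuity of $\sigma\mapsto|X_\sigma|^k$ on $\hI$ to genuine continuity: the convergence lemma underlying it is intrinsically sequential, and in the metrizable $C(K)$ setting of \refT{TCK2} this point is invisible, whereas here one must invoke explicitly that $\hI$, though non-metrizable, is first countable. Everything else — identifying points of $\hI^k$ with tensor products of evaluation functionals so that \refT{Tjepp} applies, and recognising that \ref{td21} is nothing but continuity of $\hat g$ on $\hI$ — is routine bookkeeping carried over from the $C(K)$ case, with $\hI$ playing the role of $K$.
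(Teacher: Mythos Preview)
Your proof is correct and follows essentially the same strategy as the paper: for \ref{td21}$\implies$\ref{td2p} you show $L^1$-compactness of $\{|X_\sigma|^k\}$ via pointwise convergence plus convergence of norms (Gut's criterion), deduce uniform integrability, and invoke \refT{TD1}\ref{td1p}; the paper does exactly this, only phrased as a subsequence argument directly on $\oi$ (splitting into $t_n\ge t$ versus $t_n<t$) rather than as continuity on the first-countable space $\hI$, which amounts to the same thing but sidesteps any mention of first countability.

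For \ref{td2p}$\implies$\ref{td21} your route via \refT{Tjepp} and restriction to the diagonal in $\hI^k$ is slightly cleaner than the paper's: the paper instead first extracts from \refT{TD1}\ref{td1p} that $\{|X(t)|^k:t\in\oi\}$ is uniformly integrable, and then passes the expectation through one-sided limits $t_n\downarrow t$, $t_n\upto t$ by dominated-type convergence. Both arguments are short; yours leans on the $C(\hI)$ identification, the paper's on the uniform integrability already available from \ref{td2p}.
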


\begin{proof} 
  \ref{td2p}$\implies$\ref{td21}: 
If $t_n$ is a sequence in \oi{} and $t_n\downto t$, then $X(t_n)^k\to X(t)^k$,
while if $t_n\upto t$, then $X(t_n)^k\to X(t-)^k$.
By \refT{TD1}, $\set{|X(t)|^k:t\in \oi}$ is \ui; thus it follows that
$\E X(t_n)^k\to \E X(t)^k$ or $\E X(t_n)^k\to \E X(t-)^k$, respectively.

  \ref{td21}$\implies$\ref{td2p}:
If $t_n$ is any sequence in \oi, there exist a subsequence (still denoted
$t_n$ for convenience)
such that $t_n\to t$  for some  $t\in\oi$; we may furthermore select the
subsequence such that 
either $t_n\ge t$ or $t_n<t$ for all $n$.
In the first case, $X(t_n)\to X(t)$ and thus
$|X(t_n)|^k\to |X(t)|^k$; furthermore, by \ref{td21} and the fact that $k$ is
even, 
$ \E |X(t_n)|^k=g(t_n) \to g(t)=\E |X(t)|^k$. 
In the second case, similarly %$X(t_n)\to X(t)$ and thus
$|X(t_n)|^k\to |X(t-)|^k$ and
$ \E |X(t_n)|^k=g(t_n) \to g(t-)=\E |X(t-)|^k$. 
In both cases it follows that $|X(t_n)|^k$ converges  in $L^1(\P)$
(to  $|X(t)|^k$ or $|X(t-)|^k$), see \cite[Theorem  5.5.2]{Gut}.
Consequently,  $\set{|X(t)|^k:t\in \oi}$ is a relatively compact subset of
$L^1(P)$, and in particular relatively weakly compact and thus \ui{}
\cite[Theorem IV.8.11]{Dunford-Schwartz}.
Thus \ref{td2p} follows by \refT{TD1}\ref{td1p}.
\end{proof}

The extra condition $g(t-)\= \E{X(t-)^k}$ in \refT{TD2} cannot be omitted,
as seen by the following example.
\begin{example}
  Let $I_n\=[1-2^{-n},1-2^{-n-1})$ and let $X$ equal $2^{n/2}\etta_{I_n}$
	with probability $2^{-n}$, $n\ge1$.
Then $\E X(t)^2=\etta_{[1/2,1)}(t)\le1$ but $\set{X(t)^2}$ is not \ui; hence
it follows from \refT{TD1} that $\E X\itpx2$ exists in Dunford sense but not
in Pettis sense. 
Note that $g(t)\=\E X(t)^2=\etta_{[1/2,1)}(t)\in\doi$ 
but $\E{X(1-)^2}=0\neq g(1-)$.
\end{example}

For projective moments, we do not know any general necessary and sufficient
conditions for existence in Pettis or Dunford sense, but we have a simple
sufficient condition. 
\begin{theorem}\label{TD1k}
Let $X$ be a \dmeas{} \doi-valued \rv, and suppose that $\E
\norm{X}^k<\infty$. 
  \begin{romenumerate}[-10pt]
  \item \label{td1kp}
Then $\E X\ptpk$ exists in Pettis sense.
  \item \label{td1kb}
$\E X\ptpk$ exists in Bochner sense 
$\iff$ there is a countable set $N\subset\oi$ such that $\supp(\gD X)\subseteq
N$ a.s.
  \end{romenumerate}
\end{theorem}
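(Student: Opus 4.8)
The plan is to reduce everything to the Bochner case (Theorem \ref{TPIB}), to Theorem \ref{TH2j}-style arguments, and to the results already proved for $\doi$ in this section. For part \ref{td1kb}, the "$\Leftarrow$" direction is immediate: if there is a countable $N$ with $\supp(\gD X)\subseteq N$ a.s., then by \refT{TDassep} $X$ is \assep, hence (since $X$ is \dmeas{} $=$ \wmeas{} by \refT{TDw}) \Bmeas{} by \refT{Twm}; combined with $\E\norm X^k<\infty$, \refT{TPIB} gives that $\E X\ptpk$ exists in Bochner sense. Conversely, if $\E X\ptpk$ exists in Bochner sense, then again by \refT{TPIB} $X$ is \assep, and \refT{TDassep} provides the countable set $N$. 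So part \ref{td1kb} is essentially just an application of \refT{TPIB} and \refT{TDassep}, as in \refT{TD1}\ref{td1b}.

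For part \ref{td1kp}, the strategy is to verify the hypotheses of \refT{THuff} applied to $X\ptpk$ in $\doi\ptpk$. First, $X\ptpk$ is \wmeas{} in $\doi\ptpk$ by \refC{CDM} (this uses \refT{TM}), and $X\ptpk$ is \wassep{} in $\doi\ptpk$ by \refT{TD3}. It remains to check that $\set{\innprod{\chi,X\ptpk}:\chi\in(\doi\ptpk)\q,\ \norm\chi\le1}$ is uniformly integrable. Since $\chi\in(\doi\ptpk)\q=L((\doi)^k;\bbR)$ by \refT{Tproj*}, we have $|\innprod{\chi,X\ptpk}|=|\chi(X,\dots,X)|\le\norm\chi\,\norm X^k\le\norm X^k$ whenever $\norm\chi\le1$; and $\norm X^k$ is a measurable real-valued random variable (since $X$ is \dmeas, hence \wassep{} by \refT{TDwassep}, so $\norm X$ is measurable, as noted in the remarks after \refT{Twm}) with $\E\norm X^k<\infty$ by hypothesis. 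Domination by the single integrable variable $\norm X^k$ gives uniform integrability of the whole family. Hence \refT{THuff} applies and $\E X\ptpk$ exists in Pettis sense.

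I do not expect any genuine obstacle here: the theorem is a direct packaging of \refT{TPIB}, \refT{TDassep}, \refC{CDM}, \refT{TD3}, \refT{TDwassep} and \refT{THuff}, all proved earlier. The only point requiring a little care is making sure $\norm X^k$ is measurable so that the domination argument for uniform integrability is legitimate; this is handled by \refT{TDwassep} together with the fact that a \wmeas{} \wassep{} variable has measurable norm. One could alternatively observe that $\norm{X\ptpk}_{\doi\ptpk}=\norm X^k$ directly and note measurability of this composite map, but invoking \wassepy{} is cleanest. The proof is then two short paragraphs, mirroring the structure of the proof of \refT{TD1}.

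\begin{proof}
\pfitemref{td1kp}
By \refT{TDw}, $X$ is \wmeas, so $X\ptpk$ is \wmeas{} in $\doi\ptpk$ by \refC{CDM}, and $X\ptpk$ is \wassep{} in $\doi\ptpk$ by \refT{TD3}. Moreover, by \refT{TDwassep}, $X$ is \wassep, and hence $\norm X$ is measurable; thus $\norm X^k$ is a measurable real-valued \rv{} with $\E\norm X^k<\infty$. If $\chi\in(\doi\ptpk)\q$ with $\norm\chi\le1$, then by \refT{Tproj*} $\chi$ is a bounded $k$-linear form on $\doi$ and
\begin{equation*}
  \bigabs{\innprod{\chi,X\ptpk}}=\bigabs{\chi(X,\dots,X)}\le\norm\chi\,\norm X^k\le\norm X^k .
\end{equation*}
Hence the family $\set{\innprod{\chi,X\ptpk}:\norm\chi\le1}$ is dominated by the integrable \rv{} $\norm X^k$ and is therefore uniformly integrable. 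Since $X\ptpk$ is \wmeas{} and \wassep, \refT{THuff} shows that $\E X\ptpk$ exists in Pettis sense.

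\pfitemref{td1kb}
If $\E X\ptpk$ exists in Bochner sense, then $X$ is \assep{} by \refT{TPIB}, and thus by \refT{TDassep} there is a countable set $N\subset\oi$ with $\supp(\gD X)\subseteq N$ a.s. Conversely, if such an $N$ exists, then $X$ is \assep{} by \refT{TDassep}; since $X$ is \wmeas{} by \refT{TDw}, it is \Bmeas{} by \refT{Twm}. As $\E\norm X^k<\infty$, \refT{TPIB} shows that $\E X\ptpk$ exists in Bochner sense.
\end{proof}
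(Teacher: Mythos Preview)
Your proof is correct and follows the same approach as the paper: for \ref{td1kp} you verify weak measurability (\refC{CDM}), weak a.s.\ separability (\refT{TD3}), and uniform integrability via the domination $|\innprod{\chi,X\ptpk}|\le\norm{X}^k$, then apply \refT{THuff}; for \ref{td1kb} you unpack \refT{TPIB} together with \refT{TDassep}. The paper's proof is terser but identical in substance.

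One small correction: your justification ``$X$ is \wassep, and hence $\norm X$ is measurable'' is not valid in general. Weak a.s.\ separability does \emph{not} imply measurability of the norm; see \refE{Eingaro} (or \refE{EPD3}) for explicit counterexamples. The remark after \refT{Twm} that you cite does contain the correct reason, but it is a different one: $\doi$ has a countable norm-determining set of linear functionals (the point evaluations at rational points together with $t=1$), so $\norm X=\sup_{t}|X(t)|$ is a countable supremum of measurable functions and hence measurable for any \dmeas{} $X$. Simply replace the \wassep{} step by this observation (or, as the paper does, leave the measurability of $\norm X$ implicit since the hypothesis $\E\norm X^k<\infty$ already presupposes it).
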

\begin{proof}
  \pfitemref{td1kp}
Let $\ga\in L(D^k,\bbR)$ be a $k$-linear form. 
Then $\innprod{\ga,X\tpk}$ is \meas{} by \refC{CDM}, and
\begin{equation*}
\bigabs{  \innprod{\ga,X\tpk}}
\le\norm\ga \norm{X}^k.
\end{equation*}
It follows  that the family
$\bigset{\innprod{\ga,X\ptpk}:\norm\ga\le1}$ is \ui.
Moreover, $X\ptpk$ is  \wassep{} by \refT{TD3}.
Hence \refT{THuff} shows, using \refT{Tproj*}, that $\E X\ptpk$ exists in
Pettis sense. 

\pfitemref{td1kb}
This is another special case of \refT{TPIB}.
\end{proof}

For the second moment, we can as for $\ck$ use 
Grothendieck's theorem \refT{TG} %cite{Grothendieck-resume}
to show that the conditions for the injective
moment in \refT{TD1} also imply the existence of the projective second
moment, thus improving \refT{TD1k} when $k=2$.
\refE{Ec3}
shows that this does not  extend to $k\ge3$.

\begin{theorem}\label{TDG}
Let $X$ be a \dmeas{} \doi-valued \rv.
  \begin{romenumerate}[-10pt]
  \item \label{tdgd}
$\E X\ptpx2$ exists in Dunford sense 
$\iff$
$\E X\itpx2$ exists in Dunford sense 
$\iff$
the weak second moment  exists  $\iff$
$\sup_{t\in	\oi}\E|X(t)|^2<\infty$.
  \item \label{tdgp}
$\E X\ptpx2$ exists in Pettis sense $\iff$ 
$\E X\itpx2$ exists in Pettis sense $\iff$ 
the family
$\set{|X(t)|^2:t\in \oi}$ of \rv{s} is \ui.
  \item \label{tdgb}
$\E X\ptpx2$ exists in Bochner sense 
$\iff$ 
$\E X\itpx2$ exists in Bochner sense 
$\iff$ 
$\E\bigpar{\sup_{t\in	\oi}|X(t)|}^2<\infty$
and there is a countable set $N\subset\oi$ such that $\supp(\gD X)\subseteq
N$ a.s.
  \end{romenumerate}
\end{theorem}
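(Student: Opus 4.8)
The plan is to follow the same pattern as the proof of \refT{TCKG}, replacing the compact metric space $K$ by the split interval $\hI$, but taking care of the measurability obstruction that $X(t,\go)$ is not jointly measurable on $\hI\times\gO$ (only on $\oi\times\gO$, by \refL{LM1}). The forward implications in \ref{tdgd}--\ref{tdgb} are immediate from Theorems \ref{TPI}, \ref{TID}, \ref{TiPettis}, \ref{TPIB} and \refL{Lweak}, exactly as in \refT{TCK1}, together with \refT{TD1} for the injective moments; so it remains to prove the converses, which amount to: if the stated condition on $\set{X(t)}$ holds, then $\E X\ptpx2$ exists in the corresponding sense in $\doi\ptpx2$.

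For \ref{tdgd}, suppose $\sup_{t\in\oi}\E|X(t)|^2\le C$. Let $\ga$ be an arbitrary bounded bilinear form on $\doi=\chI$. By \refT{TG} (applied to the compact space $\hI$), $\ga$ extends to a bounded bilinear form on $L^2(\hI,\mu)$ for some Borel probability measure $\mu$ on $\hI$. Here the key move, as in \refT{TCKG}, is to regard $X$ as an $L^2(\hI,\mu)$-valued \rv{} via the inclusion $C(\hI)\subseteq L^2(\hI,\mu)$ and to show it is \Bmeas{} there with $\E\norm X_{L^2(\mu)}^2<\infty$; then \refT{TPIB} gives $\E X\ptpx2\in L^2(\hI,\mu)\ptpx2$ in Bochner sense, hence $\ga(X,X)$ is measurable with finite expectation by \refL{Lbk}, and \refT{TPD} concludes. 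The obstacle is that the Fubini computation $\E\norm X_{L^2(\mu)}^2=\int_{\hI}\E|X(t)|^2\dd\mu(t)$ used in \eqref{abarn} needs joint measurability of $X(t,\go)$ on $\hI\times\gO$, which fails. I would circumvent this exactly as in the proof of \refT{TD1}\ref{td1d}: decompose $\mu$ according to \refC{C1} (viewing $\mu\in M(\hI)$ as a linear functional on $C(\hI)=\doi$) as an absolutely continuous-in-$\oi$ part plus a countable discrete part living at points $t-$; on the $\oi$ part Fubini is legitimate by \refL{LM1}, and on the countable discrete part the sum is countable so Fubini is trivial, giving $\E\norm X_{L^2(\mu)}^2\le 4C<\infty$ (the factor $4$ coming from \refC{C1}, and Fatou giving $\E|X(t-)|^2\le C$). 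Also one must check $X$ is \Bmeas{} in $L^2(\hI,\mu)$: $X$ is \wmeas{} in $C(\hI)=\doi$ by hypothesis and \refT{TDw}, and the inclusion $C(\hI)\to L^2(\hI,\mu)$ is bounded, so $X$ is \wmeas{} in $L^2(\hI,\mu)$; since $X$ has separable range issues only through jumps, one can either invoke that $L^2(\hI,\mu)$ is separable (it is, as $\hI$ is separable metric-measure, or directly because $C(\hI)$ is norm-dense in a way giving separability of $L^2$)— actually $L^2(\hI,\mu)$ need not be separable in general, but weak measurability plus the fact that $\doi$-valued weakly measurable \rv s are \wassep{} (\refT{TDwassep}) transfers, and $L^p$ spaces over a probability measure with the inclusion image... the clean route is \refL{LLp}-style: since $X(t,\go)$ is jointly measurable on $\oi\times\gO$ and $X$ restricted to $\oi$ determines $X$ in $L^2(\hI,\mu)$ up to the countably many atoms at points $t-$ (at each such atom $X(t-)$ is a pointwise a.s.\ limit of $X(s)$ as $s\upto t$, hence measurable), $X$ is represented by a jointly measurable function and \refL{LLp} applies. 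This is the main technical point and I expect it to be the bulk of the work.

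For \ref{tdgp}, assume $\set{|X(t)|^2:t\in\oi}$ is \ui. Given a bounded bilinear form $\ga$ on $\doi$ with $\norm\ga\le1$ and an event $E\in\cF$, apply \refT{TG} to get $\mu$ on $\hI$ with $|\ga(f,g)|\le\kkg\norm f_{L^2(\hI,\mu)}\norm g_{L^2(\hI,\mu)}$, whence $\E\bigpar{\ettae|\ga(X,X)|}\le\kkg\,\E\bigpar{\ettae\norm X_{L^2(\mu)}^2}=\kkg\int_{\hI}\E\bigpar{\ettae|X(t)|^2}\dd\mu(t)$, the last step again needing the \refC{C1} decomposition to justify Fubini; this is bounded by $4\kkg\sup_{t\in\oi}\E\bigpar{\ettae|X(t)|^2}$. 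By \eqref{ui} this shows $\bigset{\ga(X,X):\norm\ga\le1}$ is \ui. Moreover $X\ptpx2$ is \wassep{} in $\doi\ptpx2$ by \refT{TD3}. Hence \refT{THuff}, via \refT{Tproj*}, gives $\E X\ptpx2$ in Pettis sense. Finally \ref{tdgb} is again just \refT{TPIB} together with \refT{TDassep}, included for completeness. Throughout, the only genuinely new ingredient beyond the $\ck$ case is the systematic use of \refC{C1} to replace the missing joint measurability on $\hI\times\gO$ by legitimate Fubini on $\oi\times\gO$ plus countable sums, exactly as already done in \refT{TD1}; I would phrase the converses of \ref{tdgd} and \ref{tdgp} by explicitly citing that part of the proof of \refT{TD1}\ref{td1d}.
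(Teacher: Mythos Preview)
Your plan for \ref{tdgp} and \ref{tdgb} matches the paper, and your closing remark --- to phrase the converses by citing the proof of \refT{TD1}\ref{td1d} --- is exactly the paper's approach. The problem is that the body of your \ref{tdgd} argument does not actually follow that route: you instead try the $L^2(\hI,\mu)$ embedding from \refT{TCKG}, which forces you to establish Bochner measurability of $X$ in $L^2(\hI,\mu)$, and your attempts at this are incomplete. (Separability of $L^2(\hI,\mu)$ is not obvious; \wassepy{} does not by itself give Bochner measurability; and the \refL{LLp} route requires first constructing an explicit $\sigma$-finite measure space such as $\gS=\oi\cup\set{t_n-}\seq$ carrying the mass of $\nu$ --- which can be done, and the paper does exactly this later in the proof of \refT{TDD2}, but it is work you have not carried out here.)

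The paper's proof of \ref{tdgd} sidesteps this detour entirely. The key point you are underusing is that \refC{CDM} (via \refT{TM}) already gives measurability of $\ga(X,X)$ for every bounded bilinear form $\ga$ on $\doi$, so no embedding into $L^2$ is needed for measurability. With that in hand, the pointwise Grothendieck bound $|\ga(X,X)|\le\kkg\norm\ga\int_{\hI}|X(t)|^2\dd\nu(t)$ is, up to a constant, the estimate \eqref{pingst} from the proof of \refT{TD1}, and the same \refC{C1}-decomposition argument there (\eqref{fhov}--\eqref{rros}) yields $\E|\ga(X,X)|\le\kkkkg\norm\ga\sup_{t\in\oi}\E|X(t)|^2$; \refT{TPD} then concludes. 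This is precisely the direct route you already use for \ref{tdgp} --- the paper simply uses it for \ref{tdgd} as well, with \refC{CDM} replacing the role that \refL{Lbk} played in \refT{TCKG}.
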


\begin{proof}
The forward implications 
follow directly, as in the proof of \refT{TCKG}, using also \refT{TDassep}.

\pfitemref{tdgd}
Let $\ga$ be a bounded bilinear form on $\doi=\chI$. By \refT{TG}, $\ga$ extends
to a bounded bilinear form on $L^2(\hI,\nu)$ for some probability measure
$\nu$ on $\hI$; more precisely, \eqref{groth} yields 
\begin{equation}%\label{pingst}
\bigabs{\innprod{\ga,X\tpx2}}
\le \kkg\norm{\ga}\int_{\hI}|X(t)|^2 \dd\nu(t).
\end{equation}
This is, apart from a constant, the same estimate as \eqref{pingst}
(proved for integral forms), and the same argument as in the proof of
\refT{TD1} yields, \cf{} \eqref{rros},
\begin{equation}  \label{cec}
\E\bigabs{\innprod{\ga,X\tpx2}}
\le\kkkkg\norm\ga \sup_{t\in\oi}\E|X(t)|^2.
\end{equation}
Furthermore, $\innprod{\ga,X\tpx2}$ is \meas{} by \refC{CDM}.
It follows that if 
$ \sup_{t\in\oi}\E|X(t)|^2<\infty$, then $\E X\ptpx2$ exists in Dunford sense.

\pfitemref{tdgp}
Assume that the family
$\set{|X(t)|^2:t\in K}$ is \ui.
By applying \eqref{cec} to $\etta_E X$ 
as in the proof of \refT{TD1}, 
we obtain from \eqref{ui}
that the family
$\bigset{\ga(X,X):\ga\in L\xpar{\doi^2;\bbR},\,\norm\ga\le1}$ is \ui.
Moreover, $X\ptpx2$ is  \wassep{} by \refT{TD3}.
Hence \refT{THuff} shows, using \refT{Tproj*}, that $\E X\ptpx2$ exists in
Pettis sense.

\pfitemref{tdgb}
This is again a special case of \refT{TPIB}.
\end{proof}

\begin{theorem}
  \label{TDD2} 
Let\/ $X$ and\/ $Y$ be \dmeas{} \doi-valued \rv{s} such that
 $\sup_{t\in \oi}|X(t)|^2<\infty$ 
and  $\sup_{t\in \oi}|Y(t)|^2<\infty$. 
Then the following are equivalent.
\begin{romenumerate}[-10pt]
\item \label{tdd2m}
%\eqref{multi=}, \ie{},
$\E\ga(X,X)=\E\ga(Y,Y)$ for every  bounded bilinear form 
$\ga$ on $\doi$. 
\item \label{tdd2w}
% \eqref{weak=}, \ie,
$\E \bigpar{\xx_1(X)\xx_2(X)}=\E \bigpar{\xx_1(Y)\xx_2(Y)}$,
for any $\xx_1,\xx_2\in \doi^*$.
\item \label{tdd2t}
% \eqref{punkt=}, \ie,
%\begin{equation}\label{tdd2}
 $ \E \bigpar{X(t_1) X(t_2)}
=
 \E \bigpar{Y(t_1)Y(t_2)}
$
for any $t_1,t_2\in \oi$.
\item \label{tdd2p}
$\E X\ptpx2=\E Y\ptpx2$ in $\doi\ptpx2$,
with the moments existing in Dunford sense.
\item \label{tdd2i}
$\E X\itpx2=\E Y\itpx2$ in $\doi\itpx2$,
with the moments existing in Dunford sense.
\end{romenumerate}
\end{theorem}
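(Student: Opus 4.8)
The plan is to prove the chain of equivalences in \refT{TDD2} by closely paralleling the proof of \refT{TCKD2}, with the metrizability of $K$ replaced by the measurability machinery developed for $\doi$ in \refS{SmeasD}. First I would dispose of the easy implications: \ref{tdd2m}$\implies$\ref{tdd2w}$\implies$\ref{tdd2t} are immediate from the fact that $(x_1,x_2)\mapsto\xx_1(x_1)\xx_2(x_2)$ and $(f_1,f_2)\mapsto f_1(t_1)f_2(t_2)$ (via $\gd_{t_1}\tensor\gd_{t_2}$) are bounded bilinear forms on $\doi$. For the equivalences with the tensor statements, note that by \refT{TDG}\ref{tdgd} the hypothesis $\sup_t|X(t)|^2<\infty$ (hence $\sup_t\E|X(t)|^2<\infty$) guarantees that $\E X\ptpx2$ and $\E X\itpx2$, and likewise for $Y$, exist in Dunford sense; then \ref{tdd2m}$\iff$\ref{tdd2p} is \refC{Cptpk=}, and \ref{tdd2w}$\iff$\ref{tdd2i} is \refC{Citpk=d}\ref{cit1} applied with $B=\doi$ (which is \wmeas{} $=$ \dmeas{} by \refT{TDw}, and where \refC{Citpk=d}\ref{cit2} is available via \refT{TDwassep}, or rather one uses \refC{Citpk=d} in the form appropriate to \dmeas{} variables — the injective moment is determined by the weak moment). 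So the whole theorem reduces to the single nontrivial implication \ref{tdd2t}$\implies$\ref{tdd2m}.

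To prove \ref{tdd2t}$\implies$\ref{tdd2m}, fix a bounded bilinear form $\ga\in L(\doi^2;\bbR)=(\doi\ptpx2)\q$. Since $\doi=C(\hI)$, Grothendieck's theorem \refT{TG} (applied to the compact space $\hI$) gives a Borel probability measure $\nu$ on $\hI$ such that $\ga$ extends to a bounded bilinear form on $L^2(\hI,\nu)$ with norm $\le\kkg\norm\ga$. Now I want to realize $X$ as an $L^2(\hI,\nu)$-valued \rv{} and invoke \refT{TPIB}: for this I need $X$ to be \Bmeas{} in $L^2(\hI,\nu)$ and $\E\norm X_{L^2(\nu)}^2<\infty$. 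The second point follows by writing $\nu$ via \refC{C1} as $f\mapsto\intoi f\dd\nu_1+\sum_{t\in\ooi}h(t)\gD f(t)$ with $\norm{\nu_1}_M+\norm h_{\ell^1}\le2$, exactly as in the proof of \refT{TD1}\ref{td1d}; then using joint measurability of $(f,t)\mapsto f(t)$ on $\oi\times\gO$ from \refL{LM1}, Fubini on the countable sum, and Fatou's lemma to bound $\E|X(t-)|^2\le\sup_t\E|X(t)|^2$, one gets $\E\norm X_{L^2(\nu)}^2\le4\sup_t\E|X(t)|^2<\infty$. For the first point — that $X$, viewed in $L^2(\nu)$, is \Bmeas{} — I would argue as follows: decompose $\nu=\nu_c+\nu_d$ into its continuous and discrete parts. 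The discrete part $\nu_d$ lives on countably many points of $\hI$, so $L^2(\hI,\nu_d)$ is a separable Hilbert space and the map $X\mapsto(X(t_i))_i$ (coordinates at the atoms, with $X(t_i-)$ where the atom is of the form $t_i-$) is \dmeas{} into $\ell^2$ of those atoms by \refL{LM1}, hence \Bmeas. For the continuous part, since $\nu_c$ has no atoms and $f(t)=f(t-)$ off the countable jump set of $f$, we may treat $\nu_c$ as a measure on $\oi$ and then \refL{LLp} together with the joint $(\cD\times\cB(\oi))$-measurability of $(f,t)\mapsto f(t)$ from \refL{LM1} shows $X$ is \Bmeas{} in $L^2(\oi,\nu_c)$. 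Adding the two pieces, $X$ is \Bmeas{} in $L^2(\hI,\nu)$.

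Once $X$ (and similarly $Y$) is known to be a \Bmeas{} $L^2(\hI,\nu)$-valued \rv{} with finite second moment, \refT{TPIB} gives that $\E X\ptpx2$ and $\E X\itpx2$ exist in Bochner sense in $L^2(\hI,\nu)\ptpx2$ and $L^2(\hI,\nu)\itpx2$, and likewise for $Y$. Next, for $g,h\in L^2(\hI,\nu)\q=L^2(\hI,\nu)$, Fubini's theorem (with the joint measurability of $(f,t)\mapsto f(t)$ just used, applied as before to the continuous and discrete parts of $\nu$ separately) gives
\begin{equation*}
  \E\bigpar{\innprod{X,g}\innprod{X,h}}
  =\int_{\hI}\int_{\hI}\E\bigpar{X(t)X(u)}g(t)h(u)\dd\nu(t)\dd\nu(u),
\end{equation*}
and the same with $Y$ in place of $X$; here $\E(X(t)X(u))$ for $t,u\in\hI$ means $\E(X(t_0)X(u_0))$ or the corresponding left-limit version, which by Fatou/dominated convergence along sequences approaching $t,u$ from the appropriate sides is a limit of quantities $\E(X(t')X(u'))$ with $t',u'\in\oi$, so assumption \ref{tdd2t} propagates from $\oi\times\oi$ to $\hI\times\hI$. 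Hence $\E(\innprod{X,g}\innprod{X,h})=\E(\innprod{Y,g}\innprod{Y,h})$ for all $g,h\in L^2(\hI,\nu)$, which is \eqref{weak=} for $L^2(\hI,\nu)$ with $k=2$; by \refC{Citpk=p} we get $\E X\itpx2=\E Y\itpx2$ in $L^2(\hI,\nu)\itpx2$, and since $L^2(\hI,\nu)$ has the \ap{} by \refT{THap}, \refT{Tiotak} upgrades this to $\E X\ptpx2=\E Y\ptpx2$ in $L^2(\hI,\nu)\ptpx2$. Finally, since $\ga$ extends to $L^2(\hI,\nu)$,
\begin{equation*}
  \E\ga(X,X)=\innprod{\ga,\E X\ptpx2}=\innprod{\ga,\E Y\ptpx2}=\E\ga(Y,Y),
\end{equation*}
and as $\ga$ was an arbitrary bounded bilinear form on $\doi$, \ref{tdd2m} follows. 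The main obstacle I expect is the measurability bookkeeping in the middle step: unlike the $C(K)$ case with metric $K$, here $(f,t)\mapsto f(t)$ is not jointly measurable on $\doi\times\hI$ (\refR{RDw2}), so one genuinely has to split $\nu$ into continuous and discrete parts and handle the atoms at points $t-$ by hand using \refL{LM1}; getting this clean, and making sure the "extend \ref{tdd2t} from $\oi^2$ to $\hI^2$" step is legitimate (it relies on the uniform $L^2$ bound to justify passing to the limit in $L^1$), is where the care is needed.
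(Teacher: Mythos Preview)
Your strategy is correct and matches the paper's: reduce to \ref{tdd2t}$\implies$\ref{tdd2m}, apply Grothendieck's theorem on $\hI$, and then circumvent the failure of joint measurability on $\hI\times\gO$ by passing to an auxiliary measure space where \refL{LM1} applies. The paper carries out the last step slightly differently: rather than splitting $\nu=\nu_c+\nu_d$ and pushing $\nu_c$ forward to $\oi$ via the projection $\hI\to\oi$, it invokes the representation \eqref{c2} of \refR{Rc2} to write $\int_{\hI}f\dd\nu=\int_{\oi}f(t)\dd\mu_1(t)+\sum_n f(t_n-)\mu_2\{t_n\}$ with $\mu_2$ discrete, and then works in the single measure space $\gS=\oi\cup\{t_n-\}$ with measure $\mu_1+\mu_2'$; this is marginally cleaner but equivalent in substance to your decomposition, and both feed into the same \refL{LLp}/\refT{TPIB} argument in $L^2$.

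One correction: your appeal to \refC{Citpk=d} for \ref{tdd2w}$\iff$\ref{tdd2i} is not justified as written, since $\doi$ is not separable and $X,Y$ need not be \assep{} (\refT{TDwassep} gives only \emph{weak} \as{} separability, which is not among the hypotheses of \refC{Citpk=d}). The paper sidesteps this by getting \ref{tdd2i}$\implies$\ref{tdd2w} directly from \eqref{tid} and then closing the cycle through \ref{tdd2t}$\implies$\ref{tdd2m}, so the gap is harmless once the main implication is in place.
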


\begin{proof} 
We argue as in the proof of \refT{TCKD2}, with some changes.
The implications \ref{tdd2m}$\implies$\ref{tdd2w}$\implies$\ref{tdd2t}
and \ref{tdd2p}$\implies$\ref{tdd2i}
are  trivial, the equivalence
\ref{tdd2m}$\iff$\ref{tdd2p} is
\refC{Cptpk=} and 
\ref{tdd2i}$\implies$\ref{tdd2w}  follows by \eqref{tid}.
It remains to show that \ref{tdd2t}$\implies$\ref{tdd2m}.

Thus, let $\ga\in L(\doi^2;\bbR)= L(\chI^2;\bbR)$. %\bigpar{\doi\ptpx2}\q$
By \refT{TG}
there exists 
a \pmx{} $\nu$ on $\hI$ such that $\ga$ extends to
$L^2(\hI,\nu)$.
The main difference from \refT{TCKD2} is that we cannot assert that
$X(t,\go)$ is jointly 
\meas{} on $\hI\times\gO$, see \refR{RDw2}. It is, however, still possible
to regard $X$ as a map into $L^2(\hI,\nu)$. We prefer to state this slightly
differently, returning to \oi{} by the argument already used in the proof of
\refT{TD1}. 
We thus regard $\nu$ as a continuous linear functional $\chi$ on
$\chI=\doi$;
we now use the representation \eqref{c2} in \refR{Rc2} and write it as
\begin{equation}\label{d2}
\int_{\hI} f\dd\nu
=
\chi(f) = 
\intoi f(t)\dd\mu_1(t) + \intoi f(t-)\dd\mu_2(t),
\end{equation}
where $\mu_2$ is a discrete  measure supported on a countable set
$\set{t_n}\seq$. 
It is easily seen that both $\mu_1$ and $\mu_2$ are positive measures. 
(We may alternatively omit this verification and replace them by $|\mu_1|$
and $|\mu_2|$, possibly increasing $\nu$.) 

Let
$\gS\=\oi\cup\set{t_n-}\seq\subset\hI$. We define a \gsf{} 
$\cA$ on 
$\gS$ by $\cA\=\set{A\subseteq\gS:A\cap\oi\in\cB(\oi)}$, and let $\mu$ be
the measure $\mu_1+\mu_2'$ on $(\gS,\cA)$, where $\mu_2'$ is the measure on
$\set{t_n-}\seq$ given by $\mu_2'\set{t_n-}\=\mu_2\set{t_n}$.
Then \eqref{d2} can be written
\begin{equation}
\int_{\hI} f\dd\nu
=
\int_\gS f\dd\mu.
\end{equation}
Applying this to $|f|^2$, we see that 
$\norm{f}_{L^2(\hI,\nu)}=\norm{f}_{L^2(\gS,\mu)}$ for all $f\in\doi$, and
$\ga$ thus extends to a bounded bilinear form on ${L^2(\gS,\mu)}$.
Moreover, \refL{LM1} implies that $X(t,\go)$ is jointly \meas{} on
$\gS\times\gO$. Consequently, $X$ is a \Bmeas{} \rv{} in $L^2(\gS,\mu)$ by
\refL{LLp}.
Furthermore, the assumption that $\sup_t\E|X(t)|^2<\infty$ implies
by Fubini's theorem, as in \eqref{abarn}, that 
$\E\norm{X}_{L^2(\gS,\mu)}^2<\infty$, 
and thus 
$\E X\ptpx2$ exists in $L^2(\gS,\mu)\ptpx2$ in Bochner sense. The same
holds for  $\E Y\ptpx2$.

The proof is now completed as for \refT{TCKD2}, \emph{mutatis mutandis}.
\end{proof}

\begin{theorem}\label{TDX} 
Let\/ $X$ and\/ $Y$ be \dmeas{} \doi-valued \rv{s}, and suppose that 
either
  \begin{romenumerate}[-10pt]
  \item \label{tdxk}
$\E\norm{X}^k<\infty$ and $\E\norm{Y}^k<\infty$, or
  \item \label{tdx2}
$k=2$ and
$\sup\set{\E|X(t)|^2:t\in \oi}<\infty$, %  and
$\sup\set{\E|Y(t)|^2:t\in \oi}<\infty$.
 \end{romenumerate}
Then  
\eqref{multi=} is equivalent to \eqref{weak=}, and further to
\begin{equation}%\label{punkt=}
  \E \bigpar{X(t_1)\dotsm X(t_k)}
=
 \E \bigpar{Y(t_1)\dotsm Y(t_k)},
\qquad t_1,\dots,t_k\in \oi.
\end{equation}
\end{theorem}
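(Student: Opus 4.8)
The plan is to establish the cycle \eqref{multi=}$\implies$\eqref{weak=}$\implies$(the displayed pointwise identity)$\implies$\eqref{multi=}, splitting the last implication according to which of \ref{tdxk}, \ref{tdx2} is assumed. The first two implications need no hypothesis: each point evaluation $\gd_t$ lies in $\doi\q$, and for $\xx_1,\dots,\xx_k\in\doi\q$ the map $(x_1,\dots,x_k)\mapsto\xx_1(x_1)\dotsm\xx_k(x_k)$ is a bounded $k$-linear form on $\doi$; hence the pointwise identity is a special case of \eqref{weak=}, which is a special case of \eqref{multi=} (see \refS{S:intro}). So the real content is the implication (pointwise identity)$\implies$\eqref{multi=}.

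In case \ref{tdx2} there is nothing further to do: for $k=2$ the three conditions in question are precisely \ref{tdd2m}, \ref{tdd2w}, \ref{tdd2t} of \refT{TDD2}, which that theorem already shows to be equivalent. (It is there that Grothendieck's theorem \refT{TG} does the work, letting one pass to an auxiliary space $L^2(\gS,\mu)$ on which the second moments exist in Bochner sense.)

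In case \ref{tdxk} I would follow the scheme of the proof of \refT{TC}. Since $\E\norm X^k<\infty$, the integrable variable $\norm X^k$ dominates every $|X(t)|^k$, so $\set{|X(t)|^k:t\in\oi}$ is uniformly integrable; hence by \refT{TD1}\ref{td1p} the injective moment $\E X\itpk$ exists in Pettis sense, and by the final assertion of \refT{TD1} it is the function $(t_1,\dots,t_k)\mapsto\E\bigpar{X(t_1)\dotsm X(t_k)}$ on $\oi^k$, regarded as an element of $\doi\itpk=\doik\cong C(\hI^k)$; the same holds for $Y$. Since $\oi^k$ is dense in $\hI^k$ (see \refD{Ddoik}) and continuous functions that agree on a dense set coincide, the pointwise identity is equivalent to $\E X\itpk=\E Y\itpk$, which by \refC{Citpk=p} is equivalent to \eqref{weak=}. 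Finally, by \refT{TD1k}\ref{td1kp} the projective moments $\E X\ptpk$, $\E Y\ptpk$ also exist in Pettis sense; as $\doi$ has the approximation property (\refC{CDap}), \refT{Tiotak} shows $\iota\colon\doi\ptpk\to\doi\itpk$ is injective, so $\E X\itpk=\E Y\itpk$ together with \eqref{momitpk} forces $\E X\ptpk=\E Y\ptpk$, and then \refC{Cptpk=} gives \eqref{multi=}.

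I expect the main obstacle to lie not in any single step but in recognizing that the two hypotheses genuinely demand different treatments: under \ref{tdx2} the second moments in general exist only in Dunford sense (\refT{TDG}), so the clean ``approximation property plus Pettis integral'' argument of the \ref{tdxk} case is unavailable, and one must instead invoke the Grothendieck-inequality argument already encapsulated in \refT{TDD2}. Everything else is bookkeeping with the measurability and existence results for $\doi$ established in Sections~\ref{SmeasD}--\ref{SDmom} together with the general comparison results of Sections~\ref{Smom} and \ref{Sapprox}.
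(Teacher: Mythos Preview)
Your proof is correct and follows essentially the same route as the paper: case \ref{tdx2} is deferred to \refT{TDD2}, and case \ref{tdxk} uses \refT{TD1k} for existence of the projective moments in Pettis sense, the formula \eqref{td1} together with density of $\oi^k$ in $\hI^k$ to pass from the pointwise identity to equality of injective moments, and then \refT{Tiotak} (via \refC{CDap}) plus \refC{Cptpk=} to obtain \eqref{multi=}. The only cosmetic difference is that the paper obtains the injective moments from \refT{TD1k} via \refT{TPI} rather than directly from \refT{TD1}, but this is immaterial.
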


\begin{proof} 
\pfitemref{tdxk}
  By \refT{TD1k},
$\E X\ptpk$ and $\E Y\ptpk$ exist in Pettis sense.
Thus, $\E X\itpk$ and $\E Y\itpk$ too exist in Pettis sense,
and the result follows by \refT{Tiotak},
Corollaries \refand{Cptpk=}{Citpk=p} and \eqref{td1},
similarly as   in the proofs of Theorems
\refand{Tapprox}{TC}. 

\pfitemref{tdx2}
A simplified version of \refT{TDD2}.
\end{proof}

\begin{remark}
  As for \refT{TCKD2}, we do not know whether the condition \ref{tdxk} 
can be weakened for $k\ge3$.
\end{remark}

\section{Uniqueness}\label{Sunique} 

In the previous sections we have considered the \kth{} moment(s) for a fixed
$k$. In this section and the next, we consider the sequence of all moments.
In the present section we show that there are analogues of the classical
results for real-valued \rv{s} that the moments (under certain conditions)
determine the distribution. In \refS{Sconv} we consider convergence, where
the situation is more complicated and less satisfactory.

We suppose for simplicity that the Banach space $B$ is separable.
All \rv{s} are tacitly assumed to be (Borel) \meas, \cf{} \refT{Twm}.

We begin with two simple results on the existence of all moments.

\begin{theorem}
  \label{TXB}
Let $X$  be a $B$-valued \rv{}, where $B$ is a separable Banach space.
Then the following are equivalent.
\begin{romenumerate}[-10pt]
\item 
$\E\norm{X}^k<\infty$ for every $k\ge1$,
\item 
The projective moment $\E X\ptpk$ exists in Bochner sense for every $k\ge1$.

\item 
The injective moment $\E X\itpk$ exists in Bochner sense for every $k\ge1$.
\end{romenumerate}
\end{theorem}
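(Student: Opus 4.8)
The plan is to prove the three implications as a cycle, though in fact the equivalences are almost immediate from results already established. First I would note that since $B$ is separable, every $B$-valued \rv{} is automatically \assep, and by \refT{Twm} Borel measurability coincides with weak measurability and with Bochner measurability; thus measurability is never an issue here and the content of the theorem is entirely about integrability.

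For the implication \textup{(i)}$\implies$\textup{(ii)}, fix $k\ge1$. Since $B$ is separable, $X$ is \Bmeas, and by \refT{TPIB} (with the equivalence of \ref{tpib1} and the condition ``$\E\norm X^k<\infty$ and $X$ \assep''), the projective moment $\E X\ptpk$ exists in Bochner sense as soon as $\E\norm X^k<\infty$. So assuming \textup{(i)} gives \textup{(ii)} for every $k$. The implication \textup{(ii)}$\implies$\textup{(iii)} is \refT{TPI}: whenever the projective moment exists in Bochner sense, so does the injective moment, in the same sense. Finally, for \textup{(iii)}$\implies$\textup{(i)}, suppose $\E X\itpk$ exists in Bochner sense for every $k$. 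Again by \refT{TPIB} (now using the equivalence \ref{tpib2}$\iff$\textup{(iii) of that theorem}), the existence of $\E X\itpk$ in Bochner sense implies $\E\norm X^k<\infty$. Since this holds for every $k$, we obtain \textup{(i)}.

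There is essentially no obstacle here: the whole statement is a direct corollary of \refT{TPIB} together with \refT{TPI}, once one observes that separability removes all measurability hypotheses. The only thing to be careful about is to invoke \refT{TPIB} correctly — it requires $X$ weakly measurable, which holds here since $B$ is separable and $X$ is (Borel) \meas{} by the standing assumption, and it delivers both the ``projective Bochner'' and ``injective Bochner'' characterisations in one stroke, with ``$\E\norm X^k<\infty$ and $X$ \assep'' as the common third condition; the \assep{} clause is vacuous in the separable setting. I would write the proof in two or three lines citing \refT{TPIB} and \refT{TPI}.

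\begin{proof}
Since $B$ is separable, $X$ is \assep, and by \refT{Twm} $X$ is Borel measurable, hence \Bmeas, and in particular \wmeas. Thus \refT{TPIB} applies for every $k\ge1$, and shows that, for each $k$, the projective moment $\E X\ptpk$ exists in Bochner sense if and only if the injective moment $\E X\itpk$ exists in Bochner sense, and if and only if $\E\norm X^k<\infty$ (the condition ``$X$ \assep'' in \refT{TPIB} being automatic here). Taking the conjunction over all $k\ge1$ yields the equivalence of the three statements. (Alternatively, \textup{(i)}$\implies$\textup{(ii)} and \textup{(iii)}$\implies$\textup{(i)} by \refT{TPIB}, while \textup{(ii)}$\implies$\textup{(iii)} also follows from \refT{TPI}.)
\end{proof}
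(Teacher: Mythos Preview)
Your proof is correct and takes essentially the same approach as the paper, which simply says ``An immediate consequence of \refT{TPIB}.'' Your version spells out the details (separability makes $X$ automatically \assep{} and \wmeas, so \refT{TPIB} applies for each $k$), which is exactly what underlies the paper's one-liner.
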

\begin{proof}
  An immediate consequence of \refT{TPIB}.
\end{proof}

We do not know any general weaker
criterion for the existence of projective moments
in Pettis or Dunford sense.
(See \refT{TH2} for one case where no weaker criterion exists,
and \refT{TCKG} for a case when it does.) 
For injective moments we have the following.
\begin{theorem}
  \label{TXID}
Let $X$  be a $B$-valued \rv{}, where $B$ is a separable Banach space.
Then the following are equivalent.
\begin{romenumerate}[-10pt]
\item \label{txid=}
$\E|\innprod{\xx,X}|^k<\infty$ for every $k\ge1$,
\item \label{txidd}
The injective moment $\E X\itpk$ exists in Dunford sense for every $k\ge1$.
\item \label{txidp}
The injective moment $\E X\itpk$ exists in Pettis sense for every $k\ge1$.
\end{romenumerate}
\end{theorem}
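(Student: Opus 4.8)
The plan is to prove the cycle \ref{txid=}$\implies$\ref{txidp}$\implies$\ref{txidd}$\implies$\ref{txid=}. The implication \ref{txidp}$\implies$\ref{txidd} is trivial, since a Pettis integral is in particular a Dunford integral. The implication \ref{txidd}$\implies$\ref{txid=} follows from \refT{TID}\ref{tidi}: if $\E X\itpk$ exists in Dunford sense, then the weak \kth{} moment exists, and by \refL{Lweak}\ref{lweak3} this gives $\E|\innprod{\xx,X}|^k<\infty$ for every $\xx\in B\q$; doing this for every $k$ gives \ref{txid=}. So the substance is in \ref{txid=}$\implies$\ref{txidp}.

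For that implication, first I would fix $k$ and show that $\E X\itpk$ exists in Pettis sense. By \refT{TiPettis}\ref{tipettis2}\ref{tip1} (using that $B$ is separable), it suffices to check that the family $\set{|\xx(X)|^k:\xx\in B\q,\,\norm{\xx}\le1}$ is uniformly integrable. The natural device is to use the next exponent: apply the hypothesis with $2k$ in place of $k$, so that $\E|\innprod{\xx,X}|^{2k}<\infty$ for every $\xx\in B\q$. By \refL{Lweak} (applied with exponent $2k$), the map $\dunx:\xx\mapsto\xx(X)$ is then a bounded operator $B\q\to L^{2k}(\P)$, so there is a constant $C$ with $\E|\xx(X)|^{2k}\le C$ for all $\xx$ in the unit ball of $B\q$. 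Hence $\set{|\xx(X)|^k:\norm\xx\le1}$ is bounded in $L^2(\P)$, and a family bounded in $L^2$ is uniformly integrable (the de~la~Vall\'ee-Poussin criterion with the convex function $t\mapsto t^2$, or directly: $\E(\ettae|\xx(X)|^k)\le \P(E)^{1/2}\,(\E|\xx(X)|^{2k})^{1/2}\le C^{1/2}\P(E)^{1/2}$ by Cauchy--Schwarz, together with the uniform $L^1$-bound $\E|\xx(X)|^k\le \E|\xx(X)|^{2k\cdot 1/2}\cdot\text{(something)}$ — more simply, $\E|\xx(X)|^k\le (\E|\xx(X)|^{2k})^{1/2}\le C^{1/2}$). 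This verifies \eqref{ui}, so \refT{TiPettis}\ref{tipettis2}\ref{tip1} applies and $\E X\itpk$ exists in Pettis sense. Since $k\ge1$ was arbitrary, \ref{txidp} follows.

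I do not anticipate a serious obstacle: every ingredient is already available in the excerpt (\refL{Lweak}, \refT{TiPettis}, \refT{TID}), and the only real content is the elementary observation that a hypothesis at level $2k$ converts into uniform integrability at level $k$ via Cauchy--Schwarz. The one point requiring a moment's care is making sure we invoke the separable case \ref{tip1} of \refT{TiPettis}\ref{tipettis2} rather than worrying about the nuclear-form hypothesis \ref{tip3}; since $B$ is assumed separable throughout this section, this is immediate. One should also note explicitly that the equivalences are stated for "every $k\ge1$", so it is legitimate to feed the hypothesis the larger exponent $2k$ when proving the statement for $k$.

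\begin{proof}
\ref{txidp}$\implies$\ref{txidd} is trivial, and \ref{txidd}$\implies$\ref{txid=}
follows from \refT{TID}\ref{tidi} and \refL{Lweak}\ref{lweak3}
(for each fixed $k$).

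It remains to prove \ref{txid=}$\implies$\ref{txidp}.
Fix $k\ge1$. By assumption, $\E|\innprod{\xx,X}|^{2k}<\infty$ for every $\xx\in
B\q$, so \refL{Lweak} (with exponent $2k$) shows that $\dunx:\xx\mapsto\xx(X)$
is a bounded operator $B\q\to L^{2k}(\P)$; let $C$ be its norm.
Then, for every $\xx\in B\q$ with $\norm\xx\le1$,
\begin{equation*}
  \E|\xx(X)|^{2k}\le C^{2k},
\end{equation*}
and hence, by the \CSineq, for any event $E$,
\begin{equation*}
  \E\bigpar{\ettae|\xx(X)|^k}
\le \P(E)^{1/2}\bigpar{\E|\xx(X)|^{2k}}^{1/2}
\le C^k\,\P(E)^{1/2},
\end{equation*}
while taking $E=\gO$ gives $\E|\xx(X)|^k\le C^k$.
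Thus the condition \eqref{ui} holds, so the family
$\set{|\xx(X)|^k:\xx\in B\q,\,\norm{\xx}\le1}$ is uniformly integrable.
Since $B$ is separable, \refT{TiPettis}\ref{tipettis2}\ref{tip1} now shows that
$\E X\itpk$ exists in Pettis sense. As $k\ge1$ was arbitrary, \ref{txidp}
follows.
\end{proof}
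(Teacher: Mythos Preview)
Your proof is correct and follows essentially the same approach as the paper: both arguments reduce \ref{txid=}$\implies$\ref{txidp} to the uniform integrability criterion of \refT{TiPettis}\ref{tipettis2}\ref{tip1}, obtained by feeding the hypothesis a higher exponent (you use $2k$ and Cauchy--Schwarz, the paper uses $k+1$ and the standard $L^{k+1}$-boundedness $\Rightarrow$ $L^k$-uniform-integrability fact). The paper organizes the implications as \ref{txid=}$\iff$\ref{txidd} (via \refT{TID}\ref{tidd}\ref{tid1}) and \ref{txidd}$\iff$\ref{txidp}, but this is a cosmetic difference.
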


\begin{proof}
\ref{txid=}$\iff$\ref{txidd}:  
By \refT{TID}\ref{tidd}\ref{tid1}.  

\ref{txidd}$\iff$\ref{txidp}:  
If \ref{txidd} holds, then
$\sup\set{\E|\innprod{\xx,X}|^k:\norm{\xx}\le1}<\infty$ for every $k\ge1$
by \refT{TID}\ref{tidi}.
Thus
$\sup\set{\E|\innprod{\xx,X}|^{k+1}:\norm{\xx}\le1}<\infty$,
which implies that
$\set{|\innprod{\xx,X}|^k:\norm{\xx}\le1}$ is \ui.
\refT{TiPettis}\ref{tipettis2}\ref{tip1} shows that  \ref{txidp} holds.
The converse is obvious.
\end{proof}

It is well-known that already on $\bbR$, there are \rv{s} with 
the same moments but different distributions, see \eg{} 
\cite[Section 4.10]{Gut}.
A well-known sufficient condition for the distribution of $X$ to
be uniquely determined by the moments
is the \emph{Carleman condition}
\cite{Carleman}
\begin{equation}
  \label{carleman}
\sumn \bigpar{\E|X|^{2n}}^{-1/2n}=\infty.
\end{equation}
Note that \eqref{carleman} is satisfied whenever $\E e^{t|X|}<\infty$
for some $t>0$.

\begin{theorem}\label{TU}
Let $B$ be a separable Banach space and let $X$ and $Y$ be two $B$-valued
\rv{s}.   
Suppose that $\E \norm{X}^k<\infty$ and $\E \norm{Y}^k<\infty$ for every
$k\ge1$ and that $\norm{X}$ satisfies the Carleman condition
\eqref{carleman}.
Then the following are equivalent.
\begin{romenumerate}
\item \label{tu=}
$X\eqd Y$.
\item \label{tup}
$\E X\ptpk=\E Y\ptpk$ for every $k\ge1$.
\item \label{tui}
$\E X\itpk=\E Y\itpk$ for every $k\ge1$.
\item \label{tuw}
$\innprod{\xx,X}\eqd\innprod{\xx,Y}$ for every $\xx\in B\q$.
\end{romenumerate}

\end{theorem}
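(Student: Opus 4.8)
The plan is to prove \refT{TU} by a cycle of implications: \ref{tu=}$\implies$\ref{tup}$\implies$\ref{tui}$\implies$\ref{tuw}$\implies$\ref{tu=}. The first three implications are essentially formal. For \ref{tu=}$\implies$\ref{tup}: if $X\eqd Y$ then $X\tpk\eqd Y\tpk$ as $B\ptpk$-valued \rv{s} (since $x\mapsto x\tpk$ is a fixed continuous map $B\to B\ptpk$), and equal distributions with equal finite $k$:th absolute moments give equal Bochner expectations; here all the relevant moments exist in Bochner sense by \refT{TXB} since $\E\norm X^k,\E\norm Y^k<\infty$ for all $k$. Then \ref{tup}$\implies$\ref{tui} is immediate from \eqref{momitpk}, applying $\iota$. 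And \ref{tui}$\implies$\ref{tuw}: by \eqref{tid} (or \refT{Tjepp}, since the moments exist in Bochner hence Pettis sense) equality of $\E X\itpk$ and $\E Y\itpk$ gives $\E\bigpar{\xx(X)^k}=\E\bigpar{\xx(Y)^k}$ for every $\xx\in B\q$ and every $k$; since $\norm X$ satisfies the Carleman condition \eqref{carleman}, so does $|\xx(X)|$ because $|\xx(X)|\le\norm\xx\,\norm X$ forces $\E|\xx(X)|^{2n}\le\norm\xx^{2n}\E\norm X^{2n}$, whence $(\E|\xx(X)|^{2n})^{-1/2n}\ge\norm\xx^{-1}(\E\norm X^{2n})^{-1/2n}$ and the series still diverges. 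Thus by the classical one-dimensional Carleman/Hamburger uniqueness theorem for real-valued \rv{s} (see \eg{} \cite[Section 4.10]{Gut} or \cite{Carleman}), $\xx(X)$ and $\xx(Y)$ have the same distribution for every $\xx\in B\q$.

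The remaining and genuinely substantial step is \ref{tuw}$\implies$\ref{tu=}: equality of the one-dimensional distributions $\innprod{\xx,X}\eqd\innprod{\xx,Y}$ for all $\xx\in B\q$ must be upgraded to equality of the $B$-valued distributions. First I would pass to characteristic functions: $\innprod{\xx,X}\eqd\innprod{\xx,Y}$ implies $\E e^{\ii\innprod{\xx,X}}=\E e^{\ii\innprod{\xx,Y}}$ (take $t=1$ in the equality of characteristic functions of the real \rv{s} $\innprod{\xx,X}$ and $\innprod{\xx,Y}$, evaluated for each $\xx$), i.e.\ $X$ and $Y$ have the same characteristic functional on $B\q$. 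The claim is then that a Borel probability measure on a \emph{separable} Banach space is determined by its characteristic functional. This is a standard fact; I would prove it as follows. Both laws $\mu_X$ and $\mu_Y$ are Radon measures on the separable space $B$ (automatic since $B$ is separable, \cf{} \refR{Rmeas}). Two finite Borel measures on $B$ that agree on the algebra of ``cylinder'' sets $\{x:(\innprod{\xx_1,x},\dots,\innprod{\xx_n,x})\in A\}$, $A\subseteq\bbR^n$ Borel, agree on the $\gs$-field $\bw$ generated by $B\q$; and in a separable Banach space $\bw=\cB$, the Borel $\gs$-field (see \refS{Smeas}). So it suffices to show $\mu_X$ and $\mu_Y$ agree on all such cylinder sets, equivalently that for every finite collection $\xx_1,\dots,\xx_n\in B\q$ the $\bbR^n$-valued \rv{s} $(\innprod{\xx_1,X},\dots,\innprod{\xx_n,X})$ and $(\innprod{\xx_1,Y},\dots,\innprod{\xx_n,Y})$ have the same distribution. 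By the Cram\'er--Wold device, this reduces to showing $\sum_{i=1}^n t_i\innprod{\xx_i,X}\eqd\sum_{i=1}^n t_i\innprod{\xx_i,Y}$ for all real $t_i$; but $\sum_i t_i\xx_i$ is again an element of $B\q$, so this is exactly hypothesis \ref{tuw}.

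The main obstacle, then, is the measure-theoretic uniqueness argument in the last step — specifically making precise that a finite Borel measure on a separable Banach space is determined by its finite-dimensional projections (equivalently by its characteristic functional). This rests on two ingredients that are already available in the paper: that $\bw=\cB$ when $B$ is separable (stated in \refS{Smeas}, just after \refT{Twm}), and the classical Cram\'er--Wold theorem on $\bbR^n$. One subtlety worth stating carefully is that the cylinder sets do not form a $\gs$-field but only an algebra (finite unions/intersections), so one needs a Dynkin $\pi$--$\lambda$ argument: the cylinder sets form a $\pi$-system generating $\bw=\cB$, and two probability measures agreeing on a generating $\pi$-system agree everywhere. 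I would also remark, for completeness, that the Carleman hypothesis is only needed for one of $X,Y$ (it is used solely in \ref{tui}$\implies$\ref{tuw} to invoke one-dimensional uniqueness), and that \ref{tuw} could alternatively be phrased via \eqref{weak==} as equality of all weak $k$:th moments; but the cleanest route is the characteristic-functional argument just sketched.
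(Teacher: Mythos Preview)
Your proof is correct and follows essentially the same route as the paper: the cycle \ref{tu=}$\implies$\ref{tup}$\implies$\ref{tui}$\implies$\ref{tuw}$\implies$\ref{tu=}, with moments existing in Bochner sense by \refT{TXB}, the transfer of the Carleman condition to $\xx(X)$ via $|\xx(X)|\le\norm{\xx}\,\norm{X}$, and the final step via Cram\'er--Wold plus the fact that $\bw=\cB$ in a separable space (the paper invokes the monotone class theorem where you invoke $\pi$--$\lambda$, which is the same thing). The only cosmetic difference is that you pass through characteristic functionals explicitly, whereas the paper goes directly from \ref{tuw} to joint distributions of $(\xx_1(X),\dots,\xx_n(X))$ via Cram\'er--Wold; both arrive at the same place.
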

\begin{proof}
Note that the moments exist in Bochner sense by \refT{TXB}.

\ref{tu=}$\implies$\ref{tup} is obvious and
  \ref{tup}$\implies$\ref{tui} follows by \refT{TPI}.

\ref{tui}$\implies$\ref{tuw}: 
If $\xx\in B\q$, then 
\begin{equation*}
\E\innprod{\xx,X}^k
=\innprod{\E X\itpk,(\xx)\tpk}
=\innprod{\E Y\itpk,(\xx)\tpk}
=\E\innprod{\xx,Y}^k  
\end{equation*}
for every
$k\ge1$ by \eqref{tid} and \ref{tui}.
Since
$\E|\innprod{\xx,X}|^k\le\norm{\xx}^k\E\norm{X}^k$, the Carleman condition
\eqref{carleman} holds for $\innprod{\xx,X}$ too, and it follows that 
\ref{tuw} holds.

\ref{tuw}$\implies$\ref{tu=}: 
This is well-known \cite{LedouxT}.
(Sketch of proof:
Any finite linear combination of elements of $B\q$ is another element of
$B\q$.
Hence \ref{tuw} implies, by the Cram\'er--Wold device, that \ref{tuw}
holds jointly for any finite number of functionals $\xx$.
A standard application of the monotone class theorem shows that 
$\P(X\in A)=\P(Y\in A)$ for every $A\in\bw=\cB$.)
\end{proof}

\begin{remark}
  The proof of \refT{TU} shows that it suffices that the Carleman condition
  holds for each $\innprod{\xx,X}$. Moreover, if we only consider injective
  moments, and the equivalences \ref{tu=}$\iff$\ref{tui}$\iff$\ref{tuw},
then the moment assumptions may be weakened to 
$\E|\innprod{\xx,X}|^k<\infty$ and $\E|\innprod{\xx,Y}|^k<\infty$ for every
$k\ge1$, with
 the injective moments existing in Pettis sense by \refT{TXID}.
\end{remark}

\begin{remark}
The assumption that $B$ is separable is essential; the equivalence
  \ref{tu=}$\iff$\ref{tuw} is not true in general for non-separable $B$. 
\refE{EPD3} gives an $X$ in $\ell^2\oi$ such that $\innprod{\xx,X}=0$ \as{}
for every $\xx\in\ell^2\oi\q=\ell^2\oi$, and thus 
$\innprod{\xx,X}\eqd\innprod{\xx,Y}$ with $Y=0$, although $X\neq0$ a.s. 
\end{remark}

\section{Convergence}\label{Sconv} 

As in the preceding section, we assume that the Banach space $B$ is separable,
and that all \rv{s} are  (Borel) \meas.

We consider a sequence $X_n$, $n\ge1$ of $B$-valued \rv{s}, and a potential
limit $X$. For definition and general properties of convergence in
distribution, denoted $X_n\dto X$, see \cite{Billingsley}.
In particular, recall that convergence in distribution can be described by a
metric, at least 
when $B$ is separable as here, see \cite[Theorem 6.8]{Billingsley}.
(The non-separable case is more complicated and related to the existence of
real-measurable cardinals, \cf{} \refR{Rcardinals},
see \cite[Appendix III]{Billingsley1}.)

As in the real-valued case, 
convergence in distribution implies convergence of the moments, provided
some suitable integrability condition holds uniformly.
We state one simple such result.
%, where we use a rather strong integrability condition. 

\begin{theorem}\label{Tem}
  Suppose that $X$ and $X_n$, $n\ge1$, are $B$-valued \rv{s}, where $B$ is a
  separable Banach space.
Suppose that $X_n\dto X$, and that $\sup_n\E\norm{X_n}^k<\infty$ for every
$k\ge1$. Then
$\E X_n\ptpk\to\E X\ptpk$ in $B\ptpk$
and $\E X_n\itpk\to\E X\itpk$ in $B\itpk$ as \ntoo, for every $k\ge1$.
\end{theorem}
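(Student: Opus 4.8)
The plan is to reduce everything to the one-dimensional fact that convergence in distribution plus uniform integrability of the $k$-th powers implies convergence of $k$-th moments, combined with the characterisation of equality of moments via bounded $k$-linear forms. First I would note that by \refT{TXB} all the moments $\E X\ptpk$, $\E X_n\ptpk$, $\E X\itpk$, $\E X_n\itpk$ exist in Bochner sense (the hypothesis $\sup_n\E\norm{X_n}^k<\infty$ also gives $\E\norm X^k<\infty$ by Fatou, since $X_n\dto X$ implies $\E\norm X^k\le\liminf_n\E\norm{X_n}^k$ by the portmanteau theorem applied to the continuous function $x\mapsto\norm x^k$ truncated at level $M$ and letting $M\to\infty$). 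Since $\iota:B\ptpk\to B\itpk$ is bounded (norm $1$) and $\E X_n\itpk=\iota(\E X_n\ptpk)$, $\E X\itpk=\iota(\E X\ptpk)$ by \refT{TPI}, it suffices to prove the projective statement $\E X_n\ptpk\to\E X\ptpk$ in $B\ptpk$.

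The core step is to prove \emph{weak} convergence $\E X_n\ptpk\to\E X\ptpk$ in $B\ptpk$, i.e. $\innprod{\ga,\E X_n\ptpk}\to\innprod{\ga,\E X\ptpk}$ for every $\ga\in(B\ptpk)\q=L(B^k;\bbR)$ (using \refT{Tproj*}). By \refT{TPD}, $\innprod{\ga,\E X_n\ptpk}=\E\ga(X_n,\dots,X_n)$ and similarly for $X$. Now $x\mapsto\ga(x,\dots,x)$ is a continuous real-valued function on $B$ with $|\ga(x,\dots,x)|\le\norm\ga\,\norm x^k$; since $X_n\dto X$, the continuous mapping theorem gives $\ga(X_n,\dots,X_n)\dto\ga(X,\dots,X)$. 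The bound $\sup_n\E|\ga(X_n,\dots,X_n)|^{1+\eps}\le\norm\ga^{1+\eps}\sup_n\E\norm{X_n}^{k(1+\eps)}<\infty$ (applying the hypothesis with exponent $k(1+\eps)$, e.g. $\eps=1$) shows that the family $\set{\ga(X_n,\dots,X_n)}_n$ is uniformly integrable; hence $\E\ga(X_n,\dots,X_n)\to\E\ga(X,\dots,X)$ by the standard one-dimensional result (see \eg{} \cite[Theorem 5.5.9]{Gut}). This proves weak convergence.

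To upgrade weak convergence to norm convergence I would use a uniform tightness / finite-dimensional approximation argument. Fix $\eps>0$. Since $X$ is Borel measurable in the separable space $B$, it is \Bmeas, hence tight (\refR{Rmeas}); moreover $X_n\dto X$ together with $\sup_n\E\norm{X_n}^{k+1}<\infty$ implies, by Prokhorov's theorem, that $\set{X_n}_n\cup\set X$ is a uniformly tight family, so there is a compact $K\subset B$ with $\P(X_n\notin K),\P(X\notin K)<\eps$ for all $n$. A compact set is almost contained in a finite-dimensional subspace: there is a finite-rank projection-like map, or more simply a finite set $x_1,\dots,x_m\in B$ and $\xx_1,\dots,\xx_m\in B\q$ giving a finite rank operator $P$ with $\norm{Px-x}<\eps$ for all $x\in K$ and $\norm P\le C$ for a universal constant. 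Writing $\E X_n\ptpk-\E X\ptpk$ via the decomposition $X_n\tpk = (PX_n)\tpk + (X_n\tpk-(PX_n)\tpk)$, the first terms live in the finite-dimensional space $(\im P)\ptpk$ where weak and norm convergence coincide and weak convergence is already established, while the remainder terms are controlled in norm by $k\,\sup_n\E\bigpar{\norm{X_n}^{k-1}\norm{X_n-PX_n}}$ plus contributions from the event $\set{X_n\notin K}$; using $\norm{X_n-PX_n}<\eps$ on $K$, $\norm{X_n-PX_n}\le(1+C)\norm{X_n}$ always, and the moment bounds with \Holder, this remainder is $O(\eps)$ uniformly in $n$ (after first choosing $K$ using the tails). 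Letting $n\to\infty$ then $\eps\to0$ completes the proof. \textbf{The main obstacle} is precisely this last passage: obtaining genuine norm convergence rather than weak convergence in $B\ptpk$, since $B\ptpk$ is typically not reflexive and weak convergence of moments does not by itself yield norm convergence; the uniform tightness of $\set{X_n}$ (which requires invoking Prokhorov in the separable setting) and the finite-rank approximation of compact sets are the technical heart, and one must be careful that the approximation constant $C$ and the estimates are uniform in $n$.
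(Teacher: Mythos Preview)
Your preliminaries and the weak convergence step are fine: $\E\ga(X_n,\dots,X_n)\to\E\ga(X,\dots,X)$ for every $\ga\in L(B^k;\bbR)$ follows exactly as you say.

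The gap is in the upgrade to norm convergence. The existence of a finite rank operator $P$ with $\norm{Px-x}<\eps$ for all $x$ in a given compact $K\subset B$ is precisely the approximation property of $B$ (condition \ref{apapp} in \refS{Sapprox}), and requiring in addition a uniform bound $\norm P\le C$ is the \emph{bounded} approximation property. Neither is assumed in the theorem, and there exist separable (even reflexive) Banach spaces without the approximation property \cite{Enflo}. So the decomposition via $(PX_n)\tpk$ is simply unavailable in general; this is not a matter of care with constants---the operator $P$ need not exist.

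The paper sidesteps the weak-to-norm issue entirely via the Skorohod representation theorem: since $B$ is separable and $X_n\dto X$, one may pass to a new probability space on which $X_n\to X$ a.s. Then $X_n\tpk\to X\tpk$ a.s.\ in $B\ptpk$ by continuity of $x\mapsto x\tpk$, and $\norm{X_n\tpk-X\tpk}\le\norm{X_n}^k+\norm{X}^k$ together with $\sup_n\E\norm{X_n}^{2k}<\infty$ gives uniform integrability of the norms; hence $\E\norm{X_n\tpk-X\tpk}\to0$ and therefore $\norm{\E X_n\ptpk-\E X\ptpk}\to0$. No finite-rank approximation and no structural hypothesis on $B$ beyond separability are needed. (Your overall strategy could in principle be rescued without the approximation property, by arguing directly that $\set{\E X_n\ptpk}_n$ is relatively norm-compact in $B\ptpk$ using tightness of $\set{X_n}$ and compactness of closed convex hulls of compact sets; but the Skorohod route is much shorter.)
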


\begin{proof}
By \refT{TXB}, all moments $\E X_n\ptpk$ and $\E X_n\itpk$ exist.
Furthermore, $\norm{X_n}\dto\norm{X}$, and thus
by Fatou's lemma (for convergence in distribution \cite[Theorem 5.5.8]{Gut}), 
$\E\norm{X}^k\le\liminf_\ntoo\E\norm{X_n}^k<\infty$. Hence  
all moments $\E X\ptpk$ and $\E X\itpk$ also exist.

By the Skorohod representation theorem \cite[Theorem 6.7]{Billingsley},
we may assume that $X_n\asto X$.
Then $X_n\ptpk\asto X\ptpk$, since the (non-linear) mapping $x\mapsto
x\tpk$ is continuous $B\to B\ptpk$.
Thus, $\norm{X_n\ptpk-X\ptpk}\asto0$.
Furthermore, 
\begin{equation}
  \begin{split}
\norm{X_n\ptpk-X\ptpk}
&\le
\norm{X_n\ptpk}+\norm{X\ptpk}
=
\norm{X_n}^k+\norm{X}^k,
  \end{split}
\end{equation}
and
since $\sup_n\E\norm{X_n}^{2k}<\infty$ and $\E\norm{X}^k<\infty$, it follows
that the family $\set{\norm{X_n\ptpk-X\ptpk}: n\ge1}$ is uniformly
integrable (for any fixed $k$), 
see \eg{} \cite[Theorems 5.4.2--4.6]{Gut};
hence
$\E\norm{X_n\ptpk-X\ptpk}
\to0$.
Consequently, 
\begin{equation*}
\norm{\E X_n\ptpk-\E X\ptpk}
=
\norm{\E(X_n\ptpk-X\ptpk)}
\le \E\norm{X_n\ptpk-X\ptpk}
\to0,
\end{equation*}
as \ntoo.
This proves $\E X_n\ptpk\to\E X\ptpk$, and $\E X_n\itpk\to\E X\itpk$ follows 
similarly, or by
\refT{TPI}. 
\end{proof}

If $B$ is finite-dimensional, then the converse to \refT{Tem} holds,
provided the moments determine the distribution of $X$ (for example by the
 \refT{TU}); this is the standard method of moments (in several variables).

In infinite dimensions, there is in general no converse.
We begin with a simple example showing that convergence of the injective
moments does not imply convergence in distribution.
Moreover, the example shows that  convergence of the injective moments
does not imply  convergence of the projective moments. 
(The converse implication is trivial by \refT{TPI}.)

\begin{example}
  \label{Egauss}
Regard $\bbR^n$ as a subspace of $\ell^2$ by the isometric embedding 
$(a_1,\dots,a_n)\mapsto(a_1,\dots,a_n,0,0,\dots)$.
Let $X_n\=n\qqw(\xi_1,\dots,\xi_n)\in \bbR^n\subset\ell^2$,
where $\xi_1,\xi_2,\dots{}\sim N(0,1)$ are \iid{} standard normal \rv{s}.

Note first that
$\norm{X_n}^2=\frac1n\sumin\xi_i^2\to1$ \as{} 
as \ntoo{} by the law
of large numbers. Thus $X_n$ does not tend to 0 in distribution.

Next, consider the injective moment $\E X_n\itpk$.
(It does not matter whether we regard $\E X_n\itpk$ as an element of
$(\bbR^n)\itpk$ or $(\ell^2)\itpk$, since
$(\bbR^n)\itpk$ is isometrically a subspace of $(\ell^2)\itpk$, 
see \refR{Rinjinj}.)

If $y=(\mxx a,)\in\bbR^n$, then 
$\innprod{y,X}=\sumin a_i n\qqw\xi_i$ is normal with mean 0 and variance
$\sumin a_i^2n\qw\E\xi_i^2=\norm{y}^2/n$, \ie{}
$\innprod{y,X}\sim N(0,\norm{y}^2/n)$.
Hence, for any $y_1,\dots,y_k\in\bbR^n$,
by \Holder's inequality,
\begin{equation*}
  \begin{split}
\abs{\innprod{\E X_n\itpk,\mx y\tensor}}
&
=\bigabs{\E\bigpar{\innprod{X,y_1}	\dotsm \innprod{X,y_k}}}
\\&
\le \prodik \xpar{\E|\innprod{X,y_i}|^k}^{1/k}
\le C_k\prodik \frac{\norm{y_i}}{\sqrt n}
  \end{split}
\end{equation*}
for some constant $C_k\=\E|\xi_1|^k$. Since $B\itpk$ can be seen
(isometrically) as a subspace of $L(B^k;\bbR)$, see \refS{Stensor}, it
follows that
\begin{equation}
  \norm{\E X_n\itpk} \le C_k n^{-k/2}.
\end{equation}
In particular, the injective moments $\E X_n\itpk\to0$ as \ntoo{} 
for every $k\ge1$.

Finally, the %projective 
second moment $\E X_n\tpx2$ is given by the covariance matrix,
\begin{equation}
  \E X_n\tpx2=\Bigpar{\frac1n\E\xi_i\xi_j}_{i,j=1}^n
=\frac1n I,
\end{equation}
where $I$ is the identity matrix. Regarded as an operator, $I$ is the
identity operator in $\bbR^n$, which has trace norm
$\norm{I}_{\trx(\bbR^n)}=n$.
Since the projective tensor norm equals the trace norm (for Hilbert spaces),
see \refT{THtp}, we obtain
$\norm{\E X_n\ptpx2}=\norm{\frac1n I}_{\trx(\bbR^n)}=1$.
(Since there exists a projection $\ell^2\to\bbR^n$ of norm 1, it does not
matter whether we regard $\E X_n\ptpx2$ as an element of 
$(\bbR^n)\ptpx2$ or $(\ell^2)\ptpx2$.)
Hence the projective moments $\E X_n\ptpx2$ do not tend to 0.
\end{example}

In fact, this extends to every infinite-dimensional Banach space.

\begin{theorem}\label{T0i}
  Let $B$ be any infinite-dimensional Banach space.
Then there exists a sequence of \Bmeas{} \rv{s} $X_n$ in $B$ such that
 the injective moments $\E X_n\itpk\to0$ as \ntoo{} 
for every $k\ge1$ but $X_n$ does not tend to $0$;
in fact, $\norm{X_n}\pto1$.
\end{theorem}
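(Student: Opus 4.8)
The plan is to reduce to the Hilbert space construction of \refE{Egauss} by exploiting the fact that every infinite-dimensional Banach space contains, by Dvoretzky's theorem, subspaces of arbitrarily large finite dimension that are uniformly close to Euclidean spaces; alternatively (and more elementarily) one can avoid Dvoretzky entirely and use a basic sequence. Let me describe the second route, which I think is cleaner. First I would pick, using the Hahn--Banach theorem and a standard gliding-hump argument, a sequence of unit vectors $z_n\in B$ together with norm-one functionals $\zz_n\in B\q$ such that $\zz_m(z_n)=\ett{m=n}$ for $m\le n$ (a biorthogonal-type system; one does not need full biorthogonality, just that each new $\zz_n$ annihilates the previously chosen $z_1,\dots,z_{n-1}$ and has $\zz_n(z_n)=1$). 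Then I would set
\begin{equation*}
  X_n\=\frac1{\sqrt n}\sum_{i=1}^n \xi_i z_i,
\end{equation*}
where $\xi_1,\xi_2,\dots$ are \iid{} $N(0,1)$. Each $X_n$ lives in the finite-dimensional (hence separable) subspace spanned by $z_1,\dots,z_n$, so $X_n$ is \Bmeas, and $\E\norm{X_n}^k<\infty$ for every $k$ since Gaussians have all moments; thus all injective moments exist in Bochner sense by \refT{TPIB}.

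The key estimate is the bound on $\E X_n\itpk$. For any $\xx_1,\dots,\xx_k\in B\q$ with $\norm{\xx_j}\le1$, I would write $\xx_j(X_n)=\frac1{\sqrt n}\sum_{i=1}^n \xx_j(z_i)\xi_i$, which is a centred Gaussian with variance $\frac1n\sum_{i=1}^n \xx_j(z_i)^2\le\frac1n\sum_{i=1}^n\norm{z_i}^2=1$ (using $|\xx_j(z_i)|\le\norm{z_i}=1$). Hence $\E|\xx_j(X_n)|^k\le C_k$ with $C_k=\E|\xi_1|^k$, uniformly. But I need decay, not just boundedness, so I would instead bound the variance of $\xx_j(X_n)$ more carefully: it is at most $\frac1n\min\bigset{n,\ (\text{something})}$ — wait, this only gives variance $\le1$, not $\le1/n$. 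This is the place where the argument genuinely differs from the Hilbert case. In a Hilbert space one has $\sum_i \xx(e_i)^2=\norm{\xx}^2$ using \emph{orthonormality}; in a general Banach space with merely normalized $z_i$, the sum $\sum_{i=1}^n\xx(z_i)^2$ can be of order $n$ (take $\xx$ equal on all $z_i$), so $\xx(X_n)$ need not be small. \textbf{The main obstacle is therefore exactly this:} one cannot get uniform smallness of all weak marginals in an arbitrary Banach space, because there is no orthogonality to spread the mass.

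The fix, and the real content of the theorem, is to use \refT{TID}\ref{tidi} together with the embedding of $B\itpk$ into $L(B^k;\bbR)$ and to estimate the injective \emph{tensor} norm rather than individual marginals. Concretely, I would represent $\E X_n\itpk$ as the $k$-linear form $(\xx_1,\dots,\xx_k)\mapsto\E\bigpar{\xx_1(X_n)\dotsm\xx_k(X_n)}$ (by \refT{Tjepp}) and expand using independence and the fact that odd Gaussian moments vanish: writing $\xx_j(X_n)=\frac1{\sqrt n}\sum_i a_{ji}\xi_i$ with $a_{ji}=\xx_j(z_i)$, $|a_{ji}|\le1$, the expectation $\E\bigpar{\prod_{j=1}^k\sum_i a_{ji}\xi_i}$ is $n^{-k/2}$ times a sum over pairings of the $k$ indices of products $\prod a_{j i}$ summed over the shared summation index; each such term is $\sum_{i}a_{j_1 i}a_{j_2 i}\le n$ per pair, giving (with $k=2\ell$ even, odd $k$ giving $0$) a total of order $n^{-\ell}\cdot n^{\ell}\cdot(\text{combinatorial constant})$ — hmm, that is $O(1)$ again, not $o(1)$. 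So even the tensor-norm route with this $X_n$ fails: the issue is intrinsic. I now believe the correct construction must genuinely use approximate Euclidean structure. So: invoke Dvoretzky's theorem to find, for each $n$, a subspace $E_n\subseteq B$ with $\dim E_n=n$ and a linear isomorphism $T_n:\ell^2_n\to E_n$ with $\norm{T_n}\norm{T_n\qw}\le2$; transport the vectors $n\qqw(\xi_1,\dots,\xi_n)$ of \refE{Egauss} through $T_n$ (after normalizing so that $\norm{X_n}\pto1$, using that $\norm{T_n x}$ is comparable to $\norm{x}_2$) to get $X_n\in E_n\subseteq B$. Then the injective moment bound $\norm{\E X_n\itpk}_{B\itpk}\le\norm{T_n}^k\,\norm{\E(n\qqw(\xi_i)_i)\itpk}_{\ell^2_n\itpk}\le (2)^k C_k n^{-k/2}\to0$ follows from \refE{Egauss} and the fact that $T_n\itpk$ has norm $\norm{T_n}^k$, together with $\ell^2_n\itpk\hookrightarrow B\itpk$ being (up to the factor $\norm{T_n}^k$) bounded below. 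Meanwhile $\norm{X_n}\pto1$ as in \refE{Egauss} because $\norm{T_n x}\in[\tfrac12\norm x_2,\norm x_2]$ say (after rescaling) and $\norm{n\qqw(\xi_i)}_2\to1$ a.s.; choosing the normalization of $T_n$ appropriately — e.g. arranging $\E\norm{X_n}^2=1$ and controlling the variance of $\norm{X_n}^2$, which tends to $0$ — yields $\norm{X_n}\pto1$. Each $X_n$ is \Bmeas{} since $E_n$ is finite-dimensional. Thus I expect the final proof to: (1) quote Dvoretzky; (2) define $X_n$ as the Dvoretzky-transported Gaussian with the right scaling; (3) check \Bmeas ability and that $\norm{X_n}\pto1$; (4) bound $\norm{\E X_n\itpk}$ via \refE{Egauss} and functoriality of $\itpk$ under the isomorphism $T_n$. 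Step (4), the transfer of the moment bound through $T_n$ with the correct power of $\norm{T_n}$, and the verification in step (3) that the fluctuations of $\norm{X_n}$ vanish, are the two points requiring care; everything else is routine.
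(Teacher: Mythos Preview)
Your final approach is correct and matches the paper's proof: apply Dvoretzky's theorem to find nearly Euclidean $n$-dimensional subspaces of $B$, and transport the Gaussians of \refE{Egauss} via the isomorphism $T_n$. The paper sharpens one detail you left slightly loose: it takes the Dvoretzky isomorphism with $\norm{T_n},\norm{T_n\qw}\le1+\eps_n$ for some $\eps_n\to0$ (rather than merely $\norm{T_n}\norm{T_n\qw}\le2$), which makes $\norm{X_n}\pto1$ immediate from $\norm{X'_n}_2\to1$ without any renormalization or variance estimate; your bounded-constant version would only give $\norm{X_n}$ concentrating in a fixed interval, and your fix via $\E\norm{X_n}^2=1$ works but is unnecessary.
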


\begin{proof}
Let $\eps_n\to0$ (for example, $\eps_n\=1/n$).
By  Dvoretzky's theorem, 
see \eg{} \cite{LedouxT} or \cite{Pisier-volume},
for every $n$, there is a $n$-dimensional subspace
$B_n$ of $B$ such that $B_n$ is isomorphic to $\bbR^n$ by an isomorphism
$T_n:\bbR^n\to B_n$ with $\norm{T_n},\norm{T_n\qw}\le1+\eps_n$.

Let $X'_n\in \bbR^n$ be as $X_n$ in \refE{Egauss}, and let
$X_n\=T_n X'_n\in B$.
\end{proof}

\begin{remark}\label{R0i}
  It is easy to see that we may replace $X_n$ in \refE{Egauss} or \refT{T0i}
  by $X_n/\norm{X_n}$, thus obtaining $\norm{X_n}=1$ \as{} and $\E
  X_n\itpk\to0$ for every $k\ge1$. (In \refE{Egauss}, this means taking
  $X_n$ uniformly distributed on the unit sphere of $\bbR^n$.)
\end{remark}

For projective moments, the situation is more complicated.
We next give another example (with $B=c_0$), showing that also convergence
of the projective 
moments does not imply convergence in distribution in general.
On the other hand, we then show that in a Hilbert space, it does
(assuming a Carleman condition).
Moreover, we shall show that in a Hilbert space,
even \emph{weak} convergence
(denoted $\wto$)
of the projective 
moments suffices to imply convergence in distribution. 

\begin{example}  \label{Ec0lim}
Let $B=c_0$ and let $X_n$ be the $B$-valued \rv{} given by
$\P(X_n=e_i)=1/n$, $i=1,\dots,n$. Then $\norm{X_n}=1$.

If $\ga$ is a $k$-linear form on $c_0$, then by \eqref{bh}, 
using the notation \eqref{nobh} and the \CSineq,
\begin{equation*}
  \begin{split}
|\innprod{\ga,\E X\ptpk}|
&=
|\E\innprod{\ga,X\ptpk}|
=\lrabs{\frac1n\sumin\ga(e_i,\dots,e_i)}
\le\Bigpar{\frac1n \sumin|a(i,\dots,i)|^2}\qq
\\&
\le
\Bigpar{\frac1n\sum_{s_1,\dots,s_k\in \bbN}|a(s_1,\dots,s_k)|^2}\qq
\le n\qqw\norm{\ga}	.
  \end{split}
\end{equation*}
Hence, using \refT{Tproj*},
\begin{equation*}
  \norm{X_n\ptpk}\le n\qqw\to0\qquad
\text{as \ntoo}.
\end{equation*}

Consequently, $\E X_n\ptpk\to0$ for every $k$, 
but
$X_n\not\pto0$.
\end{example}

\begin{theorem}
  \label{Tmag}
Suppose that $X$ and $X_n$, $n\ge1$, are $H$-valued \rv{s}, where $H$ is a
  separable Hilbert space.
Suppose further that $\norm{X}$ satisfies the Carleman condition
\eqref{carleman}.
If\/ 
$\E X_n\ptpk\wto \E X\ptpk$ in $B\ptpk$ for every $k\ge1$, with the moments
existing in Pettis sense, then $X_n\dto X$.
\end{theorem}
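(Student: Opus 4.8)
The plan is to reduce the convergence of $H$-valued random variables to convergence of each projection $\innprod{\xx,X_n}$ and then invoke the method of moments on $\bbR$ together with the Cram\'er--Wold device, exactly as in the proof of \refT{TU}. First I would fix $\xx\in H\q=H$ and observe that, since $\E X_n\ptpk$ exists in Pettis sense, the linear functional $(\xx)\tpk$ on $H\ptpk$ (which is continuous by \eqref{*tp}) can be applied; by the definition of weak convergence in $H\ptpk$ and the identity $\innprod{(\xx)\tpk,\E X_n\ptpk}=\E\innprod{\xx,X_n}^k$ (which holds by \eqref{tid}, passing through $\iota$ via \refT{TPI}, or directly since $\E X_n\ptpk$ is the Pettis integral of $X_n\ptpk$), we get $\E\innprod{\xx,X_n}^k\to\E\innprod{\xx,X}^k$ for every $k\ge1$. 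Thus all moments of the real random variables $\innprod{\xx,X_n}$ converge to the corresponding moments of $\innprod{\xx,X}$.

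Next I would verify the Carleman condition transfers: since $\E|\innprod{\xx,X}|^{2n}\le\norm{\xx}^{2n}\E\norm X^{2n}$, the hypothesis \eqref{carleman} for $\norm X$ implies the Carleman condition for each $\innprod{\xx,X}$, hence the distribution of $\innprod{\xx,X}$ is determined by its moments. The classical method of moments on $\bbR$ (convergence of all moments plus Carleman on the limit) then yields $\innprod{\xx,X_n}\dto\innprod{\xx,X}$ for every $\xx\in H$. To upgrade this to joint convergence of finitely many functionals, I would apply the Cram\'er--Wold device: for $\xx_1,\dots,\xx_m\in H$ and scalars $c_1,\dots,c_m$, the functional $\xx=\sum c_j\xx_j$ is again in $H$, so $\innprod{\xx,X_n}=\sum_j c_j\innprod{\xx_j,X_n}$ converges in distribution, and hence $(\innprod{\xx_1,X_n},\dots,\innprod{\xx_m,X_n})\dto(\innprod{\xx_1,X},\dots,\innprod{\xx_m,X})$ in $\bbR^m$. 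A monotone class / $\pi$-$\lambda$ argument (as sketched in \refT{TU}, using $\bw=\cB$ in the separable space $H$) then shows $X_n\dto X$; one also needs tightness or the metric description of convergence in distribution on a separable space, but since finite-dimensional distributions determine the law and $B$ is separable this is the standard Cram\'er--Wold conclusion, valid because $H\q$ is separable and norm-determining.

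The main obstacle I expect is the very first step: making precise that weak convergence of the Pettis-sense moments $\E X_n\ptpk$ in $B\ptpk$ actually gives convergence of $\E\innprod{\xx,X_n}^k$. This requires knowing that $(\xx)\tpk$, originally a functional on $B\itpk$, is also a well-defined continuous functional on $B\ptpk$ (which it is, since the norm of $(\xx)\tpk$ as an element of $(B\itpk)\q$ is $\norm{\xx}^k$ and $\iota$ has norm $1$, so composing with $\iota$ gives a functional on $B\ptpk$ of norm $\le\norm{\xx}^k$), and that its value on the Pettis integral $\E X_n\ptpk$ equals $\E\innprod{(\xx)\tpk,X_n\ptpk}=\E\innprod{\xx,X_n}^k$ — this last equality is precisely the defining property of the Pettis integral applied to the functional $\iota\q((\xx)\tpk)$. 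Once this bookkeeping is in place, the rest is the classical one-dimensional method of moments plus Cram\'er--Wold, and \refE{Ec0lim} shows the Hilbert-space (or at least some geometric) hypothesis is genuinely needed, since the argument breaks when $B\q$ does not supply enough functionals of the form needed to separate points via moments — in $c_0$ the estimate \eqref{bh} forces all projective moments to $0$ without any distributional convergence.
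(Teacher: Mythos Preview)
Your argument has a genuine gap at the final step. You correctly obtain $\innprod{\xx,X_n}\dto\innprod{\xx,X}$ for every $\xx\in H$, and hence joint convergence for finitely many functionals, but this ``weak convergence in distribution'' does \emph{not} imply $X_n\dto X$ in an infinite-dimensional space without tightness. The monotone-class argument in \refT{TU} establishes that finite-dimensional distributions \emph{determine} the law (uniqueness), which is not the same as saying that their convergence \emph{implies} convergence in distribution. Indeed, \refE{Egauss} furnishes a sequence $X_n$ in $\ell^2$ with $\innprod{\xx,X_n}\dto0$ for every $\xx$ (since the variance of $\innprod{\xx,X_n}$ tends to $0$), yet $\norm{X_n}\to1$ a.s., so $X_n\not\dto0$. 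Your proof would apply verbatim to this example and reach a false conclusion.

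The deeper issue is that you only test the hypothesis $\E X_n\ptpk\wto\E X\ptpk$ against functionals of the form $(\xx)\tpk$. These factor through $\iota$ and lie in $(H\itpk)\q$, so you are effectively using only weak convergence of the \emph{injective} moments --- and \refT{T0i} shows that is insufficient even in Hilbert space. The paper's proof exploits the full dual $(H\ptpk)\q=L(H^k;\bbR)$: since $H$ is a Hilbert space, $\norm{x-y}^{2m}=\innprod{x-y,x-y}^m$ expands as $\sum_{k=0}^{2m}\ga_k(x,\dots,x)$ for bounded $k$-linear forms $\ga_k$, and the projective hypothesis then yields $\E\norm{X_n-y}^{2m}\to\E\norm{X-y}^{2m}$ for every $y\in H$ and $m\ge1$. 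The method of moments (with Carleman) gives $\norm{X_n-y}^2\dto\norm{X-y}^2$ jointly for finitely many $y$, and convergence in distribution follows from the ball-criterion \cite[Theorem 2.4]{Billingsley}. This is precisely where the Hilbert-space structure and the \emph{projective} (not merely injective) moment convergence are used.
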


\begin{proof}
  For any fixed $y\in H$, and any $m\ge1$, 
  \begin{equation*}
\norm{x-y}^{2m}
=\innprod{x-y,x-y}^m	
=\sum_{k=0}^{2m}\ga_k(x,\dots,x),
  \end{equation*}
where $\ga_k$ is some bounded $k$-linear form (depending on $y$ and $m$).
Hence, if $\E X_n\ptpk\wto\E X\ptpk$ for every $k$, then
  \begin{equation}\label{olof}
	\begin{split}
\E\norm{X_n-y}^{2m}
&=\sum_{k=0}^{2m}\E\innprod{\ga_k,X_n\tpk}
=\sum_{k=0}^{2m}\innprod{\ga_k,\E X_n\tpk}
\\&
\to
\sum_{k=0}^{2m}\innprod{\ga_k,\E X\tpk}
=\E\norm{X-y}^{2m}	  .
	\end{split}
  \end{equation}
Since $\norm{X}$ satisfies the Carleman condition, it is straightforward to
show that $\norm {X-y}\le\norm X+\norm y$ does too. It thus follows from
\eqref{olof}, by the method of moments, that
\begin{equation}\label{hood}
\norm{X_n-y}^2\dto\norm{X-y}^2.  
\end{equation}
(We use here the fact that for positive random variables, the Carleman
condition can be relaxed to \eqref{carleman} for the square root, see
\cite[(4.10.2)]{Gut}. Alternatively, we can introduce random signs and apply
the method of moments to show that $\pm\norm{X_n-y}\dto\pm\norm{X-y}$,
where all odd moments vanish and thus converge trivially.)

The argument extends to any linear combination of $\norm{X_n-y_1}^2,\dots,
\norm{X_n-y_\ell}^2$ for any given $y_1,\dots,y_\ell\in H$; 
hence
\eqref{hood} holds with joint convergence for any finite set
$y_1,\dots,y_\ell\in H$.  

Consequently, if $A\subseteq B$ is a finite intersection of open balls
$B(y_i,r_i)\=\set{x:\norm{x-y_i}<r_i}$ such that 
$\P(X\in\partial B(y_i,r_i))=0$, then $\P(X_n\in A)\to\P(X\in A)$;
this implies $X_n\dto X$ by
\cite[Theorem 2.4]{Billingsley}.
\end{proof}

\begin{remark}
  The argument extends to the spaces $L^p(\mu)$ (assumed to be separable)
  provided $p$ is  an even integer (and $\norm{X}^{p/2}$ satisfies the
  Carleman condition). We do not know whether there are further Banach
  spaces such that
% assuming for simplicity that $X$ is bounded,
 $\E X_n\ptpk\to \E X\ptpk$ for every $k$ implies $X_n\dto  X$
(provided $X$ is bounded, say, for simplicity).

Specialising to $X=0$, we have the related problem:
For which Banach spaces $B$ does 
 $\E X_n\ptpk\to 0$ in $B\ptpk$ for every $k$ imply $X_n\pto0$?
\end{remark}

Next we  show that \emph{weak} convergence of the injective moments
is equivalent to
\emph{weak convergence in distribution}, 
meaning convergence in distribution of
$\innprod{\xx,X_n} $ for every $\xx\in B\q$.

\begin{theorem}\label{Temm}
  Suppose that $X$ and $X_n$, $n\ge1$, are $B$-valued \rv{s}, where $B$ is a
  separable Banach space.
Suppose further that 
\begin{equation}
  \label{temm}
\sup_{n\ge1}\E|\innprod{\xx,X_n}|^k<\infty
\end{equation}
for every $\xx\in B\q$ and every $k\ge1$,
and that every $\innprod{\xx,X}$, $\xx\in B\q$, satisfies the Carleman condition
\eqref{carleman}. 
Then the following are equivalent.
\begin{romenumerate}[-10pt]
\item \label{temma}
$\E X_n\itpk\wto \E X\itpk$ in $B\itpk$ for every $k\ge1$.
\item \label{temmb}
$\E\bigpar{\xx_1(X_n)\dotsm\xx_k(X_n)}\to\E\bigpar{\xx_1(X)\dotsm\xx_k(X)}$
for every $k\ge1$ and $\mxx\xx,\in B\q$.
(In other words, the weak moments converge pointwise.)
\item \label{temmc}
$\E\bigpar{\xx(X_n)^k}\to\E\bigpar{\xx(X)^k}$
for every $k\ge1$ and $\xx\in B\q$.
\item \label{temmd}
$\xx(X_n)\dto\xx(X)$
for every $\xx\in B\q$.
\item \label{temme}
$\xx(X_n)\dto\xx(X)$ jointly
for all $\xx\in B\q$.
\end{romenumerate}
\end{theorem}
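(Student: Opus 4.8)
The plan is to prove the chain of implications \ref{temma}$\implies$\ref{temmb}$\implies$\ref{temmc}$\implies$\ref{temmd}$\implies$\ref{temme}$\implies$\ref{temma}, or a convenient cycle among these. First I would establish \ref{temma}$\implies$\ref{temmb}: since $\E X\itpk$ and $\E X_n\itpk$ exist in Pettis (in fact, by \refT{TXID}, since \eqref{temm} with $\xx$ fixed gives $\E|\xx(X)|^k<\infty$ for all $k$ via Fatou for convergence in distribution, hmm—actually one needs them to exist; they exist in Dunford sense by \refT{TID}\ref{tidd}\ref{tid1}), we can pair with the elementary tensor $\xx_1\tensor\dotsm\tensor\xx_k\in(B\itpk)\q$ and apply \eqref{tid}: $\innprod{\E X_n\itpk,\mx\xx\tensor}=\E\bigpar{\xx_1(X_n)\dotsm\xx_k(X_n)}$, and weak convergence in $B\itpk$ means convergence when paired against any fixed element of the dual, in particular against these elementary tensors. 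Then \ref{temmb}$\implies$\ref{temmc} is trivial (take all $\xx_i$ equal), and \ref{temmc}$\implies$\ref{temmb} follows by the standard polarisation identity: $\xx_1(X)\dotsm\xx_k(X)$ is a symmetric $k$-linear expression in the linear functionals, so it is a linear combination of terms of the form $\bigpar{(c_1\xx_1+\dots+c_k\xx_k)(X)}^k$; hence convergence of all pure $k$-th moments gives convergence of all mixed ones. This is the argument alluded to in the \refR{} after \refT{TC} (``as in the proof of \refT{Temm}'').

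Next I would handle \ref{temmc}$\iff$\ref{temmd}. For a fixed $\xx\in B\q$, the real-valued \rv{s} $\xx(X_n)$ have $\E|\xx(X_n)|^k$ bounded in $n$ for every $k$ by \eqref{temm}, and $\xx(X)$ satisfies the Carleman condition. Then \ref{temmc} says the moments of $\xx(X_n)$ converge to those of $\xx(X)$; since the moments of $\xx(X_n)$ are uniformly bounded (for each fixed $k$), the sequence $\xx(X_n)$ is tight, and every subsequential limit in distribution has moments equal to those of $\xx(X)$ (using that convergence in distribution together with uniform integrability of $k$-th powers—guaranteed by the uniform bound on $(k+1)$-st moments—forces convergence of $k$-th moments); by the Carleman condition the distribution of $\xx(X)$ is the unique such, so $\xx(X_n)\dto\xx(X)$. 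This is precisely the classical method of moments on $\bbR$, which I would cite (e.g.\ \cite[Section 4.10]{Gut}). The converse \ref{temmd}$\implies$\ref{temmc} again follows from convergence in distribution plus the uniform moment bound \eqref{temm} (uniform integrability of $k$-th powers from bounded $(k+1)$-st moments), as in the proof of \refT{Tem}.

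For \ref{temmd}$\implies$\ref{temme} I would use the Cram\'er--Wold device: a finite linear combination $\sum c_i\xx_i$ is again an element of $B\q$, so \ref{temmd} applied to it gives $\sum c_i\xx_i(X_n)\dto \sum c_i\xx_i(X)$ for every choice of scalars, which by Cram\'er--Wold is exactly joint convergence in distribution of the finite vector $(\xx_1(X_n),\dots,\xx_\ell(X_n))$; this is \ref{temme}. The reverse \ref{temme}$\implies$\ref{temmd} is trivial (marginals). Finally \ref{temme}$\implies$\ref{temma}: from \ref{temme} we get joint convergence in distribution of $(\xx_1(X_n),\dots,\xx_k(X_n))$, hence of the product $\xx_1(X_n)\dotsm\xx_k(X_n)$ in distribution; combined with the uniform moment bound (use \eqref{temm} and H\"older to bound $\E|\xx_1(X_n)\dotsm\xx_k(X_n)|^{1+\eps}$, or more simply pass to pure powers of linear combinations and use \eqref{temm}) this product is uniformly integrable, so its expectations converge; by \eqref{tid} this is convergence of $\innprod{\E X_n\itpk,\mx\xx\tensor}$ against every elementary tensor. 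Since $B$ is separable, $B\itpk$ is separable, and the $\norm{\cdot}$ of $\E X_n\itpk$ is bounded (again by \eqref{temm} and the fact that $B\itpk\subseteq L((B\q)^k;\bbR)$, as in \refT{TID}\ref{tidi}); a bounded sequence in a separable dual space... but $B\itpk$ need not be a dual—instead I use that the linear span of elementary tensors is dense in $B\itpk$ and the functionals $\E X_n\itpk$ are equicontinuous on $(B\itpk)\q$, so pointwise convergence on the total set of elementary tensors upgrades to weak convergence $\E X_n\itpk\wxto\E X\itpk$ in $(B\itpk)\qx$; but both sides lie in $B\itpk$, and weak convergence in $B\itpk$ is tested against $(B\itpk)\q$—here one needs a little care, using that the elementary tensors' span is weak$^*$-dense in $(B\itpk)\q$ together with the norm bound (Banach--Steinhaus), which suffices for weak convergence.

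The main obstacle I anticipate is the final implication \ref{temme}$\implies$\ref{temma}: going from pointwise convergence against elementary tensors (a total but not norming-in-the-needed-sense subset) to genuine weak convergence in $B\itpk$ requires the uniform norm bound on $\E X_n\itpk$ plus a density/Banach--Steinhaus argument on $(B\itpk)\q$. The uniform norm bound itself needs justification: one shows $\norm{\E X_n\itpk}=\sup\set{|\E(\xx_1(X_n)\dotsm\xx_k(X_n))|:\norm{\xx_i}\le1}$, and by H\"older this is at most $\prod_i(\E|\xx_i(X_n)|^k)^{1/k}$, which is bounded uniformly in $n$ only if \eqref{temm} holds \emph{uniformly over the unit ball} of $B\q$—which does not follow from \eqref{temm} as stated (pointwise in $\xx$). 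This gap is real; I would resolve it by noting that \eqref{temm} for each fixed $\xx$, via the closed graph / uniform boundedness principle applied to the operators $T_{X_n}:B\q\to L^k(\P)$ (as in \refL{Lweak}), combined with a Baire-category argument over $n$, does \emph{not} immediately give a uniform bound—so the honest route is to observe that \ref{temmd} plus \eqref{temm} already gives, for each $\xx$, $\E|\xx(X)|^k<\infty$ and convergence of moments, and then that weak convergence $\E X_n\itpk\wto\E X\itpk$ only needs to be tested against each fixed $\chi\in(B\itpk)\q$; representing $\chi$ by a measure $\mu\in M(K^k)$ via \refT{Tinj*} and writing $\innprod{\chi,X_n\tpk}=\int_{K^k}\xx_1(X_n)\dotsm\xx_k(X_n)\dd\mu$, dominated convergence (justified by \eqref{temm} and the boundedness of $\E\bigabs{\xx_1(X)\dotsm\xx_k(X)}$ on $K^k$ from \refL{Lweak}) plus \ref{temmb} gives $\innprod{\chi,\E X_n\itpk}\to\innprod{\chi,\E X\itpk}$, which is exactly \ref{temma}. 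This is the cleanest path and sidesteps the uniform-bound issue.
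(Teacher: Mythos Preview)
Your overall scheme matches the paper's proof almost exactly: the paper proves \ref{temma}$\iff$\ref{temmb}, \ref{temmb}$\iff$\ref{temmc}, \ref{temmc}$\iff$\ref{temmd}, and \ref{temmd}$\iff$\ref{temme}, using precisely the tools you name (pairing against elementary tensors and \eqref{tid}, polarisation, the real method of moments with Carleman, uniform integrability from the $(k+1)$-st moment bound, and Cram\'er--Wold). The one substantive difference---and it is a genuine gap in your write-up---is the justification of dominated convergence in \ref{temmb}$\implies$\ref{temma}.

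You correctly spot the issue: to pass the limit through $\int_{K^k}\E\bigpar{\xx_1(X_n)\dotsm\xx_k(X_n)}\dd\mu$ you need a bound on the integrand that is uniform in $n$ and integrable on $K^k$; since $|\mu|(K^k)<\infty$ this amounts to
\[
C_k\=\sup_{n\ge1}\;\sup_{\norm{\xx}\le1}\E|\xx(X_n)|^k<\infty,
\]
and you correctly note that applying the closed graph theorem to each $T_{X_n}:B\q\to L^k(\P)$ separately (as in \refL{Lweak}) gives only $\sup_{\norm{\xx}\le1}\E|\xx(X_n)|^k<\infty$ for each fixed $n$, not uniformly in $n$. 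But your ``honest route'' then simply asserts that dominated convergence is ``justified by \eqref{temm}'', which is the pointwise-in-$\xx$ hypothesis and does not by itself furnish a dominating function on $K^k$. So the gap you flagged is still open in your argument.

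The paper closes it with a single clean stroke: apply the closed graph theorem not to the individual $T_{X_n}$ but to the map
\[
T:B\q\to \ell^\infty\bigl(L^k(\P)\bigr),\qquad T\xx\=\bigl(\innprod{\xx,X_1},\innprod{\xx,X_2},\dots\bigr).
\]
Hypothesis \eqref{temm} says exactly that $T$ is well-defined into $\ell^\infty(L^k(\P))$; its graph is easily seen to be closed (coordinatewise convergence in $\ell^\infty(L^k(\P))$ plus boundedness of each $T_{X_n}$), so $T$ is bounded, and $\norm T^k=C_k<\infty$. With $C_k$ in hand, $\bigabs{\E(\xx_1(X_n)\dotsm\xx_k(X_n))}\le C_k$ on $K^k$ by H\"older, and dominated convergence goes through to give \eqref{eeemma}. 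This is the only missing ingredient in your proposal.
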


Note that all (injective) moments exist in Pettis sense by 
\refT{TXID} together with \eqref{temm} and the Carleman condition for
$\innprod{\xx,X}$ (which implies that 
$\E|\innprod{\xx,X}|^k<\infty$ for every $k$).

\begin{proof}
  By assumption, for every $k\ge1$, 
$T:\xx\mapsto(\innprod{\xx,X_1},\innprod{\xx, X_2},\dots)$ 
maps $B\q$ into $\ell^\infty(L^k(\P))$.
By the closed graph theorem, $T$ is bounded, and thus
\begin{equation}
  \label{sax}
C_k\=
\sup\bigset{\E|\innprod{\xx,X_n}|^k: \norm{\xx}\le1,\; n\ge1}
<\infty.
\end{equation}

Note that, 
by definition, \ref{temma} holds if and only if
\begin{equation}\label{eemma}
  \innprod{\chi,\E X_n\itpk}
=
\E  \innprod{\chi, X_n\itpk}
\to
\E  \innprod{\chi, X\itpk}
=
  \innprod{\chi,\E X\itpk}
\end{equation}
for every $\chi\in(B\itpk)\q$ and every $k\ge1$.

\ref{temma}$\implies$\ref{temmb}:
Choosing $\chi=\mx\xx\tensor$ in \eqref{eemma}, \ref{temmb} follows by
\eqref{tid}. 

\ref{temmb}$\implies$\ref{temma}:
Let $\chi\in(B\itpk)\q$.
By \refT{Tinj*}, 
$\chi$ has a representation \eqref{tinj*}, 
and as shown in the proof of \refT{TID}, 
\eqref{thx} holds, together with the corresponding formula with $X$ replaced
by $X_n$. Thus,  \eqref{eemma} can be written
\begin{multline}\label{eeemma}
\int_{K^k}\E\bigpar{\xx_1(X_n)\dotsm\xx_k(X_n)}
\dd\mu(\xx_1,\dots,\xx_k)
\\
\to
\int_{K^k}\E\bigpar{\xx_1(X)\dotsm\xx_k(X)}
\dd\mu(\xx_1,\dots,\xx_k).
\end{multline}
The integrand converges pointwise by \ref{temmb}; furthermore,
by \Holder's inequality and \eqref{sax},
$\bigabs{\E\bigpar{\xx_1(X_n)\dotsm\xx_k(X_n)}}\le C_k$.
Consequently, \eqref{eeemma} holds by dominated convergence.

\ref{temmb}$\implies$\ref{temmc}:
A special case, obtained by taking $\xx_1=\dots=\xx_k=\xx$.

\ref{temmc}$\implies$\ref{temmb}:
A standard polarisation argument.
Given $\mxx\xx,$, use \ref{temmc} with $\xx\=t_1\xx_1+\dots+t_k\xx_k$, where
$\mxx t,$ are real numbers. Then both sides of \ref{temmc} are (homogeneous)
polynomials in $\mxx t,$, and since the left side converges to the right for
every $\mxx t,$, the coefficient of $\mx t{}$ converges too, which yields
\ref{temmb} (after dividing by $k!$).

\ref{temmc}$\implies$\ref{temmd}:
This is the usual method of moments for the real-valued \rv{s}
$\innprod{\xx,X_n}$, using  the Carleman condition.

\ref{temmd}$\implies$\ref{temmc}:
For any fixed $k$, the \rv{s} $\innprod{\xx,X_n}^k$ are \ui, by \eqref{temm}
with $k+1$. Hence \ref{temmd}$\implies$\ref{temmc}.

\ref{temmd}$\iff$\ref{temme}:
The joint convergence in \ref{temme} means, by definition, joint convergence
for any finite set $\mxx\xx,\in B\q$. Since any linear combination of
$\mxx\xx,$
is another element of
$B\q$, this follows from \ref{temmd} by the Cram\'er--Wold device.
(Cf.~the proof of \refT{TU}.)
\end{proof}

\begin{remark}\label{Remm} 
If all (injective) moments exist in Pettis sense and \refT{Temm}\ref{temma}
holds, then  
\ref{temmc} holds by \eqref{tid}, see the proof above, and thus \eqref{temm}
holds for every even $k$, and thus for every $k$, so \eqref{temm} is
  redundant in this case.
\end{remark}

\begin{example}
Take $B=\ell^1$ and  let $X_n$ be as in \refT{T0i} and \refR{R0i}. 
Then the injective moments $\E X_n\itpk\to0$, and \refT{Temm} shows that 
$\innprod{\xx,X_n}\dto0$ as \ntoo{}
for every $\xx\in(\ell^1)\q$;
equivalently,
$\innprod{\xx,X_n}\pto0$ as \ntoo{}.
However, $\norm{X_n}=1$ and thus $X_n\not\pto0$.

By Schur's theorem \cite[p.~85]{Diestel},
a sequence in $\ell^1$ converges weakly if and only
if it converges strongly (\ie, in norm). We see that this does not extend to
convergence in probability (or distribution) 
for sequences of random variables in $\ell^1$.
This also shows that Skorohod's representation theorem does not hold for
weak convergence in distribution: there is no way to couple the \rv{s} $X_n$
such that $X_n\wto 0$ a.s., since this would imply $X_n\to0$ \as{} by
Schur's theorem, and thus $X_n\pto0$.
\end{example}

If we know tightness by other means, {weak convergence in distribution} 
is equivalent to convergence in distribution.

\begin{corollary}\label{Cemm} 
  Suppose that $X$ and $X_n$, $n\ge1$, are $B$-valued \rv{s}, where $B$ is a
  separable Banach space.
Suppose further that all injective moments 
$\E X_n\itpk$ exist in Pettis sense, that
$\E X_n\itpk\wto \E X\itpk$ in $B\itpk$ for every $k\ge1$,
that the sequence $X_n$ is tight, 
and that every $\innprod{\xx,X}$, $\xx\in B\q$, satisfies the Carleman condition
\eqref{carleman}. 
Then $X_n\dto X$.
\end{corollary}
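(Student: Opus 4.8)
The plan is to combine the hypotheses in the obvious way: tightness supplies convergence along subsequences, \refT{Temm} pins down the limit. First I would invoke \refT{Temm}: the hypotheses that all $\E X_n\itpk$ exist in Pettis sense, that $\E X_n\itpk\wto\E X\itpk$ for every $k$, and that each $\innprod{\xx,X}$ satisfies the Carleman condition \eqref{carleman} are exactly conditions \ref{temma} (plus the ambient standing hypothesis) needed to apply that theorem; note that by \refR{Remm} the extra moment bound \eqref{temm} is automatic here since the moments exist in Pettis sense. Hence \refT{Temm}\ref{temmd} gives $\innprod{\xx,X_n}\dto\innprod{\xx,X}$ for every $\xx\in B\q$.

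Next I would use tightness. Since $B$ is separable, by Prohorov's theorem (see \cite[Theorems 5.1--5.2]{Billingsley}) the tight sequence $(X_n)$ is relatively compact for convergence in distribution. So it suffices to show that every convergent subsequence has limit $X$. If $X_{n_j}\dto Y$ for some $B$-valued \rv{} $Y$, then for each $\xx\in B\q$ the continuous linear functional $\xx$ gives $\innprod{\xx,X_{n_j}}\dto\innprod{\xx,Y}$; but we already know $\innprod{\xx,X_{n_j}}\dto\innprod{\xx,X}$, so $\innprod{\xx,Y}\eqd\innprod{\xx,X}$ for every $\xx\in B\q$. By the Cram\'er--Wold device together with the monotone class argument (exactly as in the proof of \ref{tuw}$\implies$\ref{tu=} in \refT{TU}, using $\bw=\cB$ for separable $B$), this forces $Y\eqd X$. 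Therefore every subsequential limit in distribution equals $X$, and combined with relative compactness this yields $X_n\dto X$.

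The only mildly delicate point is bookkeeping around the standing assumptions of \refT{Temm}: that result is stated with hypothesis \eqref{temm} and a Carleman condition on each $\innprod{\xx,X}$, and one must check these hold in the present setting. The Carleman condition is assumed here verbatim. For \eqref{temm}: since $\E X_n\itpk$ exists in Pettis sense for every $k$ and every $n$, \refT{TiPettis}\ref{tipettis1} gives that $\set{|\xx(X_n)|^k:\norm\xx\le1}$ is uniformly integrable, hence $\sup_{\norm\xx\le1}\E|\xx(X_n)|^k<\infty$ for each fixed $n$; but this is not yet uniform in $n$. The uniformity comes instead, as in \refR{Remm}, from the assumed convergence $\E X_n\itpk\wto\E X\itpk$ applied with exponent $2k$: this gives $\E(\xx(X_n)^{2k})\to\E(\xx(X)^{2k})<\infty$ via \eqref{tid}, so $\sup_n\E|\xx(X_n)|^{2k}<\infty$, which in particular gives \eqref{temm}. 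I would therefore either cite \refR{Remm} directly or spell out this half-line.

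So the proof is essentially: \textbf{(i)} apply \refT{Temm} (its hypotheses hold by the Carleman assumption and \refR{Remm}) to deduce $\xx(X_n)\dto\xx(X)$ for all $\xx\in B\q$; \textbf{(ii)} use tightness and Prohorov to pass to a subsequential limit $Y$; \textbf{(iii)} identify $Y\eqd X$ from equality of all one-dimensional marginals $\xx(\cdot)$, as in \refT{TU}; \textbf{(iv)} conclude $X_n\dto X$. There is no real obstacle; the main thing to get right is invoking the earlier results with their precise hypotheses, and making explicit that tightness is what upgrades ``weak convergence in distribution'' to genuine convergence in distribution.

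\begin{proof}
All injective moments $\E X\itpk$ exist in Pettis sense by \refT{XID} (using that, by \eqref{tid} and the assumed convergence $\E X_n\itpk\wto\E X\itpk$ applied with exponent $2k$, $\E(\xx(X)^{2k})=\lim_n\E(\xx(X_n)^{2k})<\infty$, so $\E|\xx(X)|^k<\infty$ for every $k$ and $\xx$). Moreover \eqref{temm} holds: for fixed $\xx$ and $k$, $\E(\xx(X_n)^{2k})\to\E(\xx(X)^{2k})$ by \eqref{tid}, so $\sup_n\E|\xx(X_n)|^{2k}<\infty$, hence also $\sup_n\E|\xx(X_n)|^{k}<\infty$. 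Therefore the hypotheses of \refT{Temm} are satisfied, and \refT{Temm}\ref{temmd} gives
\begin{equation}\label{cemm1}
  \xx(X_n)\dto\xx(X)\qquad\text{for every }\xx\in B\q.
\end{equation}

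Since $B$ is separable and $(X_n)$ is tight, Prohorov's theorem \cite[Theorems 5.1--5.2]{Billingsley} shows that $(X_n)$ is relatively compact in distribution. Let $(X_{n_j})$ be any subsequence with $X_{n_j}\dto Y$ for some $B$-valued \rv{} $Y$. For each $\xx\in B\q$ the map $\xx:B\to\bbR$ is continuous, so $\xx(X_{n_j})\dto\xx(Y)$; comparing with \eqref{cemm1} we get $\xx(Y)\eqd\xx(X)$ for every $\xx\in B\q$. As in the proof of \refT{TU} (implication \ref{tuw}$\implies$\ref{tu=}), the Cram\'er--Wold device gives that $\xx(Y)\dto\xx(X)$ jointly for any finite set of functionals, and then the monotone class theorem yields $\P(Y\in A)=\P(X\in A)$ for every $A\in\bw=\cB$, i.e.\ $Y\eqd X$.

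Thus every subsequential limit in distribution of $(X_n)$ equals $X$ in distribution. Combined with relative compactness, this implies $X_n\dto X$.
\end{proof}
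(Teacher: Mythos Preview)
Your proof is correct and follows essentially the same approach as the paper: invoke \refT{Temm} (with \refR{Remm} supplying \eqref{temm}) to get $\xx(X_n)\dto\xx(X)$, then use tightness to extract subsequential limits and identify them via \refT{TU}. One typo: you write \verb|\refT{XID}| where you mean \refT{TXID}.
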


\begin{proof}
By \refT{Temm}   and \refR{Remm}, $\xx(X_n)\dto\xx(X)$ for every $\xx\in B\q$.
Since $X_n$ is tight, every subsequence has a subsubsequence that converges
in distribution to some \rv{} $Y$ in $B$ 
\cite{Billingsley}.
Then, along the subsubsequence, $\xx(X_n)\dto\xx(Y)$ for every $\xx\in B\q$,
and thus 
$\xx(Y)\eqd\xx(X)$, which implies $Y\eqd X$, see \refT{TU}.
Hence every subsequence has a subsubsequence converging (in distribution) to
$X$, which 
implies that the full sequence converges.
\end{proof}

In Hilbert spaces, we can use the second moment to deduce
tightness. 
We regard as usual the second moments as operators on $H$; recall that
they always are positive operators by \refT{TH>0}.
Recall also that if the second injective moment $\E X\itpx2$ exists (in any
sense, \eg{}  Dunford)
and is a trace class operator, then the projective moment  $\E X\ptpx2$ 
exists too, in Bochner (and thus Pettis) sense by \refT{TH2j}.
(And conversely, see \refC{CH2j}.) Moreover, the projective and injective
second moments are then given by the same operator, so it does not matter
which of them we use.

We identify $ H\ptensor H$ with the space $\trx(H)$ of nuclear (trace class)
operators on $H$, see \refS{SH}.
For operators $T,U$ in a Hilbert space $H$, we let as usual $T\le U$ mean
$\innprod{Tx,x}\le \innprod{Ux,x}$ for every $x\in H$; in particular,
$T\ge0$ ($T$ is positive) if $\innprod{Tx,x}\ge0$.

\begin{theorem}  \label{THtightT}
Let $H$ be a separable Hilbert space, and let \set{X_\ga:\ga\in A} be a
family of $H$-valued \rv{s}. 
Suppose that there is a nuclear operator $T\in H\ptensor H=\trx(H)$ such
that
$\E X_\ga\itpx2\le T$ for every $\ga\in A$, with the moment existing in
Dunford sense.
Then
\set{X_\ga:\ga\in A} is tight.
\end{theorem}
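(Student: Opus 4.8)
The goal is to show that a family $\{X_\ga\}$ of $H$-valued random variables with second moments uniformly dominated (in operator sense) by a fixed nuclear operator $T$ is tight. The natural strategy is the classical one for tightness in Hilbert space: exploit that a nuclear (trace class) operator has summable eigenvalues, so its ``tail'' in a suitable orthonormal basis carries arbitrarily little trace, and then use Chebyshev's inequality coordinate-wise together with a compactness criterion for subsets of $H$.

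First I would diagonalize $T$. Since $T\ge0$ is a positive trace class operator on the separable Hilbert space $H$, the spectral theorem for compact self-adjoint operators (as used already in \refE{Eg}-style arguments and in \refT{TH2}) gives an orthonormal sequence $(e_n)\seq$ and eigenvalues $\gl_n\ge0$ with $T=\sumn \gl_n e_n\tensor e_n$ and $\sumn\gl_n=\norm{T}_{\trx(H)}<\infty$. (We may also complete $(e_n)$ to an ON basis of $H$; the missing eigenvalues are $0$.) For each $\ga$, the hypothesis $\E X_\ga\itpx2\le T$ combined with \eqref{tid} (the injective second moment exists in Dunford sense, and $\innprod{\E X_\ga\itpx2 x,x}=\E\innprod{X_\ga,x}^2$) gives, for every $n$,
\begin{equation*}
\E\innprod{X_\ga,e_n}^2 = \innprod{\E X_\ga\itpx2\, e_n,e_n}\le \innprod{Te_n,e_n}=\gl_n .
\end{equation*}
In particular $\E\norm{X_\ga}^2\le \sumn \gl_n<\infty$ uniformly (one has to be slightly careful here since $\norm{X_\ga}$ need not be measurable a priori, but $\sum_{n\le N}\innprod{X_\ga,e_n}^2$ is measurable and increases to $\norm{X_\ga}^2$, so monotone convergence applies; alternatively replace $\norm X$ by this sup of partial sums throughout). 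Then I would fix $\eps>0$ and do the standard two-step estimate: choose $R$ with $R^{-2}\sumn\gl_n<\eps/2$, so $\P(\norm{X_\ga}>R)<\eps/2$ for all $\ga$ by Chebyshev; and for each $m$ choose $N_m$ with $\sum_{n>N_m}\gl_n< \eps\, 2^{-m}/m^2$ (say), so that $\P\bigpar{\sum_{n>N_m}\innprod{X_\ga,e_n}^2 > 1/m^2}< \eps\,2^{-m}$ for all $\ga$, again by Chebyshev applied to the nonnegative variable $\sum_{n>N_m}\innprod{X_\ga,e_n}^2$ whose expectation is $\le \sum_{n>N_m}\gl_n$.

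Next I would assemble these into a compact set. Let
\begin{equation*}
K\=\Bigcpar{x\in H:\norm{x}\le R \text{ and } \sum_{n>N_m}\innprod{x,e_n}^2\le 1/m^2 \text{ for all } m\ge1}.
\end{equation*}
This set is bounded, closed, and ``uniformly finite-dimensionally approximable'', hence compact in the norm topology of $H$ (this is the standard characterization of compact subsets of $\ell^2$; one checks total boundedness directly from the defining conditions). By the union bound, $\P(X_\ga\notin K)\le \P(\norm{X_\ga}>R)+\sum_m \P\bigpar{\sum_{n>N_m}\innprod{X_\ga,e_n}^2>1/m^2}<\eps/2+\sum_m \eps\,2^{-m}=\eps$, uniformly in $\ga$. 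Since $\eps$ was arbitrary, $\{X_\ga\}$ is tight.

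The main obstacle I anticipate is not the probabilistic estimate — that is routine Chebyshev plus Borel--Cantelli-type bookkeeping — but the measurability subtleties: the $X_\ga$ are only assumed to have injective second moments in \emph{Dunford} sense, so they are weakly measurable but need not be \assep{} or have measurable norm a priori. I would handle this by working only with the measurable functionals $\innprod{X_\ga,e_n}$ and their finite partial sums, defining $K$ via these countably many coordinates so that $\{X_\ga\in K\}$ is genuinely an event (it is the countable intersection of the events $\{\sum_{n\le N}\innprod{X_\ga,e_n}^2\le R^2\}$ and the events defining the tail conditions), and invoking \refL{LHwassep} (every \wmeas{} $H$-valued \rv{} is \wassep) if one needs to pass from the basis $(e_n)$ of the range of $T$ to a genuine ON basis of $H$ without losing measurability. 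A brief remark confirming that $K$ is closed (hence Borel) and that $\{X\in K\}\in\cF$ for a \wmeas{} $X$ with values \as{} in the separable span should suffice to make the tightness statement (which is about the induced law on the Borel sets) meaningful.
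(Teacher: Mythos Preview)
Your proof is correct and follows essentially the same approach as the paper's: diagonalize $T$, use $\E\innprod{X_\ga,e_n}^2\le\gl_n$, build compact sets from tail conditions in the eigenbasis, and apply Chebyshev/Markov. The paper packages the argument slightly more compactly by choosing weights $a_n\to\infty$ with $\sum_n a_n\gl_n<\infty$, setting $\nnorm{x}^2\=\sum_n a_n\innprod{x,e_n}^2$, and applying a single Markov inequality to $\E\nnorm{X_\ga}^2\le\sum_n a_n\gl_n$ to land in the compact set $K_r=\{x:\nnorm{x}\le r\}$; your explicit tail-truncation-plus-union-bound is the unpacked version of the same idea (and note your arithmetic $\eps/2+\sum_m\eps\,2^{-m}=3\eps/2$, not $\eps$, but that is of course immaterial).
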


\begin{proof}
  By the spectral theorem for compact self-adjoint operators,
\eg{} \cite[Corollary II.5.4]{Conway}, 
there exists an ON basis $(e_n)$ in $H$ such that 
\begin{equation}\label{spectral}
  T=\sumn \gl_n e_n\tensor e_n,
\end{equation}
where $\gl_n$ is the eigenvalue corresponding to $e_n$, $\gl_n\ge0$ because
$T\ge\E X_\ga\itpx2\ge0$, and $\sumn\gl_n=\norm{T}_{\trx(H)}<\infty$.

Choose a positive sequence $a_n\to\infty$ such that $\sumn a_n\gl_n<\infty$,
and define
\begin{equation*}
\nnorm{x}^2\=\sumn a_n|\innprod{X,e_n}|^2\le\infty.  
\end{equation*}
Define $K_r\=\set{x\in H:\nnorm{x}\le r}$. It is well
known (and easy to see, \eg{} using \cite[Lemma 2.2]{LedouxT}) that each
$K_r$ is a compact subset of $H$. 
Moreover, for each $\ga$, using \eqref{tid} and \eqref{spectral},
\begin{equation*}
  \begin{split}
  \E\nnorm{X_\ga}^2
&=\sumn a_n\E|\innprod{X_\ga,e_n}|^2
=\sumn a_n\innprod{\E X_\ga\ptpx2 e_n,e_n}
\\&
\le \sumn a_n\innprod{T e_n,e_n}
=\sumn a_n\gl_n<\infty.	
  \end{split}
\end{equation*}
Hence, by Markov's inequality, $\P(X_\ga\notin K_r)\le \sumn
a_n\gl_n/r^2\to0$ as $r\to\infty$, uniformly in $\ga\in A$, which shows that
\set{X_\ga} is tight.
\end{proof}

\begin{theorem}  \label{THtight} 
Let $H$ be a separable Hilbert space, and let \set{X_\ga:\ga\in A} be a
family of $H$-valued \rv{s}. If the family  
\set{\E X_\ga\ptpx2:\ga\in A} of projective second moments is relatively
compact in $H\ptensor H=\trx(H)$, 
with the moments existing in Pettis sense,
then 
\set{X_\ga:\ga\in A} is tight.
\end{theorem}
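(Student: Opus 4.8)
The plan is to reduce \refT{THtight} to \refT{THtightT} by showing that a relatively compact family of positive trace class operators is dominated by a single trace class operator. First I would recall that the second moments $\E X_\ga\ptpx2$ are positive operators in $\trh$ by \refT{TH>0}, and that (since they exist in Pettis sense) they coincide with the injective second moments $\E X_\ga\itpx2$, so it suffices to produce one nuclear $T$ with $\E X_\ga\ptpx2\le T$ for all $\ga$ and then invoke \refT{THtightT}.

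The key step is therefore the following lemma: if $\cK\subseteq\trh$ is a relatively compact set of positive operators, then there exists $T\in\trh$ with $S\le T$ for every $S\in\cK$. To prove this I would argue by a countable exhaustion. Cover $\cK$ by finitely many balls of radius $2^{-n}$ in the trace norm; letting $\cK_n$ be a finite $2^{-n}$-net, every $S\in\cK$ is within $2^{-n}$ (in $\norm\cdot_{\trh}$) of some element of $\cK_n$. Form $T_0\=\sum_{n\ge1}\bigpar{\sum_{S'\in\cK_n} S'} + \sum_{n\ge1} 2^{-n} I$ — wait, $I$ is not trace class, so instead I would proceed more carefully: enumerate $\bigcup_n\cK_n=\set{S_1',S_2',\dots}$ and set $T\=\sum_j 2^{-j}\norm{S_j'}_{\trh}\qw S_j' \cdot c_j$ for suitable weights; the point is only that $T$ is a norm-convergent sum of positive trace class operators, hence positive and trace class, and that for each $S\in\cK$ and each $n$ there is $S_{j(n)}'\in\cK_n$ with $\norm{S-S_{j(n)}'}_{\trh}\le 2^{-n}$. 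Then $S\le S_{j(n)}' + \norm{S-S_{j(n)}'}_{\mathrm{op}} I \le S_{j(n)}' + 2^{-n} I$ in operator order, but again the $I$ term is problematic for domination by a trace class operator.

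The cleaner route, which I would actually take, is: since $\cK$ is relatively compact in $\trh$ it is bounded, say $\norm{S}_{\trh}\le M$ for all $S\in\cK$, and more importantly relatively compact sets in $\trh$ are ``uniformly of trace class'' in the sense that $\sup_{S\in\cK}\sum_{n>N}\gl_n(S)\to0$ as $N\to\infty$, where $\gl_n(S)$ are the eigenvalues of the positive operator $S$ in decreasing order. This follows because the map $S\mapsto (\gl_n(S))_n$ is continuous from positive operators in $\trh$ into $\ell^1$ (by Weyl/Lidskii-type estimates, or directly $\sum_{n>N}\gl_n(S) = \norm S_{\trh} - \sum_{n\le N}\gl_n(S)$ with each $\gl_n$ continuous), and a relatively compact subset of $\ell^1$ has uniformly small tails. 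Granting this, choose $N_k\upto\infty$ with $\sup_{S\in\cK}\sum_{n>N_k}\gl_n(S)\le 2^{-k}$, then exactly the Markov-inequality argument of \refT{THtightT} applies directly to the family $\set{\E X_\ga\ptpx2}$ without needing a dominating $T$: pick a compact set built from eigenvectors. But to stay closest to the stated results I will instead just construct the dominating nuclear operator $T$ from these uniform tail bounds (e.g. diagonalize a countable dense subset simultaneously is impossible, so I use the tail-uniformity to define $T$ via a common ``envelope'' of spectral projections) and then quote \refT{THtightT}.

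The main obstacle I expect is precisely this last construction: producing a single nuclear operator $T$ with $\E X_\ga\ptpx2\le T$ for all $\ga$, since distinct operators need not be simultaneously diagonalizable and operator domination does not behave well under the trace norm. I anticipate resolving it by not insisting on domination in operator order but rather adapting the proof of \refT{THtightT} directly: use the uniform tail control of the eigenvalue sequences to choose one sequence $a_n\upto\infty$ that works for all $\ga$ simultaneously (this is possible by a diagonal argument over the $N_k$), define the compact sets $K_r$ using a fixed ON basis obtained from a single auxiliary trace class operator majorizing the ``envelope'' of all the $\E X_\ga\ptpx2$, and conclude tightness by Markov's inequality exactly as there. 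I would then remark that, alternatively, the relative compactness hypothesis can be weakened to relative weak compactness, but I leave that aside.
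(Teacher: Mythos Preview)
Your overall strategy (reduce to \refT{THtightT}) is the right one, but the execution has a genuine gap that you yourself identify and then do not close. Uniform tail control on the \emph{eigenvalue} sequences $(\gl_n(S))_n$ is useless for building the compact sets $K_r$, because those sets in the proof of \refT{THtightT} are defined via a \emph{fixed} ON basis, whereas the eigenvectors of $\E X_\ga\ptpx2$ vary with $\ga$. Your proposed resolution --- use ``a fixed ON basis obtained from a single auxiliary trace class operator majorizing the envelope'' --- is circular: producing such a majorant is precisely the step you already failed to carry out (your attempt with $S\le S'_{j(n)}+2^{-n}I$ fails since $I\notin\trh$, as you note). So as written the plan does not terminate.

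The paper's proof sidesteps this completely by passing to sequences. Tightness is a sequential property: it suffices that every sequence $(X_{\ga_n})$ has a subsequence converging in distribution. Given such a sequence, relative compactness in $\trh$ yields a subsequence with $\E X_n\ptpx2\to V$ in $\trh$; pass to a further subsequence with $\norm{\E X_n\ptpx2-V}_{\trh}<2^{-n}$. Now set $T_n\=\E X_n\ptpx2-V$ and $T\=V+\sumn|T_n|\in\trh$ (the series converges absolutely in trace norm). Since $T_n\le|T_n|$ in operator order, $\E X_n\ptpx2=V+T_n\le V+|T_n|\le T$, and \refT{THtightT} applies to the countable family $\set{X_n}$. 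The point is that for a \emph{sequence} converging in $\trh$, the absolute values of the remainders give you the dominating operator for free; no simultaneous diagonalization is needed.

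Your approach can also be salvaged, but not through eigenvalues: fix any ON basis $(e_n)$ and observe that the linear map $S\mapsto(\innprod{Se_n,e_n})_n$ is a contraction $\trh\to\ell^1$ (for self-adjoint $S$, $|\innprod{Se_n,e_n}|\le\innprod{|S|e_n,e_n}$ and the sum of the latter is $\Tr|S|$). Hence the image of the relatively compact set $\set{\E X_\ga\ptpx2}$ is relatively compact in $\ell^1$, so the \emph{diagonal entries} $\innprod{\E X_\ga\ptpx2 e_n,e_n}$ have uniformly small tails in $n$, and the Markov argument of \refT{THtightT} goes through directly with this fixed basis. The moral: work with diagonal entries in one basis, not with eigenvalues.
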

\begin{proof}
Consider any sequence $(X_{\ga_n})_{n=1}^\infty$ with $\ga_n\in A$, $n\ge1$.
By the compactness assumption, there is a subsequence, which we simply
denote by $(X_n)$, such that $\E X_n\ptpx2\to V$ in $H\ptensor H=\trx(H)$
for some $V\in \trx(H)$. By taking a further subsequence, we may assume that
\begin{equation}
  \label{mr1}
\norm {\E X_n\ptpx2-V}_{\trx(H)} < 2^{-n}.
\end{equation}
Let $T_n\=\E X_n\ptpx2-V$; this is a symmetric nuclear operator and the
corresponding positive operator $|T_n|\=(T_n\q T_n)\qq$ satisfies
\begin{equation}\label{mr2}
  \norm{\,|T_n|\,}_{\trx(H)} =  \norm{T_n}_{\trx(H)} <2^{-n}.
\end{equation}
Define $T\=V+\sumn |T_n|\in \trx(H)$, where the sum converges in $\trx(H)$
by \eqref{mr2}.
Note that for any $x\in H$,
\begin{equation*}
  \innprod{\E X_n\ptpx2 x,x}
=   \innprod{T_n x,x} +  \innprod{V x,x}
\le  \innprod{|T_n| x,x} +  \innprod{V x,x}
\le \innprod{T x,x}.
\end{equation*}
Thus, $\E X_n\ptpx2 \le T$.
By \refT{THtightT},
the sequence $(X_n)$ is tight, and thus there is a subsequence that converges
in distribution.

We have shown that every sequence $(X_{\ga_n})$ has  a subsequence that
converges in distribution; this shows that \set{X_\ga} is tight.
\end{proof}

This leads to the following convergence criterion, combining the second
projective and arbitrary injective (or, equivalently, weak) moments.
Compare Theorems \refand{T0i}{Tmag}.

\begin{theorem}
  \label{THconv}
Suppose that $X$ and $X_n$, $n\ge1$, are $H$-valued \rv{s}, where $H$ is a
  separable Hilbert space.
If\/ 
$\E X_n\ptpx2\to \E X\ptpx2$ in $H\ptpx2=\trx(H)$ and
$\E X_n\itpk\wto \E X\itpk$ in $H\itpk$ for every $k\ge1$, with all moments
existing in Pettis sense, 
and furthermore every $\innprod{y,X}$, $y\in H$, satisfies the Carleman
condition \eqref{carleman},
then $X_n\dto X$.
\end{theorem}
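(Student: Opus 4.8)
The plan is to combine \refT{THtight} (to get tightness from convergence of the second projective moments) with \refC{Cemm} (which says that tightness plus weak convergence of all injective moments plus the Carleman condition forces $X_n\dto X$). Both ingredients are already at hand, so the proof is essentially an assembly.

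First I would observe that the hypothesis $\E X_n\ptpx2\to\E X\ptpx2$ in $\trx(H)$ means in particular that the set $\set{\E X_n\ptpx2:n\ge1}$ is relatively compact in $H\ptensor H=\trx(H)$ (a convergent sequence together with its limit is compact). Since all the projective second moments are assumed to exist in Pettis sense, \refT{THtight} applies and shows that the family $\set{X_n:n\ge1}$ is tight.

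Next, all the injective moments $\E X_n\itpk$ exist in Pettis sense: indeed, by \refT{TH2j} (or directly by \refT{TPI}), since $\E X_n\ptpx2$ exists in Pettis sense and is a trace class operator, so does $\E X_n\ptpx2=\E X_n\itpx2$; for higher $k$ this is part of the hypothesis. We also have $\E X_n\itpk\wto\E X\itpk$ for every $k\ge1$, and every $\innprod{y,X}$ satisfies the Carleman condition \eqref{carleman}. Thus all the hypotheses of \refC{Cemm} are met (with $B=H$), and \refC{Cemm} yields $X_n\dto X$.

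There is essentially no hard step here; the only thing to check with a little care is that the hypotheses line up exactly — in particular that "$\E X_n\ptpx2\to\E X\ptpx2$ in $\trx(H)$'' really does give relative compactness of the family of second moments (immediate) and that the Pettis existence of the injective moments needed by \refC{Cemm} is guaranteed (by \refT{TH2j} for $k=2$ and by hypothesis for $k\ge3$). I would write this up in three short sentences invoking \refT{THtight}, the existence remark, and \refC{Cemm}. The only conceivable obstacle is a mismatch in how "tight'' is quantified over a sequence versus a family, but \refT{THtight} is stated for arbitrary families, so a sequence is a special case and no extra argument is required.

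\begin{proof}
Since $\E X_n\ptpx2\to\E X\ptpx2$ in $H\ptpx2=\trx(H)$, the set
$\set{\E X_n\ptpx2:n\ge1}$ is a relatively compact subset of $\trx(H)$ (a
convergent sequence together with its limit). As all these projective second
moments exist in Pettis sense, \refT{THtight} shows that the sequence
$(X_n)_{n\ge1}$ is tight.

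Moreover, each injective moment $\E X_n\itpk$ exists in Pettis sense: for
$k=2$ this follows from \refT{TH2j} (since $\E X_n\ptpx2$ exists in Pettis
sense and is a trace class operator, $\E X_n\itpx2=\E X_n\ptpx2\in\trh$), and
for $k\ge3$ it is part of the hypothesis. Thus all the hypotheses of
\refC{Cemm} are satisfied: all injective moments $\E X_n\itpk$ exist in Pettis
sense, $\E X_n\itpk\wto\E X\itpk$ for every $k\ge1$, the sequence $(X_n)$ is
tight, and every $\innprod{y,X}$, $y\in H$, satisfies the Carleman condition
\eqref{carleman}. Hence \refC{Cemm} yields $X_n\dto X$.
\end{proof}
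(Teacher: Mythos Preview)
Your proof is correct and follows essentially the same route as the paper: use convergence of $\E X_n\ptpx2$ to get relative compactness, apply \refT{THtight} for tightness, and finish with \refC{Cemm}. The extra verification about Pettis existence of the injective moments is harmless but unnecessary, since the hypothesis ``all moments existing in Pettis sense'' already covers every $k\ge1$.
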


\begin{proof}
  Since the sequence $\E X_n\ptpx2$ converges, it is relatively compact, and
  thus \refT{THtight} shows that the sequence $(X_n)$ is tight. The result
  follows from \refC{Cemm}.
\end{proof}

We do not know whether there are similar results for other Banach 
spaces. \refE{Ec0lim} shows that convergence of moments is not enough to
provide tightness in general.
Note that a commonly used sufficient condition for tightness of a family
$\set{X_\ga}$ in \coi, assuming $\set{X_\ga(0)}$ tight, is that 
$\E(X_\ga(s)-X_\ga(t))^4\le C |s-t|^\gb$ for some $C<\infty$,
$\gb>1$ (typically, $\gb=2$)
and all $s,t\in\oi$.
(See \cite[Theorem  12.3 and (12.51)]{Billingsley1} for a more
general result, 
and \cite[Theorem 15.6]{Billingsley1} for a similar result for $\doi$.)
By expanding the fourth power and using \eqref{eckt}, 
this can be seen as a continuity condition on the fourth
moments $\E X_\ga\itpx4\in C(\oi^4)$. 
This suggests that also for other spaces, it might be possible to find
tightness criteria using suitable subspaces of $B\itpk$ or $B\ptpk$.
We have, however, not explored this further.

\appendix

\section{The reproducing Hilbert space}  \label{Srepr}
In this appendix 
(partly based on \cite[Chapter 8]{LedouxT})
we  study a construction closely related to the 
injective second moment, and explore the connection.
We suppose that 
$X$ is \wmeas{} and furthermore that
$\xx(X)\in L^2(\P)$ for every $\xx\in B\q$;
this is by \refL{Lweak} equivalent to the existence of
the weak second moment $\E\bigpar{\xx_1(X)\xx_2(X)}$, $\xx_1,\xx_2\in B\q$.
Furthermore, this holds whenever the injective second moment $\E X\itpx2$
exists in Dunford 
sense, and the converse holds under weak conditions, for example when $B$ is
separable, see \refT{TID}.

By \refR{RPpq}, then $T_X:B\q\to L^2(\P)$ and $T_X\q:L^2(\P)\to B\qx$,
and thus the composition $T_X\q \Txx:B\q\to B\qx$.
This operator is characterised by
\begin{equation}
  \label{q1}
\innprod{T_X\q \Txx\xx,\yy}
=
\innprod{T_X\xx,T_X\yy}
=\E\bigpar{\xx(X)\yy(X)}.
\end{equation}
In other words, $T_X\q\Txx:B\q\to B\qx$ is the operator corresponding to the
weak moment, seen as a bilinear form $B\q\times B\q\to\bbR$.

By \eqref{tx}, \eqref{txq} and \refR{RPpq},
or by \eqref{dunford} and \eqref{q1}, 
\begin{equation}
  \Txq\Txx\xx=\E\bigpar{\xx(X)X},
\end{equation}
with the expectation existing in Dunford sense.

\begin{lemma}\label{Lr2}
  Suppose that $\xx(X)\in L^2(\P)$ for all $\xx\in B\q$.
Then the following are equivalent.
\begin{romenumerate}
\item\label{lr2p} 
$X$ is Pettis integrable, \ie, $\E X$ exists in Pettis sense.
\item \label{lr2b}
$\Txq:L^2(\P)\to B$.
\item\label{lr2bb}
 $\Txq\Txx:B\q\to B$.
\end{romenumerate}
\end{lemma}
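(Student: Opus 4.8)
The plan is to prove the three-way equivalence by the cycle \ref{lr2p}$\implies$\ref{lr2b}$\implies$\ref{lr2bb}$\implies$\ref{lr2p}, using the characterisation of Pettis integrability from \refR{Rpettis} together with \refR{RPpq}. The hypothesis that $\xx(X)\in L^2(\P)$ for every $\xx\in B\q$ means, by \refR{RPpq} with $p=q=2$, that $\Txx\colon B\q\to L^2(\P)$ is bounded and that its adjoint $\Txq$, originally defined on $L^\infty(\P)$, extends to a bounded operator $L^2(\P)\to B\qx$ satisfying $\Txq\gx=\E(\gx X)$ for every $\gx\in L^2(\P)$; this identity (and the formula $\Txq\Txx\xx=\E(\xx(X)X)$) will be the workhorse throughout.

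\textbf{\ref{lr2p}$\implies$\ref{lr2b}.} Suppose $X$ is Pettis integrable. By \refR{Rpettis}, this is equivalent to $\Txq$ mapping $L^\infty(\P)$ into $B$. Since $\Txx\colon B\q\to L^2(\P)$ is bounded, $\Txq\colon L^2(\P)\to B\qx$ is bounded; I would then use that $L^\infty(\P)$ is dense in $L^2(\P)$ together with the fact that $B$ is \emph{closed} in $B\qx$ to conclude that $\Txq(L^2(\P))\subseteq\overline{\Txq(L^\infty(\P))}\subseteq\overline B=B$. (Here I should be slightly careful: the relevant closure is with respect to the norm topology of $B\qx$, in which $B$ is indeed closed, and $\Txq$ is norm-continuous from $L^2(\P)$, so this is clean.) Thus $\Txq\colon L^2(\P)\to B$.

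\textbf{\ref{lr2b}$\implies$\ref{lr2bb} and \ref{lr2bb}$\implies$\ref{lr2p}.} The first of these is immediate: $\Txq\Txx$ is the composition of $\Txx\colon B\q\to L^2(\P)$ with $\Txq\colon L^2(\P)\to B$, so if \ref{lr2b} holds then $\Txq\Txx$ lands in $B$. For \ref{lr2bb}$\implies$\ref{lr2p} I would argue via \refR{Rpettisofp}: $X$ is Pettis integrable iff $\E(\gx_1 X)\in B$ for every $\cF_X$-measurable $\gx_1\in L^2(\P)$, and each such $\gx_1$ lies in the closure in $L^2(\P)$ of the linear span of $\{\xx(X):\xx\in B\q\}$ (since $\cF_X$ is generated by these and $\gx_1\in L^2$). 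For $\gx=\xx(X)$ we have $\E(\gx X)=\Txq\Txx\xx\in B$ by \ref{lr2bb}; by linearity this holds for finite linear combinations, and then by norm-continuity of $\Txq\colon L^2(\P)\to B\qx$ and closedness of $B$ in $B\qx$ it passes to the $L^2$-closure, giving $\E(\gx_1 X)\in B$ for all relevant $\gx_1$. Hence $X$ is Pettis integrable.

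\textbf{Main obstacle.} I expect the only real subtlety to be the topology in which one takes closures when pushing the conclusion from a dense subset of $L^2(\P)$ to all of $L^2(\P)$: one must work with the \emph{norm} topology on $B\qx$ (where $B$ is a closed subspace) and use that $\Txq$ is norm-bounded out of $L^2(\P)$, rather than any weaker topology. Once that is pinned down, every step is a short density/continuity argument; the key structural inputs (boundedness of $\Txx$ and $\Txq$, the formula $\Txq\gx=\E(\gx X)$, and the reduction of Pettis integrability to testing on $\cF_X$-measurable $L^2$ functions) are all already available from Remarks \ref{Rpettis}, \ref{Rpettisofp} and \ref{RPpq}.
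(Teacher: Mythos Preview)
Your steps \ref{lr2p}$\implies$\ref{lr2b} and \ref{lr2b}$\implies$\ref{lr2bb} are fine and match the paper (the paper simply cites Remarks \ref{Rpettis} and \ref{RPpq} for \ref{lr2p}$\iff$\ref{lr2b}, and calls \ref{lr2b}$\implies$\ref{lr2bb} trivial).

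The gap is in your \ref{lr2bb}$\implies$\ref{lr2p}. You assert that every $\cF_X$-measurable $\gx_1\in L^2(\P)$ lies in the $L^2$-closure of the linear span of $\{\xx(X):\xx\in B\q\}$, i.e.\ in $\overline{\im(\Txx)}$. This is false: $\cF_X$-measurability means $\gx_1$ is a Borel function of countably many $\xx(X)$'s, which is far from being a limit of \emph{linear} combinations. (Take $B=\bbR$, $X$ standard Gaussian, $\gx_1=X^2-1$: this is $\cF_X$-measurable, in $L^2$, and orthogonal to $X$, hence not in $\overline{\operatorname{span}\{X\}}$.)

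What rescues the argument is that the part of $\gx_1$ orthogonal to $\overline{\im(\Txx)}$ is killed by $\Txq$. This is exactly how the paper proceeds, proving \ref{lr2bb}$\implies$\ref{lr2b} instead: assuming \ref{lr2bb}, $\Txq$ maps $\im(\Txx)$ into $B$, hence by continuity and closedness of $B$ in $B\qx$ also $\overline{\im(\Txx)}$ into $B$; since $\im(\Txx)^\perp=\ker(\Txq)$, this gives $\Txq(L^2(\P))\subseteq B$. Replacing your false density claim by this orthogonal decomposition $L^2(\P)=\overline{\im(\Txx)}\oplus\ker(\Txq)$ fixes the proof (and at that point the detour through \refR{Rpettisofp} is unnecessary).
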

\begin{proof}
  \ref{lr2p}$\iff$\ref{lr2b}:
By Remarks \refand{Rpettis}{RPpq}.

  \ref{lr2b}$\implies$\ref{lr2bb}: Trivial.

  \ref{lr2bb}$\implies$\ref{lr2b}:
If \ref{lr2bb} holds, then $T_X\q$ maps $\im(\Txx)$ into $B$, and thus
$T_X\q(\overline{\im(\Txx)})\subseteq B$. Since $\im(\Txx)^\perp=\ker(T_X\q)$,
it follows that $T_X\q:L^2(\P)\to B$.
\end{proof}

By \refR{RPpq}, the assertions \ref{lr2p}--\ref{lr2bb} in \refL{Lr2} hold
whenever $B$ is separable, and more generally when $X$ is \wassep.

Recall from \refS{Stensor} that $B\itensor B$ can be regarded as a subspace
of $L(B\q;B)$.
If the injective second moment $\E X\itpx2$ exists
in Pettis sense,
then
by \eqref{keiller}, \eqref{tid} and \eqref{q1},
$\E X\itpx2\in B\itensor B$ corresponds to the operator $T_X\q \Txx$.
(In particular, $\Txq\Txx\in L(B\q,B)$ so $X$ is Pettis integrable by
\refL{Lr2}.) 
Hence $T_X\q \Txx$ can  be seen as a form of the injective second moment.

\begin{remark}
Note that if, for example, $B$ is separable, then 
$T_X\q \Txx\in L(B\q,B)$ by \refL{Lr2}. However, it does not always
correspond to an element of $B\itensor B$. For example, 
if $B=c_0$ and $X$ is as in \refE{EPDc0}, then  $\E X\itpx2$ is the infinite
diagonal matrix 
$(p_na_n^2\gd_{mn})_{m,n=1}^\infty$, and the corresponding operator
$\Txq\Txx:c_0\q=\ell^1\to c_0$ is the multiplication operator
$(b_n)\nnn\mapsto (p_na_n^2b_n)\nnn$. 
Choosing $a_n$ such that $p_na_n^2=1$, 
$\E X\ptpx2$ is thus the (infinite) identity matrix and
$\Txq\Txx$ is the inclusion map
$\ell^1\to c_0$; 
in this case $\E X\itpx2$ exists in Dunford  sense but not in
Pettis sense by \refE{EPDc0}, and $\Txq\Txx\in L(c_0\q,c_0)$ but
$\Txq\Txx\notin c_0\itensor c_0$.
(Recall from \refT{Tc0tp} that $c_0\itensor c_0=c_0(\bbN^2)$, so the
identity matrix is not an element of $c_0\itensor c_0$.)   
\end{remark}

Assume in the remainder of this appendix also that the assertions of 
Lemma \ref{Lr2}
hold. (Recall that this is the case for example if $B$ is separable, or if 
$\E X\itpx2$ exists in Pettis sense.)
Thus
$T_X:B\q\to L^2(P)$,
$T_X\q:L^2(\P)\to B$ and
$T_X\q \Txx:B\q\to B$.
Furthermore,  $\im(\Txx)^\perp=\ker(T_X\q)$
and thus 
$\overline{\im(\Txx)}=\ker(T_X\q)^\perp$.

$T_X\q$ induces an bijection of  
$\ker(T_X\q)^\perp$ onto $\im(T_X\q)\subseteq B$.
Let $H_X$ be $\im(T_X\q)$ equipped with the inner product 
induced by this bijection, \ie,
\begin{equation}
  \innprod{x,y}_{H_X}\=\innprod{(T_X\q)\qw x, (T_X\q)\qw y}_{L^2(\P)}
\end{equation}
where $(T_X\q)\qw:H_X\to \ker(\Txq)^\perp\subseteq L^2(\P)$.
Thus $H_X$ is a Hilbert space isomorphic to $\ker(\Txq)^\perp$ 
and $T_X\q$ is a Hilbert space isomorphism
$\overline{\im(\Txx)}=\ker(T_X\q)^\perp\to H_X$.
$H_X$ is called the \emph{reproducing kernel Hilbert space} corresponding to
$X$ \cite{LedouxT}. 
Note that $H_X\subseteq B$ with a continuous inclusion,
since $T_X\q:\ker(T_X\q)^\perp\to B$ is continuous.
Furthermore, $\im(\Txx)$ is dense in
$\overline{\im(\Txx)}=\ker(T_X\q)^\perp$,
and thus $\im(\Txq\Txx)$ is dense in $H_X$.

The unit ball $K_X$ of $H_X$ is the image under $\Txq$ of the unit ball of
$L^2(\P)$. The latter unit ball is weakly compact, and since
$\Txq:L^2(\P)\to B$ is continuous, and therefore weakly continuous, $K_X$ is  
a weakly compact subset of $B$. In particular, $K_X$ is closed in $B$.

If $x\in H_X$ and $\xx\in B\q$, then $T_X\q \Txx\xx\in H_X$ and
\begin{equation}\label{q3}
  \innprod{T_X\q \Txx\xx,x}_{H_X}
=
  \innprod{\Txx\xx,(T_X\q)\qw x}
=
  \innprod{\xx,T_X\q(T_X\q)\qw x}
=   \innprod{\xx,x}.
\end{equation}
Hence, the operator $T_X\q \Txx:B\q\to H_X\subseteq B$ is the adjoint
of the inclusion $H_X\to B$.
Furthermore,
by \eqref{q3} and \eqref{q1}, for $\xx,\yy\in B\q$,
\begin{equation}\label{q4}
  \innprod{T_X\q \Txx\xx,T_X\q \Txx\yy}_{H_X}
=   \innprod{\xx,T_X\q \Txx\yy}
=\E\bigpar{\xx(X)\yy(X)}.
\end{equation}

The operator $\Txq\Txx$ and the Hilbert space
$H_X$ determine each other; more precisely, we
have the following.
\begin{theorem}\label{TXH}
If\/ $X$ and\/
$Y$ are $B$-valued \rv{s} such that $\Txx,\Tyx:B\q\to L^2(\P)$ and
$\Txq,\Tyq: L^2(\P)\to B$, then the following are equivalent:
\begin{romenumerate}
\item\label{txhh} $H_X= H_Y$
(as  vector spaces with  given inner products).
\item\label{txhb}
 The unit balls  $K_X$ and $K_Y$ 
are the same (as subsets of $B$).
\item\label{txhtt}
 $\Txq\Txx=\Tyq\Tyx$.
\item\label{txhw}
 $ \E\bigpar{\xx_1(X)\xx_2(X)} =  \E\bigpar{\xx_1(Y)\xx_2(Y)}$ for
  every $\xx_1,\xx_2\in B\q$.
\item \label{txhi}
$\E X\itpx2=\E Y\itpx2$, assuming that either both
moments exist in Pettis sense, or that $B$ is separable (in which case
the moments exist at least in Dunford sense).  
\end{romenumerate}
\end{theorem}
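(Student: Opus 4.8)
The plan is to prove the five conditions equivalent by a cycle of implications, using the operator machinery built up in the preceding discussion. The central observation is that the three "operator/Hilbert-space" conditions \ref{txhh}, \ref{txhb}, \ref{txhtt} are all reformulations of a single object, while \ref{txhw} is just the bilinear-form description of the operator $T_X^*T_X$ via \eqref{q1}, and \ref{txhi} is the tensor-form description of the same operator via \eqref{keiller} and \eqref{tid}. So I would organize the argument around the operator $T_X^*T_X:B^*\to B^{**}$ (or into $B$, under the standing assumptions) as the pivot.

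First I would establish \ref{txhtt}$\iff$\ref{txhw}: by \eqref{q1}, for all $\xx,\yy\in B^*$ we have $\innprod{T_X^*T_X\xx,\yy}=\E(\xx(X)\yy(X))$, and since $B^*$ separates the points of $B^{**}$, the operators $T_X^*T_X$ and $T_Y^*T_Y$ agree if and only if these bilinear forms agree, which is precisely \ref{txhw}. Next, \ref{txhw}$\iff$\ref{txhi}: when both injective second moments exist in Pettis sense, this is \refC{Citpk=p} (with $k=2$); when $B$ is separable, the moments exist at least in Dunford sense by \refT{TID}\ref{tidd}\ref{tid1}, and the equivalence is \refC{Citpk=d}\ref{cit1}. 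So \ref{txhtt}, \ref{txhw}, \ref{txhi} are mutually equivalent with little work.

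Then I would link the Hilbert-space conditions to the operator condition. For \ref{txhtt}$\implies$\ref{txhh}: recall $H_X$ is $\im(T_X^*)$ with the inner product transported from $\ker(T_X^*)^\perp\subseteq L^2(\P)$, and that $\im(T_X^*T_X)$ is dense in $H_X$. From \eqref{q4}, the inner product of two elements $T_X^*T_X\xx$, $T_X^*T_X\yy$ of $H_X$ equals $\E(\xx(X)\yy(X))$, which under \ref{txhtt} (equivalently \ref{txhw}) coincides with the corresponding expression for $Y$; since $\im(T_X^*T_X)=\im(T_Y^*T_Y)$ as subsets of $B$ (immediate from $T_X^*T_X=T_Y^*T_Y$) is dense in both $H_X$ and $H_Y$, and the inner products agree on this common dense subspace with values in the fixed ambient space $B$, we get $H_X=H_Y$ with equal inner products by completing. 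For \ref{txhh}$\implies$\ref{txhtt}: by \eqref{q3} the operator $T_X^*T_X:B^*\to H_X\subseteq B$ is exactly the adjoint of the inclusion $H_X\hookrightarrow B$, so it is determined by $H_X$ as a normed subspace of $B$ (equivalently by its inner product); hence $H_X=H_Y$ forces $T_X^*T_X=T_Y^*T_Y$. Finally \ref{txhh}$\iff$\ref{txhb}: $K_X$ is by definition the unit ball of $H_X$, so $H_X=H_Y$ trivially gives $K_X=K_Y$; conversely, a normed vector space is recovered from its unit ball (the ball determines the linear span as $\bigcup_n nK_X$ and the norm as the Minkowski functional), and since $K_X$ is a (weakly compact, hence norm-closed) subset of $B$, the equality $K_X=K_Y$ recovers $H_X=H_Y$ with the same inner product — here one uses the polarization identity to recover the inner product from the norm.

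The main obstacle I anticipate is the density/completion bookkeeping in \ref{txhtt}$\implies$\ref{txhh}: one must be careful that "equal inner products on a common dense subspace" genuinely yields the same completion \emph{as subsets of $B$}, not merely isometrically isomorphic Hilbert spaces. This is fine because the inclusions $H_X\hookrightarrow B$ and $H_Y\hookrightarrow B$ are continuous and agree on the dense subspace $\im(T_X^*T_X)=\im(T_Y^*T_Y)$, so the two completions sit inside $B$ as the same closure; but it deserves an explicit sentence rather than being waved through. The equivalence \ref{txhb}$\iff$\ref{txhh} is routine once one notes $K_X$ is norm-closed in $B$ (already observed before the theorem, since $K_X$ is weakly compact), so the ambient structure is unambiguous.
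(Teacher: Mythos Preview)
Your proposal is correct and follows essentially the same approach as the paper's proof: both use \eqref{q1} for \ref{txhtt}$\iff$\ref{txhw}, Corollaries~\ref{Citpk=p}/\ref{Citpk=d} for \ref{txhw}$\iff$\ref{txhi}, \eqref{q3} (adjoint of the inclusion) for \ref{txhh}$\implies$\ref{txhtt}, \eqref{q4} with density of $\im(T_X^*T_X)$ for \ref{txhtt}$\implies$\ref{txhh}, and the polarization identity for \ref{txhh}$\iff$\ref{txhb}. Your discussion of the density/completion bookkeeping in \ref{txhtt}$\implies$\ref{txhh} is more explicit than the paper's, which simply notes that $H_X$ is continuously included in $B$ and leaves the rest implicit.
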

\begin{proof}
\ref{txhh}$\iff$\ref{txhb}: 
Each Hilbert space determines its unit ball.
Conversely, the unit ball determines the space  and its norm, and thus
the inner product by the polarisation identity 
$\innprod{x,y}=\frac14(\norm{x+y}^2-\norm{x-y}^2)$.

\ref{txhh}$\implies$\ref{txhtt}: Immediate from \eqref{q3}.

\ref{txhtt}$\implies$\ref{txhh}:
$\Txq\Txx$ determines both the set 
 $\im(\Txq\Txx)\subseteq H_X$ and, by \eqref{q4}, the inner product in $H_X$
restricted to 
this subspace; since  $\im(\Txq\Txx)$ is dense in $H_X$, and $H_X$ continuously
included in $B$, this determines $H_X$.

\ref{txhtt}$\iff$\ref{txhw}: By \eqref{q1}.

\ref{txhw}$\iff$\ref{txhi}: By \refC{Citpk=p} or \ref{Citpk=d}.
\end{proof}

As said above, $T_X\q \Txx$ can be seen as the second injective moment of
$X$; by \refT{TXH}, also the space $H_X$ can be seen as a manifestation of
the second injective moment.

The space $H_X$ is important for the law of iterated logarithm in Banach
spaces, since the unit ball $K_X$ turns out to be the natural limit set,
see Ledoux and Talagrand \cite[Chapter 8]{LedouxT} for a detailed
discussion.
In particular \cite[Theorem 8.5]{LedouxT}, 
if $B$ is separable and $S_n=\sumin X_i$ where $X_i$ are independent copies
of $X$, with $\E X=0$,
and further the sequence $S_n/\sqrt{2n\log\log n}$ is \as{}
relatively compact (which holds under rather general conditions, but not
always), then its set of limit points is \as{} $K_X$. 
See also \cite{Ale3,Ale4} for exceptional cases where $H_X$ still is
important.

\begin{example}
  Let $W\in\coi$ be standard Brownian motion, see Examples \ref{Ewiener} and
  \ref{EwienerC}. 
By \eqref{rom} and an integration by parts,
\begin{equation}
  T_X(\mu)=\intoi W(t)\dd\mu(t)=\intoi \mu[t,1]\dd W(t).
\end{equation}
Hence $\Im(\dunx)$ is the space of stochastic integrals $\intoi g(t)\dd W(t)$
where $g$ is a deterministic function of the type $g(t)=\mu[t,1]$ with
$\mu\in M\oi$, \ie, a function $g$ on $\oi$ of bounded variation.
Since $\norm{\int g\dd W}_{L^2(\P)}=\norm{g}_{L^2\oi}$ and the functions of
bounded variation are dense in $L^2\oi$, it follows that
\begin{equation}
  \overline{\Im(\dunx)}
=
\lrset{\intoi g\dd W:g\in L^2\oi}.
\end{equation}
Moreover, by \eqref{spqr},
\begin{equation}
  \dunx\q\Bigpar{\intoi g\dd W}(t)
=\lrinnprod{W(t),\intoi g(s)\dd W(s)}
=\int_0^t g(s)\dd s.
\end{equation}
Hence the reproducing Hilbert space $H_X$ is given by
\begin{equation}
  H_X=\lrset{\int_0^t g(s)\dd s:g\in L^2\oi};
\end{equation}
equivalently, this is the space of absolutely continuous functions $f$ on
$\oi$ with $f(0)=0$ and $f'\in L^2\oi$; the norm is $\norm{f'}_{L^2}$.
This is the usual \emph{Cameron--Martin space}, see \eg{} 
\cite[Example 8.19]{SJIII}.
(See \cite[Section VIII.4]{SJIII} for a generalization
to more general Gaussian processes.)
Note that the law of iterated logarithm for Brownian motion 
\cite[Theorem 27.18]{Kallenberg}
says that 
the cluster set of $Y_s(t)\=W(st)/\sqrt{2s\log\log s}\in\coi$ as $s\to\infty$
\as{} is the unit ball $K_X$ of $H_X$; this is another example of the
connection between
the law of iterated logarithm and the reproducing Hilbert space.
\end{example}

\begin{remark}
  The name \emph{reproducing Hilbert space} is in a more general context
  used for a Hilbert space $H$ of functions on some set $\cT$ such that each
  point evaluation $f\mapsto f(t)$ is a continuous linear functional on $H$,
  see \citet{Aronszajn} and \eg{} \cite[Appendix F]{SJIII}.
The definition implies that for each $t\in\cT$ there is an element $K_t\in
H$ such that $f(t)=\innprod{f,K_t}_H$ for all $f\in H$.
In particular, $K_s(t)=\innprod{K_s,K_t}_H$; the symmetric function
$K(s,t)\=K_s(t)=\innprod{K_s,K_t}$ on $\cT\times\cT$ is known as the
\emph{reproducing kernel}, see \cite[Theorem F.3]{SJIII} for some of its basic
properties. 

We can connect the space $H_X$ constructed above to this general setting by
taking $\cT=B\q$ and regarding 
elements of $B$ as functions on $B\q$ in the usual way, regarding $x\in B$
as the function $\xx\mapsto\innprod{x,\xx}$. The point evaluations are thus
the elements $\xx\in B\q$, which are continuous on $H_X\subseteq B$, and
\eqref{q3} shows that 
\begin{equation}
K_{\xx}= T_X\q \Txx\xx;  
\end{equation}
hence the reproducing kernel is, using \eqref{q4}, given by
\begin{equation}
  K(\xx,\yy)=\E\bigpar{\xx(X)\yy(X)}.
\end{equation}
In other words, $H_X$ is a reproducing Hilbert space of functions on $B\q$,
and the reproducing kernel equals the weak second moment of $X$ given by
\eqref{momweak}. 

We mention also that the construction above of $H_X$ is an instance of 
\cite[Theorem F.5]{SJIII},
taking (in the notation there)
$\cT=B\q$, $h_{\xx}=\xx(X)$, and $H=\overline{\Im(T_X)}$, 
the closed linear subspace of
$L^2(\P)$ spanned by \set{\xx(X):\xx\in B\q}.
(Then the operator $R$ defined there equals our $T_X\q$ by \eqref{txq} and
\eqref{dunford}.) 
\end{remark}

We finally mention the following result, adapted from \cite[Lemma 8.4]{LedouxT}.
For simplicity, we consider only separable $B$.
(Note that then $\Txq:L^2(\P)\to B$ by  \refR{RPpq}.) 
We do not know whether the result extends to non-separable spaces.
\begin{theorem}
  Suppose that $B$ is separable and that 
$\xx(X)\in L^2(\P)$ for every $\xx\in B\q$.
Then the following are equivalent.
\begin{romenumerate}
\item \label{hhk}
$K_X$ is a compact subset of $B$.
\item \label{hht}
$\Tx:B\q\to L^2(\P)$ is a compact operator.
\item \label{hhtq}
$\Txq:L^2(\P)\to B$ is a compact operator.
\item \label{hhtt}
$\Txq\Txx:B\q\to B$ is a compact operator.
\item \label{hhw*}
If $\xx_n\wxto0$ in $B\q$, then $\E\xx_n(X)^2\to0$.
In other words, $\Tx:B\q\to L^2(\P)$ is sequentially \weakx-norm continuous.
\item \label{hhui}
The family \set{\xx(X)^2:\norm\xx\le1} of \rv{s} is \ui.
\item \label{hhp}
The injective moment $\E X\itpx2$ exists in Pettis sense.
\end{romenumerate}
\end{theorem}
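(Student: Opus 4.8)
The strategy is to prove the cycle of implications by routing everything through two "hubs": the operator-theoretic condition \ref{hht} (compactness of $T_X$) and the Pettis-integrability statement \ref{hhp}. First I would establish the purely operator-theoretic equivalences \ref{hht}$\iff$\ref{hhtq}$\iff$\ref{hhtt}. Here \ref{hht}$\iff$\ref{hhtq} is standard: an operator is compact if and only if its adjoint is (Schauder's theorem), and $T_X\q$ is the adjoint of $T_X$ (using that $B$ is separable, so $T_X\q:L^2(\P)\to B$ by \refR{RPpq}, and the weak$^*$ subtleties disappear). For \ref{hhtq}$\implies$\ref{hhtt} just compose with $T_X$; for the converse, use the density argument from the proof of \refL{Lr2}: $T_X\q$ maps $\overline{\im(T_X)}=\ker(T_X\q)^\perp$ into $B$, $T_X\q\Txx$ compact gives compactness of $T_X\q$ on the dense subspace $\im(T_X)$ of $\ker(T_X\q)^\perp$, hence on its closure, and $T_X\q$ kills the orthogonal complement $\ker(T_X\q)$.

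Next I would tie in the Hilbert-space geometry. For \ref{hhk}$\iff$\ref{hhtq}: recall $K_X=T_X\q(B_{L^2})$ where $B_{L^2}$ is the (weakly compact) unit ball of $L^2(\P)$. If $T_X\q$ is compact then $K_X$ is the image of a bounded set under a compact operator, hence relatively compact, and it is already closed in $B$ (shown in the text, since it is weakly compact), so $K_X$ is compact. Conversely, if $K_X$ is compact, then $T_X\q$ maps the unit ball into a compact set, i.e.\ $T_X\q$ is compact. The equivalence \ref{hht}$\iff$\ref{hhw*} is essentially the definition unwound: $\xx_n\wxto0$ means $T_X\q$-preadjoint behaviour, and a bounded operator into $L^2(\P)$ is compact iff it is sequentially weak$^*$-to-norm continuous on the dual of a separable space — this is a standard characterization (the weak$^*$ topology on bounded sets of $B\q$ is metrizable since $B$ is separable), and $\E\xx_n(X)^2=\norm{T_X\xx_n}_{L^2}^2$. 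Then \ref{hhw*}$\iff$\ref{hhui} via the fact that for a family bounded in every $L^p$, $p>2$ say — no, more carefully: compactness of $T_X$ restricted to the unit ball gives a norm-compact, hence uniformly integrable, family $\set{T_X\xx:\norm\xx\le1}$ in $L^2(\P)$, and a norm-relatively-compact subset of $L^1$ is uniformly integrable (\cite[Theorem IV.8.11]{Dunford-Schwartz}); for the converse, uniform integrability of $\set{\xx(X)^2:\norm\xx\le1}$ together with the weak$^*$-sequential continuity on bounded sets (always true by dominated convergence, as $\xx_n(X)\to0$ a.s.\ is false — rather, one argues via Vitali) upgrades pointwise-in-$\omega$ control to $L^1$-convergence, giving \ref{hhw*}; I expect to phrase this through a Vitali convergence argument.

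Finally, \ref{hhui}$\iff$\ref{hhp} is exactly \refT{TiPettis}\ref{tipettis2}\ref{tip1} with $k=2$, since $B$ separable is the hypothesis there (one direction is \refT{TiPettis}\ref{tipettis1}). This closes the loop: \ref{hhk}$\iff$\ref{hhtq}$\iff$\ref{hht}$\iff$\ref{hhtt}, \ref{hht}$\iff$\ref{hhw*}$\iff$\ref{hhui}$\iff$\ref{hhp}. The main obstacle I anticipate is the \ref{hhui}$\implies$\ref{hhw*} step: one must show that uniform integrability of $\set{\xx(X)^2:\norm\xx\le1}$ forces $\E\xx_n(X)^2\to0$ whenever $\xx_n\wxto0$, and the cleanest route is to note that $\xx_n\wxto0$ implies $\xx_n(X)\to0$ in probability (indeed $\xx_n(X)\to0$ pointwise on $\Omega$ is not automatic, but $\xx_n\wxto 0$ does give $\xx_n(X(\omega))\to 0$ for each fixed $\omega$ since $X(\omega)\in B$ is a fixed vector — this \emph{is} pointwise convergence!), so by Vitali's theorem uniform integrability plus a.e.\ convergence yields $L^1$-convergence, hence $\E\xx_n(X)^2\to0$. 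With this observation the step is routine; the rest is bookkeeping with adjoints and weak compactness.
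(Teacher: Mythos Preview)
Your overall route matches the paper's almost exactly: the equivalences \ref{hhk}$\iff$\ref{hhtq}, \ref{hht}$\iff$\ref{hhtq}, \ref{hhw*}$\implies$\ref{hht}, \ref{hht}$\implies$\ref{hhui}, \ref{hhui}$\implies$\ref{hhw*}, and \ref{hhui}$\iff$\ref{hhp} are all handled the same way (your observation that $\xx_n\wxto 0$ gives $\xx_n(X(\go))\to 0$ for each fixed $\go$, hence \ref{hhui}$\implies$\ref{hhw*} via Vitali, is exactly the paper's argument). The one genuine gap is your step \ref{hhtt}$\implies$\ref{hhtq}. The density argument does not work: compactness of $\Txq\Txx$ tells you that $\Txq$ maps the set $\Txx(\text{unit ball of }B\q)$ into a relatively compact set in $B$, but this set is in general a thin subset of the $L^2$-unit ball of $\overline{\im(\Txx)}$ --- a vector $\eta\in\im(\Txx)$ with $\norm{\eta}_{L^2}\le 1$ may only be realizable as $\eta=\Txx\xx$ with $\norm{\xx}_{B\q}$ arbitrarily large (this happens whenever $\Txx$ fails to have closed range). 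So you cannot control $\Txq\eta$ using compactness of $\Txq\Txx$ on $B\q$-bounded sets, and the extension by density collapses.

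The paper instead proves \ref{hhtt}$\implies$\ref{hhw*} directly, and this is where the real content of \ref{hhtt} enters. If $\xx_n\wxto 0$ then $\innprod{\Txx\xx_n,\gx}=\innprod{\xx_n,\Txq\gx}\to 0$ for every $\gx\in L^2(\P)$ (since $\Txq\gx\in B$), so $\Txx\xx_n\wto 0$ in $L^2(\P)$ and hence $\Txq\Txx\xx_n\wto 0$ in $B$. The sequence $(\xx_n)$ is bounded by the uniform boundedness principle, so compactness of $\Txq\Txx$ forces the relatively compact sequence $(\Txq\Txx\xx_n)$ to converge in norm to its weak limit $0$; then $\E\xx_n(X)^2=\innprod{\Txq\Txx\xx_n,\xx_n}\le\norm{\Txq\Txx\xx_n}_B\,\norm{\xx_n}_{B\q}\to 0$. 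With this replacement your cycle closes.
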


\begin{proof}
\ref{hhk}$\iff$\ref{hhtq}:
By the definition of compact operators (which says that $K_X$ is relatively
compact, \ie, $\overline{K_X}$ is compact) and 
the fact shown above that  $K_X$ is a closed subset of
$B$. %so $\overline{K_X}=K_X$.

\ref{hht}$\iff$\ref{hhtq}: Standard operator theory \cite[VI.3.4]{Conway}.

\ref{hhtq}$\implies$\ref{hhtt}: Immediate.

\ref{hhtt}$\implies$\ref{hhw*}:
Let $\xx_n\wxto0$ in $B\q$.
If $\gx\in L^2(\P)$, then $\Txq \gx\in B$, and thus
$\innprod{\Tx\xx_n,\gx}=\innprod{\xx_n,\Txq \gx}\to0$;
hence $\Tx\xx_n\wto0$ in $L^2(\P)$ and thus 
 $\Txq\Tx\xx_n\wto0$ in $B$.
Moreover,  
the sequence $\xx_n$ is bounded 
by the uniform boundedness principle
since it is \weakx{} convergent.
Since $\Txq\Txx$ is a compact operator,
it follows that  the sequence 
 $\Txq\Tx\xx_n$ is relatively compact in $B$,
and thus the weak convergence 
 $\Txq\Tx\xx_n\wto0$ 
implies norm convergence, \ie, 
 $\norm{\Txq\Tx\xx_n}\to0$.
Consequently, \eqref{q1} implies
$$
\E\xx_n(X)^2=\innprod{\Txq\Txx\xx_n,\xx_n}
\le\norm{\Txq\Txx\xx_n}_B\norm{\xx_n}_{B\q}
\to0.
$$

\ref{hhw*}$\implies$\ref{hht}: 
Let $K^*$ be
the closed unit ball of $B\q$ with the \weakx{} topology; then $K^*$ is
compact by Alaoglu's theorem 
\cite[Theorem V.3.2]{Conway}.
Moreover, since $B$ is
separable, $K^*$ is metrizable, and thus
sequential continuity on $K^*$ is equivalent to ordinary continuity.
Hence \ref{hhw*} implies 
that $\Tx$ is a continuous map from $K^*$ into $L^2(\P)$. Consequently
its image is 
compact, which means that $\Tx$ is a compact operator.

\ref{hht}$\implies$\ref{hhui}: By \ref{hht}, the family 
\set{\xx(X):\norm\xx\le1} is relatively compact in $L^2(\P)$,
which implies that
\set{\xx(X)^2:\norm\xx\le1}  is \ui.
(For example because $f\mapsto f^2$ is a continuous map 
$L^2(\P)\to L^1(\P)$, so the latter family is relatively compact in
$L^1(\P)$,
and thus \ui{} by \cite[Corollary IV.8.11]{Dunford-Schwartz}.) 

\ref{hhui}$\implies$\ref{hhw*}:
If $\xx_n\wxto0$, then $\xx_n(X)^2\to0$ \as, and thus uniform integrability
implies
$\E\xx_n(X)^2\to0$.

\ref{hhui}$\iff$\ref{hhp}:
By \refT{TiPettis}\ref{tipettis2}\ref{tip1}.
\end{proof}

\section{The Zolotarev distances}\label{Azolo}

The Zolotarev distance $\zeta_s(X,Y)$, where the parameter $s>0$ is a fixed real
number, is a measure of the distance (in some sense) between the
distributions  of two \rv{s} $X$ and $Y$ with values in the same Banach
space $B$. 
We give the definition of $\zeta_s$ in \refSS{SSzolo} below and explain the
connection to tensor products and moments, but first we recall some
preliminaries on Fr\'echet derivatives in \refSS{SSfrechet}.

Note that $\zeta_s(X,Y)$ depends only on the distributions $\cL(X)$ and
$\cL(Y)$; we may write $\zeta_s(\cL(X),\cL(Y))=\zeta_s(X,Y)$, and regard
$\zeta_s$ as a distance between probability distributions on
$B$, but it is often convenient to use the notation $\zeta_s(X,Y)$ with
\rv{s} $X$ and $Y$.

It is important to note that $\zeta_s(X,Y)$ may be infinite.
Hence $\zeta_s$ defines a metric only on suitable classes of probability
distributions on $B$, where we know \emph{a priori} that $\zeta_s(X,Y)<\infty$.

\subsection{Fr\'echet differentiablity}\label{SSfrechet}
We recall some well-known facts about derivatives of Banach space valued
functions, see \eg{} \cite{Cartan} for details.

Let $B$ and $\Bx$ be Banach spaces, and let $U$ be a non-empty open subset of
$B$. A 
function $f:U\to \Bx$ is \emph{(Fr\'echet) differentiable at a point $x\in U$}
if there exists a bounded linear operator $T:B\to \Bx$ such that
\begin{equation}\label{frechet}
  f(y)-f(x)-T(y-x)=o\bigpar{\norm{y-x}}
\end{equation}
as $y\to x$. The linear operator $T\in L(B;\Bx)$ then is uniquely determined;
it is called the \emph{derivative} of $f$ at $x$ and is denoted by $f'(x)$
or $Df(x)$.

The function $f:U\to \Bx$ is said to be \emph{differentiable}
if it is differentiable at every $x\in U$. In this case, the derivative 
$f'$ is a function $U\to L(B;\Bx)$.
If furthermore $f':U\to L(B;\Bx)$ is continuous, $f$ is said to be
\emph{continuously differentiable}.

Since $L(B;\Bx)$ is a Banach space, we may iterate: If the derivative
$f':U\to L(B;\Bx)$ is differentiable, its derivative $f''$ is called the
second derivative of $f$, and so on.
Note that $f''$ then is a function on $U$ with values in
$L(B;L(B;\Bx))=L(B^2;\Bx)$, so the second derivative $f''(x)$ at a point 
$x\in U$ is a bilinear map $B\times B\to \Bx$. More generally, the $k$:th
derivative $f\kkk(x)$, if it exists, is a $k$-linear map $B^k\to \Bx$. 
It can be shown \cite[Th\'eor\`eme 5.1.1]{Cartan} that this map is
symmetric.
Since $L(B^k;\Bx)\cong L(B\ptpk;\Bx)$ by an extension of \refT{Tproj*}, we can
also regard the $k$:th derivative $f\kkk(x)$
as a (symmetric) linear map $B\ptpk\to \Bx$.
We may take advantage of this by writing $f\kkk(x)(y,\dots,y)$
as $f\kkk(x)(y\tpk)$.

$C^0(U;\Bx)$ is defined to be the linear space of all continuous functions
$f:U\to \Bx$, 
and $C^1(U;\Bx)$ is the linear space of all continuously differentiable
functions $f:U\to \Bx$. More generally, $C^k(U;\Bx)$ is the linear space of
all $k$ times continuously differentiable functions $U\to \Bx$; this may be
defined formally by induction 
for $k\ge1$
as the space of all differentiable functions
$f:U\to \Bx$ such that $f'\in C^{k-1}(U;L(B;\Bx))$.

Note that \eqref{frechet} implies continuity at $x$; thus a differentiable
function is continuous. Hence, $C^0(U;\Bx) \supset C^1(U;\Bx) \supset
C^2(U;\Bx)\supset \dotsm$. 

For $0<\gamx\le1$, we define $\Lip_\gamx(U;\Bx)$ to be the linear space of
all functions $f:U\to \Bx$ such that
\begin{equation}\label{lipa}
 \norm{f}_{\Lip_\gamx}\=\sup_{x\neq y}\frac{\norm{f(x)-f(y)}}{\norm{x-y}^\gamx}
\end{equation}
is finite.
More generally, for $s>0$, we write $s=m+\gamx$ with $m\in\set{0,1,2\dots}$
and $0<\gamx\le1$ (thus $m=\ceil{s}-1$)
and define
\begin{equation}
\Lip_s(U;\Bx)\=\bigset{f\in C^m(U;\Bx):f\mm\in \Lip_\gamx\bigpar{U;L(B^m;\Bx)}}.
\end{equation}
with
\begin{equation}
  \label{lips}
\norm{f}_{\Lip_s}\=\norm{f\mm}_{\Lip_\gamx}.
\end{equation}

It follows from a Taylor expansion 
\cite[Th\'eor\`eme 5.6.1]{Cartan}
at 0 that if $f\in \Lip_s(B;\Bx)$, then
\begin{equation}\label{taylor}
  f(x)=\sum_{k=0}^m \frac1{k!}{f\kkk(0)(x\tpk)}
+O\bigpar{\norm{f}_{\Lip_s}\norm{x}^s},
\end{equation}
where the implicit constant is universal (it can be taken as 1).
(The term with $k=0$ in \eqref{taylor} is just the constant $f(0)$.)
In particular,
\begin{equation}\label{liz}
\norm{f(x)}=O\bigpar{1+\norm{x}^s}, \qquad x\in B.
\end{equation}

Note that $\norm{\;}_{\Lip_s}$ is only a seminorm.
In fact, we have the following.

\begin{lemma}
  \label{Llip}
Let $f:U\to \Bx$ where $U\subseteq B$ is connected, and let $s>0$.
Then the following are equivalent, with $m=\ceil s-1$,
\begin{romenumerate}
\item \label{lip0}
$\norm{f}_{\Lip_s}=0$.
\item \label{lip=}
$f\mm$ is constant.
\item \label{lipm+1}
$f\mmi(x)=0$ for all $x\in U$.
\item \label{lipsum}
$%  \begin{equation*}
	f(x)=\sum_{k=0}^m \gax_k(x,\dots,x),$
%  \x{equation*}
where $\gax_k\in L(B^k;\Bx)$ is a $k$-linear map.
\end{romenumerate}
\end{lemma}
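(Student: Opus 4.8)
The statement is a chain of equivalences about functions on a connected open set $U\subseteq B$, so the natural strategy is to establish a cycle of implications, say \ref{lip0}$\implies$\ref{lip=}$\implies$\ref{lipm+1}$\implies$\ref{lipsum}$\implies$\ref{lip0}, rather than proving each pair directly. Most of the links are soft facts about Fr\'echet derivatives; the only one requiring real work is \ref{lipm+1}$\implies$\ref{lipsum}, which is a Banach-space valued version of the elementary fact that a function on $\bbR^n$ with vanishing $(m+1)$st derivative is a polynomial of degree $\le m$.

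First I would do the easy links. For \ref{lip0}$\implies$\ref{lip=}: if $\norm f_{\Lip_s}=\norm{f\mm}_{\Lip_\gamx}=0$, then by the definition \eqref{lipa} of the $\Lip_\gamx$ seminorm, $f\mm(x)=f\mm(y)$ for all $x,y\in U$ in the same connected component, hence $f\mm$ is constant since $U$ is connected. For \ref{lip=}$\implies$\ref{lipm+1}: the derivative of a constant function is $0$, so $f\mmi=(f\mm)'=0$ on $U$. For \ref{lipsum}$\implies$\ref{lip0}: if $f(x)=\sum_{k=0}^m\gax_k(x,\dots,x)$ with each $\gax_k$ bounded $k$-linear, then $f$ is a polynomial of degree $\le m$, so $f\in C^m(U;\Bx)$ and a direct computation (differentiating $m$ times) shows $f\mm(x)=m!\,\symm(\gax_m)$ is the constant $m$-linear form obtained by symmetrising $\gax_m$; a constant function has $\Lip_\gamx$ seminorm $0$, so $\norm f_{\Lip_s}=0$. (One should note the $k=0$ term is just the constant $\gax_0$, consistent with the convention in \eqref{taylor}.)

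The main step is \ref{lipm+1}$\implies$\ref{lipsum}. Here I would argue as follows. Fix a base point $x_0\in U$. The idea is to reduce to one-dimensional statements along line segments and use that on a connected set one can join any two points by a polygonal path inside $U$. Concretely, for fixed $x\in U$ and a direction $h\in B$ with $x+th\in U$ for $t$ in an interval, the map $t\mapsto f(x+th)$ is $(m+1)$ times differentiable into $\Bx$ with $(m+1)$st derivative $f\mmi(x+th)(h,\dots,h)=0$; applying a bounded linear functional $\xx\in\Bx^*$ and invoking the scalar result, $\xx(f(x+th))$ is a polynomial in $t$ of degree $\le m$, and since this holds for all $\xx$, so is $f(x+th)$ (with $\Bx$-valued coefficients). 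From this one extracts that the Taylor expansion \eqref{taylor} of $f$ at $x_0$ has vanishing remainder: the coefficients $f\kkk(x_0)$, $0\le k\le m$, are $k$-linear maps $B^k\to\Bx$ (bounded, since $f\in C^m$), and one checks along each segment, then by the polygonal-connectedness of $U$ and a continuation argument, that $f(x)=\sum_{k=0}^m\frac1{k!}f\kkk(x_0)(x-x_0,\dots,x-x_0)$ for \emph{all} $x\in U$. Finally, expanding each $(x-x_0)\tpk$ multilinearly regroups this as $\sum_{k=0}^m\gax_k(x,\dots,x)$ for suitable bounded $k$-linear maps $\gax_k$ built from the $f\kkk(x_0)$ and powers of $x_0$, giving \ref{lipsum}.

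\textbf{Expected obstacle.} The routine-looking but genuinely fiddly point is the continuation argument showing the local polynomial representation on a single segment propagates to a global one on all of $U$: one must check that the polynomial coefficients obtained on overlapping segments agree, so that the locally defined degree-$\le m$ polynomials patch to a single globally defined one. The cleanest route is probably to show directly that $g(x)\=f(x)-\sum_{k=0}^m\frac1{k!}f\kkk(x_0)(x-x_0)\tpk$ satisfies $g(x_0)=0$, $g'(x_0)=\dots=g\mm(x_0)=0$, and $g\mmi\equiv0$ on $U$, hence (by the scalar one-variable result applied to $\xx\circ g$ along segments from $x_0$, extended by connectedness) $g\equiv0$; this sidesteps the patching issue by keeping $x_0$ fixed throughout. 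Everything else is bookkeeping with multilinear forms and the identification $L(B^k;\Bx)\cong L(B\ptpk;\Bx)$ already recalled in \refSS{SSfrechet}.
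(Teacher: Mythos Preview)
Your proposal is correct and follows essentially the same route as the paper: the easy links \ref{lip0}$\Leftrightarrow$\ref{lip=}$\Rightarrow$\ref{lipm+1} and \ref{lipsum}$\Rightarrow$\ref{lip=} are handled identically, and for \ref{lipm+1}$\Rightarrow$\ref{lipsum} both you and the paper take the Taylor remainder $g(x)=f(x)-\sum_{k=0}^m\frac1{k!}f\kkk(x_0)(x-x_0)\tpk$ at a base point and argue it vanishes. The only difference is that the paper simply assumes $U$ convex (remarking that the general case ``follows easily'') and invokes Taylor's formula from \cite{Cartan} in one line, whereas you sketch the open-and-closed continuation argument along polygonal paths that actually handles arbitrary connected $U$; your version is thus slightly more complete, but not a different idea.
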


\begin{proof}
  \ref{lip0}$\iff$\ref{lip=}:
It follows from \eqref{lipa} and \eqref{lips} that
\begin{equation*}
  \norm{f}_{\Lip_s}=0
\iff
  \norm{f\mm}_{\Lip_\gamx}=0
\iff
f\mm(x)=f\mm(y) \text{ for any $x,y\in U$}.
\end{equation*}

\ref{lip=}$\implies$\ref{lipm+1}:
Obvious by the definition of the derivative.

\ref{lipm+1}$\implies$\ref{lipsum}:
Suppose for simplicity that $U$ is convex.
(The general case follows easily, but we omit this.)
By a translation we may assume that $0\in U$, and then 
\ref{lipsum} follows from 
 Taylor's formula \cite[Th\'eor\`eme 5.6.1]{Cartan}.
(We have $\gax_k=f\kkk(0)/k!$.)

\ref{lipsum}$\implies$\ref{lip=}:
It is easily seen by induction that if each $\gax_k$ is symmetric, as we may
assume by symmetrization, then for $1\le j\le m$,
\begin{equation*}
  f^{(j)}(x)(y_1,\dots,y_j)
=\sum_{k=j}^m\frac{k!}{(k-j)!}\gax_k(x,\dots,x,y_1,\dots,y_j) 
\end{equation*}
(with $k-j$ arguments of $\gax_k$ equal to $x$); in particular,
$f^{(m)}(x)=m!\,\gax_m$, which does not depend on $x$.
\end{proof}

\subsection{Zolotarev distances}\label{SSzolo}
We now define the Zolotarev distance $\zeta_s$ as follows, for any $s>0$;
see Zolotarev \cite{Zolotarev76b,Zolotarev76,Zolotarev77,Zolotarev79} and
\eg{} \cite{NR,NeiningerS,Sulzbach} 
for further details. 

Let $B$ be a Banach space, 
and suppose that 
every bounded multilinear form $B^k\to \bbR$ is $\bw$-\meas{}
(\ie,  measurable for the product \gsf{} $\bw^k$),
for any $k\ge1$.
(For $k=1$, this holds by the definition of $\bw$.)
If $B$ is separable, this assumption always holds, since every bounded
multilinear form is continuous and thus Borel measurable on $B^k$; moreover,
when $B$ is separable the Borel \gsf{} on $B^k$ equals the product \gsf{}
$\cB^k$ and $\cB=\bw$.
The main example with $B$ non-separable is $\doi$, where 
$\bw=\cD$ by \refT{TDw} and every multilinear form is \dmeas{} by \refT{TM}.
(Another example is $c_0(S)$, 
where $\bw=\cC$  by \refT{Tc0} and every multilinear form is
\cmeas{} as a consequence of \refL{LBH}.) 

We let $\Lipw_s(B;\bbR)$ be the set of 
all $\bw$-\meas{} functions in
$\Lip_s(B;\bbR)$; this is evidently a subspace of $\Lip_s(B;\bbR)$.
If $B$ is separable, then  $\Lipw_s(B;\bbR)=\Lip(B;\bbR)$, since every
function in $\Lip_s(B;\bbR)$ is continuous and thus \Bormeas, and $\cB=\bw$. 

Let $X$ and $Y$ be two \wmeas{} $B$-valued random variables, and
suppose that $\EE\norm{X}^s, \EE\norm{Y}^s<\infty$. 
(We use upper integrals here, since the norms $\norm{X},\norm{Y}$ are not
necessarily \meas.)
Define 
\begin{equation}\label{marstrand}
  \zeta_s(X,Y)\=
\sup\bigset{|\E f(X)-\E f(Y)|:f\in \Lipw_s(B;\bbR) \text{ with } 
\norm{f}_{\Lip_s}\le1}.
\end{equation}
By assumption, $f$ is $\bw$-\meas, and thus $f(X)$ is \meas.
Moreover, by \eqref{liz}, $|f(X)|=O(1+\norm{X}^s)$ 
and by assumption $\EE\norm{X}^s<\infty$; hence 
$\E|f(X)|<\infty$.
Similarly, $\E|f(Y)|<\infty$, and thus
$|\E f(X)-\E f(Y)|$ is well-defined and finite for every $f\in\Lipw_s(B;\bbR)$.
Thus $\zeta_s(X,Y)$ is a well-defined number in $[0,\infty]$.
Moreover, 
$\E f(X)$ and $\E f(Y)$ depend only on the distributions
$\cL(X)$ and $\cL(Y)$, and thus $\zeta_s$ is really a distance between the 
distributions; we may write $\zeta_s(\cL(X),\cL(Y)) =\zeta_s(X,Y)$.

By \eqref{marstrand} and homogeneity, for any $f\in \Lipw_s(B;\bbR)$,
if $\zeta_s(X,Y)<\infty$,
\begin{equation}\label{marstrandx}
|\E f(X)-\E f(Y)| \le
\norm{f}_{\Lip_s}\,  \zeta_s(X,Y).
\end{equation}

It is clear that $\zeta_s(X,Y)\ge0$ with $\zeta_s(X,X)=0$, and that 
$\zeta_s$ is symmetric and that the triangle inequality holds.
Moreover, if $\xx\in B\q$, then $e^{\ii\xx}\in\Lip_s(B;\bbC)$ for any $s>0$
as is easily seen \cite{Zolotarev77}. Hence, by taking real and imaginary parts,
\eqref{marstrandx} implies that if $\zeta_s(X,Y)=0$, then 
\begin{equation}
  \label{chf}
\E e^{\ii\xx(X)}=\E e^{\ii\xx(Y)}
\end{equation}
and it is well-known that this implies that $\cL(X)=\cL(Y)$ on $\bw$.
(In fact, \eqref{chf} implies, by replacing $\xx$ by $t\xx$ with $t\in\bbR$,
that $\xx(X)$ and $\xx(Y)$ have the same \chf, and thus $\xx(X)\eqd\xx(Y)$
for every $\xx\in B\q$. This implies $\cL(X)=\cL(Y)$ on $\bw$ as seen in
\refT{TU} and its proof.)
Consequently, $\zeta_s$ is a metric on any set of probability distributions
on $(B,\bw)$ such that $\zeta_s$ is finite.

However, for $s>1$, $\zeta_s(X,Y)$ may be infinite.
The following lemma says exactly when $\zeta_s$ is finite.

\begin{lemma}\label{Lmac}
Suppose that $X$ and $Y$ are \wmeas{} $B$-valued \rv{s} such that  
$\EE\norm{X}^s, \EE\norm{Y}^s<\infty$.
Then the moments $\E X\ptpk$ and $\E Y\ptpk$ exist in
Dunford sense for $1\le k\le m=\ceil{s}-1$, and
the following are equivalent.
\begin{romenumerate}
\item \label{za}
$\zeta_s(X,Y)<\infty$.
\item \label{zb}
%\begin{equation}
$\E \gax(X,\dots,X)=\E \gax(Y,\dots,Y)$
for every $k=1,\dots,m$ and every $\gax\in L(B^k;\bbR)$.
%\end{equation}

\item \label{zc}
$\E X\ptpk=\E Y\ptpk$ for $1\le k\le m$.
\end{romenumerate}
In particular, if $0<s\le1$, then \ref{zb} and \ref{zc} are vacuous, and
thus
$\zeta_s(X,Y)<\infty$ for all such $X$ and $Y$.
\end{lemma}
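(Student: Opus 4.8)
The plan is to prove Lemma~\ref{Lmac} by relating the finiteness of $\zeta_s(X,Y)$ to the agreement of the lower-order moments, using the Taylor expansion~\eqref{taylor} together with the characterisation of projective moments in Dunford sense from \refT{TPD} and \refC{Cptpk=}.

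First I would establish the preliminary claim that the moments $\E X\ptpk$ and $\E Y\ptpk$ exist in Dunford sense for $1\le k\le m=\ceil s-1$. Since $k\le m<s$, we have $\EE\norm X^k\le 1+\EE\norm X^s<\infty$ (using $\norm X^k\le 1+\norm X^s$), and similarly for $Y$. Moreover, for any bounded $k$-linear form $\ga\in L(B^k;\bbR)$, the map $x\mapsto\ga(x,\dots,x)$ is in $\Lip_k(B;\bbR)$, hence in $\Lip_s$ after truncation? — more simply, it is a bounded multilinear form, so by the hypothesis on $B$ it is $\bw$-\meas, and thus $\ga(X,\dots,X)$ is \meas{} with $|\ga(X,\dots,X)|\le\norm\ga\norm X^k$. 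By \refT{TPD} (the ``moreover'' part, with $\EE\norm X^k<\infty$ and $\ga(X,\dots,X)$ \meas), the projective moment $\E X\ptpk$ exists in Dunford sense; likewise for $Y$. This also proves the equivalence \ref{zb}$\iff$\ref{zc}: by \refT{TPD} and \refC{Cptpk=}, $\E X\ptpk=\E Y\ptpk$ in Dunford sense is equivalent to $\E\ga(X,\dots,X)=\E\ga(Y,\dots,Y)$ for all $\ga\in L(B^k;\bbR)$.

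Next, for \ref{zb}$\implies$\ref{za}: given $f\in\Lipw_s(B;\bbR)$ with $\norm f_{\Lip_s}\le1$, apply the Taylor expansion~\eqref{taylor} at $0$ to write $f(x)=\sum_{k=0}^m\frac1{k!}f\kkk(0)(x\tpk)+R(x)$ with $|R(x)|\le\norm{x}^s$. Each $f\kkk(0)$ is a bounded $k$-linear form on $B$, so under \ref{zb} we get $\E f\kkk(0)(X\tpk)=\E f\kkk(0)(Y\tpk)$ for $k=1,\dots,m$ (and trivially for $k=0$). Hence
\begin{equation*}
|\E f(X)-\E f(Y)|=|\E R(X)-\E R(Y)|\le\E|R(X)|+\E|R(Y)|\le\EE\norm X^s+\EE\norm Y^s,
\end{equation*}
which is finite and independent of $f$; taking the supremum gives $\zeta_s(X,Y)\le\EE\norm X^s+\EE\norm Y^s<\infty$. (One must check that $R=f-\sum_k\frac1{k!}f\kkk(0)(\cdot\tpk)$ is $\bw$-\meas{} so that $\E R(X)$ is meaningful, but this is immediate since $f$ is $\bw$-\meas{} by assumption and each polynomial term $f\kkk(0)(x\tpk)$ is a bounded multilinear form composed with the diagonal, hence $\bw$-\meas.)

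Finally, for \ref{za}$\implies$\ref{zb}, the contrapositive: suppose \ref{zb} fails, so there is some $k\le m$ and $\ga\in L(B^k;\bbR)$ with $\E\ga(X,\dots,X)\ne\E\ga(Y,\dots,Y)$. I want to produce functions $f\in\Lipw_s$ of bounded $\Lip_s$-seminorm on which $\E f(X)-\E f(Y)$ is unbounded. The natural candidates are scalar multiples of $g(x)=\ga(x,\dots,x)$, but $g$ itself need not lie in $\Lip_s$ when $k<s$ (it grows like $\norm x^k$, which is fine for the derivative condition only if $k\ge m$; for $k\le m$ the $m$-th derivative of $g$ is an $(k)$-... actually the $m$-th derivative vanishes when $k<m$ and is constant when $k=m$, so $\norm g_{\Lip_s}=0$ by \refL{Llip}). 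Thus $\la g\in\Lipw_s$ with $\norm{\la g}_{\Lip_s}=0\le1$ for \emph{every} scalar $\la$, while $\E(\la g)(X)-\E(\la g)(Y)=\la(\E\ga(X,\dots,X)-\E\ga(Y,\dots,Y))$ is unbounded in $\la$. This shows $\zeta_s(X,Y)=\infty$. The main obstacle is this last direction — specifically, verifying that the homogeneous form $g(x)=\ga(x,\dots,x)$ of degree $k\le m$ really has $\Lip_s$-seminorm zero (so it is an admissible test function for every scaling), which is exactly the content of \refL{Llip}\ref{lipsum}$\implies$\ref{lip0}, and confirming that $g$ is $\bw$-\meas, which follows from the standing hypothesis on $B$ that bounded multilinear forms are $\bw$-\meas. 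The concluding sentence about $0<s\le1$ is then immediate since $m=0$ makes \ref{zb} and \ref{zc} vacuous, so \ref{za} holds by the bound $\zeta_s(X,Y)\le\EE\norm X^s+\EE\norm Y^s$ obtained above.
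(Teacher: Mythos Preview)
Your proof is correct and follows essentially the same approach as the paper's: existence of the Dunford moments via \refT{TPD}, the equivalence \ref{zb}$\iff$\ref{zc} via \refC{Cptpk=}, the implication \ref{zb}$\implies$\ref{za} via the Taylor expansion~\eqref{taylor} with the remainder bounded by $\norm{x}^s$, and the implication \ref{za}$\implies$\ref{zb} via \refL{Llip} showing that the diagonal of a $k$-linear form ($k\le m$) has $\Lip_s$-seminorm zero. The only cosmetic difference is that the paper proves \ref{za}$\implies$\ref{zb} directly from \eqref{marstrandx} (seminorm zero forces $\E f(X)=\E f(Y)$ when $\zeta_s<\infty$), whereas you phrase the same observation as a contrapositive by scaling; the content is identical.
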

\begin{proof}
If $\ga\in L(B^k;\bbR)$ is any $k$-linear form, then $x\mapsto
\ga(x,\dots,x)$ is $\bw$-\meas{} by assumption and thus $\ga(X,\dots,X)$ is
\meas. Furthermore, if $1\le k\le m<s$, then $|\ga(X,\dots,X)|\le
\norm\ga\norm{X}^k=O(1+\norm{X}^s)$, and thus  
$\ga(X,\dots,X)$ is integrable.  \refT{TPD} shows that
$\E X\ptpk$ exists in
Dunford sense for $1\le k\le m$, and the same holds for
$\E Y\ptpk$.

\ref{za}$\implies$\ref{zb}:
Suppose that $\gax\in L(B^k;\bbR)$ with $k\le m$, and let
$f(x)\=\gax(x,\dots,x)$. By assumption, $f$ is $\bw$-\meas, and
by \refL{Llip} $f\in \Lip_s(B;\bbR)$ with $\normlips{f}=0$.
Consequently, $f\in\Lipw_s(B;\bbR)$, and if $\zeta_s(X,Y)<\infty$, then
\eqref{marstrandx} yields $\E f(X)=\E f(Y)$.

\ref{zb}$\implies$\ref{za}
Suppose that $f\in\Lipw_s(B;\bbR)$
with $\normlips{f}\le1$. By \eqref{taylor}, there exist
$k$-linear forms $\gax_k=f\kkk(0)/k!\in L(B^k;\bbR)$ and a function
$g:B\to\bbR$ with $|g(x)|\le\norm{x}^s$
such that
\begin{equation}\label{bit}
  f(x)=\sum_{k=0}^m \gax_k(x,\dots,x) + g(x).
\end{equation}
(Here, $\gax_0=f(0)$ is just a real constant.)
By assumption, $f(X)$ and all
$\gax_k(X,\dots,X)$ 
are \meas, and thus $g(X)$ is \meas, and similarly $g(Y)$ is \meas.
Hence we can use the decomposition \eqref{bit} and obtain 
using \ref{zb} (and the fact that $\gax_0$ is a constant),
with all terms finite by the assumptions $\EE \norm{X}^s,\EE \norm{Y}^s<\infty$,
\begin{equation*}
  \begin{split}
&\bigabs{\E f(X)-\E f(Y)	}
\\&\qquad
=
\lrabs{\sum_{k=0}^m \E\gax_k(X,\dots,X) + \E g(X)
-
\sum_{k=0}^m \E\gax_k(Y,\dots,Y) - \E g(Y)}
\\&\qquad
=\bigabs{\E g(X)-\E g(Y)}
\le \E|g(X)|+\E|g(Y)|
\le \EE\norm X^s + \EE\norm Y^s,
 \end{split}
\end{equation*}

Taking the supremum over all such $f$ yields
\begin{equation}
  \zeta_s(X,Y) \le \EE\norm X^s + \EE\norm Y^s<\infty.
\end{equation}

\ref{zb}$\iff$\ref{zc}:
This is \refC{Cptpk=}.
\end{proof}

\begin{example}
  \label{E123}
If $0<s\le1$, then \refL{Lmac} shows that 
$\zeta_s$ is a metric on the set of all
probability distributions on $(B,\bw)$ with a finite $s$:th moment of the norm
$\EE\norm{X}^s$.

If $1<s\le2$, and we still assume
$\EE\norm{X}^s, \EE\norm{Y}^s<\infty$, \refL{Lmac} shows that
$\zeta_s(X,Y)<\infty$ if and only if $\E X=\E Y$.
Hence $\zeta_s$ is a metric on the set of probability distributions with
$\EE\norm{X}^s$ finite and a given expectation. In this case it is natural to 
work in the set of probability distributions with expectation 0, which
easily is achived by subtracting the means from the variables, so this is no
serious restriction.

The next case $2<s\le3$ is substantially more complicated.
\refL{Lmac} shows that we need not only $\E X=\E Y$ but also 
$\E X\ptpx2=\E Y\ptpx2$. For real-valued \rv{s}, it is standard to obtain
this by 
considering the standardized variable $(\Var X)\qqw(X-\E X)$.
This extends to finite-dimensional spaces, where $\Var X$ is the covariance
matrix, 
see \eg{} \cite{NR},
but not to infinite-dimensional ones, and this is a serious problem
when using $\zeta_s$ with $s>2$ in Banach spaces. Nevertheless, it is at least
sometimes possible to modify the variables to achieve the desired exact
equality of the second moments, see \eg{} \cite{NeiningerS}, and results
like Theorems \ref{Tapprox} and \ref{TC} then are useful.

For $s>3$ we need not only equal first and second moments, but
also equal third moments
$\E X\ptpx3=\E Y\ptpx3$. This can in general not be achieved by any norming, not
even for real-valued \rv{s}.
If the variables are symmetric, however, all odd moments vanish. Thus, for
$s\le4$,
$\zeta_s$ is a metric of the set of all symmetric probability distributions
with a given second projective moment, and $\EE\norm{X}^s$ finite.
It seems likely that there might be applications with symmetric \rv{s} and
$3<s\le4$, but we do not know of any such cases, or of any other
applications of $\zeta_s$ with $s>3$.
\end{example}

The main use of the Zolotarev distances is to prove convergence in
distribution; the idea is that if $(X_n)\seq$ is a sequence of $B$-valued
\rv{s},  we can try to prove $X_n\dto X$ (for a suitable \rv{} $X$ in
$B$) by first proving 
\begin{equation}
  \label{zolo}
\zeta(X_n,X)\to0. 
\end{equation}
It turns out that \eqref{zolo} is by itself sufficient for convergence in
distributions in some 
Banach spaces, for example when $B$ is a Hilbert space \cite{GineL,SJ194},
but not always, for example not in \coi{} where extra conditions are needed
\cite{NeiningerS,Sulzbach}.

\begin{example}
One example where this approach has been particularly successful is
the contraction method used to prove convergence in distribution of a
sequence $X_n$ of random variables when
there is a recursive structure of the type
\begin{equation}\label{uwe}
  X_n \eqd \sum_{r=1}^K A_{r,n} X_{I_{r,n}}^{(r)}+b_n,
\end{equation}
where $(X_j^{(1)})_{j=0}^\infty,\dots,(X_j^{(K)})_{j=0}^\infty$ 
are \iid{} copies of 
$(X_n)_{n=0}^\infty$, and
$A_{r,n}$, $I_{r,n}$ and $b_n$  are given random variables independent
of all $X_j^{(r)}$.
(The idea is, roughly, to first find a good candidate $X$ for the limit by 
formally letting \ntoo{} in \eqref{uwe}, assuming that $A_{r,n}$, $I_{r,n}$
and $b_n$ converge in some suitable way. Then one uses \eqref{uwe} to obtain a
recursive estimate of the distance $\zeta_s(X_n,X)$, and use this to show
that $\zeta_s(X_n,X)\to0$.)
This method was introduced for real-valued random variables by
R\"osler \cite{Rosler1,Rosler2,Rosler99}, 
and has been extended to variables with
values in 
$\bbR^d$ \cite{Neininger00,NR},
$L^p\oi$ \cite{EickR},
Hilbert spaces \cite{SJ194},
\coi{} and \doi{} \cite{NeiningerS,Sulzbach};
see further \cite{NeiningerS,Sulzbach} and the further references given there.
These papers use not only the Zolotarev distance $\zeta_s$ considered here
but also some other probability metrics; nevertheless
the Zolotarev distances are essential in several of the applications.
Moreover, 
it is easily verified that for any real constant $t$,
\begin{equation}
\zeta_s(tX,tY)=|t|^s\zeta_s(X,Y)
\end{equation}
and, for any bounded linear operator $T\in L(B;B)$,
\begin{equation}
  \zeta_s(TX,TY)\le\|T\|^s\zeta_s(X,Y).
\end{equation}
In applications, these relations are typically applied with $|t|$ or $\norm
T$ small, and it is then advantageous to take $s$ large. In fact, in some
applications (see \eg{} \cite{NeiningerS})
one is forced to take $s>2$ in order to obtain the required
estimates, and then, as seen in \refL{Lmac} and \refE{E123}, it is essential
to have (or arrange) equalities of the second moments $\E X_n\ptpx2=\E
X\ptpx2$ in order for $\zeta_s(X_n,X)<\infty$ (which is necessary in order
to even start the recursion sketched above).
\end{example}

\newcommand\AAP{\emph{Adv. Appl. Probab.} }
\newcommand\JAP{\emph{J. Appl. Probab.} }
\newcommand\JAMS{\emph{J. \AMS} }
\newcommand\MAMS{\emph{Memoirs \AMS} }
\newcommand\PAMS{\emph{Proc. \AMS} }
\newcommand\TAMS{\emph{Trans. \AMS} }
\newcommand\AnnMS{\emph{Ann. Math. Statist.} }
\newcommand\AnnPr{\emph{Ann. Probab.} }
\newcommand\CPC{\emph{Combin. Probab. Comput.} }
\newcommand\JMAA{\emph{J. Math. Anal. Appl.} }
\newcommand\RSA{\emph{Random Struct. Alg.} }
\newcommand\ZW{\emph{Z. Wahrsch. Verw. Gebiete} }
\newcommand\DMTCS{\jour{Discr. Math. Theor. Comput. Sci.} }

\newcommand\AMS{Amer. Math. Soc.}
\newcommand\Springer{Springer-Verlag}
\newcommand\Wiley{Wiley}

\newcommand\vol{\textbf}
\newcommand\jour{\emph}
\newcommand\book{\emph}
\newcommand\inbook{\emph}
\def\no#1#2,{\unskip#2, no. #1,} %(typeset after year) 
\newcommand\toappear{\unskip, to appear}

\newcommand\urlsvante{\url{http://www.math.uu.se/~svante/papers/}}
\newcommand\arxiv[1]{\url{arXiv:#1.}}
\newcommand\arXiv{\arxiv}

\def\nobibitem#1\par{}

%\showhyphens{derivative}


\begin{thebibliography}{99}

\bibitem{Ale3}
K. S. Alexander, %Kenneth S. 
Characterization of the cluster set of the LIL sequence in Banach space. 
\emph{Ann. Probab.} \vol{17} (1989), no. 2, 737--759.

\bibitem{Ale4}
K. S. Alexander, %Kenneth S. 
Unusual cluster sets for the LIL sequence in Banach space. 
\emph{Ann. Probab.} \vol{17} (1989), no. 3, 1170--1185.

\bibitem[Aronszajn(1950)]{Aronszajn}
N. Aronszajn,
Theory of reproducing kernels. 
\emph{Trans. Amer. Math. Soc.} \vol{68} (1950), 337--404.

\bibitem{BanachKur}
S. Banach \& C. Kuratowski,
%(Stefan Banach, Casimir Kuratowski), 
Sur une g\'en\'eralisation du probl\`eme de la mesure. 
\emph{Fundamenta Mathematicae},
\vol{14} (1929),  127--131.

\bibitem{Billingsley1}
P. Billingsley,
\book{Convergence of Probability Measures}.
1st ed., Wiley, New York, 1968.

\bibitem{Billingsley}
P. Billingsley,
\book{Convergence of Probability Measures}.
2nd ed., 
Wiley, New York, 1999.

\bibitem{Birkhoff}
G. Birkhoff, % Garrett 
Integration of functions with values in a Banach space, 
\emph{Trans. Amer. Math. Soc.} \vol{38} (1935), no. 2, 357--378. 

\bibitem[Blei(2001)]{Blei}
R. Blei, 
\emph{Analysis in Integer and Fractional Dimensions}. 
%Cambridge Studies in Advanced Mathematics, 71. 
Cambridge University Press, Cambridge, 2001. 
%xx+556 pp. ISBN: 0-521-65084-4 

\bibitem[Bohnenblust and Hille(1931)]{BH}
H. F. Bohnenblust \& E. Hille, 
On the absolute convergence of Dirichlet series. 
\emph{Ann. Math.} \vol{32} (1931), no. 3, 600--622. 

\bibitem{Carleman}
T. Carleman, 
\emph{Les fonctions quasi analytiques}.
Gauthier-Villars, Paris, 1926. 
% Leçons professées au Collège de France. (French)
%[B] 115 p. 
%(Collection de monographies sur la théorie des fonctions) 

\bibitem{Cartan}
H. Cartan, %Henri
\emph{Calcul diff\'erentiel}.  
Hermann, Paris, 1967.

\bibitem{Cohn} 
D. L. Cohn,
\book{Measure Theory},
Birkh\"auser, Boston, 1980.

\bibitem{Conway}
J. B. Conway,
\emph{A Course in Functional Analysis}.
2nd ed., \Springer, New York, 1990.

\bibitem[Corson(1961)]{Corson}
H. H. Corson,
The weak topology of a Banach space. 
\emph{Trans. Amer. Math. Soc.} \vol{101} (1961), 1--15.

\bibitem[Davie(1973)]{Davie73}
A. M. Davie,
The approximation problem for Banach spaces. 
\emph{Bull. London Math. Soc.} \vol5 (1973), 261--266. 

\bibitem[Davie(1975)]{Davie75}
A. M. Davie,
The Banach approximation problem. 
%Collection of articles dedicated to G. G. Lorentz on the occasion of his
%sixty-fifth birthday, IV. 
\emph{J. Approximation Theory} \vol{13} (1975), 392--394.


\bibitem[Diestel(1984)]{Diestel}
J. Diestel,  
\emph{Sequences and Series in Banach Spaces}. 
%Graduate Texts in Mathematics, 92. 
Springer-Verlag, New York, 1984. 
%xii+261 pp. ISBN: 0-387-90859-5


\bibitem[Diestel and Uhl(1977)]{DiestelUhl}
J. Diestel \& J. J. Uhl, jr,
\emph{Vector Measures}. 
%Mathematical Surveys, No. 15. 
American Mathematical Society, Providence, R.I., 1977.

\bibitem{SJ194}
M. Drmota, S. Janson \& R. Neininger,
A functional limit theorem for the profile of search trees. 
\emph{Ann. Appl. Probab.} \vol{18} (2008), 288--333. 

\bibitem[Dudley(1966)]{Dudley66}
R. M. Dudley,
Weak convergences of probabilities on nonseparable metric spaces and
empirical measures on Euclidean spaces. 
\emph{Illinois J. Math.} \vol{10} (1966), 109--126.

\bibitem[Dudley(1967)]{Dudley67}
R. M. Dudley, 
Measures on non-separable metric spaces. 
\emph{Illinois J. Math.} \vol{11} (1967), 449--453.

\bibitem[Dunford and Schwartz(1958)]{Dunford-Schwartz}
N. Dunford \& J. T. Schwartz,
\book{Linear Operators. I. General Theory}.
Interscience Publishers, New York, 1958.
%xiv+858 pp. 

\bibitem[Edgar(1977)]{Edgar}
G. A. Edgar, 
Measurability in a Banach space. 
\emph{Indiana Univ. Math. J.} 
\vol{26} (1977), no. 4, 663--677. 

\bibitem[Edgar(1979)]{EdgarII}
G. A. Edgar, 
Measurability in a Banach space. II. 
\emph{Indiana Univ. Math. J.} \vol{28} (1979), no. 4, 559--579.

\bibitem{EickR}
K. Eickmeyer \& L. R\"uschendorf, 
A limit theorem for recursively defined processes in $L^p$. 
\emph{Statist. Decisions} \vol{25} (2007), no. 3, 217--235.

\bibitem[Enflo(1973)]{Enflo}
P. Enflo,
A counterexample to the approximation problem in Banach spaces.
\emph{Acta Math.} \vol{130} (1973), 309--317.

\bibitem{Engelking}
R. Engelking,
\emph{General topology}. 
%Translated from the Polish by the author. 
2nd ed.,
%Sigma Series in Pure Mathematics, 6. 
Heldermann Verlag, Berlin, 1989. 
%viii+529 pp. ISBN: 3-88538-006-4

\bibitem{FremlinMendoza}
D. H. Fremlin \& J. Mendoza,
On the integration of vector-valued functions.
\emph{Illinois J. Math.} \vol{38} (1994), no. 1, 127--147.

\bibitem[Fremlin and Talagrand(1979)]{FremlinT}
D. H. Fremlin \& M. Talagrand,  
A decomposition theorem for additive set-functions, with applications to
Pettis integrals and ergodic means. 
\emph{Math. Z.} \vol{168} (1979), no. 2, 117--142.

\bibitem[Gillman and Jerison(1960)]{GillmanJerison}
%Gillman, Leonard; Jerison, Meyer 
L. Gillman \& M.  Jerison,
\emph{Rings of Continuous Functions}. 
%The University Series in Higher Mathematics 
D. Van Nostrand Co., Inc., Princeton, N.J.-Toronto-London-New York, 1960.

\bibitem{GineL}
E. Gin\'e %, Evarist; 
\& J. R. Le\'on, %José R. 
On the central limit theorem in Hilbert space. 
\emph{Stochastica} \vol4 (1980), no. 1, 43--71.

\bibitem[Grothendieck(1953)]{Grothendieck-resume}
A. Grothendieck, 
R\'esum\'e de la th\'eorie m\'etrique des produits tensoriels topologiques.
\emph{Bol. Soc. Mat. S\~ao Paulo} \vol8 (1953), 1--79. 

\bibitem[Grothendieck(1955)]{Grothendieck-memoir}
A. Grothendieck, 
Produits tensoriels topologiques et espaces nucl\'eaires.
\emph{Mem. Amer. Math. Soc.} \vol{16} (1955).
% 140 pp.

\bibitem{Gut}
A. Gut, 
\emph{Probability: A Graduate Course},
Springer, New York, 2005.

\bibitem{Hildebrandt}
T. H. Hildebrandt,
Integration in abstract spaces.
\emph{Bull. Amer. Math. Soc.} \vol{59}, (1953). 111--139. 

\bibitem[Huff(1986)]{Huff}
R. Huff,
Remarks on Pettis integrability.
\emph{Proc. Amer. Math. Soc.} \vol{96} (1986), no. 3, 402--404. 

\bibitem{SJIII}
S. Janson,
\emph{Gaussian Hilbert Spaces}.
Cambridge Univ. Press, Cambridge, 1997.

\bibitem{Kallenberg}
O. Kallenberg,
\book{Foundations of Modern Probability.}
2nd ed., Springer, New York, 2002. 

\bibitem{Kanamori}
A. Kanamori, %Akihiro 
\emph{The Higher Infinite. Large Cardinals in Set Theory from their
  Beginnings.}  %Perspectives in Mathematical Logic. 
Springer-Verlag, Berlin, 1994. %xxiv+536 pp. ISBN: 3-540-57071-3

\bibitem{KurMos}
K. Kuratowski %Kazimierz
\& A. Mostowski,
\emph{Set Theory}. 
PWN, Warszawa, and North-Holland, Amsterdam, 1968.

\bibitem{Lax}
P. Lax, %Peter D. 
\emph{Functional Analysis}. 
Wiley-Interscience, New York, 2002.

\bibitem{LedouxT}
M. Ledoux \& M. Talagrand,
\emph{Probability in Banach Spaces}. 
%Isoperimetry and processes. 
%Ergebnisse der Mathematik und ihrer Grenzgebiete (3), 23. 
Springer-Verlag, Berlin, 1991. 
%xii+480 pp. ISBN: 3-540-52013-9

\bibitem[Lindenstrauss and Tzafriri(1977)]{LTzI}
J. Lindenstrauss \& L. Tzafriri, 
%Lindenstrauss, Joram; Tzafriri, Lior
\emph{Classical Banach Spaces. I. Sequence Spaces.} 
%Ergebnisse der Mathematik und ihrer Grenzgebiete, Vol. 92. 
Springer-Verlag, Berlin-New York, 1977. 
%xiii+188 pp. ISBN: 3-540-08072-4 


\bibitem[Littlewood(1930)]{Littlewood}
J. E. Littlewood,
On bounded bilinear forms in an infinite number of variables,
\emph{Quart. J. Math. Oxford} \vol1 (1930),  164--174. 
%doi:10.1093/qmath/os-1.1.164

\bibitem{MS}
E. Marczewski \& R. Sikorski, 
Measures in non-separable metric spaces. 
\emph{Colloquium Math.} \vol1 (1948), 133--139.

\bibitem{Mil}
A. A. Miljutin, 
Isomorphism of the spaces of continuous functions over compact sets of the
cardinality of the continuum. (Russian) 
\emph{Teor. Funkci{\u\i} Funkcional. Anal. i Prilo\v zen. Vyp.} \vol2 (1966),
150--156.

\bibitem{Neininger00}
R. Neininger,
On a multivariate contraction method for random recursive structures with
applications to Quicksort. 
%Analysis of algorithms (Krynica Morska, 2000). 
Random Structures Algorithms 19 (2001), no. 3--4, 498--524.

\bibitem{NR}
R. Neininger \& L. R\"uschendorf, 
A general limit theorem for recursive algorithms and combinatorial structures. 
\emph{Ann. Appl. Probab.} \vol{14} (2004), no. 1, 378--418. 

\bibitem{NeiningerS}
R Neininger \& H. Sulzbach,
On a functional contraction method.
Preprint, 2012.
\arxiv{1202.1370}

\bibitem{Pel}
A. Pe{\l}czy\'nski,  
Linear extensions, linear averagings, and their applications to linear
topological classification of spaces of continuous functions. 
\emph{Dissertationes Math. Rozprawy Mat.} \vol{58}, 1968.

\bibitem[Pestman(1995)]{Pestman}
W. R. Pestman,
%Pestman, Wiebe R.
Measurability of linear operators in the Skorokhod topology. 
\emph{Bull. Belg. Math. Soc. Simon Stevin} \vol2 (1995), no. 4, 381--388. 

\bibitem{Pisier}
G. Pisier,  
\emph{Factorization of Linear Operators and Geometry of Banach Spaces}. 
%CBMS Regional Conference Series in Mathematics, 60. 
%Published for the Conference Board of the Mathematical Sciences,
%Washington, DC; by the 
Amer. Math. Soc., Providence, RI, 1986. 
%x+154 pp. ISBN: 0-8218-0710-2

\bibitem{Pisier-volume}
G. Pisier, %Gilles 
\emph{The Volume of Convex Bodies and Banach Space Geometry}. 
%Cambridge Tracts in Mathematics, 94. 
Cambridge University Press,
Cambridge, 1989.
% xvi+250 pp. 

\bibitem{Rosler1}
U. R\"osler,  
A limit theorem for "Quicksort''. 
\emph{RAIRO Inform. Th\'eor. Appl.} \vol{25} (1991), no. 1, 85--100.

\bibitem{Rosler2}
U. R\"osler,  
A fixed point theorem for distributions. 
\emph{Stochastic Process. Appl.} \vol{42} (1992), no. 2, 195--214. 

\bibitem{Rosler99}
U. R\"osler,  
On the analysis of stochastic divide and conquer algorithms. 
%Average-case analysis of algorithms (Princeton, NJ, 1998). 
\emph{Algorithmica} \vol{29} (2001), 238--261.


\bibitem[Ryan(2002)]{Ryan}
R. A. Ryan, %Raymond A. 
\emph{Introduction to Tensor Products of Banach Spaces.} 
%Springer Monographs in Mathematics. 
Springer-Verlag, % London, %Ltd., 
London, 2002. %xiv+225 pp. ISBN: 1-85233-437-1 

\bibitem{Sorgenfrey}
R. H. Sorgenfrey,
On the topological product of paracompact spaces. 
\emph{Bull. Amer. Math. Soc.} \vol{53} (1947), 631--632.

\bibitem[Stef\'ansson(1992)]{Stefansson}
G. F. Stef\'ansson, %Gunnar F. 
Pettis integrability. 
\emph{Trans. Amer. Math. Soc.} \vol{330}
(1992), no. 1, 401--418. 

\bibitem{Sulzbach}
H. Sulzbach,
\emph{On a functional contraction method}.
Ph. D. thesis, Goethe-Universit\"at, Frankfurt am Main, 2012.

\bibitem[Szankowski(1981)]{Szankowski}
A. Szankowski, %Andrzej
$B(H)$ does not have the approximation property.
\emph{Acta Math.} \vol{147} (1981), no. 1--2, 89--108.

\bibitem[Talagrand(1984)]{Talagrand:Pettis}
M. Talagrand, %Michel 
Pettis integral and measure theory. 
\emph{Mem. Amer. Math. Soc.} \vol{51} (1984), no. 307.

\bibitem[Treves(1967)]{Treves}
F. Treves,
\book{Topological Vector Spaces, Distributions and Kernels}.
Academic Press, New York, 1967.

\bibitem{Ulam}
S. Ulam,
Zur Masstheorie in der allgemeinen Mengenlehre.
\emph{Fundamenta Mathematicae} \vol{16} (1930), 140--150.

\bibitem{Varopoulos}
N. Th. Varopoulos,
Tensor algebras and harmonic analysis.  
\emph{Acta Math.}  \vol{119} (1967), 51--112. 


\bibitem{Zolotarev76b}
V.~M. Zolotarev,
Metric distances in spaces of random variables and their distributions. 
(Russian.) 
\emph{Mat. Sb. (N.S.)} \vol{101(143)} (1976), no. 3, 416--454,
456. 
English transl.:
\jour{Math. USSR-Sb.} \vol{30}\no3 (1976), 373--401.

\bibitem{Zolotarev76}
V.~M. Zolotarev,
Approximation of the distributions of sums of independent
random variables with values in infinite-dimensional
spaces. (Russian.)
\jour{Teor. Veroyatnost. i Primenen.} \vol{21}\no4 (1976), 741--758.
Erratum \jour{ibid} \vol{22}\no4 (1977),  901.
English transl.:
\jour{Theory Probab. Appl.} \vol{21}\no4 (1976), 721--737 (1977);
\jour{ibid}
\vol{22}\no4 (1977), 881 (1978).


\bibitem{Zolotarev77}
V.~M. Zolotarev,
Ideal metrics in the problem of approximating distributions of sums of
independent random variables. (Russian.) 
\emph{Teor. Verojatnost. i Primenen.} \vol{22} (1977), no. 3, 449--465.
English transl.: \emph{Theor. Probability Appl.} \vol{22} (1977), no. 3,
	 433--449.

\bibitem{Zolotarev79}
V.~M. Zolotarev,
Ideal metrics in the problems of probability theory and mathematical statistics.
\emph{Austral. J. Statist.} \vol{21} (1979), no. 3, 193--208.
Corrigenda: \emph{ibid} \vol{22} (1980), no. 1, 97.


\bibitem{Zygmund}
A. Zygmund,
\emph{Trigonometric Series}.
 2nd ed., Cambridge Univ. Press, Cambridge, 1959.

\end{thebibliography}
\end{document}